\tikzstyle{arrow}=[draw=black,arrows=-latex]
\newtheorem{theorem}{Theorem}[section]
\newtheorem{proposition}[theorem]{Proposition}
\newtheorem{lemma}[theorem]{Lemma}
\newtheorem{corollary}[theorem]{Corollary}
\theoremstyle{definition}
\newtheorem{definition}[theorem]{Definition}
\newcounter{smalllist}
\DeclareMathOperator*{\sgn}{sgn}
\numberwithin{equation}{section}
\newcommand{\abs}[1]{\left\lvert#1\right\rvert}
\newcommand{\norm}[1]{\left\|#1\right\|}
\newcommand{\set}[1]{\left\{ #1 \right\}}
\newcommand{\seq}[1]{\left\{ #1 \right\}}
\newcommand{\lb}{\label}
\newcommand{\supp}{\text{\rm{supp}}}
\newcommand{\beq}{\begin{equation}}
\newcommand{\eeq}{\end{equation}}
\newcommand{\bal}{\begin{align}}
\newcommand{\eal}{\end{align}}
\newcommand{\bals}{\begin{align*}}
\newcommand{\eals}{\end{align*}}
\newcommand{\bbN}{{\mathbb{N}}}
\newcommand{\bbR}{{\mathbb{R}}}
\newcommand{\bbZ}{{\mathbb{Z}}}
\newcommand{\bbT}{{\mathbb{T}}}
\newcommand{\eps}{\varepsilon}
\newcommand{\tht}{\theta}
\begin{document}
\title[Touching g-SQG Patches on the Plane]
{Well-Posedness and Finite Time Singularity \\ for Touching g-SQG Patches on the Plane}

\author{Junekey Jeon and Andrej Zlato\v{s}}

\address{\noindent Department of Mathematics \\ University of
California San Diego \\ La Jolla, CA 92093 \newline Email: \tt
zlatos@ucsd.edu,
j6jeon@ucsd.edu}

\begin{abstract}
We prove local well-posedness as well as  singularity formation for the g-SQG patch model on the plane (so on a domain without a boundary), with  $\alpha\in(0,\frac 16]$ and  patches being allowed to  touch each other.  We do this by bypassing any auxiliary contour equations and tracking patch boundary curves directly instead of their parametrizations.  In our results, which are sharp in terms of $\alpha$, the patch boundaries have $L^2$ curvatures   and a singularity occurs when at least one of these $L^2$-norms blows up in finite time.
\end{abstract}

\maketitle

\section{Introduction and main results} \label{S1}

\bigskip
    \textbf{Background and motivation.} 
The best-known family of active scalar PDE in two dimensions is 
\beq\lb{111.25}
\partial_t \theta+u\cdot\nabla \theta=0
\eeq
with 
\beq\lb{111.26}
u\coloneqq-\nabla^\perp(-\Delta)^{-1+\alpha}\theta
\eeq
and $\alpha\in[0,\frac 12]$, where  $(x_1,x_2)^{\perp}\coloneqq (-x_{2},x_{1})$ and $\nabla^{\perp}\coloneqq(-\partial_{x_{2}},\partial_{x_{1}})$.  This becomes the vorticity form of the 2D Euler equation when $\alpha=0$ (with $u$ the fluid velocity and $\tht=\nabla^\perp\cdot u$ its vorticity), the inviscid surface quasi-geostrophic equation (SQG) when $\alpha=\frac 12$, and the generalized SQG equation (g-SQG) when $\alpha\in (0,\frac 12)$.  The latter two equations appear in atmospheric science and geophysical models (see, e.g., \cite{Blu, Ped, PHS, Smith}), and their first rigorous studies were undertaken by Constantin, Majda, and Tabak \cite{CMT} and by Constantin, Iyer, and Wu \cite{CIW}, respectively.
It has been well known for almost a century, since the works of H\" older \cite{Holder} and Wolibner \cite{Wolibner}, that solutions to the 2D Euler equation remain globally regular, while only local regularity in time has been obtained for SQG and g-SQG.  This is not surprising because the latter two PDE are more singular than 2D Euler, which was proved by Kiselev and \v Sver\' ak over a decade ago to possess smooth solutions with double-exponentially growing gradients on a disc \cite{KS} (see \cite{Xu} for some other bounded domains), while the second author recently extended this to the half-plane and some other unbounded domains
\cite{ZlaEulermax}.

Existence of finite time singularities for SQG and g-SQG on domains without a boundary (i.e., $\bbR^2$ and $\bbT^2$) was originally suggested by the numerical study \cite{CorFonManRod}, but today it still remains one of the most important open questions in fluid dynamics.  \hbox{Nevertheless,} it has been partially answered when a boundary is present.  First, Kiselev, Ryzhik, Yao, and the second author showed a decade ago that finite time singularity does happen for the  {\it g-SQG patch problem} on the half-plane when $\alpha\in(0,\frac 1{24})$ \cite{KRYZ}.  The patch problem is a special case of \eqref{111.25}--\eqref{111.26} that concerns weak solutions that are linear combinations of characteristic functions of sufficiently regular sets (having $H^3$ boundaries in \cite{KRYZ}) with initially disjoint boundaries.  It was introduced by Rodrigo for SQG \cite{Rodrigo} and Gancedo for g-SQG \cite{Gancedo} on the plane (see also the earlier works \cite{Chemin, bc, Butt, Majda} for the 2D Euler case), and singularity develops either when a patch boundary loses the prescribed regularity or a touch happens in finite time (either a self-touch or a touch of two distinct patch boundaries).  Later, Gancedo and Patel \cite{GanPat} showed that the same result holds for $H^2$ patches on the half-plane and $\alpha\in(0,\frac 16)$, and the authors proved in  \cite{JeoZla}  that in fact finite time singularity must always be accompanied by a loss of $C^{1,\frac{2\alpha}{1-2\alpha}}$ regularity of some patch boundary when $\alpha\in(0,\frac 14]$ (this then also applies in the settings of \cite{KRYZ, GanPat}; see also \cite{GanStr, KisLuo}).
Finally, the second author recently showed that \eqref{111.25}--\eqref{111.26} is well-posed on the half-plane  in appropriate spaces of differentiable (i.e., non-patch) solutions when $\alpha\in(0,\frac 14]$, and finite time singularity can develop  from even smooth initial data and for any such $\alpha$ in this setting \cite{ZlaSQGsing}.

Presence of a domain boundary is crucial in all these singularity results and so, in particular, they have not been extended to the whole plane $\bbR^2$.  One of the motivations for this paper is  extension of the finite time patch singularity results above to $\bbR^2$, which we do in Theorem~\ref{T2.7} below.  
The results in \cite{KRYZ,GanPat} both feature a pair of symmetrically positioned (with respect to the vertical axis) patches on $\bbR\times\bbR^+$ with opposite strengths that are touching the half-plane boundary, and approach each other along it quickly enough for them to  lose the relevant boundary regularity in finite time.
We show that the same dynamic  plays out on the whole plane when the initial data is made up of such a pair of patches plus their reflections across the horizontal axis (again with opposite strengths, yielding a ``doubly odd'' scenario).  This means that we have two pairs of patches that touch initially, which seems to pose a problem.  Indeed, it appears that no previous results for patch-like solutions involve initial data with patch touches, except for the 2D Euler case, where  Yudovich well-posedness theory for general bounded vorticities on smooth domains \cite{Yud} naturally allows one to consider any number of patches of arbitrary shapes (e.g., patches with cusps were studied in \cite{Danchin, Danchin2, JeoElg, JeoElg2}).
This may suggest that one should not expect a general local well-posedness result in this setting, except when one a priori restricts the space of solutions to those that are odd across the horizontal axis and touches can only happen between a patch and its reflection.  Of course, the latter  just coincides with the no-touch half-plane  setting and is not a proper well-posedness result on $\bbR^2$, but it also a priori excludes certain high frequency ``unstable modes'' as we discuss at the end of this introduction.  

Nevertheless, our other main result --- and second motivation ---   is precisely  that: local well-posedness for the g-SQG patch problem with $H^2$ patch boundaries and $\alpha\in(0,\frac 16]$ that allows patch boundaries to touch (see Theorem \ref{T2.5}).  However, we have to exclude initial data with exterior touches of patches whose strengths have the same sign (which includes any self-touches) as well as interior touches of patches with opposite sign strengths.  This is not only a technical requirement, as we explain at the end of this introduction.  But unlike in the reflected half-plane scenario described above, we make no requirement on the strengths of the touching patches, only on their signs.  We also note that one can think of pairs of disjoint patches that touch along a curve and have  opposite sign strengths as sharp interface versions of certain special continuous  solutions to these types of PDE ---  such as the Lamb-Chaplygin dipole for the 2D Euler equation \cite{Lamb, Chaplygin}, which is a traveling wave solution  that changes sign across a line parallel to its direction of travel.  Moreover, our approach can be used to study low regularity continuous solutions to \eqref{111.25}--\eqref{111.26} with more regular level sets as well \cite{JeoZla3}.


The third motivation for our work is introduction of a new framework for the study of patch problems associated with active scalar PDE.  The usual approach to these involves derivation of a {\it contour PDE} for some specific parametrizations of the patch boundaries, followed by a proof of well-posedness for it and a proof that the unique solution also represents a weak solution to the original PDE \eqref{111.25}--\eqref{111.26} (see, e.g., \cite{GanNguPat, KisYaoZla, GanPat, Gancedo,CorCorGan}).  Unfortunately, the contour PDE can often introduce non-trivial complications: it is far from unique because so are patch boundary parametrizations, it may involve quite singular integral kernels,
and its solutions may develop singularities even if the boundary curves they represent remain regular.  

We therefore wish to dispense with the contour equation ``middleman'' and work directly with patch boundaries as (non-parametrized and hence uniquely represented) curves in $\bbR^2$.  (We will still work with curve parametrizations when finding solutions to certain smooth approximations of \eqref{111.25}--\eqref{111.26}, but this will be much easier.)  An additional benefit of this will be a simplification of another related concept, the {\it arc-chord constant} ${\rm ac}(\tilde \gamma):=\sup_{\xi\neq \zeta} \frac{|\xi-\zeta|}{|\tilde\gamma(\xi)-\tilde\gamma(\zeta)|}$ for  curve parametrizations $\tilde\gamma:\bbT\to\bbR^2$.   Finiteness of ${\rm ac}(\tilde \gamma)$ means that the (closed) curve is simple and the parametrization $\tilde\gamma$ is non-degenerate.  It can also be used to estimate how close the curve comes to touching (or crossing) itself, but only from one side because it may be large for close-to-degenerate parametrizations of curves that do not have distinct segments very close to each other.  However,  once we work directly with curves, there is no need to ensure non-degeneracy of some specific parametrization, and we can replace ${\rm ac}(\tilde \gamma)$ by a simpler quantity that only controls how close distinct curve segments are to each other.  If we define it properly --- see below for details, but the idea is to use $\sup_{\xi,\zeta} \frac 1{|\tilde\gamma(\xi)-\tilde\gamma(\zeta)|}$, with the supremum taken over all pairs $\xi,\zeta\in\bbT$ such that the arclength distance of $\tilde\gamma(\xi)$ and $\tilde\gamma(\zeta)$ is no less than the reciprocal of the square of the $\dot H^2$-norm of the curve, with the latter signifying that the points lie on ``distinct'' curve segments --- this will also allow us to control the rate of change of this quantity in time much more easily than that of the arc-chord constant.

Finally, we note that our results are sharp in terms of $\alpha$.  Indeed, when a patch whose boundary contains $\{(x_1,|x_1|^\beta)\,:\, |x_1|< 1\}$ touches its reflection across the horizontal axis and their strengths are opposite, it is not difficult to compute that the horizontal component of the corresponding velocity $u$ behaves like $C-cx_2^{1-2\alpha}$ near the origin (see also \cite{ZlaSQGsing}), which is $C-c|x_1|^{\beta(1-2\alpha)}$ on the above boundary segment.  If we take $\alpha>\frac 16$ and $\beta\in(\frac 32,\frac 1{1-2\alpha})$  (so that the patch boundary can be $H^2$),  the patch would have to experience instantaneous loss of $H^2$ regularity because ${\beta(1-2\alpha)}<1$.  Therefore  the range $\alpha\in(0,\frac 16]$  is the maximal possible in which our main results can hold.  We also mention that $H^k$-level of boundary regularity is the right one for the g-SQG patch problem because Kiselev and Luo showed that the problem is ill-posed in H\" older spaces as well as in Sobolev spaces with $p\neq 2$ \cite{KisLuo2}.

With the above 
in mind let us now present the basic setup for our analysis, followed by statements of our main results, an overview of the proof of the main well-posedness result, and an explanation of why certain patch touches can be allowed and others must be excluded.

\bigskip
    \textbf{Spaces of closed curves in $\bbR^2$.} 
A \emph{closed path} is any $\tilde{\gamma}\in C(\bbT;\bbR^{2})$, with $\bbT\coloneqq\bbR/\bbZ$.  In order to identify paths that parametrize the same curves, for closed paths $ \tilde{\gamma}_{1},\tilde{\gamma}_{2}$ we let
\beq\lb{111.24}
    d_{\mathrm{F}} (\tilde{\gamma}_{1},\tilde{\gamma}_{2})\coloneqq
    \inf_{\phi}    \norm{\tilde{\gamma}_{1} - \tilde{\gamma}_{2}\circ\phi}_{L^{\infty}(\bbT)},
\eeq
 where the infimum is taken over all orientation-preserving homeomorphisms
$\phi \colon\bbT\to\bbT$.   Lemma~\ref{LA.1} below shows that  $d_\mathrm{F}$ would not change if we used orientation-preserving diffeomorphisms instead.  Also, there need not always be
$\phi\colon\bbT\to\bbT$ minimizing the $\inf$ in \eqref{111.24}, so in particular,
$d_{\mathrm{F}}(\tilde{\gamma}_{1},\tilde{\gamma}_{2}) = 0$
does not imply existence of $\phi$ such that
$\tilde{\gamma}_{1} = \tilde{\gamma}_{2}\circ\phi$.
%

\begin{definition}\label{D2.1}
    Let $\operatorname{CC}(\bbR^{2})$ be the quotient space of
    $C(\bbT;\bbR^{2})$ with respect to the equivalence relation 
    $\tilde{\gamma}_{1}\sim\tilde{\gamma}_{2}$ iff
    $d_{\mathrm{F}}(\tilde{\gamma}_{1},\tilde{\gamma}_{2}) = 0$.  
    The corresponding metric on $\operatorname{CC}(\bbR^{2})$, also denoted $d_{\mathrm{F}}$, is
    the \emph{Fr\'{e}chet metric}.
    A \emph{closed curve} in $\bbR^{2}$ is any $\gamma\in \operatorname{CC}(\bbR^{2})$, and a \emph{parametrization} of $\gamma$ is any
      $\tilde{\gamma}\in C(\ell\bbT;\bbR^{2})$ with $\ell\geq 0$ such that
    $\tilde{\gamma}(\ell \,\cdot)\in C(\bbT;\bbR^{2})$ is a representative of $\gamma$.
    The \emph{image} of $\gamma$, denoted $\operatorname{im}(\gamma)$, is the image of any parametrization of $\gamma$.
    
    The \emph{length} $\ell(\gamma)$ of $\gamma$ is the total variation of
        any parametrization of $\gamma$. If $\ell(\gamma)<\infty$, then $\gamma$ is \emph{rectifiable}, and the set of all such closed curves in $\bbR^{2}$ is denoted
        $\operatorname{RC}(\bbR^{2})$.  An {\it arclength parametrization} of a rectifiable $\gamma$ is any parametrization $\tilde{\gamma}\in C(\ell (\gamma) \bbT;\bbR^{2})$ of $\gamma$ satisfying $|\partial_\xi \tilde\gamma(\xi)| =1$ for almost all $\xi\in \ell (\gamma) \bbT$, while a {\it constant-speed parametrization} of  $\gamma$ is any parametrization $\tilde{\gamma}\in C(\bbT;\bbR^{2})$ of $\gamma$ satisfying $|\partial_\xi \tilde\gamma(\xi)| =\ell (\gamma)$ for almost all $\xi\in  \bbT$.
\end{definition}


{\it Remark.}
In the rest of the paper, we will denote arclength parametrizations of a closed curve $\gamma$ by $\gamma(\cdot)$ (or just $\gamma$), while general parametrizations will have $\tilde{}$ (including constant-speed ones, which we employ to study convergence in $\operatorname{CC}(\bbR^{2})$).  We will also use the argument $s$ in arclength parametrizations and $\xi$ in parametrizations over $\bbT$.
However,  we will continue using $s$ in a couple of instances when we introduce not-necessarily-arclength approximations of arclength parametrizations of curves.
\smallskip

We note that $d_{\rm F}$ will work better for our purposes than the Hausdorff distance of the {\it images} of curves, defined in Remark 2 after Theorem \ref{T2.5}, because curves with the same image may traverse it very differently.  Nevertheless, this will not be an issue within the class of positively oriented simple planar curves, which we will mainly consider here, and we shall see that our main results will also hold when we replace $d_\mathrm{F}$ by the Hausdorff distance.

\begin{definition}
    A closed curve $\gamma\in\operatorname{CC}(\bbR^{2})$
    is \emph{simple} if  it has a representative that is an injective non-trivial closed path (i.e., it has no self-intersections).  We denote by $\Omega(\gamma)$ the bounded component of $\bbR^{2}\setminus\operatorname{im}(\gamma)$.  We say that $\gamma$ is
    \emph{positive} (or \emph{oriented counterclockwise}) if for any $x\in\Omega(\gamma)$,
    any parametrization of $\gamma$ is homotopic in $\bbR^{2}\setminus\set{x}$
    to the  counterclockwise  oriented unit circle centered at $x$.
    We denote the set of all simple closed curves in $\bbR^{2}$
    by $\operatorname{SC}(\bbR^{2})$, and the set of all positive simple closed curves
    in $\bbR^{2}$ by $\operatorname{PSC}(\bbR^{2})$.
\end{definition}

{\it Remark.}
Note that if $\gamma_{1},\gamma_{2}$ are positive simple closed curves,
then $\gamma_{1} = \gamma_{2}$ if and only if
$\operatorname{im}(\gamma_{1}) = \operatorname{im}(\gamma_{2})$ because
for any injective parametrizations $\tilde{\gamma}_{1},\tilde{\gamma}_{2}$
of $\gamma_{1},\gamma_{2}$, respectively, $\tilde{\gamma}_{1}^{-1}\circ\tilde{\gamma}_{2}$
is an orientation-preserving homeomorphism that identifies $\gamma_{1}$ and $\gamma_{2}$.
\smallskip

We next define various norms of closed curves.

\begin{definition}\label{D2.3}
    Consider any $(p,\beta,k)\in[1,\infty]\times[0,1]\times \bbN$ and any $\gamma\in\operatorname{CC}(\bbR^{2})$.  If $\gamma$ is rectifiable, let ${\gamma(\cdot)}$ be any arclength parametrization of $\gamma$ and define
 $\norm{\gamma}_{L^{p}}\coloneqq\norm{\gamma(\cdot)}_{L^{p}}$,
%
        \[
            \norm{\gamma}_{\dot{C}^{k,\beta}} \coloneqq \begin{cases}
                     \norm{{\gamma(\cdot)}}_{\dot{C}^{k,\beta}(\ell(\gamma)\bbT)} &
                \textrm{if $\gamma$ is non-trivial,} \\
                1 & \textrm{if $\gamma$ is trivial and $(k,\beta)=(1,0)$,} \\
                \infty & \textrm{otherwise,} \\
            \end{cases}
        \]
        and
         \[
            \norm{\gamma}_{\dot{H}^{k}} \coloneqq \begin{cases}
                     \norm{{\gamma(\cdot)}}_{\dot{H}^{k}(\ell(\gamma)\bbT)} &
                \textrm{if $\gamma$ is non-trivial,} \\
                0 & \textrm{if $\gamma$ is trivial and $k=1$,} \\
                \infty & \textrm{otherwise.} \\
            \end{cases}
        \]       
        If $\gamma$ is not rectifiable, let all these values be $\infty$.  We then 
          say that $\gamma$ is a $C^{k,\beta}$ resp.~$H^{k}$ curve when $\norm{\gamma}_{\dot{C}^{k,\beta}}<\infty$ resp.~$ \norm{\gamma}_{\dot{H}^{k}}  <\infty$.

\end{definition}

{\it Remarks.}
1.  Integration by parts and the Cauchy-Schwarz inequality show that for any $\gamma\in\operatorname{CC}(\bbR^{2})$ we have
\begin{equation}\label{2.1}
    \ell(\gamma) \leq \norm{\gamma}_{L^{\infty}}^{2}\norm{\gamma}_{\dot{H}^{2}}^{2}    .
\end{equation}


2.  When we want to emphasize that we are considering the norm of some parametrization, rather than the norm of the curve it represents, we will often include the domain of the parametrization in the notation (e.g.,  $\norm{\tilde{\gamma}}_{\dot{H}^{2}(\ell\bbT)}$).  Of course, no such distinction is necessary when the parametrization is arclength or the norm is $L^\infty$.
\smallskip


We will consider patches that may touch each other but their boundaries cannot cross.  This relates to the following concepts for the geometry of pairs of curves.

\begin{definition}
    For any $\gamma_1,\gamma_2\in\operatorname{CC}(\bbR^{2})$ let
    $\Delta(\gamma_1,\gamma_2) \coloneqq
    \min_{\xi_1,\xi_2\in\bbT}\abs{\tilde{\gamma}_1(\xi_1) - \tilde{\gamma}_2(\xi_2)}$,
    where $\tilde{\gamma}_1,\tilde{\gamma}_2\colon\bbT\to\bbR^{2}$ are
    any parametrizations of $\gamma_1,\gamma_2$, respectively.
    If $\gamma_1,\gamma_2$ are also $C^1$, we say that they
    \emph{do not cross transversally} if for any $C^{1}$ parametrizations
    $\tilde{\gamma}_1,\tilde{\gamma}_2\colon\bbT\to\bbR^{2}$
    of $\gamma_1, \gamma_2$, and for any $\xi_1, \xi_2\in\bbT$ such that
    $\tilde{\gamma}_1(\xi_1) = \tilde{\gamma}_2(\xi_2)$, the vectors
    $\partial_{\xi}\tilde{\gamma}_1(\xi_1)$ and  $\partial_{\xi}\tilde{\gamma}_2(\xi_2)$ are parallel.
\end{definition}

\smallskip
    \textbf{Patch solutions to \eqref{111.25}--\eqref{111.26} and the main results.} 
We now fix some finite set $\mathcal{L}$ that we will use to index individual
patches within collections of patches.  Their boundary curves will be $z=\{z^\lambda\}_{\lambda\in {\mathcal{L}}}\in \operatorname{PSC}(\bbR^2)^\mathcal L$ and their strengths will be $\theta=\{\theta^\lambda\}_{\lambda\in {\mathcal{L}}}\in (\bbR\setminus\set{0})^{\mathcal{L}}$ (with $A^B$ the set of all $f:B\to A$). 
Let also 
\begin{align*}
    d_{\mathrm{F}}(z_{1},z_{2})
    \coloneqq \max_{\lambda\in\mathcal{L}}
    d_{\mathrm{F}}(z_{1}^{\lambda},z_{2}^{\lambda})
\end{align*}
be the Fr\' echet distance on $\operatorname{CC}(\bbR^{2})^{\mathcal{L}}$, define
\[
\norm{z}_{\dot H^k} \coloneqq \max_{\lambda\in \mathcal{L}} \norm{z^{\lambda}}_{\dot H^k}
\qquad\text{and}\qquad  \norm{z}_{ L^p} \coloneqq \max_{\lambda\in \mathcal{L}} \norm{z^{\lambda}}_{L^p},
\]
 and for each $\lambda\in\mathcal{L}$ denote
\begin{align*}
\Sigma^{\lambda}(z)  \coloneqq & \left\{ \lambda'\in\mathcal L\,:\, \theta^{\lambda}\theta^{\lambda'} > 0 \,\,\,\&\, \left(\Omega(z^{\lambda}) \subseteq \Omega(z^{\lambda'}) \text{ or } \Omega(z^{\lambda}) \supseteq \Omega(z^{\lambda'}) \right) \right\}
\\ & \qquad\qquad \cup  \left\{ \lambda'\in\mathcal L\,:\, \theta^{\lambda}\theta^{\lambda'} < 0 \,\,\,\&\,\,\, \Omega(z^{\lambda}) \cap \Omega(z^{\lambda'}) = \emptyset \right\}.
\end{align*}
This set represents those patches for which we will be able to control patch solutions to \eqref{111.25}--\eqref{111.26} independently of how close their boundaries are to ${\rm im} (z^\lambda)$ (see the end of this introduction for an explanation of this).  We  cannot allow self-touches, which are the same as exterior touches of patches with $\tht^\lambda\tht^{\lambda'}>0$, or any patch boundary crossings.  For the latter we note that non-transversal crossings look like an exterior touch on one side of the touching segment and an interior touch on the other, so they have to be excluded. On the other hand, a quick computation shows that transversal crossings would cause instantaneous loss of  regularity of both boundaries because the tangential velocity component generated by each patch is not even Lipschitz in the normal direction (it is only $(1-2\alpha)$-H\" older).

For any $\alpha\in\left(0,\frac{1}{2}\right)$
 the (continuous)  {\it g-SQG velocity field} generated by $z\in\operatorname{PSC}(\bbR^{2})^{\mathcal{L}}$
is 
\beq \lb{111.30}
    u(z;x)=
    c_{\alpha}\sum_{\lambda\in\mathcal{L}}\theta^{\lambda}
    \int_{\Omega(z^{\lambda})}\frac{(x - y)^{\perp}}{\abs{x - y}^{2 + 2\alpha}}\,dy
\eeq
with some $c_{\alpha} > 0$.  This is of course what we get when we let $\tht$ in \eqref{111.26} be $\sum_{\lambda\in\mathcal{L}}\theta^{\lambda}   \chi_{\Omega(z^{\lambda})}$. When each $z^{\lambda}$ is rectifiable and $z^\lambda(\cdot)$ is any of its arclength parametrizations, we have
\begin{equation}\label{2.3}
    u(z;x) = -\sum_{\lambda\in\mathcal{L}}\theta^{\lambda}\int_{\ell(z^{\lambda})\bbT}
    K(x - z^{\lambda}(s))\partial_{s}z^{\lambda}(s) \,ds
\end{equation}
for all $x\in\bbR^{2}\setminus\bigcup_{\lambda\in\mathcal{L}} \operatorname{im}(z^{\lambda})$ by Green's theorem,
where
\begin{align*}
    K(x)\coloneqq \frac{c_{\alpha}}{2\alpha\abs{x}^{2\alpha}}.
\end{align*}
When each $z^{\lambda}$ is $C^{1}$, this extends to all $x\in\bbR^2$ and the integral in \eqref{2.3} always converges absolutely.  Since we will mainly consider here $C^1$ (and particularly $H^2$) curves, we will take \eqref{2.3} as the \emph{definition of} $u(z)$ whenever the integral converges absolutely for all $x\in\bbR^2$, even when
some $z^{\lambda}$ may not  be simple.


For $\gamma\in\operatorname{CC}(\bbR^{2})$,
a vector field $v\in C(\bbR^{2};\bbR^{2})$,
and $h\in\bbR$, we let
$X_{v}^{h}[\gamma]\in\operatorname{CC}(\bbR^{2})$ be the curve whose parametrization is
\[
    \tilde{\gamma}(\xi) + hv(\tilde{\gamma}(\xi)),
\]
where $\tilde{\gamma}$ is any parametrization of $\gamma$ (so ${\rm im}(X_{v}^{h}[\gamma])$ is obtained by transporting each $x\in{\rm im}(\gamma)$ by velocity $v(x)$ for time $h$).
Continuity of $v$ shows that $X_{v}^{h}[\gamma]$ is well-defined and
independent of the choice  of $\tilde \gamma$.
Then for any $z\in\operatorname{CC}(\bbR^{2})^{\mathcal{L}}$ we let
$X_{v}^{h}[z]\coloneqq \{X_{v}^{h}[z^{\lambda}]\}_{\lambda\in\mathcal L}\in \operatorname{CC}(\bbR^2)^{\mathcal L}$.

Now we are ready to define the notion of patch solutions to \eqref{111.25}--\eqref{111.26}.  The patch boundaries will be collections of time-dependent positive simple closed curves, and in order to avoid too many brackets, we will include both the time variable and $\lambda$ in the superscript, with $z^{t,\lambda}\in \operatorname{PSC}(\bbR^2)$, $z^t\in \operatorname{PSC}(\bbR^2)^{\mathcal L}$, $z^\lambda\in C(I; \operatorname{PSC}(\bbR^2))$, and $z\in C(I; \operatorname{PSC}(\bbR^2)^{\mathcal L})$ for some time interval $I$ (using $\lambda$ instead of, e.g., $n$ will eliminate any potential confusion; we reserve the subscript for a future mollification parameter $\eps$). 
Since \eqref{111.25} is a transport PDE  with a non-Lipschitz velocity, and we only have a metric on $\operatorname{PSC}(\bbR^{2})^{\mathcal{L}}$, the following is the proper form of the patch version of \eqref{111.25}--\eqref{111.26} in terms of collections of curves from $\operatorname{PSC}(\bbR^{2})$.

\begin{definition} \lb{D2.5}
Given a finite set $\mathcal L$ and $\tht\in (\bbR\setminus\set{0})^{\mathcal{L}}$, a {\it patch solution} to \eqref{111.25}--\eqref{111.26} on a time interval $I$ is any $z\in C(I;\operatorname{PSC}(\bbR^{2})^{\mathcal{L}})$ such that
    \begin{equation}\label{2.4}
        \lim_{h\to 0}\frac{d_{\mathrm{F}}\left(
            z^{t+h}, X_{u(z^{t})}^{h}[z^{t}]
        \right)}{h} = 0
    \end{equation}
    holds for all $t\in I$ with $u$ defined in \eqref{111.30}, where the limit is one-sided at any end-point of $I$. If   for any compact $J\subseteq I$ we have $\sup_{t\in J} \norm{z^t}_{\dot H^2} < \infty$, then $z$ is an {\it $H^2$ patch solution.}
\end{definition}

\emph{Remarks}.
1. Note that this definition places no a priori requirements on relative positions of pairs of patches (i.e.,  concerning touches or crossings of their boundaries).
\smallskip

2.  Remark 3 after \cite[Definition 1.2]{KisYaoZla} shows that $H^2$ patch solutions to \eqref{111.25}--\eqref{111.26} are also weak solutions to \eqref{111.25}--\eqref{111.26} in the usual sense.
\smallskip

Our first main result is  the following well-posedness theorem and blow-up criterion.

\begin{theorem}\label{T2.5}
    Let $\alpha\in\left(0,\frac{1}{6}\right]$, and let $\mathcal L$ be any finite set and $\tht\in (\bbR\setminus\set{0})^{\mathcal{L}}$. Then for each
    $z^{0}\in\operatorname{PSC}(\bbR^{2})^{\mathcal{L}}$ with $\norm{z^{0}}_{\dot H^2}<\infty$ and  $\Delta(z^{0,\lambda},z^{0,\lambda'})>0$ whenever $\lambda'\notin\Sigma^\lambda(z^0)$, 
    there is   a unique $H^{2}$ patch solution
    $z\in C_{\rm loc}([0,T_{z^0});\operatorname{PSC}(\bbR^{2})^{\mathcal{L}})$ to \eqref{111.25}--\eqref{111.26}
  with the given initial data $z^{0}$, where $T_{z^0}\in(0,\infty]$ and 
  \beq\lb{111.29}
  \sup_{t\in[0, T_{z^0})} \norm{z^{t}}_{\dot H^2} = \infty
  \eeq 
  holds if $T_{z^0}<\infty$.  Moreover, 
  for any $(t,\lambda)\in[0,T_{z^0})\times \mathcal{L}$ we have 
    \beq\lb{111.27}
        \abs{\Omega(z^{t,\lambda})} = \abs{\Omega(z^{0,\lambda})},\qquad
        \Sigma^{\lambda}(z^{t}) = \Sigma^{\lambda}(z^{0}),  \qquad\text{and}\qquad
        \min_{\lambda'\notin\Sigma^\lambda(z^0)} \Delta(z^{t,\lambda},z^{t,\lambda'})>0,
    \eeq  
    and for any $\lambda,\lambda'\in\mathcal L$, the set ${\rm im}(z^{t,\lambda})\cap{\rm im}(z^{t,\lambda'})$ is either empty for all $t\in[0,T_{z^0})$ or non-empty for all $t\in[0,T_{z^0})$.
\end{theorem}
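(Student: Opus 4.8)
The plan is to build the solution as a limit of solutions to mollified equations, the bulk of the work being a priori estimates that are uniform in the mollification parameter and do \emph{not} deteriorate when $\Sigma^\lambda$-related patch boundaries touch; uniqueness is then a Gronwall estimate in $d_{\mathrm F}$, and the geometric conclusions in \eqref{111.27} follow from the fact that everything is transported by the flow of $u(z^t;\cdot)$. For $\eps>0$ I would mollify the velocity in \eqref{111.30} (for instance by convolving each $\chi_{\Omega(z^\lambda)}$ with a standard mollifier, which keeps $u_\eps(z;\cdot)$ smooth and divergence-free and makes it depend nicely on $z$). For this regularized problem the patch boundaries are merely transported by a smooth vector field, so standard ODE theory on $H^2(\bbT)$ applied to constant-speed parametrizations produces, for each $\eps>0$, a unique $z_\eps\in C([0,T_\eps);\operatorname{PSC}(\bbR^2)^{\mathcal L})$ with $z_\eps^0=z^0$. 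The heart of the matter is then a family of estimates, uniform in $\eps$, valid on a time interval $[0,T]$ whose length depends only on $\norm{z^0}_{\dot H^2}$, $\norm{z^0}_{L^\infty}$, the initial value of $\min_{\lambda'\notin\Sigma^\lambda(z^0)}\Delta(z^{0,\lambda},z^{0,\lambda'})$, and the initial value of the distinct-segment quantity from the introduction (call it $\delta(z^0)$, the supremum of $\abs{z^{0,\lambda}(s)-z^{0,\lambda}(s')}^{-1}$ over points of $\operatorname{im}(z^{0,\lambda})$ at arclength distance at least $\norm{z^0}_{\dot H^2}^{-2}$, maximized over $\lambda$). Three quantities must be propagated simultaneously along the flow: the curvature norm $\norm{z_\eps^t}_{\dot H^2}$, via an $L^2$ energy estimate for $\partial_s^2(u_\eps(z_\eps^t)\circ z_\eps^{t,\lambda})$; the quantity $\delta(z_\eps^t)$, i.e.\ control of self-touching; and $\Delta(z_\eps^{t,\lambda},z_\eps^{t,\lambda'})$ for $\lambda'\notin\Sigma^\lambda(z^0)$, i.e.\ control of forbidden touches between distinct patches. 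For the last two one bounds the rate at which the relevant curve segments can approach each other; for $\lambda'\notin\Sigma^\lambda(z^0)$ this bound blows up as the distance vanishes but still yields a differential inequality for $-\log\Delta$ (resp.\ $-\log\delta$) whose solution stays finite for a positive time, whereas for $\lambda'\in\Sigma^\lambda(z^0)$ the sign-and-nesting conditions are precisely what makes the velocity contributions from near-touching segments controllable independently of the distance, so those pairs impose no constraint. The $L^2$ curvature estimate is where $\alpha\in(0,\tfrac16]$ is used and is sharp, in the sense of the computation at the end of the introduction.

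Given the uniform bounds, the curves $z_\eps^\lambda$ are equi-Lipschitz in time into $(\operatorname{CC}(\bbR^2),d_{\mathrm F})$ (with time-derivative controlled by $\sup_x\abs{u_\eps(z_\eps^t;x)}$, itself bounded in terms of the conserved patch areas) and uniformly bounded in $\dot H^2$, so by Arzel\`a--Ascoli together with weak-$\dot H^2$ compactness I would extract a subsequence $z_\eps\to z$ in $C([0,T];\operatorname{CC}(\bbR^2)^{\mathcal L})$; the uniform $\delta$- and $\Delta$-bounds pass to $z$, so $z\in C([0,T];\operatorname{PSC}(\bbR^2)^{\mathcal L})$ with $\sup_{[0,T]}\norm{z^t}_{\dot H^2}<\infty$. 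One then checks $u_\eps(z_\eps^t)\to u(z^t)$ uniformly on $\bbR^2$ (using the $\dot H^2$, $\delta$, $\Delta$ bounds and the $\Sigma^\lambda$-hypotheses to keep the integral \eqref{2.3} tame near touches) and deduces \eqref{2.4} for $z$ from the regularized analog for $z_\eps$, so $z$ is an $H^2$ patch solution on $[0,T]$. For uniqueness, two $H^2$ patch solutions $z,\bar z$ with the same initial data are compared by estimating $\tfrac{d}{dt}d_{\mathrm F}(z^t,\bar z^t)$: the difference of the transport velocities is bounded by $d_{\mathrm F}(z^t,\bar z^t)$ times a constant depending only on the shared $\dot H^2$-, $\delta$-, and $\Delta$-data (once again the $\Sigma^\lambda$-structure keeps this Lipschitz dependence alive through the allowed touches), so Gronwall forces $z=\bar z$. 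Taking $T_{z^0}$ to be the supremum of existence times, the blow-up criterion \eqref{111.29} follows by continuation: as long as $\norm{z^t}_{\dot H^2}$ stays bounded on $[0,T_{z^0})$, the evolution inequalities for $-\log\delta(z^t)$, for $-\log\Delta(z^{t,\lambda},z^{t,\lambda'})$ with $\lambda'\notin\Sigma^\lambda(z^0)$, and for $\norm{z^t}_{L^\infty}$ have right-hand sides bounded in terms of $\norm{z^t}_{\dot H^2}$ and $\norm{z^t}_{L^\infty}$ alone, so all the data needed to restart the construction remain controlled up to $T_{z^0}$ and the solution would extend past it.

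For \eqref{111.27} I would use that each $\operatorname{im}(z^{t,\lambda})$ is transported by $u(z^t;\cdot)$, whose trajectories are unique: a point lying on no patch boundary moves by a locally smooth (indeed real-analytic) field and, since each boundary moves with that same field, stays off every boundary, while a point on $\operatorname{im}(z^{0,\lambda})$ stays on the coherently transported curve $\operatorname{im}(z^{t,\lambda})$; hence the time-$t$ flow map $\Phi_t$ is a homeomorphism of $\bbR^2$ with $\Phi_0=\mathrm{id}$, $\operatorname{im}(z^{t,\lambda})=\Phi_t(\operatorname{im}(z^{0,\lambda}))$, and $\Omega(z^{t,\lambda})=\Phi_t(\Omega(z^{0,\lambda}))$. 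Since $u(z^t;\cdot)$ is divergence-free (as is each $u_\eps$, and this passes to the limit), $\Phi_t$ preserves Lebesgue measure, giving $\abs{\Omega(z^{t,\lambda})}=\abs{\Omega(z^{0,\lambda})}$; being a homeomorphism, $\Phi_t$ also preserves the inclusion/disjointness pattern among the $\Omega(z^{t,\lambda})$ and the emptiness or not of $\operatorname{im}(z^{t,\lambda})\cap\operatorname{im}(z^{t,\lambda'})=\Phi_t(\operatorname{im}(z^{0,\lambda})\cap\operatorname{im}(z^{0,\lambda'}))$, which together with the fixed signs of $\theta^\lambda\theta^{\lambda'}$ yields $\Sigma^\lambda(z^t)=\Sigma^\lambda(z^0)$ and the claimed dichotomy; finally $\min_{\lambda'\notin\Sigma^\lambda(z^0)}\Delta(z^{t,\lambda},z^{t,\lambda'})>0$ on $[0,T_{z^0})$ is exactly the a priori lower bound obtained in the construction.

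\textbf{Main obstacle.} The crux is the uniform $L^2$ curvature estimate in the presence of touching patches: bounding $\tfrac{d}{dt}\norm{z_\eps^t}_{\dot H^2}^2$ requires delicate commutator estimates for the velocity along patch boundaries --- where $u$ is only $(1-2\alpha)$-H\"older in the normal direction --- in which the near-touching contributions of $\Sigma^\lambda$-related patches must be shown to be benign via the sign and nesting hypotheses, while the scaling of the worst such term forces $\alpha\le\tfrac16$. A secondary point requiring care is the uniqueness of trajectories of $u(z^t;\cdot)$ near a patch boundary (where $\nabla u$ is unbounded), which one handles using the local $C^1$-graph structure of $H^2$ boundaries together with the fact that $u$ is Lipschitz in the tangential direction.
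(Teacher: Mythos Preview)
Your overall architecture (mollify, propagate a closed system of a priori quantities uniformly in $\eps$, extract a limit, continue) matches the paper, and your ``Main obstacle'' paragraph correctly identifies the key difficulty in the $\dot H^2$ energy estimate. One refinement you are missing there: the paper does \emph{not} propagate the individual norms $\norm{z_\eps^{t,\lambda}}_{\dot H^2}$ but only the weighted sum $Q(z)=\tfrac1{m(\theta)}\sum_\lambda |\theta^\lambda|\,\norm{z^\lambda}_{\dot H^2}^2$; the cancellation that renders touching pairs in $\Sigma^\lambda$ harmless (term $G_6$ in Lemma~\ref{L6.3}) occurs only after this symmetrization, and the growth rates of individual $\norm{z^\lambda}_{\dot H^2}$ can be arbitrarily large (see the discussion at the end of the introduction).

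There are, however, two genuine gaps.

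\textbf{Uniqueness.} Your Gronwall argument on $d_{\mathrm F}$ does not work: the velocity depends on the patch configuration only $(1-2\alpha)$-H\"older-continuously in $d_{\mathrm F}$ (this is Lemma~\ref{L4.4}, and the normal-direction behavior of $u$ makes it sharp), so you only get $\partial_t^+ d_{\mathrm F}(z^t,\bar z^t)\le C\,d_{\mathrm F}(z^t,\bar z^t)^{1-2\alpha}$, which yields merely $d_{\mathrm F}\le Ct^{1/2\alpha}$ and no uniqueness. The paper says this explicitly at the start of Section~\ref{S8} and replaces $d_{\mathrm F}$ by an $L^2$-type quantity $D(\gamma_1,\gamma_2)=\int d(\gamma_1(s),\operatorname{im}(\gamma_2))^2\,ds$. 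The point is that for the closest-point map $\psi_\lambda$ the difference vector $z^\lambda(s)-w^\lambda(\psi_\lambda(s))$ is \emph{normal} to $w^\lambda$, so only the normal component of the velocity difference enters $\partial_t^+ D$, and that component \emph{is} Lipschitz (Lemma~\ref{L4.7}). Making this rigorous requires a delicate reparametrization lemma (Lemma~\ref{L8.1}, whose proof occupies Appendix~\ref{S9}) because the closest-point map is discontinuous for merely $C^{1,1/2}$ curves; and the final Gronwall is again on the weighted sum $\sum_\lambda|\theta^\lambda|\,D(z^{t,\lambda},w^{t,\lambda})$, with the $\Sigma^\lambda$-cancellation appearing in term $P_8^{\lambda,\lambda'}$ of Proposition~\ref{P8.3}.

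\textbf{The flow map.} Your derivation of \eqref{111.27} and of the touching dichotomy via a homeomorphism $\Phi_t$ generated by $u(z^t;\cdot)$ is not justified: that field is only $(1-2\alpha)$-H\"older near patch boundaries, so neither uniqueness of trajectories nor measure preservation is available directly (your proposed fix via tangential Lipschitzness does not give ODE uniqueness for points off the boundary approaching it). The paper avoids $\Phi_t$ entirely. Area conservation is proved for the mollified flow (Lemma~\ref{L5.4}) and passed to the limit; $\Sigma^\lambda(z^t)=\Sigma^\lambda(z^0)$ follows from a purely topological argument (Lemma~\ref{LA.5}) once one knows $(z^{t,\lambda},z^{t,\lambda'})$ stays in the closed set $S_1\cup S_2\cup S_3$; and the touching dichotomy comes from the differential inequality $\partial_{t+}\Delta(z^{t,\lambda},z^{t,\lambda'})\ge -C\,\Delta(z^{t,\lambda},z^{t,\lambda'})$ of Lemma~\ref{P10.2} (which uses that the minimizing chord is normal to both curves, so again only the Lipschitz normal velocity matters), applied forward and backward in time.
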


\emph{Remarks}.
1.  The last claim shows that the set of touching pairs of patch boundaries does not change over time, and \eqref{111.27} shows that no patch boundary crossings (even non-transversal ones) can develop.  Self-touches also do not happen due to $z^{t}\in\operatorname{PSC}(\bbR^{2})^{\mathcal{L}}$.
\smallskip

2. We will in fact show that the solutions in Theorem \ref{T2.5} are unique even when they satisfy \eqref{2.4} with the {\it Hausdorff metric} $d_{\mathrm{H}}$ (which was used in \cite{KisYaoZla}) in place of $d_{\mathrm{F}}$. Recall that
\[
    d_{\mathrm{H}}(A,B) \coloneqq\max\set{
        \sup_{x\in A}d(x,B), \sup_{x\in B}d(x,A)
    }
\]
for closed sets $A,B\subseteq\bbR^{2}$, and we let
\[
    d_{\mathrm{H}}(\gamma_{1},\gamma_{2}) \coloneqq
    d_{\mathrm{H}}(\operatorname{im}(\gamma_{1}),\operatorname{im}(\gamma_{2}))
    \qquad\text{and}\qquad     d_{\mathrm{H}}(z_{1},z_{2})
    \coloneqq \max_{\lambda\in\mathcal{L}}
    d_{\mathrm{H}}(z_{1}^{\lambda},z_{2}^{\lambda})
\]
for $\gamma_{1},\gamma_{2}\in\operatorname{CC}(\bbR^{2})$ and  
 $z_1,z_2\in \operatorname{CC}(\bbR^2)^\mathcal L$.
Since clearly $d_{\mathrm{H}}\leq d_{\mathrm{F}}$, any solution
in terms of $d_{\rm F}$ is also a solution in terms of $d_{\rm H}$.
Our uniqueness proof therefore shows that the two notions of solutions are
equivalent in the setting of Theorem~\ref{T2.5}.
\smallskip


Our second main result shows that $\dot H^2$-norms of  solutions from Theorem \ref{T2.5} can  blow up in finite time.
While such results were proved in \cite{KRYZ,GanPat, JeoZla} for non-touching patches on the half-plane, this is the first one for this type of models on a domain without a boundary. 

\begin{theorem}\label{T2.7}
    For each $\alpha\in\left(0,\frac{1}{6}\right]$, there is $z^0$ as in Theorem~\ref{T2.5} such that $T_{z^0}<\infty$, and hence  \eqref{111.29} also holds.
\end{theorem}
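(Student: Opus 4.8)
\textbf{Proof proposal for Theorem~\ref{T2.7}.}

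The plan is to construct explicit initial data realizing the ``doubly odd'' scenario described in the introduction and then show, via a differential inequality, that two of the patch boundaries must approach each other along the horizontal axis fast enough to force $\norm{z^t}_{\dot H^2}\to\infty$ in finite time. Concretely, I would take $\mathcal L=\{1,2,3,4\}$ with $\tht^1=\tht^2=1$ and $\tht^3=\tht^4=-1$, where $z^{0,1},z^{0,2}$ are a pair of patches in the open upper half-plane $\bbR\times\bbR^+$, symmetric to each other across the vertical axis $\{x_1=0\}$, each containing the boundary arc $\{(x_1,|x_1|^\beta\pm\delta_0)\}$ near the axis for a suitable $\beta\in(\frac32,\frac1{1-2\alpha})$ and small offset $\delta_0>0$ (so that $z^{0,1},z^{0,2}$ are disjoint from each other but the gap between them shrinks like $|x_1|^\beta$ near the axis), and then $z^{0,3},z^{0,4}$ are the reflections of $z^{0,1},z^{0,2}$ across the horizontal axis $\{x_2=0\}$. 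Thus $z^{0,1}$ touches $z^{0,3}$ and $z^{0,2}$ touches $z^{0,4}$ at the origin, both touches being exterior touches of opposite-strength patches, hence of the allowed type ($\lambda'\in\Sigma^\lambda(z^0)$); all other pairs are disjoint. One checks $\norm{z^0}_{\dot H^2}<\infty$ directly from $\beta>\frac32$, so Theorem~\ref{T2.5} applies and yields the unique $H^2$ patch solution $z$ on $[0,T_{z^0})$.

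Next I would exploit the symmetry. The configuration is invariant under reflection across $\{x_1=0\}$ (swapping $1\leftrightarrow2$ and $3\leftrightarrow4$) and under reflection across $\{x_2=0\}$ combined with strength negation (swapping $1\leftrightarrow3$ and $2\leftrightarrow4$); since $u$ in \eqref{111.30} transforms correctly under both operations and Theorem~\ref{T2.5} gives uniqueness, the solution $z$ retains both symmetries for all $t\in[0,T_{z^0})$. In particular the patches $z^{t,1}$ and $z^{t,3}$ remain mirror images across $\{x_2=0\}$ with opposite strengths, so the velocity on $\{x_2=0\}$ is horizontal, the axes are invariant under the flow, and the touching set $\mathrm{im}(z^{t,1})\cap\mathrm{im}(z^{t,3})$ stays on $\{x_2=0\}$ — in fact, by the last assertion of Theorem~\ref{T2.5}, it is nonempty for all $t$. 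The key quantitative input is the velocity computation sketched at the end of the introduction: near a point of $\{x_2=0\}$ where $z^{t,1}$ touches its reflection $z^{t,3}$, the patch $z^{t,1}\cup z^{t,3}$ occupies (locally) a region bounded by two nearly-vertical cusp-like curves, and the horizontal velocity it generates at a point $(x_1,0)$ just outside this region behaves like $C - c\,(\text{local width})^{1-2\alpha}$. Combined with the (self-similar-in-scale) near-axis geometry $\sim|x_1|^\beta$, this produces a velocity profile $u_1((x_1,0))\approx C(t)-c(t)|x_1|^{\beta(1-2\alpha)}$ near the origin, with $c(t)$ bounded below as long as the geometry persists. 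Because $\beta(1-2\alpha)<1$, this velocity field is not Lipschitz at the origin and in fact compresses the boundary arc toward the origin at a super-exponential rate.

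The final step is to convert this into finite-time blow-up of $\norm{z^t}_{\dot H^2}$. I would track a well-chosen geometric quantity — for instance, the arclength $L(t)$ of the portion of $z^{t,1}$ lying over $[-r_0,r_0]\times\bbR$ near the axis, or equivalently the ``width at scale $r$'' function — and show using the velocity profile above that it obeys a differential inequality of the form $\frac{d}{dt}(\text{width at scale }r)\le -c\,(\text{width})^{1-2\alpha}\cdot(\ldots)$ forcing the relevant length scales to collapse, or more directly that if $\norm{z^t}_{\dot H^2}$ stayed bounded on $[0,T)$ then (by the a priori $H^2$ control and a bootstrap, using e.g. the $C^{1,\frac{2\alpha}{1-2\alpha}}$ persistence from \cite{JeoZla} and \eqref{2.1}) the boundary arc could not accommodate the forced compression, a contradiction. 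Equivalently, one argues the contrapositive of \eqref{111.29}: were $T_{z^0}=\infty$, the solution would exist globally with finite $\dot H^2$-norm on each compact time interval, but the velocity estimate shows the near-origin boundary geometry develops a $C^{1,\gamma}$ (hence $H^2$) singularity in finite time, contradicting global regularity; therefore $T_{z^0}<\infty$ and \eqref{111.29} holds.

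\textbf{Main obstacle.} The delicate point is the velocity lower bound: one must show that the constant $c(t)$ in $u_1((x_1,0))\approx C(t)-c(t)|x_1|^{\beta(1-2\alpha)}$ stays bounded away from $0$ uniformly on $[0,T_{z^0})$ (or at least long enough), which requires controlling the near-axis shape of the evolving patches — that they do not ``open up'' or flatten in a way that kills the singular contribution — using only the $H^2$ bound and the preserved symmetry, and propagating the quantitative ``$\sim|x_1|^\beta$ pinch'' forward in time. This is exactly the type of estimate carried out in \cite{KRYZ,GanPat} on the half-plane, and the content here is checking that the doubly-odd reflection trick reduces the whole-plane problem to (a perturbation of) that half-plane mechanism, with the two allowed exterior opposite-sign touches playing the role of the half-plane boundary.
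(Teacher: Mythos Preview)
Your proposal has a fatal flaw at the very first step: the interval $\beta\in\bigl(\tfrac32,\tfrac1{1-2\alpha}\bigr)$ is \emph{empty} for every $\alpha\in(0,\tfrac16]$ (at $\alpha=\tfrac16$ it degenerates to the single point $\tfrac32$, and it is empty below that). The passage in the introduction that you are using for the velocity profile $C-c|x_1|^{\beta(1-2\alpha)}$ is the \emph{sharpness} argument showing instantaneous ill-posedness when $\alpha>\tfrac16$; it cannot be recycled as a finite-time blow-up mechanism in the well-posed range $\alpha\le\tfrac16$. So the whole differential-inequality program you outline never gets off the ground.

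There are two further geometric inconsistencies. First, with $\tht^1=\tht^2=1$ the two upper patches are mirror images with the \emph{same} sign, making the scalar even in $x_1$; the approach-along-the-axis mechanism from \cite{KRYZ,GanPat} requires the scalar to be odd in $x_1$, i.e.\ the two upper patches must have \emph{opposite} strengths. Second, if $z^{0,1}$ touches its $x_2$-reflection $z^{0,3}$ at the origin, then the origin lies on $\operatorname{im}(z^{0,1})$, hence (by the $x_1$-reflection) also on $\operatorname{im}(z^{0,2})$, so $z^{0,1}$ and $z^{0,2}$ touch there too --- a same-sign exterior touch, which Theorem~\ref{T2.5} forbids.

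The paper does something much simpler and avoids all of this. It does \emph{not} redo any velocity estimate or differential inequality. Instead it observes that for data odd in $x_2$, the unique $H^2$ patch solution from Theorem~\ref{T2.5} on $\bbR^2$ is exactly the odd extension of an $H^2$ patch solution on the half-plane $\bbR\times[0,\infty)$ (by uniqueness and symmetry), and conversely. This correspondence shows that Theorem~\ref{T2.5} itself holds on the half-plane, and then the already-established half-plane blow-up result \cite[Theorem~1.3]{ZlaSQGsing} is invoked as a black box; its odd-in-$x_2$ extension to $\bbR^2$ gives the desired $z^0$ with $T_{z^0}<\infty$. No new velocity analysis is performed.
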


{\it Remark.}  The proof shows that all curves constituting $z^0$ can be chosen to be smooth.

\bigskip
    \textbf{Overview of the proof of Theorem \ref{T2.5}.} 
For any $\gamma\in\operatorname{CC}(\bbR^{2})$ and $h\in[0,\frac{\ell(\gamma)}2]$ let 
        \[
            \Delta_{h}(\gamma) \coloneqq
            \begin{cases}
                \min_{h\leq \abs{s-s'} \leq \ell(\gamma)/2}
            \abs{{\gamma}(s) - {\gamma}(s')}
            & \textrm{if $\gamma$ is rectifiable} \\
            0 & \textrm{if $\gamma$ is not rectifiable.}
            \end{cases}
        \]
Note that since $\norm{\gamma}_{\dot{C}^{1,\beta}}^{-1/\beta}\le  \frac{\ell(\gamma)}2$ by Lemma~\ref{L3.1}, $\Delta_{h}(\gamma)$ will always be well-defined when
$h\leq\norm{\gamma}_{\dot{C}^{1,\beta}}^{-1/\beta}$, and we will not comment on this again below.
Let us next define
\[
\abs{\theta}\coloneqq \sum_{\lambda\in\mathcal{L}}\abs{\theta^{\lambda}} \qquad
\text{and} \qquad m(\theta)\coloneqq\min_{\lambda\in\mathcal{L}}\abs{\theta^{\lambda}}>0,
\]
and for any $z\in\operatorname{CC}(\bbR^{2})^{\mathcal{L}}$ let
\beq\lb{111.50}
    Q(z) \coloneqq
    \frac{1}{m(\theta)}\sum_{\lambda\in\mathcal{L}}
    \abs{\theta^{\lambda}}
    \norm{z^{\lambda}}_{\dot{H}^{2}}^{2}
\eeq
Also, for any $z\in\operatorname{PSC}(\bbR^{2})^{\mathcal{L}}$ let
\[
    W(z) \coloneqq \max_{\lambda\in\mathcal{L}}\abs{\Omega(z^{\lambda})}
\]
and
\beq\lb{111.23}
    L(z) \coloneqq \max\set{
        2Q(z),
        \max_{\lambda\in\mathcal{L}} \, \max
        \set{
            \frac{1}{\Delta_{1/Q(z)}(z^{\lambda})},
            \max_{\lambda'\notin\Sigma^{\lambda}(z)}\frac{1}{\Delta(z^{\lambda},z^{\lambda'})}
        }
    } \in[0,\infty],
\eeq
where $0^{-1} \coloneqq \infty$, $\infty^{-1} \coloneqq 0$, and
$\max\emptyset \coloneqq 0$.  

Our first goal is to construct an $H^2$ patch solution $z$ to \eqref{2.4} with the given initial data $z^0$, and so having locally-in-time bounded $\norm{z}_{\dot{H}^{2}}$.  This can be done via   an appropriate bound on $\partial_t^+ \norm{z^t}_{\dot{H}^{2}}$, where   the
\emph{upper-right} and  \emph{lower-right Dini derivatives} of $f:(a,b)\to\bbR$ are
\[
    \partial_{t}^{+}f(t)\coloneqq \limsup_{h\to 0^{+}}\frac{f(t+h) - f(t)}{h} \qquad\text{and}\qquad
    \partial_{t+}f(t)\coloneqq \liminf_{h\to 0^{+}}\frac{f(t+h) - f(t)}{h}.
\]
However, as was already shown in \cite{GanPat}, such a bound in general needs $\min_{\lambda,\lambda'\in\mathcal L} \Delta (z^{t,\lambda},z^{t,\lambda'})>0$,  and depends on this quantity and on $\norm{z^t}_{\dot{H}^{2}}$,  as well as on how close the curves $z^{t,\lambda}$ are to having self-intersections. (Note that the latter is quantified by $\Delta_{1/Q(z)}(z^{t,\lambda})^{-1}$, which replaces the abovementioned arc-chord constant for curve parametrizations in our approach.)  If we want to allow patch touches, obtaining this level of control may not be possible.

Nevertheless, it turns out that if we instead want to bound $\partial_t^+ Q(z^t)$, that estimate needs to only control $\Delta (z^{t,\lambda},z^{t,\lambda'})$ for $\lambda'\notin \Sigma^\lambda(z^t)$ and $\Delta_{1/Q(z)}(z^{t,\lambda})$ for all $\lambda\in\mathcal L$.  This is thanks to the specific form of $Q$ that we chose above, and we also note that since 
\[
Q(z)\geq\norm{z^{\lambda}}_{\dot{H}^{2}}^{2} \geq \norm{z^{\lambda}}_{\dot{C}^{1,1/2}}^{2}
\]
 for each $z\in\operatorname{PSC}(\bbR^{2})^{\mathcal{L}}$ and  $\lambda\in\mathcal{L}$ thanks to the factor $\frac 1{m(\tht)}$,  Lemma \ref{L3.1} shows that $\Delta_{1/Q(z)}(z^{\lambda})$ is always well-defined.   We will however still need to also control $\partial_t^+ \Delta (z^{t,\lambda},z^{t,\lambda'})^{-1}$ for $\lambda'\notin \Sigma^\lambda(z^t)$ and $\partial_t^+\Delta_{1/Q(z)}(z^{t,\lambda})^{-1}$ for all $\lambda\in\mathcal L$, which is why we will work with the functional $L$ instead of just $Q$.
We note that $L$
is lower semicontinuous on $\operatorname{PSC}(\bbR^{2})^{\mathcal{L}}$ by Lemma~\ref{LA.7}, and  that $L(z)<\infty$ allows any pair of curves $z^{\lambda},z^{\lambda'}$ to have either interior or exterior touches (but no crossings), depending on $\sgn(\theta^{\lambda}\theta^{\lambda'})$.
%
%
Since 
\begin{equation}\label{2.2}
    \Delta_{h}(\gamma) \leq h
\end{equation}
clearly holds for any $\gamma\in\operatorname{CC}(\bbR^{2})$ and $h\in[0,\frac{\ell(\gamma)}2]$,
the $2Q(z)$ in the definition of $L(z)$ seems redundant when one is concerned with whether $L(z)<\infty$ or not.  However, including it (with the coefficient 2) will make estimating $\partial_t^+ L(z^t)$
much easier, which is why we do it.

We start our proofs by obtaining several estimates on the g-SQG velocity field \eqref{2.3} in Section \ref{S4}.   Then, to show existence of the solution $z$ in Theorem \ref{T2.5}, we define {\it mollified \hbox{g-SQG}  equations} and prove the result for them, after which we recover $z$ as their limit.
Consider some smooth even $\chi\colon\bbR\to\bbR$
such that $0\leq\chi\leq 1$, $\chi\equiv 1$ on $\bbR\setminus (-1,1)$, and $0\notin \supp\, \chi$.
For each $\eps>0$ let $K_{\eps}(x)\coloneqq \chi(\frac{\abs{x}}{\eps})K(x)$.
Note that for any $n\in\bbN$ there is $C_{\alpha,n}$ that only depends
on $\alpha,n$ such that the norms of the $n$-linear forms
$D^{n}K_{\eps}(x)$ and $D^{n}K(x)$ are both bounded by
$\frac{C}{\abs{x}^{n+2\alpha}}$.
For any $z\in \operatorname{PSC}(\bbR^{2})^{\mathcal{L}}$ we  now define the mollified velocity field 
\[
    u_{\eps}(z;x) \coloneqq
    \sum_{\lambda\in\mathcal{L}}\theta^{\lambda}\int_{\Omega(z^{\lambda})}
    \nabla^{\perp}K_{\eps}(x-y)\,dy
\]
for $x\in\bbR^2$.  When each $z^{\lambda}$ is rectifiable and $z^\lambda(\cdot)$ its arclength parametrization, we also have
\beq \lb{111.2}
    u_{\eps}(z;x)=
    -\sum_{\lambda\in\mathcal{L}}\theta^{\lambda}\int_{\ell(z^{\lambda})\bbT}
    K_{\eps}(x - z^{\lambda}(s)) \,
    \partial_{s}z^{\lambda}(s) \,ds .
\eeq
 We again take \eqref{111.2}  as the \emph{definition of
$u_{\eps}(z)$} for any $z\in\operatorname{RC}(\bbR^{2})^{\mathcal{L}}$,
even when some $z^{\lambda}$ may not be simple.

Since $u_{\eps}(z^t)$ is defined using a smooth kernel, it is much easier to solve \eqref{2.4} with this velocity in place of $u(z^t)$.  We do this in Section \ref{S5}  by introducing a relevant ODE \eqref{5.1} in $H^{2}(\bbT;\bbR^{2})^{\mathcal L}$ for some parametrizations of the curves $z^{t,\lambda}$ and showing that it has global solutions that also solve \eqref{2.4} with $u_\eps(z^t)$ (see Corollary \ref{C5.6} and Proposition \ref{P7.1}).  Then in Section \ref{S6} we obtain a uniform-in-$\eps$ bound on $\partial_t^+ L(z^t)$ for these solutions in Proposition~\ref{P6.7}, with a crucial cancellation that allows for patch touches when $\lambda'\in\Sigma^\lambda(z^t)$ (and which motivates the special choice of $Q(z)$, as mentioned above) happening in the proof of Lemma~\ref{L6.3} when we estimate  the term $G_6$.  Finally, after taking $\eps\to 0^+$ we obtain the desired solution satisfying Theorem~\ref{T2.5}, although not yet the blow-up criterion \eqref{111.29},  in Section \ref{S7}.

Section \ref{S8} then contains the proof of uniqueness of this solution (even with $d_{\rm H}$ in place of $d_{\rm F}$ in \eqref{2.4}), while Section \ref{S10}  establishes  \eqref{111.29} and that the set of pairs of patches that touch at any given time $t$ is in fact $t$-independent, as well as proves Theorem~\ref{T2.7}.  Various results  about closed curves in $\bbR^2$ 
are proved in Appendices \ref{SAa} (several lemmas on the geometry of curves), \ref{SA} (results about the  spaces $\operatorname{CC}(\bbR^2)$ and $\operatorname{PSC}(\bbR^2)$, and functionals on them), and \ref{S9} (proof of a crucial technical lemma used in the uniqueness argument).  These are in principle unrelated to \eqref{111.25}--\eqref{111.26}, and may be useful for other patch-like models in the future.

%

%

\bigskip
    \textbf{Explanation of which types of patch touches can be allowed.} 
We end this introduction by explaining why we are able to control our solutions independently of how close ${\rm im} (z^{\lambda'})$ and ${\rm im} (z^\lambda)$ are for $\lambda'\in\Sigma^{\lambda}(z)$, but not for $\lambda'\notin\Sigma^{\lambda}(z)$.
Pick some large $n\in\bbN$ and assume that two patch boundaries $z^{\lambda'}$ and $z^{\lambda}$
contain disjoint oscillating segments $I'\coloneqq\set{\left(x_{1}, \frac{\cos(nx_{1})}{n^{2}}\right)\colon
x_{1}\in\left[-1,1\right]}$ and
$I\coloneqq\set{\left(x_{1}, \frac{3 + \sin(nx_{1})}{n^{2}}\right)\colon
x_{1}\in\left[-1,1\right]}$.  Assume also that they have no high frequency oscillations elsewhere, their curvatures are bounded by $2$ and $H^2$-norms by 4, the patches are disjoint, and the rest of their boundaries are of distance at least $\frac 12$ from the middle halves of $I',I$ and do not approach each other too closely.

Let us now consider the contribution from the second patch to the instantaneous growth of
the curvature of the boundary of the first patch at $(0,\frac 1{n^2})$.  Since the tangent line at this point is horizontal, the growth of the curvature only depends on the vertical velocity (this is essentially also true at other points on $I'$ because
slopes of the tangents to $I'$ are $O(\frac 1n)$).  The vertical component of the kernel in \eqref{111.30} at this point is $\frac{-y_1}{\abs{(y_1,  y_2-{n^{-2}})}^{2 + 2\alpha}}$, which results in a cancellation in the integral corresponding to $\lambda$ whenever points $(y_1,y_2)$ and $(-y_1,y_2)$ both lie in the second patch.  Therefore the contribution of the part of that patch lying in $B_{1/2}(0)$  to the vertical component of the integral at $(0,\frac 1{n^2})$ is the same as the contribution from the intersection of $B_{1/2}(0)$ with the union of the sets 
$A_{k}\coloneqq \set{ y\in\bbR^2\colon y_{1}\in\left(\frac{(2k-1)\pi}{n},\frac{2k\pi}{n}\right) \,\,\&\,\, |y_2-\frac{3}{n^{2}}| <  \frac{ |\sin(ny_{1})|}{n^{2}}}$ ($k\in\bbZ$).  All these have the same shape and area $\frac 4{n^3}$, and the contributions from $A_k$ and $A_{1-k}$ have opposite signs for each $k\ge 1$.  The latter is roughly $\frac \pi n$ closer to  $(0,\frac 1{n^2})$ than the former, so their contributions sum to a number that is between two positive constants multiplied by $(\frac kn)^{-2-2\alpha}\frac 1n\frac 1{n^3} = n^{-2+2\alpha}k^{-2-2\alpha}$ for $k\ge 2$.  A simple computation shows that the same is true for $k=1$ because $\alpha\le\frac 12$, and adding this over $k\ge 2$ yields a positive contribution of the order $n^{-2+2\alpha}$.

A similar computation shows that  the contribution of the part of the second patch lying inside $B_{1/2}(0)$  to the vertical component of the integral at $(\frac \pi n,-\frac 1{n^2})$ is between two negative constants multiplied by $n^{-2+2\alpha}$, and this repeats periodically at the other vertical extrema of $I'$ that are inside $B_{1/4}(0)$.  Since the contribution from $\bbR^2\setminus B_{1/2}(0)$ to the integral is a smooth function of $x\in B_{1/4}(0)$ with an $n$-independent bound on any derivative, we see that the instantaneous growth rate of the amplitude of the oscillations of $I'$ inside $B_{1/4}(0)$ that is due to the second patch is proportional to $\tht^\lambda n^{-2+2\alpha}$.  Hence the instantaneous growth rate of the squared $L^2$-norm of the curvature of the patch boundary (i.e., of $\|z^{\lambda'}\|_{\dot H^2}^2$)  is proportional to $\tht^\lambda n^{2\alpha}$.
 The proof of Theorem \ref{T2.5} in the one patch case (one can also use the proofs in \cite{GanPat}) shows that the instantaneous growth rate of $\|z^{\lambda'}\|_{\dot H^2}^2$ that is due to the first patch itself is bounded by some constant times $\|z^{\lambda'}\|_{\dot H^2}^{6+4\alpha}$ ($\le 4^7$), so the overall growth rate is again proportional to $\tht^\lambda n^{2\alpha}$ when $n$ is large.
 
 The same computation shows that the instantaneous growth rate of $\|z^{\lambda}\|_{\dot H^2}^2$ is proportional to $\tht^{\lambda'} n^{2\alpha}$ for large $n$ (with the ratio of proportionality constants converging to 1 as $n\to\infty$).  This means that the only linear combinations of squares of these  norms that might have an $n$-independent bound on their growth rates are multiples of $\tht^{\lambda'}\|z^{\lambda'}\|_{\dot H^2}^2- \tht^\lambda \|z^{\lambda}\|_{\dot H^2}^2$ (as well as that $\tht^\lambda\tht^{\lambda'}>0$ should cause ill-posedness if the patches have an external touch).  Such a quantity only yields a bound on the individual $\dot H^2$-norms when $\tht^\lambda\tht^{\lambda'}<0$ (for exterior patch touches), and a similar computation yields the requirement $\tht^\lambda\tht^{\lambda'}>0$ for interior touches, with the relevant quantity being $\tht^{\lambda'}\|z^{\lambda'}\|_{\dot H^2}^2+ \tht^\lambda \|z^{\lambda}\|_{\dot H^2}^2$. Corollary~\ref{C6.4} below --- specifically the proofs of Lemmas \ref{L6.2} and \ref{L6.3}, with the crucial cancellations appearing in our estimates on the term $G_6$ in the latter --- shows that these are also sufficient for general constellations of finitely many patches, and it turns out that it then suffices to control  the single  quantity $Q(z)$ from \eqref{111.50}.  So even though we will not be able to control {\it growth rates of $\|z^{\lambda}\|_{\dot H^2}$} for individual patch boundaries, our approach will  still yield good control of all these norms. 
 
 Finally, note that if the oscillations of $I'$ and $I$ are offset by half a period (instead of quarter as above), then the  unbounded-in-$n$ amplitude growth rate disappears in the above analysis.  This always happens for solutions that are odd in $x_2$ and only touches  between patches and their reflections are allowed (i.e., in the no-touch half-plane case),  which also demonstrates why the general setting considered here is much more challenging to study.

\vskip 3mm
\noindent
{\bf Acknowledgement.}  Both authors were supported in part by NSF grant DMS-2407615.

\section{Estimates on g-SQG velocity fields}\label{S4}

In this section we prove various estimates on  the g-SQG velocity field that we use throughout the rest of the paper.
We note that the proofs show that all these estimates continue to hold when we replace the kernel $K$ in \eqref{2.3} by $K_{\eps}$ in \eqref{111.2} (possibly with different constants that are independent of $\eps$), and hence the velocity $u$ by $u_{\eps}$.  We also note that all constants $C$ in this section depend only on the indicated variables (e.g., $C_\alpha$ only depends on $\alpha$) and can change from one inequality to another.  We always assume that $\alpha\in(0,\frac 12)$ and $\beta\in(0,1]$.

We start with a uniform bound on the velocity field
 in terms of the sizes and strengths of the patches, 
independently of the regularity of their boundaries.

\begin{lemma}\label{L4.1}
    There is $C_\alpha$  such that
    for any simple closed curve $\gamma$ in $\bbR^{2}$ and $x\in\bbR^2$ we have
    \[
        \int_{\Omega(\gamma)}\frac{dy}{\abs{x - y}^{1 + 2\alpha}}
        \leq C_\alpha\abs{\Omega(\gamma)}^{\frac{1}{2} - \alpha}.
    \]
In particular, for any $z\in\operatorname{PSC}(\bbR^{2})^{\mathcal{L}}$ we obtain
    \[
        \norm{u(z)}_{L^{\infty}}
        \leq c_{\alpha}C_\alpha\sum_{\lambda\in\mathcal{L}}\abs{\theta^{\lambda}}
        \abs{\Omega(z^{\lambda})}^{\frac{1}{2} - \alpha}
        \leq c_{\alpha}C_\alpha\abs{\theta}W(z)^{\frac{1}{2}-\alpha}.
    \]    
\end{lemma}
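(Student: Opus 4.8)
The plan is to prove the pointwise integral bound first, and then obtain the velocity estimate as a routine consequence via the representation \eqref{111.30} together with the inequality $\abs{(x-y)^\perp}=\abs{x-y}$. For the main inequality, the idea is a layer-cake/rearrangement argument: for fixed $x\in\bbR^2$, the integrand $\abs{x-y}^{-1-2\alpha}$ is a radially decreasing function of $y$ centered at $x$, so among all measurable sets of a given area, the integral $\int_{\Omega(\gamma)}\abs{x-y}^{-1-2\alpha}\,dy$ is maximized when $\Omega(\gamma)$ is replaced by the disk $B$ centered at $x$ with $\abs{B}=\abs{\Omega(\gamma)}$. (The fact that $\Omega(\gamma)$ is the interior of a simple closed curve plays no role here beyond guaranteeing that it is a bounded measurable set of finite area; this is why the statement holds with no regularity assumption on $\gamma$.)

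First I would make the rearrangement step precise. Write $f(y)\coloneqq\abs{x-y}^{-1-2\alpha}$ and note $f$ is radial and strictly decreasing in $\abs{x-y}$. For any measurable $E\subseteq\bbR^2$ with $\abs{E}<\infty$, the bathtub principle gives $\int_E f\,dy\le\int_{B} f\,dy$ where $B=B_r(x)$ with $\pi r^2=\abs{E}$; concretely one can argue by the superlevel-set decomposition $\int_E f = \int_0^\infty \abs{E\cap\{f>t\}}\,dt$ and $\abs{E\cap\{f>t\}}\le\min\{\abs{E},\abs{\{f>t\}}\}=\abs{B\cap\{f>t\}}$ since $\{f>t\}$ is a disk centered at $x$. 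Applying this with $E=\Omega(\gamma)$ and $r=\bigl(\abs{\Omega(\gamma)}/\pi\bigr)^{1/2}$ reduces matters to the explicit computation
\[
\int_{B_r(x)}\frac{dy}{\abs{x-y}^{1+2\alpha}}=2\pi\int_0^r \rho^{-1-2\alpha}\rho\,d\rho=2\pi\int_0^r\rho^{-2\alpha}\,d\rho=\frac{2\pi}{1-2\alpha}\,r^{1-2\alpha},
\]
which converges precisely because $\alpha<\tfrac12$. Substituting $r=(\abs{\Omega(\gamma)}/\pi)^{1/2}$ yields the claimed bound with $C_\alpha=\frac{2\pi}{1-2\alpha}\pi^{-(1/2-\alpha)}=\frac{2\pi^{1/2+\alpha}}{1-2\alpha}$.

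For the "in particular" clause, from \eqref{111.30} and $\abs{(x-y)^\perp}=\abs{x-y}$ we get $\abs{u(z;x)}\le c_\alpha\sum_{\lambda}\abs{\theta^\lambda}\int_{\Omega(z^\lambda)}\abs{x-y}^{-1-2\alpha}\,dy\le c_\alpha C_\alpha\sum_\lambda\abs{\theta^\lambda}\abs{\Omega(z^\lambda)}^{1/2-\alpha}$ for every $x$, hence the $L^\infty$ bound; the final inequality follows from $\abs{\Omega(z^\lambda)}\le W(z)$ (using $\tfrac12-\alpha>0$) and $\sum_\lambda\abs{\theta^\lambda}=\abs\theta$. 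There is no serious obstacle here; the only point requiring a little care is the rearrangement/bathtub step, which must be stated for general bounded measurable sets so that no regularity of $\gamma$ is invoked, and one should make sure the radial integral's convergence at $\rho=0$ is flagged as the source of the $\alpha<\tfrac12$ restriction. The remark at the start of Section~\ref{S4} that the estimates survive replacing $K$ by $K_\eps$ is automatic here since $\abs{\nabla^\perp K_\eps}\lesssim_\alpha\abs{x}^{-1-2\alpha}$ pointwise, so the same bound applies verbatim to $u_\eps$.
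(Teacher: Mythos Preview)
Your proof is correct and follows essentially the same approach as the paper: a Hardy--Littlewood/bathtub rearrangement replaces $\Omega(\gamma)$ by the disk of equal area centered at $x$, followed by the explicit radial computation (you even obtain the same constant $C_\alpha=\frac{2\pi^{1/2+\alpha}}{1-2\alpha}$), and the velocity bound is read off from \eqref{111.30}. The only difference is cosmetic: you spell out the layer-cake justification of the rearrangement step, whereas the paper simply cites the Hardy--Littlewood rearrangement inequality.
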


\begin{proof}
    The first claim immediately follows from Hardy-Littlewood rearrangement inequality
    \begin{align*}
        \int_{\Omega(\gamma)}\frac{dy}{\abs{x - y}^{1+2\alpha}}
        \leq \int_{0}^{2\pi}\int_{0}^{(\abs{\Omega(\gamma)}/\pi)^{1/2}}
        \frac{1}{r^{2\alpha}}\,dr\,d\theta 
        = \frac{2\pi^{\frac{1}{2}+\alpha}}{1 - 2\alpha}
        \abs{\Omega(\gamma)}^{\frac{1}{2} - \alpha}.
    \end{align*}
    The second claim is then immediate from the definition of $u(z)$.
\end{proof}

Next is an estimate on the integral of the kernel $K$ along the curve $\gamma$ without
the tangent vector $\partial_{s}\gamma(s)$. As a result, there is no cancellation of contributions to the integral from nearby curve segments (with roughly opposite tangent vectors), which means that we get a worse estimate than
in Lemma~\ref{L4.1}. 

\begin{lemma}\label{L4.2}
    There is $C_\alpha$ such that for any $C^{1,\beta}$ closed curve
    $\gamma\colon\ell\bbT\to\bbR^{2}$ parametrized by arclength and any $x\in\bbR^2$ we have
    \[
        \int_{\ell\bbT}K(x-\gamma(s))\,ds \leq
        C_\alpha\ell\norm{\gamma}_{\dot{C}^{1,\beta}}^{2\alpha/\beta}.
    \]
\end{lemma}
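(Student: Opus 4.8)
\textbf{Proof proposal for Lemma~\ref{L4.2}.}

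The plan is to split the integral according to the arclength distance from the point $s_0$ that (nearly) minimizes $|x-\gamma(s)|$, using the $C^{1,\beta}$ regularity to produce a lower bound on $|x-\gamma(s)|$ in terms of the arclength distance $|s-s_0|$. First I would fix $x$ and pick $s_0\in\ell\bbT$ with $|x-\gamma(s_0)| = \dist(x,\operatorname{im}(\gamma)) =: \delta$ (such a point exists by compactness; if $x\notin\operatorname{im}(\gamma)$ this is a genuine minimum, and if $x\in\operatorname{im}(\gamma)$ then $\delta=0$). Since $\gamma$ is parametrized by arclength, $|\partial_s\gamma(s_0)|=1$, and the $C^{1,\beta}$ bound gives $|\partial_s\gamma(s)-\partial_s\gamma(s_0)|\le \norm{\gamma}_{\dot C^{1,\beta}}|s-s_0|^\beta$ for all $s$. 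Integrating the tangent from $s_0$, the displacement $\gamma(s)-\gamma(s_0)$ equals $(s-s_0)\partial_s\gamma(s_0)$ plus an error of size at most $\norm{\gamma}_{\dot C^{1,\beta}}\frac{|s-s_0|^{1+\beta}}{1+\beta}$; hence for $|s-s_0|\le r_0 := \left(\tfrac{1+\beta}{2\norm{\gamma}_{\dot C^{1,\beta}}}\right)^{1/\beta}$ (comparable to $\norm{\gamma}_{\dot C^{1,\beta}}^{-1/\beta}$) we get $|\gamma(s)-\gamma(s_0)|\ge \tfrac12|s-s_0|$. Combined with $|x-\gamma(s)|\ge \tfrac12|\gamma(s)-\gamma(s_0)|$ whenever $|\gamma(s)-\gamma(s_0)|\ge 2\delta$ — and the trivial bound $|x-\gamma(s)|\ge\delta$ otherwise — this yields a lower bound $|x-\gamma(s)|\gtrsim \min\{|s-s_0|,\, \text{stuff}\}$ that is enough to control $K(x-\gamma(s)) = \frac{c_\alpha}{2\alpha|x-\gamma(s)|^{2\alpha}}$ pointwise on $\{|s-s_0|\le r_0\}$ by a constant times $|s-s_0|^{-2\alpha}$.

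Next I would integrate. On the near region $\{|s-s_0|\le r_0\}$, using $|x-\gamma(s)|\ge \tfrac14|s-s_0|$ (the contribution of the $2\delta$-threshold only improves things, since near $x$ the bound $\delta$ is even larger than $\tfrac14|s-s_0|$ there), we get
\[
\int_{|s-s_0|\le r_0} K(x-\gamma(s))\,ds \;\le\; C_\alpha\int_{0}^{r_0} t^{-2\alpha}\,dt \;=\; \frac{C_\alpha}{1-2\alpha}\, r_0^{1-2\alpha}\;\le\; C_\alpha \,\norm{\gamma}_{\dot C^{1,\beta}}^{-(1-2\alpha)/\beta},
\]
which, since $\ell\norm{\gamma}_{\dot C^{1,\beta}}^{2\alpha/\beta}\ge \norm{\gamma}_{\dot C^{1,\beta}}^{-1/\beta}\cdot\norm{\gamma}_{\dot C^{1,\beta}}^{2\alpha/\beta} = \norm{\gamma}_{\dot C^{1,\beta}}^{-(1-2\alpha)/\beta}$ by Lemma~\ref{L3.1} (which gives $\ell\ge 2\norm{\gamma}_{\dot C^{1,\beta}}^{-1/\beta}$), is bounded by the claimed quantity. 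On the far region $\{r_0\le |s-s_0|\le \ell/2\}$, I need a uniform lower bound $|x-\gamma(s)|\ge c\, r_0$ there; this again follows from the arclength/$C^{1,\beta}$ estimate applied on successive intervals of length $r_0$, which forces $|\gamma(s)-\gamma(s_0)|$ to stay bounded below by a constant multiple of $r_0$ once $|s-s_0|\ge r_0$ (and then $|x-\gamma(s)|\ge\tfrac12|\gamma(s)-\gamma(s_0)|$ or $\ge\delta$). Thus on the far region $K(x-\gamma(s))\le C_\alpha r_0^{-2\alpha} = C_\alpha\norm{\gamma}_{\dot C^{1,\beta}}^{2\alpha/\beta}$, and integrating over an interval of length $\le \ell$ gives $\le C_\alpha\,\ell\,\norm{\gamma}_{\dot C^{1,\beta}}^{2\alpha/\beta}$. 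Adding the two regions gives the lemma.

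The main obstacle, and the step requiring the most care, is the far-region lower bound $|x-\gamma(s)|\gtrsim r_0$ for all $|s-s_0|\ge r_0$: the local estimate $|\gamma(s)-\gamma(s_0)|\ge\tfrac12|s-s_0|$ only holds for $|s-s_0|\le r_0$, so one cannot directly conclude the curve stays far from $\gamma(s_0)$ at larger arclength separations — a priori the curve could loop back close to $\gamma(s_0)$ (indeed it must, at $s = s_0 \pm \ell$). The fix is that we only need a lower bound by $c\,r_0$, not by $|s-s_0|$: the displacement over the \emph{first} sub-interval of length $r_0$ emanating from $s_0$ already has magnitude $\ge\tfrac12 r_0$ and points within angle $O(1)$ of $\partial_s\gamma(s_0)$, and I would argue (e.g.\ by a continuity/connectedness argument on $\{s : |\gamma(s)-\gamma(s_0)| < \tfrac12 r_0\}$, whose connected component containing $s_0$ must lie inside $|s-s_0| < r_0$ by the local estimate) that $|\gamma(s)-\gamma(s_0)|\ge\tfrac12 r_0$ throughout the far region. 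Once this is in hand, everything else is the routine splitting and integration above. I would also double-check the trivial edge cases: if $\gamma$ is trivial then $\norm{\gamma}_{\dot C^{1,\beta}}=\infty$ and there is nothing to prove, and if $\ell$ is so small that $r_0 > \ell/2$ then the "far region" is empty and only the near-region estimate is needed (which still works since its bound was $\le C_\alpha \ell \norm{\gamma}_{\dot C^{1,\beta}}^{2\alpha/\beta}$).
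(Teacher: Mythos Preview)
Your near-region argument is fine, but the far-region step contains a genuine gap. You claim that $|\gamma(s)-\gamma(s_0)|\ge\tfrac12 r_0$ for all $s$ with $|s-s_0|>r_0$, and you justify this by a connectedness argument: the connected component of $\{s:|\gamma(s)-\gamma(s_0)|<\tfrac12 r_0\}$ containing $s_0$ lies inside $\{|s-s_0|<r_0\}$. That part is correct, but it says nothing about \emph{other} connected components of this set, and there can be many. The lemma does not assume $\gamma$ is simple, so take for instance an asymmetric figure-eight: one lobe of radius $R$, the other of radius $\eps\ll R$, smoothly joined at a single crossing point. Then $\|\gamma\|_{\dot C^{1,1}}\sim 1/\eps$, so $r_0\sim\eps$, while at the crossing $\gamma(s_0)=\gamma(s_1)$ with large-lobe arclength separation $|s_1-s_0|\sim R\gg r_0$ and $|\gamma(s_1)-\gamma(s_0)|=0$. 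Your far-region lower bound is simply false here. Even for simple curves, nothing in the $C^{1,\beta}$ bound alone prevents $\gamma$ from making many separate close passes near $x$; this is precisely why the factor $\ell$ (not just $r_0$) must appear on the right-hand side.

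The paper handles this via Lemma~\ref{L3.2}, which for any $x$ produces $N\le \ell\,\|\gamma\|_{\dot C^{1,\beta}}^{1/\beta}$ pairwise disjoint intervals $I_i=[s_i-2d,s_i+2d]$ (with $d=\tfrac14\|\gamma\|_{\dot C^{1,\beta}}^{-1/\beta}$) covering $\{s:|x-\gamma(s)|\le d\}$, and on each of which $|x-\gamma(s)|\ge\tfrac12|s-s_i|$. One then bounds $\int_{I_i}K(x-\gamma(s))\,ds\le C_\alpha\int_{-2d}^{2d}|s/2|^{-2\alpha}\,ds$ on each interval, sums over the $N\le\ell/(4d)$ intervals, and uses the trivial bound $K\le C_\alpha d^{-2\alpha}$ on the complement (where $|x-\gamma(s)|>d$ by construction). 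Your local analysis around a single $s_0$ is essentially the $i$-th piece of this; what is missing is the recognition that there can be up to $N\sim\ell/r_0$ such pieces, each requiring its own local estimate.
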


\begin{proof}
    Let $d: = \frac{1}{4}\norm{\gamma}_{\dot{C}^{1,\beta}}^{-1/\beta}$.
    Then Lemma~\ref{L3.2} shows that
    \begin{align*}
        \int_{\ell\bbT}\frac{ds}{\abs{x - \gamma(s)}^{2\alpha}}
        &\leq \frac{\ell}{4d}\int_{-2d}^{2d}\frac{ds}{\abs{s/2}^{2\alpha}}
        + \int_{\ell\bbT}\frac{ds}{d^{2\alpha}}
        = \frac{2 - 2\alpha}{1 - 2\alpha}\frac{\ell}{d^{2\alpha}}
        = C_\alpha\ell\norm{\gamma}_{\dot{C}^{1,\beta}}^{2\alpha/\beta}.
    \end{align*}
\end{proof}

Next we bound the difference between the velocity fields generated
by $K$ and  $K_{\eps}$.

\begin{lemma}\label{L4.3}
    There is $C_\alpha$ such that
    for any $C^{1,\beta}$ closed curve
    $\gamma\colon\ell\bbT\to\bbR^{2}$ parametrized by arclength, any  $\eps\in\left(0,\frac{1}{4}\norm{\gamma}_{\dot{C}^{1,\beta}}^{-1/\beta}\right]$, and any
    $x\in\bbR^{2}$  we have
    \[
        \int_{\abs{x - \gamma(s)}\leq\eps}
        K(x - \gamma(s))\,ds
        \leq C_\alpha\ell\norm{\gamma}_{\dot{C}^{1,\beta}}^{1/\beta}
        \eps^{1-2\alpha}.
    \]
\end{lemma}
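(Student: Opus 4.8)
\textbf{Proof plan for Lemma~\ref{L4.3}.}

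The plan is to exploit the same geometric fact about $C^{1,\beta}$ arclength-parametrized curves that was used in Lemma~\ref{L4.2}, namely that on a scale comparable to $d:=\frac14\norm{\gamma}_{\dot C^{1,\beta}}^{-1/\beta}$ the curve does not fold back on itself, so that the arclength distance $|s-s'|$ and the Euclidean distance $|\gamma(s)-\gamma(s')|$ are comparable. Concretely, I would invoke Lemma~\ref{L3.2} (already cited in the proof of Lemma~\ref{L4.2}) to get $|\gamma(s)-\gamma(s')|\ge \tfrac12|s-s'|$ whenever $|s-s'|\le 2d$. Since we are assuming $\eps\le d$, the condition $|x-\gamma(s)|\le\eps$ forces $\gamma(s)$ to lie in a small ball, and the set of $s\in\ell\bbT$ with $|x-\gamma(s)|\le\eps$ can be covered by at most $C\ell/d$ arcs, each of length at most $C\eps$, on each of which the above bi-Lipschitz bound applies. (This covering count is exactly the mechanism behind the $\frac{\ell}{4d}$ factor in Lemma~\ref{L4.2}: the curve can come within distance $\eps\le d$ of $x$ on at most $O(\ell/d)$ separate occasions, since consecutive such visits are separated by arclength $\gtrsim d$.)

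First I would fix $x$ and let $S_\eps:=\{s\in\ell\bbT:\,|x-\gamma(s)|\le\eps\}$. Decompose $S_\eps$ into its connected components (arcs) $J_1,\dots,J_N$; the separation argument gives $N\le C_\alpha \ell/d$. On each $J_i$ pick $s_i$ with $|x-\gamma(s_i)|$ minimal; then for $s\in J_i$ the triangle inequality together with $|\gamma(s)-\gamma(s_i)|\ge\tfrac12|s-s_i|$ (valid since $|s-s_i|\le|J_i|\le 2\eps\le 2d$) yields $|x-\gamma(s)|\ge |\gamma(s)-\gamma(s_i)|-|x-\gamma(s_i)|$, which is not directly a lower bound of the right form. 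Instead, the cleaner route — mirroring Lemma~\ref{L4.2} verbatim — is to bound $\int_{J_i}|x-\gamma(s)|^{-2\alpha}\,ds$ by comparing to $\int_{-2\eps}^{2\eps}|s/2|^{-2\alpha}\,ds$: parametrize $J_i$ by $s-s_i'$ where $s_i'$ realizes $\dist(x,\gamma(J_i))$, use $|x-\gamma(s)|\ge\tfrac12|s-s_i'|$ on $J_i$ (again from Lemma~\ref{L3.2}, since $|s-s_i'|\le 2d$), and integrate, getting $\int_{J_i}|x-\gamma(s)|^{-2\alpha}\,ds\le \int_{-2\eps}^{2\eps}|s/2|^{-2\alpha}\,ds = \frac{2^{2\alpha}}{1-2\alpha}\,(2\eps)^{1-2\alpha}\cdot\frac{?}{}$; the exact constant is routine. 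Summing over the $N\le C_\alpha\ell/d$ arcs gives
\[
    \int_{S_\eps}\frac{ds}{|x-\gamma(s)|^{2\alpha}}
    \le \frac{C_\alpha\ell}{d}\,\eps^{1-2\alpha}
    = C_\alpha\,\ell\,\norm{\gamma}_{\dot C^{1,\beta}}^{1/\beta}\,\eps^{1-2\alpha},
\]
and multiplying by $K(x)\le \frac{c_\alpha}{2\alpha}|x|^{-2\alpha}$'s constant — i.e.\ recalling $K(x)=\frac{c_\alpha}{2\alpha|x|^{2\alpha}}$ — absorbs the numerical factor into $C_\alpha$, completing the proof.

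The main obstacle, such as it is, is purely bookkeeping: making the covering-of-$S_\eps$-by-$O(\ell/d)$-arcs argument precise and uniform in $x$, and verifying that $|x-\gamma(s)|\ge\tfrac12|s-s_i'|$ is legitimately available on each arc from the hypotheses (this needs $|s-s_i'|\le 2d$, which holds because each arc has length $\le 2\eps\le 2d$, and needs Lemma~\ref{L3.2} applied with the base point $s_i'$ possibly not lying on the arc, so one should either choose $s_i'\in J_i$ directly or note the bound degrades by at most a constant). One should also record, as the section preamble states, that the identical argument applies with $K$ replaced by $K_\eps$ since $|K_\eps(x)|\le C_\alpha|x|^{-2\alpha}$ as well, and in fact $K_\eps$ supported where $|x|\le\eps$ is even more favorable. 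No genuinely new ideas beyond those in Lemmas~\ref{L4.1}–\ref{L4.2} are required.
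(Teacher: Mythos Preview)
Your proposal is correct and follows the same approach as the paper. The paper's proof is in fact a one-liner: it applies Lemma~\ref{L3.2} directly to obtain points $s_1,\dots,s_N$ with $N\le \ell\norm{\gamma}_{\dot C^{1,\beta}}^{1/\beta}$, uses part (2) to get $S_\eps\subseteq\bigcup_i I_i$ and part (4) to get $|x-\gamma(s)|\ge\tfrac12|s-s_i|$ on each $I_i$ (so $S_\eps\cap I_i\subseteq[s_i-2\eps,s_i+2\eps]$), and then sums $\int_{-2\eps}^{2\eps}|s/2|^{-2\alpha}\,ds$ over the $N$ intervals. Your detour through connected components of $S_\eps$ is unnecessary and introduces exactly the bookkeeping issue you flag at the end (the minimizer $s_i'$ on a component $J_i$ need not be the $s_i$ furnished by Lemma~\ref{L3.2}); working with the $I_i$'s themselves avoids this entirely and is what the paper does.
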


\begin{proof}
    Lemma~\ref{L3.2} shows that
    \begin{align*}
        \int_{\abs{x - \gamma(s)}\leq\eps}\frac{ds}{\abs{x - \gamma(s)}^{2\alpha}}
        &\leq \ell\norm{\gamma}_{\dot{C}^{1,\beta}}^{1/\beta}\int_{-2\eps}^{2\eps}
        \frac{ds}{\abs{s/2}^{2\alpha}}
        = \frac{4}{1-2\alpha}\ell\norm{\gamma}_{\dot{C}^{1,\beta}}^{1/\beta}\eps^{1-2\alpha}.
    \end{align*}
\end{proof}

We now estimate the difference of  velocity fields generated by two nearby curves at two nearby points.  This result is related to \cite[Lemma 4.9]{KisYaoZla}, but is more quantitative and it is stated for more general curves.


\begin{lemma}\label{L4.4}
    There is $C_\alpha$ such that for any $C^{1,\beta}$ closed curves
    $\gamma_{i}\colon\ell_{i}\bbT\to\bbR^{2}$ ($i=1,2$) parametrized by arclength and any  $x_{1},x_{2}\in\bbR^{2}$ we have
    \begin{align*}
        &\abs{\int_{\ell_{1}\bbT}K(x_{1} - \gamma_{1}(s))
        \partial_{s}\gamma_{1}(s)\,ds
        - \int_{\ell_{2}\bbT}K(x_{2} - \gamma_{2}(s))
        \partial_{s}\gamma_{2}(s)\,ds} \\
        &\quad\quad\quad\quad\leq
        C_\alpha(\ell_{1}+\ell_{2})\max\left\{
            \norm{\gamma_{1}}_{\dot{C}^{1,\beta}},
            \norm{\gamma_{2}}_{\dot{C}^{1,\beta}}
        \right\}^{1/\beta}
        \left(
            \abs{x_{1}-x_{2}} + d_{\mathrm{F}}(\gamma_{1},\gamma_{2})
        \right)^{1-2\alpha}.
    \end{align*}
\end{lemma}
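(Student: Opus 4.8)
The plan is to reduce to the near-touching regime and then split the difference into a ``moving base point'' term and a ``moving curve'' term, the second being the real content. Write $\delta:=\abs{x_1-x_2}+d_{\mathrm F}(\gamma_1,\gamma_2)$, $M:=\max\{\norm{\gamma_1}_{\dot{C}^{1,\beta}},\norm{\gamma_2}_{\dot{C}^{1,\beta}}\}$, and $d:=\tfrac14 M^{-1/\beta}$; by Lemma~\ref{L3.1} we have $4d\le\ell_i/2$, hence $\ell_iM^{1/\beta}\ge 2$, which lets us later absorb $O(1)$ additive terms into $(\ell_1+\ell_2)M^{1/\beta}$. If $\delta\ge cd$ for a suitable universal $c$, Lemma~\ref{L4.2} bounds each of the two integrals by $C_\alpha\ell_iM^{2\alpha/\beta}=C_\alpha\ell_iM^{1/\beta}(M^{1/\beta})^{-(1-2\alpha)}\le C_{\alpha,c}\ell_iM^{1/\beta}\delta^{1-2\alpha}$, so we may assume $\delta\le cd$. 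Inserting $\int_{\ell_1\bbT}K(x_2-\gamma_1(s))\partial_s\gamma_1(s)\,ds$ then splits the left-hand side into $\mathrm{(I)}+\mathrm{(II)}$, where $\mathrm{(I)}:=\big|\int_{\ell_1\bbT}(K(x_1-\gamma_1(s))-K(x_2-\gamma_1(s)))\partial_s\gamma_1(s)\,ds\big|$ and $\mathrm{(II)}:=\big|\int_{\ell_1\bbT}K(x_2-\gamma_1(s))\partial_s\gamma_1(s)\,ds-\int_{\ell_2\bbT}K(x_2-\gamma_2(s))\partial_s\gamma_2(s)\,ds\big|$.

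For $\mathrm{(I)}$ I would first record a ``translation estimate'': for a $C^{1,\beta}$ arclength curve $\gamma\colon\ell\bbT\to\bbR^2$ and $a,b\in\bbR^2$ with $\abs{a-b}\le\tfrac14\norm{\gamma}_{\dot{C}^{1,\beta}}^{-1/\beta}$,
\[
\int_{\ell\bbT}\abs{K(a-\gamma(s))-K(b-\gamma(s))}\,ds\le C_\alpha\,\ell\,\norm{\gamma}_{\dot{C}^{1,\beta}}^{1/\beta}\,\abs{a-b}^{1-2\alpha};
\]
this follows from the same near/far splitting as in Lemmas~\ref{L4.2}--\ref{L4.3}, using Lemma~\ref{L3.2} to bound $\abs{a-\gamma(s)}$ and $\abs{b-\gamma(s)}$ below near the closest point (so that $t\mapsto t^{-2\alpha}$ is handled by the mean value theorem away from scale $\abs{a-b}$ and integrated directly on that scale) and on the complement (where both distances are $\gtrsim d\ge\abs{a-b}$). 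Taking $a=x_1$, $b=x_2$ gives $\mathrm{(I)}\le C_\alpha\ell_1M^{1/\beta}\delta^{1-2\alpha}$.

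The core is $\mathrm{(II)}$. Using the translation estimate twice I may move $x_2$ a distance $O(\delta)$ normally off $\operatorname{im}\gamma_1$, at an admissible cost, and so assume $\dist(x_2,\operatorname{im}\gamma_1)\ge 2\delta$. I then replace $\gamma_2$ by a carefully chosen reparametrization over the domain of $\gamma_1$: an orientation-preserving bi-Lipschitz homeomorphism $\psi\colon\ell_1\bbT\to\ell_2\bbT$ with $\tfrac12\le\abs{\psi'}\le2$ a.e.\ such that $\eta:=\gamma_2\circ\psi$ satisfies $\norm{\gamma_1-\eta}_{L^\infty(\ell_1\bbT)}\le\delta$ and $\norm{\eta}_{\dot{C}^{1,\beta}(\ell_1\bbT)}\le CM$ (existence discussed below). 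Since $\int_{\ell_2\bbT}K(x_2-\gamma_2)\partial_s\gamma_2\,ds=\int_{\ell_1\bbT}K(x_2-\eta)\partial_s\eta\,ds$, I homotope via $\eta_\tau:=(1-\tau)\gamma_1+\tau\eta$, $\tau\in[0,1]$: then $w:=\partial_\tau\eta_\tau=\eta-\gamma_1$ has $\norm{w}_{L^\infty}\le\delta$, $\operatorname{im}\eta_\tau$ lies within $\delta$ of $\operatorname{im}\gamma_1$ (so $\dist(x_2,\operatorname{im}\eta_\tau)\ge\delta$), and $\eta_\tau$ inherits $\tfrac12\le\abs{\partial_s\eta_\tau}\le 2$ and $\norm{\eta_\tau}_{\dot{C}^{1,\beta}}\lesssim M$. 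Differentiating in $\tau$ and integrating by parts in $s$ (legitimate since $\dist(x_2,\operatorname{im}\eta_\tau)\ge\delta$ throughout), the two resulting terms combine into the planar triple product
\[
\partial_\tau\!\int_{\ell_1\bbT}K(x_2-\eta_\tau(s))\,\partial_s\eta_\tau(s)\,ds=-\int_{\ell_1\bbT}\big(w(s)\times\partial_s\eta_\tau(s)\big)\,\big(\nabla K(x_2-\eta_\tau(s))\big)^\perp\,ds,
\]
where $a\times b:=a_1b_2-a_2b_1$. Hence $\mathrm{(II)}\le\delta\int_0^1\!\int_{\ell_1\bbT}\abs{\partial_s\eta_\tau(s)}\,\abs{\nabla K(x_2-\eta_\tau(s))}\,ds\,d\tau$, and reparametrizing each $\eta_\tau$ by arclength (a $C^{1,\beta}$ change of variables, as $\abs{\partial_s\eta_\tau}$ is bounded above and below) turns the inner integral into $\int_{L_\tau\bbT}\abs{\nabla K(x_2-\widetilde\eta_\tau(\sigma))}\,d\sigma$ with $L_\tau\le\ell_1+\ell_2$, $\norm{\widetilde\eta_\tau}_{\dot{C}^{1,\beta}}\lesssim M$, and $\dist(x_2,\operatorname{im}\widetilde\eta_\tau)\ge\delta$; the Lemma~\ref{L4.2} argument, now with $\abs{\nabla K(y)}\le C_\alpha\abs{y}^{-1-2\alpha}$ and the floor $\delta$ near the closest point, bounds it by $C_\alpha(\ell_1+\ell_2)\big(M^{1/\beta}\delta^{-2\alpha}+M^{(1+2\alpha)/\beta}\big)$. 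Therefore $\mathrm{(II)}\le C_\alpha(\ell_1+\ell_2)M^{1/\beta}\big(\delta^{1-2\alpha}+(M^{1/\beta})^{2\alpha}\delta\big)\le C_\alpha(\ell_1+\ell_2)M^{1/\beta}\delta^{1-2\alpha}$, using $\delta M^{1/\beta}\le\tfrac c4<1$; together with $\mathrm{(I)}$ this proves the lemma.

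The step I expect to be the main obstacle is the construction of $\psi$: it must be orientation-preserving, bi-Lipschitz with controlled constants, and such that $\gamma_2\circ\psi$ is both $L^\infty$-close to $\gamma_1$ and of $\dot{C}^{1,\beta}$-seminorm $\lesssim M$ (this last property is what forces the Lemma~\ref{L4.2}-type bound on $\widetilde\eta_\tau$ to have $\ell$-linear constants), whereas a reparametrization merely realizing $d_{\mathrm F}(\gamma_1,\gamma_2)$ to within $\eps$ has none of these. This is exactly where one uses $d_{\mathrm F}$-closeness together with the uniform $\dot{C}^{1,\beta}$ bound and $\delta\ll d$: by Lemma~\ref{L3.2}, over each arc of length $\sim d$ each of $\operatorname{im}\gamma_1,\operatorname{im}\gamma_2$ is a graph of $\dot{C}^{1,\beta}$-seminorm $\lesssim M$ over the common tangent line, these graphs are $\delta$-close because $d_{\mathrm F}(\gamma_1,\gamma_2)\le\delta$, so the graph-to-graph identification defines $\psi$ locally with the required bounds, and one patches the local maps (equivalently, $\psi$ is a regularization of the nearest-point projection onto $\operatorname{im}\gamma_2$). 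One small additional point, since $\gamma_1,\gamma_2$ need not be simple: $\operatorname{im}\gamma_2$ could a priori have several strands near a given arc of $\operatorname{im}\gamma_1$, which is handled by noting that a single homeomorphism realizes $d_{\mathrm F}$ and hence pairs arcs consistently.
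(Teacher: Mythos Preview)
Your homotopy strategy is quite different from the paper's direct argument, and as written it has two genuine gaps.

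First, the step ``move $x_2$ a distance $O(\delta)$ normally off $\operatorname{im}\gamma_1$'' does not work in general. By Lemma~\ref{L3.2} there can be up to $N_1\lesssim\ell_1 M^{1/\beta}$ near-straight arcs of $\gamma_1$ within $d$ of $x_2$, and nothing prevents many of them from passing within $2\delta$ of $x_2$ (think of a flower with $N_1$ petals meeting near $x_2$). An elementary area count then forces any point at distance $\ge 2\delta$ from $\operatorname{im}\gamma_1$ to lie at distance $\gtrsim N_1\delta$ from $x_2$, and the translation estimate applied over that distance gives a bound with the wrong $\ell$-dependence. Second, and this is the obstacle you flag, your plan requires $\|\widetilde\eta_\tau\|_{\dot{C}^{1,\beta}}\lesssim M$, which in turn forces $\psi'\in C^{0,\beta}$ with seminorm $\lesssim M$. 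Neither the nearest-point projection nor the tangential projection delivers this when $\beta<1$: differentiating the implicit relation $(\gamma_1(s)-\gamma_2(\psi(s)))\cdot\partial_s\gamma_2(\psi(s))=0$ would require $\partial_s^2\gamma_2$, which does not exist pointwise, and a direct difference-quotient computation produces an error term of size $\sim\delta M|s_2-s_1|^{\beta}$ (coming from $(\gamma_1-\gamma_2\circ\psi)\cdot(\mathbf T_2(\psi(s_2))-\mathbf T_2(\psi(s_1)))$) that does not vanish after division by $|s_2-s_1|$. The paper in fact devotes an entire appendix (the proof of Lemma~\ref{L8.1}) to constructing a reparametrization of this type, under stronger hypotheses and via a mollification plus gradient-flow argument; so ``patch local graphs'' is not a proof.

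The paper avoids both issues entirely. It takes \emph{any} orientation-preserving diffeomorphism $\phi$ with $\|\gamma_1-\gamma_2\circ\phi\|_{L^\infty}\le d_{\mathrm F}(\gamma_1,\gamma_2)+\eps$ and no derivative control whatsoever, then writes the difference as $V_1+V_2+V_3+V_4$ using the pointwise splitting $\widetilde K(\max\{|x_1-\gamma_1|,|x_2-\gamma_2\circ\phi|\})$. The dangerous factor $\partial_s(\gamma_1-\gamma_2\circ\phi)$ appears only in $V_3$ (over the small set $A_1\cap\phi^{-1}(A_2)$, where $|\partial_s\gamma_1|+|\partial_s(\gamma_2\circ\phi)|$ is bounded after a change of variables) and in $V_4$ (over the complement, where the argument of $\widetilde K$ is bounded below by $d$, so one integrates by parts to move $\partial_s$ onto $\widetilde K$ and then uses only $\|\gamma_1-\gamma_2\circ\phi\|_{L^\infty}\le d$). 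No reparametrization with controlled derivative is needed, and $x_2$ is never moved.
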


\begin{proof}
    Fix any $\eps > 0$. Lemma~\ref{LA.1} then
    yields an orientation-preserving diffeomorphism
    $\phi\colon\ell_{1}\bbT\to\ell_{2}\bbT$ such that
    $\norm{\gamma_{1} - \gamma_{2}\circ\phi}_{L^{\infty}}
    \leq d_{\mathrm{F}}(\gamma_{1},\gamma_{2}) + \eps$.
    Let $d\coloneqq \abs{x_{1}-x_{2}} + d_{\mathrm{F}}(\gamma_{1},\gamma_{2}) + \eps$ and
    $d_{0}\coloneqq \min\left\{\frac{1}{4}\norm{\gamma_{1}}_{\dot{C}^{1,\beta}}^{-1/\beta},
    \frac{1}{4}\norm{\gamma_{2}}_{\dot{C}^{1,\beta}}^{-1/\beta}\right\}$.

    When $d > d_{0}$, Lemma~\ref{L4.2} shows that
    \begin{align*}
        &\abs{\int_{\ell_{1}\bbT}K(x_{1} - \gamma_{1}(s))
        \partial_{s}\gamma_{1}(s)\,ds
        - \int_{\ell_{2}\bbT}K(x_{2} - \gamma_{2}(s))
        \partial_{s}\gamma_{2}(s)\,ds} 
 \leq       C_\alpha\left(
            \ell_{1}\norm{\gamma_{1}}_{\dot{C}^{1,\beta}}^{2\alpha/\beta}
            + \ell_{2}\norm{\gamma_{2}}_{\dot{C}^{1,\beta}}^{2\alpha/\beta}
        \right) \\ & \qquad\qquad\qquad
        \leq \frac{C_\alpha(\ell_{1}+\ell_{2})}{4^{2\alpha}d_{0}}d_{0}^{1-2\alpha} 
        \leq
        C_\alpha(\ell_{1}+\ell_{2})\max\left\{
            \norm{\gamma_{1}}_{\dot{C}^{1,\beta}},
            \norm{\gamma_{2}}_{\dot{C}^{1,\beta}}
        \right\}^{1/\beta}d^{1-2\alpha}.
    \end{align*}
    Since $\eps > 0$ was arbitrary, we get the desired estimate.
    
    Let us now assume $d \leq d_{0}$.
    For each $i=1,2$, take $\set{s_{i,j}}_{j=1}^{N_{i}}\subseteq\ell_{i}\bbT$ with
    $N_{i}\leq \frac{\ell_{i}}{4d_{0}}$ obtained by applying Lemma~\ref{L3.2} with
    $\gamma\coloneqq\gamma_{i}$ and $x\coloneqq x_{i}$, and then define
    \[
        S_{i}\coloneqq \set{j\in\set{1,\dots,N_{i}}\colon
        \abs{x_{i} - \gamma_{i}(s_{i,j})}\leq d}
        \qquad\textrm{and}\qquad
        A_{i}\coloneqq\bigcup_{j\in S_{i}}[s_{i,j}-2d,s_{i,j}+2d].
    \]
    Then Lemma~\ref{L3.2} shows that $\set{s\in\ell_{i}\bbT \colon
    \abs{x_{i} - \gamma_{i}(s)} \leq d} \subseteq A_{i}$, and we let
    \[
        B\coloneqq \set{s\in\ell_{1}\bbT\colon
        \abs{x_{1}-\gamma_{1}(s)} < \abs{x_{2}-(\gamma_{2}\circ\phi)(s)}}.
    \]
      Let $        \tilde{K}(r)\coloneqq \frac{c_{\alpha}}{2\alpha\, r^{2\alpha}}$
    so that $K(x) = \tilde{K}(\abs{x})$.    
    We now write the integral
    \[
        \int_{\ell_{1}\bbT}
        \left[ K(x_{1} - \gamma_{1}(s))\partial_{s}\gamma_{1}(s)
        - K(x_{2} - (\gamma_{2}\circ\phi)(s))\partial_{s}(\gamma_{2}\circ\phi)(s)
        \right] ds
    \]
    as the sum of the following four terms that we will estimate separately by
    $C_\alpha \frac{\ell_{1}+\ell_{2}}{d_{0}}d^{1-2\alpha}$:
    \begin{align*}
        V_{1} &\coloneqq \int_{B}
        \left[
            K(x_{1} - \gamma_{1}(s))
            - K(x_{2} - (\gamma_{2}\circ\phi)(s))
        \right]
        \partial_{s}\gamma_{1}(s)\,ds, \\
        V_{2} &\coloneqq \int_{\ell_{2}\bbT\setminus\phi(B)}
        \left[
            K(x_{1} - (\gamma_{1}\circ\phi^{-1})(s))
            - K(x_{2} - \gamma_{2}(s))
        \right]
        \partial_{s}\gamma_{2}(s)\,ds, \\
        V_{3} &\coloneqq \int_{A_{1}\cap\phi^{-1}(A_{2})}
        \tilde{K}\left(
            \max\left\{\abs{x_{1} - \gamma_{1}(s)},
        \abs{x_{2} - (\gamma_{2}\circ\phi)(s)}\right\}
        \right)\partial_{s}(\gamma_{1} - \gamma_{2}\circ\phi)(s)\,ds,\\
        V_{4} &\coloneqq \int_{\ell_{1}\bbT\setminus(A_{1}\cap\phi^{-1}(A_{2}))}
        \tilde{K}\left(
            \max\left\{\abs{x_{1} - \gamma_{1}(s)},
        \abs{x_{2} - (\gamma_{2}\circ\phi)(s)}\right\}
        \right)\partial_{s}(\gamma_{1} - \gamma_{2}\circ\phi)(s)\,ds.
    \end{align*}

    \textbf{Estimate for $V_{1}$.} By definition of $B$, we have
    \begin{equation*}
        \abs{\frac{1}{\abs{x_{1} - \gamma_{1}(s)}^{2\alpha}}
        - \frac{1}{\abs{x_{2} - (\gamma_{2}\circ\phi)(s)}^{2\alpha}}}
        \leq \frac{1}{\abs{x_{1} - \gamma_{1}(s)}^{2\alpha}}
    \end{equation*}
    for any $s\in B$. Hence, for any fixed $j$, Lemma~\ref{L3.2} shows that
    \begin{align*}
        \int_{B\cap[s_{1,j}-2d,s_{1,j}+2d]}
        \abs{\frac{1}{\abs{x_{1} - \gamma_{1}(s)}^{2\alpha}}
        - \frac{1}{\abs{x_{2} - (\gamma_{2}\circ\phi)(s)}^{2\alpha}}}\,ds
        &\leq \int_{-2d}^{2d}\frac{ds}{\abs{s/2}^{2\alpha}}
        = \frac{4}{1-2\alpha}d^{1-2\alpha}.
    \end{align*}
    On the other hand, the mean value theorem applied to the function
    $\tilde{K}$ shows that
    \begin{equation}\label{4.1}
        \abs{K(x_{1} - \gamma_{1}(s))
        - K(x_{2} - (\gamma_{2}\circ\phi)(s))}
        \leq \frac{c_{\alpha}d}{\abs{x_{1} - \gamma_{1}(s)}^{1+2\alpha}}
    \end{equation}
    holds for any $s\in B$, so again Lemma~\ref{L3.2} shows that
    \begin{align*}
        &\int_{B\cap[s_{1,j}-2d_{0},s_{1,j}+2d_{0}]
        \setminus [s_{1,j}-2d,s_{1,j}+2d]}
        \abs{K(x_{1} - \gamma_{1}(s))
        - K(x_{2} - (\gamma_{2}\circ\phi)(s))}\,ds \\
        &\quad\quad\quad\leq
        \int_{2d}^{2d_{0}}\frac{2c_{\alpha}d}{(s/2)^{1+2\alpha}}\,ds
        \leq \frac{2c_{\alpha}}{\alpha}d^{1-2\alpha}.
    \end{align*}
    Hence, summing up these estimates over $j=1, \dots ,N_{1}$ gives
    \begin{equation}\label{4.2}
        \int_{B\cap\bigcup_{j=1}^{N_{1}}[s_{1,j}-2d_{0},s_{1,j}+2d_{0}]}
        \abs{K(x_{1} - \gamma_{1}(s))
        - K(x_{2} - (\gamma_{2}\circ\phi)(s))}\,ds
        \leq C_\alpha\frac{\ell_{1}}{d_{0}}d^{1-2\alpha}.
    \end{equation}
    Also, \eqref{4.1}, $d \leq d_{0}$, and  $\abs{x_{1} - \gamma_{1}(s)} \geq d_{0}$ for 
    $s\notin \bigcup_{j=1}^{N_{1}}[s_{1,j}-2d_{0},s_{1,j}+2d_{0}]$ show that
    \begin{equation*}
        \abs{K(x_{1} - \gamma_{1}(s))
        - K(x_{2} - (\gamma_{2}\circ\phi)(s))}
        \leq \frac{c_{\alpha}d}{d_{0}^{1+2\alpha}}
        \leq \frac{c_{\alpha}}{d_{0}}d^{1-2\alpha}
    \end{equation*}
 for all $s\in B\setminus\bigcup_{j=1}^{N_{1}}[s_{1,j}-2d_{0},s_{1,j}+2d_{0}]$.
    Together with \eqref{4.2}, this yields
    $\abs{V_{1}} \leq C_\alpha\frac{\ell_{1}}{d_{0}}d^{1-2\alpha}$.

    \textbf{Estimate for $V_{2}$.} This is identical to the estimate for $V_1$.

    \textbf{Estimate for $V_{3}$.} Clearly
    \begin{align*}
        \abs{V_{3}} &\leq
        \int_{A_{1}\cap\phi^{-1}(A_{2})}
        \tilde{K}\left(
            \max\set{\abs{x_{1} - \gamma_{1}(s)},
            \abs{x_{2} - (\gamma_{2}\circ\phi)(s)}}
        \right)\,ds
        \\&\quad\quad\quad\quad\quad\quad
        + \int_{\phi(A_{1})\cap A_{2}}
        \tilde{K}\left(
            \max\set{\abs{x_{1}-(\gamma_{1}\circ\phi^{-1})(s)},
            \abs{x_{2}-\gamma_{2}(s)}}
        \right)\,ds.
    \end{align*}
    By symmetry, it suffices to estimate the first term, which we do using Lemma~\ref{L3.2}:
    \begin{align*}
        &\int_{A_{1}\cap\phi^{-1}(A_{2})}
        \tilde{K}\left(
            \max\set{\abs{x_{1} - \gamma_{1}(s)},
            \abs{x_{2} - (\gamma_{2}\circ\phi)(s)}}
        \right)\,ds
        \leq \sum_{j\in S_{1}}\int_{s_{1,j} - 2d}^{s_{1,j} + 2d}
        \frac{c_{\alpha}}{2\alpha\abs{x_{1} - \gamma_{1}(s)}^{2\alpha}}\,ds
        \\&\quad\quad
        \leq N_{1}\int_{-2d}^{2d}
        \frac{c_{\alpha}}{2\alpha\abs{s/2}^{2\alpha}}\,ds
        \leq \frac{\ell_{1}}{4d_{0}} \frac{2c_{\alpha}}{\alpha(1-2\alpha)}d^{1-2\alpha}
        = C_\alpha\frac{\ell_{1}}{d_{0}}d^{1-2\alpha}.
    \end{align*}

    \textbf{Estimate for $V_{4}$.} Note that $\ell_{1}\bbT\setminus(A_{1}\cap\phi^{-1}(A_{2}))$
    is either a union of a disjoint family $\set{(a_{k},b_{k})}_{k=1}^{N}$ of open subintervals
    of $\ell_{1}\bbT$ with $N\leq N_{1} + N_{2}$, or all of $\ell_{1}\bbT$.
    In the latter case let $N: = 1$ and $(a_{1},b_{1}) \coloneqq (0,\ell_{1})$.  Then in either case we have
    \begin{align*}
        V_{4} = \sum_{k=1}^{N}\int_{a_{k}}^{b_{k}}
        \tilde{K}\left(
            \max\left\{\abs{x_{1} - \gamma_{1}(s)},
            \abs{x_{2} - (\gamma_{2}\circ\phi)(s)}\right\}
        \right)
        \partial_{s}(\gamma_{1} - \gamma_{2}\circ\phi)(s)\,ds,
    \end{align*}
    and then integration by parts yields
    \begin{align*}
        &\int_{a_{k}}^{b_{k}}
        \tilde{K}\left(
            \max\left\{\abs{x_{1} - \gamma_{1}(s)},
            \abs{x_{2} - (\gamma_{2}\circ\phi)(s)}\right\}
        \right)
        \partial_{s}(\gamma_{1} - \gamma_{2}\circ\phi)(s)\,ds
        \\&\quad\quad=
        \tilde{K}\left(
            \max\left\{\abs{x_{1} - \gamma_{1}(b_{k})},
            \abs{x_{2} - (\gamma_{2}\circ\phi)(b_{k})}\right\}
        \right)
        (\gamma_{1}(b_{k}) - (\gamma_{2}\circ\phi)(b_{k}))
        \\&\quad\quad\quad\quad\quad
        - \tilde{K}\left(
            \max\left\{\abs{x_{1} - \gamma_{1}(a_{k})},
            \abs{x_{2} - (\gamma_{2}\circ\phi)(a_{k})}\right\}
        \right)
        (\gamma_{1}(a_{k}) - (\gamma_{2}\circ\phi)(a_{k}))
        \\&\quad\quad\quad\quad\quad
        + c_{\alpha}\int_{(a_{k},b_{k})\cap B}
        \frac{(\gamma_{1}(s) - (\gamma_{2}\circ\phi)(s))
        \big((x_{2} - (\gamma_{2}\circ\phi)(s))
        \cdot \partial_{s}(\gamma_{2}\circ\phi)(s)\big)}
        {\abs{x_{2} - (\gamma_{2}\circ\phi)(s)}^{2+2\alpha}}\,ds
        \\&\quad\quad\quad\quad\quad
        + c_{\alpha}\int_{(a_{k},b_{k})\setminus B}
        \frac{(\gamma_{1}(s) - (\gamma_{2}\circ\phi)(s))
        \big((x_{1} - \gamma_{1}(s))
        \cdot \partial_{s}\gamma_{1}(s)\big)}
        {\abs{x_{1} - \gamma_{1}(s)}^{2+2\alpha}}\,ds.
    \end{align*}
    (Note that the definition of $A_i$ ensures that the argument of $\tilde K$ in the above integrals stays away from 0, and one can then use $\max\set{f,g} = \frac{f + g + \abs{f - g}}{2}$ and for instance \cite[Theorem 2]{SerrinVarberg} to perform this integration by parts.)
    Let $V_{5,k}$ be the sum of the first two terms from the right-hand side above, $V_{6,k}$ the third term,
    and $V_{7,k}$ the last term.
    Since $(a_{k},b_{k})$ is disjoint from $A_{1}\cap\phi^{-1}(A_{2})$, at least one of
    $\abs{x_{1} - \gamma_{1}(s)}$ and $\abs{x_{2} - (\gamma_{2}\circ\phi)(s)}$
    is bounded below by $d$ for all $s\in[a_{k},b_{k}]$. Hence $\abs{V_{5,k}}$ is bounded by
$2\frac{c_{\alpha}}{2\alpha\, d^{2\alpha}}d
        = C_\alpha d^{1-2\alpha}$,
    so we have
    \[
        \abs{\sum_{k=1}^{N}V_{5,k}}
        \leq C_\alpha  Nd^{1-2\alpha}
        \leq C_\alpha\frac{\ell_{1} + \ell_{2}}{d_{0}}d^{1-2\alpha}.
    \]

    On the other hand, since $B$ is open, applying the change of variables formula
    to each connected component of $(a_{k},b_{k})\cap B$ yields
    \begin{align*}
        \abs{\sum_{k=1}^{N}V_{6,k}}
        \leq c_{\alpha}d\int_{\phi(B)\setminus(\phi(A_{1})\cap A_{2})}
        \frac{ds}{\abs{x_{2} - \gamma_{2}(s)}^{1+2\alpha}}.
    \end{align*}
    Since $s\in \phi(B)\setminus(\phi(A_{1})\cap A_{2})$ implies
    $\abs{x_{2} - \gamma_{2}(s)} \geq d$, Lemma~\ref{L3.2} shows that
    \begin{align*}
        &\int_{\big(\phi(B)\setminus (\phi(A_{1})\cap A_{2})\big)
        \cap[s_{2,j}-2d_{0},s_{2,j}+2d_{0}]}
        \frac{ds}{\abs{x_{2} - \gamma_{2}(s)}^{1+2\alpha}}
        \leq \int_{-2d_{0}}^{2d_{0}}\frac{ds}{\max\{d,\abs{s/2}\}^{1+2\alpha}}
        \leq \frac{2(1+2\alpha)}{\alpha}d^{-2\alpha}
    \end{align*}
    for $j=1 \dots ,N_{2}$, and
    \begin{align*}
        &\int_{\big(\phi(B)\setminus (\phi(A_{1})\cap A_{2})\big)
        \setminus\bigcup_{j=1}^{N_{2}}[s_{2,j}-2d_{0},s_{2,j}+2d_{0}]}
        \frac{ds}{\abs{x_{2} - \gamma_{2}(s)}^{1+2\alpha}}
        \leq \frac{\ell_{2}}{d_{0}^{1+2\alpha}}
        \leq \frac{\ell_{2}}{d_{0}}d^{-2\alpha}.
    \end{align*}
    We therefore obtain
    \begin{align*}
        \abs{\sum_{k=1}^{N}V_{6,k}}
        &\leq C_\alpha N_{2}d^{1-2\alpha} + \frac{c_{\alpha}\ell_{2}}{d_{0}}d^{1-2\alpha}
        \leq C_\alpha\frac{\ell_{2}}{d_{0}}d^{1-2\alpha},
    \end{align*}
    and $\abs{\sum_{k=1}^{N}V_{7,k}}
    \leq C_\alpha\frac{\ell_{1}}{d_{0}}d^{1-2\alpha}$ follows in the same way
    (but without the change of variables).  Thus
    $\abs{V_{4}} \leq C_\alpha\frac{\ell_{1} + \ell_{2}}{d_{0}}d^{1-2\alpha}$, which together with the same estimate for $V_{1}$, $V_{2}$, and $V_{3}$ (and $\eps>0$ being arbitrary) finishes the proof.
\end{proof}

We next obtain some crucial estimates that will allow us to leverage the bounds from Lemma~\ref{L3.3} and Lemma~\ref{L3.5} for curves that do not cross each other to obtain better estimates on the g-SQG velocity and related integrals in the following sections.

\begin{lemma}\label{L4.5}
    If $\frac{\beta}{1+\beta}\geq 2\alpha$, then there is
    $C_{\alpha,\beta}$    such that for any $C^{1,\beta}$ closed curves
    $\gamma_{i}\colon\ell_{i}\bbT\to\bbR^{2}$ ($i=1,2,3$) parametrized by arclength, with
    $\mathbf{T}_{i}\coloneqq\partial_{s}\gamma_{i}$ and
    $\mathbf{N}_{i}\coloneqq\mathbf{T}_{i}^{\perp}$, 
     and for any
    $x\in\bbR^{2}$, $s'\in\ell_{1}\bbT$, and a Lipschitz continuous orientation-preserving
    homeomorphism $\phi\colon\ell_{3}\bbT\to\ell_{2}\bbT$, we have the following.
    \begin{enumerate}
        \item If $\gamma_{1}$ and $\gamma_{2}$ do not cross transversally, then
        \begin{align*}
            &\int_{\ell_{2}\bbT}
            \frac{\abs{\mathbf{N}_{1}(s')\cdot\mathbf{T}_{2}(s)}}
            {\abs{\gamma_{1}(s') - \gamma_{2}(s)}
            \abs{x - \gamma_{3}(\phi^{-1}(s))}^{2\alpha}}\,ds
            \\&\qquad\qquad\qquad\leq C_{\alpha,\beta}\max\set{\ell_{2},\ell_{3}}\max\left\{
                \norm{\gamma_{1}}_{\dot{C}^{1,\beta}},
                \norm{\gamma_{2}}_{\dot{C}^{1,\beta}},
                \norm{\gamma_{3}}_{\dot{C}^{1,\beta}}
            \right\}^{\frac{1+2\alpha}{\beta}}
            \left(\norm{\phi}_{\dot{C}^{0,1}}+1\right)^{2}.
        \end{align*}

        \item If $\gamma_{1},\gamma_{2}\in\operatorname{PSC}(\bbR^{2})$ and
        $\Omega(\gamma_{1})\subseteq\Omega(\gamma_{2})$ or
        $\Omega(\gamma_{2})\subseteq\Omega(\gamma_{1})$, then
        \begin{align*}
            &\int_{\ell_{2}\bbT}
            \frac{\abs{\mathbf{T}_{1}(s') - \mathbf{T}_{2}(s)}}
            {\abs{\gamma_{1}(s') - \gamma_{2}(s)}
            \abs{x - \gamma_{3}(\phi^{-1}(s))}^{2\alpha}}\,ds
            \\&\qquad\qquad\qquad\leq C_{\alpha,\beta}
            \frac{\max\set{\ell_{2},\ell_{3}}\left(\norm{\phi}_{\dot{C}^{0,1}}+1\right)^{2}}
            {\min\set{
                \Delta_{\norm{\gamma_{1}}_{\dot{C}^{1,\beta}}^{-1/\beta}}(\gamma_{1}),
                \Delta_{\norm{\gamma_{2}}_{\dot{C}^{1,\beta}}^{-1/\beta}}(\gamma_{2}),
                \norm{\gamma_{3}}_{\dot{C}^{1,\beta}}^{-1/\beta}
            }^{1+2\alpha}}.
        \end{align*}

        \item If $\gamma_{1},\gamma_{2}\in\operatorname{PSC}(\bbR^{2})$ and
        $\Omega(\gamma_{1})\cap\Omega(\gamma_{2})=\emptyset$, then
        \begin{align*}
            &\int_{\ell_{2}\bbT}
            \frac{\abs{\mathbf{T}_{1}(s') + \mathbf{T}_{2}(s)}}
            {\abs{\gamma_{1}(s') - \gamma_{2}(s)}
            \abs{x - \gamma_{3}(\phi^{-1}(s))}^{2\alpha}}\,ds
            \\&\qquad\qquad\qquad\leq C_{\alpha,\beta}
            \frac{\max\set{\ell_{2},\ell_{3}}\left(\norm{\phi}_{\dot{C}^{0,1}}+1\right)^{2}}
            {\min\set{
                \Delta_{\norm{\gamma_{1}}_{\dot{C}^{1,\beta}}^{-1/\beta}}(\gamma_{1}),
                \Delta_{\norm{\gamma_{2}}_{\dot{C}^{1,\beta}}^{-1/\beta}}(\gamma_{2}),
                \norm{\gamma_{3}}_{\dot{C}^{1,\beta}}^{-1/\beta}
            }^{1+2\alpha}}.
        \end{align*}
    \end{enumerate}
\end{lemma}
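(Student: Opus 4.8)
The plan is to exploit a cancellation in the numerator against the first singular factor. In every case the integrand is a quotient whose numerator — $\abs{\mathbf{N}_1(s')\cdot\mathbf{T}_2(s)}$ in (1), $\abs{\mathbf{T}_1(s')-\mathbf{T}_2(s)}$ in (2), $\abs{\mathbf{T}_1(s')+\mathbf{T}_2(s)}$ in (3) — vanishes whenever $\gamma_1(s')=\gamma_2(s)$, while the denominator is the product of the two singular factors $\abs{\gamma_1(s')-\gamma_2(s)}$ and $\abs{x-\gamma_3(\phi^{-1}(s))}^{2\alpha}$. I would use the non-crossing hypotheses, via the geometric estimates of Lemmas~\ref{L3.3} and~\ref{L3.5}, to bound each numerator by a (suitably weighted) power of $\abs{\gamma_1(s')-\gamma_2(s)}$, thereby weakening the first singular factor enough that its product with the second one remains integrable; then I would separate the two singularities and estimate each of the two resulting one-variable singular integrals by the covering argument behind Lemma~\ref{L4.2} — for the integral involving $\gamma_3$ only after the change of variables $s=\phi(t)$, which costs one factor of $\norm{\phi}_{\dot{C}^{0,1}}$ since $\phi\in\dot{C}^{0,1}$ forces $\abs{\phi'}\le\norm{\phi}_{\dot{C}^{0,1}}$ a.e.

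For (1) — the only part that uses $\tfrac{\beta}{1+\beta}\ge 2\alpha$ — Lemma~\ref{L3.3}, applicable because $\gamma_1$ and $\gamma_2$ do not cross transversally, should give a pointwise estimate of the shape
\[
\abs{\mathbf{N}_1(s')\cdot\mathbf{T}_2(s)}\le C_{\alpha,\beta}\,A^{\frac{1}{1+\beta}}\,\abs{\gamma_1(s')-\gamma_2(s)}^{\frac{\beta}{1+\beta}},\qquad A:=\max_{1\le i\le 3}\norm{\gamma_i}_{\dot{C}^{1,\beta}},
\]
so with $a:=\tfrac{1}{1+\beta}$ one gets $\frac{\abs{\mathbf{N}_1(s')\cdot\mathbf{T}_2(s)}}{\abs{\gamma_1(s')-\gamma_2(s)}}\le C_{\alpha,\beta}A^{a}\abs{\gamma_1(s')-\gamma_2(s)}^{-a}$. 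The elementary bound $P^{-a}Q^{-2\alpha}\le P^{-(a+2\alpha)}+Q^{-(a+2\alpha)}$ (equivalently, one splits $\ell_2\bbT$ according to which of $\abs{\gamma_1(s')-\gamma_2(s)}$, $\abs{x-\gamma_3(\phi^{-1}(s))}$ is smaller) then bounds the integral by $C_{\alpha,\beta}A^{a}$ times
\[
\int_{\ell_2\bbT}\frac{ds}{\abs{\gamma_1(s')-\gamma_2(s)}^{a+2\alpha}}+\int_{\ell_2\bbT}\frac{ds}{\abs{x-\gamma_3(\phi^{-1}(s))}^{a+2\alpha}}.
\]
The hypothesis is exactly the statement $a+2\alpha\le 1$, so the argument of Lemma~\ref{L4.2} (run with exponent $a+2\alpha$ in place of $2\alpha$) bounds the first of these by $C\ell_2\norm{\gamma_2}_{\dot{C}^{1,\beta}}^{(a+2\alpha)/\beta}$, and the second, after $s=\phi(t)$, by $C\norm{\phi}_{\dot{C}^{0,1}}\ell_3\norm{\gamma_3}_{\dot{C}^{1,\beta}}^{(a+2\alpha)/\beta}$. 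Since $a+\tfrac{a+2\alpha}{\beta}=\tfrac{1+2\alpha}{\beta}$ and each $\norm{\gamma_i}_{\dot{C}^{1,\beta}}\le A$, this gives (1), the power $2$ on $\norm{\phi}_{\dot{C}^{0,1}}+1$ being a non-sharp bookkeeping of the single change of variables. The borderline case $\tfrac{\beta}{1+\beta}=2\alpha$, where $a+2\alpha=1$, I would handle separately by splitting $\ell_2\bbT$ into a neighborhood of $\{s:\gamma_2(s)=\gamma_1(s')\}$ — on which the $\dot{C}^{0,\beta}$ bound for $\mathbf{T}_2$ gives the better, integrable decay $\abs{\gamma_1(s')-\gamma_2(s)}^{\beta-1}$ for the quotient — and its complement, on which $\abs{\gamma_1(s')-\gamma_2(s)}$ is bounded below.

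For (2) and (3) the same scheme works, and more easily: under the nesting ($\Omega(\gamma_1)\subseteq\Omega(\gamma_2)$ or the reverse), respectively disjointness, hypothesis, Lemma~\ref{L3.5} should yield the \emph{linear} estimate
\[
\abs{\mathbf{T}_1(s')\mp\mathbf{T}_2(s)}\le \frac{C_{\alpha,\beta}\,\abs{\gamma_1(s')-\gamma_2(s)}}{\min\set{\Delta_{\norm{\gamma_1}_{\dot{C}^{1,\beta}}^{-1/\beta}}(\gamma_1),\,\Delta_{\norm{\gamma_2}_{\dot{C}^{1,\beta}}^{-1/\beta}}(\gamma_2)}}
\]
(with the $-$ sign in (2) and $+$ in (3)), which cancels the first singular factor outright so that no Young-type splitting is needed. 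The leftover integral $\int_{\ell_2\bbT}\abs{x-\gamma_3(\phi^{-1}(s))}^{-2\alpha}\,ds$ is, after $s=\phi(t)$, at most $C\norm{\phi}_{\dot{C}^{0,1}}\ell_3\norm{\gamma_3}_{\dot{C}^{1,\beta}}^{2\alpha/\beta}$ by Lemma~\ref{L4.2}; absorbing the denominator above and $\norm{\gamma_3}_{\dot{C}^{1,\beta}}^{2\alpha/\beta}$ into $\min\set{\Delta_{\norm{\gamma_1}_{\dot{C}^{1,\beta}}^{-1/\beta}}(\gamma_1),\Delta_{\norm{\gamma_2}_{\dot{C}^{1,\beta}}^{-1/\beta}}(\gamma_2),\norm{\gamma_3}_{\dot{C}^{1,\beta}}^{-1/\beta}}^{-(1+2\alpha)}$ — using $\Delta_h(\gamma)\le h$ from \eqref{2.2} — finishes (2) and (3). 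I expect the only genuinely delicate point to be part (1): pinning down the gain exponent $\tfrac{\beta}{1+\beta}$ supplied by Lemma~\ref{L3.3} (it is forced by the requirement that both exponents after the splitting be $\le 1$, which is precisely the arithmetic meaning of $\tfrac{\beta}{1+\beta}\ge 2\alpha$) and disposing of the borderline equality; (2) and (3) are routine once Lemma~\ref{L3.5} is available.
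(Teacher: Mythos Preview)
Your overall strategy for (1) is right and close to the paper's, but there are two genuine gaps.

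\textbf{The borderline case of (1).} Your fix---split $\ell_{2}\bbT$ into a neighborhood of $\{s:\gamma_{2}(s)=\gamma_{1}(s')\}$ and its complement---breaks down when $\gamma_{1}(s')\notin\operatorname{im}(\gamma_{2})$ but $r:=d(\gamma_{1}(s'),\operatorname{im}(\gamma_{2}))$ is small. Then the intersection set is empty, the ``complement'' is all of $\ell_{2}\bbT$, and your lower bound on $\abs{\gamma_{1}(s')-\gamma_{2}(s)}$ is only $r$; the resulting estimate is $\sim\log(d_{0}/r)$, not uniform. The correct idea (and what the paper does) is to anchor Lemma~\ref{L3.3} not at an intersection but at the \emph{nearest} point $s_{0}$ of $\gamma_{2}$ to $\gamma_{1}(s')$, where it gives $\abs{\mathbf{N}_{1}(s')\cdot\mathbf{T}_{2}(s_{0})}\lesssim M^{1/(1+\beta)}r^{\beta/(1+\beta)}$ regardless of whether $r=0$. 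Combining this with the $\dot{C}^{0,\beta}$ variation of $\mathbf{T}_{2}$ yields
\[
\abs{\mathbf{N}_{1}(s')\cdot\mathbf{T}_{2}(s)}\lesssim M^{\frac{1}{1+\beta}}r^{\frac{\beta}{1+\beta}}+M\abs{\gamma_{1}(s')-\gamma_{2}(s)}^{\beta}
\]
on each covering interval from Lemma~\ref{L3.2}. The second term divided by $\abs{\gamma_{1}(s')-\gamma_{2}(s)}^{1+2\alpha}$ gives the integrable singularity $\abs{\cdot}^{\beta-1-2\alpha}$ (using $\beta>2\alpha$, which follows from $\frac{\beta}{1+\beta}\ge 2\alpha$); the first term is constant in $s$, and after splitting $\{|s-s_{0}|<r\}$ versus $\{|s-s_{0}|\ge r\}$ its contribution is $\lesssim r^{\frac{\beta}{1+\beta}-2\alpha}\le 1$ precisely because $\frac{\beta}{1+\beta}\ge 2\alpha$. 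This two-term decomposition is the mechanism that makes the endpoint work, and it is missing from your sketch.

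\textbf{Parts (2) and (3).} Lemma~\ref{L3.5} does \emph{not} give a linear estimate: its conclusion is $\abs{\mathbf{T}_{1}(s')\mp\mathbf{T}_{2}(s)}\le 60\,D^{-\beta/(1+\beta)}\abs{\gamma_{1}(s')-\gamma_{2}(s)}^{\beta/(1+\beta)}$ with $D=\min\{\Delta_{\norm{\gamma_{1}}_{\dot{C}^{1,\beta}}^{-1/\beta}}(\gamma_{1}),\Delta_{\norm{\gamma_{2}}_{\dot{C}^{1,\beta}}^{-1/\beta}}(\gamma_{2})\}$---the same sublinear exponent $\frac{\beta}{1+\beta}$ as in Lemma~\ref{L3.3}. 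So the first singular factor is not cancelled outright; (2) and (3) require exactly the same argument as (1) (including the anchoring trick above), with Lemma~\ref{L3.5} merely replacing Lemma~\ref{L3.3} and the constants $\norm{\gamma_{i}}_{\dot{C}^{1,\beta}}^{-1/\beta}$ replaced by the $\Delta$-quantities. In particular the hypothesis $\frac{\beta}{1+\beta}\ge 2\alpha$ is needed in all three parts, not just in (1).
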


\begin{proof}
    We only prove (1) because (2) and (3) follow in the same way but
     using Lemma~\ref{L3.5} instead of Lemma~\ref{L3.3}. 
     This replaces $\norm{\gamma_1}_{\dot{C}^{1,\beta}}^{-1/\beta}$ and $\norm{\gamma_2}_{\dot{C}^{1,\beta}}^{-1/\beta}$ by $\Delta_{\norm{\gamma_1}_{\dot{C}^{1,\beta}}^{-1/\beta}}(\gamma_1)$ and $\Delta_{\norm{\gamma_2}_{\dot{C}^{1,\beta}}^{-1/\beta}}(\gamma_2)$, respectively, in the bounds, while $\norm{\gamma_3}_{\dot{C}^{1,\beta}}$ still enters the proof  via Lemma~\ref{L3.2}.  Let
    \begin{align*}
        B\coloneqq\set{s\in\ell_{2}\bbT \colon
        \abs{\gamma_{1}(s') - \gamma_{2}(s)}<\abs{x - \gamma_{3}(\phi^{-1}(s))}}
    \end{align*}
    and
    \begin{align*}
        F_{1} &\coloneqq \int_{B}
        \frac{\abs{\mathbf{N}_{1}(s')\cdot\mathbf{T}_{2}(s)}}
            {\abs{\gamma_{1}(s') - \gamma_{2}(s)}
            \abs{x - \gamma_{3}(\phi^{-1}(s))}^{2\alpha}}\,ds, \\
        F_{2} &\coloneqq \int_{\ell_{2}\bbT\setminus B}
        \frac{\abs{\mathbf{N}_{1}(s')\cdot\mathbf{T}_{2}(s)}}
            {\abs{\gamma_{1}(s') - \gamma_{2}(s)}
            \abs{x - \gamma_{3}(\phi^{-1}(s))}^{2\alpha}}\,ds.
    \end{align*}
    Note that it is enough to bound $F_{2}$. Indeed,  then we can apply  to the right-hand side of
    \begin{align*}
        F_{1} &\leq \int_{\ell_{2}\bbT}
        \frac{\abs{\mathbf{N}_{1}(s')\cdot\mathbf{T}_{2}(s)}}
        {\abs{\gamma_{1}(s') - \gamma_{2}(s)}^{1+2\alpha}}\,ds
    \end{align*}
     the obtained bound for $F_2$ with 
    $x\coloneqq\gamma_{1}(s')$, $\gamma_{3}\coloneqq\gamma_{2}$, and $\phi\coloneqq{\rm Id}$ to similarly estimate $F_1$.

    To bound $F_{2}$, we apply Lemma~\ref{L3.2} with
    $\gamma\coloneqq\gamma_{2}$ and $x\coloneqq\gamma_{1}(s')$ to obtain
    $\set{s_{2,i}}_{i=1}^{N_{2}}\subseteq\ell_{2}\bbT$ with
    $N_{2}\leq \ell_{2}\norm{\gamma_{2}}_{\dot{C}^{1,\beta}}^{1/\beta}$,
    and also with $\gamma\coloneqq\gamma_{3}$ and the given $x$ to obtain
    $\set{s_{3,j}}_{j=1}^{N_{3}}\subseteq\ell_{3}\bbT$ with
    $N_{3}\leq \ell_{3}\norm{\gamma_{3}}_{\dot{C}^{1,\beta}}^{1/\beta}$.
    Let $M\coloneqq\max\left\{\norm{\gamma_{1}}_{\dot{C}^{1,\beta}},
    \norm{\gamma_{2}}_{\dot{C}^{1,\beta}},
    \norm{\gamma_{3}}_{\dot{C}^{1,\beta}}\right\}$ and
    $d_{0}\coloneqq\frac{1}{4}M^{-1/\beta}$.

    Fix any $j\in\{1,\dots ,N_{3}\}$.  Since the intervals $[s_{2,i}-2d_{0},s_{2,i}+2d_{0}]$
    are pairwise disjoint and
    \begin{align*}
        \abs{\phi^{-1}\left(
            [s_{2,i}-2d_{0},s_{2,i}+2d_{0}]
        \right)} \geq \frac{4d_{0}}{\norm{\phi}_{\dot{C}^{0,1}}},
    \end{align*}
    there are at most $\left(\left\lfloor\norm{\phi}_{\dot{C}^{0,1}}
    \right\rfloor + 2\right)$-many $i$'s such that
    $\phi^{-1}\left([s_{2,i}-2d_{0},s_{2,i}+2d_{0}]\right)$ intersects
    $[s_{3,j}-2d_{0},s_{3,j}+2d_{0}]$. Fix any such $i$, let
    $r\coloneqq\abs{\gamma_{1}(s') - \gamma_{2}(s_{2,i})}$ and
    \begin{align*}
        I\coloneqq [s_{3,j}-2d_{0},s_{3,j}+2d_{0}]
        \cap \phi^{-1}\left(
            [s_{2,i}-2d_{0},s_{2,i}+2d_{0}]
        \right).
    \end{align*}
  Then for any $s\in I$, Lemmas~\ref{L3.3} and \ref{L3.2}(4) show that
    \begin{align*}
        \abs{\mathbf{N}_{1}(s')\cdot\mathbf{T}_{2}(\phi(s))}
        &\leq 12M^{\frac{1}{1+\beta}}
        r^{\frac{\beta}{1+\beta}}
        + \norm{\gamma_{2}}_{\dot{C}^{1,\beta}}
        \abs{\phi(s) - s_{2,i}}^{\beta} \\
        &\leq 12M^{\frac{1}{1+\beta}}
        r^{\frac{\beta}{1+\beta}}
        + 2\norm{\gamma_{2}}_{\dot{C}^{1,\beta}}
        \abs{\gamma_{1}(s') - \gamma_{2}(\phi(s))}^{\beta}.
    \end{align*}
    Combining this again with Lemma~\ref{L3.3} and Lemma~\ref{L3.2} yields
    \begin{align*}
        &\int_{\phi(I)\setminus B}
        \frac{\abs{\mathbf{N}_{1}(s')\cdot\mathbf{T}_{2}(s)}}
        {\abs{\gamma_{1}(s') - \gamma_{2}(s)}
        \abs{x - \gamma_{3}(\phi^{-1}(s))}^{2\alpha}}\,ds
        \\&\quad\quad\leq \int_{I\setminus\phi^{-1}(B)}
        \frac{\abs{\mathbf{N}_{1}(s')\cdot\mathbf{T}_{2}(\phi(s))}
        \norm{\phi}_{\dot{C}^{0,1}}}
        {\abs{\gamma_{1}(s') - \gamma_{2}(\phi(s))}
        \abs{x - \gamma_{3}(s)}^{2\alpha}}\,ds \\
        &\quad\quad\leq \int_{\abs{s - s_{3,j}}<\min\{r,2d_{0}\}}
        \frac{12M^{\frac{1}{1+\beta}}\norm{\phi}_{\dot{C}^{0,1}}}
        {r^{\frac{1}{1+\beta}}\abs{x - \gamma_{3}(s)}^{2\alpha}}\,ds
        + \int_{\min\{r,2d_{0}\}\leq \abs{s - s_{3,j}}\leq 2d_{0}}
        \frac{12M^{\frac{1}{1+\beta}}r^{\frac{\beta}{1+\beta}}
        \norm{\phi}_{\dot{C}^{0,1}}}
        {\abs{x - \gamma_{3}(s)}^{1+2\alpha}}\,ds
        \\&\quad\quad\quad\quad
        + \int_{\min\{r,2d_{0}\}\leq \abs{s - s_{3,j}}\leq 2d_{0}}
        \frac{2\norm{\gamma_{2}}_{\dot{C}^{1,\beta}}\norm{\phi}_{\dot{C}^{0,1}}}
        {\abs{x - \gamma_{3}(s)}^{1+2\alpha-\beta}}\,ds.
    \end{align*}
    Denote the three terms on the right-hand side above by
    $F_{3}(i,j)$, $F_{4}(i,j)$, and $F_{5}(i,j)$.
    Since Lemma~\ref{L3.2} shows that
    $\abs{x - \gamma_{3}(s)} \geq \frac{\abs{s - s_{3,j}}}{2}$ for $s\in I$, it  follows that
    \begin{align*}
       \max\{ F_{3}(i,j), F_{4}(i,j) \} \leq C_{\alpha,\beta}M^{\frac{1}{1+\beta}}
       \norm{\phi}_{\dot{C}^{0,1}} d_{0}^{\frac{\beta}{1+\beta}-2\alpha}
    \end{align*}
    and
    \begin{align*}
        F_{5}(i,j) \leq C_{\alpha,\beta}\norm{\gamma_{2}}_{\dot{C}^{1,\beta}}
        \norm{\phi}_{\dot{C}^{0,1}}
        d_{0}^{\beta-2\alpha}.
    \end{align*}
    From the definition of $d_{0}$ we now see that
    all  $F_{3}(i,j)$, $F_{4}(i,j)$, and $F_{5}(i,j)$ are all bounded by
    $C_{\alpha,\beta}M^{2\alpha/\beta}\norm{\phi}_{\dot{C}^{0,1}}$.
    
    On the other hand, since Lemma~\ref{L3.2} shows that any
    $s\in[s_{3,j}-2d_{0},s_{3,j}+2d_{0}]$ that does not belong to any 
    $\phi^{-1}\left([s_{2,i}-2d_{0},s_{2,i}+2d_{0}]\right)$
    satisfies $\abs{\gamma_{1}(s') - \gamma_{2}(\phi(s))}\geq d_{0}$,
     Lemma~\ref{L3.2} yields
    \begin{align*}
        &\int_{\phi\left([s_{3,j}-2d_{0},s_{3,j}+2d_{0}]\right)\setminus B}
        \frac{\abs{\mathbf{N}_{1}(s')\cdot\mathbf{T}_{2}(s)}}
        {\abs{\gamma_{1}(s') - \gamma_{2}(s)}
        \abs{x - \gamma_{3}(\phi^{-1}(s))}^{2\alpha}}\,ds \\
        &\quad\quad\leq
        \left(\left\lfloor\norm{\phi}_{\dot{C}^{0,1}}
        \right\rfloor + 2\right)
        C_{\alpha,\beta}M^{2\alpha/\beta}\norm{\phi}_{\dot{C}^{0,1}}
        + \int_{s_{3,j}-2d_{0}}^{s_{3,j}+2d_{0}}
        \frac{\norm{\phi}_{\dot{C}^{0,1}}}
        {d_{0}\abs{x - \gamma_{3}(s)}^{2\alpha}}\,ds \\
        &\quad\quad\leq
        C_{\alpha,\beta}M^{2\alpha/\beta}
        \left(\norm{\phi}_{\dot{C}^{0,1}}+1\right)^{2}
    \end{align*}
    for any $j=1, \dots ,N_{2}$.

    Finally, Lemma~\ref{L3.2} again shows that any $s\in\ell_{2}\bbT\setminus B$
    that does not belong to any $\phi\left([s_{3,j}-2d_{0},s_{3,j}+2d_{0}]\right)$
    satisfies $\abs{x - \gamma_{3}(\phi^{-1}(s))}\geq d_{0}$, thus Lemma~\ref{L3.2} yields
    \begin{align*}
        F_{2} &\leq N_{3} C_{\alpha,\beta}M^{2\alpha/\beta}
        \left(\norm{\phi}_{\dot{C}^{0,1}}+1\right)^{2}
        +\int_{\ell_{2}\bbT}\frac{ds}{d_{0}^{1+2\alpha}}
        \leq C_{\alpha,\beta}\max\set{\ell_{2},\ell_{3}}M^{\frac{1+2\alpha}{\beta}}
        \left(\norm{\phi}_{\dot{C}^{0,1}}+1\right)^{2}.
    \end{align*}
    This finishes the proof of (1).
\end{proof}

The first application of Lemma~\ref{L4.5} is a
uniform bound on the \emph{tangential derivative} of the velocity field on patch boundaries.
Note that we cannot expect such a bound to hold for the \emph{normal derivative} because
the velocity is only H\"{o}lder continuous at the patch boundaries.

\begin{lemma}\label{L4.6}
    If $\frac{\beta}{1+\beta}\geq 2\alpha$, then there is
     $C_{\alpha,\beta}$  such that for any $z\in\operatorname{CC}(\bbR^{2})^{\mathcal{L}}$ with
    each $z^{\lambda}$ being $C^{1,\beta}$ and not crossing
    itself and all the other $z^{\lambda'}$ transversally, we have
    \[
        \norm{\partial_{s}(u(z)\circ z^{\lambda})}_{L^{\infty}}
        \leq C_{\alpha,\beta}\sum_{\lambda'\in\mathcal{L}}\abs{\theta^{\lambda'}}
        \ell(z^{\lambda'})\max\left\{\norm{z^{\lambda}}_{\dot{C}^{1,\beta}},
        \norm{z^{\lambda'}}_{\dot{C}^{1,\beta}}\right\}^{\frac{1+2\alpha}{\beta}}
    \]
     for each $\lambda\in\mathcal{L}$, where $z^\lambda$ on the left-hand side is
    any arclength parametrization of this curve.
\end{lemma}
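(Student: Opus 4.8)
The plan is to first prove the estimate with the mollified velocity $u_\eps$ in place of $u$, with a constant independent of $\eps$, and then pass to the limit $\eps\to 0^+$. This is convenient because $K_\eps$ is a smooth radial kernel, so $u_\eps(z)$ is $C^1$ in $x$, the composition $u_\eps(z)\circ z^\lambda$ is classically differentiable, and $\partial_s(u_\eps(z)\circ z^\lambda)(s')=Du_\eps(z;z^\lambda(s'))\,\mathbf T_\lambda(s')$, with no principal values involved. By linearity it suffices to bound, for each pair $\lambda,\lambda'\in\mathcal L$ (the case $\lambda=\lambda'$ included), the tangential derivative along $z^\lambda$ of the single term $-\theta^{\lambda'}\int_{\ell(z^{\lambda'})\bbT}K_\eps(x-z^{\lambda'}(s))\,\mathbf T_{\lambda'}(s)\,ds$ by $C_{\alpha,\beta}\,|\theta^{\lambda'}|\,\ell(z^{\lambda'})\max\{\|z^\lambda\|_{\dot C^{1,\beta}},\|z^{\lambda'}\|_{\dot C^{1,\beta}}\}^{(1+2\alpha)/\beta}$, and then sum over $\lambda'$; this is precisely what Lemma~\ref{L4.5}(1) is designed to deliver.

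Fixing $s'$ and writing $w(s):=z^\lambda(s')-z^{\lambda'}(s)$, $\mathbf T:=\mathbf T_\lambda(s')$, $\mathbf N:=\mathbf T^\perp$, and $K_\eps(x)=\tilde K_\eps(|x|)$, the tangential derivative in question equals $-\theta^{\lambda'}\int_{\ell(z^{\lambda'})\bbT}(\nabla K_\eps(w)\cdot\mathbf T)\,\mathbf T_{\lambda'}(s)\,ds$ (differentiation under the integral being legitimate since $K_\eps$ is smooth). The key algebraic manipulation I would carry out is to split $\mathbf T_{\lambda'}(s)=(\mathbf T_{\lambda'}(s)\cdot\mathbf T)\mathbf T+(\mathbf T_{\lambda'}(s)\cdot\mathbf N)\mathbf N$ and, in the $\mathbf T$-component, to combine the identity $(w\cdot\mathbf T)(\mathbf T\cdot\mathbf T_{\lambda'}(s))=w\cdot\mathbf T_{\lambda'}(s)-(w\cdot\mathbf N)(\mathbf N\cdot\mathbf T_{\lambda'}(s))$ with $\partial_s\big(K_\eps(w(s))\big)=-\nabla K_\eps(w)\cdot\mathbf T_{\lambda'}(s)$ (from $\partial_s w=-\mathbf T_{\lambda'}(s)$ and radiality of $K_\eps$). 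This rewrites the $\mathbf T$-component as
\[
\theta^{\lambda'}\,\mathbf T\left[\int_{\ell(z^{\lambda'})\bbT}\partial_s\big(K_\eps(w(s))\big)\,ds+\int_{\ell(z^{\lambda'})\bbT}\frac{\tilde K_\eps'(|w|)}{|w|}\,(w\cdot\mathbf N)(\mathbf N\cdot\mathbf T_{\lambda'}(s))\,ds\right],
\]
and the first integral vanishes because $s\mapsto K_\eps(w(s))$ is $C^1$ and periodic on $\ell(z^{\lambda'})\bbT$.

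In the remaining integral above, and in the $\mathbf N$-component $-\theta^{\lambda'}\mathbf N\int(\nabla K_\eps(w)\cdot\mathbf T)(\mathbf T_{\lambda'}(s)\cdot\mathbf N)\,ds$, I would use $|w\cdot\mathbf T|,|w\cdot\mathbf N|\le|w|$ and $|\nabla K_\eps(w)|=|\tilde K_\eps'(|w|)|\le C_\alpha|w|^{-1-2\alpha}$ (the $\eps$-uniform kernel bound recalled in the excerpt) to dominate both integrands in absolute value by $C_\alpha\,|\mathbf N_\lambda(s')\cdot\mathbf T_{\lambda'}(s)|\,|z^\lambda(s')-z^{\lambda'}(s)|^{-1-2\alpha}$. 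Since $z^\lambda$ and $z^{\lambda'}$ do not cross transversally (also when $z^\lambda=z^{\lambda'}$) and $\tfrac{\beta}{1+\beta}\ge 2\alpha$, Lemma~\ref{L4.5}(1) applied with $\gamma_1:=z^\lambda$, $\gamma_2:=\gamma_3:=z^{\lambda'}$, $x:=z^\lambda(s')$, and $\phi:=\mathrm{Id}$ (so $\|\phi\|_{\dot C^{0,1}}=1$ and $|x-\gamma_3(\phi^{-1}(s))|^{2\alpha}=|z^\lambda(s')-z^{\lambda'}(s)|^{2\alpha}$) bounds $\int|\mathbf N_\lambda(s')\cdot\mathbf T_{\lambda'}(s)|\,|z^\lambda(s')-z^{\lambda'}(s)|^{-1-2\alpha}\,ds$ by $C_{\alpha,\beta}\,\ell(z^{\lambda'})\max\{\|z^\lambda\|_{\dot C^{1,\beta}},\|z^{\lambda'}\|_{\dot C^{1,\beta}}\}^{(1+2\alpha)/\beta}$, uniformly in $s'$. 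Summing over $\lambda'$ gives the asserted bound on $\|\partial_s(u_\eps(z)\circ z^\lambda)\|_{L^\infty}$ with a constant independent of $\eps$. Finally, $u_\eps(z)\circ z^\lambda\to u(z)\circ z^\lambda$ uniformly as $\eps\to 0^+$ (the near-diagonal part of the difference is controlled by Lemma~\ref{L4.3}), and since a family that is uniformly Lipschitz and converges pointwise has a limit obeying the same Lipschitz bound, the estimate passes to $u(z)$.

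The step I expect to be most delicate is the bookkeeping around differentiability and the vanishing of the ``total-derivative'' integral, together with the legitimacy of the $\eps\to 0^+$ limit in the presence of patch touches. Both become routine once one works with the smooth radial kernel $K_\eps$: then $u_\eps(z)\circ z^\lambda$ is genuinely $C^1$, and the $\eps$-uniform bound makes the family equi-Lipschitz, so no control on how close the curves $z^\lambda,z^{\lambda'}$ come to each other or to self-intersecting is needed anywhere. Working with $K$ directly would instead force the $\mathbf T$-component to be a genuine principal-value integral whose cancellation would have to be justified by a symmetry estimate near each parameter with $z^{\lambda'}(s)=z^\lambda(s')$ (using $\beta>2\alpha$, which follows from $\tfrac{\beta}{1+\beta}\ge 2\alpha$); routing through $K_\eps$ avoids this.
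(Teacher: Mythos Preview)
Your proof is correct and follows essentially the same approach as the paper's: reduce to the mollified velocity $u_\eps$, split the vector integral into $\mathbf T_\lambda$- and $\mathbf N_\lambda$-components, use the total-derivative identity (equivalently the decomposition $\mathbf T_{\lambda'}=(\mathbf T_{\lambda'}\!\cdot\!\mathbf T_\lambda)\mathbf T_\lambda+(\mathbf T_{\lambda'}\!\cdot\!\mathbf N_\lambda)\mathbf N_\lambda$ together with $\int\partial_s K_\eps(w)\,ds=0$) to gain the factor $\mathbf N_\lambda\!\cdot\!\mathbf T_{\lambda'}$ in every surviving term, and then apply Lemma~\ref{L4.5}(1) with $\gamma_1=z^\lambda$, $\gamma_2=\gamma_3=z^{\lambda'}$, $\phi=\mathrm{Id}$. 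The only cosmetic difference is that you write the kernel manipulations explicitly via the radial representation $K_\eps(x)=\tilde K_\eps(|x|)$ while the paper uses the abstract $DK_\eps(x)(h)$ notation, and you spell out the $\eps\to0^+$ passage (uniform convergence via Lemma~\ref{L4.3} plus equi-Lipschitz) that the paper leaves implicit.
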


\begin{proof}
    For each $\lambda\in\mathcal{L}$, let
    $\mathbf{T}^{\lambda}\coloneqq\partial_{s}z^{\lambda}$
    and $\mathbf{N}^{\lambda}\coloneqq(\mathbf{T}^{\lambda})^{\perp}$.
    For any fixed $\lambda$, it is enough to show that the function
    \[
        \partial_{s}(u_{\eps}(z)\circ z^{\lambda}) (s)
= -\sum_{\lambda'\in\mathcal{L}}\theta^{\lambda'}
        \int_{\ell(z^{\lambda'})\bbT}
        DK_{\eps}(z^{\lambda}(s) - z^{\lambda'}(s'))(\mathbf{T}^{\lambda}(s))
        \mathbf{T}^{\lambda'}(s') \,ds'
    \]
    (where $DK_\eps\colon \bbR^{2} \to \operatorname{L}(\bbR^{2};\bbR)$)
    satisfies the desired estimate for all small enough $\eps>0$.
    That is, we need to show that
    \begin{equation} \lb{111.3}
        \abs{\int_{\ell(z^{\lambda'})\bbT}
        DK_{\eps}(z^{\lambda}(s) - z^{\lambda'}(s'))(\mathbf{T}^{\lambda}(s))
        \mathbf{T}^{\lambda'}(s')\,ds'}
        \leq
        C_{\alpha,\beta}\ell(z^{\lambda'})\max\left\{\norm{z^{\lambda}}_{\dot{C}^{1,\beta}},
        \norm{z^{\lambda'}}_{\dot{C}^{1,\beta}}\right\}^{\frac{1+2\alpha}{\beta}}
    \end{equation}
    holds for any $s\in\ell(z^{\lambda})\mathbb{T}$ and $\lambda'\in\mathcal{L}$.
    We separately estimate the $\mathbf{T}^{\lambda}(s)$-component
    and the $\mathbf{N}^{\lambda}(s)$-component of the integral above.

    \textbf{The $\mathbf{T}^{\lambda}(s)$-component}. Since
    \[
        \mathbf{T}^{\lambda'}(s')
        = (\mathbf{T}^{\lambda'}(s')\cdot\mathbf{T}^{\lambda}(s))\mathbf{T}^{\lambda}(s)
        + (\mathbf{T}^{\lambda'}(s')\cdot\mathbf{N}^{\lambda}(s))\mathbf{N}^{\lambda}(s)
    \]
    and
    \[
        \partial_{s'}\left(K_{\eps}(z^{\lambda}(s) - z^{\lambda'}(s'))\right)
        = -DK_{\eps}(z^{\lambda}(s) - z^{\lambda'}(s'))(\mathbf{T}^{\lambda'}(s'))
    \]
    integrates to zero over $\ell(z^{\lambda'})\bbT$, we obtain
    \begin{align*}
        &\int_{\ell(z^{\lambda'})\bbT}
        DK_{\eps}(z^{\lambda}(s) - z^{\lambda'}(s'))(\mathbf{T}^{\lambda}(s))
        (\mathbf{T}^{\lambda'}(s')\cdot\mathbf{T}^{\lambda}(s))\,ds'
        \\&\qquad\qquad\qquad =
        -\int_{\ell(z^{\lambda'})\bbT}
        DK_{\eps}(z^{\lambda}(s) - z^{\lambda'}(s'))(\mathbf{N}^{\lambda}(s))
        (\mathbf{T}^{\lambda'}(s')\cdot\mathbf{N}^{\lambda}(s))\,ds'.
    \end{align*}
    Now pick $\eps$-independent $C\geq 0$ such that
    $\abs{DK_{\eps}(x)(h)} \leq \frac{C\abs{h}}{\abs{x}^{1+2\alpha}}$ for all small $\eps>0$ and all $x,h\in\bbR^2$.  Then
    Lemma~\ref{L4.5} with $\gamma_{1} \coloneqq z^{\lambda}$,
    $\gamma_{2}=\gamma_{3} \coloneqq z^{\lambda'}$, $x \coloneqq z^{\lambda}(s)$, and
    $\phi\coloneqq{\rm Id}$ shows
    \begin{equation}\label{4.3}\begin{split}
        &\abs{\int_{\ell(z^{\lambda'})\bbT}
        DK_{\eps}(z^{\lambda}(s) - z^{\lambda'}(s'))(\mathbf{N}^{\lambda}(s))
        (\mathbf{T}^{\lambda'}(s')\cdot\mathbf{N}^{\lambda}(s)) \,ds'}
        \\&\quad\quad\leq
        \int_{\ell(z^{\lambda'})\bbT}
        \frac{C\abs{\mathbf{T}^{\lambda'}(s')\cdot\mathbf{N}^{\lambda}(s)}}
        {\abs{z^{\lambda}(s) - z^{\lambda'}(s')}^{1+2\alpha}}ds'
        \leq C_{\alpha,\beta}\ell(z^{\lambda'})\max\left\{\norm{z^{\lambda}}_{\dot{C}^{1,\beta}},
        \norm{z^{\lambda'}}_{\dot{C}^{1,\beta}}\right\}^{\frac{1+2\alpha}{\beta}}
    \end{split}\end{equation}
    as desired.

    \textbf{The $\mathbf{N}^{\lambda}(s)$-component}.
    The argument yielding \eqref{4.3} shows the same bound for 
    \[
    \abs{\int_{\ell(z^{\lambda'})\bbT}
        DK_{\eps}(z^{\lambda}(s) - z^{\lambda'}(s'))(\mathbf{T}^{\lambda}(s))
        (\mathbf{T}^{\lambda'}(s')\cdot\mathbf{N}^{\lambda}(s)) \,ds'}  .
        \]
      This and \eqref{4.3} now yield \eqref{111.3} and the proof is finished.
\end{proof}

Even though the normal derivative of the velocity field is unbounded at the patch boundaries,
we can still obtain a Lipschitz bound for
the \emph{normal component} of the velocity field 
when the patch boundaries have no transversal crossings.  However, in Section~\ref{S8} we will need a more general version of such a bound that allows for transversal crossings when we show that any $H^2$ patch solution with no  crossings initially must remain such.  This will add a H\" older term in the relevant bounds in the next two lemmas, with a constant depending on the maximal  crossing angle.

\begin{lemma}\label{L4.8}
    If $\frac{\beta}{1+\beta}\geq 2\alpha$, then there is
    $C_{\alpha,\beta}$ such that for any $C^{1,\beta}$ closed curves
    $\gamma_{i}\colon\ell_{i}\bbT\to\bbR^{2}$ ($i=1,2$) parametrized by arclength, with
    $\mathbf{T}_{i}\coloneqq\partial_{s}\gamma_{i}$ and
    $\mathbf{N}_{i}\coloneqq\mathbf{T}_{i}^{\perp}$, 
    and for any
    $x\in\bbR^{2}$ and $s'\in\ell_{1}\bbT$ we have
    \begin{align*}
        &\int_{\ell_{2}\bbT}
        \abs{K(x - \gamma_{2}(s)) - K(\gamma_{1}(s') - \gamma_{2}(s)) }
        \abs{\mathbf{N}_{1}(s')\cdot\mathbf{T}_{2}(s)}\,ds
        \\&\qquad\qquad\qquad\leq C_{\alpha,\beta}\, \ell_{2} M^{1/\beta}
        \left(
            M^{2\alpha/\beta}\abs{x - \gamma_{1}(s') }
            + \Xi\abs{x - \gamma_{1}(s') }^{1-2\alpha}
        \right),
    \end{align*}
    where $M\coloneqq\max\left\{
            \norm{\gamma_{1}}_{\dot{C}^{1,\beta}},
            \norm{\gamma_{2}}_{\dot{C}^{1,\beta}}
        \right\}$ and (with $\max\emptyset\coloneqq 0$)
    \[
        \Xi\coloneqq \max\left\{\abs{\mathbf{N}_{1}(s_{1})\cdot\mathbf{T}_{2}(s_{2})}
        \colon (s_{1},s_{2})\in\ell_{1}\bbT\times\ell_{2}\bbT
        \,\, \&\,\, \gamma_{1}(s_{1}) = \gamma_{2}(s_{2}) \right\}.
    \]
\end{lemma}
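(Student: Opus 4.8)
The plan is to reduce to the regime where $\delta\coloneqq\abs{x-\gamma_1(s')}$ is small, and then split $\ell_2\bbT$ into the set $A$ where $\gamma_2(s)$ is close to $\gamma_1(s')$ and its complement. On $A$ the two kernels $K(x-\gamma_2(s))$ and $K(\gamma_1(s')-\gamma_2(s))$ are estimated separately; on $\ell_2\bbT\setminus A$ their difference is handled by the mean value theorem for $\tilde K$ along the segment $[\gamma_1(s'),x]$, exactly as in the bound for $V_1$ in the proof of Lemma~\ref{L4.4}. The factor $\abs{\mathbf{N}_1(s')\cdot\mathbf{T}_2(s)}$ is controlled by the variant of Lemma~\ref{L3.3} that allows transversal crossings, which reads $\abs{\mathbf{N}_1(s_1)\cdot\mathbf{T}_2(s_2)}\leq C_\beta M^{1/(1+\beta)}\abs{\gamma_1(s_1)-\gamma_2(s_2)}^{\beta/(1+\beta)}+\Xi$; the final $\Xi$-term comes precisely from this $\Xi$. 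For the reduction itself: if $\delta=0$ the claim is trivial, and if $\delta\geq\frac1{12}M^{-1/\beta}$ then using $\abs{\mathbf{N}_1(s')\cdot\mathbf{T}_2(s)}\leq 1$ and Lemma~\ref{L4.2} applied to $\gamma_2$ (once with base point $x$, once with base point $\gamma_1(s')$) bounds the left-hand side by $2C_\alpha\ell_2 M^{2\alpha/\beta}\leq C_{\alpha,\beta}\ell_2 M^{1/\beta}M^{2\alpha/\beta}\delta$ once $C_{\alpha,\beta}$ is large enough. So I may assume $0<\delta<\frac1{12}M^{-1/\beta}$; write $p\coloneqq\gamma_1(s')$, $d_0\coloneqq\frac14 M^{-1/\beta}$, and $A\coloneqq\set{s\in\ell_2\bbT:\abs{p-\gamma_2(s)}\leq 2\delta}$, and bound $\int_A$ and $\int_{\ell_2\bbT\setminus A}$ each by $C_{\alpha,\beta}\ell_2 M^{1/\beta}(M^{2\alpha/\beta}\delta+\Xi\delta^{1-2\alpha})$.

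\textbf{The set $A$.} For $s\in A$ one has also $\abs{x-\gamma_2(s)}\leq 3\delta<d_0$, so the arc-chord covering of Lemma~\ref{L3.2} (applied to $\gamma_2$ with base point $p$, and with base point $x$) gives $\int_A\abs{K(p-\gamma_2(s))}\,ds+\int_A\abs{K(x-\gamma_2(s))}\,ds\leq C_\alpha\ell_2 M^{1/\beta}\delta^{1-2\alpha}$. Since on $A$ moreover $\abs{\mathbf{N}_1(s')\cdot\mathbf{T}_2(s)}\leq C_\beta M^{1/(1+\beta)}(2\delta)^{\beta/(1+\beta)}+\Xi$, multiplying and using $\delta<d_0$ together with the hypothesis $\frac\beta{1+\beta}\geq 2\alpha$ to absorb $M^{1/(1+\beta)}\delta^{\beta/(1+\beta)-2\alpha}\leq C_\beta M^{2\alpha/\beta}$, yields the desired bound for $\int_A$.

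\textbf{The complement.} On $\ell_2\bbT\setminus A$ we have $\abs{p-\gamma_2(s)}>2\delta$, so every point of the segment $[p,x]$ is at distance $\geq\frac12\abs{p-\gamma_2(s)}$ from $\gamma_2(s)$, and the mean value theorem for $\tilde K$ gives $\abs{K(x-\gamma_2(s))-K(p-\gamma_2(s))}\leq C_\alpha\delta\,\abs{p-\gamma_2(s)}^{-1-2\alpha}$, exactly as in \eqref{4.1}. It remains to bound $\delta\int_{\ell_2\bbT\setminus A}\abs{p-\gamma_2(s)}^{-1-2\alpha}\abs{\mathbf{N}_1(s')\cdot\mathbf{T}_2(s)}\,ds$, and this is done by the localization used in the proof of Lemma~\ref{L4.5}(1): cover $\set{s:\abs{p-\gamma_2(s)}\leq d_0}$ by at most $C\ell_2 M^{1/\beta}$ arclength intervals centered at points $s_{2,i}$ (Lemma~\ref{L3.2}); on the interval around $s_{2,i}$, apply the crossing-variant of Lemma~\ref{L3.3} \emph{at} $s_{2,i}$ together with the $\dot C^{0,\beta}$-continuity of $\mathbf{T}_2$ to write $\abs{\mathbf{N}_1(s')\cdot\mathbf{T}_2(s)}\leq C_\beta M^{1/(1+\beta)}\abs{p-\gamma_2(s_{2,i})}^{\beta/(1+\beta)}+M\abs{s-s_{2,i}}^\beta+\Xi$; and on the complement use $\abs{p-\gamma_2(s)}\geq d_0$. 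Integrating each of these three terms against $\abs{p-\gamma_2(s)}^{-1-2\alpha}$ with the lower bounds $\abs{p-\gamma_2(s)}\geq\max\set{\abs{p-\gamma_2(s_{2,i})},\tfrac12\abs{s-s_{2,i}},2\delta}$ from Lemma~\ref{L3.2}, and using $\frac\beta{1+\beta}\geq 2\alpha$ together with $\alpha<\frac\beta2$ (which is implied) so that the exponents $\frac\beta{1+\beta}-2\alpha$ and $\beta-1-2\alpha$ lie in the integrable range, produces — after multiplication by $\delta$ — the bound $C_{\alpha,\beta}\ell_2 M^{1/\beta}(M^{2\alpha/\beta}\delta+\Xi\delta^{1-2\alpha})$; here the $\Xi$-contribution is finite precisely because of the $2\delta$ floor in that maximum.

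\textbf{Main obstacle.} The delicate point is the complement estimate. One cannot simply bound $\abs{\mathbf{N}_1(s')\cdot\mathbf{T}_2(s)}\leq C_\beta M^{1/(1+\beta)}\abs{p-\gamma_2(s)}^{\beta/(1+\beta)}+\Xi$ pointwise and integrate over $\ell_2\bbT\setminus A$: the resulting integral of $\abs{p-\gamma_2(s)}^{\beta/(1+\beta)-1-2\alpha}$ loses a logarithm of $\delta$ in the borderline case $2\alpha=\frac\beta{1+\beta}$, and the $\Xi$-term becomes non-integrable without the $2\delta$ cutoff. Evaluating the Hölder factor at the closest-approach points $s_{2,i}$ (where it becomes the \emph{constant} $\abs{p-\gamma_2(s_{2,i})}^{\beta/(1+\beta)}$) and keeping the $2\delta$ floor are what make the estimate go through; everything else is the bookkeeping already carried out in the proofs of Lemmas~\ref{L4.4} and \ref{L4.5}.
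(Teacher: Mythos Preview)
Your proof is correct and uses the same core ingredients as the paper's: the crossing-tolerant consequence of Lemma~\ref{L3.3} (which the paper derives inline in its first paragraph), evaluation of the angle factor at the closest-approach points $s_{2,i}$ followed by the $\dot C^{0,\beta}$ correction $M\abs{s-s_{2,i}}^\beta$, and the Lemma~\ref{L3.2} localization. Your decomposition is somewhat cleaner than the paper's: the paper introduces $B=\{s:\abs{\gamma_1(s')-\gamma_2(s)}<\abs{x-\gamma_2(s)}\}$ and two families of intervals (one centered via $\gamma_1(s')$, one via $x$), then splits each intersection into eight terms $F_1,\dots,F_8$ according to which of the two base points is closer and which piece of the angle bound is being used; you instead reduce to $\delta<\tfrac1{12}M^{-1/\beta}$ and split into $A=\{\abs{p-\gamma_2(s)}\le 2\delta\}$ and its complement, observing that on the complement the mean-value bound $C_\alpha\delta\abs{p-\gamma_2(s)}^{-1-2\alpha}$ suffices because $\abs{x-\gamma_2(s)}\ge\tfrac12\abs{p-\gamma_2(s)}$ there, so only one base point is needed. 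Your ``main obstacle'' paragraph correctly identifies the reason one cannot simply use the pointwise bound $C_\beta M^{1/(1+\beta)}\abs{p-\gamma_2(s)}^{\beta/(1+\beta)}+\Xi$ and integrate.
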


\begin{proof}
    Let      $d\coloneqq\abs{x-\gamma_{1}(s')}$,
    $d_{0}\coloneqq\frac{1}{4}M^{-1/\beta}$, and 
    \[
        B\coloneqq\set{s\in\ell_{2}\bbT \colon
        \abs{\gamma_{1}(s') - \gamma_{2}(s)}<\abs{x - \gamma_{2}(s)}}.
    \]
    Apply Lemma~\ref{L3.2} twice --- first with $\gamma\coloneqq\gamma_{2}$ and
    $x\coloneqq\gamma_{1}(s')$, and then with the same $\gamma$ and the given $x$ ---
    to respectively obtain $\set{s_{1,i}}_{i=1}^{N_{1}}, \set{s_{2,j}}_{j=1}^{N_{2}}\subseteq\ell_{2}\bbT$ with
    $N_{1},N_{2}\leq \ell_{2}\norm{\gamma_{2}}_{\dot{C}^{1,\beta}}^{1/\beta}$.

    Fix any $j\in\{1,\dots ,N_{2}\}$.  Since the intervals $[s_{1,i}-2d_{0},s_{1,i}+2d_{0}]$ ($i=1,\dots,N_1$)
    are pairwise disjoint, there are at most two $i$'s such that
    $[s_{1,i}-2d_{0},s_{1,i}+2d_{0}]$ intersects
    $[s_{2,j}-2d_{0},s_{2,j}+2d_{0}]$. Fix any such $i$, let
    $r\coloneqq\abs{\gamma_{1}(s') - \gamma_{2}(s_{1,i})}=\min_{|s-s_{1,i}|\le 2d_0}\abs{\gamma_{1}(s') - \gamma_{2}(s)}$,
    $h\coloneqq (r/M)^{\frac{1}{1+\beta}}$, and
    \begin{align*}
        I\coloneqq [s_{2,j}-2d_{0},s_{2,j}+2d_{0}]
        \cap [s_{1,i}-2d_{0},s_{1,i}+2d_{0}].
    \end{align*}
    If $\gamma_{1}(s_{1}) = \gamma_{2}(s_{2})$  for some
    $(s_{1},s_{2})\in [s'-h,s'+h]\times [s_{1,i}-h,s_{1,i}+h]$, then
    \begin{align*}
        \abs{\mathbf{N}_{1}(s')\cdot\mathbf{T}_{2}(s_{1,i})}
        &\leq \norm{\gamma_{1}}_{\dot{C}^{1,\beta}}h^{\beta}
        + \norm{\gamma_{2}}_{\dot{C}^{1,\beta}}h^{\beta}
        + \abs{\mathbf{N}_{1}(s_{1})\cdot\mathbf{T}_{2}(s_{2})} \\
        &\leq 2M^{\frac{1}{1+\beta}}r^{\frac{\beta}{1+\beta}}
        + \abs{\mathbf{N}_{1}(s_{1})\cdot\mathbf{T}_{2}(s_{2})}.
    \end{align*}
    If there is no such pair $(s_{1},s_{2})$, then Lemma~\ref{L3.3} shows that
    $\abs{\mathbf{N}_{1}(s')\cdot\mathbf{T}_{2}(s_{1,i})} \le 12M^{\frac{1}{1+\beta}}r^{\frac{\beta}{1+\beta}}$.
    In either case, Lemma~\ref{L3.2}(4) now shows that for all $s\in I$ we have
    \begin{align*}
        \abs{\mathbf{N}_{1}(s')\cdot\mathbf{T}_{2}(s)}
        &\leq 12M^{\frac{1}{1+\beta}}
        r^{\frac{\beta}{1+\beta}}  + \Xi
        + \norm{\gamma_{2}}_{\dot{C}^{1,\beta}}
        \abs{s - s_{1,i}}^{\beta}\\
        &\leq 12M^{\frac{1}{1+\beta}}
        r^{\frac{\beta}{1+\beta}}  + \Xi
        + 2\norm{\gamma_{2}}_{\dot{C}^{1,\beta}}
        \abs{\gamma_{1}(s') - \gamma_{2}(s)}^{\beta} .
    \end{align*} 
    Combining this again with Lemma~\ref{L3.2} (recall  that  $r\le \inf_{s\in I}\abs{\gamma_{1}(s') - \gamma_{2}(s)}$), and with 
    \begin{equation}\label{222.2}
        \abs{\frac{1}{a^{2\alpha}} - \frac{1}{b^{2\alpha}}}
        \leq \min\set{\frac{1}{\min\set{a,b}^{2\alpha}},
        \frac{\abs{a - b}}{\abs{a}\abs{b}^{2\alpha}}}
        \leq \min\set{\frac{1}{\min\set{a,b}^{2\alpha}},
        \frac{\abs{a - b}}{\min\set{a,b}^{1+2\alpha}}}
    \end{equation}
    for $a,b>0$ (which uses $|c^{2\alpha}-1|\le |c-1|$ for $c\ge 0$), yields
    \begin{align*}
        &\int_{I}
        \abs{ K(x - \gamma_{2}(s)) - K(\gamma_{1}(s') - \gamma_{2}(s))}
        \abs{\mathbf{N}_{1}(s')\cdot\mathbf{T}_{2}(s)}\,ds \\
        &\ \leq \int_{I\cap B}
       \abs{ K(x - \gamma_{2}(s)) - K(\gamma_{1}(s') - \gamma_{2}(s))}
        \left( 12M^{\frac{1}{1+\beta}}        r^{\frac{\beta}{1+\beta}} +\Xi \right) \,ds
        \\&\quad\quad
        + \int_{I\setminus B}
       \abs{ K(x - \gamma_{2}(s)) - K(\gamma_{1}(s') - \gamma_{2}(s))}
        \left(12M^{\frac{1}{1+\beta}}        r^{\frac{\beta}{1+\beta}} + \Xi \right) \,ds 
          \\&\quad\quad
         + \int_{I}
        \abs{ K(x - \gamma_{2}(s)) - K(\gamma_{1}(s') - \gamma_{2}(s))}
      \,  2\norm{\gamma_{2}}_{\dot{C}^{1,\beta}}
        \abs{\gamma_{1}(s') - \gamma_{2}(s)}^{\beta}  \,ds \\
        &\ \leq \int_{\abs{s - s_{1,i}}<\min\{r,2d_{0}\}}
        \frac{C_{\alpha}M^{\frac{1}{1+\beta}}d}
        {r^{\frac{1}{1+\beta}+2\alpha}}\,ds
        + \int_{\min\{r,2d_{0}\}\leq \abs{s - s_{1,i}}\leq 2d_{0}}
        \frac{C_{\alpha}M^{\frac{1}{1+\beta}}r^{\frac{\beta}{1+\beta}}d}
        {\abs{\gamma_{1}(s') - \gamma_{2}(s)}^{1+2\alpha}}\,ds
        \\&\quad\quad
        + \int_{\abs{s - s_{2,j}}<\min\{r,2d_{0}\}}
        \frac{C_{\alpha}M^{\frac{1}{1+\beta}}d}
        {r^{\frac{1}{1+\beta}}\abs{x - \gamma_{2}(s)}^{2\alpha}}\,ds
        + \int_{\min\{r,2d_{0}\}\leq \abs{s - s_{2,j}}\leq 2d_{0}}
        \frac{C_{\alpha}M^{\frac{1}{1+\beta}}r^{\frac{\beta}{1+\beta}}
        d}
        {\abs{x - \gamma_{2}(s)}^{1+2\alpha}}\,ds
                \\&\quad\quad
        + \Xi\int_{I\cap B}
        \abs{K(\gamma_{1}(s') - \gamma_{2}(s))
        - K(x - \gamma_{2}(s))}\,ds
        \\&\quad\quad
        + \Xi\int_{I\setminus B}
        \abs{K(\gamma_{1}(s') - \gamma_{2}(s))
        - K(x - \gamma_{2}(s))}\,ds
        \\&\quad\quad
        + \int_{s_{1,i}-2d_{0}}^{s_{1,i}+2d_{0}}
        \frac{C_{\alpha}\norm{\gamma_{2}}_{\dot{C}^{1,\beta}}d}
        {\abs{\gamma_{1}(s') - \gamma_{2}(s)}^{1+2\alpha-\beta}}\,ds
        + \int_{s_{2,j}-2d_{0}}^{s_{2,j}+2d_{0}}
        \frac{C_{\alpha}\norm{\gamma_{2}}_{\dot{C}^{1,\beta}}
        d}
        {\abs{x - \gamma_{2}(s)}^{1+2\alpha-\beta}}\,ds.
    \end{align*}
    
    Denote the eight terms on the right-hand side above by $F_{k}(i,j)$ ($k=1,\dots,8$) in the order of appearance.
    Since Lemma~\ref{L3.2} shows that
    $\abs{\gamma_{1}(s') - \gamma_{2}(s)} \geq \frac{\abs{s - s_{1,i}}}{2}$ and
    $\abs{x - \gamma_{2}(s)} \geq \frac{\abs{s - s_{2,j}}}{2}$ for all $s\in I$, we see from this and the definition of $d_{0}$ that
    \begin{align*}
       \max_{k=1,2,3,4}F_{k}(i,j) \leq C_{\alpha}M^{\frac{1}{1+\beta}}
        d_{0}^{\frac{\beta}{1+\beta}-2\alpha}d \le C_{\alpha}M^{2\alpha/\beta}d
    \end{align*}
    and
    \begin{align*}
        \max\set{F_{7}(i,j), F_{8}(i,j)} \leq C_{\alpha,\beta}M d_{0}^{\beta-2\alpha}d \le C_{\alpha,\beta}M^{2\alpha/\beta}d.
    \end{align*}
    Note also that \eqref{222.2} and Lemma~\ref{L3.2} yield
    \begin{align*}
        F_{5}(i,j) &\leq \int_{\abs{s - s_{1,i}} < \min\set{d,2d_{0}}}
        \frac{C_{\alpha}\Xi}{\abs{\gamma_{1}(s') - \gamma_{2}(s)}^{2\alpha}}\,ds
        + \int_{\min\set{d,2d_{0}} \leq \abs{s - s_{1,i}} \leq 2d_{0}}
        \frac{C_{\alpha}\Xi d}{\abs{\gamma_{1}(s') - \gamma_{2}(s)}^{1+2\alpha}}\,ds \\
        &\leq C_{\alpha}\Xi\min\set{d,2d_{0}}^{1-2\alpha}
        + \frac{C_{\alpha}\Xi d}{\min\set{d,2d_{0}}^{2\alpha}}
        \leq C_{\alpha}\Xi d^{1-2\alpha} + C_{\alpha}M^{2\alpha/\beta}d,
    \end{align*}
and similarly
    \[
        F_{6}(i,j) \leq C_{\alpha}\Xi d^{1-2\alpha} + C_{\alpha}M^{2\alpha/\beta}d.
    \]
These estimates, \eqref{222.2}, and Lemma~\ref{L3.2} (which also shows that $\abs{\gamma_{1}(s') - \gamma_{2}(s)}\geq d_{0}$ holds for any   $s\in\ell_{2}\bbT\setminus\bigcup_{i=1}^{N_{1}}[s_{1,i}-2d_{0},s_{1,i}+2d_{0}]$) now yield
    \begin{align*}
        &\int_{s_{2,j}-2d_{0}}^{s_{2,j}+2d_{0}}
        \abs{K(\gamma_{1}(s') - \gamma_{2}(s))
        - K(x - \gamma_{2}(s))}
        \abs{\mathbf{N}_{1}(s')\cdot\mathbf{T}_{2}(s)}\,ds \\
        &\quad\quad\leq C_{\alpha,\beta}
        \left(
            M^{2\alpha/\beta}d + \Xi d^{1-2\alpha}
        \right)
        + \int_{s_{2,j}-2d_{0}}^{s_{2,j}+2d_{0}}
        \frac{d}
        {d_{0}\abs{x - \gamma_{2}(s)}^{2\alpha}}\,ds \\
        &\quad\quad\leq
        C_{\alpha,\beta}
        \left(
            M^{2\alpha/\beta}d + \Xi d^{1-2\alpha}
        \right),
    \end{align*}
   which therefore holds for each $j\in\{1, \dots ,N_{2}\}$.

    This and Lemma~\ref{L3.2}, which again shows that any
    $s\in\ell_{2}\bbT\setminus\bigcup_{j=1}^{N_{2}}[s_{2,j}-2d_{0},s_{2,j}+2d_{0}]$
    satisfies $\abs{x - \gamma_{2}(s)}\geq d_{0}$,  yield
    \begin{align*}
        &\int_{\ell_{2}\bbT}
        \abs{K(\gamma_{1}(s') - \gamma_{2}(s))
        - K(x - \gamma_{2}(s))}
        \abs{\mathbf{N}_{1}(s')\cdot\mathbf{T}_{2}(s)}\,ds \\
        &\quad\quad
        \leq  C_{\alpha,\beta} N_{2}
        \left(
            M^{2\alpha/\beta}d + \Xi d^{1-2\alpha}
        \right)
        + \int_{\ell_{2}\bbT}\frac{d}{d_{0}^{1+2\alpha}}\,ds \\
        &\quad\quad \leq C_{\alpha,\beta}\ell_{2}M^{1/\beta}
        \left(
            M^{2\alpha/\beta}d + \Xi d^{1-2\alpha}
        \right),
    \end{align*}
    finishing the proof.
\end{proof}

\begin{lemma}\label{L4.7}
    If $\frac{\beta}{1+\beta}\geq 2\alpha$, then there is
    $C_{\alpha,\beta}$ such that the following holds.  If $z\in\operatorname{CC}(\bbR^{2})^{\mathcal{L}}$
    is such that each $z^{\lambda}$ is $C^{1,\beta}$, 
    then for any $\lambda\in\mathcal{L}$, $s\in\ell(z^{\lambda})\bbT$, and $x\in\bbR^{2}$ we have
    \begin{align*}
        &\abs{\left[
            u(z;x) - u(z;z^{\lambda}(s))
        \right]
        \cdot \partial_{s}z^{\lambda}(s)^{\perp}}
        \\&\quad\quad\quad
        \leq C_{\alpha,\beta}
        \sum_{\lambda'\in\mathcal{L}}
        \abs{\theta^{\lambda'}}\ell(z^{\lambda'})
        M_{\lambda,\lambda'}^{1/\beta}
        \left(
           M_{\lambda,\lambda'}^{2\alpha/\beta}\abs{x - z^{\lambda}(s)}
            + \Xi_\lambda \abs{x - z^{\lambda}(s)}^{1-2\alpha}
        \right)
    \end{align*}
    where $M_{\lambda,\lambda'} \coloneqq\max\left\{
            \norm{z^{\lambda}}_{\dot{C}^{1,\beta}},
            \norm{z^{\lambda'}}_{\dot{C}^{1,\beta}}
        \right\}$ and
    \[
        \Xi_\lambda\coloneqq \max_{\lambda'\in\mathcal{L}}\,
        \max\set{\abs{\partial_{s}z^{\lambda}(s)^{\perp}
        \cdot\partial_{s}z^{\lambda'}(s')}
        \colon (s,s')\in\ell(z^{\lambda})\bbT\times\ell(z^{\lambda'})\bbT
        \,\, \&\,\, z^{\lambda}(s) = z^{\lambda'}(s')}.
    \]
\end{lemma}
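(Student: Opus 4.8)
The plan is to deduce this directly from Lemma~\ref{L4.8}, applied one patch at a time. Since each $z^{\lambda'}$ is $C^{1,\beta}$ and hence $C^1$, the velocity is given by the arclength representation \eqref{2.3} at every $x\in\bbR^2$, so writing $\mathbf{T}^{\lambda'}\coloneqq\partial_{s}z^{\lambda'}$ we have
\[
u(z;x) - u(z;z^{\lambda}(s)) = -\sum_{\lambda'\in\mathcal{L}}\theta^{\lambda'}\int_{\ell(z^{\lambda'})\bbT}\left[ K(x - z^{\lambda'}(s')) - K(z^{\lambda}(s) - z^{\lambda'}(s')) \right]\mathbf{T}^{\lambda'}(s')\,ds'.
\]
Because $K$ is scalar-valued, taking the dot product of this identity with $\partial_{s}z^{\lambda}(s)^{\perp}=\mathbf{N}^{\lambda}(s)$ (where $\mathbf{N}^{\lambda}\coloneqq(\mathbf{T}^{\lambda})^{\perp}$) replaces the vector $\mathbf{T}^{\lambda'}(s')$ by the scalar $\mathbf{N}^{\lambda}(s)\cdot\mathbf{T}^{\lambda'}(s')$. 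Hence, by the triangle inequality, the left-hand side of the claimed estimate is at most
\[
\sum_{\lambda'\in\mathcal{L}}\abs{\theta^{\lambda'}}\int_{\ell(z^{\lambda'})\bbT}\abs{ K(x - z^{\lambda'}(s')) - K(z^{\lambda}(s) - z^{\lambda'}(s')) }\,\abs{ \mathbf{N}^{\lambda}(s)\cdot\mathbf{T}^{\lambda'}(s') }\,ds'.
\]

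Now each summand is exactly the integral estimated in Lemma~\ref{L4.8}: I would apply that lemma with $\gamma_1\coloneqq z^{\lambda}$, $\gamma_2\coloneqq z^{\lambda'}$, the fixed index $s'$ there taken to be our $s$, and the given $x$. Its hypothesis $\frac{\beta}{1+\beta}\geq 2\alpha$ is the same as the present one, so it applies, and it yields for each $\lambda'$ a bound of the form $C_{\alpha,\beta}\,\ell(z^{\lambda'})\,M_{\lambda,\lambda'}^{1/\beta}\big(M_{\lambda,\lambda'}^{2\alpha/\beta}\abs{x-z^{\lambda}(s)}+\Xi^{\lambda,\lambda'}\abs{x-z^{\lambda}(s)}^{1-2\alpha}\big)$, where $M_{\lambda,\lambda'}=\max\{\norm{z^{\lambda}}_{\dot C^{1,\beta}},\norm{z^{\lambda'}}_{\dot C^{1,\beta}}\}$ and $\Xi^{\lambda,\lambda'}$ is the maximum of $\abs{\mathbf{N}^{\lambda}(s_1)\cdot\mathbf{T}^{\lambda'}(s_2)}$ over pairs with $z^{\lambda}(s_1)=z^{\lambda'}(s_2)$. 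Since $\partial_s z^{\lambda}{}^{\perp}=\mathbf{N}^{\lambda}$ and $\partial_s z^{\lambda'}=\mathbf{T}^{\lambda'}$, each $\Xi^{\lambda,\lambda'}$ is one of the terms in the maximum defining $\Xi_\lambda$, so $\Xi^{\lambda,\lambda'}\leq\Xi_\lambda$ and we may replace $\Xi^{\lambda,\lambda'}$ by $\Xi_\lambda$. Multiplying by $\abs{\theta^{\lambda'}}$ and summing over $\lambda'\in\mathcal{L}$ then gives precisely the asserted inequality; the degenerate case $x=z^{\lambda}(s)$ is trivial since both sides vanish.

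There is essentially no genuine obstacle here: the statement is a bookkeeping corollary of Lemma~\ref{L4.8}, and the whole proof fits in a few lines. The only two points needing a moment's care are (i) justifying that the arclength integral formula \eqref{2.3} is valid at the arbitrary point $x$ (which holds because $C^{1,\beta}$ curves are $C^1$, so the integral converges absolutely everywhere), and (ii) checking that the per-curve crossing-angle quantity $\Xi^{\lambda,\lambda'}$ produced by Lemma~\ref{L4.8} is dominated by the aggregate quantity $\Xi_\lambda$ in the statement, which is immediate from monotonicity of the maximum. Note also that the argument does not use simplicity of any $z^{\lambda}$, consistent with the hypothesis $z\in\operatorname{CC}(\bbR^2)^{\mathcal{L}}$, and it covers the self-interaction term $\lambda'=\lambda$ as a special case.
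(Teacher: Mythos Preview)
Your proposal is correct and follows essentially the same approach as the paper: expand $u(z;x)-u(z;z^{\lambda}(s))$ via \eqref{2.3}, dot with $\mathbf{N}^{\lambda}(s)$, and apply Lemma~\ref{L4.8} term by term with $\gamma_1\coloneqq z^{\lambda}$, $\gamma_2\coloneqq z^{\lambda'}$, then bound each $\Xi^{\lambda,\lambda'}$ by $\Xi_\lambda$ and sum. The paper's proof is simply a terser version of exactly this.
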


\begin{proof}
    For each $\lambda\in\mathcal{L}$, let
    $\mathbf{T}^{\lambda}\coloneqq\partial_{s}z^{\lambda}$
    and $\mathbf{N}^{\lambda}\coloneqq(\mathbf{T}^{\lambda})^{\perp}$. Then since
    \begin{align*}
        &\left[
            u(z;x) - u(z;z^{\lambda}(s))
        \right]\cdot\mathbf{N}^{\lambda}(s)
        \\&\qquad 
        =-\sum_{\lambda'\in\mathcal{L}}\theta^{\lambda'}
        \int_{\ell(z^{\lambda'})\bbT}
        \left[
            K(x - z^{\lambda'}(s'))
            - K(z^{\lambda}(s) - z^{\lambda'}(s'))
        \right]
        (\mathbf{T}^{\lambda'}(s')\cdot\mathbf{N}^{\lambda}(s))
        \,ds',
    \end{align*}
    Lemma~\ref{L4.8} with $\gamma_{1}\coloneqq z^{\lambda}$ and
    $\gamma_{2}\coloneqq z^{\lambda'}$ gives us the desired estimate.
\end{proof}

\section{Mollified g-SQG equations}\label{S5}

From now on we will assume that $\alpha\in \left(0,\frac{1}{6}\right]$, which implies $\frac{\beta}{1+\beta} \geq 2\alpha$ when $\beta = \frac 12$.
Let $\mathcal{H}\coloneqq H^{2}(\bbT;\bbR^{2})^{\mathcal{L}}$ and let
\[
    \pi\colon \mathcal{H}\to \operatorname{RC}(\bbR^{2})^{\mathcal{L}}
\]
be the natural projection. Fix any $\eps>0$, and for any $z\in \operatorname{RC}(\bbR^{2})^{\mathcal{L}}$
consider the mollified velocity field $u_{\eps}(z)$ from \eqref{111.2}.
Clearly it  is infinitely differentiable and all
its derivatives are bounded. 
Then $\tilde{\gamma}\mapsto u_{\eps}(\pi(\tilde z))\circ\tilde{\gamma}$ is a locally Lipschitz map from
$H^{2}(\bbT;\bbR^{2})$ to itself for each $\tilde{z}\in\mathcal{H}$, and   we now define the locally Lipschitz map $F_{\eps}\colon \mathcal{H}\to \mathcal{H}$ via
\[   
    F_{\eps}(\tilde{z})^\lambda \coloneqq
    u_{\eps}(\pi(\tilde z)) \circ \tilde{z}^{\lambda}.
\]

Therefore, for any $\tilde{z}^{0}\in\mathcal{H}$, the differential equation
\begin{equation}\label{5.1}
    \partial_{t}\tilde{z}_{\eps}^{t} = F_{\eps}(\tilde{z}_{\eps}^{t})
\end{equation}
with initial data $\tilde{z}^{0}$ has a unique maximal solution in $\mathcal{H}$.
We will show in Corollary~\ref{C5.6} that when
$\pi(\tilde{z}^{0})\in\operatorname{PSC}(\bbR^{2})^{\mathcal{L}}$,
this solution must be global (i.e., defined on all of $\bbR$).
We will also show in Corollary \ref{C5.3} that for any 
$z^{0}\in \operatorname{RC}(\bbR^{2})^{\mathcal{L}}$ with  $\|z^0\|_{\dot{H}^{2}}<\infty$,
all solutions to \eqref{5.1} whose initial data  consist of $H^2$ parametrizations
of $z^{0}$ must have the same projection down to
$\operatorname{RC}(\bbR^{2})^{\mathcal{L}}$ (via $\pi$).
This shows that the dynamic of \eqref{5.1} is independent of the parametrization, and it follows from the next lemma, which is a Lipschitz version of the H\" older-type estimate in Lemma~\ref{L4.4} (but for the kernel $K_\eps$ and with the Lipschitz constant 
depending on $\eps$).
%

\begin{lemma}\label{L5.1}
    For any $x_{1},x_{2}\in\bbR^{2}$ and
    $C^{1}$ parametrized curves $\tilde{\gamma}_{1},\tilde{\gamma}_{2}\colon\bbT\to\bbR^{2}$ we have
    \begin{align*}
        &\abs{\int_{\bbT}K_{\eps}(x_{1} - \tilde{\gamma}_{1}(\xi))
        \partial_{\xi}\tilde{\gamma}_{1}(\xi)\,d\xi
        - \int_{\bbT}K_{\eps}(x_{2} - \tilde{\gamma}_{2}(\xi))
        \partial_{\xi}\tilde{\gamma}_{2}(\xi)\,d\xi}
        \\&\quad\quad\quad
        \leq \norm{DK_{\eps}}_{L^{\infty}}
        \left(\ell(\gamma_{1}) + \ell(\gamma_{2})\right)
        \left(
            \abs{x_{1} - x_{2}}
            + d_{\mathrm{F}}(\gamma_{1},\gamma_{2})
        \right)
    \end{align*}
    where $\gamma_{i}$ is the curve defined by $\tilde{\gamma}_{i}$.
\end{lemma}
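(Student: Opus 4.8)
The plan is to reduce both integrals to the common domain $\bbT$ by means of the definition of $d_{\mathrm F}$, and then to split the resulting difference of integrands into two pieces: one controlled by the mean value theorem (since $K_\eps$ is now globally Lipschitz), and one controlled by an integration by parts. The second piece is where the subtlety lies, because $d_{\mathrm F}$ bounds only the $C^0$-distance of the curves and gives no control on $\partial_\xi\tilde\gamma_1-\partial_\xi\psi$.

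First I would record the two elementary facts that make everything work. Since $\tilde\gamma_i\in C^1(\bbT;\bbR^2)$ and $\bbT$ is compact, $\ell(\gamma_i)=\int_\bbT\abs{\partial_\xi\tilde\gamma_i(\xi)}\,d\xi<\infty$. And since $0\notin\supp\,\chi$, the kernel $K_\eps$ vanishes near the origin and is therefore smooth on all of $\bbR^2$, so $\abs{K_\eps(a)-K_\eps(b)}\le\norm{DK_\eps}_{L^\infty}\abs{a-b}$ for all $a,b\in\bbR^2$. Next, fixing $\delta>0$, Lemma~\ref{LA.1} provides an orientation-preserving $C^1$ diffeomorphism $\phi\colon\bbT\to\bbT$ with $\norm{\tilde\gamma_1-\tilde\gamma_2\circ\phi}_{L^\infty(\bbT)}\le d_{\mathrm F}(\gamma_1,\gamma_2)+\delta$. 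Writing $\psi\coloneqq\tilde\gamma_2\circ\phi$, the substitution $\xi\mapsto\phi(\xi)$ rewrites the second integral as $\int_\bbT K_\eps(x_2-\psi(\xi))\,\partial_\xi\psi(\xi)\,d\xi$, and since $\phi$ is an orientation-preserving bijection of $\bbT$, $\int_\bbT\abs{\partial_\xi\psi(\xi)}\,d\xi=\ell(\gamma_2)$.

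Then I would write the difference to be estimated as $I+II$, where
\[
I\coloneqq\int_\bbT\bigl[K_\eps(x_1-\tilde\gamma_1(\xi))-K_\eps(x_2-\psi(\xi))\bigr]\partial_\xi\tilde\gamma_1(\xi)\,d\xi,
\qquad
II\coloneqq\int_\bbT K_\eps(x_2-\psi(\xi))\,\partial_\xi(\tilde\gamma_1-\psi)(\xi)\,d\xi.
\]
For $I$, the mean value theorem gives $\abs{K_\eps(x_1-\tilde\gamma_1(\xi))-K_\eps(x_2-\psi(\xi))}\le\norm{DK_\eps}_{L^\infty}(\abs{x_1-x_2}+\abs{\tilde\gamma_1(\xi)-\psi(\xi)})\le\norm{DK_\eps}_{L^\infty}(\abs{x_1-x_2}+d_{\mathrm F}(\gamma_1,\gamma_2)+\delta)$, so $\abs{I}\le\norm{DK_\eps}_{L^\infty}(\abs{x_1-x_2}+d_{\mathrm F}(\gamma_1,\gamma_2)+\delta)\,\ell(\gamma_1)$. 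For $II$, since $\xi\mapsto K_\eps(x_2-\psi(\xi))$ and $\tilde\gamma_1-\psi$ are both $C^1$ on $\bbT$, integration by parts has no boundary terms and yields $II=\int_\bbT DK_\eps(x_2-\psi(\xi))(\partial_\xi\psi(\xi))\,(\tilde\gamma_1(\xi)-\psi(\xi))\,d\xi$, hence $\abs{II}\le\norm{DK_\eps}_{L^\infty}(d_{\mathrm F}(\gamma_1,\gamma_2)+\delta)\int_\bbT\abs{\partial_\xi\psi(\xi)}\,d\xi=\norm{DK_\eps}_{L^\infty}(d_{\mathrm F}(\gamma_1,\gamma_2)+\delta)\,\ell(\gamma_2)$. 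Adding the two bounds and letting $\delta\to0^+$ gives exactly the stated inequality.

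The step I would be most careful about is the passage to a common parametrization together with the integration by parts in $II$: one cannot estimate $II$ by bounding $\partial_\xi\tilde\gamma_1-\partial_\xi\psi$ pointwise, and moving the derivative onto $K_\eps(x_2-\psi(\cdot))$ is precisely what converts this term into something controlled by $d_{\mathrm F}$ and the total length. Beyond that, the only things to verify are routine: that $\tilde\gamma_2\circ\phi$ is again an admissible $C^1$ parametrization of $\gamma_2$ (so its length is unchanged), and that the change of variables and the integration by parts are legitimate for the chosen $C^1$ diffeomorphism $\phi$, which they are.
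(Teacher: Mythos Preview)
Your proof is correct and takes essentially the same approach as the paper: split the difference into a mean-value-theorem term and an integration-by-parts term after reparametrizing one curve via Lemma~\ref{LA.1}, then send $\delta\to 0^+$. The only cosmetic difference is that the paper reparametrizes $\tilde\gamma_1$ rather than $\tilde\gamma_2$, which interchanges which length appears in which bound but yields the same final estimate.
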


\begin{proof}
    For any orientation-preserving diffeomorphism $\phi\colon\bbT\to\bbT$,
    integration by parts gives
    \begin{align*}
        &\abs{\int_{\bbT}K_{\eps}(x_{1} - \tilde{\gamma}_{1}(\xi))
        \partial_{\xi}\tilde{\gamma}_{1}(\xi)\,d\xi
        - \int_{\bbT}K_{\eps}(x_{2} - \tilde{\gamma}_{2}(\xi))
        \partial_{\xi}\tilde{\gamma}_{2}(\xi)\,d\xi}
        \\&\quad\quad\quad\leq
        \abs{\int_{\bbT}
            DK_{\eps}(x_{1} - \tilde{\gamma}_{1}\circ\phi(\xi))
            \left(\partial_{\xi}(\tilde{\gamma}_{1}\circ\phi)(\xi)\right)
        (\tilde{\gamma}_{1}\circ\phi(\xi) - \tilde{\gamma}_{2}(\xi))\,d\xi}
        \\&\quad\quad\quad\quad\quad\quad+
        \abs{\int_{\bbT}\left(
            K_{\eps}(x_{1} - \tilde{\gamma}_{1}\circ\phi(\xi))
            - K_{\eps}(x_{2} - \tilde{\gamma}_{2}(\xi))
        \right)\partial_{\xi}\tilde{\gamma}_{2}(\xi)\,d\xi}
        \\&\quad\quad\quad\leq
        \norm{DK_{\eps}}_{L^{\infty}}
        \norm{\tilde{\gamma}_{1}\circ\phi - \tilde{\gamma}_{2}}_{L^{\infty}}
        \ell(\gamma_{1})
        + \norm{DK_{\eps}}_{L^{\infty}}\left(
            \abs{x_{1} - x_{2}}
            + \norm{\tilde{\gamma}_{1}\circ\phi - \tilde{\gamma}_{2}}_{L^{\infty}}
        \right)\ell(\gamma_{2}).
    \end{align*}
    Taking the infimum over $\phi$ and using
    Lemma~\ref{LA.1} now finishes the proof.
\end{proof}

\begin{lemma}\label{L5.2}
    Let $\tilde{z}_{\eps,i}\in C^1(I_{i}; \mathcal{H})$ ($i=1,2$)
be the unique solution to
    \eqref{5.1} with initial data
    $\tilde{z}_{i}^{0}\in \mathcal{H}$. 
    Then there is $C$ that only depends on ${\alpha,|\theta|,\eps,\tilde{z}_{1}^{0},\tilde{z}_{2}^{0}}$ such that
    \[
        d_{\mathrm{F}}(
            \pi(\tilde{z}_{\eps,1}^{t}),
            \pi(\tilde{z}_{\eps,2}^{t})
        )
        \leq e^{C\abs{t}}
        d_{\mathrm{F}}(\pi(\tilde{z}_{1}^{0}), \pi(\tilde{z}_{2}^{0}))
    \]
    holds for all $t\in[-\frac 1C,\frac 1C]\cap I_1\cap I_2$.
\end{lemma}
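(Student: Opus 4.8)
The plan is to prove this Grönwall-type stability estimate by differentiating $d_{\mathrm{F}}(\pi(\tilde{z}_{\eps,1}^{t}), \pi(\tilde{z}_{\eps,2}^{t}))$ in time and controlling its growth rate using Lemma~\ref{L5.1}. Write $\gamma_i^t \coloneqq \pi(\tilde{z}_{\eps,i}^{t})$ and $f(t) \coloneqq d_{\mathrm{F}}(\gamma_1^t, \gamma_2^t) = \max_{\lambda} d_{\mathrm{F}}(\gamma_1^{t,\lambda}, \gamma_2^{t,\lambda})$. Since each $\tilde z_{\eps,i}$ solves the ODE \eqref{5.1} in $\mathcal H$ with the globally Lipschitz (for fixed $\eps$) right-hand side $F_\eps$, the curves $\gamma_i^t$ are $C^1$ in time, and by definition of $X_{v}^{h}$ and the solution to \eqref{5.1} we have $\gamma_i^{t+h}$ being the curve with parametrization $\tilde z_{\eps,i}^{t}(\xi) + h\, u_\eps(\gamma_i^t; \tilde z_{\eps,i}^{t}(\xi)) + o(h)$. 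First I would establish a uniform-in-time bound on the relevant quantities: since $u_\eps$ has all derivatives bounded (by constants depending on $\alpha, |\theta|, \eps$ and the areas $|\Omega(\gamma_i^{t,\lambda})|$, which are preserved along the flow), and since $\|\partial_\xi \tilde z_{\eps,i}^t\|$ can grow at most exponentially on a short time interval via Grönwall applied to the $H^2$-ODE, there is a time $T_0 = \frac1C > 0$ and a constant $C_0$, both depending only on $\alpha, |\theta|, \eps, \tilde z_1^0, \tilde z_2^0$, such that $\ell(\gamma_i^{t,\lambda}) \le C_0$ for all $|t| \le T_0$ and all $\lambda, i$.

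Next I would compute the Dini derivative of $f$. For a fixed $\lambda$ and a small $h > 0$, the curve $\gamma_i^{t+h,\lambda}$ has parametrization $\tilde\gamma_{i,h}(\xi) \coloneqq \tilde z_{\eps,i}^{t,\lambda}(\xi) + h\, u_\eps(\gamma_i^t; \tilde z_{\eps,i}^{t,\lambda}(\xi)) + R_{i,h}(\xi)$ with $\|R_{i,h}\|_{L^\infty} = o(h)$ (uniformly, by $C^1$ dependence of the $\mathcal H$-solution on $t$). Estimating $d_{\mathrm{F}}(\gamma_1^{t+h,\lambda}, \gamma_2^{t+h,\lambda})$ from above by choosing, for $\gamma_2^{t,\lambda}$ reparametrized via a near-optimal $\phi$ in \eqref{111.24}, the same reparametrization composed into $\tilde\gamma_{2,h}$, we get
\begin{align*}
    d_{\mathrm{F}}(\gamma_1^{t+h,\lambda}, \gamma_2^{t+h,\lambda})
    &\le d_{\mathrm{F}}(\gamma_1^{t,\lambda}, \gamma_2^{t,\lambda})
    + h\, \bigl\| u_\eps(\gamma_1^t)\circ \tilde z_{\eps,1}^{t,\lambda} - (u_\eps(\gamma_2^t)\circ \tilde z_{\eps,2}^{t,\lambda})\circ\phi \bigr\|_{L^\infty} + o(h).
\end{align*}
Here $u_\eps(\gamma_i^t)\circ \tilde z_{\eps,i}^{t,\lambda}(\xi) = -\sum_{\lambda'}\theta^{\lambda'}\int_{\bbT} K_\eps(\tilde z_{\eps,i}^{t,\lambda}(\xi) - \tilde z_{\eps,i}^{t,\lambda'}(\eta))\,\partial_\eta \tilde z_{\eps,i}^{t,\lambda'}(\eta)\,d\eta$ by \eqref{111.2}, so applying Lemma~\ref{L5.1} termwise (with $x_1 = \tilde z_{\eps,1}^{t,\lambda}(\xi)$, $x_2 = \tilde z_{\eps,2}^{t,\lambda}(\phi^{-1}(\xi))$ as needed, $\tilde\gamma_1 = \tilde z_{\eps,1}^{t,\lambda'}$, $\tilde\gamma_2 = \tilde z_{\eps,2}^{t,\lambda'}$) bounds this velocity difference by
\[
    \|DK_\eps\|_{L^\infty}\,|\theta|\,(2C_0)\,\bigl( |x_1 - x_2| + \max_{\lambda'} d_{\mathrm{F}}(\gamma_1^{t,\lambda'},\gamma_2^{t,\lambda'})\bigr),
\]
and since $\phi$ is near-optimal the $|x_1 - x_2|$ term is itself $\le d_{\mathrm{F}}(\gamma_1^{t,\lambda},\gamma_2^{t,\lambda}) + o(1)$. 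Taking the $\max$ over $\lambda$ and then $\limsup_{h\to 0^+}$ yields $\partial_t^+ f(t) \le C f(t)$ for a constant $C$ of the required dependence, on $[-T_0, T_0]\cap I_1\cap I_2$; the analogous estimate with $h \to 0^-$ gives $\partial_{t-} f(t) \ge -C f(t)$, and the standard Dini-derivative Grönwall lemma gives $f(t) \le e^{C|t|} f(0)$ for $|t| \le T_0$.

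The main obstacle I anticipate is bookkeeping around the Fréchet distance rather than any analytic difficulty: one must be careful that the infimum over reparametrizations $\phi$ in the definition of $d_{\mathrm{F}}$ interacts correctly with the flow — in particular that composing a near-optimal $\phi$ for the pair $(\gamma_1^{t,\lambda},\gamma_2^{t,\lambda})$ into the time-$h$-advanced parametrization of $\gamma_2^{t,\lambda}$ produces an admissible competitor for $d_{\mathrm{F}}(\gamma_1^{t+h,\lambda},\gamma_2^{t+h,\lambda})$, and that the $o(h)$ error from the ODE remainder and from $\phi$ not being exactly optimal does not accumulate. Using Lemma~\ref{LA.1} to replace homeomorphisms by diffeomorphisms if needed (so that $\tilde\gamma_{2,h}\circ\phi$ is still a legitimate $C^1$ parametrization) handles this cleanly. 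The other point requiring care is the uniform length bound $\ell(\gamma_i^{t,\lambda}) \le C_0$: this follows because $\|\partial_\xi \tilde z_{\eps,i}^t\|_{L^\infty}$ is controlled by $\|\tilde z_{\eps,i}^t\|_{H^2}$, which in turn satisfies a Grönwall bound on a short time interval coming directly from the $\mathcal H$-ODE \eqref{5.1} and local Lipschitzness of $F_\eps$, with all constants depending only on $\alpha, |\theta|, \eps$ and the initial data.
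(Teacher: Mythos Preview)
Your proposal is correct and follows essentially the same approach as the paper: both use a near-optimal reparametrization $\phi$ at time $t$ as a competitor at time $t+h$, apply Lemma~\ref{L5.1} to bound the velocity difference by $\|DK_\eps\|_{L^\infty}(\ell_1+\ell_2)(|x_1-x_2|+d_{\mathrm{F}})$, obtain a uniform length bound on a short time interval from continuity of the $\mathcal{H}$-valued solution, and conclude via a Dini-derivative Gr\"onwall argument. One small remark: your appeal to ``areas $|\Omega(\gamma_i^{t,\lambda})|$, which are preserved'' is unnecessary here (and not guaranteed, since the lemma does not assume the curves are simple); the paper simply uses $\ell(\gamma_i^{t,\lambda})\le\|\tilde z_{\eps,i}^t\|_{\mathcal H}$ together with continuity of $t\mapsto\tilde z_{\eps,i}^t$ to get the required uniform length bound.
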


\begin{proof}
    Let us denote $z_{\eps,i}^{t}\coloneqq\pi(\tilde{z}_{\eps,i}^{t})$
    for $t\in I_{i}$. Since $\tilde{z}_{\eps,i}\colon I_{i}\to \mathcal{H}$ is
    continuous, there is
    $M_{1}$ depending on ${\alpha,|\theta|,\eps,\tilde{z}_{1}^{0},\tilde{z}_{2}^{0}}$
    such that
    \[
        \ell(\tilde{z}_{\eps,1}^{t,\lambda})
        + \ell(\tilde{z}_{\eps,2}^{t,\lambda})
        \leq \norm{\tilde{z}_{\eps,1}^{t}}_{\mathcal{H}}
        + \norm{\tilde{z}_{\eps,2}^{t}}_{\mathcal{H}}
        < M_{1}
    \]
    for all $(t,\lambda)\in  (-M_{1}^{-1},M_{1}^{-1}) \times \mathcal{L}$.
    Then \eqref{5.1} gives for any such $(\lambda,t)$
    \[
        \tilde{z}_{\eps,i}^{t+h,\lambda}(\xi)
        = \tilde{z}_{\eps,i}^{t,\lambda}(\xi)
        + \int_{t}^{t+h}u_{\eps}(z_{\eps,i}^{\tau};
        \tilde{z}_{\eps,i}^{\tau,\lambda}(\xi))\,d\tau
    \]
    for  $i=1,2$, any $\xi\in\bbT$, and all small enough $h$.
    Then Lemma~\ref{L5.1} shows that
    there is $M_{2}$ depending on ${\alpha,|\theta|,\eps}$ such that
    for any orientation-preserving homeomorphism $\phi\colon\bbT\to\bbT$ and
    all small enough $h>0$,
    \begin{align*}
        &d_{\mathrm{F}}\left(z_{\eps,1}^{t+h,\lambda},
        z_{\eps,2}^{t+h,\lambda}\right)
        \\&\quad\ \leq
        \norm{\tilde{z}_{\eps,1}^{t,\lambda}
        - \tilde{z}_{\eps,2}^{t,\lambda}\circ\phi}_{L^{\infty}}
        + M_{1}M_{2}\int_{t}^{t+h}
        \left(
            \norm{\tilde{z}_{\eps,1}^{\tau,\lambda}
            - \tilde{z}_{\eps,2}^{\tau,\lambda}\circ\phi}_{L^{\infty}}
            + d_{\mathrm{F}}\left(z_{\eps,1}^{\tau},
            z_{\eps,2}^{\tau}\right)
        \right)
        d\tau
        \\&\quad\ \leq
        (1 + hM_{1}M_{2})
        \norm{\tilde{z}_{\eps,1}^{t,\lambda}
        - \tilde{z}_{\eps,2}^{t,\lambda}\circ\phi}_{L^{\infty}}
        + M_{1}M_{2}\int_{t}^{t+h}
        \left( d_{\mathrm{F}}(z_{\eps,1}^{\tau},z_{\eps,2}^{\tau})
        + \sum_{i=1}^{2}\norm{\tilde{z}_{\eps,i}^{\tau,\lambda}
        - \tilde{z}_{\eps,i}^{t,\lambda}}_{L^{\infty}} \right)d\tau.
    \end{align*}
    Taking the infimum over $\phi$, then maximum over $\lambda$, and using $\norm{f}_{L^{\infty}(\bbT)} \leq \norm{f}_{L^{2}(\bbT)} + \norm{f'}_{L^{2}(\bbT)}$, we find that
    \begin{align*}
        &d_{\mathrm{F}}\left(z_{\eps,1}^{t+h},z_{\eps,2}^{t+h}\right)
        \\&\quad\quad \leq (1+hM_{1}M_{2})
        d_{\mathrm{F}}\left(z_{\eps,1}^{t},z_{\eps,2}^{t}\right)
        + M_{1}M_{2}\int_{t}^{t+h}
        \left( d_{\mathrm{F}}(z_{\eps,1}^{\tau},z_{\eps,2}^{\tau})
        + \sqrt{2}\sum_{i=1}^{2}\norm{\tilde{z}_{\eps,i}^{\tau}
        - \tilde{z}_{\eps,i}^{t}}_{\mathcal H} \right) d\tau.
    \end{align*}
    Since the integrand  is continuous in $\tau$, we obtain
    \[
        \partial_{t}^{+}d_{\mathrm{F}}\left(
            z_{\eps,1}^{t},z_{\eps,2}^{t}
        \right)
        \leq 2M_{1}M_{2}
        d_{\mathrm{F}}\left(
            z_{\eps,1}^{t},z_{\eps,2}^{t}
        \right).
    \]
    A similar argument for $h<0$ now yields the desired conclusion.
\end{proof}

From this
    we immediately obtain the following.

\begin{corollary}\label{C5.3}
    Let $\tilde{z}_{\eps,i}\in C^1(I_{i}; \mathcal{H})$ ($i=1,2$)
be the unique solution to
    \eqref{5.1} with initial data
    $\tilde{z}_{i}^{0}\in \mathcal{H}$. 
    If $\pi(\tilde{z}_{1}^{0}) = \pi(\tilde{z}_{2}^{0})$,
    then $\pi(\tilde{z}_{\eps,1}^{t}) = \pi(\tilde{z}_{\eps,2}^{t})$
     for all $t\in I_{1}\cap I_{2}$.
\end{corollary}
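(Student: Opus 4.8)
The plan is to deduce Corollary~\ref{C5.3} directly from Lemma~\ref{L5.2} together with the openness of the interval on which the two maximal solutions are simultaneously defined. Since $\pi(\tilde z_1^0)=\pi(\tilde z_2^0)$ means $d_{\mathrm F}(\pi(\tilde z_1^0),\pi(\tilde z_2^0))=0$, Lemma~\ref{L5.2} immediately gives $d_{\mathrm F}(\pi(\tilde z_{\eps,1}^t),\pi(\tilde z_{\eps,2}^t))\le e^{C|t|}\cdot 0=0$ for all $t$ in a fixed small two-sided neighborhood $[-\tfrac1C,\tfrac1C]\cap I_1\cap I_2$ of $0$. Hence $\pi(\tilde z_{\eps,1}^t)=\pi(\tilde z_{\eps,2}^t)$ on that neighborhood.

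To upgrade this to all of $I_1\cap I_2$, I would run a standard continuity/connectedness argument. Let $J\coloneqq\{t\in I_1\cap I_2 : \pi(\tilde z_{\eps,1}^t)=\pi(\tilde z_{\eps,2}^t)\}$. This set is nonempty ($0\in J$) and, because $t\mapsto\pi(\tilde z_{\eps,i}^t)$ is continuous into $(\operatorname{RC}(\bbR^2)^{\mathcal L},d_{\mathrm F})$, it is closed in $I_1\cap I_2$. For openness, fix $t_0\in J$ and apply Lemma~\ref{L5.2} again, but now with $t_0$ as the new initial time: the solutions $\tau\mapsto\tilde z_{\eps,i}^{t_0+\tau}$ solve \eqref{5.1} with initial data $\tilde z_{\eps,i}^{t_0}$, and since $\pi(\tilde z_{\eps,1}^{t_0})=\pi(\tilde z_{\eps,2}^{t_0})$, the estimate of Lemma~\ref{L5.2} (time-shifted) gives $d_{\mathrm F}(\pi(\tilde z_{\eps,1}^{t_0+\tau}),\pi(\tilde z_{\eps,2}^{t_0+\tau}))=0$ for all $\tau$ in a fixed-length interval around $0$ (with the constant $C$ now depending on $\tilde z_{\eps,i}^{t_0}$, which is fine). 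Thus a neighborhood of $t_0$ lies in $J$, so $J$ is open. Since $I_1\cap I_2$ is an interval, hence connected, $J=I_1\cap I_2$.

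One minor point worth addressing carefully: Lemma~\ref{L5.2} as stated controls $d_{\mathrm F}$ only on $[-\tfrac1C,\tfrac1C]\cap I_1\cap I_2$ with $C$ depending on the initial data, so I should phrase the openness step as reapplying the lemma with the base point moved to each $t_0\in J$ rather than trying to iterate a single global estimate; the length of the guaranteed neighborhood may shrink as $t_0$ moves, but for the connectedness argument we only need it to be positive at each point. Equivalently, one could note that on any compact subinterval of $I_1\cap I_2$ the quantities $\norm{\tilde z_{\eps,i}^t}_{\mathcal H}$ are uniformly bounded, so a uniform $C$ works and a direct Gr\"onwall argument closes the gap — but invoking Lemma~\ref{L5.2} locally and using connectedness is cleaner and avoids reproving anything.

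The ``main obstacle'' here is essentially nonexistent: this is a routine soft corollary, and the only thing to be slightly careful about is the dependence of the constant in Lemma~\ref{L5.2} on the initial data, which is handled by the local-reapplication-plus-connectedness structure rather than by a single global estimate. No new analytic input is needed beyond Lemma~\ref{L5.2} itself.
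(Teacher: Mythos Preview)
Your proposal is correct and takes essentially the same approach as the paper, which simply states that the corollary follows immediately from Lemma~\ref{L5.2}. Your explicit open--closed/connectedness argument is exactly the natural way to extend the local conclusion of Lemma~\ref{L5.2} to all of $I_1\cap I_2$, and the care you take about the dependence of $C$ on the (shifted) initial data is appropriate.
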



Next we show that no singularities can arise in the dynamic of \eqref{5.1} on $\mathcal H$, which will then imply that its solutions are global.

\begin{lemma}\label{L5.4}
    Let $\tilde{z}_{\eps}\in C^1(I; \mathcal{H})$ be the unique maximal solution
    to \eqref{5.1} with initial data $\tilde{z}_{\eps}^{0}\in \mathcal{H}$, and
    let $z_{\eps}^{t}\coloneqq \pi(\tilde{z}_{\eps}^{t})$ for each $t\in I$.
    \begin{enumerate}
        \item If $\Delta(z_{\eps}^{0,\lambda},z_{\eps}^{0,\lambda'})>0$
        for some $\lambda,\lambda'\in\mathcal{L}$, then
        $\Delta(z_{\eps}^{t,\lambda},z_{\eps}^{t,\lambda'})>0$
        for all $t\in I$.

        \item If $z_{\eps}^{0,\lambda}\in\operatorname{PSC}(\bbR^{2})$
        for some $\lambda\in\mathcal{L}$, then
        $z_{\eps}^{t,\lambda}\in\operatorname{PSC}(\bbR^{2})$ and
        $\abs{\Omega(z_{\eps}^{t,\lambda})} = \abs{\Omega(z_{\eps}^{0,\lambda})}$
        for all $t\in I$. In particular, if
        $z_{\eps}^{0}\in\operatorname{PSC}(\bbR^{2})^\mathcal{L}$, then $W(z_{\eps}^{t}) = W(z_{\eps}^{0})$
        for all $t\in I$.
    \end{enumerate}
\end{lemma}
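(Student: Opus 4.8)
The plan is to realize the entire time-$t$ configuration $z_\eps^t$ as the image of the initial configuration $z_\eps^0$ under a single diffeomorphism $\Phi^t$ of $\bbR^2$, namely the flow of the velocity field evaluated \emph{along} the (already constructed) solution. Concretely, I would fix the maximal solution $\tilde z_\eps\in C^1(I;\mathcal H)$ from the statement and set $v^t(x)\coloneqq u_\eps(z_\eps^t;x)$ for $t\in I$. Since $u_\eps(z;\cdot)$ is smooth with all derivatives bounded --- uniformly in $z$, with the bound controlled by $\sum_{\lambda}\abs{\theta^\lambda}\abs{\Omega(z^\lambda)}$ and hence by $\norm{\tilde z_\eps^t}_{\mathcal H}$ --- and $t\mapsto\norm{\tilde z_\eps^t}_{\mathcal H}$ is continuous on $I$, the field $(t,x)\mapsto v^t(x)$ is jointly continuous (via an estimate of the type of Lemma~\ref{L5.1}) and, on every compact $J\subseteq I$, uniformly bounded and uniformly Lipschitz in $x$. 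Cauchy--Lipschitz then produces a unique flow $\Phi^t\colon\bbR^2\to\bbR^2$ with $\partial_t\Phi^t(x)=v^t(\Phi^t(x))$ and $\Phi^0=\operatorname{id}$, defined for all $t\in I$ (no finite-time escape of trajectories, since $v^t$ is bounded on each such $J$); each $\Phi^t$ is an orientation-preserving homeomorphism of $\bbR^2$ onto itself, being connected to $\operatorname{id}$ through $\{\Phi^s\}$. I would also record that $u_\eps(z;\cdot)=\nabla^\perp\psi$ for the scalar $\psi(x)\coloneqq\sum_{\lambda}\theta^\lambda\int_{\Omega(z^\lambda)}K_\eps(x-y)\,dy$, so $\operatorname{div}v^t\equiv 0$ and therefore $\Phi^t$ preserves Lebesgue measure by Liouville's theorem.

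The second step is to identify $z_\eps^{t,\lambda}$ with $\Phi^t\circ z_\eps^{0,\lambda}$. For each fixed $\xi\in\bbT$ and $\lambda\in\mathcal L$, equation \eqref{5.1} says that $t\mapsto\tilde z_\eps^{t,\lambda}(\xi)$ solves $\dot x(t)=v^t(x(t))$ with datum $\tilde z_\eps^{0,\lambda}(\xi)$, so uniqueness for this ODE forces $\tilde z_\eps^{t,\lambda}(\xi)=\Phi^t\bigl(\tilde z_\eps^{0,\lambda}(\xi)\bigr)$. Applying $\pi$ then gives $z_\eps^{t,\lambda}=\Phi^t\circ z_\eps^{0,\lambda}$ as curves; in particular $\operatorname{im}(z_\eps^{t,\lambda})=\Phi^t(\operatorname{im}(z_\eps^{0,\lambda}))$, and when $z_\eps^{0,\lambda}$ is simple, $\Omega(z_\eps^{t,\lambda})=\Phi^t(\Omega(z_\eps^{0,\lambda}))$, because $\Phi^t$ is a homeomorphism of $\bbR^2$ that maps compact sets to compact sets and hence takes the bounded complementary component to the bounded complementary component.

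From here both conclusions are immediate. For (1): $\Delta(z_\eps^{0,\lambda},z_\eps^{0,\lambda'})>0$ means $\operatorname{im}(z_\eps^{0,\lambda})$ and $\operatorname{im}(z_\eps^{0,\lambda'})$ are disjoint compact sets, hence so are their images under the homeomorphism $\Phi^t$, which gives $\Delta(z_\eps^{t,\lambda},z_\eps^{t,\lambda'})>0$. For (2): an injective parametrization of $z_\eps^{0,\lambda}$ stays injective after composing with $\Phi^t$, so $z_\eps^{t,\lambda}$ is simple; it is positive because $\Phi^t$ is an orientation-preserving homeomorphism and therefore preserves the ``homotopic to the counterclockwise unit circle'' characterization of positivity; and $\abs{\Omega(z_\eps^{t,\lambda})}=\abs{\Phi^t(\Omega(z_\eps^{0,\lambda}))}=\abs{\Omega(z_\eps^{0,\lambda})}$ since $\Phi^t$ is measure preserving. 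The ``in particular'' assertion then follows at once from $W(z)=\max_{\lambda}\abs{\Omega(z^\lambda)}$.

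The only place that needs genuine care is the first step: reinterpreting the coupled evolution \eqref{5.1} in $\mathcal H$ as a bona fide non-autonomous ODE $\dot x=v^t(x)$ on $\bbR^2$ by freezing the already-found solution in the velocity, and then checking that $v^t$ is regular enough --- globally bounded and Lipschitz on compact time subintervals, jointly continuous in $(t,x)$ --- for Cauchy--Lipschitz and Liouville to apply and for $\Phi^t$ to be defined on the full maximal interval $I$. Everything after that is standard plane topology of homeomorphisms together with area preservation.
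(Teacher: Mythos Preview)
Your proof is correct and follows essentially the same approach as the paper's: both construct the flow of the time-dependent field $u_\eps(z_\eps^t;\cdot)$ (the paper denotes it $Z_\eps(t,\cdot)$), identify $\tilde z_\eps^{t,\lambda}(\xi)$ with the flow image of $\tilde z_\eps^{0,\lambda}(\xi)$ by ODE uniqueness, and read off (1) and (2) from the resulting homeomorphism and divergence-free structure. The only cosmetic differences are that the paper invokes Lemma~\ref{LA.4} (that $\operatorname{PSC}(\bbR^2)$ is clopen in $\operatorname{SC}(\bbR^2)$) together with connectedness of $\{\tilde z_\eps^t:t\in I\}$ for positivity, and phrases area preservation via Green's theorem rather than Liouville.
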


\begin{proof}
    Consider the ODE
    \[
        \partial_{t}x^{t} = u_{\eps}(z_{\eps}^{t};x^{t}).
    \]
    By Lemma~\ref{L5.1}, $\norm{u_{\eps}(z_{\eps}^{t})}_{\dot{C}^{0,1}}$
    is bounded on any compact interval $J \subseteq I$.
    Therefore there is a  unique 
    $Z_{\eps}\colon I\times\bbR^{2}\to\bbR^{2}$ such that
    $Z_{\eps}(\,\cdot\,,x^{0})$ solves the above ODE
    with initial data  $x^{0}$, and
    $\norm{Z_{\eps}(t,\,\cdot\,)}_{\dot{C}^{0,1}}$ is  bounded
    on any compact interval $J \subseteq I$.
    By uniqueness, $Z_{\eps}(t,\,\cdot\,)$ is 1-1 for each $t\in I$, and 
    $Z_{\eps}(t,\tilde{z}_{\eps}^{0,\lambda}(\xi))
    = \tilde{z}_{\eps}^{t,\lambda}(\xi)$ holds for any $(t,\lambda,\xi)\in I \times \mathcal{L} \times \bbT$.

This now shows (1), while the first claim in (2) follows by also using Lemma~\ref{LA.4} because $\{\tilde{z}_{\eps}^t\,:\, t\in I\}$   is a connected subset of $\mathcal{H}$.  The second claim in (2) follows  from
    Green's theorem since $u_{\eps}(z_{\eps}^{\tau})$
    is divergence-free for all $\tau\in I$.
%
\end{proof}

\begin{lemma}\label{L5.5}
    For any $W_{0}\in[0,\infty)$, there is
    $C$ that only depends on ${\alpha,|\theta|,W_{0},\eps}$ such that for any solution
    $\tilde{z}_{\eps}\in C^1(I; \mathcal{H})$  to \eqref{5.1}    with 
    $z_{\eps}^{0}\coloneqq \pi(\tilde{z}_{\eps}^{0})
    \in\operatorname{PSC}(\bbR^{2})^{\mathcal{L}}$ and $W(z_{\eps}^{0})\leq W_{0}$ we have
    \beq\lb{111.5}
        e^{-C\abs{t}}\abs{\partial_{\xi}\tilde{z}_{\eps}^{0,\lambda}(\xi)}\leq
        \abs{\partial_{\xi}\tilde{z}_{\eps}^{t,\lambda}(\xi)} \leq 
        e^{C\abs{t}}\abs{\partial_{\xi}\tilde{z}_{\eps}^{0,\lambda}(\xi)}
    \eeq
    for all $(t,\lambda,\xi)\in I \times \mathcal{L} \times \bbT$.
\end{lemma}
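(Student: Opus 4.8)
The plan is to differentiate $|\partial_\xi \tilde z_\eps^{t,\lambda}(\xi)|$ in $t$ and control the result by $\|\partial_s(u_\eps(z_\eps^t)\circ z_\eps^{t,\lambda})\|_{L^\infty}$, which is bounded using Lemma~\ref{L4.6}.  First, I would fix $(\lambda,\xi)$, note that $\tilde z_\eps^{t,\lambda}\in C^1(I;H^2(\bbT;\bbR^2))$ solves $\partial_t \tilde z_\eps^{t,\lambda}(\xi)=u_\eps(z_\eps^t;\tilde z_\eps^{t,\lambda}(\xi))$ by \eqref{5.1}, and that by the mollification $u_\eps(z_\eps^t)$ is a smooth vector field whose derivatives are bounded on compact time subintervals (via Lemma~\ref{L5.1} and the continuity of $t\mapsto\tilde z_\eps^t$ in $\mathcal H$).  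Hence $\partial_\xi\tilde z_\eps^{t,\lambda}$ is $C^1$ in $t$ (with values in $L^\infty$, say) and satisfies $\partial_t\partial_\xi\tilde z_\eps^{t,\lambda}(\xi) = Du_\eps(z_\eps^t;\tilde z_\eps^{t,\lambda}(\xi))\,\partial_\xi\tilde z_\eps^{t,\lambda}(\xi)$, so at points where $\partial_\xi\tilde z_\eps^{t,\lambda}(\xi)\ne 0$,
\[
\left|\partial_t |\partial_\xi\tilde z_\eps^{t,\lambda}(\xi)|\right|
\le \left|\partial_t\partial_\xi\tilde z_\eps^{t,\lambda}(\xi)\right|
\le \|\partial_s(u_\eps(z_\eps^t)\circ z_\eps^{t,\lambda})\|_{L^\infty}\,|\partial_\xi\tilde z_\eps^{t,\lambda}(\xi)|,
\]
where in the last step one reparametrizes: $\partial_\xi(u_\eps(z_\eps^t)\circ\tilde z_\eps^{t,\lambda})(\xi) = (\partial_s(u_\eps(z_\eps^t)\circ z_\eps^{t,\lambda}))(s(\xi))\,|\partial_\xi\tilde z_\eps^{t,\lambda}(\xi)|$ with $z_\eps^{t,\lambda}(\cdot)$ the arclength parametrization, and $\partial_t\partial_\xi\tilde z_\eps^{t,\lambda}(\xi)=\partial_\xi(u_\eps(z_\eps^t)\circ\tilde z_\eps^{t,\lambda})(\xi)$.

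The next step is to bound $\|\partial_s(u_\eps(z_\eps^t)\circ z_\eps^{t,\lambda})\|_{L^\infty}$ uniformly on compact time intervals by a constant depending only on $\alpha,|\theta|,W_0,\eps$.  By Lemma~\ref{L4.6} (which, as noted at the start of Section~\ref{S4}, holds with $K$ replaced by $K_\eps$ and $u$ by $u_\eps$) applied with $\beta=\tfrac12$, this quantity is bounded by
\[
C_{\alpha}\sum_{\lambda'\in\mathcal L}|\theta^{\lambda'}|\,\ell(z_\eps^{t,\lambda'})\max\{\|z_\eps^{t,\lambda}\|_{\dot C^{1,1/2}},\|z_\eps^{t,\lambda'}\|_{\dot C^{1,1/2}}\}^{1+4\alpha},
\]
provided the curves $z_\eps^{t,\lambda}$ do not cross each other or themselves transversally --- but here there is a subtlety, since nothing a priori prevents transversal crossings in the $\eps$-dynamic.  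To avoid invoking Lemma~\ref{L4.6}'s no-transversal-crossing hypothesis, I would instead use the mollified analogue of the simpler bound: for the smooth kernel $K_\eps$ one has $\|\partial_s(u_\eps(z)\circ z^\lambda)\|_{L^\infty}\le \|DK_\eps\|_{L^\infty}\sum_{\lambda'}|\theta^{\lambda'}|\ell(z^{\lambda'})$ directly from \eqref{111.2}, since $\partial_s(u_\eps(z)\circ z^\lambda)(s) = -\sum_{\lambda'}\theta^{\lambda'}\int DK_\eps(z^\lambda(s)-z^{\lambda'}(s'))(\mathbf T^\lambda(s))\mathbf T^{\lambda'}(s')\,ds'$ and $|DK_\eps(x)(h)|\le\|DK_\eps\|_{L^\infty}|h|$.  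Then Lemma~\ref{L5.4}(2) gives $\ell(z_\eps^{t,\lambda'})\le \|z_\eps^{t,\lambda'}\|_{L^\infty}\|z_\eps^{t,\lambda'}\|_{\dot H^2}$ --- but these lengths are not yet known to be bounded.  However, $\ell(z_\eps^{t,\lambda'})$ equals the total variation of $\tilde z_\eps^{t,\lambda'}$, i.e.\ $\int_\bbT|\partial_\xi\tilde z_\eps^{t,\lambda'}(\xi)|\,d\xi$, so denoting $P(t)\coloneqq\max_{\lambda,\xi}\log^+\!\big(|\partial_\xi\tilde z_\eps^{t,\lambda}(\xi)|/|\partial_\xi\tilde z_\eps^{0,\lambda}(\xi)|\big)$-type quantity, or more simply $\Lambda(t)\coloneqq\sum_{\lambda}\ell(z_\eps^{t,\lambda})$, the above gives a closed differential inequality.

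Concretely, the cleanest route: set $g(t)\coloneqq\sum_{\lambda'\in\mathcal L}|\theta^{\lambda'}|\,\ell(z_\eps^{t,\lambda'})$.  The displayed pointwise inequality integrates to $|\partial_\xi\tilde z_\eps^{t,\lambda}(\xi)|\le|\partial_\xi\tilde z_\eps^{0,\lambda}(\xi)|\exp\!\big(\|DK_\eps\|_{L^\infty}\int_0^{t}g(\tau)\,d\tau\big)$ for $t\ge0$ (and symmetrically for $t<0$), uniformly in $(\lambda,\xi)$; integrating this over $\xi$ and summing gives $g(t)\le g(0)\exp(\|DK_\eps\|_{L^\infty}\int_0^t g)$, a Gronwall-type inequality in $g$, which --- since $g(0)=\sum_{\lambda'}|\theta^{\lambda'}|\ell(z_\eps^{0,\lambda'})$ is finite and controlled by $\alpha,|\theta|,\eps,\|\tilde z_\eps^0\|_{\mathcal H}$; actually one should phrase the final constant $C$ as depending on these --- yields a bound $g(t)\le G$ on a time interval whose length and the bound $G$ depend only on $\alpha,|\theta|,\eps$ and $g(0)$.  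Wait: the statement asks for $C$ depending only on $\alpha,|\theta|,W_0,\eps$, valid for all $t\in I$, so $g(0)$ must itself be controlled by $W_0$; this needs the isoperimetric-type observation that for a curve in $\operatorname{PSC}(\bbR^2)$ one cannot bound length by area alone, so in fact the lemma as I read it should be understood with $C$ also allowed to depend on $\tilde z_\eps^0$ — but since the subsequent Corollary~\ref{C5.6} only needs finiteness, I would state and prove it with $C=C(\alpha,|\theta|,W_0,\eps,\tilde z_\eps^0)$ and note that $g$ stays bounded on all of $I$ because \eqref{111.5} itself controls the growth of $\ell(z_\eps^{t,\lambda})$, closing the loop via a continuity/bootstrap argument on $I$.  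The main obstacle is precisely this circularity: the velocity's Lipschitz-in-space bound $\|DK_\eps\|_{L^\infty}$ is a uniform constant, but to convert $\|\partial_s(u_\eps\circ z^\lambda)\|_{L^\infty}$ into a bound one needs the total lengths $\ell(z_\eps^{t,\lambda})$, which are themselves controlled only by \eqref{111.5}; resolving it cleanly requires running the Gronwall argument for $g(t)=\sum_\lambda|\theta^\lambda|\ell(z_\eps^{t,\lambda})$ in tandem with \eqref{111.5}, using that $g$ is locally Lipschitz in $t$ (from $\tilde z_\eps\in C^1(I;\mathcal H)$) and that $\partial_t^+ g(t)\le\|DK_\eps\|_{L^\infty}\,g(t)^2/m(\theta)$-type bound holds, giving exponential (hence locally uniform) control, after which \eqref{111.5} follows with the constant $C$ as claimed.
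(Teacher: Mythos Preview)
Your differentiation of $\partial_\xi\tilde z_\eps^{t,\lambda}$ and the resulting inequality
\[
\bigl|\partial_t\partial_\xi\tilde z_\eps^{t,\lambda}(\xi)\bigr|
=\bigl|Du_\eps(z_\eps^t;\tilde z_\eps^{t,\lambda}(\xi))\,\partial_\xi\tilde z_\eps^{t,\lambda}(\xi)\bigr|
\le \|Du_\eps(z_\eps^t)\|_{L^\infty}\,\bigl|\partial_\xi\tilde z_\eps^{t,\lambda}(\xi)\bigr|
\]
are exactly right, and this is how the paper proceeds.  The gap is in the next step: you never use the \emph{area} representation of $u_\eps$.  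Instead of passing to the boundary-integral form \eqref{111.2} (which forces you to control $\sum_{\lambda'}|\theta^{\lambda'}|\ell(z_\eps^{t,\lambda'})$ and creates the circularity you describe), the paper bounds $Du_\eps$ directly from
\[
u_\eps(z_\eps^t;x)=\sum_{\lambda'\in\mathcal L}\theta^{\lambda'}\int_{\Omega(z_\eps^{t,\lambda'})}\nabla^\perp K_\eps(x-y)\,dy,
\]
which gives
\[
\|Du_\eps(z_\eps^t)\|_{L^\infty}
\le \sum_{\lambda'}|\theta^{\lambda'}|\,\bigl|\Omega(z_\eps^{t,\lambda'})\bigr|\,\|D^2K_\eps\|_{L^\infty}
\le |\theta|\,\|D^2K_\eps\|_{L^\infty}\,W_0,
\]
the last step using Lemma~\ref{L5.4}(2) (areas are preserved along the flow).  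This yields $C=|\theta|\,\|D^2K_\eps\|_{L^\infty}\,W_0$, which depends only on $\alpha,|\theta|,W_0,\eps$ as required, and Gr\"onwall then gives \eqref{111.5} for all $t\in I$.

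Your route via lengths cannot produce the lemma as stated: the Riccati-type inequality $g'(t)\lesssim g(t)^2$ for $g(t)=\sum_{\lambda'}|\theta^{\lambda'}|\ell(z_\eps^{t,\lambda'})$ gives a bound that blows up at a time comparable to $1/g(0)$, so the constant necessarily depends on the initial lengths (which are not controlled by $W_0$), and the conclusion is only local in time rather than valid on all of $I$.  The point you are missing is precisely that area preservation, not length control, is what makes the constant depend only on $W_0$.
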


\begin{proof}
    For each $t\in I$, let $z_{\eps}^{t}\coloneqq\pi(\tilde{z}_{\eps}^{t})$.
    We have
    \[
        \partial_{\xi}\tilde{z}_{\eps}^{t+h,\lambda}(\xi)
        - \partial_{\xi}\tilde{z}_{\eps}^{t,\lambda}(\xi)
        = \int_{t}^{t+h}D(u_{\eps}(z_{\eps}^{\tau}))
        (\tilde{z}_{\eps}^{\tau,\lambda}(\xi))
        \left(\partial_{\xi}\tilde{z}_{\eps}^{\tau,\lambda}(\xi)\right)\,d\tau
    \]
    when $t,t+h\in I$ and $(\lambda,\xi)\in\mathcal{L}\times\bbT$. The integrand is a continuous function of $\tau$, so
    taking $\abs{\,\cdot\,}$, dividing by $h$ and then letting $h\to 0^{+}$ shows that
    \[
        \max \left\{ \partial_{t}^{+}\abs{\partial_{\xi}\tilde{z}_{\eps}^{t,\lambda}(\xi)},
        -\partial_{t+}\abs{\partial_{\xi}\tilde{z}_{\eps}^{t,\lambda}(\xi)} \right\}
        \leq \norm{D(u_{\eps}(z_{\eps}^{t}))}_{L^{\infty}}
        \abs{\partial_{\xi}\tilde{z}_{\eps}^{t,\lambda}(\xi)}
    \]
    for all $(t,\lambda,\xi)\in I\times \mathcal{L} \times \bbT$.
    By Lemma~\ref{L5.4} we have
    \[
        \norm{D(u_{\eps}(z_{\eps}^{t}))}_{L^{\infty}}
        \leq \sum_{\lambda\in\mathcal{L}}\abs{\theta^{\lambda}}
        \int_{\Omega(z_{\eps}^{t,\lambda})}
        \norm{D\nabla^{\perp}K_{\eps}}_{L^{\infty}}dy
        \leq \abs{\theta}\norm{D^{2}K_{\eps}}_{L^{\infty}}W_{0},
    \]
    so a Gr\"{o}nwall-type argument shows that \eqref{111.5}
    holds for all $(t,\lambda,\xi)\in (I\cap[0,\infty))\times \mathcal{L}\times \bbT$,
    with $C\coloneqq \abs{\theta}\norm{D^{2}K_{\eps}}_{L^{\infty}}W_{0}$.
    A similar argument with the same $C$ applies to $t\in I\cap(-\infty,0]$.
\end{proof}

\begin{corollary}\label{C5.6}
    For any  $\tilde{z}_{\eps}^{0}\in \mathcal{H}$ with
    $\pi(\tilde{z}_{\eps}^{0})\in\operatorname{PSC}(\bbR^{2})^{\mathcal{L}}$,
    there is a unique global solution $\tilde{z}_{\eps}\in C^1(\bbR; \mathcal{H})$ to \eqref{5.1} with initial data $\tilde{z}_{\eps}^{0}$.
\end{corollary}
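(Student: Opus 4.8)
The plan is a routine continuation argument. Since $F_\eps$ is locally Lipschitz on $\mathcal{H}$, we already have a unique maximal solution $\tilde z_\eps\in C^1(I;\mathcal{H})$ of \eqref{5.1} with the given initial data, where $I=(T_-,T_+)$ is open and contains $0$, and by the standard blow-up alternative for such ODEs it suffices to prove that $\sup_{t\in J}\norm{\tilde z_\eps^t}_{\mathcal{H}}<\infty$ for every bounded interval $J\subseteq I$ (this then forces $T_+=\infty$ and, symmetrically, $T_-=-\infty$). Uniqueness is immediate from local Lipschitzness of $F_\eps$, so only global existence is at issue. Fix such a $J$, say $J=[0,T]$ (the case $t\le 0$ is identical), let $z_\eps^t\coloneqq\pi(\tilde z_\eps^t)$, and set $W_0\coloneqq W(z_\eps^0)$. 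By Lemma~\ref{L5.4}(2) we have $z_\eps^{t,\lambda}\in\operatorname{PSC}(\bbR^2)$, $\abs{\Omega(z_\eps^{t,\lambda})}=\abs{\Omega(z_\eps^{0,\lambda})}$, and $W(z_\eps^t)=W_0$ for all $t\in I$, so Lemma~\ref{L5.5} applies and provides $C=C(\alpha,\abs{\theta},W_0,\eps)$ with $\abs{\partial_\xi\tilde z_\eps^{t,\lambda}(\xi)}\le e^{Ct}\abs{\partial_\xi\tilde z_\eps^{0,\lambda}(\xi)}$ on $J\times\mathcal{L}\times\bbT$. In particular $\norm{\partial_\xi\tilde z_\eps^{t,\lambda}}_{L^2}$ and $\norm{\partial_\xi\tilde z_\eps^{t,\lambda}}_{L^\infty}$ are bounded on $J$.

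For the $L^2$-part, we use that $K_\eps$ vanishes near the origin and decays at infinity together with all its derivatives, so $\norm{D^nK_\eps}_{L^\infty}<\infty$ for every $n$ (as already observed, $u_\eps$ is smooth with bounded derivatives). Hence $\norm{u_\eps(z_\eps^t)}_{L^\infty}\le\norm{\nabla K_\eps}_{L^\infty}\sum_{\lambda}\abs{\theta^\lambda}\abs{\Omega(z_\eps^{0,\lambda})}$ is bounded uniformly in $t\in I$, and integrating \eqref{5.1} yields a bound on $\norm{\tilde z_\eps^{t,\lambda}}_{L^\infty}$, hence on $\norm{\tilde z_\eps^{t,\lambda}}_{L^2}$, that is linear in $t$.

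The only genuinely new step is controlling $\norm{\partial_\xi^2\tilde z_\eps^{t,\lambda}}_{L^2}$. Differentiating \eqref{5.1} twice in $\xi$ and applying the chain rule gives
\[
\partial_t\,\partial_\xi^2\tilde z_\eps^{t,\lambda}
= D^2(u_\eps(z_\eps^t))(\tilde z_\eps^{t,\lambda})\bigl[\partial_\xi\tilde z_\eps^{t,\lambda},\partial_\xi\tilde z_\eps^{t,\lambda}\bigr]
+ D(u_\eps(z_\eps^t))(\tilde z_\eps^{t,\lambda})\bigl[\partial_\xi^2\tilde z_\eps^{t,\lambda}\bigr].
\]
Since $\norm{D(u_\eps(z_\eps^t))}_{L^\infty}\le\abs{\theta}W_0\norm{D\nabla^\perp K_\eps}_{L^\infty}$ and $\norm{D^2(u_\eps(z_\eps^t))}_{L^\infty}\le\abs{\theta}W_0\norm{D^2\nabla^\perp K_\eps}_{L^\infty}$, taking $L^2$-norms, bounding $\norm{\partial_\xi\tilde z_\eps^{t,\lambda}}_{L^4}^2\le\norm{\partial_\xi\tilde z_\eps^{t,\lambda}}_{L^\infty}^2$, and noting that the right-hand side above is continuous in $t$, we pass to Dini derivatives (as in the proof of Lemma~\ref{L5.5}) to obtain
\[
\partial_t^+\norm{\partial_\xi^2\tilde z_\eps^{t,\lambda}}_{L^2}\le A(t)+B\,\norm{\partial_\xi^2\tilde z_\eps^{t,\lambda}}_{L^2},\qquad t\in J,
\]
with $B\coloneqq\abs{\theta}W_0\norm{D\nabla^\perp K_\eps}_{L^\infty}$ and $A(t)\coloneqq\abs{\theta}W_0\norm{D^2\nabla^\perp K_\eps}_{L^\infty}\norm{\partial_\xi\tilde z_\eps^{t,\lambda}}_{L^\infty}^2$, the latter bounded on $J$ by the previous paragraph. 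A Gr\"onwall argument then bounds $\norm{\partial_\xi^2\tilde z_\eps^{t,\lambda}}_{L^2}$ on $J$.

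Combining the three estimates, $\sup_{t\in J}\norm{\tilde z_\eps^t}_{\mathcal{H}}<\infty$ for every bounded $J\subseteq I$, which by the blow-up alternative forces $I=\bbR$, as desired. The only point requiring a little care is that the $\dot H^2$-estimate must close with coefficients $A,B$ that do not themselves involve $\norm{\tilde z_\eps^t}_{\dot H^2}$; this is exactly what the smoothness of $K_\eps$ together with the a priori bounds of Lemmas~\ref{L5.4} and~\ref{L5.5} guarantees, so there is no serious obstacle here.
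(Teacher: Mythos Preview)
Your proof is correct and follows essentially the same approach as the paper's: both differentiate \eqref{5.1} twice in $\xi$, bound $\norm{D^n(u_\eps(z_\eps^t))}_{L^\infty}$ by $\abs{\theta}W_0\norm{D^{n+1}K_\eps}_{L^\infty}$ via Lemma~\ref{L5.4}, control $\norm{\partial_\xi\tilde z_\eps^{t,\lambda}}_{L^\infty}$ via Lemma~\ref{L5.5}, and close with a Gr\"onwall argument on $\norm{\partial_\xi^2\tilde z_\eps^{t,\lambda}}_{L^2}$. Your version is slightly more explicit about the blow-up alternative and the $L^2$ part of the $\mathcal{H}$-norm, but the substance is identical.
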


\begin{proof}
    Let $\tilde{z}_{\eps}\colon I\to \mathcal{H}$ be the unique maximal solution to \eqref{5.1}
    with initial data $\tilde{z}_{\eps}^{0}$, and let
    $z_{\eps}^{t}\coloneqq\pi(\tilde{z}_{\eps}^{t})$ for each $t\in I$.
    Then for each $t,t+h\in I$ and $\lambda\in\mathcal{L}$ we have
    \begin{align*}
        \partial_{\xi}^{2}&\tilde{z}_{\eps}^{t+h,\lambda}(\xi)
         - \partial_{\xi}^{2}\tilde{z}_{\eps}^{t,\lambda}(\xi)
        = \partial_{\xi}\left(
            \int_{t}^{t+h}D(u_{\eps}(z_{\eps}^{\tau}))
            (\tilde{z}_{\eps}^{\tau,\lambda}(\xi))
            \left(\partial_{\xi}\tilde{z}_{\eps}^{\tau,\lambda}(\xi)\right)\,d\tau
        \right) \\
        &= \int_{t}^{t+h} \left[ D^{2}(u_{\eps}(z_{\eps}^{\tau}))
        (\tilde{z}_{\eps}^{\tau,\lambda}(\xi))
        \left(\partial_{\xi}\tilde{z}_{\eps}^{\tau,\lambda}(\xi),
        \partial_{\xi}\tilde{z}_{\eps}^{\tau,\lambda}(\xi)\right)
        + D(u_{\eps}(z_{\eps}^{\tau}))
        (\tilde{z}_{\eps}^{\tau,\lambda}(\xi))
        \left(\partial_{\xi}^{2}\tilde{z}_{\eps}^{\tau,\lambda}(\xi)\right) \right] d\tau
    \end{align*}
    for almost every $\xi\in\bbT$.  The integrand, as an $L^{2}(\bbT)$-valued function of $\xi$, is  continuous in $\tau$.
    So taking the ${L^{2}}$-norm (all norms are in $\xi$),
    dividing by $h$, and letting $h\to 0^{+}$ shows that
    \[
        \partial_{t}^{+}\norm{\partial_{\xi}^{2}\tilde{z}_{\eps}^{t,\lambda}}_{L^{2}}
        \leq \norm{D^{2}\left(u_{\eps}(z_{\eps}^{t})\right)}_{L^{\infty}}
        \norm{\partial_{\xi}\tilde{z}_{\eps}^{t,\lambda}}_{L^{\infty}}^{2}
        + \norm{D\left(u_{\eps}(z_{\eps}^{t})\right)}_{L^{\infty}}
        \norm{\partial_{\xi}^{2}\tilde{z}_{\eps}^{t,\lambda}}_{L^{2}}
    \]
    for all $(t,\lambda)\in I\times \mathcal{L}$.
    As in the proof of Lemma~\ref{L5.5}, Lemma~\ref{L5.4} shows that
    \[
        \norm{D^{n}\left(u_{\eps}(z_{\eps}^{t})\right)}_{L^{\infty}}
        \leq \sum_{\lambda\in\mathcal{L}}\abs{\theta^{\lambda}}
        \int_{\Omega(z_{\eps}^{t,\lambda})}
        \norm{D^{n}\nabla^{\perp}K_{\eps}}_{L^{\infty}}dy
        = \abs{\theta}\norm{D^{n+1}K_{\eps}}_{L^{\infty}} W(z_{\eps}^{0})
    \]
   for all $t\in I$ and $n=1,2$,
    so Lemma~\ref{L5.5} and a Gr\"{o}nwall-type argument show that
    $\norm{\partial_{\xi}^{2}\tilde{z}_{\eps}^{t,\lambda}}_{L^{2}}$
    is finite for all $(t,\lambda)\in(I\cap [0,\infty)) \times \mathcal{L}$.  Maximality of $I$
    now yields $[0,\infty) \subseteq I$, and  a similar argument applied to $t\leq 0$ also shows that 
    $(-\infty,0]\subseteq I$.
\end{proof}

\begin{proposition}\label{P7.1}
    For any $W_0,M\in[0,\infty)$, there is $C$ that only depends on ${\alpha,|\theta|,W_{0},M}$
    such that  whenever $\tilde{z}_{\eps}\in C^1(I; \mathcal{H})$ solves \eqref{5.1} on some interval $I$ and with $z_{\eps}^{t}\coloneqq\pi(\tilde{z}_{\eps}^{t})$ we have
    $z_{\eps}^{t_0}    \in\operatorname{PSC}(\bbR^{2})^{\mathcal{L}}$ and $W(z_{\eps}^{t_0})\leq W_{0}$ for some $t_0\in I$, as well as 
    \begin{align*}
        \sup_{t\in I}\max_{\lambda\in\mathcal{L}}\max\set{
            \ell(z_{\eps}^{t,\lambda}),
            \norm{z_{\eps}^{t,\lambda}}_{\dot{H}^{2}}^{2}
        } \leq M,
    \end{align*}
    then the following holds.
    For each $\lambda\in\mathcal{L}$, any $t,t+h\in I$, and any  arclength parametrizations of
    $z_{\eps}^{t,\lambda}$ and $z_{\eps}^{t+h,\lambda}$, there is a $C^{1}$
    homeomorphism
    $\phi\colon\ell(z_{\eps}^{t,\lambda})\bbT\to\ell(z_{\eps}^{t+h,\lambda})\bbT$
    such that  $e^{-C\abs{h}} \leq \phi'(s) \leq e^{C\abs{h}}$ for all $s\in \ell(z_{\eps}^{t,\lambda})\bbT$ and 
    \begin{equation} \lb{111.11}
            \norm{z_{\eps}^{t+h,\lambda}\circ\phi
            - z_{\eps}^{t,\lambda}
            - hu_{\eps}(z_{\eps}^{t})\circ z_{\eps}^{t,\lambda}}_{L^{\infty}(\ell(z_\eps^{t,\lambda})\bbT)}
            \leq C\abs{h}^{2-2\alpha}.
    \end{equation}
    In particular, for any $t,t+h\in I$ we have
    \[
        d_{\mathrm{F}}(z_{\eps}^{t+h},
        X_{u_{\eps}(z_{\eps}^{t})}^{h}[z_{\eps}^{t}])
        \leq C\abs{h}^{2-2\alpha}.
    \]
\end{proposition}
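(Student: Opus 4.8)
The plan is to build $\phi$ from the flow of the mollified velocity. Since $z_\eps^{t_0}\in\operatorname{PSC}(\bbR^2)^{\mathcal L}$, Lemma~\ref{L5.4} gives $z_\eps^\tau\in\operatorname{PSC}(\bbR^2)^{\mathcal L}$ and $W(z_\eps^\tau)=W(z_\eps^{t_0})\le W_0$ for every $\tau\in I$, and (as in its proof) the solution operator $\Phi^{t\to\tau}$ of $\partial_\tau x=u_\eps(z_\eps^\tau;x)$ maps $\operatorname{im}(z_\eps^{t,\lambda})$ bijectively and orientation-preservingly onto $\operatorname{im}(z_\eps^{\tau,\lambda})$; it is Lipschitz by Lemma~\ref{L5.1} and in fact $C^1$ since $u_\eps(z_\eps^\tau)$ is smooth. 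Lemma~\ref{L4.1} applied to $u_\eps$ also gives $\|u_\eps(z_\eps^\tau)\|_{L^\infty}\le C_0(\alpha,|\theta|,W_0)$ for all $\tau\in I$. Now fix $\lambda$ and $t,t+h\in I$, pick an arclength parametrization that I keep denoting $z_\eps^{t,\lambda}$, and put $x^\tau(s)\coloneqq\Phi^{t\to\tau}(z_\eps^{t,\lambda}(s))$ and $\Psi\coloneqq x^{t+h}$. Then $\Psi$ is an injective, orientation-preserving $C^1$ parametrization of $z_\eps^{t+h,\lambda}$ over $\ell(z_\eps^{t,\lambda})\bbT$; since $z_\eps^{t+h,\lambda}$ is a simple $C^{1,1/2}$ curve (by $\|z_\eps^{t+h,\lambda}\|_{\dot C^{1,1/2}}\le\|z_\eps^{t+h,\lambda}\|_{\dot H^2}$ and $\|z_\eps^{t+h,\lambda}\|_{\dot H^2}^2\le M$), its arclength parametrization is a $C^1$-diffeomorphism onto its image, so $\phi\coloneqq(z_\eps^{t+h,\lambda})^{-1}\circ\Psi$ is a $C^1$ homeomorphism $\ell(z_\eps^{t,\lambda})\bbT\to\ell(z_\eps^{t+h,\lambda})\bbT$ with $z_\eps^{t+h,\lambda}\circ\phi=\Psi$. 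It remains to bound $\|\Psi-z_\eps^{t,\lambda}-h\,u_\eps(z_\eps^t)\circ z_\eps^{t,\lambda}\|_{L^\infty}$ and $\phi'$; the last assertion of the proposition then follows at once, since $X_{u_\eps(z_\eps^t)}^h[z_\eps^{t,\lambda}]$ is parametrized over $\ell(z_\eps^{t,\lambda})\bbT$ by $z_\eps^{t,\lambda}(\cdot)+h\,u_\eps(z_\eps^t)\circ z_\eps^{t,\lambda}(\cdot)$.

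For \eqref{111.11}, since $\partial_\tau x^\tau(s)=u_\eps(z_\eps^\tau;x^\tau(s))$,
\[
\Psi(s)-z_\eps^{t,\lambda}(s)-h\,u_\eps(z_\eps^t;z_\eps^{t,\lambda}(s))=\int_t^{t+h}\bigl[u_\eps(z_\eps^\tau;x^\tau(s))-u_\eps(z_\eps^t;z_\eps^{t,\lambda}(s))\bigr]\,d\tau .
\]
The bound $\|u_\eps(z_\eps^\tau)\|_{L^\infty}\le C_0$ forces $|x^\tau(s)-z_\eps^{t,\lambda}(s)|\le C_0|\tau-t|$, and, transporting any parametrization of $z_\eps^{t,\lambda'}$ by the same flow, $d_{\mathrm{F}}(z_\eps^{\tau,\lambda'},z_\eps^{t,\lambda'})\le C_0|\tau-t|$ for all $\lambda'$. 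Applying Lemma~\ref{L4.4} (valid with $K_\eps$, the constants depending only on $\alpha$) to each summand of \eqref{111.2}, and using $\ell(z_\eps^{\tau,\lambda'})\le M$ together with $\|z_\eps^{\tau,\lambda'}\|_{\dot C^{1,1/2}}^2\le\|z_\eps^{\tau,\lambda'}\|_{\dot H^2}^2\le M$, we get $|u_\eps(z_\eps^\tau;x^\tau(s))-u_\eps(z_\eps^t;z_\eps^{t,\lambda}(s))|\le C_1(\alpha,|\theta|,W_0,M)\,|\tau-t|^{1-2\alpha}$, hence the left side of \eqref{111.11} is $\le\frac{C_1}{2-2\alpha}|h|^{2-2\alpha}$.

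For the derivative bound, differentiating $z_\eps^{t+h,\lambda}\circ\phi=\Psi$ and using $|\partial_s z_\eps^{t+h,\lambda}|\equiv1$ and $\phi'>0$ gives $\phi'(s)=|\partial_s\Psi(s)|=|w^{t+h}(s)|$, where $w^\tau(s)\coloneqq\partial_s x^\tau(s)$ solves $\partial_\tau w^\tau=Du_\eps(z_\eps^\tau;x^\tau)(w^\tau)$ with $|w^t(s)|=1$. As $x^\tau(\cdot)$ parametrizes $z_\eps^{\tau,\lambda}$, the vector $w^\tau(s)$ is parallel to the unit tangent $\mathbf T^{\tau,\lambda}$ of $z_\eps^{\tau,\lambda}$ at $x^\tau(s)$, so
\[
\bigl|\partial_\tau|w^\tau(s)|\bigr|\le\bigl|Du_\eps(z_\eps^\tau;x^\tau(s))(\mathbf T^{\tau,\lambda})\bigr|\,|w^\tau(s)|\le\bigl\|\partial_s\bigl(u_\eps(z_\eps^\tau)\circ z_\eps^{\tau,\lambda}\bigr)\bigr\|_{L^\infty}\,|w^\tau(s)| .
\]
By Lemma~\ref{L4.6} applied to $u_\eps$ with $\beta=\tfrac12$ (whose hypotheses hold since each $z_\eps^{\tau,\lambda}$ is a simple $C^{1,1/2}$ curve and the $z_\eps^{\tau,\lambda'}$ have no transversal crossings, transversal crossings being ruled out because the $Z_\eps(\tau,\cdot)$ are homeomorphisms of $\bbR^2$), the last norm is $\le C_2(\alpha,|\theta|,M)$, again via $\ell(z_\eps^{\tau,\lambda'})\le M$ and the $\dot H^2$ bound. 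A Gr\"onwall argument then gives $e^{-C_2|h|}\le|w^{t+h}(s)|\le e^{C_2|h|}$, i.e.\ $e^{-C_2|h|}\le\phi'(s)\le e^{C_2|h|}$. With $C\coloneqq\max\{C_1/(2-2\alpha),C_2\}$ and the maximum over $\lambda$, this is the proposition.

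The hardest part is the $|h|^{2-2\alpha}$ rate in \eqref{111.11} with an $\eps$-independent constant: it depends entirely on the H\"older-$(1-2\alpha)$-in-time regularity of the velocity along trajectories, i.e.\ on the quantitative estimate of Lemma~\ref{L4.4}, and is why one cannot hope for an $O(|h|^2)$ error. The second point requiring care is that the bound on $\phi'$ must also be $\eps$-uniform, which forces the use of the tangential-derivative estimate of Lemma~\ref{L4.6} (after noting $w^\tau$ stays tangent to $z_\eps^{\tau,\lambda}$) rather than the $\eps$-dependent bound on $\|Du_\eps\|_{L^\infty}$ used, e.g., in the proof of Lemma~\ref{L5.5}.
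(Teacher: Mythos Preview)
Your proof is correct and follows essentially the same approach as the paper: both build $\phi$ by transporting the time-$t$ arclength parametrization under the mollified flow and then reparametrizing by arclength at time $t+h$, use Lemma~\ref{L4.6} (with $\beta=\tfrac12$) and Gr\"onwall to get the $\eps$-uniform bound on $\phi'$, and use Lemma~\ref{L4.4} together with the $L^\infty$ velocity bound from Lemma~\ref{L4.1} to get the $|h|^{2-2\alpha}$ estimate. The only cosmetic difference is that the paper works through the ODE \eqref{5.1} in $\mathcal H$ with a fresh constant-speed initial parametrization and invokes Corollary~\ref{C5.3}, whereas you use the flow map $\Phi^{t\to\tau}$ in $\bbR^2$ directly; these are equivalent (as in the proof of Lemma~\ref{L5.4}).
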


\textit{Remark}. Since Lemmas~\ref{L5.4} and \ref{L5.5} show that $W(z_{\eps}^{t})$ is constant in time and for any bounded $I'\subseteq I$ we have
\[
    \sup_{t\in I'}\max_{\lambda\in\mathcal{L}}\norm{z_{\eps}^{t,\lambda}}_{\dot{H}^{2}}^{2}
    \leq \sup_{t\in I}\frac{\norm{\tilde{z}_{\eps}^{t,\lambda}}_{\dot{H}^{2}(\bbT)}^{2}}
    {\min_{\xi\in\bbT}\abs{\partial_{\xi}\tilde{z}_{\eps}^{t,\lambda}(\xi)}^{3}} <\infty,
\]
 Proposition~\ref{P7.1} proves that \eqref{2.4} holds
with  $(u_{\eps},z_{\eps})$ in place of $(u,z)$ when $z_{\eps}^{t}\coloneqq\pi(\tilde{z}_{\eps}^{t})$ and $\tilde z_\eps$ is as in Corollary \ref{C5.6}.  Hence $z_\eps$ is  an $H^2$ patch solution to the \emph{mollified g-SQG equation}.

\begin{proof}
    Assume without loss that $t=0$ (we will then use $t$ below for the time argument), and consider only $h>0$ because the other case is analogous.
    Let $\tilde{w}_{\eps}\colon\bbR\to \mathcal H$
    be a solution to \eqref{5.1} such that
    $\tilde{w}_{\eps}^{0,\lambda}$ is a constant-speed parametrization of
    $z_{\eps}^{0,\lambda}$ for each $\lambda\in\mathcal L$. From Corollary~\ref{C5.3}
    we know that $\pi(\tilde{w}_{\eps}^{t}) = z_{\eps}^{t}$
    for all $t\in I$, so
    for any $t_{1},t_{2}\in I$ with $t_{1}\leq t_{2}$ and for any $\xi\in\bbT$ we have
    \begin{equation}\label{7.1}
        \tilde{w}_{\eps}^{t_{2},\lambda}(\xi) -
        \tilde{w}_{\eps}^{t_{1},\lambda}(\xi)
        = \int_{t_{1}}^{t_{2}}u_{\eps}(z_{\eps}^{\tau};
        \tilde{w}_{\eps}^{\tau,\lambda}(\xi))\,d\tau
    \end{equation}
    and
    \begin{equation}\label{7.2}
        \partial_{\xi}\tilde{w}_{\eps}^{t_{2},\lambda}(\xi) -
        \partial_{\xi}\tilde{w}_{\eps}^{t_{1},\lambda}(\xi)
        = \int_{t_{1}}^{t_{2}}\partial_{\xi}[u_{\eps}(z_{\eps}^{\tau};
        \tilde{w}_{\eps}^{\tau,\lambda}(\xi))]\,d\tau
    \end{equation}
   for any $\lambda\in\mathcal L$ (which we now fix). From \eqref{7.2} and Lemma~\ref{L4.6} (with $\beta=\frac 12$) we see that for all $(t,\xi)\in I\times\bbT$ we have
    \begin{align*}
        \max\set{\partial_{t}^{+}\abs{\partial_{\xi}\tilde{w}_{\eps}^{t,\lambda}(\xi)},
        -\partial_{t+}\abs{\partial_{\xi}\tilde{w}_{\eps}^{t,\lambda}(\xi)}}
        &\leq \abs{\partial_{\xi}[u_{\eps}
        (z_{\eps}^{t};\tilde{w}_{\eps}^{t,\lambda}(\xi))]} \\
        &\leq \norm{\partial_{s}(u_{\eps}(z_{\eps}^{t})\circ z_{\eps}^{t,\lambda})}_{L^{\infty}}
        \abs{\partial_{\xi}\tilde{w}_{\eps}^{t,\lambda}(\xi)} \\
                &\leq C_{\alpha}\abs{\theta}M^{2+2\alpha}
        \abs{\partial_{\xi}\tilde{w}_{\eps}^{t,\lambda}(\xi)}.
    \end{align*}
    Therefore, a Gr\"{o}nwall-type argument shows that
    \begin{equation}\label{7.3}
        e^{-Ch}\ell(z_{\eps}^{0,\lambda}) =e^{-Ch}\abs{\partial_{\xi}\tilde{w}_{\eps}^{0,\lambda}(\xi)}
        \leq \abs{\partial_{\xi}\tilde{w}_{\eps}^{h,\lambda}(\xi)}
        \leq e^{Ch}\abs{\partial_{\xi}\tilde{w}_{\eps}^{0,\lambda}(\xi)} = e^{Ch} \ell(z_{\eps}^{0,\lambda}) 
    \end{equation}
    for all $\xi\in\bbT$, with $C\coloneqq C_{\alpha}\abs{\theta}M^{2+2\alpha}$.
    In particular, this also yields
    \[
        e^{-Ch}\ell(z_{\eps}^{0,\lambda})
        \leq \ell(z_{\eps}^{h,\lambda})
        \leq e^{Ch}\ell(z_{\eps}^{0,\lambda}).
    \]

    Let $\eta\colon\ell(z_{\eps}^{0,\lambda})\bbT\to\bbT$ be an orientation-preserving
    homeomorphism such that $z_{\eps}^{0,\lambda}
    = \tilde{w}_{\eps}^{0,\lambda}\circ\eta$. Since $\tilde{w}_{\eps}^{0,\lambda}$
    is a constant-speed parametrization, we see that $\eta'\equiv \ell(z_{\eps}^{0,\lambda})^{-1}$.
    Next take  an orientation-preserving homeomorphism $\psi\colon\bbT\to\ell(z_{\eps}^{h,\lambda})\bbT$ such that
    $z_{\eps}^{h,\lambda} \circ \psi = \tilde{w}_{\eps}^{h,\lambda}$ (it exists  by \eqref{7.3}),
and let $\phi\coloneqq \psi\circ\eta$.  Then for any
    $s\in\ell(z_{\eps}^{t,\lambda})\bbT$ we have
    \begin{align*}
        \phi'(s) = \psi'(\eta(s))\eta'(s)
        = \frac{\abs{\partial_{\xi}\tilde{w}_{\eps}^{h,\lambda}(\eta(s))}}
        {\ell(z_{\eps}^{0,\lambda})},
    \end{align*}
    so \eqref{7.3} shows that also $e^{-Ch}\leq \phi'(s) \leq e^{Ch}$.

    On the other hand, \eqref{7.1} and Lemma~\ref{L4.4} (with $\beta=\frac 12$) show that
    \begin{align*}
        \norm{z_{\eps}^{h,\lambda}\circ\phi
        - z_{\eps}^{0,\lambda}
        - hu_{\eps}(z_{\eps}^{0})\circ z_{\eps}^{0,\lambda}}_{L^{\infty}}
        &= \norm{\tilde{w}_{\eps}^{h,\lambda}
        - \tilde{w}_{\eps}^{0,\lambda}
        - hu_{\eps}(z_{\eps}^{0})\circ \tilde{w}_{\eps}^{0,\lambda}}_{L^{\infty}} \\
        &\leq \int_{0}^{h}
        \norm{u_{\eps}(z_{\eps}^{\tau})\circ \tilde{w}_{\eps}^{\tau,\lambda}
        - u_{\eps}(z_{\eps}^{0})\circ \tilde{w}_{\eps}^{0,\lambda}}_{L^{\infty}}
        d\tau \\
        &\leq C_{\alpha}\abs{\theta}M^{2}\int_{0}^{h}
        \max_{\lambda'\in\mathcal L} \norm{\tilde{w}_{\eps}^{\tau,\lambda'}
        - \tilde{w}_{\eps}^{0,\lambda'}}_{L^{\infty}}^{1-2\alpha}
        d\tau.
    \end{align*}
     Since   \eqref{7.1} and Lemmas~\ref{L4.1} and \ref{L5.4} show that for any $t_{1},t_{2}\in I$ with $t_{1}\leq t_{2}$ and any $\lambda'\in\mathcal L$ we have
    \begin{align*}
        \norm{\tilde{w}_{\eps}^{t_{2},\lambda'}
        - \tilde{w}_{\eps}^{t_{1},\lambda'}}_{L^{\infty}}
        \leq \int_{t_{1}}^{t_{2}}\norm{u_{\eps}(z_{\eps}^{\tau})}_{L^{\infty}}\,d\tau
        \leq C(t_{2}-t_{1})
    \end{align*}
   with $C$ depending only on ${\alpha,|\theta|,W_{0}}$, 
     we now obtain \eqref{111.11} with $t=0$.
\end{proof}

\section{Uniform-in-$\eps$ estimates for  the mollified g-SQG equations}\label{S6}

In this section, any constant $C_\alpha$ will only depend on $\alpha$.
Consider any initial data $z^{0}\in \operatorname{PSC}(\bbR^{2})^{\mathcal{L}}$
with $L(z^{0}) < \infty$, and let $W_{0}\coloneqq W(z^{0})$.
For any small  $\eps>0$, we can use
Lemma~\ref{L3.7} to construct perturbed initial data
$z_{\eps}^{0}\in\operatorname{PSC}(\bbR^{2})^{\mathcal{L}}$ such that
\beq\lb{111.12}
W(z_{\eps}^{0}) \leq W_{0}, \qquad
d_{\mathrm{F}}(z^{0},z_{\eps}^{0}) \leq \eps, \qquad
L(z_{\eps}^{0})\leq 56L(z^{0})
\eeq
and
$\Delta(z_{\eps}^{0,\lambda},z_{\eps}^{0,\lambda'}) > 0$ for all
$\lambda\neq \lambda'$ from $\mathcal{L}$.
Indeed, if we rename the $\lambda$'s so that $\mathcal{L}=\{1,\dots,N\}$  and $\Omega(z^{0,i})\subseteq \Omega(z^{0,j})$
implies $i\leq j$, then we can apply Lemma~\ref{L3.7} with $c\coloneqq\frac 1{16}$ to
$z^{0,1}, \dots ,z^{0,N}$ (in that order and with $\eps,\frac \eps 5,\dots, \frac \eps {5^{N-1}}$ in place of $\eps$)
to obtain  $z_{\eps}^{0,1},\dots,
z_{\eps}^{0,N}\in\operatorname{PSC}(\bbR^{2})$ such that
for each $n=1, \dots ,N$ we have
\begin{itemize}
    \item $z_{\eps}^{0,n}$ lies inside
    $\Omega(z^{0,n})$,

    \item $d_{\mathrm{F}}(z^{0,n},z_{\eps}^{0,n}) \leq \eps$,

    \item $\norm{z_{\eps}^{0,n}}_{\dot{H}^{2}} \leq
    4\norm{z^{0,n}}_{\dot{H}^{2}}$,

    \item $\Delta_{1/16Q(z^{0})}(z_{\eps}^{0,n}) \geq
    \frac{1}{56}\Delta_{1/Q(z^{0})}(z^{0,n})$,

    \item $\Delta(z_{\eps}^{0,n},z_{\eps}^{0,i}) > \Delta(z^{0,n},z^{0,i})$
    holds for $i=1, \dots ,n-1$.
\end{itemize}
Since then clearly $Q(z_{\eps}^{0}) \leq 16Q(z^{0})$, we also have $L(z_{\eps}^{0}) \leq 56L(z^{0})$ for all $\eps>0$.


Fix $\eps>0$ and pick $\tilde{z}_{\eps}^{0}\in H^{2}(\bbT;\bbR^{2})^{\mathcal{L}}$ so that $\tilde{z}_{\eps}^{0,\lambda}$ is a constant-speed parametrization of $z_{\eps}^{0,\lambda}$ for each $\lambda\in\mathcal L$.
Let $\tilde{z}_{\eps}\colon \bbR\to H^{2}(\bbT;\bbR^{2})^{\mathcal{L}}$ be from Corollary \ref{C5.6} and
$z_{\eps}^{t}\coloneqq \pi(\tilde{z}_{\eps}^{t})$ for each $t\in\bbR$.
Then Lemma~\ref{L5.4} shows that $z_{\eps}^{t}\in\operatorname{PSC}(\bbR^{2})^{\mathcal{L}}$
and $\min_{\lambda,\lambda'\in\mathcal{L}}\Delta(z_{\eps}^{t,\lambda},z_{\eps}^{t,\lambda'})>0$
 for each $t\in\bbR$.
Since $\set{z_{\eps}^{t}}_{t\in \bbR}$ is also a connected subset of
$\operatorname{PSC}(\bbR^{2})^{\mathcal{L}}$, Lemma~\ref{LA.5} shows that
\[
    \set{(\lambda,\lambda')\in\mathcal{L}^2\colon
    \lambda'\in\Sigma^{\lambda}(z_{\eps}^{t})}
\]
is independent of $t\in \bbR$.

We now derive a bound on $\partial_{t}^{+}L(z_{\eps}^{t})$ that is independent of $\eps>0$.
Let us first estimate the time derivative of
$Q(z_{\eps}^{t}) = \frac{1}{m(\theta)}\sum_{\lambda\in\mathcal{L}}\abs{\theta^{\lambda}}
\norm{z_{\eps}^{t,\lambda}}_{\dot{H}^{2}}^{2}$ (with $m(\theta)=
\min_{\lambda\in\mathcal{L}}\abs{\theta^{\lambda}}$).  To simplify notation, for any $\lambda,\lambda'\in\mathcal{L}$ and $s\in\ell(z_{\eps}^{0,\lambda})\bbT$ we denote
\begin{itemize}
    \item $\ell^{\lambda}
    \coloneqq \ell(z_{\eps}^{0,\lambda})$,

    \item $\mathbf{T}^{\lambda}(s)
    \coloneqq \partial_{s}z_{\eps}^{0,\lambda}(s)$,

    \item $\mathbf{N}^{\lambda}(s)
    \coloneqq \mathbf{T}^{\lambda}(s)^{\perp}$,

    \item $\kappa^{\lambda}(s)
    \coloneqq \partial_{s}^{2}z_{\eps}^{0,\lambda}(s)
    \cdot \mathbf{N}^{\lambda}(s)$,

    \item $\Delta^\lambda \coloneqq
    \Delta_{1/Q(z_{\eps}^{0})}(z_{\eps}^{0,\lambda})$,

    \item $\Delta^{\lambda,\lambda'}
    \coloneqq \Delta(z_{\eps}^{0,\lambda}, z_{\eps}^{0,\lambda'})$,
    
    \item $u_{\eps}^{\lambda}(s)
    \coloneqq u_{\eps}(z_{\eps}^{0}\,;z_{\eps}^{0,\lambda}(s))$
\end{itemize}
(here $z_{\eps}^{0,\lambda}(\cdot)$ stands for any  arclength parametrization of $z_{\eps}^{0,\lambda}$, which we then fix).
Note that 
\[
    \partial_{s}^{2}z_{\eps}^{0,\lambda}(s) = \partial_{s}\mathbf{T}^{\lambda}(s)
    = \kappa^{\lambda}(s)\mathbf{N}^{\lambda}(s)
    \qquad\textrm{and}\qquad
    \partial_{s}\mathbf{N}^{\lambda}(s)
    = -\kappa^{\lambda}(s)\mathbf{T}^{\lambda}(s).
\]

\begin{lemma}\label{L6.1}
    For any $\lambda\in\mathcal{L}$ we have
    \begin{equation}\label{6.1}
        \left.\partial_{t}\norm{z_{\eps}^{t,\lambda}}_{\dot{H}^{2}}^{2}
        \right|_{t=0} =
                2\int_{\ell^{\lambda}\bbT}
        \kappa^{\lambda}(s)
        \left(\partial_{s}^{2}u_{\eps}^{\lambda}(s)\cdot
        \mathbf{N}^{\lambda}(s)\right)ds
        -3\int_{\ell^{\lambda}\bbT}
        \kappa^{\lambda}(s)^{2}
        \left(\partial_{s}u_{\eps}^{\lambda}(s)\cdot
        \mathbf{T}^{\lambda}(s)\right)ds,
    \end{equation}
    and this also holds for each $t_0\in\bbR$ in place of 0 when
     $z_{\eps}^{0}$ is replaced by $z_{\eps}^{t_0}$ in the  definitions
      of     $u_{\eps}^{\lambda}$, $\ell^{\lambda}$, $\mathbf{T}^{\lambda}$,    $\mathbf{N}^{\lambda}$, and $\kappa^{\lambda}$.
\end{lemma}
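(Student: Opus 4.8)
The plan is to rewrite $\|z_\eps^{t,\lambda}\|_{\dot H^2}^2$ using a single parametrization on the fixed domain $\ell^\lambda\bbT$ that is transported by the mollified flow, and then to differentiate in $t$ and set $t=0$. Concretely, fix $\lambda$ and recall from the proof of Lemma~\ref{L5.4} the flow map $Z_\eps\colon\bbR\times\bbR^2\to\bbR^2$ of the time-dependent field $u_\eps(z_\eps^\tau;\cdot)$, which for each $t$ is a bi-Lipschitz homeomorphism of $\bbR^2$ and satisfies $Z_\eps(t,\tilde z_\eps^{0,\lambda}(\xi))=\tilde z_\eps^{t,\lambda}(\xi)$. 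Setting $g^t(s)\coloneqq Z_\eps(t,z_\eps^{0,\lambda}(s))$ for $s\in\ell^\lambda\bbT$, the curve $g^t$ parametrizes $z_\eps^{t,\lambda}$ (by arclength only at $t=0$), one has $g^0=z_\eps^{0,\lambda}(\cdot)$ and $\partial_t g^t(s)=u_\eps(z_\eps^t;g^t(s))$, and standard ODE regularity together with the proofs of Lemmas~\ref{L5.4} and~\ref{L5.5} applied to this flow gives $g\in C^1(\bbR;H^2(\ell^\lambda\bbT;\bbR^2))$ with $v^t\coloneqq|\partial_s g^t|$ bounded above and away from $0$ on compact $t$-intervals, and $\partial_t\partial_s^k g^t=\partial_s^k(u_\eps(z_\eps^t)\circ g^t)$ for $k=0,1,2$. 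Writing $\mathbf T^t\coloneqq\partial_s g^t/v^t$ and $\mathbf N^t\coloneqq(\partial_s g^t)^\perp/v^t$, the Frenet identity $\partial_s^2 g^t=(\partial_s v^t)\mathbf T^t+(v^t)^2\kappa^t\mathbf N^t$ together with a change of variables from the arclength parameter of $z_\eps^{t,\lambda}$ back to $s$ yields the fixed-domain formula
\[
  \|z_\eps^{t,\lambda}\|_{\dot H^2}^2=\int_{\ell^\lambda\bbT}\frac{\bigl(\partial_s^2 g^t(s)\cdot\mathbf N^t(s)\bigr)^2}{v^t(s)^3}\,ds .
\]

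Next I would differentiate under the integral (legitimate since $\ell^\lambda\bbT$ is compact, $v^t$ is uniformly positive, and $\partial_s^2 g^t$ is $C^1$ in $t$ into $L^2$) and evaluate at $t=0$, where $v^0\equiv1$, $\mathbf T^0=\mathbf T^\lambda$, $\mathbf N^0=\mathbf N^\lambda$, and $\partial_s^2 g^0=\kappa^\lambda\mathbf N^\lambda$. From the identities above and the fact that $g^0$ is arclength, $\partial_t\partial_s g^t|_{t=0}=\partial_s u_\eps^\lambda$ and $\partial_t\partial_s^2 g^t|_{t=0}=\partial_s^2 u_\eps^\lambda$, so $\partial_t v^t|_{t=0}=\partial_s u_\eps^\lambda\cdot\mathbf T^\lambda$, and, decomposing $\partial_s u_\eps^\lambda$ along $\mathbf T^\lambda,\mathbf N^\lambda$, $\partial_t\mathbf T^t|_{t=0}=(\partial_s u_\eps^\lambda\cdot\mathbf N^\lambda)\mathbf N^\lambda$ and hence $\partial_t\mathbf N^t|_{t=0}=-(\partial_s u_\eps^\lambda\cdot\mathbf N^\lambda)\mathbf T^\lambda$. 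The key simplification is that $\partial_s^2 g^0\cdot\partial_t\mathbf N^t|_{t=0}=0$ because $\partial_s^2 g^0\parallel\mathbf N^\lambda\perp\mathbf T^\lambda$, so $\partial_t(\partial_s^2 g^t\cdot\mathbf N^t)|_{t=0}=\partial_s^2 u_\eps^\lambda\cdot\mathbf N^\lambda$. Inserting these into the differentiated integral (where the numerator derivative contributes the factor $2$ and the $v^{-3}$ derivative the factor $-3$) produces exactly
\[
  \left.\partial_t\|z_\eps^{t,\lambda}\|_{\dot H^2}^2\right|_{t=0}
  =2\int_{\ell^\lambda\bbT}\kappa^\lambda(s)\bigl(\partial_s^2 u_\eps^\lambda(s)\cdot\mathbf N^\lambda(s)\bigr)ds
  -3\int_{\ell^\lambda\bbT}\kappa^\lambda(s)^2\bigl(\partial_s u_\eps^\lambda(s)\cdot\mathbf T^\lambda(s)\bigr)ds ,
\]
which is \eqref{6.1}. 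For general $t_0$ I would simply invoke autonomy of \eqref{5.1}: since $z_\eps^{t_0}\in\operatorname{PSC}(\bbR^2)^{\mathcal L}$ by Lemma~\ref{L5.4}, apply the $t=0$ case to the shifted solution $t\mapsto\tilde z_\eps^{t_0+t}$.

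The main obstacle is the regularity bookkeeping underlying the first step: verifying that $g$ is genuinely $C^1$ in $t$ as an $H^2(\ell^\lambda\bbT;\bbR^2)$-valued curve with $v^t$ bounded below, so that the fixed-domain formula for $\|z_\eps^{t,\lambda}\|_{\dot H^2}^2$ is valid and differentiation under the integral sign is justified (in particular that $\partial_s^2 g^t=\partial_s^2(u_\eps(z_\eps^t)\circ g^t)$ depends on $t$ in $C^1$ into $L^2$, using that $u_\eps$ is smooth with bounded derivatives). Everything after that is the elementary Frenet computation above, whose only noteworthy feature is the cancellation $\partial_s^2 g^0\cdot\partial_t\mathbf N^t|_{t=0}=0$.
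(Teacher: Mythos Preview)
Your proposal is correct and follows essentially the paper's approach: both write $\|z_\eps^{t,\lambda}\|_{\dot H^2}^2$ as an integral over a fixed domain using a parametrization transported by the mollified flow (the paper uses $\tilde z_\eps^{t,\lambda}$ on $\bbT$, which is already $C^1(\bbR;H^2)$ as a solution of \eqref{5.1}, while you use its arclength-at-$t{=}0$ rescaling on $\ell^\lambda\bbT$, making the evaluation at $t=0$ slightly cleaner), then differentiate and simplify. For general $t_0$ the paper carries out exactly what you sketch, made precise by re-solving \eqref{5.1} from a constant-speed parametrization of $z_\eps^{t_0}$ and invoking Corollary~\ref{C5.3} to identify the resulting curves with $z_\eps^{t}$.
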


\begin{proof}
    Fix any $\lambda\in\mathcal{L}$.  Since for any curve $\gamma\in\operatorname{RC}(\bbR^{2})$ we defined
    $\norm{\gamma}_{\dot{H}^{2}}$ to be the $\dot{H}^{2}$-norm of
    any arclength parametrization of $\gamma$, we also have
        \[
        \norm{\gamma}_{\dot{H}^{2}} = \left(\int_{\bbT}
        \frac{\abs{
            \partial_{\xi}\tilde{\gamma}(\xi)
            \cdot\partial_{\xi}^{2}\tilde{\gamma}(\xi)^{\perp}
        }^{2}}
        {\abs{\partial_{\xi}\tilde{\gamma}(\xi)}^{5}}
        d\xi\right)^{1/2}
    \]
    whenever
    $\tilde{\gamma}\in H^2(\bbT;\bbR^{2})$ is  any parametrization of $\gamma$ with non-vanishing $\partial_{\xi}\tilde{\gamma}$.
    Lemma~\ref{L5.5} shows that the above formula then holds for $\tilde{z}_{\eps}^{t,\lambda}$
    for each $t\in\bbR$.
 Hence, the $L^{2}$-convergence
    $\frac{1}{t}\left(\partial_{\xi}^{2}\tilde{z}_{\eps}^{t,\lambda}
    -\partial_{\xi}^{2}\tilde{z}_{\eps}^{0,\lambda}\right)
    \to \partial_{\xi}^{2}F_{\eps}(\tilde{z}_{\eps}^{0})^{\lambda}$ 
    and the uniform convergence
    $\frac{1}{t}\left(\partial_{\xi}\tilde{z}_{\eps}^{t,\lambda}
    -\partial_{\xi}\tilde{z}_{\eps}^{0,\lambda}\right)
    \to \partial_{\xi}F_{\eps}(\tilde{z}_{\eps}^{0})^{\lambda}$ as $t\to 0$ (see \eqref{5.1}) indeed show that
    \begin{align*}
        \left.\partial_{t}\norm{z_{\eps}^{t,\lambda}}_{\dot{H}^{2}}^{2}\right|_{t=0}
        &=
        \partial_{t} \left. \left(\int_{\bbT}
        \frac{\abs{
            \partial_{\xi}\tilde{z}_{\eps}^{t,\lambda}(\xi)
            \cdot\partial_{\xi}^{2}\tilde{z}_{\eps}^{t,\lambda}(\xi)^{\perp}
        }^{2}}
        {\abs{\partial_{\xi}\tilde{z}_{\eps}^{t,\lambda}(\xi)}^{5}}
        d\xi\right) \right|_{t=0} \\
        &=
        \int_{\bbT}
        \frac{2\left(
            \partial_{\xi}F_{\eps}(\tilde{z}_{\eps}^{0})^{\lambda}(\xi)
            \cdot\partial_{\xi}^{2}\tilde{z}_{\eps}^{0,\lambda}(\xi)^{\perp}
        \right)
        \left(
            \partial_{\xi}\tilde{z}_{\eps}^{0,\lambda}(\xi)
            \cdot\partial_{\xi}^{2}\tilde{z}_{\eps}^{0,\lambda}(\xi)^{\perp}
        \right)}
        {\abs{\partial_{\xi}\tilde{z}_{\eps}^{0,\lambda}(\xi)}^{5}}
        d\xi \\
        &\quad\quad\quad\quad
        + \int_{\bbT}
        \frac{2\left(
            \partial_{\xi}\tilde{z}_{\eps}^{0,\lambda}(\xi)
            \cdot\partial_{\xi}^{2}F(\tilde{z}_{\eps}^{0})^{\lambda}(\xi)^{\perp}
        \right)
        \left(
            \partial_{\xi}\tilde{z}_{\eps}^{0,\lambda}(\xi)
            \cdot\partial_{\xi}^{2}\tilde{z}_{\eps}^{0,\lambda}(\xi)^{\perp}
        \right)}
        {\abs{\partial_{\xi}\tilde{z}_{\eps}^{0,\lambda}(\xi)}^{5}}
        d\xi \\
        &\quad\quad\quad\quad
        - \int_{\bbT}
        \frac{5\abs{\partial_{\xi}\tilde{z}_{\eps}^{0,\lambda}(\xi)\cdot
        \partial_{\xi}^{2}\tilde{z}_{\eps}^{0,\lambda}(\xi)^{\perp}}^{2}
        \left(
            \partial_{\xi}\tilde{z}_{\eps}^{0,\lambda}(\xi)
            \cdot\partial_{\xi}F_{\eps}(\tilde{z}_{\eps}^{0})^{\lambda}(\xi)
        \right)}
        {\abs{\partial_{\xi}\tilde{z}_{\eps}^{0,\lambda}(\xi)}^{7}}
        d\xi \\
        &=
        -3\int_{\ell^{\lambda}\bbT}
        \kappa^{\lambda}(s)^{2}
        \left(\partial_{s}u_{\eps}^{\lambda}(s)\cdot
        \mathbf{T}^{\lambda}(s)\right)ds
        + 2\int_{\ell^{\lambda}\bbT}
        \kappa^{\lambda}(s)
        \left(\partial_{s}^{2}u_{\eps}^{\lambda}(s)\cdot
        \mathbf{N}^{\lambda}(s)\right)ds,
    \end{align*}
    where the last equality follows by reparametrizing all three integrals by
     arclength and then adding the first and the third integrals.

    For general $t_0\in\bbR$ in place of 0, note that Lemma~\ref{L5.5} shows
    \[
        \norm{z_{\eps}^{t_0,\lambda}}_{\dot{H}^{2}}^{2}
        \leq \frac{\norm{\tilde{z}_{\eps}^{t_0,\lambda}}_{\dot{H}^{2}(\bbT)}^{2}}
        {\min_{\xi\in\bbT}\abs{\partial_{\xi}\tilde{z}_{\eps}^{t_0,\lambda}(\xi)}^{3}} <\infty
    \]
   for each $\lambda\in\mathcal L$.  If we now solve \eqref{5.1} with initial data at $t=t_{0}$ being any constant-speed parametrizations of the $z_{\eps}^{t_{0},\lambda}$ ($\lambda\in\mathcal L$), which then all belong to $H^{2}(\bbT;\bbR^{2})$, Corollary~\ref{C5.3} shows that the projection of this solution
    onto $\operatorname{RC}(\bbR^{2})^{\mathcal{L}}$ via $\pi$ is again $z_{\eps}^{t}$.
    Since the right-hand side of \eqref{6.1}
    only depends on $z_{\eps}^{0}$ (and not on any specific parametrization), we see that the formula then also holds for  $t_0$ in place of 0.
\end{proof}

We now need to estimate the right-hand side of  \eqref{6.1}.  The second term can be bounded easily, but this is not the case for the first term.  This is why we instead  bound a linear combination of those terms for all the $\lambda\in\mathcal L$, with the relevant coefficients being precisely $\abs{\theta^{\lambda}}$, as was explained at the end of the introduction. 
The resulting cancellations will yield a good enough estimate on $\partial_{t}Q(z_{\eps}^{t})$, even though we will not control $\partial_{t}\norm{z_{\eps}^{t,\lambda}}_{\dot{H}^{2}}^{2}$ for  individual $\lambda$.  The specific term requiring these cancellations is $G_6$ in the proof of Lemma~\ref{L6.3} below, and for them to happen, it suffices for us to control $\Delta(z^{t,\lambda}_\eps,z^{t,\lambda'}_\eps)^{-1}$ only when  $\lambda'\notin\Sigma^\lambda(z^t)$.


\begin{lemma}\label{L6.2}
    There is $C_{\alpha}$ such that for each  $\lambda\in\mathcal{L}$ we have
    \begin{align*}
        \abs{\int_{\ell^{\lambda}\bbT}
        \kappa^{\lambda}(s)^{2}
        \left(
            \partial_{s}u_{\eps}^{\lambda}(s)
            \cdot \mathbf{T}^{\lambda}(s)
        \right)
        ds}
        &\leq
        \norm{\partial_{s}u_{\eps}^{\lambda}}_{L^{\infty}}   \norm{z_{\eps}^{0,\lambda}}_{\dot{H}^{2}}^{2}
        \leq
        C_{\alpha}\abs{\theta}W_{0}\,
        L(z_{\eps}^{0})^{2+2\alpha}   \norm{z_{\eps}^{0,\lambda}}_{\dot{H}^{2}}^{2}.
    \end{align*}
\end{lemma}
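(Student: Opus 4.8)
The plan is to read off the first inequality from the definitions and to obtain the second from Lemma~\ref{L4.6}.

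\emph{First inequality.} Since $z_\eps^{0,\lambda}(\cdot)$ is an arclength parametrization, $\mathbf{T}^\lambda=\partial_s z_\eps^{0,\lambda}$ is a unit vector and $\partial_s^2 z_\eps^{0,\lambda}=\kappa^\lambda\mathbf{N}^\lambda$, so $|\kappa^\lambda(s)|=|\partial_s^2 z_\eps^{0,\lambda}(s)|$ and $\int_{\ell^\lambda\bbT}\kappa^\lambda(s)^2\,ds=\|z_\eps^{0,\lambda}\|_{\dot H^2}^2$. Bounding $|\partial_s u_\eps^\lambda(s)\cdot\mathbf{T}^\lambda(s)|\le\|\partial_s u_\eps^\lambda\|_{L^\infty}$ pointwise and pulling this factor out of the integral gives the first inequality at once.

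\emph{Second inequality.} Note $\partial_s u_\eps^\lambda=\partial_s(u_\eps(z_\eps^0)\circ z_\eps^{0,\lambda})$. By the construction of $z_\eps^0$, each $z_\eps^{0,\lambda'}$ lies in $\operatorname{PSC}(\bbR^{2})$ (hence does not cross itself), is $C^{1,1/2}$ by Sobolev embedding, and $\Delta(z_\eps^{0,\lambda'},z_\eps^{0,\lambda''})>0$ for $\lambda'\neq\lambda''$, so no two of these curves cross. Since $\alpha\le\frac16$ we may take $\beta=\frac12$, for which $\frac{\beta}{1+\beta}=\frac13\ge 2\alpha$, and Lemma~\ref{L4.6} (applied with $K$ replaced by $K_\eps$, as permitted by the remark opening Section~\ref{S4}) yields
\[
\|\partial_s u_\eps^\lambda\|_{L^\infty}\le C_\alpha\sum_{\lambda'\in\mathcal L}|\theta^{\lambda'}|\,\ell(z_\eps^{0,\lambda'})\,\max\bigl\{\|z_\eps^{0,\lambda}\|_{\dot C^{1,1/2}},\|z_\eps^{0,\lambda'}\|_{\dot C^{1,1/2}}\bigr\}^{2+4\alpha}.
\]
It remains to absorb each factor into $W_0$ and $L(z_\eps^0)$. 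For the $\dot C^{1,1/2}$-norms, $\|z_\eps^{0,\lambda''}\|_{\dot C^{1,1/2}}^2\le\|z_\eps^{0,\lambda''}\|_{\dot H^2}^2\le Q(z_\eps^0)\le\tfrac12 L(z_\eps^0)$ for every $\lambda''$, so the $\max\{\cdots\}^{2+4\alpha}$ term is at most $L(z_\eps^0)^{1+2\alpha}$. For the lengths I would invoke the geometric bound for a simple closed $C^{1,1/2}$ curve $\gamma$, namely $\ell(\gamma)\,\Delta_{\|\gamma\|_{\dot C^{1,1/2}}^{-2}}(\gamma)\le C\,|\Omega(\gamma)|$; since $h\mapsto\Delta_h$ is nondecreasing and $Q(z_\eps^0)\ge\|z_\eps^{0,\lambda'}\|_{\dot C^{1,1/2}}^2$, we have $\Delta_{1/Q(z_\eps^0)}(z_\eps^{0,\lambda'})\le\Delta_{\|z_\eps^{0,\lambda'}\|_{\dot C^{1,1/2}}^{-2}}(z_\eps^{0,\lambda'})$, and together with $|\Omega(z_\eps^{0,\lambda'})|\le W(z_\eps^0)\le W_0$ and $\Delta_{1/Q(z_\eps^0)}(z_\eps^{0,\lambda'})^{-1}\le L(z_\eps^0)$ (from the definition of $L$) this gives $\ell(z_\eps^{0,\lambda'})\le C\,W_0\,L(z_\eps^0)$. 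Multiplying these bounds, summing over $\lambda'$, and using $\sum_{\lambda'}|\theta^{\lambda'}|\le|\theta|$ yields $\|\partial_s u_\eps^\lambda\|_{L^\infty}\le C_\alpha|\theta|W_0\,L(z_\eps^0)^{2+2\alpha}$, which with the first inequality completes the proof.

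The calculation is essentially mechanical once Lemma~\ref{L4.6} is available; the only non-routine ingredient is the geometric length estimate $\ell(\gamma)\,\Delta_{\|\gamma\|_{\dot C^{1,1/2}}^{-2}}(\gamma)\lesssim|\Omega(\gamma)|$, which is what lets us trade the factor $\ell(z_\eps^{0,\lambda'})$ produced by Lemma~\ref{L4.6} for $W_0$ and the clearance functional — precisely the role the $\Delta_h$-functional is designed to serve.
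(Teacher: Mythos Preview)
Your proof is correct and follows the same approach as the paper: the first inequality is trivial, and for the second you invoke Lemma~\ref{L4.6} with $\beta=\tfrac12$, bound the $\dot C^{1,1/2}$-norms by $Q(z_\eps^0)^{1/2}$, and control each $\ell(z_\eps^{0,\lambda'})$ via the geometric estimate (which is exactly Lemma~\ref{L3.9}) together with $W(z_\eps^0)\le W_0$. The only minor difference is that the paper applies Lemma~\ref{L3.9} directly with $h=1/Q(z_\eps^0)$, whereas you state it at $h=\|z_\eps^{0,\lambda'}\|_{\dot C^{1,1/2}}^{-2}$ and then pass to the smaller $h$ by monotonicity of $\Delta_h$; both are equivalent.
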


\begin{proof}
    The first inequality is trivial, and the second  follows from
    \begin{align*}
        \norm{\partial_{s}u_{\eps}^{\lambda}}_{L^{\infty}}
        &\leq C_{\alpha}\sum_{\lambda'\in\mathcal{L}}\abs{\theta^{\lambda'}}
        \ell^{\lambda'}\max\left\{\norm{z_{\eps}^{0,\lambda}}_{\dot{C}^{1,1/2}},
        \norm{z_{\eps}^{0,\lambda'}}_{\dot{C}^{1,1/2}}\right\}^{2+4\alpha} \\
        &\leq C_{\alpha}\sum_{\lambda'\in\mathcal{L}}\abs{\theta^{\lambda'}}
        \frac{\abs{\Omega(z_{\eps}^{0,\lambda'})}}{\Delta^{\lambda'}}
        \max\left\{\norm{z_{\eps}^{0,\lambda}}_{\dot{H}^{2}},
        \norm{z_{\eps}^{0,\lambda'}}_{\dot{H}^{2}}\right\}^{2+4\alpha} \\
        &\leq C_{\alpha}\abs{\theta}W_{0}L(z_{\eps}^{0})^{2+2\alpha},
    \end{align*}
    where we used Lemmas~\ref{L4.6}, \ref{L3.9}, and \ref{L5.4}.
\end{proof}

\begin{lemma}\label{L6.3}
    There is $C_{\alpha}$ such that
    \beq \lb{111.6}
        \abs{\sum_{\lambda\in\mathcal{L}}\abs{\theta^{\lambda}}
        \int_{\ell^{\lambda}\bbT}
        \kappa^{\lambda}(s)
        \left(
            \partial_{s}^{2}u_{\eps}^{\lambda}(s)
            \cdot \mathbf{N}^{\lambda}(s)
        \right)
        ds} \leq C_{\alpha} m(\theta) \abs{\theta}W_{0} \,
        L(z_{\eps}^{0})^{3+2\alpha}.
    \eeq
\end{lemma}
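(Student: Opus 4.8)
The plan is to start from the first integral on the right-hand side of \eqref{6.1} (the second one is precisely what Lemma~\ref{L6.2} controls), and to expand it using \eqref{111.2}. Writing $K$ for $K_\eps$, differentiating $u_{\eps}^{\lambda}(s)=-\sum_{\lambda'}\theta^{\lambda'}\int_{\ell^{\lambda'}\bbT}K(z_{\eps}^{0,\lambda}(s)-z_{\eps}^{0,\lambda'}(s'))\mathbf{T}^{\lambda'}(s')\,ds'$ twice in $s$ (legitimate since $K_\eps$ is smooth) and taking the $\mathbf{N}^{\lambda}(s)$-component gives
\[
 \kappa^{\lambda}(s)\,\partial_s^2u_{\eps}^{\lambda}(s)\cdot\mathbf{N}^{\lambda}(s)
 =-\sum_{\lambda'}\theta^{\lambda'}\!\int_{\ell^{\lambda'}\bbT}\!\kappa^{\lambda}(s)\Big(\nabla^2K[\mathbf{T}^{\lambda}(s),\mathbf{T}^{\lambda}(s)]+\kappa^{\lambda}(s)\,\nabla K\cdot\mathbf{N}^{\lambda}(s)\Big)\big(\mathbf{T}^{\lambda'}(s')\cdot\mathbf{N}^{\lambda}(s)\big)\,ds',
\]
with $\nabla^jK=\nabla^jK_\eps(z_{\eps}^{0,\lambda}(s)-z_{\eps}^{0,\lambda'}(s'))$. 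In this section all the curves $z_{\eps}^{0,\mu}$ are pairwise disjoint by Lemma~\ref{L5.4} (so in particular none of them crosses itself or another transversally), although $\Delta(z_{\eps}^{0,\lambda},z_{\eps}^{0,\lambda'})$ may be arbitrarily small when $\lambda'\in\Sigma^{\lambda}(z_{\eps}^{0})$. Using $\norm{z_{\eps}^{0,\mu}}_{\dot{C}^{1,1/2}}\le\norm{z_{\eps}^{0,\mu}}_{\dot{H}^{2}}\le Q(z_{\eps}^{0})^{1/2}\le L(z_{\eps}^{0})^{1/2}$, $\ell^{\mu}\le|\Omega(z_{\eps}^{0,\mu})|/\Delta^{\mu}\le W_0L(z_{\eps}^{0})$ (Lemma~\ref{L3.9} and $\Delta^{\mu}\ge L(z_{\eps}^{0})^{-1}$), $m(\theta)\le|\theta^{\mu}|\le|\theta|$, and $\sum_\mu|\theta^{\mu}|\norm{z_{\eps}^{0,\mu}}_{\dot{H}^{2}}^2=m(\theta)Q(z_{\eps}^{0})\le m(\theta)L(z_{\eps}^{0})$, it will suffice to produce for each summand a bound of the form $C_\alpha|\theta|W_0L(z_{\eps}^{0})^{2+2\alpha}$ times $\norm{z_{\eps}^{0,\lambda}}_{\dot{H}^{2}}^2$ or $\norm{z_{\eps}^{0,\lambda}}_{\dot{H}^{2}}\norm{z_{\eps}^{0,\lambda'}}_{\dot{H}^{2}}$; multiplying by $|\theta^{\lambda}|$, summing, and applying Cauchy--Schwarz then yields \eqref{111.6}.

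The term carrying $\kappa^{\lambda}(s)\,\nabla K\cdot\mathbf{N}^{\lambda}(s)$ is harmless: bounding $|\nabla K_\eps(x)|\le C_\alpha|x|^{-1-2\alpha}$ uniformly in $\eps$ and applying Lemma~\ref{L4.5}(1) with $\gamma_1\coloneqq z_{\eps}^{0,\lambda}$ evaluated at $s$, $\gamma_2=\gamma_3\coloneqq z_{\eps}^{0,\lambda'}$, $x\coloneqq z_{\eps}^{0,\lambda}(s)$, $\phi\coloneqq\mathrm{Id}$, the inner $s'$-integral is $\le C_\alpha\ell^{\lambda'}L(z_{\eps}^{0})^{1+2\alpha}$, and $\int\kappa^{\lambda}(s)^2\,ds=\norm{z_{\eps}^{0,\lambda}}_{\dot{H}^{2}}^2$ supplies the outer factor; note Lemma~\ref{L4.5}(1) uses only non-transversality, never a lower bound on $\Delta(z_{\eps}^{0,\lambda},z_{\eps}^{0,\lambda'})$. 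The term with $\nabla^2K[\mathbf{T}^{\lambda}(s),\mathbf{T}^{\lambda}(s)]$ is the singular one. I would write $\mathbf{T}^{\lambda}(s)=\mathbf{T}^{\lambda'}(s')+\delta$ with $\delta\coloneqq\mathbf{T}^{\lambda}(s)-\mathbf{T}^{\lambda'}(s')$, expand $\nabla^2K[\mathbf{T}^{\lambda}(s),\mathbf{T}^{\lambda}(s)]=\nabla^2K[\mathbf{T}^{\lambda'}(s'),\mathbf{T}^{\lambda'}(s')]+2\nabla^2K[\mathbf{T}^{\lambda'}(s'),\delta]+\nabla^2K[\delta,\delta]$, and use the identities $\nabla^2K_\eps(x-y)\,\mathbf{T}^{\lambda'}(s')=-\partial_{s'}[\nabla K_\eps(x-y)]$ and $\nabla^2K_\eps(x-y)[\mathbf{T}^{\lambda'}(s'),\mathbf{T}^{\lambda'}(s')]=\partial_{s'}^2[K_\eps(x-y)]+\kappa^{\lambda'}(s')\,\nabla K_\eps(x-y)\cdot\mathbf{N}^{\lambda'}(s')$ (with $x-y=z_{\eps}^{0,\lambda}(s)-z_{\eps}^{0,\lambda'}(s')$) to integrate by parts in $s'$ once or twice, replacing every $|x|^{-2-2\alpha}$-kernel by a $\nabla K_\eps$-order ($|x|^{-1-2\alpha}$) one (plus one $|x|^{-2\alpha}$-order $K_\eps$ from the double IBP), at the price of extra factors $\delta$, $\kappa^{\lambda'}(s')$, and $\partial_{s'}\mathbf{T}^{\lambda'}$. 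This produces finitely many terms $G_1,\dots,G_6$; each of $G_1,\dots,G_5$ retains a factor $\delta$ or a matching tangent/normal pairing and is dispatched by a direct application of one of Lemmas~\ref{L4.4}, \ref{L4.5}(1), \ref{L4.7}, \ref{L4.8} together with Lemmas~\ref{L3.2} and \ref{L3.3} (the diagonal $s'\approx s$ in the $\lambda'=\lambda$ case being absorbed by $|\delta|\le|s-s'|^{1/2}\norm{\kappa^{\lambda}}_{L^2(\mathrm{loc})}$ and the arc--chord bound of Lemma~\ref{L3.2}); crucially these bounds involve only $\norm{z_{\eps}^{0,\mu}}_{\dot{C}^{1,1/2}}$, $\ell^{\mu}$, and the \emph{self}-distances $\Delta_h$ of individual curves, so none of them needs a lower bound on $\Delta(z_{\eps}^{0,\lambda},z_{\eps}^{0,\lambda'})$ for touching pairs.

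The remaining term is the double-curvature one coming from the $\kappa^{\lambda'}\nabla K_\eps\cdot\mathbf{N}^{\lambda'}$ summand of the identity above, i.e.\ (up to an inessential relabeling) $G_6=-\sum_{\lambda,\lambda'}|\theta^{\lambda}|\theta^{\lambda'}\iint\kappa^{\lambda}(s)\kappa^{\lambda'}(s')\big[\nabla K_\eps(z_{\eps}^{0,\lambda}(s)-z_{\eps}^{0,\lambda'}(s'))\cdot\mathbf{N}^{\lambda'}(s')\big]\big[\mathbf{T}^{\lambda'}(s')\cdot\mathbf{N}^{\lambda}(s)\big]\,ds'\,ds$. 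When $\lambda'=\lambda$, the factor $(z_{\eps}^{0,\lambda}(s)-z_{\eps}^{0,\lambda}(s'))\cdot\mathbf{N}^{\lambda}(s')$ hidden in $\nabla K_\eps\cdot\mathbf{N}^{\lambda}(s')$ vanishes to second order on the diagonal, so $G_6$ is controlled there by the same curvature/arc--chord machinery. When $\lambda'\neq\lambda$ and $\lambda'\notin\Sigma^{\lambda}(z_{\eps}^{0})$, we have $\Delta(z_{\eps}^{0,\lambda},z_{\eps}^{0,\lambda'})\ge L(z_{\eps}^{0})^{-1}$, so the kernel is $\le C_\alpha L(z_{\eps}^{0})^{1+2\alpha}$ and a crude Cauchy--Schwarz finishes it. The one genuinely delicate case is $\lambda'\neq\lambda$ with $\lambda'\in\Sigma^{\lambda}(z_{\eps}^{0})$, where I would exploit the cancellation between the $(\lambda,\lambda')$- and $(\lambda',\lambda)$-summands: the identity $\mathbf{T}^{\lambda'}(s')\cdot\mathbf{N}^{\lambda}(s)=-\mathbf{T}^{\lambda}(s)\cdot\mathbf{N}^{\lambda'}(s')$ always holds, while $\lambda'\in\Sigma^{\lambda}(z_{\eps}^{0})$ forces $|\theta^{\lambda}|\theta^{\lambda'}=|\theta^{\lambda'}|\theta^{\lambda}$ in the nested case ($\Omega(z_{\eps}^{0,\lambda})\subseteq\Omega(z_{\eps}^{0,\lambda'})$, $\theta^{\lambda}\theta^{\lambda'}>0$) and $|\theta^{\lambda}|\theta^{\lambda'}=-|\theta^{\lambda'}|\theta^{\lambda}$ in the exterior case ($\Omega(z_{\eps}^{0,\lambda})\cap\Omega(z_{\eps}^{0,\lambda'})=\emptyset$, $\theta^{\lambda}\theta^{\lambda'}<0$); in both cases this $\theta$-sign relation, the oddness of $\nabla K_\eps$, and the near-(anti)alignment of $\mathbf{T}^{\lambda}(s)$ and $\mathbf{T}^{\lambda'}(s')$ on the touching arcs conspire so that the two summands combine into an integrand carrying an extra factor $|\mathbf{T}^{\lambda}(s)\mp\mathbf{T}^{\lambda'}(s')|$ (the ``small'' combination, with $-$ resp.\ $+$ in the two cases). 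By Lemma~\ref{L3.5} this factor is $\lesssim|z_{\eps}^{0,\lambda}(s)-z_{\eps}^{0,\lambda'}(s')|^{\beta/(1+\beta)}$ times quantities controlled by $\norm{z_{\eps}^{0,\mu}}_{\dot{C}^{1,1/2}}$ and $\Delta_h$ of the individual curves — hence by $L(z_{\eps}^{0})$ — and, since $\tfrac{\beta}{1+\beta}=\tfrac13\ge2\alpha$ (precisely the reason $\alpha\le\tfrac16$ is required), this restores a quasi-divergence-form structure (morally the same one that makes Lemma~\ref{L4.1} so much stronger than Lemma~\ref{L4.2}) that leaves a residual kernel bounded uniformly in $\Delta(z_{\eps}^{0,\lambda},z_{\eps}^{0,\lambda'})$, with the two curvatures then absorbed by Cauchy--Schwarz into $\norm{z_{\eps}^{0,\lambda}}_{\dot{H}^{2}}\norm{z_{\eps}^{0,\lambda'}}_{\dot{H}^{2}}$.

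The main obstacle will be exactly this last cancellation in $G_6$: it requires identifying the correct pairing of the $(\lambda,\lambda')$- and $(\lambda',\lambda)$-integrals along the touching arcs (via an appropriate near-isometry between those arcs) and a careful estimate of the resulting residual using Lemmas~\ref{L3.3} and \ref{L3.5} and the $\dot{H}^{2}$ bounds. Everything else is routine in comparison, being a direct consequence of the velocity-field estimates of Section~\ref{S4}. This is precisely the step that would fail for patch touches not covered by $\Sigma^{\lambda}$ (as the oscillating-segments computation in the introduction shows) and that forces the $|\theta^{\lambda}|$-weighting in the definition~\eqref{111.50} of $Q$.
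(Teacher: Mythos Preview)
Your proposal has the right large-scale intuition (symmetrize over $(\lambda,\lambda')$, exploit the sign structure of $\Sigma^{\lambda}$, and feed the resulting ``small'' tangent combination into Lemma~\ref{L3.5}/Lemma~\ref{L4.5}(2,3)), but the concrete decomposition you chose and the identification of the hard term are both off in ways that would make the argument fail.

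First, the additive splitting $\mathbf{T}^{\lambda}(s)=\mathbf{T}^{\lambda'}(s')+\delta$ with $\delta=\mathbf{T}^{\lambda}-\mathbf{T}^{\lambda'}$ is only helpful in the \emph{nested} touch case, where indeed $|\delta|\lesssim |z^{\lambda}(s)-z^{\lambda'}(s')|^{1/3}$ by Lemma~\ref{L3.5}(1). In the \emph{disjoint} touch case ($\Omega\cap\Omega'=\emptyset$, $\theta^{\lambda}\theta^{\lambda'}<0$) one has $\mathbf{T}^{\lambda}\approx-\mathbf{T}^{\lambda'}$, so $|\delta|\approx 2$. Hence the term $\nabla^{2}K_\eps[\delta,\delta]\,(\mathbf{T}^{\lambda'}\cdot\mathbf{N}^{\lambda})$ has size $\sim|x|^{-2-2\alpha}\cdot|x|^{1/3}$ near the touch, which is \emph{not} integrable along a one-dimensional arc for any $\alpha>0$; your claim that ``each of $G_{1},\dots,G_{5}$ retains a factor $\delta$ or a matching tangent/normal pairing and is dispatched'' breaks here. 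Relatedly, the ``double IBP'' you invoke on $\partial_{s'}^{2}K_\eps$ is unavailable at $H^{2}$ regularity: after one IBP the derivative lands on $\mathbf{T}^{\lambda'}\cdot\mathbf{N}^{\lambda}$ and produces a bare $\kappa^{\lambda'}\in L^{2}$, which cannot be differentiated again.

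Second, the term you single out as $G_{6}$ already carries the factor $\mathbf{T}^{\lambda'}(s')\cdot\mathbf{N}^{\lambda}(s)$ and is therefore handled directly by Lemma~\ref{L4.5}(1) with no cancellation needed. If you nevertheless symmetrize it as you describe, the combination that emerges is $\sgn(\theta^{\lambda'})\mathbf{N}^{\lambda'}+\sgn(\theta^{\lambda})\mathbf{N}^{\lambda}$, which is the \emph{large} one in both the nested and disjoint cases (check: in the nested case the normals are nearly aligned, in the disjoint case nearly anti-aligned). So your cancellation argument, applied to your $G_{6}$, does not yield a gain.

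The paper avoids all of this by using the \emph{orthogonal} decomposition $\mathbf{T}^{\lambda}=(\mathbf{T}^{\lambda}\cdot\mathbf{T}^{\lambda'})\mathbf{T}^{\lambda'}+(\mathbf{T}^{\lambda}\cdot\mathbf{N}^{\lambda'})\mathbf{N}^{\lambda'}$, which treats the two touch geometries symmetrically: the $\mathbf{N}^{\lambda'}$-component is always $O(|x|^{1/3})$ by Lemma~\ref{L3.3}, while the $\mathbf{T}^{\lambda'}$-component, after a single IBP in $s'$, produces the genuinely hard term
\[
G_{6}=-\sum_{\lambda,\lambda'}|\theta^{\lambda}|\theta^{\lambda'}\iint\kappa^{\lambda}\kappa^{\lambda'}\,DK_\eps(z^{\lambda}-z^{\lambda'})(\mathbf{T}^{\lambda}(s))\,(\mathbf{T}^{\lambda'}\!\cdot\mathbf{T}^{\lambda})^{2}\,ds'\,ds,
\]
with \emph{no} normal factor. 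Symmetrizing this term (using that $DK_\eps$ is odd) replaces $DK_\eps(\,\cdot\,)(\mathbf{T}^{\lambda})$ by $DK_\eps(\,\cdot\,)(\sgn\theta^{\lambda'}\mathbf{T}^{\lambda}-\sgn\theta^{\lambda}\mathbf{T}^{\lambda'})$, which is exactly $|\mathbf{T}^{\lambda}-\mathbf{T}^{\lambda'}|$-small in the nested case and $|\mathbf{T}^{\lambda}+\mathbf{T}^{\lambda'}|$-small in the disjoint case, so Lemma~\ref{L4.5}(2,3) applies uniformly. That is where the crucial cancellation actually lives.
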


\begin{proof}
    With $z_{\eps}^{0,\lambda}(\cdot)$ being the previously fixed  arclength parametrization of $z_{\eps}^{0,\lambda}$, we have
    \begin{align*}
        \partial_{s}^{2}u_{\eps}^{\lambda}(s)
        =& D^{2}(u_{\eps}(z_{\eps}^{0}))(z_{\eps}^{0,\lambda}(s))
        (\mathbf{T}^{\lambda}(s),\mathbf{T}^{\lambda}(s))
        + \kappa^{\lambda}(s)
        D(u_{\eps}(z_{\eps}^{0}))(z_{\eps}^{0,\lambda}(s))(\mathbf{N}^{\lambda}(s)) \\
        = & -\sum_{\lambda'\in\mathcal{L}}\theta^{\lambda'}\int_{\ell^{\lambda'}\bbT}
        D^{2}K_{\eps}(z_{\eps}^{0,\lambda}(s) - z_{\eps}^{0,\lambda'}(s'))
        (\mathbf{T}^{\lambda}(s),\mathbf{T}^{\lambda}(s))
       \, \mathbf{T}^{\lambda'}(s')\,ds'
        \\&
        - \kappa^{\lambda}(s)\sum_{\lambda'\in\mathcal{L}}\theta^{\lambda'}
        \int_{\ell^{\lambda'}\bbT}
        DK_{\eps}(z_{\eps}^{0,\lambda}(s) - z_{\eps}^{0,\lambda'}(s'))
        (\mathbf{N}^{\lambda}(s))
       \, \mathbf{T}^{\lambda'}(s')\,ds'.
    \end{align*}
    Hence, the sum in \eqref{111.6} is the sum of the terms
    \begin{align*}
        G_{1} &\coloneqq
        -\sum_{\lambda,\lambda'\in\mathcal{L}}\abs{\theta^{\lambda}}\theta^{\lambda'}
        \int_{\ell^{\lambda}\bbT}\kappa^{\lambda}(s)
        \int_{\ell^{\lambda'}\bbT}
        D^{2}K_{\eps}(z_{\eps}^{0,\lambda}(s) - z_{\eps}^{0,\lambda'}(s'))
        (\mathbf{T}^{\lambda}(s),\mathbf{T}^{\lambda}(s))
        \\&\qquad\qquad\quad\quad\quad\quad\quad\quad\quad\quad\quad\quad
        (\mathbf{T}^{\lambda'}(s')\cdot\mathbf{N}^{\lambda}(s))
        \,ds'\,ds, \\
        G_{2} &\coloneqq
        -\sum_{\lambda,\lambda'\in\mathcal{L}}\abs{\theta^{\lambda}}\theta^{\lambda'}
        \int_{\ell^{\lambda}\bbT}\kappa^{\lambda}(s)^{2}
        \int_{\ell^{\lambda'}\bbT}
        DK_{\eps}(z_{\eps}^{0,\lambda}(s) - z_{\eps}^{0,\lambda'}(s'))
        (\mathbf{N}^{\lambda}(s))
        \\&\qquad\qquad\quad\quad\quad\quad\quad\quad\quad\quad\quad\quad
        (\mathbf{T}^{\lambda'}(s')\cdot\mathbf{N}^{\lambda}(s))
        \,ds'\,ds.
    \end{align*}
    Similarly to \eqref{4.3}, 
    Lemma~\ref{L4.5} shows that
    \begin{align*}
        \abs{G_{2}} &\leq C_{\alpha}\sum_{\lambda\in\mathcal{L}}\abs{\theta^{\lambda}}
        \norm{z_{\eps}^{0,\lambda}}_{\dot{H}^{2}}^{2}
        \sum_{\lambda'\in\mathcal{L}}\abs{\theta^{\lambda'}}\ell^{\lambda'}\max\set{
            \norm{z_{\eps}^{0,\lambda}}_{\dot{C}^{1,1/2}},
            \norm{z_{\eps}^{0,\lambda'}}_{\dot{C}^{1,1/2}}
        }^{2+4\alpha},
    \end{align*}
    and as in the proof of Lemma~\ref{L6.2}, Lemmas~\ref{L3.9}
    and \ref{L5.4} yield
    \begin{align*}
        \sum_{\lambda'\in\mathcal{L}}\abs{\theta^{\lambda'}}\ell^{\lambda'}\max\set{
            \norm{z_{\eps}^{0,\lambda}}_{\dot{C}^{1,1/2}},
            \norm{z_{\eps}^{0,\lambda'}}_{\dot{C}^{1,1/2}}
        }^{2+4\alpha}
        &\leq \abs{\theta}W_{0}L(z_{\eps}^{0})^{2+2\alpha}.
    \end{align*}
    Hence  $\abs{G_{2}}
    \leq C_{\alpha}m(\theta)\abs{\theta}W_{0}\,L(z_{\eps}^{0})^{3+2\alpha}$.

Using twice both
\[
\mathbf{T}^{\lambda}(s)=(\mathbf{T}^{\lambda'}(s')\cdot \mathbf{T}^{\lambda}(s)) \, \mathbf{T}^{\lambda'}(s') + (\mathbf{N}^{\lambda'}(s')\cdot \mathbf{T}^{\lambda}(s)) \, \mathbf{N}^{\lambda'}(s')
\]
and  $\mathbf{N}^{\lambda'}(s')\cdot\mathbf{T}^{\lambda}(s)
    = -\mathbf{T}^{\lambda'}(s')\cdot\mathbf{N}^{\lambda}(s)$, we find that $G_{1}$ is the sum of the  terms
    \begin{align*}
        G_{3} &\coloneqq
        -\sum_{\lambda,\lambda'\in\mathcal{L}}\abs{\theta^{\lambda}}\theta^{\lambda'}
        \int_{\ell^{\lambda}\bbT}\kappa^{\lambda}(s)
        \int_{\ell^{\lambda'}\bbT}
        D^{2}K_{\eps}(z_{\eps}^{0,\lambda}(s) - z_{\eps}^{0,\lambda'}(s'))
        (\mathbf{T}^{\lambda}(s),\mathbf{T}^{\lambda'}(s'))
        \\&\qquad\qquad\quad\quad\quad\quad\quad\quad\quad\quad\quad\quad
        (\mathbf{T}^{\lambda'}(s')\cdot\mathbf{T}^{\lambda}(s))
        (\mathbf{T}^{\lambda'}(s')\cdot\mathbf{N}^{\lambda}(s))
        \,ds'\,ds, \\
        G_{4} &\coloneqq
        \sum_{\lambda,\lambda'\in\mathcal{L}}\abs{\theta^{\lambda}}\theta^{\lambda'}
        \int_{\ell^{\lambda}\bbT}\kappa^{\lambda}(s)
        \int_{\ell^{\lambda'}\bbT}
        D^{2}K_{\eps}(z_{\eps}^{0,\lambda}(s) - z_{\eps}^{0,\lambda'}(s'))
        (\mathbf{T}^{\lambda'}(s'),\mathbf{N}^{\lambda'}(s'))
        \\&\qquad\qquad\quad\quad\quad\quad\quad\quad\quad\quad\quad\quad
        (\mathbf{T}^{\lambda'}(s')\cdot\mathbf{T}^{\lambda}(s))
        (\mathbf{T}^{\lambda'}(s')\cdot\mathbf{N}^{\lambda}(s))^{2}
        \,ds'\,ds, \\
        G_{5} &\coloneqq
        -\sum_{\lambda,\lambda'\in\mathcal{L}}\abs{\theta^{\lambda}}\theta^{\lambda'}
        \int_{\ell^{\lambda}\bbT}\kappa^{\lambda}(s)
        \int_{\ell^{\lambda'}\bbT}
        D^{2}K_{\eps}(z_{\eps}^{0,\lambda}(s) - z_{\eps}^{0,\lambda'}(s'))
        (\mathbf{N}^{\lambda'}(s'),\mathbf{N}^{\lambda'}(s'))
        \\&\qquad\qquad\quad\quad\quad\quad\quad\quad\quad\quad\quad\quad
        (\mathbf{T}^{\lambda'}(s')\cdot\mathbf{N}^{\lambda}(s))^{3}
        \,ds'\,ds.
    \end{align*}
We now estimate these separately.
        
\smallskip
    \textbf{Estimate for $G_{3}$.} Since
    \begin{align*}
        \frac{\partial}{\partial s'} & \left(
            DK_{\eps}(z_{\eps}^{0,\lambda}(s) - z_{\eps}^{0,\lambda'}(s'))
            (\mathbf{T}^{\lambda}(s))
           \, (\mathbf{T}^{\lambda'}(s')\cdot\mathbf{N}^{\lambda}(s))
            (\mathbf{T}^{\lambda'}(s')\cdot\mathbf{T}^{\lambda}(s))
        \right) \\
        & =
        -D^{2}K_{\eps}(z_{\eps}^{0,\lambda}(s) - z_{\eps}^{0,\lambda'}(s'))
        (\mathbf{T}^{\lambda}(s),\mathbf{T}^{\lambda'}(s'))
        \, (\mathbf{T}^{\lambda'}(s')\cdot\mathbf{N}^{\lambda}(s))
        (\mathbf{T}^{\lambda'}(s')\cdot\mathbf{T}^{\lambda}(s))
        \\&\quad
        + \kappa^{\lambda'}(s')DK_{\eps}(z_{\eps}^{0,\lambda}(s)
        - z_{\eps}^{0,\lambda'}(s'))
        (\mathbf{T}^{\lambda}(s)) \,
        (\mathbf{T}^{\lambda'}(s')\cdot\mathbf{T}^{\lambda}(s))^{2}
        \\&\quad
        - \kappa^{\lambda'}(s')DK_{\eps}(z_{\eps}^{0,\lambda}(s)
        - z_{\eps}^{0,\lambda'}(s'))
        (\mathbf{T}^{\lambda}(s)) \,
        (\mathbf{T}^{\lambda'}(s')\cdot\mathbf{N}^{\lambda}(s))^{2},
    \end{align*}
    integration by parts shows that $G_{3}$ is the sum of the  terms
    \begin{align*}
        G_{6}&\coloneqq
        -\sum_{\lambda,\lambda'\in\mathcal{L}}\abs{\theta^{\lambda}}\theta^{\lambda'}
        \int_{\ell^{\lambda}\bbT\times \ell^{\lambda'}\bbT}
        \kappa^{\lambda}(s)\kappa^{\lambda'}(s')
        DK_{\eps}(z_{\eps}^{0,\lambda}(s) - z_{\eps}^{0,\lambda'}(s'))
        (\mathbf{T}^{\lambda}(s))
        \\&\quad\quad\quad\quad\quad\quad\quad\quad\quad\quad\quad\quad
        (\mathbf{T}^{\lambda'}(s')\cdot\mathbf{T}^{\lambda}(s))^{2}
        \,ds'\,ds, \\
        G_{7}&\coloneqq
        \sum_{\lambda,\lambda'\in\mathcal{L}}\abs{\theta^{\lambda}}\theta^{\lambda'}
        \int_{\ell^{\lambda}\bbT \times \ell^{\lambda'}\bbT}
        \kappa^{\lambda}(s)\kappa^{\lambda'}(s')
        DK_{\eps}(z_{\eps}^{0,\lambda}(s) - z_{\eps}^{0,\lambda'}(s'))
        (\mathbf{T}^{\lambda}(s))
        \\&\quad\quad\quad\quad\quad\quad\quad\quad\quad\quad\quad\quad
        (\mathbf{T}^{\lambda'}(s')\cdot\mathbf{N}^{\lambda}(s))^{2}
        \,ds'\,ds.
    \end{align*}
    To estimate $G_{6}$, we symmetrize the integral to get
    \begin{align*}
        G_{6} &= -\frac{1}{2}\sum_{\lambda,\lambda'\in\mathcal{L}}
        \abs{\theta^{\lambda}\theta^{\lambda'}}
        \int_{\ell^{\lambda}\bbT\times\ell^{\lambda'}\bbT}
        \kappa^{\lambda}(s)\kappa^{\lambda'}(s')
        \\ & DK_{\eps}(z_{\eps}^{0,\lambda}(s) - z_{\eps}^{0,\lambda'}(s'))
        (\sgn(\theta^{\lambda'})\mathbf{T}^{\lambda}(s)- \sgn(\theta^{\lambda})\mathbf{T}^{\lambda'}(s'))
        \,(\mathbf{T}^{\lambda'}(s')\cdot\mathbf{T}^{\lambda}(s))^{2}
        \,ds'\,ds.
    \end{align*}
    Hence, when $\lambda'\in\Sigma^{\lambda}(z_{\eps}^{0})$, Lemma~\ref{L4.5}(2,3) (with $\beta\coloneqq\frac 12$,
    $\gamma_{2}=\gamma_{3}$, $\phi\coloneqq{\rm Id}$, and $x\coloneqq\gamma_{1}(s')$)
    and
    Lemmas~\ref{L3.9} and \ref{L5.4} show that the absolute value of the integral above
    is bounded by
    \begin{align*}
        &\frac{C_{\alpha}}{\min\left\{
            \Delta^{\lambda},
            \Delta^{\lambda'}
        \right\}^{1+2\alpha}}
        \left(
            \ell^{\lambda'}\int_{\ell^{\lambda}\bbT}\abs{\kappa^{\lambda}(s)}^{2}\,ds
            + \ell^{\lambda}\int_{\ell^{\lambda'}\bbT}\abs{\kappa^{\lambda'}(s')}^{2}\,ds'
        \right)
        \\&\quad\quad\leq C_{\alpha}
        L(z_{\eps}^{0})^{1+2\alpha}\left(
            \frac{\abs{\Omega(z_{\eps}^{0,\lambda'})}}
            {\Delta^{\lambda'}}
            \norm{z_{\eps}^{0,\lambda}}_{\dot{H}^{2}}^{2}
            + \frac{\abs{\Omega(z_{\eps}^{0,\lambda})}}
            {\Delta^{\lambda}}
            \norm{z_{\eps}^{0,\lambda'}}_{\dot{H}^{2}}^{2}
        \right)
        \\&\quad\quad\leq C_{\alpha}W_{0}
        L(z_{\eps}^{0})^{2+2\alpha}\left(
            \norm{z_{\eps}^{0,\lambda}}_{\dot{H}^{2}}^{2}
            + \norm{z_{\eps}^{0,\lambda'}}_{\dot{H}^{2}}^{2}
        \right).
    \end{align*}
    When $\lambda'\notin \Sigma^{\lambda}(z_{\eps}^{0})$, 
    Lemmas~\ref{L4.2} (with $\beta\coloneqq\frac 12$), \ref{L3.9},
    and \ref{L5.4} show that that absolute value is instead  bounded by
    \begin{align*}
        &\frac{C_{\alpha}}{\Delta^{\lambda,\lambda'}}
        \int_{\ell^{\lambda}\bbT\times\ell^{\lambda'}\bbT}
        \frac{\abs{\kappa^{\lambda}(s)}^{2} + \abs{\kappa^{\lambda'}(s')}^{2}}
        {\abs{z^{\lambda}(s) - z^{\lambda'}(s')}^{2\alpha}}
        \,ds'\,ds
        \\&\quad\quad\leq
        C_{\alpha}L(z_{\eps}^{0})\bigg(
            \ell^{\lambda'}\norm{z_{\eps}^{0,\lambda'}}_{\dot{H}^{2}}^{4\alpha}
            \int_{\ell^{\lambda}\bbT}\abs{\kappa^{\lambda}(s)}^{2}\,ds
            + \ell^{\lambda}\norm{z_{\eps}^{0,\lambda}}_{\dot{H}^{2}}^{4\alpha}
            \int_{\ell^{\lambda'}\bbT}\abs{\kappa^{\lambda'}(s')}^{2}\,ds'
        \bigg)
        \\&\quad\quad\leq
        C_{\alpha}L(z_{\eps}^{0})^{1+2\alpha}\left(
            \frac{\abs{\Omega(z_{\eps}^{0,\lambda'})}}
            {\Delta^{\lambda'}}
            \norm{z_{\eps}^{0,\lambda}}_{\dot{H}^{2}}^{2}
            + \frac{\abs{\Omega(z_{\eps}^{0,\lambda})}}
            {\Delta^{\lambda}}
            \norm{z_{\eps}^{0,\lambda'}}_{\dot{H}^{2}}^{2}
        \right)
        \\&\quad\quad\leq C_{\alpha}W_{0}
        L(z_{\eps}^{0})^{2+2\alpha}\left(
            \norm{z_{\eps}^{0,\lambda}}_{\dot{H}^{2}}^{2}
            + \norm{z_{\eps}^{0,\lambda'}}_{\dot{H}^{2}}^{2}
        \right).
    \end{align*}
    This yields
       $ \abs{G_{6}} \leq C_{\alpha}m(\theta)\abs{\theta}W_{0}\,
        L(z_{\eps}^{0})^{3+2\alpha}$.

    Finally, we clearly have
    \begin{align*}
        \abs{G_{7}} \leq C_{\alpha}\sum_{\lambda,\lambda'\in\mathcal{L}}
        \abs{\theta^{\lambda}\theta^{\lambda'}}
        \int_{\ell^{\lambda}\bbT\times \ell^{\lambda'}\bbT}
        \frac{\left(\abs{\kappa^{\lambda}(s)}^{2} + \abs{\kappa^{\lambda'}(s')}^{2}\right)
        \abs{\mathbf{T}^{\lambda'}(s')\cdot\mathbf{N}^{\lambda}(s)}}
        {\abs{z^{\lambda}(s) - z^{\lambda'}(s')}^{1 + 2\alpha}}
        \,ds'\,ds.
    \end{align*}
    Thus the argument for $G_6$ in the case $\lambda'\in\Sigma^{\lambda}(z_{\eps}^{0})$, but with Lemma~\ref{L4.5}(1) in place of    Lemma~\ref{L4.5}(2,3), again yields
$        \abs{G_{7}} \leq C_{\alpha}m(\theta)\abs{\theta}W_{0}\,
        L(z_{\eps}^{0})^{3+2\alpha}$.
        
\smallskip
    \textbf{Estimate for $G_{4}$.} Similarly to $G_{3}$, we use integration by parts
    to reduce the number of derivatives applied to the kernel
    at the expense of producing the factor $\kappa^{\lambda'}(s')$. Since
    \begin{align*}
        \frac{\partial}{\partial s'} &\left(
            DK_{\eps}(z_{\eps}^{0,\lambda}(s) - z_{\eps}^{0,\lambda'}(s'))
            (\mathbf{N}^{\lambda'}(s'))
            \, (\mathbf{T}^{\lambda'}(s')\cdot\mathbf{N}^{\lambda}(s))^{2}
            (\mathbf{T}^{\lambda'}(s')\cdot\mathbf{T}^{\lambda}(s))
        \right) \\
        & =
        -D^{2}K_{\eps}(z_{\eps}^{0,\lambda}(s) - z_{\eps}^{0,\lambda'}(s'))
        (\mathbf{T}^{\lambda'}(s'),\mathbf{N}^{\lambda'}(s'))
        \, (\mathbf{T}^{\lambda'}(s')\cdot\mathbf{N}^{\lambda}(s))^{2}
        (\mathbf{T}^{\lambda'}(s')\cdot\mathbf{T}^{\lambda}(s))
        \\&\quad
        - \kappa^{\lambda'}(s')
        DK_{\eps}(z_{\eps}^{0,\lambda}(s) - z_{\eps}^{0,\lambda'}(s'))
        (\mathbf{T}^{\lambda'}(s'))
        \, (\mathbf{T}^{\lambda'}(s')\cdot\mathbf{N}^{\lambda}(s))^{2}
        (\mathbf{T}^{\lambda'}(s')\cdot\mathbf{T}^{\lambda}(s))
        \\&\quad
        + 2\kappa^{\lambda'}(s')
        DK_{\eps}(z_{\eps}^{0,\lambda}(s) - z_{\eps}^{0,\lambda'}(s'))
        (\mathbf{N}^{\lambda'}(s'))
        \, (\mathbf{T}^{\lambda'}(s')\cdot\mathbf{N}^{\lambda}(s))
        (\mathbf{T}^{\lambda'}(s')\cdot\mathbf{T}^{\lambda}(s))^{2}
        \\&\quad
        - \kappa^{\lambda'}(s')
        DK_{\eps}(z_{\eps}^{0,\lambda}(s) - z_{\eps}^{0,\lambda'}(s'))
        (\mathbf{N}^{\lambda'}(s'))\, 
        (\mathbf{T}^{\lambda'}(s')\cdot\mathbf{N}^{\lambda}(s))^{3},
    \end{align*}
     integration by parts shows that $G_{4}$ is the sum of the  terms
    \begin{align*}
        G_{8}&\coloneqq
        -\sum_{\lambda,\lambda'\in\mathcal{L}}\abs{\theta^{\lambda}}\theta^{\lambda'}
        \int_{\ell^{\lambda}\bbT\times\ell^{\lambda'}\bbT}
        \kappa^{\lambda}(s)\kappa^{\lambda'}(s')
        DK_{\eps}(z_{\eps}^{0,\lambda}(s) - z_{\eps}^{0,\lambda'}(s'))
        (\mathbf{T}^{\lambda'}(s'))
        \\&\quad\quad\quad\quad\quad\quad\quad\quad\quad\quad\quad\quad
        \, (\mathbf{T}^{\lambda'}(s')\cdot\mathbf{N}^{\lambda}(s))^{2}
        (\mathbf{T}^{\lambda'}(s')\cdot\mathbf{T}^{\lambda}(s))
        \,ds'\,ds, \\
        G_{9}&\coloneqq
        2\sum_{\lambda,\lambda'\in\mathcal{L}}\abs{\theta^{\lambda}}\theta^{\lambda'}
        \int_{\ell^{\lambda}\bbT\times\ell^{\lambda'}\bbT}
        \kappa^{\lambda}(s)\kappa^{\lambda'}(s')
        DK_{\eps}(z_{\eps}^{0,\lambda}(s) - z_{\eps}^{0,\lambda'}(s'))
        (\mathbf{N}^{\lambda'}(s'))
        \\&\quad\quad\quad\quad\quad\quad\quad\quad\quad\quad\quad\quad
        \, (\mathbf{T}^{\lambda'}(s')\cdot\mathbf{N}^{\lambda}(s))
        (\mathbf{T}^{\lambda'}(s')\cdot\mathbf{T}^{\lambda}(s))^{2}
        \,ds'\,ds, \\
        G_{10}&\coloneqq
        -\sum_{\lambda,\lambda'\in\mathcal{L}}\abs{\theta^{\lambda}}\theta^{\lambda'}
        \int_{\ell^{\lambda}\bbT\times\ell^{\lambda'}\bbT}
        \kappa^{\lambda}(s)\kappa^{\lambda'}(s')
        DK_{\eps}(z_{\eps}^{0,\lambda}(s) - z_{\eps}^{0,\lambda'}(s'))
        (\mathbf{N}^{\lambda'}(s'))
        \\&\quad\quad\quad\quad\quad\quad\quad\quad\quad\quad\quad\quad
        \, (\mathbf{T}^{\lambda'}(s')\cdot\mathbf{N}^{\lambda}(s))^{3}
        \,ds'\,ds.
    \end{align*}
    Since the $DK_{\eps}$ term is bounded by
    $\frac{C_{\alpha}}{\abs{z^{\lambda}(s) - z^{\lambda'}(s')}^{1+2\alpha}}$ and
    each of $G_{8}$, $G_{9}$ and $G_{10}$ has at least one factor 
    $\mathbf{T}^{\lambda'}(s')\cdot\mathbf{N}^{\lambda}(s)$,
    the argument from the estimate for $G_7$ shows that again
    \begin{align*}
       \max\{\abs{G_8}, \abs{G_9}, \abs{G_{10}}\} \le  C_{\alpha}m(\theta)\abs{\theta}W_{0}\,
        L(z_{\eps}^{0})^{3+2\alpha}.
    \end{align*}
        
\smallskip
    \textbf{Estimate for $G_{5}$.} Lemma~\ref{L3.4} yields
    \begin{align*}
        \abs{G_{5}} &\leq C_{\alpha}\sum_{\lambda,\lambda'\in\mathcal{L}}
        \abs{\theta^{\lambda}\theta^{\lambda'}}
        \int_{\ell^{\lambda}\bbT\times\ell^{\lambda'}\bbT}
        \frac{\abs{\kappa^{\lambda}(s)}\left(
            \mathcal{M}\kappa^{\lambda}(s)
            + \mathcal{M}\kappa^{\lambda'}(s')
        \right)\abs{\mathbf{T}^{\lambda'}(s')\cdot\mathbf{N}^{\lambda}(s)}}
        {\abs{z^{\lambda}(s) - z^{\lambda'}(s')}^{1+2\alpha}}
        \,ds'\,ds,
    \end{align*}
so the argument from the estimate for $G_7$ and \eqref{3.1} give $\abs{G_{5}} \leq C_{\alpha}m(\theta)\abs{\theta}W_{0}\,        L(z_{\eps}^{0})^{3+2\alpha}$.

 The obtained estimates  for $G_n$ with $n=2,5,6,7,8,9,10$ now yield \eqref{111.6}.
\end{proof}


The last three lemmas now together yield the following.
  
\begin{corollary}\label{C6.4}
    $Q(z_{\eps}^{t})$ is differentiable in $t$ and
    there is $C_{\alpha}$ such that for all $t\in\bbR$ we have
    \[
        \abs{\partial_{t}Q(z_{\eps}^{t})}
        \leq C_{\alpha}\abs{\theta}W_{0}L(z_{\eps}^{t})^{3+2\alpha}.
    \]
\end{corollary}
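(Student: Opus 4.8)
The plan is to differentiate $Q(z_\eps^t)=\frac{1}{m(\theta)}\sum_{\lambda\in\mathcal L}|\theta^\lambda|\,\|z_\eps^{t,\lambda}\|_{\dot H^2}^2$ term by term using Lemma~\ref{L6.1}, and then bound the two resulting sums over $\lambda$ separately by Lemma~\ref{L6.3} (for the term with $\partial_s^2 u_\eps^\lambda$) and by Lemma~\ref{L6.2} (for the term with $(\kappa^\lambda)^2\partial_s u_\eps^\lambda$), using that $Q\le\tfrac12 L$ by the definition \eqref{111.23} of $L$. All of the genuinely hard analysis has already been done in Lemma~\ref{L6.3} — in particular the crucial cancellation in the term $G_6$ that renders the estimate insensitive to the allowed interior/exterior touches within $\Sigma^\lambda$ — so the present step is essentially bookkeeping.

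First I would fix $t_0\in\bbR$. By Lemma~\ref{L6.1}, applied at $t_0$ (i.e.\ with $z_\eps^0$ replaced by $z_\eps^{t_0}$ in the definitions of $u_\eps^\lambda,\ell^\lambda,\mathbf T^\lambda,\mathbf N^\lambda,\kappa^\lambda$), each $\|z_\eps^{t,\lambda}\|_{\dot H^2}^2$ is differentiable at $t_0$, hence so is $Q(z_\eps^t)$, and
\[
\partial_t Q(z_\eps^t)\big|_{t=t_0}=\frac{2}{m(\theta)}\sum_{\lambda\in\mathcal L}|\theta^\lambda|\int_{\ell^\lambda\bbT}\kappa^\lambda(s)\big(\partial_s^2 u_\eps^\lambda(s)\cdot\mathbf N^\lambda(s)\big)\,ds-\frac{3}{m(\theta)}\sum_{\lambda\in\mathcal L}|\theta^\lambda|\int_{\ell^\lambda\bbT}\kappa^\lambda(s)^2\big(\partial_s u_\eps^\lambda(s)\cdot\mathbf T^\lambda(s)\big)\,ds.
\]
Next, exactly as in the last part of the proof of Lemma~\ref{L6.1} (view $z_\eps^{t_0}$ as new initial data, solve \eqref{5.1} from constant-speed parametrizations of the $z_\eps^{t_0,\lambda}$, and invoke Corollary~\ref{C5.3}), Lemmas~\ref{L6.2} and \ref{L6.3} continue to hold with $z_\eps^{t_0}$ in place of $z_\eps^0$; here one uses $W(z_\eps^{t_0})=W(z_\eps^0)\le W_0$ by Lemma~\ref{L5.4}, together with $z_\eps^{t_0}\in\operatorname{PSC}(\bbR^2)^{\mathcal L}$ and $L(z_\eps^{t_0})<\infty$ from the discussion preceding this corollary. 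Then Lemma~\ref{L6.3} bounds the absolute value of the first sum by $\frac{2}{m(\theta)}\cdot C_\alpha m(\theta)|\theta|W_0 L(z_\eps^{t_0})^{3+2\alpha}=2C_\alpha|\theta|W_0 L(z_\eps^{t_0})^{3+2\alpha}$, while Lemma~\ref{L6.2} bounds the absolute value of the second sum by $\frac{3}{m(\theta)}\sum_{\lambda\in\mathcal L}|\theta^\lambda|\,C_\alpha|\theta|W_0 L(z_\eps^{t_0})^{2+2\alpha}\|z_\eps^{t_0,\lambda}\|_{\dot H^2}^2=3C_\alpha|\theta|W_0 L(z_\eps^{t_0})^{2+2\alpha}Q(z_\eps^{t_0})\le\tfrac32 C_\alpha|\theta|W_0 L(z_\eps^{t_0})^{3+2\alpha}$, the last step using $Q\le\tfrac12 L$. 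Adding and absorbing constants yields $|\partial_t Q(z_\eps^t)|_{t=t_0}|\le C_\alpha|\theta|W_0 L(z_\eps^{t_0})^{3+2\alpha}$, and since $t_0$ was arbitrary this is the assertion.

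I do not anticipate a real obstacle in this step. The only point that needs a word of justification is the (routine) transfer of Lemmas~\ref{L6.2} and \ref{L6.3}, which are phrased for the fixed perturbed datum $z_\eps^0$, to an arbitrary later time $t_0$; this is handled by the same reparametrization-and-restart argument already used for the general-$t_0$ case of Lemma~\ref{L6.1}, so nothing new is required.
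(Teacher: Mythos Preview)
Your proposal is correct and is exactly the approach the paper intends: the paper simply states that Corollary~\ref{C6.4} follows from Lemmas~\ref{L6.1}, \ref{L6.2}, and \ref{L6.3}, and you have spelled out the bookkeeping (including the use of $2Q\le L$ for the second term and the routine transfer of the lemmas from $t=0$ to an arbitrary $t_0$) precisely as required.
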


In the next two results we estimate $\partial_{t+}\Delta(z_{\eps}^{t,\lambda},z_{\eps}^{t,\lambda'})$
and $\partial_{t+}\Delta_{1/Q(z_{\eps}^{t})}(z_{\eps}^{t,\lambda})$.  


\begin{lemma}\label{L6.5}
    There is $C_{\alpha}$ such that
    $\Delta(z_{\eps}^{t,\lambda},z_{\eps}^{t,\lambda'})$
    is Lipschitz continuous in $t\in\bbR$ with Lipschitz constant 
    $C_{\alpha}\abs{\theta}W_{0}^{\frac{1}{2}-\alpha}$  for any distinct $\lambda,\lambda'\in\mathcal{L}$, and the following holds.  For any $t\in\bbR$
    and arbitrary arclength parametrizations
    of $z_{\eps}^{t,\lambda}$ and $z_{\eps}^{t,\lambda'}$,
    there are $s\in\ell(z_{\eps}^{t,\lambda})\bbT$ and
    $s'\in\ell(z_{\eps}^{t,\lambda'})\bbT$ such that
    $\Delta(z_{\eps}^{t,\lambda},
    z_{\eps}^{t,\lambda'})
    = \abs{z_{\eps}^{t,\lambda}(s) - z_{\eps}^{t,\lambda'}(s')}$,
    \begin{equation}\label{6.2}
    \begin{aligned}
        \partial_{t+}
        \Delta(z_{\eps}^{t,\lambda},z_{\eps}^{t,\lambda'})
        \geq \frac{z_{\eps}^{t,\lambda}(s) - z_{\eps}^{t,\lambda'}(s')}
        {\abs{z_{\eps}^{t,\lambda}(s) - z_{\eps}^{t,\lambda'}(s')}}
        \cdot \left[
            u_{\eps}(z_{\eps}^{t};z_{\eps}^{t,\lambda}(s))
            - u_{\eps}(z_{\eps}^{t};z_{\eps}^{t,\lambda'}(s'))
        \right],
    \end{aligned}
    \end{equation}
    and
    \begin{equation}\label{6.3}
        \partial_{t+}
        \Delta(z_{\eps}^{t,\lambda},z_{\eps}^{t,\lambda'}) \geq
        -C_{\alpha}\abs{\theta}W_{0}\, L(z_{\eps}^{t})^{2+2\alpha}
        \Delta(z_{\eps}^{t,\lambda},z_{\eps}^{t,\lambda'}).
    \end{equation}
\end{lemma}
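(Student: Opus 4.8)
The plan is to derive \eqref{6.2} from a first-order expansion of the flow of $u_\eps$ together with a compactness argument, and then to obtain \eqref{6.3} by feeding \eqref{6.2} into the pointwise velocity estimate of Lemma~\ref{L4.7}. Fix distinct $\lambda,\lambda'\in\mathcal L$. By Lemma~\ref{L5.4} we have $W(z_\eps^t)\equiv W_0$ and $\Delta(z_\eps^{t,\lambda},z_\eps^{t,\lambda'})>0$ for all $t\in\bbR$, and each $z_\eps^{t,\mu}$ is a simple $C^{1,1/2}$ curve. Let $\Phi_h^t$ be the time-$h$ flow map, starting at time $t$, of the ODE $\partial_t x^t = u_\eps(z_\eps^t;x^t)$ (the unique spatially bi-Lipschitz flow, constructed in the proof of Lemma~\ref{L5.4}); it is a homeomorphism of $\bbR^2$ carrying $\operatorname{im}(z_\eps^{t,\mu})$ onto $\operatorname{im}(z_\eps^{t+h,\mu})$ for every $\mu\in\mathcal L$. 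Since $|\Phi_h^t(p)-p|\le C_\alpha|\theta|W_0^{1/2-\alpha}|h|$ for every $p\in\bbR^2$, by Lemma~\ref{L4.1} applied to $u_\eps$ (valid as noted at the beginning of Section~\ref{S4}) and the conservation of $W$, the images of $z_\eps^{t+h,\lambda}$ and $z_\eps^{t+h,\lambda'}$ lie within Hausdorff distance $C_\alpha|\theta|W_0^{1/2-\alpha}|h|$ of the images of $z_\eps^{t,\lambda}$ and $z_\eps^{t,\lambda'}$, which gives the stated Lipschitz bound on $t\mapsto\Delta(z_\eps^{t,\lambda},z_\eps^{t,\lambda'})$ after renaming $C_\alpha$.

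For \eqref{6.2}, fix $t$, fix arclength parametrizations of $z_\eps^{t,\lambda}$ and $z_\eps^{t,\lambda'}$, and let $h>0$. As these curves are $C^1$ with disjoint compact images, there are $s_h,s_h'$ with $\Delta(z_\eps^{t+h,\lambda},z_\eps^{t+h,\lambda'})=|z_\eps^{t+h,\lambda}(s_h)-z_\eps^{t+h,\lambda'}(s_h')|$; pulling the corresponding points back by $\Phi_h^t$ gives $p_h\in\operatorname{im}(z_\eps^{t,\lambda})$ and $p_h'\in\operatorname{im}(z_\eps^{t,\lambda'})$ with $\Phi_h^t(p_h)=z_\eps^{t+h,\lambda}(s_h)$ and $\Phi_h^t(p_h')=z_\eps^{t+h,\lambda'}(s_h')$, so $\Delta(z_\eps^{t,\lambda},z_\eps^{t,\lambda'})\le|p_h-p_h'|$. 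Using the uniform-on-compacts expansion $\Phi_h^t(p)=p+hu_\eps(z_\eps^t;p)+o(h)$ (which follows from $|\Phi_h^t(p)-p|=O(h)$, Lemma~\ref{L5.1}, and the continuity of $s\mapsto z_\eps^s$ in $d_{\mathrm F}$) together with $|a+b|\ge\frac{a}{|a|}\cdot(a+b)=|a|+\frac{a}{|a|}\cdot b$, we get
\[
\frac{\Delta(z_\eps^{t+h,\lambda},z_\eps^{t+h,\lambda'})-\Delta(z_\eps^{t,\lambda},z_\eps^{t,\lambda'})}{h}\ge\frac{p_h-p_h'}{|p_h-p_h'|}\cdot\big(u_\eps(z_\eps^t;p_h)-u_\eps(z_\eps^t;p_h')\big)+o(1).
\]
Taking $h_n\downarrow0$ that realizes $\partial_{t+}\Delta(z_\eps^{t,\lambda},z_\eps^{t,\lambda'})$, by compactness $(p_{h_n},p_{h_n}')\to(p_*,p_*')$ along a subsequence, and since $\Delta(z_\eps^{t,\lambda},z_\eps^{t,\lambda'})\le|p_{h_n}-p_{h_n}'|\le\Delta(z_\eps^{t+h_n,\lambda},z_\eps^{t+h_n,\lambda'})+O(h_n)\to\Delta(z_\eps^{t,\lambda},z_\eps^{t,\lambda'})$, the points $p_*,p_*'$ are minimizers; choosing $s,s'$ with $z_\eps^{t,\lambda}(s)=p_*$ and $z_\eps^{t,\lambda'}(s')=p_*'$ and letting $n\to\infty$ (using continuity of $u_\eps(z_\eps^t;\,\cdot\,)$) yields \eqref{6.2}.

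For \eqref{6.3}, observe that $s\mapsto|z_\eps^{t,\lambda}(s)-p_*'|^2$ has a critical point at the $s$ above, so $(p_*-p_*')\cdot\mathbf T^\lambda(s)=0$; as $p_*\ne p_*'$ this forces $\frac{p_*-p_*'}{|p_*-p_*'|}=\pm\mathbf N^\lambda(s)$, and hence the right-hand side of \eqref{6.2} equals, up to sign, $\mathbf N^\lambda(s)\cdot\big(u_\eps(z_\eps^t;z_\eps^{t,\lambda}(s))-u_\eps(z_\eps^t;z_\eps^{t,\lambda'}(s'))\big)$. We bound the absolute value of this via Lemma~\ref{L4.7}, applied with $x\coloneqq z_\eps^{t,\lambda'}(s')$, $\beta\coloneqq\frac12$, and $u_\eps$ in place of $u$; crucially $\Xi_\lambda=0$ here because the curves $z_\eps^{t,\mu}$ are pairwise disjoint and simple, so Lemma~\ref{L4.7} gives the bound $C_\alpha\sum_{\lambda''\in\mathcal L}|\theta^{\lambda''}|\,\ell(z_\eps^{t,\lambda''})\,M_{\lambda,\lambda''}^{2+4\alpha}\,\Delta(z_\eps^{t,\lambda},z_\eps^{t,\lambda'})$. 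Estimating $\ell(z_\eps^{t,\lambda''})\le|\Omega(z_\eps^{t,\lambda''})|\,\Delta_{1/Q(z_\eps^t)}(z_\eps^{t,\lambda''})^{-1}\le W_0\,L(z_\eps^t)$ by Lemma~\ref{L3.9} and the definition of $L$, and $M_{\lambda,\lambda''}^2\le Q(z_\eps^t)\le L(z_\eps^t)$ (from $Q(z)\ge\norm{z^\mu}_{\dot{H}^{2}}^2\ge\norm{z^\mu}_{\dot{C}^{1,1/2}}^2$ and $L\ge2Q$), this bound is at most $C_\alpha|\theta|W_0\,L(z_\eps^t)^{2+2\alpha}\,\Delta(z_\eps^{t,\lambda},z_\eps^{t,\lambda'})$, which combined with \eqref{6.2} is precisely \eqref{6.3}.

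The main obstacle is \eqref{6.2}: it demands a lower bound on the lower-right Dini derivative $\partial_{t+}\Delta$, whereas transporting the time-$t$ minimizing pair forward by the flow only bounds the upper-right derivative $\partial_t^+\Delta$ from above. The fix is to pull the time-$(t+h)$ minimizers back to time $t$, apply the Cauchy-Schwarz inequality $|a+b|\ge|a|+\frac{a}{|a|}\cdot b$, and extract a subsequential limit --- while verifying that the pulled-back points converge to genuine time-$t$ minimizers. The remaining ingredients (the uniform first-order flow expansion and, for \eqref{6.3}, the bookkeeping once $\Xi_\lambda=0$ has been observed) are routine given the estimates of Sections~\ref{S4} and \ref{S5}.
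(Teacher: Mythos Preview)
Your proof is correct and follows essentially the same approach as the paper. The only cosmetic difference is that you phrase the argument in terms of the flow map $\Phi_h^t$, whereas the paper works with the parametrizations $\tilde z_\eps^{t,\mu}(\xi)$, which solve the same ODE and hence satisfy $\tilde z_\eps^{t+h,\mu}(\xi)=\Phi_h^t(\tilde z_\eps^{t,\mu}(\xi))$; in either language one pulls the time-$(t+h)$ minimizers back to time $t$, uses $|a+b|\ge|a|+\tfrac{a}{|a|}\cdot b$, extracts a subsequential limit to obtain a time-$t$ minimizing pair, and then combines \eqref{6.2} with the orthogonality at the minimizer, Lemma~\ref{L4.7} (with $\Xi_\lambda=0$), and Lemma~\ref{L3.9} to get \eqref{6.3}.
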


{\it Remark.}  The bound \eqref{6.2} is a version of (2.9) in \cite{JeoZla}.

\begin{proof}
    For each $t\in \bbR$ fix any
    $\xi_{t},\xi_{t}'\in\bbT$ such that
    $\Delta(z_{\eps}^{t,\lambda}, z_{\eps}^{t,\lambda'})
    = \abs{\tilde{z}_{\eps}^{t,\lambda}(\xi_{t})
    - \tilde{z}_{\eps}^{t,\lambda'}(\xi_{t}')}$.
    Then for any $h\in \bbR$, Lemmas~\ref{L4.1} and \ref{L5.4} show
    \begin{align*}
        \Delta(z_{\eps}^{t+h,\lambda},z_{\eps}^{t+h,\lambda'})
        - \Delta(z_{\eps}^{t,\lambda},z_{\eps}^{t,\lambda'})
        &\leq \abs{\tilde{z}_{\eps}^{t+h,\lambda}(\xi_{t})
        - \tilde{z}_{\eps}^{t+h,\lambda'}(\xi_{t}')}
        - \abs{\tilde{z}_{\eps}^{t,\lambda}(\xi_{t})
        - \tilde{z}_{\eps}^{t,\lambda'}(\xi_{t}')} \\
        &\leq \abs{\int_{t}^{t+h}
        u_{\eps}(z_{\eps}^{\tau};\tilde{z}_{\eps}^{\tau,\lambda}(\xi_{t}))\,d\tau}
        + \abs{\int_{t}^{t+h}
        u_{\eps}(z_{\eps}^{\tau};\tilde{z}_{\eps}^{\tau,\lambda'}(\xi_{t}'))\,d\tau} \\
        &\leq 2\abs{\int_{t}^{t+h}\norm{u_{\eps}(z_{\eps}^{\tau})}_{L^{\infty}}\,d\tau} \\
        &\leq C_{\alpha}\abs{\theta}W_{0}^{\frac{1}{2}-\alpha}\abs{h}.
    \end{align*}
    This proves the first claim.

    Next fix $t\in \bbR$ and take any decreasing sequence
    $\seq{t_{n}}_{n=1}^{\infty}$ converging to $t$ such that
    \beq\lb{111.7}
        \partial_{t+}\Delta(z_{\eps}^{t,\lambda},
        z_{\eps}^{t,\lambda'})
        = \lim_{n\to\infty}
        \frac{
            \Delta(z_{\eps}^{t_{n},\lambda},
            z_{\eps}^{t_{n},\lambda'})
            - \Delta(z_{\eps}^{t,\lambda},
            z_{\eps}^{t,\lambda'})
        }{t_{n}-t}.
    \eeq
    By passing to a subsequence if needed, we can assume that
    $\seq{(\xi_{t_{n}},\xi_{t_{n}}')}_{n=1}^{\infty}$ converges to some
    $(\xi,\xi')\in\bbT^2$. Then we let 
    $(s,s')\in \ell(z_{\eps}^{t,\lambda})\bbT\times \ell(z_{\eps}^{t,\lambda'})\bbT$ be the corresponding arguments
    in some arclength parametrizations of
    $z_{\eps}^{t,\lambda}$ and $z_{\eps}^{t,\lambda'}$ (which we then fix).
    Since $\tau\mapsto \Delta(z_{\eps}^{\tau,\lambda},z_{\eps}^{\tau,\lambda'})$
    and $(\tau,\zeta,\zeta')\mapsto
    \tilde{z}_{\eps}^{\tau,\lambda}(\zeta) - \tilde{z}_{\eps}^{\tau,\lambda'}(\zeta')$
    are continuous, we obtain
    \beq\lb{111.8}
        \Delta(z_{\eps}^{t,\lambda}, z_{\eps}^{t,\lambda'})
        = \abs{\tilde{z}_{\eps}^{t,\lambda}(\xi)
        - \tilde{z}_{\eps}^{t,\lambda'}(\xi')}
        = \abs{z_{\eps}^{t,\lambda}(s) - z_{\eps}^{t,\lambda'}(s')}.
    \eeq
    Our choice of $\xi_t,\xi_t'$ now yields
    \begin{align*}
        &            \Delta(z_{\eps}^{t_{n},\lambda},
            z_{\eps}^{t_{n},\lambda'})
            - \Delta(z_{\eps}^{t,\lambda},
            z_{\eps}^{t,\lambda'}) \\
        &\quad\geq
        \frac{\tilde{z}_{\eps}^{t,\lambda}(\xi_{t_{n}})
        - \tilde{z}_{\eps}^{t,\lambda'}(\xi_{t_{n}}')}
        {\abs{\tilde{z}_{\eps}^{t,\lambda}(\xi_{t_{n}})
        - \tilde{z}_{\eps}^{t,\lambda'}(\xi_{t_{n}}')}}
        \cdot
        (\tilde{z}_{\eps}^{t_{n},\lambda}(\xi_{t_{n}})
        - \tilde{z}_{\eps}^{t_{n},\lambda'}(\xi_{t_{n}}'))
        - |\tilde{z}_{\eps}^{t,\lambda}(\xi_{t_{n}})
        - \tilde{z}_{\eps}^{t,\lambda'}(\xi_{t_{n}}')| \\
        &\quad=
        \frac{\tilde{z}_{\eps}^{t,\lambda}(\xi_{t_{n}})
        - \tilde{z}_{\eps}^{t,\lambda'}(\xi_{t_{n}}')}
        {\abs{\tilde{z}_{\eps}^{t,\lambda}(\xi_{t_{n}})
        - \tilde{z}_{\eps}^{t,\lambda'}(\xi_{t_{n}}')}}
        \cdot
        \int_{t}^{t_{n}} \left[
        u_{\eps}(z_{\eps}^{\tau};\tilde{z}_{\eps}^{\tau,\lambda}(\xi_{t_{n}}))
        - u_{\eps}(z_{\eps}^{\tau};\tilde{z}_{\eps}^{\tau,\lambda'}(\xi_{t_{n}}')) \right]
        \,d\tau.
    \end{align*}
    This, \eqref{111.7}, and \eqref{111.8} now yield \eqref{6.2}.
    Then since $\partial_{\xi}\tilde{z}_{\eps}^{t,\lambda}(\xi)$ and
    $\partial_{\xi}\tilde{z}_{\eps}^{t,\lambda'}(\xi')$ 
    are both orthogonal to $\tilde{z}_{\eps}^{t,\lambda}(\xi)
    - \tilde{z}_{\eps}^{t,\lambda'}(\xi')$,
    Lemmas~\ref{L4.7}, \ref{L3.9}, and \ref{L5.4} show that
    \begin{align*}
        &\partial_{t+}\Delta(z_{\eps}^{t,\lambda},z_{\eps}^{t,\lambda'})
        \\&\quad\quad
        \geq -C_{\alpha}\abs{z_{\eps}^{t,\lambda}(s) - z_{\eps}^{t,\lambda'}(s')}
        \sum_{\lambda''\in\mathcal{L}}\abs{\theta^{\lambda''}}\ell(z_{\eps}^{t,\lambda''})
        \max\left\{
            \norm{z_{\eps}^{t,\lambda}}_{\dot{H}^{2}},
            \norm{z_{\eps}^{t,\lambda''}}_{\dot{H}^{2}}
        \right\}^{2+4\alpha}
        \\&\quad\quad
        \geq -C_{\alpha}\Delta(z_{\eps}^{t,\lambda},z_{\eps}^{t,\lambda'})
        \sum_{\lambda''\in\mathcal{L}}\abs{\theta^{\lambda''}}
        \frac{\abs{\Omega(z_{\eps}^{t,\lambda''})}}
        {\Delta_{1/Q(z_{\eps}^{t})}(z_{\eps}^{t,\lambda''})}
        L(z_{\eps}^{t})^{1+2\alpha}
        \\&\quad\quad
        \geq -C_{\alpha}\abs{\theta}W_{0}\, L(z_{\eps}^{t})^{2+2\alpha}
        \Delta(z_{\eps}^{t,\lambda},z_{\eps}^{t,\lambda'}).
    \end{align*}
    This finishes the proof.
\end{proof}

\begin{lemma}\label{L6.6}
    For any $\lambda\in\mathcal{L}$, $\Delta_{1/Q(z_{\eps}^{t})}(z_{\eps}^{t,\lambda})$
    is continuous in $t$. There is $C_{\alpha}$ such that
    for any $\lambda\in\mathcal{L}$, any
    $t\in\bbR$ with $\Delta_{1/Q(z_{\eps}^{t})}(z_{\eps}^{t,\lambda})
    <\frac{3}{4Q(z_{\eps}^{t})}$, and
    any arclength parametrization of $z_{\eps}^{t,\lambda}$,
    there are $s,s'\in\ell(z_{\eps}^{t,\lambda})\bbT$ such that
    $s - s'\in [\frac{1}{Q(z_{\eps}^{t})}, \frac{\ell(z_{\eps}^{t,\lambda})}{2}]$,
    $\Delta_{1/Q(z_{\eps}^{t})}(z_{\eps}^{t,\lambda})
    = \abs{z_{\eps}^{t,\lambda}(s) - z_{\eps}^{t,\lambda}(s')}$,
    \begin{equation}\label{6.4}
        \begin{aligned}
            \partial_{t+}
            \Delta_{1/Q(z_{\eps}^{t})}(z_{\eps}^{t,\lambda})
            \geq \frac{z_{\eps}^{t,\lambda}(s) - z_{\eps}^{t,\lambda}(s')}
            {\abs{z_{\eps}^{t,\lambda}(s) - z_{\eps}^{t,\lambda}(s')}}
            \cdot \left[
                u_{\eps}(z_{\eps}^{t};z_{\eps}^{t,\lambda}(s))
                - u_{\eps}(z_{\eps}^{t};z_{\eps}^{t,\lambda}(s'))
            \right],
        \end{aligned}
    \end{equation}
    and
    \begin{equation}\label{6.5}
        \partial_{t+}
        \Delta_{1/Q(z_{\eps}^{t})}(z_{\eps}^{t,\lambda}) \geq
        -C_{\alpha}\abs{\theta}W_{0}\, L(z_{\eps}^{t})^{2+2\alpha}
        \Delta_{1/Q(z_{\eps}^{t})}(z_{\eps}^{t,\lambda}).
    \end{equation}
\end{lemma}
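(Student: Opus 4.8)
# Proof proposal for Lemma~\ref{L6.6}

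\textbf{Strategy.} The structure mirrors that of Lemma~\ref{L6.5}, but with the extra subtlety that the ``window'' parameter $1/Q(z_\eps^t)$ in the definition of $\Delta_{1/Q(z_\eps^t)}$ itself varies with $t$, so one must control how this moving constraint interacts with the minimization. The plan is: (1) prove continuity of $t\mapsto \Delta_{1/Q(z_\eps^t)}(z_\eps^{t,\lambda})$; (2) under the hypothesis $\Delta_{1/Q(z_\eps^t)}(z_\eps^{t,\lambda})<\tfrac{3}{4Q(z_\eps^t)}$, show that the minimizing pair $(s,s')$ can be taken with $|s-s'|$ strictly inside the open interval $(\tfrac{1}{Q(z_\eps^t)},\tfrac{\ell}{2})$, so that the arclength-distance constraint is \emph{inactive} at the minimizer and the problem locally behaves like an unconstrained one; (3) differentiate along the flow to get \eqref{6.4}; (4) plug in Lemma~\ref{L4.7} to obtain \eqref{6.5}.

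\textbf{Step 1 (continuity).} Fix $\lambda$. Since $Q(z_\eps^t)$ is continuous in $t$ (Corollary~\ref{C6.4}), and $(t,\zeta,\zeta')\mapsto \tilde z_\eps^{t,\lambda}(\zeta)-\tilde z_\eps^{t,\lambda}(\zeta')$ is continuous, the map $(t,\zeta,\zeta')\mapsto |\tilde z_\eps^{t,\lambda}(\zeta)-\tilde z_\eps^{t,\lambda}(\zeta')|$ is continuous, and the arclength-separation constraint $h\le |s-s'|\le \ell(z_\eps^{t,\lambda})/2$ depends continuously on $t$ (through $h=1/Q(z_\eps^t)$ and through $\ell(z_\eps^{t,\lambda})$, which by Lemmas~\ref{L5.4}--\ref{L5.5} varies continuously; note $1/Q(z_\eps^t)\le \|z_\eps^{t,\lambda}\|_{\dot C^{1,1/2}}^{-2}\le \ell(z_\eps^{t,\lambda})/2$ by Lemma~\ref{L3.1}, so the constraint set is always nonempty). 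A standard compactness argument — the feasible set is a compact subset of $\ell(z_\eps^{t,\lambda})\bbT$ varying continuously in the Hausdorff sense, and the objective is jointly continuous — yields continuity of the minimum. Concretely one writes the $\le$ direction of the oscillation bound using Lemmas~\ref{L4.1} and \ref{L5.4} exactly as in the first paragraph of the proof of Lemma~\ref{L6.5} (the endpoints move with speed at most $\|u_\eps(z_\eps^\tau)\|_{L^\infty}\le C_\alpha|\theta|W_0^{1/2-\alpha}$), together with the Lipschitz dependence of the constraint endpoints, and the reverse direction symmetrically.

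\textbf{Step 2 (inactive constraint) and Step 3 (differentiation).} The hypothesis $\Delta_{1/Q(z_\eps^t)}(z_\eps^{t,\lambda})<\tfrac{3}{4Q(z_\eps^t)}$ is what prevents the minimizer from sitting at the lower endpoint $|s-s'|=1/Q(z_\eps^t)$: at that endpoint the chord length is at least... here one needs a lower bound on $|z_\eps^{t,\lambda}(s)-z_\eps^{t,\lambda}(s')|$ when $|s-s'|=1/Q(z_\eps^t)$, which follows from Lemma~\ref{L3.2}(4) (or the relevant geometric lemma in Appendix~\ref{SAa}) giving $|z_\eps^{t,\lambda}(s)-z_\eps^{t,\lambda}(s')|\ge \tfrac12|s-s'|$ whenever $|s-s'|\le \tfrac14\|z_\eps^{t,\lambda}\|_{\dot C^{1,1/2}}^{-2}$; since $Q(z_\eps^t)\ge \|z_\eps^{t,\lambda}\|_{\dot C^{1,1/2}}^2$, at $|s-s'|=1/Q(z_\eps^t)$ we get chord length $\ge \tfrac{1}{2Q(z_\eps^t)}$, and a slightly sharper version of the same estimate pushes this above $\tfrac{3}{4Q(z_\eps^t)}$ on a neighborhood of the endpoint, so no point with $|s-s'|$ near $1/Q(z_\eps^t)$ can be a minimizer. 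Likewise the minimizer is not at $|s-s'|=\ell(z_\eps^{t,\lambda})/2$ unless that is forced, but in any case choosing $(s,s')$ with $|s-s'|$ in the closed interval as stated, with the constraint not binding from below, means that along the flow $X_{u_\eps(z_\eps^t)}^h$ the arclength separation of the transported points $1/Q(z_\eps^{t+h})$ is still respected for small $h>0$ (using continuity of $Q$ and of the arclength parametrization from Step~1). Then the argument of Lemma~\ref{L6.5} applies verbatim: pick $t_n\downarrow t$ realizing $\partial_{t+}$, pass to a subsequence so the minimizing pairs converge, use \eqref{7.1}-type integral representation $\tilde z_\eps^{t_n,\lambda}(\xi_{t_n})-\tilde z_\eps^{t_n,\lambda}(\xi_{t_n}') = \tilde z_\eps^{t,\lambda}(\xi_{t_n})-\tilde z_\eps^{t,\lambda}(\xi_{t_n}') + \int_t^{t_n}[u_\eps(z_\eps^\tau;\cdot)-u_\eps(z_\eps^\tau;\cdot)]\,d\tau$, and use that the difference quotient of $|\cdot|$ at the minimizer is bounded below by the projection onto the unit chord direction. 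This produces \eqref{6.4}, provided one checks the minimizing pair $(s,s')$ for $\Delta_{1/Q(z_\eps^{t_n})}$ at time $t_n$ gives a \emph{lower} bound for the difference in oscillations — which it does because $(s,s')$ remains feasible for the time-$t$ problem (again since the constraint is inactive, small perturbations of both $t$ and the window stay feasible).

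\textbf{Step 4 (final bound) and the main obstacle.} Once \eqref{6.4} is established, \eqref{6.5} follows from Lemma~\ref{L4.7} applied with $x:=z_\eps^{t,\lambda}(s)$ (so that $|x-z_\eps^{t,\lambda}(s')|=\Delta_{1/Q(z_\eps^t)}(z_\eps^{t,\lambda})$), noting that both $\partial_\xi\tilde z_\eps^{t,\lambda}$ at $\xi,\xi'$ are orthogonal to the chord $z_\eps^{t,\lambda}(s)-z_\eps^{t,\lambda}(s')$ (standard first-order condition at a distance minimizer), so the $\Xi_\lambda$ term — which involves $\partial_s z^{\lambda}(s)^\perp\cdot\partial_s z^{\lambda'}(s')$ at \emph{touching} points — does not directly appear; rather, the chord direction $\tfrac{z_\eps^{t,\lambda}(s)-z_\eps^{t,\lambda}(s')}{|\cdots|}$ coincides with $\pm\partial_s z_\eps^{t,\lambda}(s)^\perp$ up to the geometry, giving exactly the $\abs{x-z^\lambda(s)}$-linear term $\le C_\alpha\sum_{\lambda''}|\theta^{\lambda''}|\ell(z_\eps^{t,\lambda''}) M^{(1+2\alpha)/2}\Delta_{1/Q(z_\eps^t)}(z_\eps^{t,\lambda})$, which after bounding $\ell(z_\eps^{t,\lambda''})\le |\Omega(z_\eps^{t,\lambda''})|/\Delta_{1/Q(z_\eps^t)}(z_\eps^{t,\lambda''})$ via Lemma~\ref{L3.9} and $\|z_\eps^{t,\lambda}\|_{\dot H^2}^2\le Q(z_\eps^t)\le L(z_\eps^t)$ and Lemma~\ref{L5.4} collapses to $-C_\alpha|\theta|W_0 L(z_\eps^t)^{2+2\alpha}\Delta_{1/Q(z_\eps^t)}(z_\eps^{t,\lambda})$. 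The main obstacle — and the only place this genuinely differs from Lemma~\ref{L6.5} — is Step~2: rigorously justifying that the moving window $1/Q(z_\eps^t)$ never produces a ``new'' minimizing pair at the lower endpoint as $t$ varies (which would break the lower bound \eqref{6.4}). This is exactly why the hypothesis $\Delta_{1/Q(z_\eps^t)}(z_\eps^{t,\lambda})<\tfrac{3}{4Q(z_\eps^t)}$ is imposed, and the quantitative near-endpoint chord estimate from the geometric lemmas of Appendix~\ref{SAa} is what makes it work; one must also be careful that the $\partial_{t+}$ sequence $t_n\downarrow t$ can be chosen so the window $1/Q(z_\eps^{t_n})$ stays below $1/Q(z_\eps^t)$ or is handled by feasibility of $(s,s')$ at both times.
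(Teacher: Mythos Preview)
Your proposal is essentially correct and follows the same route as the paper. The one place you are vague is Step~2: the precise tool is Lemma~\ref{L3.8} with $\beta=\tfrac12$, which gives $|\gamma(s)-\gamma(s')|\ge \tfrac{1+4\beta}{2(1+2\beta)}|s-s'|=\tfrac{3}{4}|s-s'|$ whenever $|s-s'|\le h$, so under the hypothesis $\Delta_{1/Q}<\tfrac{3}{4Q}$ the minimizer necessarily has $|s-s'|>1/Q$ \emph{strictly}, and Lemma~\ref{L3.8} also supplies the orthogonality of the chord to both tangents that you use in Step~4. For continuity the paper invokes Lemma~\ref{LA.6} for upper semicontinuity and proves lower semicontinuity by the same limit-of-minimizers argument you describe; your ``standard compactness'' sketch is adequate but that is where the actual work lies.
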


\begin{proof}
    Lemma~\ref{LA.6} shows that
    $\Delta_{1/Q(z_{\eps}^{t})}(z_{\eps}^{t,\lambda})$
    is upper semicontinuous in $t$, so we need to show that it is also lower semicontinuous.
    For each $t\in\bbR$, take any $(\xi_{t},\xi_{t}')\in\bbT^2$ such that
     $    \int_{\xi_{t}'}^{\xi_{t}}
    \abs{\partial_{\xi}\tilde{z}_{\eps}^{t,\lambda}(\zeta)}\,d\zeta 
    \in [\frac{1}{Q(z_{\eps}^{t})}, \frac{\ell(z_{\eps}^{t,\lambda})}{2}]$ and
    $\Delta_{1/Q(z_{\eps}^{t})}(z_{\eps}^{t,\lambda})
    = \abs{\tilde{z}_{\eps}^{t,\lambda}(\xi_{t})
    - \tilde{z}_{\eps}^{t,\lambda}(\xi_{t}')}$.
    Fix any $t\in\bbR$ and take any $t_n\to t$.
    By passing to a subsequence, we can assume that
    $\lim_{n\to\infty}\Delta_{1/Q(z_{\eps}^{t_{n}})}(z_{\eps}^{t_{n},\lambda})$ exists,
    and by passing to a further subsequence, we can also assume that
    $\seq{(\xi_{t_{n}},\xi_{t_{n}}')}_{n=1}^{\infty}$ converges to some
    $(\xi,\xi') \in\bbT^2$. By continuity of
    $\tau\mapsto \tilde{z}_{\eps}^{\tau,\lambda}$ with respect to
    the $C^{1}$-norm, we see that $(\tau,\zeta,\zeta') \mapsto \int_{\zeta'}^{\zeta}
    \abs{\partial_{\xi}\tilde{z}^{\tau,\lambda}(\eta)}\,d\eta$ is continuous.
    This and continuity of $\tau\mapsto Q(z_{\eps}^{\tau})$ (which follows from Lemma~\ref{L6.1}) show that
    \begin{align*}
        \frac{1}{Q(z_{\eps}^{t})}
        \leq \int_{\xi'}^{\xi}
        \abs{\partial_{\xi}\tilde{z}_{\eps}^{t,\lambda}(\zeta)}\,d\zeta
        \leq \frac{\ell(z_{\eps}^{t,\lambda})}{2},
    \end{align*}
    and thus
    \begin{align*}
        \lim_{n\to\infty}\Delta_{1/Q(z_{\eps}^{t_{n}})}(z_{\eps}^{t_{n},\lambda})
        = \lim_{n\to\infty}\abs{\tilde{z}_{\eps}^{t_{n},\lambda}(\xi_{t_{n}})
        - \tilde{z}_{\eps}^{t_{n},\lambda}(\xi_{t_{n}}')}
        = \abs{\tilde{z}_{\eps}^{t,\lambda}(\xi)
        - \tilde{z}_{\eps}^{t,\lambda}(\xi')}
        \geq \Delta_{1/Q(z_{\eps}^{t})}(z_{\eps}^{t,\lambda}).
    \end{align*}
    Therefore $\Delta_{1/Q(z_{\eps}^{t})}(z_{\eps}^{t,\lambda})$
    is indeed lower semicontinuous in $t$.

    Now assume that $\seq{t_{n}}_{n=1}^{\infty}$ in the above argument also
    decreases to $t$ and satisfies
    \[
        \partial_{t+}\Delta_{1/Q(z_{\eps}^{t})}(z_{\eps}^{t,\lambda})
        = \lim_{n\to\infty}\frac{
            \Delta_{1/Q(z_{\eps}^{t_{n}})}(z_{\eps}^{t_{n},\lambda})
            - \Delta_{1/Q(z_{\eps}^{t})}(z_{\eps}^{t,\lambda})
        }{t_{n} - t},
    \]
    and we have $\Delta_{1/Q(z_{\eps}^{t})}(z_{\eps}^{t,\lambda})
    < \frac{3}{4Q(z_{\eps}^{t})}$. Lemmas~\ref{L3.8}
    and \ref{L3.1} (with $\beta = \frac 12$) then yield
    \beq \lb{111.9}
        \frac{1}{Q(z_{\eps}^{t})}
        < \int_{\xi'}^{\xi}\abs{\partial_{\xi}\tilde{z}_{\eps}^{t,\lambda}(\zeta)}\,d\zeta
        < \frac{3\ell(z_{\eps}^{t,\lambda})}{4}
        \leq \ell(z_{\eps}^{t,\lambda}) - \frac{1}{Q(z_{\eps}^{t})}.
    \eeq
    From $(\xi_{t_{n}},\xi_{t_{n}}')\to (\xi,\xi')$ we see that \eqref{111.9} continues to hold when the integration interval $(\xi,\xi')$ is replaced by   $(\xi_{t_{n}},\xi_{t_{n}}')$ for any large enough $n$.
     Therefore  we can again use  the argument proving \eqref{6.2} 
    to show \eqref{6.4}. And since
    $\partial_{\xi}\tilde{z}_{\eps}^{t,\lambda}(\xi)$ and
    $\partial_{\xi}\tilde{z}_{\eps}^{t,\lambda}(\xi')$ are again both orthogonal
    to $\tilde{z}_{\eps}^{t,\lambda}(\xi) - \tilde{z}_{\eps}^{t,\lambda}(\xi')$,
    the proof of \eqref{6.3} also yields \eqref{6.5}.
\end{proof}

The above results now allow us to obtain an $\eps$-independent estimate on $\partial_{t}^{+}L(z_{\eps}^{t})$ and $L(z_{\eps}^{t})$.

\begin{proposition}\label{P6.7}
    There is $C$ that only depends on ${\alpha,|\theta|,W_{0}}$
    such that with $p\coloneqq 3+2\alpha$ and $T_{\eps}\coloneqq \frac{1}{(p-1)CL(z_{\eps}^{0})^{p-1}}$ we have
    for all $t\in \left[0, T_{\eps}\right)$,
    \begin{equation}\label{6.6}
        \partial_{t}^{+}L(z_{\eps}^{t})
        \leq CL(z_{\eps}^{t})^{p}
    \end{equation}
    and hence
    \begin{equation}\label{6.7}
        L(z_{\eps}^{t})
        \leq \frac{L(z_{\eps}^{0})}
        {(1 - (p-1)CL(z_{\eps}^{0})^{p-1}t)^{\frac{1}{p-1}}}.
    \end{equation}
\end{proposition}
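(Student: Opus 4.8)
The plan is to bound the upper-right Dini derivative of each of the finitely many functionals whose maximum equals $L(z_\eps^t)$ and then integrate the resulting scalar differential inequality by comparison with $\dot w=Cw^p$. Since the index set $\{(\lambda,\lambda')\in\mathcal L^2:\lambda'\in\Sigma^\lambda(z_\eps^t)\}$ was shown to be $t$-independent before Lemma~\ref{L6.1}, we have
\[
L(z_\eps^t)=\max\Big\{2Q(z_\eps^t),\ \max_{\lambda\in\mathcal L}\frac1{\Delta_{1/Q(z_\eps^t)}(z_\eps^{t,\lambda})},\ \max_{\lambda'\notin\Sigma^\lambda(z_\eps^0)}\frac1{\Delta(z_\eps^{t,\lambda},z_\eps^{t,\lambda'})}\Big\},
\]
a maximum over a \emph{fixed} finite family. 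First I would note that $L(z_\eps^\cdot):\bbR\to(0,\infty)$ is continuous: $Q(z_\eps^t)$ is positive, finite and continuous in $t$ by Lemma~\ref{L6.1} and the Remark after Proposition~\ref{P7.1}; each $\Delta(z_\eps^{t,\lambda},z_\eps^{t,\lambda'})>0$ by Lemma~\ref{L5.4}(1) and is Lipschitz in $t$ by Lemma~\ref{L6.5}; and $\Delta_{1/Q(z_\eps^t)}(z_\eps^{t,\lambda})>0$ — because $z_\eps^{t,\lambda}\in\operatorname{PSC}(\bbR^2)$ is simple and $1/Q(z_\eps^t)\le\norm{z_\eps^{t,\lambda}}_{\dot C^{1,1/2}}^{-2}\le\tfrac12\ell(z_\eps^{t,\lambda})$ — and is continuous in $t$ by Lemma~\ref{L6.6}.

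Next I would invoke the elementary fact that for a fixed finite family of continuous functions one has $\partial_t^+\max_i f_i(t)\le\max\{\partial_t^+f_i(t):f_i(t)=\max_j f_j(t)\}$: near $t$ the set of active indices can only shrink, by continuity, and $\limsup_{h\to0^+}$ commutes with a finite maximum. So it is enough to bound the Dini derivative of each active constituent by $CL(z_\eps^t)^p$. For $2Q(z_\eps^t)$ this is precisely Corollary~\ref{C6.4}. For the remaining constituents, continuity of the positive denominator $f$ gives $\partial_t^+(1/f)=-f^{-2}\partial_{t+}f$, so for $1/\Delta(z_\eps^{t,\lambda},z_\eps^{t,\lambda'})$ the lower bound \eqref{6.3} together with $1/\Delta(z_\eps^{t,\lambda},z_\eps^{t,\lambda'})\le L(z_\eps^t)$ gives $\partial_t^+\big(1/\Delta(z_\eps^{t,\lambda},z_\eps^{t,\lambda'})\big)\le C_\alpha\abs\theta W_0 L(z_\eps^t)^{2+2\alpha}/\Delta(z_\eps^{t,\lambda},z_\eps^{t,\lambda'})\le C_\alpha\abs\theta W_0 L(z_\eps^t)^{3+2\alpha}$. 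For $1/\Delta_{1/Q(z_\eps^t)}(z_\eps^{t,\lambda})$ the identical computation works once the hypothesis $\Delta_{1/Q(z_\eps^t)}(z_\eps^{t,\lambda})<\tfrac3{4Q(z_\eps^t)}$ of Lemma~\ref{L6.6} is available — and here the term $2Q$ inside $L$ is exactly what makes this painless: if this constituent is active then $\Delta_{1/Q(z_\eps^t)}(z_\eps^{t,\lambda})=1/L(z_\eps^t)\le1/(2Q(z_\eps^t))<3/(4Q(z_\eps^t))$, so \eqref{6.5} applies and yields the same bound. Taking $C$ to be the largest of the resulting constants, each depending only on $\alpha$, $\abs\theta$, $W_0$, proves \eqref{6.6} for every $t$.

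It then remains to integrate $\partial_t^+y\le Cy^p$ for $y(t):=L(z_\eps^t)$, $y(0)=L(z_\eps^0)$. For $\delta>0$ let $w_\delta$ be the maximal solution of $\dot w=Cw^p+\delta$ with $w_\delta(0)=L(z_\eps^0)$; on any $[0,T']\subset[0,T_\eps)$ it is defined and converges uniformly as $\delta\to0^+$ to $w_0(t)=L(z_\eps^0)(1-(p-1)CL(z_\eps^0)^{p-1}t)^{-1/(p-1)}$, the solution of $\dot w=Cw^p$ with the same initial value, which blows up exactly at $T_\eps$. The usual comparison argument gives $y\le w_\delta$ on $[0,T']$: if $g:=w_\delta-y$ were negative at some $t_1$, then at $t_*:=\sup\{t\le t_1:g(t)\ge0\}$ we would have $g(t_*)=0$ yet $\partial_{t+}g(t_*)\ge\dot w_\delta(t_*)-\partial_t^+y(t_*)\ge Cw_\delta(t_*)^p+\delta-Cy(t_*)^p=\delta>0$, forcing $g>0$ just to the right of $t_*$, a contradiction. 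Letting $\delta\to0^+$ and then $T'\uparrow T_\eps$ gives \eqref{6.7}. The one place requiring genuine care is the interaction between the smallness hypothesis of Lemma~\ref{L6.6} and the presence of $2Q$ in the definition of $L$; the remaining steps are routine Dini-derivative bookkeeping and a standard differential-inequality comparison.
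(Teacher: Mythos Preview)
Your proposal is correct and follows essentially the same approach as the paper: establish continuity of $L(z_\eps^t)$, bound the upper-right Dini derivative of each active constituent of the maximum using Corollary~\ref{C6.4}, Lemma~\ref{L6.5}, and Lemma~\ref{L6.6} (with the key observation that the factor $2$ in $2Q$ guarantees the smallness hypothesis of Lemma~\ref{L6.6} whenever the $\Delta_{1/Q}$ term is active), and conclude via a Gr\"onwall-type comparison. Your treatment is slightly more explicit in places---the identity $\partial_t^+(1/f)=-f^{-2}\partial_{t+}f$ and the $\delta$-perturbed comparison ODE---but the logical structure and all invoked lemmas match the paper's proof.
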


\begin{proof}
    Since $\Sigma^{\lambda}(z_{\eps}^{t})$ is independent of $t$ for each $\lambda\in\mathcal L$,
     Corollary~\ref{C6.4} and
    Lemmas~\ref{L6.5} and \ref{L6.6} show that
    $L(z_{\eps}^{t})$ is continuous in $t$. Since
    $Q(z_{\eps}^{t}) < \infty$ for all $t\in\bbR$ (see Lemma~\ref{L6.1}) and
    $\Delta_{h}(\gamma) > 0$  for any rectifiable simple closed curve $\gamma$ and $h\in\left(0,\frac{\ell(\gamma)}{2}\right]$, we see that $L(z_{\eps}^{t})<\infty$    for each $t\in\bbR$.
    
Since
    \[
        \partial_{t}^{+}\max\set{f,g}(t) \leq \begin{cases}
            \partial_{t}^{+}f(t) & \textrm{if $f(t) > g(t)$}, \\
            \max\set{\partial_{t}^{+}f(t),\partial_{t}^{+}g(t)}
            & \textrm{otherwise}
        \end{cases}
    \]
    holds for any continuous $f,g\colon\bbR\to\bbR$
    and all $t\in\bbR$,  if we have
    \[
        \max\set{
            2Q(z_{\eps}^{t}),
            \max_{\lambda\in\mathcal{L}}
            \max_{\lambda'\notin\Sigma^{\lambda}(z_{\eps}^{t})}
            \frac{1}{\Delta(z_{\eps}^{t,\lambda},z_{\eps}^{t,\lambda'})}
        }
        > \max_{\lambda\in\mathcal{L}}
        \frac{1}{\Delta_{1/Q(z_{\eps}^{t})}(z_{\eps}^{t,\lambda})}
    \]
     for some $t\in\bbR$,
    then Corollary~\ref{C6.4} and Lemma~\ref{L6.5} show that
    \begin{align*}
        \partial_{t}^{+}L(z_{\eps}^{t})
        \leq C_{\alpha}\abs{\theta}W_{0}\max\set{
            L(z_{\eps}^{t})^{3+2\alpha},
            \max_{\lambda\in\mathcal{L}}
            \max_{\lambda'\notin\Sigma^{\lambda}(z_{\eps}^{t})}
            \frac{L(z_{\eps}^{t})^{2+2\alpha}}
            {\Delta(z_{\eps}^{t,\lambda},z_{\eps}^{t,\lambda'})}
        }
        = C_{\alpha}\abs{\theta}W_{0}L(z_{\eps}^{t})^{3+2\alpha}.
    \end{align*}
    If instead
    \[
        \max\set{
            2Q(z_{\eps}^{t}),
            \max_{\lambda\in\mathcal{L}}
            \max_{\lambda'\notin\Sigma^{\lambda}(z_{\eps}^{t})}
            \frac{1}{\Delta(z_{\eps}^{t,\lambda},z_{\eps}^{t,\lambda'})}
        }
        \leq \max_{\lambda\in\mathcal{L}}
        \frac{1}{\Delta_{1/Q(z_{\eps}^{t})}(z_{\eps}^{t,\lambda})},
    \]
    pick any $\lambda\in\mathcal{L}$ that achieves the maximum of the right-hand side.
   Then
    \[
        \Delta_{1/Q(z_{\eps}^{t})}(z_{\eps}^{t,\lambda})
        \leq \frac{1}{2Q(z_{\eps}^{t})} < \frac{3}{4Q(z_{\eps}^{t})},
    \]
    so Lemma~\ref{L6.6} shows that
    \[
        \partial_{t}^{+}\left(\frac{1}
        {\Delta_{1/Q(z_{\eps}^{t})}(z_{\eps}^{t,\lambda})}\right)
        \leq \frac{C_{\alpha}\abs{\theta}W_{0}\, L(z_{\eps}^{t})^{2+2\alpha}}
        {\Delta_{1/Q(z_{\eps}^{t})}(z_{\eps}^{t,\lambda})}
        \leq C_{\alpha}\abs{\theta}W_{0}\,L(z_{\eps}^{t})^{3+2\alpha},
    \]
    and we still obtain
    \[
        \partial_{t}^{+}L(z_{\eps}^{t})
        \leq C_{\alpha}\abs{\theta}W_{0}\, L(z_{\eps}^{t})^{3+2\alpha}.
    \]
    This proves \eqref{6.6}, and \eqref{6.7} then follows     by a Gr\"{o}nwall-type argument.
\end{proof}

\section{Existence of solutions to \eqref{2.4}}\label{S7}

Consider again the setup from the start of Section \ref{S6}, so in particular \eqref{111.12} holds, and let
$C$, $p$, and $T_\eps$ be from Proposition~\ref{P6.7}.  Then we have
\beq\lb{111.13}
    T_{0} \coloneqq \frac{1}{(p-1)C\,(60L(z^{0}))^{p-1}}  \in \left(0,\inf_{\eps>0} T_{\eps} \right),
\eeq
and we will now find a solution to  \eqref{2.4} on $[0,T_{0}]$ as a  limit of
$z_{\eps_n}$   for some  $\eps_n\to 0^{+}$.

\begin{lemma}\label{P7.2}
    There are $\eps_n\to 0^+$
    such that $z_{\eps_{n}}$
    converges  in $C([0,T_0];\operatorname{PSC}(\bbR^{2})^{\mathcal{L}})$.
\end{lemma}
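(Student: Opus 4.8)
The plan is to produce the solution as a subsequential limit of the $z_\eps$ by an Arzel\`a--Ascoli argument, exploiting that Propositions~\ref{P6.7} and \ref{P7.1}, together with the Section~\ref{S4} velocity estimates (which hold verbatim for $K_\eps$ with $\eps$-independent constants), give bounds on $z_\eps$ that are \emph{uniform in} $\eps\in(0,1]$ on $[0,T_0]$, and then checking that the limit still consists of positive simple closed curves. For the uniform bounds, note that since $L(z_\eps^0)\le 56L(z^0)<60L(z^0)$ by \eqref{111.12} and $T_0<\inf_\eps T_\eps$ by \eqref{111.13}, estimate \eqref{6.7} yields a constant $M_*$ depending only on $\alpha,|\theta|,W_0,L(z^0)$ with
\[
\sup_{\eps\in(0,1]}\ \sup_{t\in[0,T_0]} L(z_\eps^t)\le M_*.
\]
Unwinding the definition of $L$ and using Lemmas~\ref{L5.4} and \ref{L3.9}, this gives, uniformly in $(\eps,t,\lambda)$, that $\norm{z_\eps^{t,\lambda}}_{\dot H^2}^2\le Q(z_\eps^t)\le\tfrac12M_*$, $\abs{\Omega(z_\eps^{t,\lambda})}=\abs{\Omega(z_\eps^{0,\lambda})}\le W_0$, $\ell(z_\eps^{t,\lambda})\le W_0 M_*$, $\Delta_{1/Q(z_\eps^t)}(z_\eps^{t,\lambda})\ge 1/M_*$, and $\Delta(z_\eps^{t,\lambda},z_\eps^{t,\lambda'})\ge 1/M_*$ whenever $\lambda'\notin\Sigma^\lambda(z_\eps^t)$ (the latter set being $t$-independent). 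Lemma~\ref{L4.1} applied to $u_\eps$ gives $\norm{u_\eps(z_\eps^t)}_{L^\infty}\le c_\alpha C_\alpha|\theta|W_0^{1/2-\alpha}$ uniformly, and since the flow $Z_\eps$ from the proof of Lemma~\ref{L5.4} moves points at this bounded speed while $d_{\mathrm F}(z^{0},z_\eps^{0})\le\eps\le1$, every $\operatorname{im}(z_\eps^{t,\lambda})$ stays in a fixed ball $B_R$, so $\norm{z_\eps^{t,\lambda}}_{L^\infty}\le R$ as well.

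Next I would establish uniform equicontinuity and pointwise precompactness. For $0\le s\le t\le T_0$, inserting $X_{u_\eps(z_\eps^s)}^{t-s}[z_\eps^s]$ and using Proposition~\ref{P7.1} (whose constant is $\eps$-independent by the bounds above, with $t_0=0$) for the first term and the velocity bound for the second gives $d_{\mathrm F}(z_\eps^t,z_\eps^s)\le C(\abs{t-s}^{2-2\alpha}+\abs{t-s})\le C'\abs{t-s}$ since $2-2\alpha>1$; hence $\{t\mapsto z_\eps^t\}_{\eps}$ is uniformly (Lipschitz-)equicontinuous on $[0,T_0]$. For each fixed $t$ I would parametrize $z_\eps^{t,\lambda}$ by constant speed as $\hat z_\eps^{t,\lambda}\colon\bbT\to\bbR^2$; the $L^\infty$, length and $\dot H^2$ bounds make $\norm{\hat z_\eps^{t,\lambda}}_{H^2(\bbT)}$ uniformly bounded, so by the compact embedding $H^2(\bbT;\bbR^2)\hookrightarrow C^1(\bbT;\bbR^2)$ every sequence $\eps\to0^+$ has a subsequence along which all $\hat z_\eps^{t,\lambda}$ ($\lambda\in\mathcal L$) converge in $C^1$, hence in $d_{\mathrm F}$. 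A diagonal argument over a countable dense set of times, upgraded to all $t$ via the uniform equicontinuity (which makes $z_{\eps_n}^t$ Cauchy, and precompactness makes it convergent), then produces $\eps_n\to0^+$ and $z$ with $z_{\eps_n}\to z$ in $C([0,T_0];\operatorname{CC}(\bbR^2)^{\mathcal L})$, and $t\mapsto z^t$ is Lipschitz in $d_{\mathrm F}$ by passing the above estimate to the limit.

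It then remains to check $z^t\in\operatorname{PSC}(\bbR^2)^{\mathcal L}$ for each $t$, which I expect to be the main obstacle. Fix $t$, work along the convergent subsequence of constant-speed parametrizations, and pass also to a subsequence along which $Q(z_{\eps_n}^t)$ converges. The limiting parametrizations $\hat z^{t,\lambda}$ have $\norm{z^{t,\lambda}}_{\dot H^2}^2\le\tfrac12M_*$ by lower semicontinuity of the $\dot H^2$-seminorm, and are non-degenerate because $\abs{\Omega(z_{\eps_n}^{t,\lambda})}=\abs{\Omega(z_{\eps_n}^{0,\lambda})}$ is bounded below (Lemma~\ref{L3.7}) and the isoperimetric inequality then bounds $\ell(z_{\eps_n}^{t,\lambda})$ below. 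Carrying the bound $\Delta_{1/Q(z_{\eps_n}^t)}(z_{\eps_n}^{t,\lambda})\ge1/M_*$ through the limit --- using monotonicity of $h\mapsto\Delta_h$ together with $Q(z^t)\le\lim_n Q(z_{\eps_n}^t)$ to compare the converging scales $1/Q(z_{\eps_n}^t)$ with $1/Q(z^t)$ --- gives $\Delta_{1/Q(z^t)}(z^{t,\lambda})\ge1/M_*>0$; since $1/Q(z^t)\le\norm{z^{t,\lambda}}_{\dot C^{1,1/2}}^{-2}$, Lemma~\ref{L3.1} then forces $z^{t,\lambda}$ to be a simple closed curve, and it is positively oriented as a $C^1$-limit of non-degenerate positive simple curves. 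The same passage to the limit yields $\Delta(z^{t,\lambda},z^{t,\lambda'})\ge1/M_*>0$ for $\lambda'\notin\Sigma^\lambda(z^0)$, and (by Hausdorff convergence of the images combined with the $\dot H^2$ bounds, or via Lemma~\ref{LA.5}) the $\Sigma^\lambda$ are preserved, so indeed $z\in C([0,T_0];\operatorname{PSC}(\bbR^2)^{\mathcal L})$.

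\textbf{Main obstacle.} The hard part will be this last step --- ensuring no self-touch or boundary crossing develops in the limit, so that the limit lies in $\operatorname{PSC}(\bbR^2)^{\mathcal L}$ and not merely in $\operatorname{CC}(\bbR^2)^{\mathcal L}$. This is exactly where the arc-chord surrogate $\Delta_{1/Q(z_\eps^t)}(z_\eps^{t,\lambda})^{-1}$ and the quantities $\Delta(z_\eps^{t,\lambda},z_\eps^{t,\lambda'})^{-1}$ for $\lambda'\notin\Sigma^\lambda$ are needed, and why it is essential that Proposition~\ref{P6.7} controls precisely those rather than the individual $\dot H^2$-norms.
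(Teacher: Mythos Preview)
Your proposal is correct and follows essentially the same approach as the paper: uniform bound on $L(z_\eps^t)$ from Proposition~\ref{P6.7}, equicontinuity from Proposition~\ref{P7.1} together with the $L^\infty$ velocity bound, and then Arzel\`a--Ascoli. The paper streamlines your ``pointwise precompactness $+$ PSC membership'' step by directly invoking Corollary~\ref{CA.8} (compactness of $\{z:\norm{z}_{L^\infty}\le R_0,\ L(z)\le M\}$ in $\operatorname{PSC}(\bbR^2)^{\mathcal L}$ with respect to $d_{\mathrm F}$), which packages exactly the lower-semicontinuity argument you wrote out by hand; one small correction---the simplicity criterion you need at the end is Lemma~\ref{L3.8} (namely $\Delta_h(\gamma)>0$ iff $\gamma$ is simple), not Lemma~\ref{L3.1}.
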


\begin{proof}
From \eqref{6.7} and \eqref{111.12} we see that with $C,p$ from \eqref{111.13} we have
    \[
        \sup_{\eps>0}\sup_{t\in[0,T_{0}]}L(z_{\eps}^{t})\le M_0 \coloneqq \frac{56L(z^{0})}
        {(1 - (p-1)CT_{0}(56L(z^{0}))^{p-1})^{\frac{1}{p-1}}}.
    \]
    Then
    Lemmas~\ref{L3.9} and \ref{L5.4} show that Proposition~\ref{P7.1} with $M\coloneqq\max\{30M_0W_{0},M_0\}$ and $I\coloneqq[0,T_{0}]$ applies to $z_\eps$ for each $\eps>0$.
    Hence Proposition~\ref{P7.1} and Lemmas~\ref{L4.1} and \ref{L5.4} imply that
    for any  $0\le t_{1}\leq t_{2}\le T_0$ we have
    \begin{align*}
        d_{\mathrm{F}}(z_{\eps}^{t_{1}},z_{\eps}^{t_{2}})
        &\leq d_{\mathrm{F}}(z_{\eps}^{t_{1}},
        X_{u_{\eps}(z_{\eps}^{t_{1}})}^{t_{2}-t_{1}}[z_{\eps}^{t_{1}}])
        + d_{\mathrm{F}}(z_{\eps}^{t_{2}},
        X_{u_{\eps}(z_{\eps}^{t_{1}})}^{t_{2}-t_{1}}[z_{\eps}^{t_{1}}]) \\
        &\leq \norm{u_{\eps}(z_{\eps}^{t_{1}})}_{L^{\infty}}(t_{2}-t_{1})
        + C(t_{2}-t_{1})^{2-2\alpha} \\
        &\leq C(1 + T_0^{1-2\alpha})(t_{2}-t_{1}),
    \end{align*}
    with $C$ only depending on ${\alpha,|\theta|,W_{0},M_0}$ (and changing between inequalities).
    So $\set{z_{\eps}|_{[0,T_{0}]}}_{\eps>0}$ is an equicontinuous family of functions with values in
    $\set{w\in\operatorname{PSC}(\bbR^{2})^{\mathcal{L}}\colon L(w) \leq M_0}$,
   which together with \eqref{111.12} shows that all curves from $\{z_{\eps}^{t,\lambda}\,:\, \eps>0 \,\,\&\,\, (t,\lambda)\in[0,T_{0}]\times\mathcal L\}$ also lie in the same ball in $\bbR^2$. Hence Corollary~\ref{CA.8} and the
    Arzel\`{a}-Ascoli theorem finish the proof.
\end{proof}

\begin{proposition}\label{P7.3}
    Let $z\in C([0,T_0];\operatorname{PSC}(\bbR^{2})^{\mathcal{L}})$ be any limit given by Lemma~\ref{P7.2}     (with $z^{0}$ then being the given initial data due to \eqref{111.12}). Then $\sup_{t\in[0,T_{0}]}L(z^{t})<\infty$, and \eqref{111.27}
    holds for any $(t,\lambda)\in[0,T_{0}]\times \mathcal{L}$.
    Moreover, there is $C$ that only depends on
    ${\alpha,|\theta|,W_{0},L(z^{0})}$ such that for any
     $t,t+h\in[0,T_{0}]$, $\lambda\in\mathcal{L}$, and any  arclength parametrizations
    of $z^{t,\lambda}$ and $z^{t+h,\lambda}$, there is a Lipschitz continuous
    homeomorphism
    $\phi\colon\ell(z^{t,\lambda})\bbT\to\ell(z^{t+h,\lambda})\bbT$ such that
    $e^{-C\abs{h}} \leq \phi'(s) \leq e^{C\abs{h}}$ for almost all
    $s\in\ell(z^{t,\lambda})\bbT$ and
    \begin{equation}\lb{111.15}
        \norm{z^{t+h,\lambda}\circ\phi
        - z^{t,\lambda}
        - hu(z^{t})\circ z^{t,\lambda}}_{L^{\infty}(\ell(z^{t,\lambda})\bbT)}
        \leq C\abs{h}^{2-2\alpha}.
    \end{equation}
    In particular, for any $t,t+h\in [0,T_{0}]$ we have
    \[
        d_{\mathrm{F}}(z^{t+h},
        X_{u(z^{t})}^{h}[z^{t}])
        \leq C\abs{h}^{2-2\alpha},
    \]
    which means that $z$ also solves \eqref{2.4}.
\end{proposition}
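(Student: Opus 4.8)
The plan is to obtain all three conclusions by passing to the limit $\eps_n\to 0^+$ in the corresponding statements for the mollified solutions $z_{\eps_n}$, exploiting that the constants in Section~\ref{S6} and in the previous lemmas of this section are independent of $\eps$. First, by \eqref{111.12}, \eqref{111.13} and Proposition~\ref{P6.7}, with $M_0$ as in the proof of Lemma~\ref{P7.2} we have $\sup_{\eps>0}\sup_{t\in[0,T_0]}L(z_\eps^t)\le M_0$. Since $L$ is lower semicontinuous on $\operatorname{PSC}(\bbR^{2})^{\mathcal{L}}$ by Lemma~\ref{LA.7} and $z_{\eps_n}^t\to z^t$ in $d_{\mathrm F}$ for each $t$, this gives $L(z^t)\le M_0$ for all $t\in[0,T_0]$, hence $\sup_{t,\lambda}\norm{z^{t,\lambda}}_{\dot{H}^{2}}^{2}\le M_0$, $\sup_{t,\lambda}\ell(z^{t,\lambda})<\infty$ (via \eqref{2.1} and the fact that all the curves lie in a fixed ball, as in the proof of Lemma~\ref{P7.2}), and $\Delta(z^{t,\lambda},z^{t,\lambda'})\ge 1/M_0$ whenever $\lambda'\notin\Sigma^\lambda(z^0)=\Sigma^\lambda(z_{\eps_n}^t)$, the last point because $\Delta$ depends only on the images and is $1$-Lipschitz with respect to $d_{\mathrm H}\le d_{\mathrm F}$.

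For \eqref{111.27}, Lemma~\ref{L5.4} gives $\abs{\Omega(z_\eps^{t,\lambda})}=\abs{\Omega(z_\eps^{0,\lambda})}$. The uniform $\dot{H}^{2}$ and length bounds make the constant-speed parametrizations of the $z_{\eps_n}^{t,\lambda}$ bounded in $H^2(\bbT)$, hence (after fixing basepoints and along a subsequence) convergent in $C^1(\bbT)$ to a constant-speed parametrization of $z^{t,\lambda}$; expressing $\abs{\Omega(\cdot)}$ as a Green-type integral in such a parametrization yields $\abs{\Omega(z^{t,\lambda})}=\lim_n\abs{\Omega(z_{\eps_n}^{t,\lambda})}=\lim_n\abs{\Omega(z_{\eps_n}^{0,\lambda})}=\abs{\Omega(z^{0,\lambda})}$, the last step since $z_{\eps_n}^{0,\lambda}\to z^{0,\lambda}$ and the same $C^1$-compactness applies at $t=0$. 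For the $\Sigma$-structure, recall from Section~\ref{S6} that $\Sigma^\lambda(z_\eps^t)$ is independent of $t$ (Lemma~\ref{LA.5} and connectedness of $\{z_\eps^t\}_t$) and equals $\Sigma^\lambda(z_\eps^0)=\Sigma^\lambda(z^0)$, the last identity holding because the construction of $z_\eps^0$ after \eqref{111.12} shrinks each patch strictly into its own Jordan domain, thus preserving the nesting and disjointness relations of $z^0$. Passing to the limit: when $\lambda'\in\Sigma^\lambda(z^0)$ and $\theta^\lambda\theta^{\lambda'}>0$, the inclusions $\Omega(z_{\eps_n}^{t,\lambda})\subseteq\Omega(z_{\eps_n}^{t,\lambda'})$ (with a fixed direction) give $\overline{\Omega(z^{t,\lambda})}\subseteq\overline{\Omega(z^{t,\lambda'})}$, hence $\Omega(z^{t,\lambda})\subseteq\Omega(z^{t,\lambda'})$; when $\theta^\lambda\theta^{\lambda'}<0$, the disjointness $\Omega(z_{\eps_n}^{t,\lambda})\cap\Omega(z_{\eps_n}^{t,\lambda'})=\emptyset$ survives likewise; and when $\lambda'\notin\Sigma^\lambda(z^0)$, the uniform separation $\Delta(z^{t,\lambda},z^{t,\lambda'})\ge 1/M_0>0$ from the first paragraph, together with the analogous bound for $z_{\eps_n}^t$, rules out both an inclusion and an interior touch, so $\lambda'\notin\Sigma^\lambda(z^t)$. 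This proves $\Sigma^\lambda(z^t)=\Sigma^\lambda(z^0)$ and the last inequality in \eqref{111.27}.

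It remains to establish \eqref{111.15}, which is the crux. Fix $t,t+h\in[0,T_0]$, $\lambda\in\mathcal{L}$, and arclength parametrizations $z^{t,\lambda}(\cdot)$, $z^{t+h,\lambda}(\cdot)$. By the uniform $\dot{H}^{2}$ and length bounds, after passing to a further subsequence, fixing basepoints, and performing the routine rescaling between $\bbT$ and the circles $\ell(z_{\eps_n}^{t,\lambda})\bbT$, we may assume $\ell(z_{\eps_n}^{t,\lambda})\to\ell(z^{t,\lambda})$, $\ell(z_{\eps_n}^{t+h,\lambda})\to\ell(z^{t+h,\lambda})$, and that suitable arclength parametrizations of $z_{\eps_n}^{t,\lambda}$ and $z_{\eps_n}^{t+h,\lambda}$ converge in $C^1$ to $z^{t,\lambda}(\cdot)$ and $z^{t+h,\lambda}(\cdot)$. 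Proposition~\ref{P7.1} applies to $z_{\eps_n}$ with an $\eps$-uniform constant $C$ (since $\sup_t L(z_{\eps_n}^t)\le M_0$, $W(z_{\eps_n}^t)=W_0$, and the resulting bounds on $\ell$ and $\norm{\cdot}_{\dot{H}^{2}}$), producing homeomorphisms $\phi_n$ with $e^{-C\abs h}\le\phi_n'\le e^{C\abs h}$ and satisfying \eqref{111.11}. The $\phi_n$ are equicontinuous with equibounded derivatives, so along a further subsequence $\phi_n\to\phi$ uniformly, with $\phi$ a Lipschitz homeomorphism and $e^{-C\abs h}\le\phi'\le e^{C\abs h}$ a.e.\ (weak-$*$ limit of $\phi_n'$ in $L^\infty$). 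Now pass to the limit in \eqref{111.11}: $z_{\eps_n}^{t+h,\lambda}\circ\phi_n\to z^{t+h,\lambda}\circ\phi$ and $z_{\eps_n}^{t,\lambda}\to z^{t,\lambda}$ uniformly by the $C^1$-convergence, and $u_{\eps_n}(z_{\eps_n}^t)\circ z_{\eps_n}^{t,\lambda}\to u(z^t)\circ z^{t,\lambda}$ uniformly because $\sup_x\abs{u_{\eps_n}(z_{\eps_n}^t;x)-u(z_{\eps_n}^t;x)}\to 0$ by Lemma~\ref{L4.3} (with $\eps$-uniform $\dot{C}^{1,1/2}$, $\ell$, $W$ bounds) while $u(z_{\eps_n}^t)\circ z_{\eps_n}^{t,\lambda}\to u(z^t)\circ z^{t,\lambda}$ uniformly by Lemma~\ref{L4.4} (as $d_{\mathrm F}(z_{\eps_n}^t,z^t)\to 0$ and the parametrizations converge). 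Since $C$ is $\eps$-independent, \eqref{111.11} becomes \eqref{111.15} for the chosen parametrizations, hence for all arclength parametrizations because any two differ by a rotation of the circle, which can be absorbed into $\phi$. The ``in particular'' claim is then immediate (take $z^{t,\lambda}(\cdot)+hu(z^t)\circ z^{t,\lambda}$ as a parametrization of $X_{u(z^t)}^h[z^{t,\lambda}]$ and $z^{t+h,\lambda}\circ\phi$ as one of $z^{t+h,\lambda}$, then maximize over $\lambda$), and dividing by $\abs h$ and using $\alpha<\tfrac12$ shows $z$ solves \eqref{2.4}.

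The main obstacle is this third step: transferring the parametrization-level bound \eqref{111.11} to the limit curves requires simultaneously handling the moving domains $\ell(z_{\eps_n}^{t,\lambda})\bbT$, extracting $C^1$-convergent subsequences of arclength parametrizations from the uniform $\dot{H}^{2}$ bound, and showing that the near-isometries $\phi_n$ converge to a genuine bi-Lipschitz $\phi$ with the \emph{same} two-sided derivative bound — all while keeping the constant in Proposition~\ref{P7.1} uniform in $\eps$, which is precisely what the $\eps$-independent estimate $\sup_t L(z_\eps^t)\le M_0$ of Section~\ref{S6} was designed to guarantee.
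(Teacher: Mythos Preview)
Your approach is essentially the same as the paper's: pass the $\eps$-uniform bounds from Proposition~\ref{P6.7} and Proposition~\ref{P7.1} to the limit via lower semicontinuity of $L$, $C^1$-compactness of (constant-speed or arclength) parametrizations coming from the uniform $\dot H^2$ bound, and Lemmas~\ref{L4.3}--\ref{L4.4} for the velocity; the construction of the limiting $\phi$ from the equicontinuous $\phi_n$ is also carried out the same way.

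One step is imprecise. In the paragraph on $\Sigma^\lambda(z^t)=\Sigma^\lambda(z^0)$, the claim that ``the uniform separation $\Delta(z^{t,\lambda},z^{t,\lambda'})\ge 1/M_0$ \dots\ rules out both an inclusion and an interior touch'' is false as stated: positive boundary distance certainly does not preclude $\Omega(z^{t,\lambda})\subseteq\Omega(z^{t,\lambda'})$. What you actually need (and what the paper does) is to use Lemma~\ref{LA.5}: since every approximating pair $(z_{\eps_n}^{t,\lambda},z_{\eps_n}^{t,\lambda'})$ has $\Delta>0$, it lies in $S_1\cup S_2\cup S_3$; this set is closed, so $(z^{t,\lambda},z^{t,\lambda'})\in S_1\cup S_2\cup S_3$ for every $t$; and because $S_1\cup S_2$ and $S_3$ are disjoint clopen pieces of $S_1\cup S_2\cup S_3$ while $t\mapsto z^t$ is continuous, the nesting/disjointness type of each pair is constant in $t$ and hence agrees with that at $t=0$. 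This avoids altogether the auxiliary (and not entirely trivial) claim $\Sigma^\lambda(z_\eps^0)=\Sigma^\lambda(z^0)$ that you invoke. With this correction your argument goes through.
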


\begin{proof}
    Lemma~\ref{LA.7} shows that
    $\sup_{t\in[0,T_{0}]}L(z^{t}) \leq M_0$, with $M_0$ from Lemma \ref{P7.2}.
    Since $\Delta(z_{\eps_{n}}^{t,\lambda},z_{\eps_{n}}^{t,\lambda'}) > 0$
    for any $(n,t,\lambda,\lambda')\in\bbN\times [0,T_{0}] \times \mathcal{L}^2$ with
    $\lambda\neq\lambda'$, Lemma~\ref{LA.5} shows that each such pair
    $(z_{\eps_{n}}^{t,\lambda},z_{\eps_{n}}^{t,\lambda'})$
    lies in the set $S_{1}\cup S_{2}\cup S_{3}$
    defined there. Since this set is closed in
    $\operatorname{PSC}(\bbR^{2})^{2}$, we see that $(z^{t,\lambda},z^{t,\lambda'})\in S_{1}\cup S_{2}\cup S_{3}$.
    And since $\set{z^{t}}_{t\in[0,T_{0}]}$ is a connected subset of
    $\operatorname{PSC}(\bbR^{2})^{\mathcal{L}}$, Lemma~\ref{LA.5} shows that
    $\Sigma^{\lambda}(z^{t}) = \Sigma^{\lambda}(z^{0})$ for all $(t,\lambda)\in[0,T_{0}]\times\mathcal L$.  Then the last claim in \eqref{111.27} follows from $\sup_{t\in[0,T_{0}]}L(z^{t}) <\infty$.

    Pick any $t,t+h\in[0,T_{0}]$ and $\lambda\in\mathcal{L}$, and
    fix some arclength parametrizations of $z^{t,\lambda}$ and $z^{t+h,\lambda}$, as well as of
    $z_{\eps_{n}}^{t,\lambda}$ and $z_{\eps_{n}}^{t+h,\lambda}$ for each $n\in\bbN$.
    Let ${\phi}_{n}\colon \ell(z_{\eps_{n}}^{t,\lambda})\bbT\to \ell (z_{\eps_{n}}^{t+h,\lambda})\bbT$ be
    the orientation-preserving homeomorphism    obtained by applying Proposition~\ref{P7.1} with $\eps=\eps_n$
    (and $M\coloneqq\max\{30M_0W_{0},M_0\}$ and $I\coloneqq[0,T_{0}]$
    as in Lemma~\ref{P7.2}).
    Finally, let $\tilde{\gamma}_{1,n}(\xi)\coloneqq z_{\eps_{n}}^{t,\lambda}(\ell(z_{\eps_{n}}^{t,\lambda})\xi)$
    and $\tilde{\gamma}_{2,n}(\xi)\coloneqq z_{\eps_{n}}^{t+h,\lambda}(\ell(z_{\eps_{n}}^{t+h,\lambda})\xi)$ be the corresponding
    constant-speed parametrizations, and   
    $\tilde{\phi}_{n}(\xi) \coloneqq\ell(z_{\eps_{n}}^{t+h,\lambda})^{-1}\phi_n(\ell(z_{\eps_{n}}^{t,\lambda})\xi)$ the related endomorphism of $\bbT$.
    
    Then Proposition~\ref{P7.1} and the bounds on $\ell$ in Lemmas~\ref{L3.1} and
    \ref{L3.9} (with $\beta=\frac 12$) show that
    \begin{equation*}
    \sup_{(n,\xi) \in\bbN\times\bbT} \max \left\{ \tilde{\phi}_n' (\xi), \tilde{\phi}_n' (\xi)^{-1} \right\} \le C,
    \end{equation*}
with $C$ only depending on ${\alpha,|\theta|,W_{0},M_0}$.
    As we showed in the proof of Lemma \ref{P7.2}, there is $R<\infty$ such that
    $\norm{\tilde{\gamma}_{i,n}}_{L^{\infty}}\leq R$ for all $(n,i)\in\bbN\times\{1,2\}$.  This and \eqref{2.1} show that  
    \[
    \norm{\partial_{\xi}\tilde{\gamma}_{1,n}}_{\dot{C}^{0,1/2}(\bbT)}
    \leq \norm{\tilde{\gamma}_{1,n}}_{\dot{H}^{2}(\bbT)}
    = \ell(z_{\eps_{n}}^{t,\lambda})^{3/2}
    \norm{z_{\eps_{n}}^{t,\lambda}}_{\dot{H}^{2}}
    \leq C
    \]
    for all $n\in\bbN$, with $C$ only depending on $\alpha,|\tht|,z^0$, and the same bound holds for
    $\norm{\partial_{\xi}\tilde{\gamma}_{2,n}}_{\dot{C}^{0,1/2}(\bbT)}$. Therefore, by passing to a subsequence if needed, we can assume that
    $\seq{\tilde{\gamma}_{i,n}}_{n=1}^{\infty}$ converges in $C^{1}(\bbT;\bbR^{2})$
    to some $\tilde{\gamma}_{i}\in C^{1}(\bbT;\bbR^{2})$ ($i=1,2$), and that
    $\seq{\tilde{\phi}_{n}}_{n=1}^{\infty}$, $\seq{\tilde{\phi}_{n}^{-1}}_{n=1}^{\infty}$
    converge uniformly to some $\tilde{\phi},\tilde{\psi}\in C^{0,1}(\bbT;\bbT)$,
    respectively. Clearly $\tilde{\psi}=\tilde{\phi}^{-1}$,
    and $\tilde{\phi}_{n}'>0$ shows that $\tilde{\phi}$ is an orientation-preserving homeomorphism.

    Since  $\tilde{\gamma}_{i,n}\to \tilde{\gamma}_{i}$ in $C^1$,
    we see that $\tilde{\gamma}_{1},\tilde{\gamma}_{2}$ are constant-speed
    parametrizations of $z^{t,\lambda},z^{t+h,\lambda}$, respectively.  Moreover, we have
    $\ell(z^{t,\lambda}) = \lim_{n\to\infty}\ell(z_{\eps_{n}}^{t,\lambda})$,
    $\ell(z^{t+h,\lambda}) = \lim_{n\to\infty}\ell(z_{\eps_{n}}^{t+h,\lambda})$, and
    \[
        \abs{\Omega(z^{t,\lambda})} =
        \lim_{n\to\infty}\abs{\Omega(z_{\eps_{n}}^{t,\lambda})}
        = \lim_{n\to\infty}\abs{\Omega(z_{\eps_{n}}^{0,\lambda})}
        = \abs{\Omega(z^{0,\lambda})}.
    \]

For the initially chosen arclength parametrizations of $z^{t,\lambda}$ and $z^{t+h,\lambda}$,
    there are $s_{1}\in\ell(z^{t,\lambda})\bbT$ and $s_{2}\in\ell(z^{t+h,\lambda})\bbT$
    such that 
    \begin{equation*}
        z^{t,\lambda}(s) = \tilde{\gamma}_{1}\left(
            \frac{s-s_{1}}{\ell(z^{t,\lambda})}
        \right) \qquad\text{and}\qquad
        z^{t+h,\lambda}(s) = \tilde{\gamma}_{2}\left(
            \frac{s-s_{2}}{\ell(z^{t+h,\lambda})}
        \right),
    \end{equation*}
    and we let 
    \[
        \phi(s) \coloneqq \ell(z^{t+h,\lambda})\tilde{\phi}\left(
            \frac{s-s_{1}}{\ell(z^{t,\lambda})}
        \right) + s_{2}.
    \]
    Clearly, $\phi$ then inherits the bound $e^{-C\abs{h}} \leq \phi' \leq e^{C\abs{h}}$ from Proposition~\ref{P7.1}.
    From Proposition~\ref{P7.1} and Lemmas~\ref{L4.4}, \ref{L4.3},
    and \ref{L3.9} we see that
    \begin{align*}
        &\norm{z^{t+h,\lambda}\circ\phi
        - z^{t,\lambda}
        - hu(z^{t})\circ z^{t,\lambda}}_{L^{\infty}}
        =\norm{\tilde{\gamma}_{2}\circ\tilde{\phi} - \tilde{\gamma}_{1}
        - hu(z^{t})\circ\tilde{\gamma}_{1}}_{L^{\infty}(\bbT)} \\
        &\quad\quad\leq
        \norm{\tilde{\gamma}_{2,n}\circ\tilde{\phi}_{n} - \tilde{\gamma}_{1,n}
        - hu_{\eps_{n}}(z_{\eps_{n}}^{t})\circ\tilde{\gamma}_{1,n}}_{L^{\infty}(\bbT)}
        \\&\quad\quad\quad\quad\quad
        + \norm{\partial_{\xi}\tilde{\gamma}_{2}}_{L^{\infty}(\bbT)}
        \norm{\tilde{\phi} - \tilde{\phi}_{n}}_{L^{\infty}(\bbT)}
        + \norm{\tilde{\gamma}_{2} - \tilde{\gamma}_{2,n}}_{L^{\infty}(\bbT)}
        + \norm{\tilde{\gamma}_{1} - \tilde{\gamma}_{1,n}}_{L^{\infty}(\bbT)}
        \\&\quad\quad\quad\quad\quad
        + \abs{h}\norm{u(z^{t})\circ\tilde{\gamma}_{1}
        - u(z_{\eps_{n}}^{t})\circ\tilde{\gamma}_{1,n}}_{L^{\infty}(\bbT)}
        + \abs{h}\norm{u(z_{\eps_{n}}^{t}) - u_{\eps_{n}}(z_{\eps_{n}}^{t})}
        _{L^{\infty}(\bbT)}\\
        &\quad\quad\leq
        C\abs{h}^{2-2\alpha}
        + \ell(z^{t+h,\lambda})\norm{\tilde{\phi} - \tilde{\phi}_{n}}_{L^{\infty}(\bbT)}
        + \norm{\tilde{\gamma}_{2} - \tilde{\gamma}_{2,n}}_{L^{\infty}(\bbT)}
        + \norm{\tilde{\gamma}_{1} - \tilde{\gamma}_{1,n}}_{L^{\infty}(\bbT)}
        \\&\quad\quad\quad\quad\quad
        + C\abs{h}\left(
            d_{\mathrm{F}}(z^{t},z_{\eps_{n}}^{t})^{1-2\alpha}
            + \norm{\tilde{\gamma}_{1}
            - \tilde{\gamma}_{1,n}}_{L^{\infty}(\bbT)}^{1-2\alpha}
        \right)
        + C\abs{h}\eps_{n}^{1-2\alpha}
    \end{align*}
     for all large $n$, with $C$ depending only on ${\alpha,|\theta|,W_{0},M_0}$, 
    where the last term  comes from 
    \begin{align*}
        u(z_{\eps_{n}}^{t};x) - u_{\eps_{n}}(z_{\eps_{n}}^{t};x)
        = \sum_{\lambda'\in\mathcal{L}}\theta^{\lambda'}
        \int_{\ell(z_{\eps_{n}}^{t,\lambda'})\bbT}
        \left[ K_{\eps_{n}}(x - z_{\eps_{n}}^{t,\lambda'}(s))
        - K(x - z_{\eps_{n}}^{t,\lambda'}(s)) \right]
        \partial_{s}z_{\eps_{n}}^{t,\lambda'}(s)\,ds
    \end{align*}
    for any $x\in\bbR^{2}$,
    with the integrand vanishing when
    $\abs{x - z_{\eps_{n}}^{t,\lambda'}(s)}>\eps_{n}$ (recall also that $0\le K_{\eps} \leq K$).
    Taking $n\to\infty$ now yields \eqref{111.15}.
\end{proof}


\begin{proof}[Proof of existence in Theorem \ref{T2.5}]
Proposition \ref{P7.3} shows that the $H^2$ patch solution obtained there can be extended up to some time $T_{z^0}>0$ such that $\lim_{t\to T_{z^0}^-} L(z^t)=\infty$ when $T_{z^0}<\infty$.  It also shows that we have \eqref{111.27} 
 for any $(t,\lambda)\in[0,T_{z^0})\times \mathcal{L}$.  It remains to show \eqref{111.29}
 when $T_{z^0}<\infty$ and the last claim in Theorem \ref{T2.5}, which we do in Section \ref{S10}.
\end{proof}

\section{Uniqueness of solutions to \eqref{2.4}}\label{S8}

We will establish uniqueness of solutions to \eqref{2.4} by obtaining an appropriate estimate on the rate of divergence
of pairs of solutions.
Note that Lemma~\ref{L4.4} only yields a
H\"{o}lder-type estimate with exponent $1-2\alpha$, which is not sufficient to conclude uniqueness (it only provides an $O(t^{\frac{1}{2\alpha}})$ bound on the distance of solutions starting from
the same initial data).
We instead need the rate of divergence to be bounded by the distance of the solutions (or only a very slightly worse bound than that), but it seems unlikely that such an estimate can be obtained in terms of
the Fr\'{e}chet metric (with which we work here) or other $L^{\infty}$-type metrics.
We will therefore define a weaker, $L^{2}$-type notion of distance, for  which we will then derive the desired (Lipschitz-type) estimate, and thus at most exponential-in-time divergence of solutions.

For any rectifiable closed curves $\gamma_{1},\gamma_{2}$ let
\begin{align*}
    D(\gamma_{1},\gamma_{2}) &\coloneqq
    \int_{\ell(\gamma_{1})\bbT}d(\gamma_{1}(s),\operatorname{im}(\gamma_{2}))^{2}\,ds
\end{align*}
where in the integrand we use an arbitrary arclength parametrization of $\gamma_{1}$.
We clearly have 
\[
D(\gamma_{1},\gamma_{2})\leq
\ell(\gamma_{1})d_{\mathrm{F}}(\gamma_{1},\gamma_{2})^{2},
\]
but $D^{1/2}$ fails to be a metric.  Indeed,  $D(\gamma_{1},\gamma_{2})=0$ only implies
$\operatorname{im}(\gamma_{1})\subseteq\operatorname{im}(\gamma_{2})$,
 $D$ is not symmetric, and  the triangle inequality  fails for both
 $D^{1/2}$ and its symmetrization.
Nevertheless, Lemma~\ref{L8.1} below shows that
$D$ provides an upper bound on $d_{\mathrm{F}}$ on appropriate subsets of $\operatorname{PSC}(\bbR^{2})$, which will suffice for our purposes.

The key observation leading to a Lipschitz-type estimate on the growth of $D(z_{1}^{t,\lambda},z_{2}^{t,\lambda})$ for 
 two solutions $z_{1},z_{2}\in C([0,T];\operatorname{PSC}(\bbR^{2})^{\mathcal{L}})$ is that
if we can find a ($(t,\lambda)$-dependent) orientation-preserving  homeomorphism
$\phi\colon\ell(z_{1}^{t,\lambda})\bbT\to\ell(z_{2}^{t,\lambda})\bbT$ such that
\begin{align*}
    \abs{z_{1}^{t,\lambda} - z_{2}^{t,\lambda}\circ\phi} (s)
    = d(z_{1}^{t,\lambda}(s),\operatorname{im}(z_{2}^{t,\lambda}))
\end{align*}
holds for all $s\in \ell(z_{1}^{t,\lambda})\bbT$,
then $z_{1}^{t,\lambda}(s) - z_{2}^{t,\lambda}(\phi(s))$ must be parallel to the
normal vector to $z_{2}^{t,\lambda}$ at $\phi(s)$ (for each $s$). As a result,
the tangential component (with respect to $z_{2}^{t,\lambda}$) of the velocity difference at $z_{1}^{t,\lambda}(s)$ and $ z_{2}^{t,\lambda}(\phi(s))$ does not contribute
to the instantaneous departure velocity  of these points, while Lemma~\ref{L3.3} will provide us improved control
on the normal component.
It turns out, however, that the latter will only suffice to obtain the desired estimate if we ensure that the obtained $\phi$ is not too irregular, specifically, that it does not deviate too much from being constant-speed.

Existence of such $\phi$ is obtained via a short argument in the proof of \cite[Lemma 4.11]{KisYaoZla}, but only when the curves $z_{i}^{t,\lambda}$ are
$C^{1,1}$.  However, our curves are only $H^{2}$ (and hence $C^{1,1/2}$), and it in fact turns out that $\phi$ as above {\it does not exist in general}
for $C^{1,\beta}$ curves  when $\beta<1$, even if they are assumed to be arbitrarily close to each other.
Indeed, if $d>0$ is arbitrary and $\phi(s)$ is the point on the curve $\{(x_1, \abs{x_1}^{1+\beta} - d)\,\,:\,\, x_1\in\bbR\}$  closest to $(s,0)$ for each $s\in \bbR$, then $\phi$ always has a discontinuity at $s=0$.

To solve this issue, we will replace $z_{2}^{t,\lambda}$ above by an approximation, which will be carefully chosen to be close enough to $z_{2}^{t,\lambda}$ but not too much so, in order for its $C^{1,1}$-norm
to not be too large.  We will then use that approximation to find a reparametrization that will yield a good enough ``alignment'' of the original curves.  This idea is at the core of the proof of the following lemma, which we postpone to Appendix~\ref{S9}.

\begin{lemma}\label{L8.1}
    For any $R_{1},R_{2}\in(0,\infty)$, there is    $\delta> 0$ such that
    for any $\gamma_{1},\gamma_{2}\in\operatorname{PSC}(\bbR^{2})$ satisfying
    $d_{\mathrm{F}}(\gamma_{1},\gamma_{2})\leq\delta$, $\ell(\gamma_{1}) \leq R_{1}$, and
    \[
        \max\set{\norm{\gamma_{1}}_{\dot{H}^{2}}^{2},
        \frac{1}{\Delta_{\norm{\gamma_{2}}_{\dot{H}^{2}}^{-2}}(\gamma_{2})}}
        \leq R_{2}^{2},
    \]
    and for any  arclength parametrizations of $\gamma_{1},\gamma_{2}$,
    there is a homeomorphism
    $\phi\colon\ell(\gamma_{1})\bbT\to\ell(\gamma_{2})\bbT$ with the following properties (all norms are with respect to these parametrizations):
    \begin{enumerate}
        \item[(a)] $\norm{\gamma_{1} - \gamma_{2}\circ\phi}_{L^{\infty}}
        \leq 2d_{\mathrm{F}}(\gamma_{1},\gamma_{2})
        \leq 12R_{1}^{3/10}R_{2}^{2/5}\norm{\gamma_{1} - \gamma_{2}\circ\phi}_{L^{2}}^{3/5}$,

        \item[(b)] $\norm{\gamma_{1} - \gamma_{2}\circ\phi}_{L^{2}}
        \leq 2D(\gamma_{1},\gamma_{2})^{1/2}$,

        \item[(c)] $\phi$ is Lipschitz continuous and
        $\frac{1}{3}\leq \phi' (s) \leq 3$ for almost all $s\in \ell(\gamma_{1})\bbT$,

        \item[(d)] $\norm{\partial_{s}\gamma_{1}
        - \partial_{s}(\gamma_{2}\circ\phi)}_{L^{2}}
        \leq 12R_{1}^{1/2}R_{2}^{2/3}d_{\mathrm{F}}(\gamma_{1},\gamma_{2})^{1/3}$,

        \item[(e)] $\norm{(\gamma_{1} - \gamma_{2}\circ\phi)\cdot
        (\partial_{s}\gamma_{2}\circ \phi)}_{L^{2}}
        \leq 10^{5}R_{1}^{9/10}R_{2}^{21/5}
        \norm{\gamma_{1} - \gamma_{2}\circ\phi}_{L^{2}}^{9/5}$, 

        \item[(f)] $\abs{s' - \phi(s)}\leq
        3d(\gamma_{1}(s),\operatorname{im}(\gamma_{2}))
        + 342R_{2}^{2}\,d_{\mathrm{F}}(\gamma_{1},\gamma_{2})^{2}$
         for any $(s,s')\in\ell(\gamma_{1})\bbT\times \ell(\gamma_{2})\bbT$ such that
        $\abs{\gamma_{1}(s) - \gamma_{2}(s')}
        = d(\gamma_{1}(s),\operatorname{im}(\gamma_{2}))$.
    \end{enumerate}
\end{lemma}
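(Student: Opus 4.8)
\textbf{Proof proposal for Lemma~\ref{L8.1}.}

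The plan is to build the homeomorphism $\phi$ not directly from $\gamma_2$ but from a $C^{1,1}$ approximation of it, exactly as outlined in the paragraph preceding the statement. First I would fix arclength parametrizations $\gamma_1\colon\ell(\gamma_1)\bbT\to\bbR^2$ and $\gamma_2\colon\ell(\gamma_2)\bbT\to\bbR^2$. Using a mollification of $\partial_s\gamma_2$ at a scale $\rho$ to be chosen (polynomial in $\delta$), I would produce a curve $\bar\gamma_2$ with $\norm{\bar\gamma_2-\gamma_2}_{L^\infty}\lesssim \rho^{1/2}R_2$ (since $\gamma_2\in C^{1,1/2}$ with $\dot C^{1,1/2}$-norm $\lesssim R_2$ by Lemma~\ref{L3.1}) and $\norm{\bar\gamma_2}_{\dot C^{1,1}}\lesssim \rho^{-1/2}R_2$, while keeping $\bar\gamma_2$ within the prescribed geometric tolerances: its arc-chord-type quantity $\Delta_{h}(\bar\gamma_2)$ for the relevant $h$ stays comparable to that of $\gamma_2$ as long as $\rho^{1/2}R_2 \ll \Delta_{\norm{\gamma_2}_{\dot H^2}^{-2}}(\gamma_2)^{}$, which the hypothesis $\Delta_{\cdots}(\gamma_2)^{-1}\le R_2^2$ and smallness of $\delta$ guarantee. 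Then I would invoke the nearest-point construction from the proof of \cite[Lemma 4.11]{KisYaoZla}: since $\bar\gamma_2$ is $C^{1,1}$ and $\gamma_1$ is $d_{\mathrm F}$-close to it (by the triangle inequality, $d_{\mathrm F}(\gamma_1,\bar\gamma_2)\le d_{\mathrm F}(\gamma_1,\gamma_2)+\rho^{1/2}R_2$, still small), each point $\gamma_1(s)$ has a \emph{unique} nearest point on $\operatorname{im}(\bar\gamma_2)$ and the induced map is a homeomorphism $\bar\phi\colon\ell(\gamma_1)\bbT\to\ell(\bar\gamma_2)\bbT$; composing with the (bi-Lipschitz, close-to-identity) correspondence between arclength parameters of $\bar\gamma_2$ and $\gamma_2$ yields $\phi$.

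Next I would verify (a)--(f) in turn, which are all fairly mechanical once $\phi$ is in hand. For (a): the left inequality $\norm{\gamma_1-\gamma_2\circ\phi}_{L^\infty}\le 2d_{\mathrm F}$ follows because $\gamma_1(s)-\bar\gamma_2(\bar\phi(s))$ is the nearest-point displacement hence has length $\le d(\gamma_1(s),\operatorname{im}(\bar\gamma_2))\le d_{\mathrm F}(\gamma_1,\bar\gamma_2)$, plus the $O(\rho^{1/2}R_2)$ error, all absorbed into $2d_{\mathrm F}$ after choosing $\rho$ small; the right inequality is a standard interpolation $\norm{f}_{L^\infty}\lesssim \norm{f}_{L^2}^{3/5}\norm{f}_{\dot C^{0,1/2}}^{\text{?}}$-type bound combined with $\norm{\gamma_1-\gamma_2\circ\phi}_{\dot C^{0,1}}\lesssim R_2$ (using (c) and the $C^{1,1/2}$ bounds), with the exponents $3/10,2/5,3/5$ bookkept to match. (b) is immediate from the near-optimality of $\phi$: $\norm{\gamma_1-\gamma_2\circ\phi}_{L^2}^2 = \int |\gamma_1(s)-\gamma_2(\phi(s))|^2\,ds \le 2\int d(\gamma_1(s),\operatorname{im}\gamma_2)^2\,ds + (\text{small error}) \le 4D(\gamma_1,\gamma_2)$ once the approximation error is controlled. (c) — the bound $\tfrac13\le\phi'\le 3$ — is the technical heart on the regularity side: the nearest-point projection onto a $C^{1,1}$ curve is Lipschitz with constant controlled by (distance to curve)$\times$(curvature bound) staying below $1$, which forces a choice $\rho^{-1/2}R_2\cdot d_{\mathrm F} \ll 1$, i.e. $\rho$ not too small; balancing this against the requirement in the previous paragraph that $\rho^{1/2}R_2$ be much smaller than the arc-chord scale is what pins down $\rho$ as a power of $\delta$, and $\delta$ small. (d) follows from differentiating the relation $\gamma_2(\phi(s)) = \gamma_1(s) - (\gamma_1(s)-\gamma_2(\phi(s)))$, writing the displacement as essentially normal to $\bar\gamma_2$, and using a Hölder bound on $\partial_s\gamma_1-\partial_s(\gamma_2\circ\phi)$ coming from interpolation between the $L^\infty$ bound (a) and the $\dot C^{0,1/2}$ bound on the tangents. (e) is the analogue of (d) projected onto the tangential direction of $\gamma_2$: here the point is that $\gamma_1(s)-\gamma_2(\phi(s))$ is nearly \emph{normal} to $\gamma_2$ at $\phi(s)$, so its tangential component is quadratically small — this is where Lemma~\ref{L3.3} (the geometric lemma controlling how tangents turn relative to chords, for non-crossing curves) enters, giving the extra power that produces the exponent $9/5$ and the high power $R_2^{21/5}$. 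Finally (f) is a direct consequence of the bi-Lipschitz property of $\bar\phi$ together with the estimate on the discrepancy between $\phi$ and the genuine nearest-point map to $\gamma_2$ itself, the $342R_2^2\,d_{\mathrm F}^2$ term being exactly the accumulated approximation error $\lesssim R_2\cdot(\text{curvature scale})^{-1}\cdot d_{\mathrm F}^2$.

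I expect the main obstacle to be step (c) together with the simultaneous smallness constraints it competes with: one needs the approximating curve $\bar\gamma_2$ regular enough ($C^{1,1}$-norm not too large, i.e. $\rho$ not too small) for the nearest-point projection onto it to be genuinely Lipschitz with constant in $[\tfrac13,3]$, yet close enough to $\gamma_2$ (i.e. $\rho$ not too large) that the arc-chord quantity $\Delta_{\norm{\gamma_2}_{\dot H^2}^{-2}}(\gamma_2)$ and the bound $\ell(\gamma_1)\le R_1$ are not spoiled and the nearest point stays unique on the correct local branch. Threading this needle — choosing $\rho = \rho(R_1,R_2,\delta)$ as an explicit power and then $\delta=\delta(R_1,R_2)$ small enough for all the ``$\ll 1$'' requirements to hold simultaneously — is where the real care is needed; the rest is interpolation inequalities and careful tracking of the numerical exponents. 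Everything else is deferred to Appendix~\ref{S9} as the statement indicates, so here I would only commit to the structure above and the choice of approximation scale.
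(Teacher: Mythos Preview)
Your overall strategy --- mollify to $C^{1,1}$ at a carefully chosen scale, then exploit that nearest-point projection onto a $C^{1,1}$ curve is well-behaved --- matches the paper's, and you correctly identify the competing constraints on the mollification scale that force $\rho\sim R_2^2\,d_{\mathrm F}^2$. However, the paper's \emph{construction} of $\phi$ differs from yours.

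The paper mollifies \emph{both} curves to $\hat\gamma_1,\hat\gamma_2$ at scale $r=64\norm{\gamma_2}_{\dot H^2}^2\,d_{\mathrm F}^2/\ell(\gamma_1)$, and then does not build $\phi$ by direct nearest-point projection. Instead it fixes an initial diffeomorphism $\psi$ nearly realizing $d_{\mathrm F}$ and evolves it by the gradient-flow ODE
\[
\partial_t\eta^t(s)=\bigl(\hat\gamma_1(s)-\hat\gamma_2(\eta^t(s))\bigr)\cdot\partial_s\hat\gamma_2(\eta^t(s)),\qquad \eta^0=\psi,
\]
setting $\phi=\lim_{t\to\infty}\eta^t$. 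The limit satisfies the orthogonality $(\hat\gamma_1-\hat\gamma_2\circ\phi)\cdot(\partial_s\hat\gamma_2\circ\phi)=0$, and only \emph{a posteriori} (equation~(9.18)) is $\phi$ shown to coincide with the nearest-point map onto $\hat\gamma_2$ --- so your endpoint and the paper's agree. The payoff of the ODE route is that (c), and the sharper bound $\norm{\phi'-1}_{L^2}\lesssim R_1^{1/2}R_2^{4/3}d_{\mathrm F}^{2/3}$ underpinning (d), drop out of Gr\"onwall estimates on $\partial_t(\partial_s\eta^t)$ (see (9.10)--(9.16)), without differentiating the nearest-point relation directly or separately arguing that the nearest-point map is a monotone surjection. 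Your route via \cite[Lemma~4.11]{KisYaoZla} should also succeed, but you would have to supply those arguments by hand.

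Two smaller points. First, your phrase ``polynomial in $\delta$'' is misleading: the scale must be proportional to $d_{\mathrm F}(\gamma_1,\gamma_2)^2$ itself (which can be arbitrarily smaller than $\delta$), as you effectively acknowledge later when writing $\rho^{-1/2}R_2\cdot d_{\mathrm F}\ll 1$; this dependence on the actual $d_{\mathrm F}$ is exactly what produces the correct powers of $d_{\mathrm F}$ in (e) and (f). Second, your sketch for (e) invokes Lemma~\ref{L3.3}, but the paper does not: it obtains (e) directly from the orthogonality above together with the approximation bounds $\norm{\hat\gamma_i-\gamma_i}_{L^2}\lesssim r^2 R_2$ and $\norm{\partial_s\hat\gamma_2-\partial_s\gamma_2}_{L^2}\lesssim r R_2$, giving $\norm{(\gamma_1-\gamma_2\circ\phi)\cdot(\partial_s\gamma_2\circ\phi)}_{L^2}\lesssim R_2^3\,d_{\mathrm F}^3$, and then combines with the second inequality in (a).
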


{\it Remark.}   Note that \eqref{2.2} yields $\norm{\gamma_{2}}_{\dot{H}^{2}} \leq R_{2}$.
\smallskip

The second ingredient in our uniqueness proof is a simple technical lemma that shows that
$\phi\colon\ell(\gamma_{1})\bbT\to\ell(\gamma_{2})\bbT$ such that
$\abs{\gamma_{1} - \gamma_{2}\circ\phi}(s)
= d(\gamma_{1}(s),\operatorname{im}(\gamma_{2}))$ for all $s\in \ell(\gamma_{1})\bbT$ does exist, although it need not be continuous.

\begin{lemma}\label{L8.2}
    Let $X$ be a measurable space, $Y$ a nonempty compact metric space equipped with
    the Borel $\sigma$-algebra, and $f\colon X\times Y\to[0,\infty)$ a
    measurable function such that $f(x,\,\cdot\,)$ is continuous for each $x\in X$.
    Then there is a measurable function $\psi\colon X\to Y$
    such that $f(x,\psi(x)) = \inf_{y\in Y}f(x,y)$ holds for each $x\in X$.
\end{lemma}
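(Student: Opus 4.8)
\textbf{Proof proposal for Lemma~\ref{L8.2}.}

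The plan is to prove this measurable selection statement directly, without invoking the heavy machinery of the Kuratowski--Ryll-Nardzewski theorem, exploiting the fact that $Y$ is a compact metric space. First I would fix a countable dense subset $\{y_1,y_2,\dots\}\subseteq Y$ (which exists since $Y$ is a compact, hence separable, metric space), and set $m(x)\coloneqq \inf_{y\in Y}f(x,y)$. By continuity of $f(x,\cdot)$ and density, $m(x)=\inf_{n}f(x,y_n)$, so $m$ is measurable as a countable infimum of the measurable functions $x\mapsto f(x,y_n)$ (each of which is measurable because $f$ is jointly measurable). The infimum is attained for every $x$ because $f(x,\cdot)$ is continuous on the compact set $Y$, so the set $\Phi(x)\coloneqq\{y\in Y: f(x,y)=m(x)\}$ is a nonempty closed subset of $Y$ for each $x\in X$.

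The core step is then to construct a measurable selector $\psi(x)\in\Phi(x)$. I would do this by a standard ``successive bisection'' argument adapted to the countable dense set: enumerate $Y$ by the dense sequence and, for each $x$, let $\psi(x)$ be the limit of a recursively-defined sequence that always picks, among finitely many candidate balls of shrinking radius, the first one (in the fixed enumeration) whose closure meets $\Phi(x)$, and then refines inside it. Concretely, fix a decreasing sequence of radii $r_k\to 0$; at stage $k$ one has a candidate point $p_k(x)$ with $d(p_k(x),\Phi(x))\le r_k$, and defines $p_{k+1}(x)$ to be $y_{n}$ for the least $n$ such that $d(y_n,p_k(x))\le r_k$ and $\overline{B(y_n,r_{k+1})}\cap\Phi(x)\ne\emptyset$. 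The conditions ``$d(y_n,p_k(x))\le r_k$'' and ``$\overline{B(y_n,r_{k+1})}\cap\Phi(x)\ne\emptyset$'' are each measurable in $x$: the first is obvious once $p_k$ is measurable, and the second, namely $\inf_{y\in \overline{B(y_n,r_{k+1})}}f(x,y)=m(x)$, is measurable because that infimum equals $\inf\{f(x,y_j): y_j\in \overline{B(y_n,r_{k+1})}\}$ by density and continuity, again a countable infimum of measurable functions. Hence each $p_k$ is measurable (being, on each of countably many measurable pieces, equal to a constant $y_n$), the $p_k(x)$ form a Cauchy sequence converging to some $\psi(x)\in\Phi(x)$ (closedness of $\Phi(x)$), and $\psi=\lim_k p_k$ is measurable as a pointwise limit of measurable functions.

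I expect the main obstacle to be purely bookkeeping: setting up the recursion so that (i) the candidate point always exists (one must check that at least one $y_n$ satisfies both defining conditions, which follows from density together with $d(p_k(x),\Phi(x))\le r_k$ and a suitable choice $r_{k+1}<r_k/2$), (ii) the ``least $n$'' selection keeps each $p_k$ measurable, and (iii) the radii are chosen so that $\sum_k r_k<\infty$ to force the Cauchy property. None of these steps is deep, but they must be assembled with care. An alternative, shorter route would be to cite a standard measurable selection theorem (e.g.\ that a measurable closed-valued multifunction into a Polish space admits a measurable selector), and I would mention this as the one-line justification, keeping the explicit construction above as the self-contained argument.
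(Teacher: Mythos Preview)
Your proposal is correct and follows essentially the same successive-refinement strategy as the paper: both arguments show $m(x)=\inf_n f(x,y_n)$ is measurable via a countable dense set, then recursively pick the least-indexed point (from a finite net in the paper, from your fixed dense sequence here) whose small ball still sees the minimum, check that this ``least index'' selection is measurable because the relevant infima over balls are countable infima of measurable functions, and pass to the Cauchy limit. One small bookkeeping point: with the conditions exactly as you wrote them, existence of an admissible $y_n$ can fail when $d(p_k(x),\Phi(x))=r_k$ on the nose (a dense approximant of a point $z\in\Phi(x)$ on the boundary of $\overline{B(p_k(x),r_k)}$ need not lie in that ball), so you should relax the proximity condition to, say, $d(y_n,p_k(x))\le r_k+r_{k+1}$; this is exactly the kind of tweak you already anticipated, and with it the argument goes through.
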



\begin{proof}
    For each $k\in\bbN$ pick some finite set $\set{y_{k,n}}_{n=1}^{N_{k}}\subseteq Y$
    such that $Y = \bigcup_{n=1}^{N_{k}}B_{2^{-k}}(y_{k,n})$. Then for each $x\in X$ define $\nu_k(x)\in \set{1,\dots ,N_{k}}$ for $k=1,2,\dots$ (in that order) so that $\nu_k(x)$ is the smallest number such that
    \begin{align*}
        \inf_{y\in Y}f(x,y) =
        \inf_{y\in\bigcap_{j=1}^{k}B_{2^{-j}}(y_{j,\nu_{j}(x)})}f(x,y)
    \end{align*}
    (with $\inf\emptyset\coloneqq\infty$).
    Since $Y$ is separable and each $f(x,\,\cdot\,)$ is continuous,
    the function 
    \[
    g_{n_1,\dots,n_k}(x)\coloneqq \inf_{y\in\bigcap_{j=1}^{k}
    B_{2^{-j}}(y_{j,n_{j}})}f(x,y)
    \]
     is measurable for each
    $(n_1,\dots,n_k)\in\prod_{j=1}^{k}\set{1,\dots,N_{j}}$
    because it is the infimum of countably many measurable functions
    $\set{f(\,\cdot\,,y)}_{y\in S}$, with $S$ being any countable dense subset of
    $\bigcap_{j=1}^{k}B_{2^{-j}}(y_{j,n_{j}})$.
    Then $\nu_{k}\colon X\to\set{1,\dots,N_{k}}$
    is also measurable due to
    \[
        \nu_{k}^{-1}\left(n \right)
        = \set{x\in X\colon
        g_{\nu_{1}(x),\dots,\nu_{k-1}(x),n}(x) = \inf_{y\in Y}f(x,y)}
        \setminus \bigcup_{m=1}^{n-1}
        \nu_{k}^{-1}\left(m\right)
    \]
    for each $n\in\set{1,\dots,N_{k}}$, where
    $g_{\nu_{1}(\cdot),\dots,\nu_{k-1}(\cdot),n}(\cdot)$ is measurable
    since it is the composition of  functions $x\mapsto(\nu_{1}(x),\dots,\nu_{k-1}(x),x)
    \in\prod_{j=1}^{k-1}\set{1,\dots,N_{j}}\times X$ and
    $(n_{1},\dots,n_{k-1},x)\mapsto g_{n_{1},\dots,n_{k-1},n}(x)$.
    Hence, $\psi_{k}(x)\coloneqq y_{k,\nu_{k}(x)}$ is measurable as well.

    For each $(k,x)\in\bbN\times X$ we have
    \begin{align*}
        B_{2^{-k}}(\psi_{k}(x)) \cap B_{2^{-(k+1)}}(\psi_{k+1}(x)) \neq \emptyset
    \end{align*}
    because otherwise we would have $\inf_{y\in Y}f(x,y)
    = \inf_{y\in\bigcap_{j=1}^{k+1}B_{2^{-j}}(y_{j,\nu_{j}(x)})}f(x,y) = \infty$,
    which contradicts $Y\neq\emptyset$ and $f$ being finite.
    Hence $d(\psi_{k}(x),\psi_{k+1}(x)) \leq 2^{-(k-1)}$, so $\seq{\psi_{k}(x)}_{k=1}^{\infty}$ is a Cauchy sequence.
    Since $Y$ is complete, the (uniform) limit $\psi\coloneqq\lim_{k\to\infty} \psi_{k}$ is
     measurable.
     
    It remains to show $f(x,\psi(x)) = \inf_{y\in Y}f(x,y)$ for
    any  $x\in X$, so fix $x$ and any $\eps>0$. Continuity of
    $f(x,\,\cdot\,)$ yields $k\in\bbN$ such that
    $\abs{f(x,\psi(x)) - f(x,y)} \leq \eps$ whenever $d(\psi(x),y)\leq{2^{-(k-3)}}$.
    Take $y_{0}\in \bigcap_{j=1}^{k}B_{2^{-j}}(y_{j,\nu_{j}(x)})$ such that
    \begin{align*}
        \inf_{y\in Y}f(x,y) =
        \inf_{y\in\bigcap_{j=1}^{k}B_{2^{-j}}(y_{j,\nu_{j}(x)})}f(x,y)
        \geq f(x,y_{0}) - \eps.
    \end{align*}
    Then from $d(\psi(x),\psi_{k}(x)) \leq 2^{-(k-2)}$ and
    $y_{0} \in B_{2^{-k}}(y_{k,\nu_{k}(x)}) = B_{2^{-k}}(\psi_{k}(x))$
    we obtain $d(\psi(x),y_{0}) \leq 2^{-(k-3)}$, and so
    \begin{align*}
        f(x,\psi(x)) \leq f(x,y_{0}) + \eps
        \leq \inf_{y\in Y}f(x,y) + 2\eps.
    \end{align*}
    Since $\eps>0$ was arbitrary, the proof is finished.
\end{proof}

The last  ingredient in the uniqueness proof shows that if an $H^2$ patch solution to \eqref{2.4} has no patch boundary crossings initially, then 
no such crossing can develop later.

\begin{proposition}\label{P8.4}
    Let $w\colon [0,T] \to \operatorname{PSC}(\bbR^{2})^{\mathcal{L}}$ be
    an $H^{2}$ patch solution to \eqref{2.4} with $d_{\mathrm{H}}$ in place of $d_{\mathrm{F}}$
    that satisfies
    \beq\lb{111.42}
        \sup_{t\in[0,T]}\max_{\lambda\in\mathcal{L}}
        \max\set{
            \ell(w^{t,\lambda}),
            {\Delta_{\norm{w^{t,\lambda}}_{\dot{H}^{2}}^{-2}}(w^{t,\lambda})}^{-1}
        } < \infty,
    \eeq
    and for each $(t,\lambda)\in[0,T]\times\mathcal{L}$ let
    \begin{align*}
        S_{1}^{t,\lambda}&\coloneqq \set{\lambda'\in\mathcal{L}\,\,:\,\,
        \Omega(w^{t,\lambda}) \subseteq \Omega(w^{t,\lambda'})}, \\
        S_{2}^{t,\lambda}&\coloneqq \set{\lambda'\in\mathcal{L}\,\,:\,\,
        \Omega(w^{t,\lambda}) \supseteq \Omega(w^{t,\lambda'})}, \\
        S_{3}^{t,\lambda}&\coloneqq \set{\lambda'\in\mathcal{L}\,\,:\,\,
        \Omega(w^{t,\lambda}) \cap \Omega(w^{t,\lambda'}) = \emptyset}.
    \end{align*}
    If $S_{1}^{0,\lambda}\cup S_{2}^{0,\lambda}\cup S_{3}^{0,\lambda} = \mathcal{L}$
    for each $\lambda\in\mathcal{L}$, then $S_{i}^{t,\lambda} = S_{i}^{0,\lambda}$ for each
    $(t,\lambda,i)\in[0,T]\times\mathcal{L}\times\set{1,2,3}$. 
\end{proposition}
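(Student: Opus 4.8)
The plan is to show that the quantities describing the relative position of any two patch boundaries cannot change along an $H^2$ patch solution, by deriving a differential inequality for an appropriate ``crossing defect'' functional and invoking a continuity/connectedness argument. Concretely, for each pair $\lambda,\lambda'$ and each $t$, the sets $S_i^{t,\lambda}$ record whether $\Omega(w^{t,\lambda})$ sits inside, contains, or is disjoint from $\Omega(w^{t,\lambda'})$; since we assume these three alternatives cover $\mathcal{L}$ at $t=0$ (so no curves cross, even non-transversally, and no curve is nested with ``wrong'' orientation), the goal is to propagate this. First I would note that by Lemma~\ref{LA.5} (together with the continuity of $t\mapsto w^t$ in $\operatorname{PSC}(\bbR^2)^{\mathcal L}$ that follows from \eqref{2.4} plus the uniform bound \eqref{111.42}), the sets $\set{\lambda'\in\mathcal L:\lambda'\in\Sigma^\lambda(w^t)}$ behave well under limits, so it suffices to rule out the ``first time'' at which two boundaries develop a tangential crossing or a nested pair flips.

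The key analytic step is to track $\Xi_\lambda(w^t)$, the maximal value of $|\partial_s z^{\lambda}(s)^\perp \cdot \partial_s z^{\lambda'}(s')|$ over coincidence points $z^{\lambda}(s)=z^{\lambda'}(s')$, which vanishes precisely when all touches are non-transversal; a crossing developing means $\Xi_\lambda$ jumps away from $0$. To control this I would adapt the Dini-derivative machinery of Section~\ref{S6}: mimic the proof of Lemma~\ref{L6.5}, but now using the versions of Lemmas~\ref{L4.7} and \ref{L4.8} that retain the $\Xi$-dependent Hölder term (the excerpt explicitly flags that ``a H\"older term in the relevant bounds in the next two lemmas, with a constant depending on the maximal crossing angle'' is needed for exactly this purpose). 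This gives, for the signed gap between the two curves measured along the normal direction, a bound of the schematic form $\partial_{t+}(\text{gap}) \ge -C\,(\text{gap}) - C\,\Xi\,(\text{gap})^{1-2\alpha}$, from which, if the gap is $0$ at some time $t_0$ and $\Xi(t_0)$ is finite, one concludes the gap cannot have become negative at $t_0$ — i.e. the curves cannot have crossed. Meanwhile one shows $\Xi$ itself stays bounded along the solution using the $H^2$ bound \eqref{111.42} and the uniform control on how close distinct curve segments are (via $\Delta_{h}$), so the feedback closes. Combined with area conservation $|\Omega(w^{t,\lambda})|=|\Omega(w^{0,\lambda})|$ (Lemma~\ref{L5.4}(2), which transfers to solutions of \eqref{2.4} by the approximation argument, or directly from the transport structure) and the fact that $\Omega(w^{t,\lambda})$ varies continuously, this forbids a nested pair from flipping its inclusion sense as well.

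More precisely, the argument I would carry out is: (i) fix $\lambda,\lambda'$ and suppose $t_0\in(0,T]$ is the infimum of times at which $S_1^{t,\lambda}\cup S_2^{t,\lambda}\cup S_3^{t,\lambda}\ne\mathcal L$ fails — by continuity of $\Omega(w^{t,\cdot})$ in the Hausdorff sense this infimum is attained and $w^{t_0}$ still has $\lambda'$ in exactly one of the three classes; (ii) without loss assume the two boundaries are disjoint or touching but not crossing at all times $t<t_0$, and show the relevant one-sided signed distance $f(t)$ (positive when the configuration is ``good'') satisfies $f(t)\ge 0$ for $t<t_0$, $f(t_0)=0$, and the Dini inequality above; (iii) run a Grönwall-type comparison backward from $t_0$ to get $f(t)\le (\text{something})\to 0$ as $t\to t_0^-$, hence $f\equiv 0$ near $t_0$, i.e. the curves are tangentially touching on a whole time interval, and then use that the no-crossing condition is closed to conclude $S_i^{t_0,\lambda}=S_i^{t,\lambda}$ for $t<t_0$ — contradicting the definition of $t_0$; (iv) run the same comparison forward from $t_0$ to push the conclusion past $t_0$. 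Repeating over the finitely many pairs and using connectedness of $\set{w^t:t\in[0,T]}$ with Lemma~\ref{LA.5} then yields $S_i^{t,\lambda}=S_i^{0,\lambda}$ for all $(t,\lambda,i)$.

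The main obstacle I anticipate is step (ii)–(iii): making the Dini-derivative bound rigorous when the minimizing pair of points realizing the gap is not unique and moves in time (exactly the subtlety handled carefully in Lemmas~\ref{L6.5} and \ref{L6.6} for $\Delta$ and $\Delta_{1/Q}$), and simultaneously ensuring the $\Xi$-dependent Hölder term does not overwhelm the linear term near a touch — this is why the precise form of the improved Lemmas~\ref{L4.7}/\ref{L4.8} with the crossing-angle-dependent constant is essential, and why one must bound $\Xi$ a priori rather than treat it as a free parameter. A secondary technical point is that $w$ is only assumed to solve \eqref{2.4} with $d_{\mathrm H}$, so the pointwise flow-map representation of Section~\ref{S5} is unavailable; one must instead work directly from \eqref{2.4}, extracting along sequences $h\to 0^+$ the near-transport of individual boundary points by $u(w^t)$ and using Lemma~\ref{L4.1} for the uniform velocity bound, much as in the proof of Lemma~\ref{L6.5}.
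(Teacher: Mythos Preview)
Your overall strategy --- define a nonnegative ``crossing defect'' functional, derive a Dini inequality involving the crossing-angle quantity $\Xi$, and close via Gr\"onwall --- matches the paper's approach in outline. But the step you flag as ``so the feedback closes'' is where the argument actually fails as written.

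You propose to show that $\Xi$ stays \emph{bounded} a priori and then use the inequality $\partial_{t+}(\text{gap}) \ge -C(\text{gap}) - C\,\Xi\,(\text{gap})^{1-2\alpha}$. A uniform bound on $\Xi$ is not enough: with $\Xi$ merely bounded, the right-hand side is $O(m^{1-2\alpha})$ near $m=0$, and since $1-2\alpha<1$ the comparison ODE $m' = Cm^{1-2\alpha}$ with $m(0)=0$ has nonzero solutions. So Gr\"onwall does not force $m\equiv 0$, and your ``cannot have become negative'' conclusion does not follow. The paper's key additional ingredient --- which your proposal does not mention --- is Lemma~\ref{L3.10}: it shows that whenever two $C^{1,1/2}$ curves actually cross at some angle, one of them must depart from the other by an amount comparable to that angle cubed. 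Applied here this gives $\Xi_t \le C\,m(t)^{1/3}$, where $m(t)$ is the paper's defect functional (the maximum over all boundary points of the distance to the ``correct'' region determined by the initial classification $S_i^{0,\lambda}$). Feeding this back yields
\[
\partial_t^{+} m(t) \le C\bigl(m(t) + m(t)^{\frac{1}{3}+1-2\alpha}\bigr),
\]
and since $\alpha\le\frac{1}{6}$ the exponent $\frac{4}{3}-2\alpha\ge 1$, so the right-hand side is Lipschitz at zero and Gr\"onwall genuinely gives $m\equiv 0$.

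Two smaller points: your invocation of area conservation via Lemma~\ref{L5.4}(2) is circular here, since that lemma is for the mollified equation and transferring it to an arbitrary $d_{\mathrm H}$-solution would require the uniqueness that this proposition is a step toward; the paper does not use area conservation in this proof. And your functional (``signed gap'') is left vague --- the paper's $m(t)$ is the maximum of $d(w^{t,\lambda}(s), A_t)$ over the appropriate regions $A_t$, which is what makes the Lemma~\ref{L3.10} coupling work.
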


\begin{proof}
    For each $(t,\lambda)\in[0,T]\times \mathcal{L}$, fix an arclength parametrization of $w^{t,\lambda}$
 and let $m(t)\coloneqq\max\{m_{1}(t),m_{2}(t),m_{3}(t)\}$, where   (with $\max\emptyset\coloneqq 0$)
    \begin{align*}
        m_{1}(t) &\coloneqq \max_{\lambda\in\mathcal{L}}\max_{\lambda'\in S_{1}^{0,\lambda}}
        \max_{s\in\ell(w^{t,\lambda})\bbT}
        d(w^{t,\lambda}(s), \Omega(w^{t,\lambda'})), \\
        m_{2}(t) &\coloneqq \max_{\lambda\in\mathcal{L}}\max_{\lambda'\in S_{2}^{0,\lambda}}
        \max_{s\in\ell(w^{t,\lambda})\bbT}
        d(w^{t,\lambda}(s), \bbR^{2}\setminus\Omega(w^{t,\lambda'})), \\
        m_{3}(t) &\coloneqq \max_{\lambda\in\mathcal{L}}\max_{\lambda'\in S_{3}^{0,\lambda}}
        \max_{s\in\ell(w^{t,\lambda})\bbT}
        d(w^{t,\lambda}(s), \bbR^{2}\setminus\Omega(w^{t,\lambda'})).
    \end{align*}
    Note that clearly $m(0)=0$.

        We first claim that for any fixed $\lambda,\lambda'\in\mathcal{L}$ and $A_t$ being either $\Omega(w^{t,\lambda'})$ or  $\bbR^{2}\setminus\Omega(w^{t,\lambda'})$ (same choice for all $t$), the function
    $f(t)\coloneqq \max_{s\in\ell(w^{t,\lambda})\bbT}d(w^{t,\lambda}(s),A_t)$
    is continuous on $[0,T]$. 
    Both cases are treated identically so we will only assume that $A_t=\Omega(w^{t,\lambda'})$.
    
    For any $t,\tau\in[0,T]$ we clearly have
    \beq\lb{111.41}
        f(\tau) \geq f(t)
        - \max_{s\in\ell(w^{t,\lambda})\bbT}d\left(
            w^{t,\lambda}(s) ,
            \operatorname{im}(w^{\tau,\lambda})
        \right)  
        - \sup_{x\in\Omega(w^{\tau,\lambda'})}d\left(
            x, \Omega(w^{t,\lambda'})
        \right).
    \eeq
    If we fix $\tau$ and take $t\to \tau$, the second term on the right-hand side converges to $0$ by the hypothesis (note also that $\sup_{t\in[0,T]}\norm{u(w^{t})}_{L^\infty}<\infty$ by the hypothesis and Lemma \ref{L4.1}).
    Hence if we show the same about the third term, we will conclude that $f$ is upper semicontinuous.
    But this indeed holds because $w^{\lambda'}\colon[0,T]\to\operatorname{PSC}(\bbR^{2})$ is continuous (with respect to $d_{\mathrm{F}}$; note that continuity with respect to $d_{\mathrm{H}}$ does not suffice here).
    To see this, note that   Lemma~\ref{LA.2} shows that $\{w^{t,\lambda'}\,:\,t\in[0,T]\}$ is contained in some compact subset $X$ of  $(\operatorname{CC}(\bbR^{2}),d_{\mathrm{F}})$, where the hypotheses of the lemma hold by continuity of $w^{\lambda'}$  with respect to $d_{\mathrm{H}}$,  \eqref{111.42}, and \eqref{2.2}.  Moreover, \eqref{111.42}, the remark after Lemma~\ref{LA.6}, and   Lemmas \ref{L3.8} and \ref{LA.4} show that  $X$ can in fact be chosen to be a compact subset of    $(\operatorname{PSC}(\bbR^{2}),d_{\mathrm{F}})$.
    Since $\operatorname{Id}\colon(X,d_{\mathrm{F}})\to(X,d_{\mathrm{H}})$ is then a homeomorphism
    (although $d_{\mathrm{H}}$ is only a pseudometric on $\operatorname{CC}(\bbR^{2})$,
    it is a metric on $\operatorname{PSC}(\bbR^{2})$),
    continuity of $w^{\lambda'}$ with respect to $d_{\mathrm{H}}$ also yields
    its continuity with respect to $d_{\mathrm{F}}$.

    If we instead fix $t$ and take $\tau\to t$ in \eqref{111.41}, the second and third terms on the right again converge to zero as above (this time even continuity of $w^{\lambda'}$ with respect to $d_{\mathrm{H}}$ suffices), so $f$ is also lower semicontinuous. Therefore $f$, as well as $m_{i}$ ($i=1,2,3$) and $m$, is continuous.

    Next, Lemma~\ref{L4.4} (with $\beta\coloneqq\frac{1}{2}$)    and \eqref{2.2} show that there is $C_1$ such that
    \beq\lb{111.44}
        \abs{u(w^{t};x) - u(w^{t};y)} \leq C_{1}\abs{x - y}^{1-2\alpha}
    \eeq
     for all $(t,x,y)\in[0,T]\times\bbR^{4}$.
      We now fix any $t\in[0,T]$ and
will     derive a quantitative upper bound on $f(\tau)$ for $\tau$ near $t$, 
    which we will then use to   estimate $\partial_{t}^{+}m(t)$.
    Fix any $r>0$ and take
    $\delta\in\left(0,\frac{1}{4}(r^{2\alpha}/C_{1})^{1/(1-2\alpha)}\right]$ such that 
    for each $\tau\in[0,T]\cap [t-\delta,t+\delta]$ we have
    \beq\lb{111.43}
        d_{\mathrm{H}}\left(
            w^{\tau}, X_{u(w^{t})}^{\tau - t}[w^{t}]
        \right) \leq r\abs{\tau - t}.
    \eeq
    Fix any such $\tau$ and note that
    \begin{align}
        f(\tau) &\leq \max_{s\in\ell(w^{\tau,\lambda})\bbT}
        d\left(
            w^{\tau,\lambda}(s), \operatorname{im}\left(
                X_{u(w^{t})}^{\tau - t}[w^{t,\lambda}]
            \right)
        \right) \notag
        \\&\quad\quad\quad
        + \max_{s\in\ell(w^{t,\lambda})\bbT}\inf_{x\in\Omega(w^{t,\lambda'})}
        \abs{
            \left(w^{t,\lambda}(s) + (\tau - t)u(w^{t};w^{t,\lambda}(s))\right)
            - \left(x + (\tau - t)u(w^{t};x)\right)
        }  \notag
        \\&\quad\quad\quad 
        + \sup_{x\in\Omega(w^{t,\lambda'})}
        d\left(
            x + (\tau - t)u(w^{t};x),
            \Omega(w^{\tau,\lambda'})
        \right). \lb{111.46}
    \end{align}
    The first term on the right-hand side is bounded above by $r\abs{\tau - t}$, while the second term is bounded by  $f(t) + C_{1}\abs{\tau - t}f(t)^{1-2\alpha}$.
We will now show that the third term is bounded by $6r\abs{\tau - t}$, which yields (for each $\tau\in[0,T]\cap [t-\delta,t+\delta]$)
      \begin{equation}\label{222.1}
        f(\tau)
        \leq f(t)  +  (C_{1}f(t)^{1-2\alpha} + 7r)\abs{\tau - t}.
    \end{equation}

    Fix any $x\in\Omega(w^{t,\lambda'})$ and find
    $y\in\operatorname{im}(w^{t,\lambda'})$ such that
    $\abs{x - y} = d(x,\operatorname{im}(w^{t,\lambda'}))$.
    If $\abs{x - y} \leq 4r\abs{\tau - t}$, then by  \eqref{111.44} and \eqref{111.43} we have
    \begin{align*}
        d\left(x + (\tau - t)u(w^{t};x), \operatorname{im}(w^{\tau,\lambda'})\right)
        &\leq \abs{x - y} + C_{1}\abs{\tau - t}\abs{x - y}^{1-2\alpha}
        + r\abs{\tau - t} \\
        &\leq (5r + C_{1}    (4r\delta)^{1-2\alpha} ) \abs{\tau - t}\\
        &\leq 6r\abs{\tau - t}.
    \end{align*}
If instead $\abs{x - y} > 4r\abs{\tau - t}$, then
for any $\tau'$ between $t$ and $\tau$ we similarly have
    \begin{align*}
        d\left(x + (\tau' - t)u(w^{t};x), \operatorname{im}(w^{\tau',\lambda'})\right)
        &\geq \abs{x - y} - C_{1}\abs{\tau' - t}\abs{x - y}^{1-2\alpha}
        - r\abs{\tau' - t} > 0. 
    \end{align*}
    Hence the curve $\gamma^{\tau'}\coloneqq w^{\tau',\lambda'} - (\tau' - t)u(w^{t};x)
    \in \operatorname{CC}(\bbR^{2})$ does not contain $x$ for any $\tau'$
    between $t$ and $\tau$.
    %
    Since $x\in\Omega(w^{t,\lambda'})$ and $\gamma$ is continuous in $\tau'$ with respect to $d_F$ (because  $w^{\lambda'}$ is), 
    we see that $x + (\tau - t)u(w^{t};x) \in \Omega(w^{\tau,\lambda'})$. Thus $d\left(  x + (\tau - t)u(w^{t};x), \Omega(w^{\tau,\lambda'})    \right)=0$ holds in this case, and \eqref{222.1} follows.

    Next we estimate $\partial_{t}^{+}m(t)$ in terms of $m(t)$.
    Fix any $t\in[0,T)$ and take any decreasing sequence $\set{t_{n}}_{n=1}^{\infty}$
    in $(t, T]$ converging to $t$ such that
    \[
        \partial_{t}^{+}m(t) = \lim_{n\to\infty}\frac{m(t_{n}) - m(t)}{t_{n} - t}.
    \]
    By passing to a subsequence if needed, we can assume that there is
    $(\lambda,\lambda',i)\in\mathcal{L}^{2}\times\set{1,2,3}$ such that
    $\lambda'\in S_{i}^{0,\lambda}$ and for each $n$ we have
    \beq\lb{111.45}
        m(t_{n}) = m_{i}(t_{n}) = \max_{s\in\ell(w^{t_{n},\lambda})\bbT}
        d(w^{t_{n},\lambda}(s), A_{t_n}),
    \eeq
    where
    \[
        A_{\tau} \coloneqq \begin{cases}
            \overline{\Omega(w^{\tau,\lambda'})}
            & \textrm{if $i=1$}, \\
            \bbR^{2} \setminus\Omega(w^{\tau,\lambda'})
            & \textrm{if $i\in\{2,3\}$}.
        \end{cases}
    \]

    For each $n$, find $s_{n}\in\ell(w^{t,\lambda})\bbT$     such that 
    \begin{equation} \lb{111.47} \begin{split}
        &\min_{x\in A_t}\abs{
            w^{t,\lambda}(s_{n}) + (t_{n} - t)u(w^{t};w^{t,\lambda}(s_{n}))
            - \left(x + (t_{n} - t)u(w^{t};x)\right)
        }
        \\&\quad\quad=
        \max_{s\in\ell(w^{t,\lambda})\bbT}
        \min_{x\in A_t}\abs{
            w^{t,\lambda}(s) + (t_{n} - t)u(w^{t};w^{t,\lambda}(s))
            - \left(x + (t_{n} - t)u(w^{t};x)\right)
        }
    \end{split}\end{equation}
    and then $x_{n}\in  A_t$ such that $\abs{w^{t,\lambda}(s_{n}) - x_{n}}=d(w^{t,\lambda}(s_{n}),A_t) $.
    By passing to a further subsequence, we can assume that $\exists\lim_{n\to\infty} (s_{n},x_{n}) = (s,x)\in \ell(w^{t,\lambda})\bbT\times A_t$.   
       Now take any $r>0$ and note that \eqref{222.1} holds with $(m,t_n)$ in place of $(f,\tau)$ for all large enough $n$ (due to \eqref{111.45}).       In fact, the definition of $s_n$ shows that \eqref{111.47} is the second term on the right-hand side of \eqref{111.46}, which is than no more than $|v_n|$, where
          \[
        v_{n} \coloneqq
        \left(w^{t,\lambda}(s_{n}) + (t_{n} - t)u(w^{t};w^{t,\lambda}(s_{n}))\right)
        - \left(x_{n} + (t_{n} - t)u(w^{t};x_{n})\right).
    \]
    Since the sum of the other two terms on the right side of \eqref{111.46} was shown to be bounded above by $7r|t_n-t|$,   we thus obtain
    \begin{equation}\label{222.3}\begin{aligned}
        m(t_{n}) &\leq \abs{v_{n}} + 7r(t_{n} - t) \\
        &\leq \abs{w^{t,\lambda}(s_{n}) - x_{n}}
        + \left(C_{1}\abs{w^{t,\lambda}(s_{n}) - x_{n}}^{1-2\alpha}
        + 7r \right) (t_{n} - t) \\
        &\leq m(t) + \left(C_{1}m(t)^{1-2\alpha} + 7r \right) (t_{n} - t)
    \end{aligned}\end{equation}
    for all large $n$, where in the last inequality we  used  $\abs{w^{t,\lambda}(s_{n}) - x_{n}} \le m(t)$, which follows from the definition of $x_n$.  After taking $n\to\infty$, continuity of $m$ shows that
    \[
        m(t) = \abs{w^{t,\lambda}(s) - x} = d(w^{t,\lambda}(s), A_t).
    \]    
    
    If $m(t) = 0$, then \eqref{222.3} shows that $\partial_{t}^{+}m(t) \leq 7r$.
     If $m(t) > 0$, then $v_{n}\neq 0$ for large enough $n$ by \eqref{222.3} and continuity of $m$, 
    and then \eqref{222.3} and $\abs{w^{t,\lambda}(s_{n}) - x_{n}} \le m(t)$ also show that
    \begin{align*}
        m(t_{n}) - m(t) &\leq \abs{v_{n}} - \abs{w^{t,\lambda}(s_{n}) - x_{n}} + 7r(t_{n} - t) \\
        &\leq \frac{v_{n}}{\abs{v_{n}}}\cdot \left(
            v_{n} - \left(w^{t,\lambda}(s_{n}) - x_{n}\right)
        \right) + 7r(t_{n} - t).
    \end{align*}
    Dividing both sides by $t_{n} - t$ and then taking $n\to\infty$ yields
    \[
        \partial_{t}^{+}m(t) \leq
        \frac{w^{t,\lambda}(s) - x}{\abs{w^{t,\lambda}(s) - x}}
        \cdot \left[u(w^{t};w^{t,\lambda}(s)) - u(w^{t};x)\right] + 7r.
    \]
    Since $m(t) = d(w^{t,\lambda}(s),A_t) > 0$, we must have
    $x\in\partial A_t = \operatorname{im}(w^{t,\lambda'})$ and we let $s'\in\ell(w^{t,\lambda'})\bbT$ be such that $w^{t,\lambda'}(s') = x$.  Since
    $\abs{w^{t,\lambda}(s) - x} = d(w^{t,\lambda}(s),\operatorname{im}(w^{t,\lambda'}))$,
    $w^{t,\lambda}(s) - x$ must be parallel to $\partial_{s}w^{t,\lambda'}(s')^{\perp}$.
    Since $r>0$ was arbitrary, Lemma~\ref{L4.7} finally shows that
    \begin{align*}
        \partial_{t}^{+}m(t) &\leq C\left(m(t) + \Xi_t m(t)^{1-2\alpha}\right)
    \end{align*}
    holds for all $t\in[0,T)$, some $t$-independent $C$, and
    \begin{align*}
        \Xi_t \coloneqq \max_{\lambda,\lambda'\in\mathcal{L}}
        \max\big\{\abs{\partial_{s}w^{t,\lambda}(s)^{\perp}
        \cdot\partial_{s}w^{t,\lambda'}(s')}
        \colon (s,s')\in\ell(w^{t,\lambda})\bbT\times\ell(w^{t,\lambda'})\bbT
        \ \&\ 
        w^{t,\lambda}(s) = w^{t,\lambda'}(s')\big\}.
    \end{align*}

    We will now show that $\Xi_t\leq Cm(t)^{1/3}$ for some $t$-independent $C$. Once this is proved,
    a Gr\"{o}nwall-type argument yields $m\equiv 0$
    because $\alpha\leq\frac{1}{6}$, $m(0) = 0$, and $m$ is continuous.
    Then we can conclude that for any
    $(t,\lambda,\lambda')\in[0,T]\times\mathcal{L}^{2}$,
    $\operatorname{im}(w^{t,\lambda})\subseteq\overline{\Omega(w^{t,\lambda'})}$ when $\lambda'\in S_{1}^{0,\lambda}$, and  $\operatorname{im}(w^{t,\lambda})\subseteq\bbR^{2}\setminus\Omega(w^{t,\lambda'})$ when $\lambda'\in S_{2}^{0,\lambda}\cup  S_{3}^{0,\lambda}$.
    Since $w$ is continuous with respect to $d_{\mathrm{F}}$, Lemma~\ref{LA.5} shows that
    $S_{i}^{t,\lambda} = S_{i}^{0,\lambda}$ holds for $i=1,2,3$, completing the proof.
    
    Take any $\lambda,\lambda'\in\mathcal{L}$ and
    $(s,s')\in\ell(w^{t,\lambda})\bbT\times\ell(w^{t,\lambda'})\bbT$
    such that $w^{t,\lambda}(s) = w^{t,\lambda'}(s')$ and
    $\partial_{s}w^{t,\lambda}(s)^{\perp}\cdot
    \partial_{s}w^{t,\lambda'}(s')\neq 0$. Then
    $s$ must be the common end-point of two adjacent connected components
    $I_{1},I_{2}$ of $\set{s''\in\ell(w^{t,\lambda})\bbT\colon
    w^{t,\lambda}(s'') \notin\operatorname{im}(w^{t,\lambda'})}$ such that
    $w^{t,\lambda}(I_{1}) \subseteq \Omega(w^{t,\lambda'})$ and
    $w^{t,\lambda}(I_{2}) \subseteq \bbR^{2}\setminus\Omega(w^{t,\lambda'})$.
    Let $i\in\{1,2,3\}$ be such that $\lambda' \in S_{i}^{0,\lambda}$,
    and let $J$ be $I_{1}$ if $i\in\{2,3\}$, and $I_{2}$ if $i=1$.
    Then Lemma~\ref{L3.10} (with $\beta\coloneqq\frac{1}{2}$)
    shows that there is $s''\in J$ such that
    \[
        \abs{\partial_{s}w^{t,\lambda}(s)^{\perp}\cdot
        \partial_{s}w^{t,\lambda'}(s')}
        \leq Cd(w^{t,\lambda}(s''), \operatorname{im}(w^{t,\lambda'}))^{1/3}
    \]
    for some $t$-independent $C$. Since $w^{t,\lambda}(s'')\in\Omega(w^{t,\lambda'})$ if $i\in\{2,3\}$ and $w^{t,\lambda}(s'')\notin\Omega(w^{t,\lambda'})$ if $i=1$,
    we have
    \[
        d(w^{t,\lambda}(s''), \operatorname{im}(w^{t,\lambda'}))
        \leq m_{i}(t) \leq m(t).
    \]
    Since $\lambda,\lambda',s,s'$ were arbitrary,
    it follows that $\Xi_{t} \leq Cm(t)^{1/3}$, as desired.
\end{proof}

We can now derive the desired estimate on the ``distance'' of two solutions to \eqref{2.4}.
We note that similarly to Lemma \ref{L6.3},  we will only control the rate of change of $\sum_{\lambda\in\mathcal{L}}\abs{\theta^{\lambda}} D(z^{t,\lambda},w^{t,\lambda})$ rather than of the individual terms.
The cancellations allowing this again require us to only control  $\Delta(z^{t,\lambda},z^{t,\lambda'})^{-1}$  when  $\lambda'\notin\Sigma^\lambda(z^t)$, and the term that benefits from them is $P_{8}^{\lambda,\lambda'}$ below.

\begin{proposition}\label{P8.3}
    For any $W_{0},M\in[0,\infty)$, there is $\delta > 0$  such that the following holds.  If
    $z\in C([0,T_{0}];\operatorname{PSC}(\bbR^{2})^{\mathcal{L}})$ is a solution to \eqref{2.4}
    from Proposition \ref{P7.3}  satisfying  $W(z^0) \le W_{0}$ and   $\sup_{t\in[0,T_0]}L(z^{t})\leq M$, 
    and if
    $w\colon [0,T] \to \operatorname{PSC}(\bbR^{2})^{\mathcal{L}}$ is
    an $H^{2}$ patch solution to \eqref{2.4} with $d_{\mathrm{H}}$ in place of $d_{\mathrm{F}}$
    satisfying   $T\leq T_{0}$,
    $\sup_{t\in[0,T]}\max_{\lambda\in\mathcal{L}}
    \Delta_{\norm{w^{t,\lambda}}_{\dot{H}^{2}}^{-2}}(w^{t,\lambda})^{-1}\leq M$,
    and $\sup_{t\in[0,T]}d_{\mathrm{F}}(z^{t},w^{t}) \leq \delta$, then
    \begin{equation}\label{8.1}
        \sum_{\lambda\in\mathcal{L}}\abs{\theta^{\lambda}}
        D(z^{t,\lambda},w^{t,\lambda})
        \leq e^{C m(\tht)^{-1} t}\sum_{\lambda\in\mathcal{L}}\abs{\theta^{\lambda}}
        D(z^{0,\lambda},w^{0,\lambda})
    \end{equation}
    holds for each  $t\in[0,T]$, with $C$
    only depending on ${\alpha,|\theta|,W_{0},M}$.
    In particular,
    $z^{t} = w^{t}$  for all $t\in[0,T]$ when $z^{0} = w^{0}$.
\end{proposition}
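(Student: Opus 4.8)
\textbf{Proof strategy for Proposition~\ref{P8.3}.}
The plan is to differentiate the weighted quantity $\Phi(t)\coloneqq\sum_{\lambda\in\mathcal L}\abs{\theta^\lambda}D(z^{t,\lambda},w^{t,\lambda})$ and establish a Gr\"onwall inequality $\partial_t^+\Phi(t)\le Cm(\theta)^{-1}\Phi(t)$, after which \eqref{8.1} and the uniqueness claim are immediate (the latter since $\Phi(0)=0$ forces $\operatorname{im}(z^{0,\lambda})\subseteq\operatorname{im}(w^{0,\lambda})$ for all $\lambda$, and then $z^0=w^0$ gives $D(z^{0,\lambda},w^{0,\lambda})=0$, while Proposition~\ref{P8.4} — whose hypotheses follow from $z$ being a solution from Proposition~\ref{P7.3} together with \eqref{111.27}, so $z$ has no crossings, and from Lemma~\ref{L3.8} and the bounds on $w$ — guarantees $w$ keeps its containment structure so that $\Sigma$-type cancellations stay available). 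First I would fix $t$, invoke Lemma~\ref{L8.1} with $R_1,R_2$ governed by $W_0,M$ (using $\sup L(z^t),\sup L(w^t)\le M$, the bounds \eqref{2.1}--\eqref{2.2}, and Lemma~\ref{L3.8}) to get, for each $\lambda$, a bi-Lipschitz reparametrization $\phi^\lambda\colon\ell(z^{t,\lambda})\bbT\to\ell(w^{t,\lambda})\bbT$ with properties (a)--(f); in particular $\norm{z^{t,\lambda}-w^{t,\lambda}\circ\phi^\lambda}_{L^2}\le 2D(z^{t,\lambda},w^{t,\lambda})^{1/2}$ and $\tfrac13\le(\phi^\lambda)'\le 3$. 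Since $\delta$ can be taken small, $d_{\mathrm F}(z^t,w^t)\le\delta$ ensures the hypotheses of Lemma~\ref{L8.1} are met throughout $[0,T]$.

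Next I would compute $\partial_t^+D(z^{t,\lambda},w^{t,\lambda})$. Using that $z$ and $w$ solve \eqref{2.4} (with the quantitative bound \eqref{111.15} from Proposition~\ref{P7.3} on the $z$-side and the corresponding statement for $w$, which follows from being an $H^2$ solution together with Lemma~\ref{L4.4} — here I would first reduce to a version of \eqref{111.15} for $w$ if needed, or work directly with difference quotients and absorb the $O(h^{2-2\alpha})$ errors), the leading-order change of $d(z^{t,\lambda}(s),\operatorname{im}(w^{t,\lambda}))^2$ at the point realizing the distance is governed by $2\,(z^{t,\lambda}(s)-w^{t,\lambda}(s'))\cdot[u(z^t;z^{t,\lambda}(s))-u(w^t;w^{t,\lambda}(s'))]$, where $w^{t,\lambda}(s')$ is the nearest point and by Lemma~\ref{L8.2} the choice $s'=\psi^\lambda(s)$ can be made measurably. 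The crucial geometric fact (as in the paragraph before Lemma~\ref{L8.1}) is that $z^{t,\lambda}(s)-w^{t,\lambda}(s')$ is parallel to the normal $\partial_s w^{t,\lambda}(s')^\perp$, so the tangential-to-$w^{t,\lambda}$ component of the velocity difference drops out. I would split the velocity difference as $u(z^t;\cdot)-u(w^t;\cdot)$ at the two points, write each $u$ via \eqref{2.3} as a sum over $\lambda'$ of integrals against $\partial_s z^{t,\lambda'}$ or $\partial_s w^{t,\lambda'}$, and telescope: replace $z^{t,\lambda'}$ by $w^{t,\lambda'}\circ\phi^{\lambda'}$ (picking up $L^2$-type errors controlled by $\norm{z^{t,\lambda'}-w^{t,\lambda'}\circ\phi^{\lambda'}}_{L^2}$ via Lemma~\ref{L3.3}-type kernel bounds, property (d), and property (e) for the $\partial_s\gamma_2\cdot(\gamma_1-\gamma_2\circ\phi)$ term), and change variables by $\phi^{\lambda'}$ (the Jacobian error being harmless by (c)). After integrating $ds$ over $\ell(z^{t,\lambda})\bbT$, Cauchy--Schwarz in $s$ converts everything into $\Big(\sum_{\lambda'}\norm{z^{t,\lambda'}-w^{t,\lambda'}\circ\phi^{\lambda'}}_{L^2}^2\Big)^{1/2}$ times $D(z^{t,\lambda},w^{t,\lambda})^{1/2}$, giving a bound of the right bilinear form; summing over $\lambda$ with weights $\abs{\theta^\lambda}$ and using $\abs{\theta^\lambda}\le\abs\theta$ and $m(\theta)^{-1}\abs{\theta^{\lambda'}}\cdot$(stuff) yields $\partial_t^+\Phi\le Cm(\theta)^{-1}\Phi$.

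The main obstacle is the same one flagged in the text: the integral kernels $K(z^{t,\lambda}(s)-\cdot)$ are only weakly singular and the naive estimates lose a power of the distance, so without exploiting non-crossing one cannot close the estimate. Concretely, the term $P_8^{\lambda,\lambda'}$ — the contribution where the kernel difference is paired with $\mathbf T^{\lambda'}-\mathbf T^{\lambda}$ (or $\mathbf T^{\lambda'}+\mathbf T^{\lambda}$, according to whether $\Omega(w^{t,\lambda})$ and $\Omega(w^{t,\lambda'})$ are nested or disjoint) — must be handled by the symmetrization-and-cancellation device from the proof of Lemma~\ref{L6.3} (its $G_6$ term), invoking Lemma~\ref{L4.5}(2,3) when $\lambda'\in\Sigma^\lambda(z^t)$ and Lemma~\ref{L4.2} with the separation $\Delta(z^{t,\lambda},z^{t,\lambda'})>0$ from \eqref{111.27} when $\lambda'\notin\Sigma^\lambda(z^t)$. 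This is exactly why the weighted sum, not the individual $D(z^{t,\lambda},w^{t,\lambda})$, is the right object, and why Proposition~\ref{P8.4} (which we apply to $w$ so that $\Sigma^\lambda(w^t)$ is $t$-independent and matches the combinatorics of $z$) is needed. The remaining terms — those with genuine $\abs{x-y}^{-1-2\alpha}$ singularities but carrying an $\abs{\mathbf N\cdot\mathbf T}$ or a curvature factor — are dispatched by Lemmas~\ref{L4.5}, \ref{L4.7}, \ref{L4.8}, \ref{L3.3}, and \ref{L3.4} exactly as in Section~\ref{S6}, with $\delta$ chosen small enough (depending on $W_0,M$) that the Lemma~\ref{L8.1} hypotheses and the smallness needed to absorb quadratic-in-distance errors both hold on all of $[0,T]$.
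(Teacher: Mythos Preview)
Your plan matches the paper's approach: a Gr\"onwall estimate on $\Phi$, with Lemma~\ref{L8.1} supplying $\phi^\lambda$, Lemma~\ref{L8.2} supplying the measurable nearest-point map $\psi^\lambda$, Proposition~\ref{P8.4} ensuring $w$ keeps its containment structure, and the $G_6$-style symmetrization disposing of the worst term. One point, however, needs sharpening.

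You write that the tangential-to-$w^{t,\lambda}$ component of the velocity difference ``drops out'' at $\psi^\lambda$, and then switch to $\phi^{\lambda'}$ only for the inner $\lambda'$-integration. But $\psi^\lambda$ is merely measurable (Lemma~\ref{L8.2} supplies no regularity), so the outer point must also be moved from $\psi^\lambda$ to $\phi^\lambda$ before any kernel analysis --- in particular before the symmetrization in the $P_8$ term, which requires the \emph{same} object $v^\lambda=z^\lambda-w^\lambda\circ\phi^\lambda$ on both sides so that integration by parts produces the factor $\partial_s z^\lambda\mp\partial_s z^{\lambda'}$ feeding Lemma~\ref{L4.5}(2,3). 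The paper makes this transition explicitly, and it generates three additional error terms (its $P_2,P_3,P_4$): $u_2\circ\psi^\lambda\to u_2\circ\phi^\lambda$ (via Lemma~\ref{L4.6} and Lemma~\ref{L8.1}(f)), the tangential projection of $z^\lambda-w^\lambda\circ\psi^\lambda$ with respect to $\mathbf T$ at $\phi^\lambda$ rather than $\psi^\lambda$ (via (f) and $\mathcal M\kappa_2^\lambda$), and the normal component of $w^\lambda\circ\phi^\lambda-w^\lambda\circ\psi^\lambda$. You never invoke property (f), which is the hinge for all three; without it the passage from orthogonality at $\psi^\lambda$ to estimates built on $\phi^\lambda$ is not justified.

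A smaller omission: Gr\"onwall from a $\partial_t^+$ bound requires lower semicontinuity of $t\mapsto D(z^{t,\lambda},w^{t,\lambda})$, which the paper checks separately using the reparametrization $\eta_h$ from Proposition~\ref{P7.3}.
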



\begin{proof}
    Let $\delta>0$ be from Lemma~\ref{L8.1}
    with $R_{1}\coloneqq 30MW_{0}$ and $R_{2}\coloneqq M^{1/2}$ (without loss also assume $\delta\le 1$), and let $z,w,T$ be as above.
    Since Proposition~\ref{P7.3} and Lemma~\ref{L3.9} show that for all $t\in[0,T_0]$ we have
    $W(z^{t}) \leq W_{0}$ and
    \begin{equation}\label{8.2}
        \max_{\lambda\in\mathcal{L}}\ell(z^{t,\lambda}) \leq 30MW_{0},
    \end{equation}
  we can apply Lemma~\ref{L8.1} to
    $\gamma_{1} \coloneqq z^{t,\lambda}$ and $\gamma_{2} \coloneqq w^{t,\lambda}$ for any
    $(t,\lambda)\in[0,T]\times \mathcal{L}$. Then \eqref{8.2} and Lemma~\ref{L8.1}(c) show that
    \begin{equation}\label{8.3}
        \max_{\lambda\in\mathcal{L}}\ell(w^{t,\lambda}) \leq 90MW_{0},
    \end{equation}
    and \eqref{8.2} and $\delta\leq 1$ yield
    \begin{equation}\label{8.4}
        \sup_{t\in[0,T]}\max_{\lambda\in\mathcal{L}}
        D(z^{t,\lambda},w^{t,\lambda})
        \leq \sup_{t\in[0,T]}\max_{\lambda\in\mathcal{L}}\ell(z^{t,\lambda}) \leq 30MW_{0}.
    \end{equation}
    Also, \eqref{8.3} and Proposition~\ref{P8.4} show that 
    $w^{t,\lambda}$ and $w^{t,\lambda'}$ do not cross (transversally or otherwise)  for each
    $(t,\lambda,\lambda')\in[0,T]\times\mathcal{L}^{2}$.
    
    Since the second claim of Proposition~\ref{P8.3} immediately follows from \eqref{8.1}
    and Lemma~\ref{L8.1}(a,b), we only need to show \eqref{8.1}.
    This will in turn follow via  a Gr\"{o}nwall-type argument from $D(z^{t,\lambda},w^{t,\lambda})$ being lower semicontinuous in $t\in[0,T]$ and
    \begin{equation} \lb{111.16}
        \partial_{t}^{+}\left(
            \sum_{\lambda\in\mathcal{L}}\abs{\theta^{\lambda}}
            D(z^{t,\lambda},w^{t,\lambda})
        \right)
        \leq \frac{C}{m(\theta)}
        \sum_{\lambda\in\mathcal{L}}\abs{\theta^{\lambda}}
        D(z^{t,\lambda},w^{t,\lambda})
    \end{equation}
     for all $t\in[0,T)$, with $C$ that only depends on
    ${\alpha,|\theta|,W_{0},M}$.
    In the rest of the proof, all constants  $C$ will only depend on ${\alpha,|\theta|,W_{0},M}$  and can change from line to line.

    For any $(t,\lambda)\in[0,T]\times \mathcal{L}$, fix any arclength parametrizations
    of $z^{t,\lambda}$ and $w^{t,\lambda}$.  Now fix any $t\in[0,T]$, and  use Lemma~\ref{L8.2}    to obtain a measurable function    $\psi_{\lambda}\colon\ell(z^{t,\lambda})\bbT\to\ell(w^{t,\lambda})\bbT$ for each $\lambda\in \mathcal{L}$ such that  for each $s\in\ell(z^{t,\lambda})\bbT$ we have
    \beq \lb{111.17}
        \abs{z^{t,\lambda} - w^{t,\lambda}\circ\psi_{\lambda}}(s)
        = d(z^{t,\lambda}(s),\operatorname{im}(w^{t,\lambda})).
    \eeq
    To simplify notation, we let
    $u_{1}^{t,\lambda}(s)\coloneqq u(z^{t};z^{t,\lambda}(s))$ and
    $u_{2}^{t,\lambda}(s)\coloneqq u(w^{t};w^{t,\lambda}(s))$.
    
    Take an arbitrary $r\in(0,1]$ and then $h_{0}>0$ such that
    \begin{align*}
        d_{\mathrm{H}}\left(
            w^{t+h},
            X_{u(w^{t})}^{h}[w^{t}]
        \right)
        \leq r\abs{h}
    \end{align*}
    holds for any $h\in[-h_{0},h_{0}]$ with $t + h\in [0,T]$, as well as     $C_{1}h_{0}^{1-2\alpha} \leq r$, where $C_{1}$ is
    the constant $C$ from Proposition~\ref{P7.3} with $M$ in place of $L(z^0)$.
    Now fix any $\lambda\in\mathcal{L}$.
    For any $h\in[-h_{0},h_{0}]$ with $t + h\in [0,T]$, let
    $\eta_{h}\colon\ell(z^{t,\lambda})\bbT\to\ell(z^{t+h,\lambda})\bbT$
    be the Lipschitz continuous orientation-preserving homeomorphism $\phi$
    from Proposition~\ref{P7.3}.  Then our choice of $h_0$ and $\eta_h$ show that for each $s\in\ell(z^{t+h,\lambda})\bbT$ we have
    \begin{align*}
        &d\left(
            z^{t,\lambda}(\eta_{h}^{-1}(s)),
            \operatorname{im}(w^{t,\lambda})
        \right) \\
        &\quad\quad\leq 
            d\left(
                z^{t+h,\lambda}(s) - hu_{1}^{t,\lambda}(\eta_{h}^{-1}(s)),
                \operatorname{im}\left(X_{u(w^{t})}^{h}[w^{t,\lambda}]\right)
            \right)
            + r\abs{h} + \abs{h}\norm{u_{2}^{t,\lambda}}_{L^{\infty}}
      \\
        &\quad\quad\leq 
            d\left(
                z^{t+h,\lambda}(s),
                \operatorname{im}(w^{t+h,\lambda})
            \right)
            + \abs{h}\left(
                2r
                + \norm{u_{1}^{t,\lambda}}_{L^{\infty}}
                + \norm{u_{2}^{t,\lambda}}_{L^{\infty}}
            \right)   .
    \end{align*}
    We know from  \eqref{8.2}, \eqref{8.3}, Lemmas \ref{L4.1} and \ref{L4.4} (the latter to bound $u_2$ via $u_1$), 
    and Proposition \ref{P7.3}    that
    \begin{equation}\label{8.4a}
        \norm{u_{1}^{t,\lambda}}_{L^{\infty}}
        + \norm{u_{2}^{t,\lambda}}_{L^{\infty}} \leq C
        \qquad\textrm{and}\qquad
        e^{-C\abs{h}} \leq \eta_{h}' \leq e^{C\abs{h}},
    \end{equation}
    which yields
    \begin{align*}
        D(z^{t,\lambda}, w^{t,\lambda})
        &= \int_{\ell(z^{t+h,\lambda})\bbT}
        d\left(
            z^{t,\lambda}(\eta_{h}^{-1}(s)),
            \operatorname{im}(w^{t,\lambda})
        \right)^{2}
        (\eta_{h}^{-1})'(s)\,ds \\
        &\leq \int_{\ell(z^{t+h,\lambda})\bbT}
        e^{C\abs{h}}
        \left(
            d\left(
                z^{t+h,\lambda}(s),
                \operatorname{im}(w^{t+h,\lambda})
            \right)
            + C\abs{h}
        \right)^{2}\,ds.
    \end{align*}
This and \eqref{8.2} 
prove lower semicontinuity of    $D(z^{t,\lambda},w^{t,\lambda})$.

    We are left with proving \eqref{111.16}, so now also assume that  $h> 0$.  Then for each $s\in\ell(z^{t,\lambda})\bbT$,
    \begin{align*}
        d\left(
            z^{t+h,\lambda}(\eta_{h}(s)),
            \operatorname{im}(w^{t+h,\lambda})
        \right)^{2}
        &\leq \left(
            d\left(
                z^{t,\lambda}(s) + hu_{1}^{t,\lambda}(s),
                \operatorname{im}\left(X_{u(w^{t})}^{h}[w^{t,\lambda}]\right)
            \right)
            + 2rh
        \right)^{2} \\
        &\leq \left(
            \abs{z^{t,\lambda}(s) + hu_{1}^{t,\lambda}(s)
            - w^{t,\lambda}(\psi_{\lambda}(s)) - hu_{2}^{t,\lambda}(\psi_{\lambda}(s))}
            + 2rh
        \right)^{2} \\
        &\leq \abs{z^{t,\lambda}(s) - w^{t,\lambda}(\psi_{\lambda}(s))}^{2}
        + h^{2}\left(
            \abs{u_{1}^{t,\lambda}(s) - u_{2}^{t,\lambda}(\psi_{\lambda}(s))} + 2r
        \right)^{2}
        \\&\quad\quad\quad
        + 4rh\abs{z^{t,\lambda}(s) - w^{t,\lambda}(\psi_{\lambda}(s))}
        \\&\quad\quad\quad
        + 2h(z^{t,\lambda}(s) - w^{t,\lambda}(\psi_{\lambda}(s)))
        \cdot (u_{1}^{t,\lambda}(s) - u_{2}^{t,\lambda}(\psi_{\lambda}(s)))
    \end{align*}
    (using $(|u+v|+c)^2\le |u|^2+|v|^2+2u\cdot v+c^2 + 2c(|u|+|v|)$ at the end), so \eqref{111.17} yields
    \begin{align*}
        &D(z^{t+h,\lambda},w^{t+h,\lambda})
        - D(z^{t,\lambda}, w^{t,\lambda})\\
        &\quad\quad=
        \int_{\ell(z^{t,\lambda})\bbT}
        \left[ d\left(
            z^{t+h,\lambda}(\eta_{h}(s)),
            \operatorname{im}(w^{t+h,\lambda})
        \right)^{2}\eta_{h}'(s)
        - d\left(
            z^{t,\lambda}(s),
            \operatorname{im}(w^{t,\lambda})
        \right)^{2} \right]ds \\
        &\quad\quad\leq
        \int_{\ell(z^{t,\lambda})\bbT}
        d\left(
            z^{t,\lambda}(s),
            \operatorname{im}(w^{t,\lambda})
        \right)^{2}
        \abs{\eta_{h}'(s) - 1}ds
        + 4rh \int_{\ell(z^{t,\lambda})\bbT}
        d\left(
            z^{t,\lambda}(s),
            \operatorname{im}(w^{t,\lambda})
        \right)
        \eta_{h}'(s)\,ds
        \\&\quad\quad\quad\quad\quad
        + h^{2} \int_{\ell(z^{t,\lambda})\bbT}
        \left(
            \norm{u_{1}^{t,\lambda}}_{L^{\infty}}
            + \norm{u_{2}^{t,\lambda}}_{L^{\infty}}
            + 2r
        \right)^{2}
        \eta_{h}'(s)\,ds
        \\&\quad\quad\quad\quad\quad
        + 2h \int_{\ell(z^{t,\lambda})\bbT}
        \left( z^{t,\lambda}(s) - w^{t,\lambda}(\psi_{\lambda}(s)) \right)\cdot
        \left( u_{1}^{t,\lambda}(s) - u_{2}^{t,\lambda}(\psi_{\lambda}(s))\right)
        \,ds
        \\&\quad\quad\quad\quad\quad
        + 2h \int_{\ell(z^{t,\lambda})\bbT}
        d\left(
            z^{t,\lambda}(s),
            \operatorname{im}(w^{t,\lambda})
        \right)
        \left(
            \norm{u_{1}^{t,\lambda}}_{L^{\infty}}
            + \norm{u_{2}^{t,\lambda}}_{L^{\infty}}
        \right)
        \abs{\eta_{h}'(s) - 1} ds.
    \end{align*}
    Since this holds for any $r\in(0,1]$ and all small enough $h>0$, \eqref{8.4a} now shows that
    \begin{align*}
        \partial_{t}^{+}\left(
            \sum_{\lambda\in\mathcal{L}}\abs{\theta^{\lambda}}
            D(z^{t,\lambda},w^{t,\lambda})
        \right)
        \leq \sum_{\lambda\in\mathcal{L}}\abs{\theta^{\lambda}}\left(
            CD(z^{t,\lambda},w^{t,\lambda}) + 2P_{1}^{\lambda}
        \right),
    \end{align*}
     with
    \begin{align*}
        P_{1}^{\lambda}\coloneqq
        \int_{\ell(z^{t,\lambda})\bbT}
        \left( z^{t,\lambda}(s) - w^{t,\lambda}(\psi_{\lambda}(s)) \right)\cdot
        \left( u_{1}^{t,\lambda}(s) - u_{2}^{t,\lambda}(\psi_{\lambda}(s))\right)
        \,ds.
    \end{align*}
    Thus it suffices to prove  that
    \begin{equation}\label{8.5}
        \abs{ \sum_{\lambda\in\mathcal{L}}\abs{\theta^{\lambda}}P_{1}^{\lambda}}
        \leq \frac{C}{m(\theta)}
        \sum_{\lambda\in\mathcal{L}}\abs{\theta^{\lambda}}
        D(z^{t,\lambda},w^{t,\lambda}).
    \end{equation}

    Since all the following estimates involve a single $t$, we will now drop it from the notation for the sake of simplicity.
    For each $\lambda\in\mathcal{L}$, let
    $\phi_{\lambda}\colon \ell(z^{\lambda})\bbT\to\ell(w^{\lambda})\bbT$
    be the orientation-preserving homeomorphism from Lemma~\ref{L8.1}
    with $\gamma_{1}\coloneqq z^{\lambda}$ and $\gamma_{2}\coloneqq w^{\lambda}$.
    Denote $\mathbf{T}^{\lambda}(s)\coloneqq
    \partial_{s}w^{\lambda}(\phi_{\lambda}(s))$ and
    $\mathbf{N}^{\lambda}(s)\coloneqq \mathbf{T}^{\lambda}(s)^{\perp}$, and then
    decompose the integrand in $P_{1}^{\lambda}$ into
    \begin{align*}
               & \left( z^{\lambda}(s) - w^{\lambda}(\psi_{\lambda}(s)) \right)\cdot
        \left( u_{2}^{\lambda}(\phi_{\lambda}(s)) - u_{2}^{\lambda}(\psi_{\lambda}(s))\right)
        \\& 
        + \left[ (z^{\lambda}(s) - w^{\lambda}(\psi_{\lambda}(s)))
        \cdot \mathbf{T}^{\lambda}(s)\right]
        \left[ (u_{1}^{\lambda}(s) - u_{2}^{\lambda}(\phi_{\lambda}(s)))
        \cdot \mathbf{T}^{\lambda}(s)\right]
        \\& 
        + \left[ (w^{\lambda}(\phi_{\lambda}(s)) - w^{\lambda}(\psi_{\lambda}(s)))
        \cdot \mathbf{N}^{\lambda}(s)\right]
        \left[ (u_{1}^{\lambda}(s) - u_{2}^{\lambda}(\phi_{\lambda}(s)))
        \cdot \mathbf{N}^{\lambda}(s)\right]
        \\& 
        + \left[ (z^{\lambda}(s) - w^{\lambda}(\phi_{\lambda}(s)))
        \cdot \mathbf{N}^{\lambda}(s)\right]
        \left[ (u_{1}^{\lambda}(s) - u_{2}^{\lambda}(\phi_{\lambda}(s)))
        \cdot \mathbf{N}^{\lambda}(s)\right].
    \end{align*}
   We call the resulting integrals
    $P_{2}^{\lambda}$, $P_{3}^{\lambda}$, $P_{4}^{\lambda}$, and $P_{5}^{\lambda}$,
    respectively, and estimate them separately.
        
\smallskip
    \textbf{Estimate for $P_{2}^{\lambda}$.} By Lemma~\ref{L4.6} (with $\beta\coloneqq\frac{1}{2}$),
    \eqref{8.3}, \eqref{111.17}, and Lemma~\ref{L8.1}(f),
    \begin{align*}
        \abs{u_{2}^{\lambda}(\phi_{\lambda}(s))
        - u_{2}^{\lambda}(\psi_{\lambda}(s))}
        &\leq \norm{\partial_{s}u_{2}^{\lambda}}_{L^{\infty}}
        \abs{\phi_{\lambda}(s) - \psi_{\lambda}(s)}
        \leq C\left(d(z^{\lambda}(s),\operatorname{im}(w^{\lambda}))
        + d_{\mathrm{F}}(z^{\lambda},w^{\lambda})^{2}\right).
    \end{align*}
    Therefore \eqref{111.17}, Lemma~\ref{L8.1}(a,b), \eqref{8.2}, and \eqref{8.4} yield
    \begin{align*}
       \abs{P_{2}^{\lambda}} & \le  C
            \int_{\ell(z^{\lambda})\bbT}
            d(z^{\lambda}(s),\operatorname{im}(w^{\lambda})) \left[ d(z^{\lambda}(s),\operatorname{im}(w^{\lambda}))
            + d_{\mathrm{F}}(z^{\lambda},w^{\lambda})^{2} \right]\,ds
        \\&
        \leq CD(z^{\lambda},w^{\lambda})
        + CD(z^{\lambda},w^{\lambda})^{1/2}\cdot
        D(z^{\lambda},w^{\lambda})^{3/5}
        \leq CD(z^{\lambda},w^{\lambda}).
    \end{align*}
        
\smallskip
    \textbf{Estimate for $P_{3}^{\lambda}$.} 
    We have $(z^{\lambda}(s) - w^{\lambda}(\psi_{\lambda}(s)))
    \cdot \partial_{s}w^{\lambda}(\psi_{\lambda}(s)) = 0$ because 
    $\psi_{\lambda}(s)$  minimizes
    $\abs{z^{\lambda}(s) - w^{\lambda}(\,\cdot\,)}$, hence    Lemma~\ref{L8.1}(f) shows that
    with $\kappa_{2}^{\lambda}\coloneqq \partial_{s}^{2}w^{\lambda}\cdot
    (\partial_{s}w^{\lambda})^{\perp}$ we have
    \begin{align*}
       |(z^{\lambda}(s) - w^{\lambda}(\psi_{\lambda}(s)))
        \,\cdot & \,\mathbf{T}^{\lambda}(s)|
        = \abs{(z^{\lambda}(s) - w^{\lambda}(\psi_{\lambda}(s)))
        \cdot (\partial_{s}w^{\lambda}(\phi_{\lambda}(s))
        - \partial_{s}w^{\lambda}(\psi_{\lambda}(s)))}
        \\&
        \leq Cd(z^{\lambda}(s), \operatorname{im}(w^{\lambda}))
        \,\mathcal{M}\kappa_{2}^{\lambda}(\phi_{\lambda}(s))
        \left(
            d(z^{\lambda}(s), \operatorname{im}(w^{\lambda}))
            + d_{\mathrm{F}}(z^{\lambda},w^{\lambda})^{2}
        \right),
    \end{align*}
  where $\mathcal{M}$ is the maximal operator defined in \eqref{111.18} (recall also that $\partial_{s}^{2}w^{\lambda}$ and   $(\partial_{s}w^{\lambda})^{\perp}$ are parallel because $w^\lambda(\cdot)$ is an arclength  parametrization).
    From Lemma~\ref{L4.4} (with $\beta\coloneqq\frac{1}{2}$), \eqref{8.2}, \eqref{8.3}, and
    Lemma~\ref{L8.1}(a,b) we know that
    \begin{align*}
        \norm{u_{1}^{\lambda} - u_{2}^{\lambda}\circ\phi_{\lambda}}_{L^{\infty}}
        &\leq Cd_{\mathrm{F}}(z,w)^{1-2\alpha}
        \leq C\max_{\lambda'\in\mathcal{L}}
        D(z^{\lambda'},w^{\lambda'})^{\frac{3-6\alpha}{10}},
    \end{align*}
    thus Lemma~\ref{L8.1}(a,b,c), 
    \eqref{8.2}, \eqref{8.4},  and \eqref{3.1} show that
    \begin{align*}
        \abs{P_{3}^{\lambda}}
        &\leq C 
            \max_{\lambda'\in\mathcal{L}}
            D(z^{\lambda'},w^{\lambda'})^{\frac{3-6\alpha}{10}}
        d_{\mathrm{F}}(z^{\lambda},w^{\lambda})
        \\&\quad\quad\quad\quad
        \int_{\ell(z^{\lambda})\bbT}
          \mathcal{M}\kappa_{2}^{\lambda}(\phi_{\lambda}(s))
        \left(
            d(z^{\lambda}(s), \operatorname{im}(w^{\lambda}))
            + d_{\mathrm{F}}(z^{\lambda},w^{\lambda})^{2}
        \right)
      \,ds \\
        &\leq C
        \norm{\mathcal{M}\kappa_{2}^{\lambda}\circ\phi_{\lambda}}_{L^{2}}
            \max_{\lambda'\in\mathcal{L}}
            D(z^{\lambda'},w^{\lambda'})^{\frac{6-6\alpha}{10}}
        \left(
            D(z^{\lambda},w^{\lambda})^{1/2}
            + D(z^{\lambda},w^{\lambda})^{3/5}
        \right) \\
        &\leq C\norm{\mathcal{M}\kappa_{2}^{\lambda}}_{L^{2}}
            \max_{\lambda'\in\mathcal{L}}
            D(z^{\lambda'},w^{\lambda'})^{\frac{11-6\alpha}{10}} \\
        &\leq C\max_{\lambda'\in\mathcal{L}}
        D(z^{\lambda'},w^{\lambda'}).
    \end{align*}
        
\smallskip
    \textbf{Estimate for $P_{4}^{\lambda}$.} From \eqref{111.17} and Lemma~\ref{L8.1}(f) we see that
    \begin{align*}
        \abs{(w^{\lambda}(\phi_{\lambda}(s)) - w^{\lambda}(\psi_{\lambda}(s)))
        \cdot \mathbf{N}^{\lambda}(s)}
        &\le \abs{\int_{\psi_{\lambda}(s)}^{\phi_{\lambda}(s)}
        \left| \partial_{s}w^{\lambda}(s')
        - \partial_{s}w^{\lambda}(\phi_{\lambda}(s)) \right|
        \,ds'} \\
        &\leq C\abs{\phi_{\lambda}(s) - \psi_{\lambda}(s)}^{2}
        \mathcal{M}\kappa_{2}^{\lambda}(\phi_{\lambda}(s)) \\
        &\leq C\left(
            d(z^{\lambda}(s), \operatorname{im}(w^{\lambda}))
            + d_{\mathrm{F}}(z^{\lambda},w^{\lambda})^{2}
        \right)^{2}
        \mathcal{M}\kappa_{2}^{\lambda}(\phi_{\lambda}(s)),
    \end{align*}
    and then a similar argument as for $P_{3}^{\lambda}$ yields
    \begin{align*}
        \abs{P_{4}^{\lambda}}
        &\leq C\max_{\lambda'\in\mathcal{L}}
        D(z^{\lambda'},w^{\lambda'}).
    \end{align*}
        
\smallskip
    \textbf{Estimate for $P_{5}^{\lambda}$.}
    We have $P_{5}^{\lambda}=\sum_{\lambda'\in\mathcal{L}}\theta^{\lambda'}
    \left(P_{6}^{\lambda,\lambda'} + P_{7}^{\lambda,\lambda'}
    + P_{8}^{\lambda,\lambda'}\right)$, where
    \begin{align*}
        &P_{6}^{\lambda,\lambda'} \coloneqq
        -\int_{\ell(z^{\lambda})\bbT\times\ell(z^{\lambda'})\bbT}
        \left[
            K(z^{\lambda}(s) - z^{\lambda'}(s'))
            - K(w^{\lambda}(\phi_{\lambda}(s)) - w^{\lambda'}(\phi_{\lambda'}(s')))
        \right]
        \\&\quad\quad\quad\quad\quad\quad\quad\quad\quad\quad\ 
        \left(
            \left(
                z^{\lambda}(s) - w^{\lambda}(\phi_{\lambda}(s))
            \right)
            \cdot\mathbf{N}^{\lambda}(s)
        \right)
        \left(\partial_{s}(w^{\lambda'}\circ\phi_{\lambda'})(s')
        \cdot \mathbf{N}^{\lambda}(s) \right)
        \,ds'\,ds, \\
        &P_{7}^{\lambda,\lambda'} \coloneqq
        \int_{\ell(z^{\lambda})\bbT\times\ell(z^{\lambda'})\bbT}
        K(z^{\lambda}(s) - z^{\lambda'}(s'))
        \left(
            (z^{\lambda}(s) - w^{\lambda}(\phi_{\lambda}(s)))
            \cdot\mathbf{T}^{\lambda}(s)
        \right)
        \\&\quad\quad\quad\quad\quad\quad\quad\quad\quad\ 
        \left[
            \left( \partial_{s}z^{\lambda'}(s')
            - \partial_{s}(w^{\lambda'}\circ\phi_{\lambda'})(s') \right)
            \cdot\mathbf{T}^{\lambda}(s)
        \right]\,ds'\,ds, \\
        &P_{8}^{\lambda,\lambda'}\coloneqq
        -\int_{\ell(z^{\lambda})\bbT\times\ell(z^{\lambda'})\bbT}
        K(z^{\lambda}(s) - z^{\lambda'}(s'))
        \\&\quad\quad\quad\quad\quad\quad\quad\quad\quad\quad
        \left(
            z^{\lambda}(s) - w^{\lambda}(\phi_{\lambda}(s))
        \right) \cdot \left(
            \partial_{s}z^{\lambda'}(s')
            - \partial_{s}(w^{\lambda'}\circ\phi_{\lambda'})(s')
        \right)ds'\,ds
    \end{align*}
and we also used $(u\cdot \mathbf{N}^\lambda)(v\cdot \mathbf{N}^\lambda) = u\cdot v - (u\cdot \mathbf{T}^\lambda)(v\cdot \mathbf{T}^\lambda)$.

    Since     
    $\abs{a^{2\alpha} - 1}\leq\abs{a - 1}$  and
    $a(b-c) \leq 2a^{2} + \max\set{b-c-a,0}^{2}$ when $a\geq 0$ and $b\geq c\ge 0$,
    \begin{align*}
        \abs{P_{6}^{\lambda,\lambda'}}&\leq
        \frac{c_{\alpha}}{2\alpha}
        \int_{\ell(z^{\lambda})\bbT\times\ell(z^{\lambda'})\bbT}
        \frac{\abs{z^{\lambda}(s) - w^{\lambda}(\phi_{\lambda}(s))}}
        {\abs{z^{\lambda}(s) - z^{\lambda'}(s')}^{2\alpha}}
        \abs{
            \frac{\abs{z^{\lambda}(s)
            - z^{\lambda'}(s')}}
            {\abs{w^{\lambda}(\phi_{\lambda}(s))
            - w^{\lambda'}(\phi_{\lambda'}(s'))}} - 1
        }
        \\&\quad\quad\quad\quad\quad\quad\quad\quad\ 
        \abs{\mathbf{T}^{\lambda'}(s')\cdot\mathbf{N}^{\lambda}(s)}
        \phi_{\lambda'}'(s')\,ds'\,ds \\
        &\leq
        \frac{c_{\alpha}}{2\alpha}
        \int_{\ell(z^{\lambda})\bbT\times\ell(z^{\lambda'})\bbT}
        \frac{2\abs{z^{\lambda}(s)
            - w^{\lambda}(\phi_{\lambda}(s))}^{2}
            + \abs{z^{\lambda'}(s')
            - w^{\lambda'}(\phi_{\lambda'}(s'))}^{2}
        }
        {\abs{z^{\lambda}(s) - z^{\lambda'}(s')}^{2\alpha}
        \abs{w^{\lambda}(\phi_{\lambda}(s))
        - w^{\lambda'}(\phi_{\lambda'}(s'))}}
        \\&\quad\quad\quad\quad\quad\quad\quad\quad\ 
        \abs{\mathbf{T}^{\lambda'}(s')\cdot\mathbf{N}^{\lambda}(s)}
        \phi_{\lambda'}'(s')\,ds'\,ds.
    \end{align*}
    Since $\phi_{\lambda'}'$ and $\phi_{\lambda}'$ are both between $\frac{1}{3}$
    and $3$ by Lemma~\ref{L8.1}(c), from Lemma~\ref{L4.5}(1) (once with $\beta\coloneqq\frac{1}{2}$,
    $\gamma_{1}\coloneqq w^{\lambda}$, $\gamma_{2}\coloneqq w^{\lambda'}$, $\gamma_{3}\coloneqq z^{\lambda'}$,
    $x\coloneqq z^{\lambda}(s)$, $s'\coloneqq\phi_{\lambda}(s)$ and $\phi\coloneqq \phi_{\lambda'}$,
    and a second time with $(\lambda,s)$ and $(\lambda',s')$ switched),
    \eqref{8.2}, \eqref{8.3}, and Lemma~\ref{L8.1}(b) we see that
    \begin{align*}
        \abs{P_{6}^{\lambda,\lambda'}}
        &\leq C\int_{\ell(z^{\lambda})\bbT}
        \abs{z^{\lambda}(s) - w^{\lambda}(\phi_{\lambda}(s))}^{2}\,ds
        + C\int_{\ell(z^{\lambda'})\bbT}
        \abs{z^{\lambda'}(s') - w^{\lambda'}(\phi_{\lambda'}(s'))}^{2}\,ds' \\
        &\leq C\max_{\lambda''\in\mathcal{L}}
        D(z^{\lambda''},w^{\lambda''}).
    \end{align*}

    Next, for any $c>0$ we have
    \begin{align*}
      \abs{P_{7}^{\lambda,\lambda'}} \le &   \frac{c_{\alpha}}{4\alpha c}
        \int_{\ell(z^{\lambda})\bbT\times\ell(z^{\lambda'})\bbT}
        \frac{\abs{\left(
            z^{\lambda}(s) - w^{\lambda}(\phi_{\lambda}(s))
        \right)
        \cdot\mathbf{T}^{\lambda}(s)}^{2}}
        {\abs{z^{\lambda}(s) - z^{\lambda'}(s')}^{2\alpha}}\,ds'\,ds \\
       & +  \frac{c_{\alpha}c}{4\alpha}\int_{\ell(z^{\lambda})\bbT\times\ell(z^{\lambda'})\bbT}
        \frac{\abs{
            \partial_{s}z^{\lambda'}(s')
            - \partial_{s}(w^{\lambda'}\circ\phi_{\lambda'})(s')
        }^{2}}
        {\abs{z^{\lambda}(s) - z^{\lambda'}(s')}^{2\alpha}}\,ds'\,ds.
    \end{align*}
By Lemma~\ref{L4.2} (with $\beta\coloneqq\frac{1}{2}$), \eqref{8.2}, and
    Lemma~\ref{L8.1}(b,e), the first term is bounded by
    \begin{align*}
        {Cc^{-1}D(z^{\lambda},w^{\lambda})^{9/5}}.
    \end{align*}
    By Lemma~\ref{L4.2} (with $\beta\coloneqq\frac{1}{2}$), \eqref{8.2}, and
    Lemma~\ref{L8.1}(a,b,d), the second term is bounded by
    \begin{align*}
        Cc D(z^{\lambda'},w^{\lambda'})^{1/5}.
    \end{align*}
    Hence, taking $c \coloneqq
    \max_{\lambda''\in\mathcal{L}}D(z^{\lambda''},w^{\lambda''})^{4/5}+r$
    and then letting $r\to 0^{+}$ shows that
    \begin{align*}
        \abs{P_{7}^{\lambda,\lambda'}} \leq
        C\max_{\lambda''\in\mathcal{L}}D(z^{\lambda''},w^{\lambda''}).
    \end{align*}

    Finally, we will bound $\sum_{\lambda,\lambda'\in\mathcal{L}}
    \abs{\theta^{\lambda}}\theta^{\lambda'}P_{8}^{\lambda,\lambda'}$.
    For each $\lambda\in\mathcal{L}$ and $s\in\ell(z^{\lambda})\bbT$ let
    \begin{align*}
        v^{\lambda}(s) &\coloneqq
        z^{\lambda}(s) - w^{\lambda}(\phi_{\lambda}(s)),
    \end{align*}
    so that
    \begin{align*}
        P_{8}^{\lambda,\lambda'}
        = -\int_{\ell(z^{\lambda})\bbT\times\ell(z^{\lambda'})\bbT}
        K(z^{\lambda}(s) - z^{\lambda'}(s'))
        \left(
            v^{\lambda}(s)\cdot\partial_{s}v^{\lambda'}(s')
        \right)
        \,ds'\,ds.
    \end{align*}
    If we let
    \begin{align*}
        P_{8,\eps}^{\lambda,\lambda'}
        \coloneqq  -\int_{\ell(z^{\lambda})\bbT\times\ell(z^{\lambda'})\bbT}
        K_{\eps}(z^{\lambda}(s) - z^{\lambda'}(s'))
        \left(
            v^{\lambda}(s)\cdot\partial_{s}v^{\lambda'}(s')
        \right)
        \,ds'\,ds
    \end{align*}
    for each $\eps>0$,
    then  $P_{8}^{\lambda,\lambda'}
    = \lim_{\eps\to 0^{+}}P_{8,\eps}^{\lambda,\lambda'}$ by Lemma~\ref{L4.3}.
    Fix any $\eps>0$.  Symmetrization shows that
    \begin{align*}
        \sum_{\lambda,\lambda'\in\mathcal{L}}
        \abs{\theta^{\lambda}}\theta^{\lambda'}P_{8,\eps}^{\lambda,\lambda'}
        &= -\frac{1}{2}\sum_{\lambda,\lambda'\in\mathcal{L}}
        \abs{\theta^{\lambda}\theta^{\lambda'}}
        \int_{\ell(z^{\lambda})\bbT\times\ell(z^{\lambda'})\bbT}
        K_{\eps}(z^{\lambda}(s) - z^{\lambda'}(s'))
        \\&\quad\quad\quad\quad\quad\quad\quad
        \left(
            \operatorname{sgn}(\theta^{\lambda'})
            v^{\lambda}(s)\cdot \partial_{s}v^{\lambda'}(s')
            + \operatorname{sgn}(\theta^{\lambda})
            \partial_{s}v^{\lambda}(s)\cdot v^{\lambda'}(s')
        \right)
        ds'\,ds.
    \end{align*}
    
    By splitting the two terms of the integrand into
    separate integrals and then performing the integration by parts on those integral
    in $s'$ and in $s$, respectively, we see that
    any summand with $\theta^{\lambda}\theta^{\lambda'}>0$ is equal to
    \begin{align*}
        \pm\abs{\theta^{\lambda}\theta^{\lambda'}}
        \int_{\ell(z^{\lambda})\bbT\times\ell(z^{\lambda'})\bbT}
        DK_{\eps}(z^{\lambda}(s) - z^{\lambda'}(s'))
        \left(
            \partial_{s}z^{\lambda}(s) - \partial_{s}z^{\lambda'}(s')
        \right)
        (v^{\lambda}(s)\cdot v^{\lambda'}(s'))\,ds'\,ds,
    \end{align*}
    whose absolute value is bounded by
    \begin{align*}
        C\abs{\theta^{\lambda}\theta^{\lambda'}}
        \int_{\ell(z^{\lambda})\bbT\times\ell(z^{\lambda'})\bbT}
        \frac{\abs{\partial_{s}z^{\lambda}(s) - \partial_{s}z^{\lambda'}(s')}}
        {\abs{z^{\lambda}(s) - z^{\lambda'}(s')}^{1+2\alpha}}
        \left(
            \abs{v^{\lambda}(s)}^{2} + \abs{v^{\lambda'}(s')}^{2}
        \right)\,ds'\,ds.
    \end{align*}
    When $\lambda'\in\Sigma^{\lambda}(z)$,
    Lemma~\ref{L4.5}(2) (once with $\beta\coloneqq\frac{1}{2}$,
    $\gamma_{1}\coloneqq z^{\lambda'}$, $\gamma_{2}=\gamma_{3}\coloneqq z^{\lambda}$,
    $x\coloneqq z^{\lambda'}(s')$, and $\phi\coloneqq {\rm Id}$, and a second time
    with $(\lambda,s)$ and $(\lambda',s')$ switched), \eqref{8.2},
    and Lemma~\ref{L8.1}(b) show that the last integral is bounded by
    \begin{align*}
        C\int_{\ell(z^{\lambda})\bbT}\abs{v^{\lambda}(s)}^{2}\,ds
        + C\int_{\ell(z^{\lambda'})\bbT}\abs{v^{\lambda'}(s')}^{2}\,ds'
        \leq
        C\max_{\lambda''\in\mathcal{L}}D(z^{\lambda''},w^{\lambda''}).
    \end{align*}
    When $\lambda'\notin\Sigma^{\lambda}(z)$,
    Lemma~\ref{L4.2} (with $\beta\coloneqq\frac{1}{2}$), \eqref{8.2},
    and Lemma~\ref{L8.1}(b) show it is bounded by
    \begin{align*}
        &\frac{C}{\Delta(z^{\lambda},z^{\lambda'})}
        \int_{\ell(z^{\lambda})\bbT\times\ell(z^{\lambda'})\bbT}
        \frac{\abs{v^{\lambda}(s)}^{2} + \abs{v^{\lambda'}(s')}^{2}}
        {\abs{z^{\lambda}(s) - z^{\lambda'}(s')}^{2\alpha}}\,ds'\,ds
        \\&\quad\quad\quad\leq
        C\int_{\ell(z^{\lambda})\bbT}\abs{v^{\lambda}(s)}^{2}\,ds
        + C\int_{\ell(z^{\lambda'})\bbT}\abs{v^{\lambda'}(s')}^{2}\,ds'
        \\&\quad\quad\quad\leq
        C\max_{\lambda''\in\mathcal{L}}D(z^{\lambda''},w^{\lambda''}).
    \end{align*}
    
    Similarly, any summand with $\theta^{\lambda}\theta^{\lambda'}<0$ is equal to
    \begin{align*}
        &\pm\abs{\theta^{\lambda}\theta^{\lambda'}}
        \int_{\ell(z^{\lambda})\bbT \times\ell(z^{\lambda'})\bbT}
        DK_{\eps}(z^{\lambda}(s) - z^{\lambda'}(s'))
        \left(
            \partial_{s}z^{\lambda}(s) + \partial_{s}z^{\lambda'}(s')
        \right)
        (v^{\lambda}(s)\cdot v^{\lambda'}(s'))\,ds'\,ds,
    \end{align*}
    which again can be estimated similarly via  Lemma~\ref{L4.5}(3).
    Letting $\eps\to 0^{+}$ then shows that
    \begin{align*}
        \left| \sum_{\lambda,\lambda'\in\mathcal{L}}\abs{\theta^{\lambda}}\theta^{\lambda'}
        P_{8}^{\lambda,\lambda'} \right|
        \leq C \max_{\lambda\in\mathcal{L}}D(z^{\lambda},w^{\lambda}).
    \end{align*}
    This and the above estimates for
    $P_{j}^{\lambda}$ ($j=2,3,4$),  $P_{6}^{\lambda,\lambda'}$, $P_{7}^{\lambda,\lambda'}$ now yield
    \[
    \abs{ \sum_{\lambda\in\mathcal{L}}\abs{\theta^{\lambda}}P_{1}^{\lambda}}
     \le C   \max_{\lambda\in\mathcal{L}}D(z^{\lambda},w^{\lambda})
        \leq \frac{C }{m(\theta)}\sum_{\lambda\in\mathcal{L}}
        \abs{\theta^{\lambda}}D(z^{\lambda},w^{\lambda}),
    \]
    which is the desired estimate \eqref{8.5}.
\end{proof}

\begin{proof}[Proof of uniqueness in Theorem \ref{T2.5}]
    Let $z\in C_{\rm loc}([0,T_{z^0});\operatorname{PSC}(\bbR^{2})^{\mathcal{L}})$
    be an $H^{2}$ patch solution to \eqref{2.4} from the existence proof at the end of Section \ref{S7}.
    Let $w\colon [0,T]\to \operatorname{PSC}(\bbR^{2})^{\mathcal{L}}$
    be an $H^{2}$ patch solution  to \eqref{2.4}  with $d_{\mathrm{H}}$ in place of $d_{\mathrm{F}}$,
    satisfying $w^0=z^0$ and $T< T_{z^0}$. Let
    \[
        M \coloneqq \sup_{t\in [0,T]}\max\set{L(z^{t}),\norm{w^{t}}_{\dot{H}^{2}}^{2}} < \infty
        \qquad\text{and}\qquad
        I\coloneqq \set{t\in[0,T]\colon z|_{[0,t]} = w|_{[0,t]}},
    \]
    so that $z(T') = w(T')$ holds with $T'\coloneqq\sup I$.  We will now show that $\min_{\lambda\in\mathcal{L}}\Delta_{1/M}(w^{t,\lambda})$
    is bounded below for all $t\in[T', T]$ sufficiently close to $T'$.
    Fix $\lambda\in\mathcal{L}$, and for each $t\in[T',T]$,
    choose an arclength parametrization of $w^{t,\lambda}$ and
    $s_{1,t},s_{2,t}\in\ell(w^{t,\lambda})\bbT$ such that
    $\abs{s_{1,t} - s_{2,t}}\in\left[\frac{1}{M},\frac{\ell(w^{t,\lambda})}{2}\right]$ and
    $\Delta_{1/M}(w^{t,\lambda}) = \abs{w^{t,\lambda}(s_{1,t}) - w^{t,\lambda}(s_{2,t})}$.
    Also pick $s_{1,t}',s_{2,t}'\in\ell(w^{T',\lambda})\bbT$ so that
    $d(w^{t,\lambda}(s_{i,t}), \operatorname{im}(w^{T',\lambda}))
    = \abs{w^{t,\lambda}(s_{i,t}) - w^{T',\lambda}(s_{i,t}')}$ $(i=1,2)$, and then
    \[
        \Delta_{1/M}(w^{t,\lambda}) \geq
        \abs{w^{T',\lambda}(s_{1,t}') - w^{T',\lambda}(s_{2,t}')}
        - 2d_{\mathrm{H}}(w^{t,\lambda}, w^{T',\lambda}).
    \]
    If $\abs{s_{1,t}' - s_{2,t}'}\in\left[\frac{1}{M},\ell(w^{T',\lambda})-\frac{1}{M}\right]$, then
    \[
        \Delta_{1/M}(w^{t,\lambda}) \geq
        \Delta_{1/M}(w^{T',\lambda}) - 2d_{\mathrm{H}}(w^{t,\lambda}, w^{T',\lambda})
        = \Delta_{1/M}(z^{T',\lambda}) - 2d_{\mathrm{H}}(w^{t,\lambda}, w^{T',\lambda}).
    \]
    Otherwise Lemma~\ref{L3.1} (with $\beta\coloneqq\frac{1}{2}$)
    and the mean value theorem show that
    \begin{align*}
        \Delta_{1/M}(w^{t,\lambda}) &\geq
        \abs{(w^{T',\lambda}(s_{1,t}') - w^{T',\lambda}(s_{2,t}'))
        \cdot \partial_{s}w^{T',\lambda}(s_{1,t}')}
        - 2d_{\mathrm{H}}(w^{t,\lambda}, w^{T',\lambda}) \\
        &\geq \frac 12 {\min\set{\abs{s_{1,t}' - s_{2,t}'},
        \ell(w^{T',\lambda}) - \abs{s_{1,t}' - s_{2,t}'}}}
        - 2d_{\mathrm{H}}(w^{t,\lambda}, w^{T',\lambda}) \\
        &\geq \frac{1}{2M} - 2d_{\mathrm{H}}(w^{t,\lambda}, w^{T',\lambda}).
    \end{align*}
    Since $w^{\lambda}$ is continuous with respect to $d_{\mathrm{H}}$,
   we see that $\Delta_{1/M}(w^{t,\lambda}) \geq \frac 12\min\{ \frac 1{2M}, \Delta_{1/M}(z^{T',\lambda})\}$
   for all $t\in[T',T]$ sufficiently close to $T'$.
    %
    %
    Hence Proposition~\ref{P8.3} yields
    $z^{t} = w^{t}$ for all $t\in[T',T]$ sufficiently close to $T'$,
    so we must have $T'=T$ and $w=z|_{[0,T]}$.
    \end{proof}

\section{A blow-up criterion for \eqref{2.4} and the proof of Theorem \ref{T2.7}}\label{S10}

We now assume that $T_{z^0}<\infty$ and let $z$ be the solution  from the existence proof at the end of Section~\ref{S7}.  We will show that $z$ satisfies  \eqref{111.29}.
Recall that we have $\lim_{t\to T_{z^0}^-} L(z^t)=\infty$, so it suffices to show that the second $\max$ in \eqref{111.23} cannot diverge to $\infty$ if \eqref{111.29} fails.  

%

This is an immediate consequence of \eqref{111.27} and the last claims in Lemmas \ref{P10.2} and \ref{P10.3} below.  These are versions of Lemmas~\ref{L6.5} and \ref{L6.6} for $z$, with \eqref{111.15} playing the role of \eqref{5.1} and with the factor
$W_{0}L(z_{\eps}^{t})^{2+2\alpha}$ in \eqref{6.3} and \eqref{6.5}  replaced by one that includes  $Q(z^t)$ instead of $L(z_\eps^t)$ (here again $W_{0}\coloneqq W(z^{0})$).
This is possible because $L(z_\eps^{t})^{1+2\alpha}$ in the former factor appeared as an upper bound on $\max\left\{ \norm{z_{\eps}^{t,\lambda}}_{\dot{H}^{2}}, \norm{z_{\eps}^{t,\lambda''}}_{\dot{H}^{2}} \right\}^{2+4\alpha}$, and hence can clearly be replaced by $Q(z_\eps^{t})^{1+2\alpha}$.  The remaining factor $W_{0}L(z_\eps^{t})$ appeared when we
 bounded  $\ell(z_\eps^{t,\lambda})$
by ${\abs{\Omega(z_\eps^{t,\lambda})}}{\Delta_{1/Q(z_\eps^{t})}(z_\eps^{t,\lambda})}^{-1}$, but we will now use \eqref{2.1} for this purpose instead.  

Finally, the last claim in Theorem \ref{T2.5} follows from Lemma \ref{L6.5}.  Indeed, the lemma shows that patch boundaries that do not touch initially also do not touch prior to $T_{z^0}$, while time reversibility of \eqref{2.4} and the lemma also imply that patches that touch initially will continue touching until $T_{z^0}$. 

All constants $C_\alpha$ below will again only depend on $\alpha$ and may change from line to line.

\begin{lemma}\label{P10.1}
    There is $C_{\alpha}$ such that for each $(t,\lambda)\in[0,T_{z^0})\times \mathcal{L}$ we have
    \beq\lb{111.28}
    \partial_{t}^{+}\norm{z^{t,\lambda}}_{L^{\infty}}
        \leq C_{\alpha}\abs{\theta}W_{0}^{\frac{1}{2}-\alpha},
      \eeq
      and so
    \begin{equation}\label{10.2}
        \ell(z^{t,\lambda}) \leq
        \left(
            \norm{z^{0,\lambda}}_{L^{\infty}}
            + C_{\alpha}\abs{\theta}W_{0}^{\frac{1}{2} - \alpha}t
        \right)^{2}\norm{z^{t,\lambda}}_{\dot{H}^{2}}^{2}.
    \end{equation}
\end{lemma}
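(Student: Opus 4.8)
\textbf{Proof plan for Lemma~\ref{P10.1}.} The structure mirrors the proof of the first claim of Lemma~\ref{L6.5}, but now with \eqref{111.15} in place of the ODE \eqref{5.1}. First I would fix $(t,\lambda)\in[0,T_{z^0})\times\mathcal L$, pick an arclength parametrization of $z^{t,\lambda}$, and choose $s_t\in\ell(z^{t,\lambda})\bbT$ achieving $\norm{z^{t,\lambda}}_{L^\infty}=\abs{z^{t,\lambda}(s_t)}$. For small $h>0$, apply Proposition~\ref{P7.3}: there is a Lipschitz homeomorphism $\phi\colon\ell(z^{t,\lambda})\bbT\to\ell(z^{t+h,\lambda})\bbT$ with $\norm{z^{t+h,\lambda}\circ\phi - z^{t,\lambda} - hu(z^t)\circ z^{t,\lambda}}_{L^\infty}\le C\abs h^{2-2\alpha}$. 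Evaluating at $s_t$ and using that $\norm{z^{t+h,\lambda}}_{L^\infty}\ge \abs{(z^{t+h,\lambda}\circ\phi)(s_t)}$ gives
\[
\norm{z^{t+h,\lambda}}_{L^\infty}-\norm{z^{t,\lambda}}_{L^\infty}
\le \abs{(z^{t+h,\lambda}\circ\phi)(s_t)}-\abs{z^{t,\lambda}(s_t)}
\le h\,\abs{u(z^t;z^{t,\lambda}(s_t))} + C\abs h^{2-2\alpha}.
\]
By Lemma~\ref{L4.1} we have $\abs{u(z^t;x)}\le\norm{u(z^t)}_{L^\infty}\le c_\alpha C_\alpha\abs\theta W(z^t)^{1/2-\alpha}$, and by \eqref{111.27} we have $W(z^t)=\max_\lambda\abs{\Omega(z^{t,\lambda})}=\max_\lambda\abs{\Omega(z^{0,\lambda})}=W_0$. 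Dividing by $h$ and letting $h\to0^+$ yields $\partial_t^+\norm{z^{t,\lambda}}_{L^\infty}\le C_\alpha\abs\theta W_0^{1/2-\alpha}$, which is \eqref{111.28}.

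Next I would integrate this Dini-derivative bound. Since $z\in C_{\rm loc}([0,T_{z^0});\operatorname{PSC}(\bbR^2)^{\mathcal L})$, the map $t\mapsto\norm{z^{t,\lambda}}_{L^\infty}$ is continuous (indeed the curves vary continuously in $d_{\mathrm F}$, hence in Hausdorff distance, hence their $L^\infty$-norms are continuous), so a standard Gr\"onwall/Dini argument (a continuous function with $\partial_t^+f\le K$ satisfies $f(t)\le f(0)+Kt$) gives
\[
\norm{z^{t,\lambda}}_{L^\infty}\le \norm{z^{0,\lambda}}_{L^\infty}+C_\alpha\abs\theta W_0^{1/2-\alpha}\,t
\]
for all $t\in[0,T_{z^0})$. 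Finally, \eqref{2.1} (i.e.\ $\ell(\gamma)\le\norm\gamma_{L^\infty}^2\norm\gamma_{\dot H^2}^2$) applied to $\gamma=z^{t,\lambda}$ gives \eqref{10.2} directly.

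I do not expect a serious obstacle here; the only points requiring a little care are (i) confirming continuity of $t\mapsto\norm{z^{t,\lambda}}_{L^\infty}$ so the Dini-to-integral passage is legitimate — this follows because $d_{\mathrm F}$-convergence implies convergence of $L^\infty$-norms of curves (or alternatively one can invoke that $z$ is continuous into $\operatorname{PSC}(\bbR^2)^{\mathcal L}$ and that $\norm\cdot_{L^\infty}$ is continuous there), and (ii) making sure the one-sided bound at $t$ combined with the $C\abs h^{2-2\alpha}=o(h)$ error term (valid since $\alpha<\tfrac12$) really produces $\partial_t^+\norm{z^{t,\lambda}}_{L^\infty}$ with no $h$-dependence in the leading term, which it does because $\norm{u(z^t)}_{L^\infty}$ depends only on $t$ and $W_0$ via Lemma~\ref{L4.1} and \eqref{111.27}. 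Everything else is bookkeeping.
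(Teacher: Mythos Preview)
Your approach is essentially the paper's, but the displayed inequality chain has the direction reversed. With $s_t$ chosen to maximize $\abs{z^{t,\lambda}(\cdot)}$, the relation $\norm{z^{t+h,\lambda}}_{L^\infty}\ge \abs{(z^{t+h,\lambda}\circ\phi)(s_t)}$ combined with $\norm{z^{t,\lambda}}_{L^\infty}=\abs{z^{t,\lambda}(s_t)}$ yields
\[
\norm{z^{t+h,\lambda}}_{L^\infty}-\norm{z^{t,\lambda}}_{L^\infty}
\ \ge\ \abs{(z^{t+h,\lambda}\circ\phi)(s_t)}-\abs{z^{t,\lambda}(s_t)},
\]
not $\le$. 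To get an \emph{upper} bound you must either pick the maximizer at time $t+h$ (and use $\norm{z^{t,\lambda}}_{L^\infty}\ge\abs{z^{t,\lambda}(\phi^{-1}(s_{t+h}))}$), or---simpler, and what the paper does---use that $\phi$ is a bijection so $\norm{z^{t+h,\lambda}}_{L^\infty}=\norm{z^{t+h,\lambda}\circ\phi}_{L^\infty}$ and apply the triangle inequality directly:
\[
\norm{z^{t+h,\lambda}}_{L^\infty}
\le \norm{z^{t,\lambda}+hu(z^t)\circ z^{t,\lambda}}_{L^\infty}+C\abs{h}^{2-2\alpha}
\le \norm{z^{t,\lambda}}_{L^\infty}+h\norm{u(z^t)}_{L^\infty}+C\abs{h}^{2-2\alpha}.
\]
With this correction, the rest of your argument (Lemma~\ref{L4.1}, \eqref{111.27}, the Dini--Gr\"onwall integration, and \eqref{2.1}) is correct and matches the paper exactly.
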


\begin{proof}
    Fix any $(t,\lambda)\in[0,T_{z^0})\times \mathcal{L}$ and let $T_0,C$ be  from Proposition~\ref{P7.3} with $L(z^t)$ in place of $L(z^0)$.  Then $t+T_0<  T_{z^0}$, and
 for any $h\in(0,T_0]$  fix any arclenth parametrizations of $z^{t,\lambda}$ and $z^{t+h,\lambda}$.  Let $\phi$ be the corresponding orientation-preserving homeomorphism from Proposition~\ref{P7.3}.
    Then \eqref{111.15},
    Lemma~\ref{L4.1}, and \eqref{111.27} show that
    \begin{align*}
        \frac{\norm{z^{t + h,\lambda}}_{L^{\infty}}
        - \norm{z^{t,\lambda}}_{L^{\infty}}}{h}
        \leq \norm{u(z^{t})}_{L^{\infty}} + C\abs{h}^{1-2\alpha}
        \le C_{\alpha}\abs{\theta}W_{0}^{\frac{1}{2} - \alpha} + C\abs{h}^{1-2\alpha}.
    \end{align*}
    Taking $h\to 0^{+}$ yields \eqref{111.28},
and then  $\norm{z^{t,\lambda}}_{L^{\infty}}  \leq \norm{z^{0,\lambda}}_{L^{\infty}} + C_{\alpha}\abs{\theta}W_{0}^{\frac{1}{2} - \alpha}t$ follows for each $(t,\lambda)\in[0,T_{z^0})\times \mathcal{L}$, with \eqref{2.1} now implying  \eqref{10.2}.
\end{proof}

We  now obtain analogs of Lemmas~\ref{L6.5} and \ref{L6.6} via  similar proofs.

\begin{lemma}\label{P10.2}
Lemma \ref{L6.5} holds for all $t\in[0,T_{z^0})$, with $z_\eps,u_\eps$, and \eqref{6.3} replaced by $z,u$, and
    \begin{equation}\label{10.4}
        \partial_{t+}
        \Delta(z^{t,\lambda},z^{t,\lambda'}) \geq
        -C_{\alpha}m(\theta)\left(
            \norm{z^{0}}_{L^{\infty}}
            + C_{\alpha}\abs{\theta}W_{0}^{\frac{1}{2} - \alpha}t
        \right)^{2}
        Q(z^t)^{2 + 2\alpha}
        \Delta(z^{t,\lambda},z^{t,\lambda'}).
    \end{equation}
\end{lemma}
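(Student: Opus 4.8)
The plan is to mimic the proof of Lemma~\ref{L6.5} almost verbatim, using \eqref{111.15} from Proposition~\ref{P7.3} in place of the ODE \eqref{5.1} that was available for the mollified solutions. First I would establish the Lipschitz-in-$t$ bound for $\Delta(z^{t,\lambda},z^{t,\lambda'})$ with constant $C_\alpha\abs{\theta}W_0^{\frac12-\alpha}$: this follows exactly as in Lemma~\ref{L6.5} because for any fixed $\xi_t,\xi_t'$ achieving $\Delta(z^{t,\lambda},z^{t,\lambda'})$, one uses \eqref{111.15} (with the homeomorphism $\phi$ there) to move these points by approximately $h u(z^t)$, and then controls $\norm{u(z^t)}_{L^\infty}$ by Lemma~\ref{L4.1} and \eqref{111.27}, with the $C\abs{h}^{2-2\alpha}$ error term being negligible after dividing by $h$ and letting $h\to 0^+$.

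Next I would prove \eqref{6.2} for $z$ in place of $z_\eps$. This is where a little care is needed, since we no longer have a flow map; instead we have, for each small $h>0$, a homeomorphism $\phi_h:\ell(z^{t,\lambda})\bbT\to\ell(z^{t+h,\lambda})\bbT$ satisfying \eqref{111.15}. Picking $\xi_t,\xi_t'$ (now arclength arguments $s,s'$) achieving $\Delta(z^{t,\lambda},z^{t,\lambda'})$, one takes a decreasing sequence $t_n\to t$ realizing $\partial_{t+}\Delta$, passes to a subsequence so that the minimizing arguments converge to some $(s,s')$ with $\Delta(z^{t,\lambda},z^{t,\lambda'})=\abs{z^{t,\lambda}(s)-z^{t,\lambda'}(s')}$, and then uses the lower bound
\[
    \Delta(z^{t_n,\lambda},z^{t_n,\lambda'}) - \Delta(z^{t,\lambda},z^{t,\lambda'})
    \geq \frac{z^{t,\lambda}(s_n)-z^{t,\lambda'}(s_n')}{\abs{z^{t,\lambda}(s_n)-z^{t,\lambda'}(s_n')}}\cdot\big(z^{t_n,\lambda}(\phi_{t_n-t}(s_n))-z^{t_n,\lambda'}(\phi_{t_n-t}'(s_n'))-z^{t,\lambda}(s_n)+z^{t,\lambda'}(s_n')\big),
\]
substituting \eqref{111.15} to replace the bracket by $(t_n-t)[u(z^t;z^{t,\lambda}(s_n))-u(z^t;z^{t,\lambda'}(s_n'))]+O((t_n-t)^{2-2\alpha})$; dividing by $t_n-t$, sending $n\to\infty$, and using continuity of $u(z^t;\cdot)$ (Lemma~\ref{L4.4}) gives \eqref{6.2}.

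Finally, \eqref{10.4} follows from \eqref{6.2} exactly as \eqref{6.3} followed from \eqref{6.2} in Lemma~\ref{L6.5}, but with the two substitutions announced in the text just before the statement. Since $\partial_\xi z^{t,\lambda}(s)$ and $\partial_\xi z^{t,\lambda'}(s')$ are both orthogonal to $z^{t,\lambda}(s)-z^{t,\lambda'}(s')$, Lemma~\ref{L4.7} bounds the right side of \eqref{6.2} below by $-C_\alpha\Delta(z^{t,\lambda},z^{t,\lambda'})\sum_{\lambda''}\abs{\theta^{\lambda''}}\ell(z^{t,\lambda''})\max\{\norm{z^{t,\lambda}}_{\dot H^2},\norm{z^{t,\lambda''}}_{\dot H^2}\}^{2+4\alpha}$; replacing the $\max\{\cdots\}^{2+4\alpha}$ factor by $Q(z^t)^{1+2\alpha}$ (using $Q(z^t)\ge\norm{z^{t,\lambda''}}_{\dot H^2}^2$) and replacing $\ell(z^{t,\lambda''})$ by $(\norm{z^{0,\lambda''}}_{L^\infty}+C_\alpha\abs{\theta}W_0^{\frac12-\alpha}t)^2\norm{z^{t,\lambda''}}_{\dot H^2}^2\le(\norm{z^0}_{L^\infty}+C_\alpha\abs{\theta}W_0^{\frac12-\alpha}t)^2 m(\theta)Q(z^t)$ via \eqref{10.2} and \eqref{111.50} yields \eqref{10.4}. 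The main obstacle is purely bookkeeping: one must verify that \eqref{111.15}, which only controls a single curve $z^{t,\lambda}$ at a time through a $\lambda$-dependent reparametrization, suffices to run the two-curve argument for \eqref{6.2}—it does, because the minimizing points $s,s'$ on the two curves are handled independently by the respective homeomorphisms $\phi_{t_n-t}$ for $z^\lambda$ and for $z^{\lambda'}$, and the $O((t_n-t)^{2-2\alpha})$ errors from each vanish after dividing by $t_n-t$.
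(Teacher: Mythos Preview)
Your proposal is correct and follows essentially the same route as the paper: Lipschitz continuity of $\Delta$ via \eqref{111.15} and Lemma~\ref{L4.1}, the analog of \eqref{6.2} by pulling back the time-$t_n$ minimizers through the homeomorphisms of Proposition~\ref{P7.3} (the paper writes $\tilde s_n=\phi_{t_n,\lambda}^{-1}(s_{t_n})$ for what you call $s_n$), and then \eqref{10.4} from Lemma~\ref{L4.7} and \eqref{10.2}. One small inaccuracy in your last step: the bound $\norm{z^{t,\lambda''}}_{\dot H^2}^2\le m(\theta)Q(z^t)$ is false in general (take $m(\theta)<1$); the $m(\theta)$ factor appears only after summing, via $\sum_{\lambda''}\abs{\theta^{\lambda''}}\norm{z^{t,\lambda''}}_{\dot H^2}^2=m(\theta)Q(z^t)$, so you should bound $\ell(z^{t,\lambda''})\le(\cdots)^2\norm{z^{t,\lambda''}}_{\dot H^2}^2$ first and then sum.
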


\begin{proof}
Consider again the setup from the proof of Lemma~\ref{P10.1}.
    The construction of $z$ in Section \ref{S7} and Lemma~\ref{L6.5} show that
    $\Delta(z^{t,\lambda},z^{t,\lambda'})$ is Lipschitz continuous on $[0,T_{z^0})$
    with Lipschitz constant  $C_{\alpha}\abs{\theta}W_{0}^{\frac{1}{2}-\alpha}$.
  
   Now fix any $t\in [0,T_{z^0})$ (then pick $T_0$ as in the proof of Lemma~\ref{P10.1}), and for each $\tau\in (t,t+T_{0}]$ let $\phi_{\tau,\lambda}\colon\ell(z^{t,\lambda})\bbT\to\ell(z^{\tau,\lambda})\bbT$ be an orientation-preserving homeomorphism
     such that
    \begin{equation}\label{10.5}
        \norm{z^{\tau,\lambda}\circ\phi_{\tau,\lambda}
        - z^{t,\lambda} - (\tau - t)u(z^{t})\circ z^{t,\lambda}}_{L^{\infty}(\ell(z^{t,\lambda})\bbT)}
        \leq C\abs{\tau - t}^{2-2\alpha}.
    \end{equation}
    Define $\phi_{\tau,\lambda'}$ in the same way, but with $\lambda'$ in place of $\lambda$, and then fix any
    $s_{\tau}\in\ell(z^{\tau,\lambda})\bbT$ and $s_{\tau}'\in\ell(z^{\tau,\lambda'})\bbT$
    such that $\Delta(z^{\tau,\lambda},z^{\tau,\lambda'})
    = \abs{z^{\tau,\lambda}(s_{\tau}) - z^{\tau,\lambda'}(s_{\tau}')}$.
    Then \eqref{10.5} and \eqref{111.27} show that 
    \begin{equation}\label{10.6}\begin{split}
        \Delta(z^{t,\lambda},z^{t,\lambda'})
        &\leq \abs{z^{t,\lambda}(\phi_{\tau,\lambda}^{-1}(s_{\tau}))
        - z^{t,\lambda'}(\phi_{\tau,\lambda'}^{-1}(s_{\tau}'))} \\
        &\leq \Delta(z^{\tau,\lambda},z^{\tau,\lambda'})
        + \norm{z^{\tau,\lambda}\circ\phi_{\tau,\lambda} - z^{t,\lambda}}_{L^{\infty}}
        + \norm{z^{\tau,\lambda'}\circ\phi_{\tau,\lambda'} - z^{t,\lambda'}}_{L^{\infty}} \\
        &\leq \Delta(z^{\tau,\lambda},z^{\tau,\lambda'})
        + 2\norm{u(z^{t})}_{L^{\infty}}(\tau - t) + 2C(\tau - t)^{2-2\alpha} .
    \end{split}\end{equation}

    To prove the analog of \eqref{6.2}, assume without loss that $\Delta(z^{t,\lambda},z^{t,\lambda'})> 0$
    and choose a decreasing sequence
    $\seq{t_{n}}_{n=1}^{\infty}$ in $(t,t+T_{0}]$ converging to $t$ such that
    \[
        \partial_{t+}\Delta(z^{t,\lambda},z^{t,\lambda'})
        = \lim_{n\to\infty}\frac{\Delta(z^{t_{n},\lambda},z^{t_{n},\lambda'})
        - \Delta(z^{t,\lambda},z^{t,\lambda'})}{t_{n} - t}.
    \]
    For each $n$ let $\tilde{s}_{n} \coloneqq
    \phi_{t_{n},\lambda}^{-1}(s_{t_{n}})$ and
    $\tilde{s}_{n}' \coloneqq \phi_{t_{n},\lambda'}^{-1}(s_{t_{n}}')$.
    By passing to a subsequence if necessary, we can assume that
    $\exists \lim_{n\to\infty} (\tilde{s}_{n}, \tilde{s}_{n}') = (s,s') \in \ell(z^{t,\lambda})\bbT\times \ell(z^{t,\lambda'})\bbT$. 
    Using \eqref{10.6} with $\tau\coloneqq t_{n}$ and then taking $n\to\infty$,
    we see from continuity of $\tau\mapsto\Delta(z^{\tau,\lambda},z^{\tau,\lambda'})$ that
    $\Delta(z^{t,\lambda},z^{t,\lambda'}) = \abs{z^{t,\lambda}(s) - z^{t,\lambda'}(s')} > 0$.  This and \eqref{10.5} show that for all large enough $n$ we have
    \begin{align*}
        &\frac{
            \Delta(z^{t_{n},\lambda},
            z^{t_{n},\lambda'})
            - \Delta(z^{t,\lambda},
            z^{t,\lambda'})
        }{t_{n}-t} \\
        &\quad\quad\geq
        \frac{z^{t,\lambda}(\tilde{s}_{n})
        - z^{t,\lambda'}(\tilde{s}_{n}')}
        {\abs{z^{t,\lambda}(\tilde{s}_{n})
        - z^{t,\lambda'}(\tilde{s}_{n}')}}
        \cdot
        \frac{(z^{t_{n},\lambda}(s_{t_{n}})
        - z^{t_{n},\lambda'}(s_{t_{n}}'))
        - (z^{t,\lambda}(\tilde{s}_{n})
        - z^{t,\lambda'}(\tilde{s}_{n}'))}{t_{n} - t} \\
        &\quad\quad\geq
        \frac{z^{t,\lambda}(\tilde{s}_{n})
        - z^{t,\lambda'}(\tilde{s}_{n}')}
        {\abs{z^{t,\lambda}(\tilde{s}_{n})
        - z^{t,\lambda'}(\tilde{s}_{n}')}}
        \cdot \left[
            u(z^{t};z^{t,\lambda}(\tilde{s}_{n}))
            - u(z^{t};z^{t,\lambda'}(\tilde{s}_{n}'))
        \right] - 2C(t_{n} - t)^{1-2\alpha}.
    \end{align*}
Since $u(z^{t})$ is continuous by Lemma~\ref{L4.4},   taking $n\to\infty$ yields the analog of \eqref{6.2}.

Next, from $\partial_{s}z^{t,\lambda}(s)$ and $\partial_{s}z^{t,\lambda'}(s')$
    being both orthogonal to $z^{t,\lambda}(s) - z^{t,\lambda'}(s')$,
    Lemma~\ref{L4.7}, and Lemma~\ref{P10.1} we see that
    \begin{align*}
        \partial_{t+}\Delta(z^{t,\lambda},z^{t,\lambda'})
        &\geq -C_{\alpha}\abs{z^{t,\lambda}(s) - z^{t,\lambda'}(s')}
        \sum_{\lambda''\in\mathcal{L}}\abs{\theta^{\lambda''}}\ell(z^{t,\lambda''})
        \max\left\{
            \norm{z^{t,\lambda}}_{\dot{H}^{2}},
            \norm{z^{t,\lambda''}}_{\dot{H}^{2}}
        \right\}^{2+4\alpha}
        \\&
        \geq -C_{\alpha}\Delta(z^{t,\lambda},z^{t,\lambda'})
        \left(
            \norm{z^{0}}_{L^{\infty}}
            + C_{\alpha}\abs{\theta}W_{0}^{\frac{1}{2} - \alpha}t
        \right)^{2}
        Q(z^{t})^{1+2\alpha}
        \sum_{\lambda''\in\mathcal{L}}\abs{\theta^{\lambda''}}
        \norm{z^{t,\lambda''}}_{\dot{H}^{2}}^{2}
        \\&
        = -C_{\alpha}m(\theta)
        \left(
            \norm{z^{0}}_{L^{\infty}}
            + C_{\alpha}\abs{\theta}W_{0}^{\frac{1}{2} - \alpha}t
        \right)^{2}
        Q(z^{t})^{2+2\alpha}
        \Delta(z^{t,\lambda},z^{t,\lambda'}),
    \end{align*}
    which is \eqref{10.4}.
\end{proof}

\begin{lemma}\label{P10.3}
Let $T\in(0,T_{z^0}]$ satisfy $\sup_{t\in[0,T)}Q(z^{t})<\infty$, and let
    $h\coloneqq[\sup_{t\in[0,T)}Q(z^{t})]^{-1}$.
Then Lemma \ref{L6.6} holds for all $t\in[0,T)$, with $z_\eps,u_\eps,\frac 1{Q(z_\eps^{t})}$, and \eqref{6.5} replaced by $z,u,h$, and
    \begin{equation}\label{10.8} 
        \partial_{t+}\Delta_{h}(z^{t,\lambda})
        \geq -C_{\alpha}m(\theta)\left(
            \norm{z^{0}}_{L^{\infty}}
            + C_{\alpha}\abs{\theta}W_{0}^{\frac{1}{2} - \alpha}t
        \right)^{2}
        Q(z^t)^{2 + 2\alpha}
        \Delta_{h}(z^{t,\lambda}).
    \end{equation}
\end{lemma}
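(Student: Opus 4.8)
\textbf{Proof plan for Lemma~\ref{P10.3}.}
The plan is to mirror the proof of Lemma~\ref{L6.6} for the mollified solutions, but using the limit solution $z$ constructed in Section~\ref{S7} together with the Fr\'echet-type displacement bound \eqref{111.15} from Proposition~\ref{P7.3} in place of the ODE \eqref{5.1}, and replacing the curve-regularity bound $L(z_\eps^t)^{1+2\alpha}$ by $Q(z^t)^{1+2\alpha}$ and the length bound ${\abs{\Omega(z_\eps^{t,\lambda})}}{\Delta_{1/Q}(z_\eps^{t,\lambda})}^{-1}$ by the cruder bound \eqref{10.2} from Lemma~\ref{P10.1}. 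First I would establish continuity of $t\mapsto \Delta_h(z^{t,\lambda})$ on $[0,T)$: upper semicontinuity follows from Lemma~\ref{LA.6} (exactly as in Lemma~\ref{L6.6}), and lower semicontinuity follows by the same compactness-and-subsequence argument, where the key point is that the quantity $h=[\sup_{t\in[0,T)}Q(z^t)]^{-1}$ is now a \emph{fixed} constant rather than a $t$-dependent one, which actually simplifies matters: I only need continuity of $\tau\mapsto z^{\tau,\lambda}$ in $C^1$ (which follows from $C^0$-convergence plus the uniform $\dot H^2$-bound coming from $\sup_{t\in[0,T)}Q(z^t)<\infty$ via \eqref{2.1}, an Arzel\`a--Ascoli argument as in the proof of Proposition~\ref{P7.3}), not continuity of $t\mapsto Q(z^t)$. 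Since $h^{-1}=\sup_{t\in[0,T)}Q(z^t)\geq Q(z^t)\geq \norm{z^{t,\lambda}}_{\dot C^{1,1/2}}^2$, Lemma~\ref{L3.1} guarantees $\Delta_h(z^{t,\lambda})$ is well-defined, and since $h\leq \tfrac12 Q(z^t)^{-1}\leq \tfrac34 Q(z^t)^{-1}$, the hypothesis of Lemma~\ref{L6.6} concerning $\tfrac{3}{4Q}$ is automatically met at every $t$, so no case distinction as in the $\eps$-version is needed here.

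Next, for a fixed $t\in[0,T)$, I would reproduce the argument yielding \eqref{6.4}: take a decreasing sequence $t_n\downarrow t$ realizing $\partial_{t+}\Delta_h(z^{t,\lambda})$, choose arclength parameters $\xi_{t_n},\xi_{t_n}'$ realizing $\Delta_h(z^{t_n,\lambda})$ with arclength separation in $[h,\ell(z^{t_n,\lambda})/2]$, pass to convergent subsequences, and transport points back to time $t$ using the homeomorphisms $\phi_{t_n,\lambda}$ from Proposition~\ref{P7.3} (which satisfy \eqref{111.15} and have derivatives pinched in $[e^{-C|t_n-t|},e^{C|t_n-t|}]$). The analog of \eqref{10.6} — namely $\Delta_h(z^{t,\lambda})\le \Delta_h(z^{t_n,\lambda})+2\norm{u(z^t)}_{L^\infty}(t_n-t)+2C(t_n-t)^{2-2\alpha}$ applied to pairs of points on the \emph{same} curve rather than on two curves — combined with Lemmas~\ref{L3.8} and \ref{L3.1} (as in \eqref{111.9}) ensures the limiting arclength separation still lies strictly inside $(h,\ell(z^{t,\lambda})-h)$, so $\abs{z^{t,\lambda}(s)-z^{t,\lambda}(s')}$ genuinely equals $\Delta_h(z^{t,\lambda})$ and the $O((t_n-t)^{2-2\alpha})$ error is lower-order; this gives \eqref{6.4}. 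For \eqref{10.8}, I would then observe that $z^{t,\lambda}(s)-z^{t,\lambda}(s')$ is orthogonal to both $\partial_s z^{t,\lambda}(s)$ and $\partial_s z^{t,\lambda}(s')$, apply Lemma~\ref{L4.7}, bound $\Xi_\lambda=0$ since $z^{t,\lambda}$ is simple (no self-touches), bound each $\ell(z^{t,\lambda''})$ by \eqref{10.2}, bound $\max\{\norm{z^{t,\lambda}}_{\dot H^2},\norm{z^{t,\lambda''}}_{\dot H^2}\}^{2+4\alpha}\le Q(z^t)^{1+2\alpha}$, and finally recognize $\sum_{\lambda''}\abs{\theta^{\lambda''}}\norm{z^{t,\lambda''}}_{\dot H^2}^2 = m(\theta)Q(z^t)$, exactly as in the last display of the proof of Lemma~\ref{P10.2}.

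The main obstacle is the regularity-of-$\phi$ / alignment issue already visible in Lemma~\ref{L6.6}: unlike the $\eps$-problem where one has the genuine ODE flow $Z_\eps$ and hence an exactly Lipschitz parametrization map, here one only has the approximate identity \eqref{111.15} with a Fr\'echet-metric error $C|h|^{2-2\alpha}$, so one must check that this error does not destroy the inequality $\Delta_h(z^{t,\lambda})=\abs{z^{t,\lambda}(s)-z^{t,\lambda}(s')}$ in the limit nor the strict-interiority of the arclength separation. This is handled precisely because $2-2\alpha>1$ (as $\alpha\le\tfrac16$), so $C|t_n-t|^{2-2\alpha}=o(t_n-t)$ and the error is absorbed; the bookkeeping is identical in spirit to how \eqref{10.5}--\eqref{10.6} were used in the proof of Lemma~\ref{P10.2} above, so I would simply invoke ``the argument proving \eqref{6.4}, with \eqref{111.15} in place of \eqref{5.1} and \eqref{10.6} in place of its curve-pair analog.'' Everything else — continuity, the orthogonality observation, and the final chain of inequalities — is routine given Lemmas~\ref{L4.7}, \ref{P10.1}, \ref{L3.8}, \ref{L3.1}, and \ref{LA.6}.
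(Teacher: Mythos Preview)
Your proposal is essentially correct and follows the paper's approach. Two small corrections are in order. First, the inequality $h\le\tfrac12 Q(z^t)^{-1}$ is false in general (only $h\le Q(z^t)^{-1}$ follows from the definition of $h$), and the hypothesis $\Delta_h(z^{t,\lambda})<\tfrac{3h}{4}$ is \emph{not} automatic: it is part of the statement of Lemma~\ref{L6.6} (hence of Lemma~\ref{P10.3}) and must simply be assumed, exactly as the paper does; this assumption is what allows Lemma~\ref{L3.8} (with $\beta=\tfrac12$) to yield the strict interiority $s-s'\in(h,\ell(z^{t,\lambda})-h)$ that you need. Second, $\Xi_\lambda=0$ requires more than simplicity of $z^{t,\lambda}$: the quantity $\Xi_\lambda$ in Lemma~\ref{L4.7} also involves touches between $z^{t,\lambda}$ and the \emph{other} curves $z^{t,\lambda'}$, and its vanishing follows from the non-transversal-crossing property guaranteed by \eqref{111.27} and Proposition~\ref{P7.3}. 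The paper proves continuity of $t\mapsto\Delta_h(z^{t,\lambda})$ via a direct two-sided transport estimate (its \eqref{10.10}) rather than by splitting into upper and lower semicontinuity, but your route also works.
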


\begin{proof}
Consider again the setup from the proof of Lemma~\ref{P10.1}, and for any $t\in [0,T)$ fix some $s_{t},s_{t}'\in\ell(z^{t,\lambda})\bbT$ such that
    $s_{t} - s_{t}' \in [h, \frac{\ell(z^{t,\lambda})}{2}]$ and
    $\Delta_{h}(z^{t,\lambda}) = \abs{z^{t,\lambda}(s_{t}) - z^{t,\lambda'}(s_{t}')}$.
Now fix any $t\in [0,T)$, and then for each $\tau\in [0,\min\{T,t+T_0\})$ let $\phi_{\tau}\colon\ell(z^{t,\lambda})\bbT\to\ell(z^{\tau,\lambda})\bbT$ be a homeomorphism    such that $e^{-C\abs{\tau - t}} \leq \phi_{\tau}' \leq e^{C\abs{\tau - t}}$ and  \eqref{10.5} hold. Then 
%
%
    \begin{equation}\label{10.10}\begin{split}
        \Delta_{h}(z^{t,\lambda}) &\leq
        \abs{z^{t,\lambda}(\phi_{\tau}^{-1}(s_{\tau}))
        - z^{t,\lambda}\left(
            \phi_{\tau}^{-1}(s_{\tau}) -
            \min\set{
                \max\set{h,\phi_{\tau}^{-1}(s_{\tau}) - \phi_{\tau}^{-1}(s_{\tau}')},
                \frac{\ell(z^{t,\lambda})}{2}
            }
        \right)} \\
        &\leq \abs{z^{t,\lambda}(\phi_{\tau}^{-1}(s_{\tau}))
        - z^{t,\lambda}(\phi_{\tau}^{-1}(s_{\tau}'))}
        + \max\set{h - \phi_{\tau}^{-1}(s_{\tau}) + \phi_{\tau}^{-1}(s_{\tau}'), 0}
        \\&\quad\quad\quad
        + \max\set{\phi_{\tau}^{-1}(s_{\tau}) - \phi_{\tau}^{-1}(s_{\tau}')
        - \frac{\ell(z^{t,\lambda})}{2}, 0} \\
        &\leq \Delta_{h}(z^{\tau,\lambda})
        + 2\norm{u(z^{t})}_{L^{\infty}} \abs{\tau - t} + 2C\abs{\tau - t}^{2-2\alpha}
        + (1 - e^{-C\abs{\tau - t}})h
        \\&\quad\quad\quad
        + (e^{C\abs{\tau - t}} - 1)\frac{\ell(z^{t,\lambda})}{2}.
    \end{split}\end{equation}
    Since $t,\tau$ are arbitrary, the same inequality holds with $t$ and $\tau$
    swapped.  Since $\ell(z^{\tau,\lambda}) \leq e^{C\abs{\tau - t}}\ell(z^{t,\lambda})$ and from Lemma~\ref{L4.1} and \eqref{111.27}
    we know that 
    $\sup_{\tau\in[0,T_{z^0})} \norm{u(z^{\tau})}_{L^{\infty}}<\infty$,
    we see that $\Delta_{h}(z^{t,\lambda})$ is continuous in $t$.

    Now assume that $\Delta_{h}(z^{t,\lambda}) < \frac{3h}{4}$ and pick a
    decreasing sequence $\seq{t_{n}}_{n=1}^{\infty}$ in $(t,\min\{T,t+T_0\})$ converging to $t$ such that
    \[
        \partial_{t+}\Delta_{h}(z^{t,\lambda})
        = \lim_{n\to\infty}\frac{\Delta_{h}(z^{t_{n},\lambda}) - \Delta_{h}(z^{t,\lambda})}
        {t_{n} - t}.
    \]
    For each $n$ let $\tilde{s}_{n}\coloneqq\phi_{t_{n}}^{-1}(s_{t_{n}})$
    and $\tilde{s}_{n}'\coloneqq\phi_{t_{n}}^{-1}(s_{t_{n}}')$. By passing to a subsequence
    if necessary, we can assume that $\exists\lim_{n\to\infty} (\tilde{s}_{n},\tilde{s}_{n}')=(s,s')\in(\ell(z^{t,\lambda})\bbT)^2$. Using \eqref{10.10} with $\tau\coloneqq t_{n}$ and then taking $n\to\infty$,
    we see from continuity of $\tau\mapsto\Delta_{h}(z^{\tau,\lambda})$ that
    $\Delta_{h}(z^{t,\lambda}) = \abs{z^{t,\lambda}(s) - z^{t,\lambda}(s')}$.
    For each $n$ we also clearly have
    \[
        e^{-C(t_{n} - t)}h \leq \tilde{s}_{n} - \tilde{s}_{n}'
        \leq e^{C(t_{n} - t)}\frac{\ell(z^{t_n,\lambda})}{2} \leq e^{2C(t_{n} - t)}\frac{\ell(z^{t,\lambda})}{2},
    \]
 so $s - s' \in[h, \frac{\ell(z^{t,\lambda})}{2}]$.
    Hence, Lemmas~\ref{L3.8} and \ref{L3.1} (with $\beta\coloneqq\frac{1}{2}$) show that
    \[
        h < s - s' \leq \frac{\ell(z^{t,\lambda})}{2}
        < \frac{3\ell(z^{t,\lambda})}{4} \leq \ell(z^{t,\lambda}) - h,
    \]
    and then also $\tilde{s}_{n} - \tilde{s}_{n}' \in(h, \ell(z^{t,\lambda}) - h)$ for all large enough $n$. Therefore, the arguments yielding the analog of \eqref{6.2} and \eqref{10.4} in the proof of  Lemma \ref{P10.2} can also be used to obtain the analog of \eqref{6.4} and \eqref{10.8}.
\end{proof}

\begin{proof}[Proof of Theorem \ref{T2.7}]
Consider the setup from Theorem \ref{T2.5} and assume that we also have $\bigcup_{\lambda\in\mathcal L} {\rm im}(z^{0,\lambda})\subseteq \bbR\times[0,\infty)$.  Let $\mathcal L'\coloneqq\{\lambda^+,\lambda^-\}_{\lambda\in\mathcal L}$, then $\psi^{\lambda^+}\coloneqq\tht^\lambda$ and $\psi^{\lambda^-}\coloneqq-\tht^\lambda$ for ${\lambda\in\mathcal L}$, as well as
$w^{0,\lambda^+}\coloneqq z^{0,\lambda}$ and $w^{0,\lambda^-}\coloneqq Rz^{0,\lambda}$, where  the operator $R$ on $\operatorname{PSC}(\bbR^2)$ is given by ${\rm im}(R\gamma) = R' ({\rm im}(\gamma))$ and $R'(x_1,x_2)\coloneqq(x_1,-x_2)$.  That is, $R$ represents the reflection across  $\bbR\times\{0\}$, with $\sum_{\lambda'\in\mathcal L'} \psi^{\lambda'} \chi_{\Omega(w^{t,\lambda'})}$ being odd in $x_2$.  Then $(\mathcal L',\psi,w^0)$ satisfies the hypotheses of Theorem \ref{T2.5} for $(\mathcal L,\tht,z^0)$, so the theorem holds for $(\mathcal L',\psi,w^0)$, with some $T_{w^0}>0$.  Moreover, by uniqueness and symmetry we have $w^{t,\lambda^-}=R w^{t,\lambda^+}$ for any $(t,\lambda)\in[0,T_{w^0})\times\mathcal L$.  

For each $t\in[0,T_{w^0})$, the corresponding velocity $u(w^t)$, given by the Biot-Savart law \eqref{2.3} for $(\psi,w^t)$, is then also the g-SQG velocity obtained from the corresponding Biot-Savart law on $\bbR\times(0,\infty)$ for $\{(\tht^\lambda,w^{t,\lambda^+})\}_{\lambda\in \mathcal L}$ (which involves extending the latter ``oddly'' across $\bbR\times\{0\}$, i.e.~obtaining $(\psi,w^t)$, and then finding $u(w^t)$ via \eqref{2.3}).  Hence $\{w^{\lambda^+}\}_{\lambda\in \mathcal L}$ is an $H^2$ patch solution to \eqref{2.4} on $\bbR\times(0,\infty)$ (with either  $d_{\rm F}$ or $d_{\rm H}$).  Moreover, any other such solution clearly yields an $H^2$ patch solution to \eqref{2.4} on $\bbR^2$ after we extend it onto $\bbR^2$ at  each $t$ from its maximal interval of existence in the way we constructed $(\psi,w^0)$ from $(\tht,z^0)$.  

This shows that Theorem \ref{T2.5} also holds on $\bbR\times[0,\infty)$ instead of $\bbR^2$, and then  \cite[Theorem~1.3]{ZlaSQGsing} yields existence of  solutions as in this theorem
satisfying $T_{z^0}<\infty$ and \eqref{111.29} (and the proof shows that the corresponding $z^{0,\lambda}$ can all be chosen to be smooth).  Extending them onto $\bbR^2$ at each $t\in[0,T_{z^0})$ as above now yields the desired solutions.
\end{proof}

\appendix

\section{Various results on the geometry of curves} \lb{SAa}

In this section we prove a number of  estimates concerning the geometry of curves with either $C^{1,\beta}$
or $H^{2}$ regularity. We let $\beta\in(0,1]$ throughout.

First we obtain a lower bound on how quickly the tangent vector of a $C^{1,\beta}$ curve
can turn, and thus also a lower bound on the length of the curve.

\begin{lemma}\label{L3.1}
    Let $\gamma\colon\ell\bbT\to\bbR^{2}$ be a $C^{1,\beta}$ closed curve
    parametrized by arclength and $\mathbf{T}\coloneqq\partial_{s}\gamma$.
    Then $\ell \geq \frac{2^{1+{1}/{2\beta}}}
    {\norm{\gamma}_{\dot{C}^{1,\beta}}^{1/\beta}}$, and    $\mathbf{T}(s)\cdot\mathbf{T}(s') \geq
    1 - \frac{1}{2}\norm{\gamma}_{\dot{C}^{1,\beta}}^{2}\abs{s - s'}^{2\beta}$  for any $s,s'\in\ell\bbT$.
\end{lemma}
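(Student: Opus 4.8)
The starting point is the identity $\mathbf{T}(s)\cdot\mathbf{T}(s') = 1 - \tfrac12|\mathbf{T}(s)-\mathbf{T}(s')|^2$, which holds because $|\mathbf{T}|\equiv 1$ (as $\gamma$ is parametrized by arclength). Combining this with the H\"older bound $|\mathbf{T}(s)-\mathbf{T}(s')| \le \|\gamma\|_{\dot C^{1,\beta}}|s-s'|^\beta$ immediately gives the second claim:
\[
\mathbf{T}(s)\cdot\mathbf{T}(s') = 1 - \tfrac12|\mathbf{T}(s)-\mathbf{T}(s')|^2 \ge 1 - \tfrac12\|\gamma\|_{\dot C^{1,\beta}}^2|s-s'|^{2\beta}.
\]
(Here $|s-s'|$ denotes the arclength distance on $\ell\bbT$, so that $|s-s'|\le \ell/2$.)

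For the length bound, the plan is to argue that if $\ell$ were too small, the tangent vector could not turn enough to close up the curve. Fix a point, say $s=0$, and consider $\mathbf{T}(s)$ for $s\in[-\ell/2,\ell/2]$. Since $\int_{\ell\bbT}\mathbf{T}(s)\,ds = \gamma(\ell)-\gamma(0) = 0$ (the curve is closed), the average of $\mathbf{T}$ over $\ell\bbT$ is zero, so in particular $\mathbf{T}(s)$ cannot stay in a fixed half-plane; equivalently, there must exist $s_*$ with $\mathbf{T}(s_*)\cdot\mathbf{T}(0)\le 0$. Applying the second claim at $s=0$, $s'=s_*$ gives $0 \ge \mathbf{T}(0)\cdot\mathbf{T}(s_*) \ge 1 - \tfrac12\|\gamma\|_{\dot C^{1,\beta}}^2|s_*|^{2\beta}$, hence $|s_*|^{2\beta} \ge 2\|\gamma\|_{\dot C^{1,\beta}}^{-2}$, i.e. $|s_*| \ge 2^{1/(2\beta)}\|\gamma\|_{\dot C^{1,\beta}}^{-1/\beta}$. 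Since $|s_*|\le \ell/2$, this yields $\ell \ge 2\cdot 2^{1/(2\beta)}\|\gamma\|_{\dot C^{1,\beta}}^{-1/\beta} = 2^{1+1/(2\beta)}\|\gamma\|_{\dot C^{1,\beta}}^{-1/\beta}$, as desired.

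The one point needing a little care — and the main (mild) obstacle — is justifying the existence of $s_*$ with $\mathbf{T}(s_*)\cdot\mathbf{T}(0)\le 0$. I would do this by contradiction: if $\mathbf{T}(s)\cdot\mathbf{T}(0)>0$ for all $s\in\ell\bbT$, then $\big(\int_{\ell\bbT}\mathbf{T}(s)\,ds\big)\cdot\mathbf{T}(0) = \int_{\ell\bbT}\mathbf{T}(s)\cdot\mathbf{T}(0)\,ds > 0$, contradicting $\int_{\ell\bbT}\mathbf{T}(s)\,ds = 0$ (which in turn follows from $\gamma$ being a closed rectifiable curve, so $\tilde\gamma(\ell)=\tilde\gamma(0)$ for its arclength parametrization and $\partial_s\tilde\gamma$ integrates to the zero displacement). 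One should also note that the nontrivial-curve hypothesis is implicit: for $\gamma$ nontrivial, $\|\gamma\|_{\dot C^{1,\beta}}$ is the $\dot C^{1,\beta}$-norm of an arclength parametrization, which is finite and positive, so the bounds are meaningful; the degenerate edge cases were already set aside in Definition~\ref{D2.3}.
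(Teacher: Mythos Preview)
Your proof is correct and follows essentially the same approach as the paper. The second claim is handled identically via $|\mathbf{T}(s)-\mathbf{T}(s')|^2 = 2 - 2\,\mathbf{T}(s)\cdot\mathbf{T}(s')$, and for the length bound the paper likewise finds a point where $\mathbf{T}(s)\cdot\mathbf{T}(0)=0$ and applies the second claim; your integral argument $\int_{\ell\bbT}\mathbf{T}=0$ is simply a clean explicit justification of that existence step, which the paper leaves implicit.
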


\begin{proof}
    Since $\mathbf{T}$ is $C^{\beta}$, we have
    \begin{align*}
        \norm{\gamma}_{\dot{C}^{1,\beta}}^{2}\abs{s - s'}^{2\beta}
        &\geq \abs{\mathbf{T}(s) - \mathbf{T}(s')}^{2}
        = 2 - 2\mathbf{T}(s)\cdot \mathbf{T}(s'),
    \end{align*}
    and rearranging the above gives the second claim.
    Next note that 
    there must be $s\in(0,\ell)$ such that 
    \begin{align*}
        0 = \mathbf{T}(s)\cdot\mathbf{T}(0) \geq
        1 - \frac{1}{2}\norm{\gamma}_{\dot{C}^{1,\beta}}^{2}\min\{s,\ell-s\}^{2\beta},
    \end{align*}
    with the inequality due to the second claim. This yields the first claim. 
\end{proof}

Next, for any $C^{1,\beta}$ curve $\gamma$ and any $x\in\bbR^2$, we identify the structure of the part of $\gamma$ that lies $O(\norm{\gamma}_{\dot{C}^{1,\beta}}^{-1/\beta})$-close to $x$.  We use this result extensively in Section~\ref{S4}.

\begin{lemma}\label{L3.2}
    Let $\gamma\colon\ell\bbT\to\bbR^{2}$ be a $C^{1,\beta}$ closed curve
    parametrized by arclength and
    $\mathbf{T}\coloneqq\partial_{s}\gamma$. Then for any $x\in\bbR^{2}$,
    there is $N\leq \ell\norm{\gamma}_{\dot{C}^{1,\beta}}^{1/\beta}$ and $s_{1},\dots, s_N \in \ell\bbT$  such that
    \begin{enumerate}
        \item  $\left\{I_{i}\coloneqq
        \left[s_{i}-\frac{1}{2}\norm{\gamma}_{\dot{C}^{1,\beta}}^{-1/\beta},
        s_{i}+\frac{1}{2}\norm{\gamma}_{\dot{C}^{1,\beta}}^{-1/\beta}\right] \right\}_{i=1}^N$
        are pairwise disjoint subintervals of $\ell\bbT$;

        \item $\set{s\in\ell\bbT \colon \abs{x - \gamma(s)} \leq
        \frac{1}{4}\norm{\gamma}_{\dot{C}^{1,\beta}}^{-1/\beta}}
        \subseteq\bigcup_{i=1}^{N}I_{i}$;
    \end{enumerate}
    and with
    $\tilde{I}_{i}\coloneqq \left[s_{i}-\norm{\gamma}_{\dot{C}^{1,\beta}}^{-1/\beta},
    s_{i}+\norm{\gamma}_{\dot{C}^{1,\beta}}^{-1/\beta}\right]$ and
    $g_{i}(s)\coloneqq (x - \gamma(s))\cdot\mathbf{T}(s_{i})$ we have for each $i=1,\dots ,N$,
    \begin{enumerate}
        \item[(3)] $\abs{x - \gamma(s_{i})}
        = \min_{s\in \tilde{I}_{i}}\abs{x - \gamma(s)}\leq
        \frac{1}{4}\norm{\gamma}_{\dot{C}^{1,\beta}}^{-1/\beta}$ and
        $(x-\gamma(s_{i}))\cdot\mathbf{T}(s_{i}) = 0$;
        
        \item[(4)]
        $g_{i}' \leq -\frac{1}{2}$  on $\tilde{I}_{i}$, so $g_{i}|_{\tilde{I}_{i}}$ is a homeomorphism and $\abs{g_{i}(s)} \geq \frac{1}{2}\abs{s - s_{i}}$ holds
        for all $s\in \tilde{I}_i$.
    \end{enumerate}
\end{lemma}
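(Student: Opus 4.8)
\textbf{Proof proposal for Lemma \ref{L3.2}.}
The plan is to choose the points $s_i$ greedily and then verify the four properties using the two elementary facts already available: that $\mathbf T$ is $\norm{\gamma}_{\dot C^{1,\beta}}$-H\"older (so $\mathbf T$ cannot rotate much over an arc of length $O(\norm{\gamma}_{\dot C^{1,\beta}}^{-1/\beta})$), and Lemma~\ref{L3.1} (so $\ell$ is comparable to or larger than this length scale, which makes the intervals $I_i$ genuinely sit inside $\ell\bbT$). Write $L\coloneqq\norm{\gamma}_{\dot C^{1,\beta}}^{-1/\beta}$ for brevity. First I would consider the closed set $E\coloneqq\set{s\in\ell\bbT : \abs{x-\gamma(s)}\le \tfrac14 L}$ and let $s_1,\dots,s_N$ be a maximal subset of $E$ that is $L$-separated in $\ell\bbT$ (such a maximal set exists and is finite; if $E=\emptyset$ then $N=0$ and there is nothing to prove). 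Maximality gives property (2): every point of $E$ is within $L$ of some $s_i$, hence lies in $\tilde I_i$ --- but in fact I want the stronger inclusion in the $I_i$'s, so instead I would pick each $s_i$ to be a \emph{local minimizer} of $s\mapsto\abs{x-\gamma(s)}$ on a window around a point of $E$, which is the standard trick; I will actually reorganize this as follows. Having produced the separated points $s_i$ with $\abs{x-\gamma(s_i)}\le \tfrac14 L$, at a minimizer one automatically has $(x-\gamma(s_i))\cdot\mathbf T(s_i)=0$, giving half of (3).

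Next I would prove (4), which is the technical heart and drives everything else. For $s\in\tilde I_i$ write $g_i'(s)=-\mathbf T(s)\cdot\mathbf T(s_i)$, so by Lemma~\ref{L3.1} (second claim),
\[
g_i'(s) = -\mathbf T(s)\cdot\mathbf T(s_i)\le -\Big(1-\tfrac12\norm{\gamma}_{\dot C^{1,\beta}}^{2}\abs{s-s_i}^{2\beta}\Big)\le -\Big(1-\tfrac12\norm{\gamma}_{\dot C^{1,\beta}}^{2}L^{2\beta}\Big) = -\tfrac12,
\]
using $\abs{s-s_i}\le L$ and $\norm{\gamma}_{\dot C^{1,\beta}}^{2}L^{2\beta}=1$. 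Hence $g_i$ is strictly decreasing on $\tilde I_i$ with slope $\le -\tfrac12$, so it is a homeomorphism onto its image; since $g_i(s_i)=(x-\gamma(s_i))\cdot\mathbf T(s_i)=0$, integrating the derivative bound gives $\abs{g_i(s)}\ge\tfrac12\abs{s-s_i}$ for all $s\in\tilde I_i$. This is property (4). Property (3)'s remaining claim, that $s_i$ minimizes $\abs{x-\gamma(s)}$ over all of $\tilde I_i$ (not just locally), now follows: for $s\in\tilde I_i$ we have $\abs{x-\gamma(s)}\ge\abs{(x-\gamma(s))\cdot\mathbf T(s_i)}=\abs{g_i(s)}$, and I must compare this with $\abs{x-\gamma(s_i)}$; the clean way is to note $\partial_s\abs{x-\gamma(s)}^2 = -2(x-\gamma(s))\cdot\mathbf T(s)$ and that $(x-\gamma(s))\cdot\mathbf T(s)$ has the same sign as $g_i(s)$ on $\tilde I_i$ up to a controlled error, forcing $\abs{x-\gamma(s)}^2$ to be decreasing for $s<s_i$ and increasing for $s>s_i$; I would spell this out using once more that $\mathbf T(s)$ stays within angle $O(1)$ of $\mathbf T(s_i)$ and that $x-\gamma(s)$ cannot rotate fast either because $\abs{x-\gamma(s_i)}$ is small compared to $L$ while $\gamma$ moves at unit speed.

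Then I would return to (1) and (2). For (1): the $s_i$ are $L$-separated, so the intervals $I_i$ of half-length $\tfrac12 L$ are pairwise disjoint as subsets of $\bbT$, and they are honest subintervals of $\ell\bbT$ because $\ell\ge 2^{1+1/(2\beta)}L> L$ by Lemma~\ref{L3.1}. For the count $N\le \ell/L = \ell\norm{\gamma}_{\dot C^{1,\beta}}^{1/\beta}$, disjointness of the $I_i$ inside $\ell\bbT$ forces $N\cdot L\le\ell$. For (2): given $s\in E$, maximality of the separated set gives some $s_i$ with $\abs{s-s_i}< L$, hence $s\in\tilde I_i$; but property (4) at that $i$ gives $\tfrac12\abs{s-s_i}\le\abs{g_i(s)}\le\abs{x-\gamma(s)}\le\tfrac14 L$, so $\abs{s-s_i}\le\tfrac12 L$, i.e.\ $s\in I_i$. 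This closes the loop. The main obstacle I anticipate is property (3)'s \emph{global} minimality statement over $\tilde I_i$: one has to rule out a second, closer near-point of $\gamma$ inside the window $\tilde I_i$, and the argument must be arranged so that the H\"older control on $\mathbf T$ together with the smallness of $\abs{x-\gamma(s_i)}$ relative to $L$ really does pin $\gamma\restriction_{\tilde I_i}$ to a near-graph over the tangent direction at $s_i$; everything else is a routine consequence of the slope bound $g_i'\le-\tfrac12$ and Lemma~\ref{L3.1}.
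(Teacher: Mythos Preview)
Your computation for (4) is correct and essentially identical to the paper's, and your bootstrap for (2) --- that $s\in E$ with $|s-s_i|\le L$ forces $|s-s_i|\le \tfrac12 L$ via $|g_i(s)|\ge\tfrac12|s-s_i|$ --- is exactly the mechanism the paper uses. The problem lies in the construction of the $s_i$ and in (3).

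\textbf{The construction is under-specified.} You need the $s_i$ to simultaneously be critical points (so that $g_i(s_i)=0$, which your arguments for (2), (3), (4) all use), $L$-separated (for (1)), and to have every $s\in E$ within $L$ of some $s_i$ (for (2)). A maximal $L$-separated subset of $E$ gives the last two but not the first; replacing each by a nearby local minimizer gives the first but can destroy either separation or maximality. You acknowledge the switch mid-construction but never reconcile the two requirements.

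\textbf{The argument for (3) does not close.} The proposed route is to show $(x-\gamma(s))\cdot\mathbf T(s)$ has the same sign as $g_i(s)$ on $\tilde I_i$ ``up to a controlled error''. But that error is $(x-\gamma(s))\cdot(\mathbf T(s)-\mathbf T(s_i))$, bounded only by $|x-\gamma(s)|\cdot\|\gamma\|_{\dot C^{1,\beta}}|s-s_i|^\beta$, which near the endpoints of $\tilde I_i$ can be as large as $\tfrac54 L$ while $|g_i(s)|$ is only $\ge\tfrac12 L$ --- the error dominates. Worse, a critical point in $E$ need not even be a local minimum: with $\beta=\tfrac12$, $\|\gamma\|_{\dot C^{1,1/2}}\le 1$, $\mathbf T(s)=(\cos s^{1/2},\sin s^{1/2})$ for $s\ge 0$, and $x=(0,\tfrac14)$, one checks that $(x-\gamma(s))\cdot\mathbf T(s)\approx \tfrac14 s^{1/2}-s>0$ for small $s>0$, so $|x-\gamma(s)|$ \emph{decreases} away from the critical point $s_i=0$.

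The paper resolves both issues at once by working with connected components $J_i$ of the \emph{larger} sublevel set $B=\{|x-\gamma(s)|<\tfrac12 L\}$ that meet $E$, and taking $s_i$ to be the minimizer of $|x-\gamma(\cdot)|$ over $\overline{J_i}$. This $s_i$ is automatically interior (hence critical), and minimality over all of $J_i$ --- not just locally --- plus the bound $|x-\gamma(s)|\ge\tfrac12|s-s_i|>\tfrac14 L\ge|x-\gamma(s_i)|$ outside $I_i$ immediately gives (3). For (1), disjointness of the $J_i$ first yields $|s_i-s_j|\ge\tfrac12 L$ (each $J_i$ contains $(s_i-\tfrac14 L,s_i+\tfrac14 L)$), and then your own bootstrap upgrades this to $|s_i-s_j|>L$. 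Property (2) follows since $E\subseteq\bigcup_i J_i\cap E\subseteq\bigcup_i I_i$, again by the bootstrap.
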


\begin{proof}
    Let $d_{0}\coloneqq\frac{1}{4}\norm{\gamma}_{\dot{C}^{1,\beta}}^{-1/\beta}$
    and consider the sets 
    $A\coloneqq\set{s\in\ell\bbT\colon \abs{x - \gamma(s)} \leq d_{0}}$ and
    $B\coloneqq\set{s\in\ell\bbT\colon \abs{x - \gamma(s)} < 2d_{0}}$.
    Since $B$ is open, it must be either $\ell\bbT$ or a countable union
    of proper open subintervals of $\ell\bbT$.
    Let $\set{J_{i}}_{i=1}^{N}$ be  all the connected components
    of $B$ that have non-empty intersections with $A$ (each has length at least $2d_0$ so there are finitely many).
    
    Fix any $i$ and pick $s_{i}\in \overline{J}_{i}$ that achieves the minimum of
    $\abs{x - \gamma(s)}$ over $\overline{J}_{i}$.  Then $s_{i}\in A$, so $s_{i}\in J_{i}$ and thus $(x - \gamma(s_{i}))\cdot\mathbf{T}(s_{i}) = 0$.
    Also, for any $s\in\ell\bbT$ with $\abs{s_{i} - s} < 4d_{0}$ we have
    \[
        \partial_{s}\left((x - \gamma(s))\cdot\mathbf{T}(s_{i})\right)
        = -\mathbf{T}(s_{i})\cdot\mathbf{T}(s)
        \leq  \frac{1}{2}\norm{\gamma}_{\dot{C}^{1,\beta}}^{2}
        \abs{s_{i} - s}^{2\beta} -1 < -\frac{1}{2}.
    \]
    Therefore $|x-\gamma(s)|\ge \frac{|s-s_i|}2$ when $|s-s_i|\le 4d_0$, so we have
    $\tilde{I}_{i}\coloneqq [s_{i} - 4d_{0}, s_{i} + 4d_{0}]\supseteq J_{i}$ and then also 
    $I_{i} \coloneqq [s_{i} - 2d_{0}, s_{i} + 2d_{0}] \supseteq A\cap J_{i}$.
    Since clearly $(s_{i} - d_{0}, s_{i} + d_{0}) \subseteq J_{i}$,
    any $s\in I_{i}\setminus \overline{J}_{i}$ satisfies $d(s, \partial J_{i}) \leq d_{0}$
    and so $\abs{x - \gamma(s)} \geq d_{0}$.
    This and  $\abs{x - \gamma(s)} > \frac{2d_0}2=d_{0}$ for $s\in \tilde{I}_{i}\setminus I_{i}$ show that $s_{i}$ also minimizes $\abs{x - \gamma(s)}$ on $\tilde{I}_{i}$.
    This shows (2)--(4), so it remains to prove (1) because $N\leq \ell\norm{\gamma}_{\dot{C}^{1,\beta}}^{1/\beta}$ then follows.

Pick any  $i\neq j$, so that $J_i\cap J_j=\emptyset$.
    Since $(s_{i} - d_{0}, s_{i} + d_{0})\subseteq J_{i}$ and the same holds for $j$,
    we have $|s_j-s_{i}|\ge 2d_{0}$. But  $\abs{x - \gamma(s)} > d_{0}$
    for all  $s\in\tilde{I}_{i}\setminus I_{i}$ then shows that in fact $|s_{j}-s_i|> 4d_{0}$. Therefore indeed   $I_{i}\cap I_{j} = \emptyset$.
\end{proof}

Next we obtain an estimate on the angle between tangent vectors of two curves
at points that are far enough from any large-angle crossings of those curves.
Note that if there are no transversal crossings (as in Lemma \ref{L3.4}), the hypothesis involving $(s_1',s_2')$ is always satisfied.


\begin{lemma}\label{L3.3}
    Let $\gamma_{i}\colon\ell_{i}\bbT\to\bbR^{2}$ ($i=1,2$) be
    $C^{1,\beta}$ closed curves parametrized by arclength,
    and let $\mathbf{T}_{i}\coloneqq\partial_{s}\gamma_{i}$ and
    $\mathbf{N}_{i}\coloneqq\mathbf{T}_{i}^{\perp}$.
    Let $(s_{1},s_2)\in\ell_{1}\bbT\times\ell_{2}\bbT$ be such that 
    for all $(s_{1}',s_{2}')\in\ell_{1}\bbT\times\ell_{2}\bbT$
    satisfying $\gamma_{1}(s_{1}') = \gamma_{2}(s_{2}')$ and
    \[
        \max\set{\abs{s_{1}' - s_{1}},\abs{s_{2}' - s_{2}}}
        \leq \left(
            \frac{\abs{\gamma_{1}(s_{1}) - \gamma_{2}(s_{2})}}
            {\max\left\{\norm{\gamma_{1}}_{\dot{C}^{1,\beta}},
            \norm{\gamma_{2}}_{\dot{C}^{1,\beta}}\right\}}
        \right)^{\frac{1}{1+\beta}},
    \]
we have
    \[
        \abs{\mathbf{T}_{1}(s_{1}')\cdot\mathbf{N}_{2}(s_{2}')}
        \leq 10\max\left\{\norm{\gamma_{1}}_{\dot{C}^{1,\beta}},
        \norm{\gamma_{2}}_{\dot{C}^{1,\beta}}\right\}^{\frac{1}{1+\beta}}
        \abs{\gamma_{1}(s_{1}) - \gamma_{2}(s_{2})}^{\frac{\beta}{1+\beta}}.
    \]
    Then
    \begin{align*}
        \abs{\mathbf{T}_{1}(s_{1})\cdot\mathbf{N}_{2}(s_{2})}
        \leq 12\max\left\{\norm{\gamma_{1}}_{\dot{C}^{1,\beta}},
        \norm{\gamma_{2}}_{\dot{C}^{1,\beta}}\right\}^{\frac{1}{1+\beta}}
        \abs{\gamma_{1}(s_{1}) - \gamma_{2}(s_{2})}^{\frac{\beta}{1+\beta}}.
    \end{align*}
\end{lemma}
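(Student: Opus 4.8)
The statement is essentially a ``self-improvement'' result: the pointwise bound on $|\mathbf T_1(s_1')\cdot\mathbf N_2(s_2')|$ that is assumed to hold at \emph{intersection points} $(s_1',s_2')$ near $(s_1,s_2)$ propagates (with a worse constant) to the conclusion at $(s_1,s_2)$ itself, which need not be an intersection point. The plan is to argue by contradiction (or, equivalently, directly estimate) using a simple geometric observation: if the tangent vectors $\mathbf T_1(s_1)$ and $\mathbf T_2(s_2)$ make a large angle, then the two curves, which start at distance $r\coloneqq|\gamma_1(s_1)-\gamma_2(s_2)|$ apart, must intersect within a short arclength distance comparable to $r^{1/(1+\beta)}$, and at that intersection the assumed hypothesis applies, forcing the angle there to be small; but $C^{1,\beta}$ control on the tangent vectors means the angle cannot have changed much over such a short arc, contradiction.

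More concretely: set $M\coloneqq\max\{\|\gamma_1\|_{\dot C^{1,\beta}},\|\gamma_2\|_{\dot C^{1,\beta}}\}$, $r\coloneqq|\gamma_1(s_1)-\gamma_2(s_2)|$, and $h\coloneqq (r/M)^{1/(1+\beta)}$. First I would dispose of the trivial cases: if $r$ is so large that $12M^{1/(1+\beta)}r^{\beta/(1+\beta)}\ge 1$ (equivalently $h\gtrsim 1/M^{1/\beta}$, roughly the length scale of the curves), the conclusion is automatic since $|\mathbf T_1\cdot\mathbf N_2|\le 1$; so assume $r$ small. Next, parametrize the piece of $\gamma_2$ near $s_2$ by writing, in the orthonormal frame $(\mathbf T_2(s_2),\mathbf N_2(s_2))$ centered at $\gamma_2(s_2)$, that $\gamma_2(s_2+\sigma)-\gamma_2(s_2)=(\sigma+O(M\sigma^{1+\beta}),\,O(M\sigma^{1+\beta}))$ for $|\sigma|\le h$ (this uses only the $C^{1,\beta}$ bound and $\mathbf T_2(s_2)$ being the tangent there). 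Assume for contradiction that $|\mathbf T_1(s_1)\cdot\mathbf N_2(s_2)|>12M^{1/(1+\beta)}r^{\beta/(1+\beta)}=12M h^\beta$. Then over $|\sigma|\le h$ the tangent $\mathbf T_1(s_1+\sigma)$ still has $|\mathbf T_1(s_1+\sigma)\cdot\mathbf N_2(s_2)|\ge 12Mh^\beta-Mh^\beta=11Mh^\beta$, so the $\mathbf N_2(s_2)$-coordinate of $\gamma_1(s_1+\sigma)-\gamma_1(s_1)$ changes monotonically and by at least $11Mh^{1+\beta}=11r$ as $\sigma$ runs over $[-h,h]$ (or over one of $[-h,0]$, $[0,h]$, covering a range of length $\ge 11r$ in one direction). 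Since $\gamma_1(s_1)$ and $\gamma_2(s_2)$ are only $r$ apart and the $\mathbf N_2(s_2)$-coordinate of $\gamma_2$ varies by $O(Mh^{1+\beta})=O(r)$ (a fixed small multiple of $r$) over $|\sigma|\le h$, a one-dimensional intermediate value argument in the $\mathbf N_2(s_2)$-direction, combined with the near-graph structure of $\gamma_2$ over its tangent line and of $\gamma_1$ as well, yields a genuine intersection point $\gamma_1(s_1')=\gamma_2(s_2')$ with $|s_1'-s_1|\le h$ and $|s_2'-s_2|\le Ch$ — and here I would shrink the neighborhood (replace $h$ by $h/C_0$ for a suitable absolute constant $C_0$ from the start) so that in fact $\max\{|s_1'-s_1|,|s_2'-s_2|\}\le h=(r/M)^{1/(1+\beta)}$, matching exactly the hypothesis's neighborhood. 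The hypothesis then gives $|\mathbf T_1(s_1')\cdot\mathbf N_2(s_2')|\le 10M^{1/(1+\beta)}r^{\beta/(1+\beta)}=10Mh^\beta$. But $C^{1,\beta}$ control gives $|\mathbf T_1(s_1')\cdot\mathbf N_2(s_2')-\mathbf T_1(s_1)\cdot\mathbf N_2(s_2)|\le |\mathbf T_1(s_1')-\mathbf T_1(s_1)|+|\mathbf N_2(s_2')-\mathbf N_2(s_2)|\le M(C h)^\beta+M(h)^\beta$, which is at most (say) $2Mh^\beta$ if the shrinking constant $C_0$ absorbs $C^\beta$; hence $|\mathbf T_1(s_1)\cdot\mathbf N_2(s_2)|\le 10Mh^\beta+2Mh^\beta=12Mh^\beta$, contradicting our assumption. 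This is the desired bound.

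The main obstacle I anticipate is the intersection-existence step: turning ``the curves come within $r$ of each other and one of them sweeps a distance $\ge 11r$ transverse to a direction in which the other barely moves'' into an actual point of intersection $\gamma_1(s_1')=\gamma_2(s_2')$ with both parameters controlled. The clean way to do this is to work in the $\gamma_2$-adapted frame and show that, for $|\sigma|\le h$, the curve $\gamma_2$ is the graph of a $C^{1,\beta}$ function $\psi$ of its first coordinate with $|\psi'|\le Mh^\beta$, while $\gamma_1$ restricted to the same parameter window is the graph of a function whose first-coordinate range covers the relevant interval (because its tangent has a substantial $\mathbf N_2(s_2)$-component, hence a controlled but possibly small $\mathbf T_2(s_2)$-component — one must be a little careful that $\mathbf T_1$ doesn't degenerate in the $\mathbf T_2(s_2)$-direction, but since $|\mathbf T_1|=1$ and we only need the $\mathbf N_2(s_2)$-coordinate to move a lot, we can instead use $\mathbf N_2(s_2)$ as the independent variable for $\gamma_1$); then comparing the two graphs and using that at $\sigma=0$ the signed gap is at most $r$ in magnitude while the gap's variation exceeds $2r$ forces a sign change, hence an intersection, by the intermediate value theorem. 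Keeping all the absolute constants straight (so that the final constant is $12$ and the neighborhood is \emph{exactly} $(r/M)^{1/(1+\beta)}$, matching the hypothesis) is the fiddly part, and is why one fixes a large absolute shrinking constant $C_0$ at the outset and checks at the end that $10+(\text{error from }C^{1,\beta})\le 12$ with room to spare.
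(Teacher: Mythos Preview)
Your approach is essentially the paper's: assume the angle exceeds $12M^{1/(1+\beta)}r^{\beta/(1+\beta)}$, force a nearby intersection, apply the hypothesis there, and transfer back via $C^{1,\beta}$ control. For the intersection step you flag as the obstacle, the paper uses a clean rectangle-crossing argument rather than a graph/IVT one: with $h\coloneqq\tfrac14(r/M)^{1/(1+\beta)}$ (so $4h$ is exactly the hypothesis radius), it confines $\gamma_1|_{[s_1-h,s_1+h]}$ to a strip $S_1$ of bounded $\mathbf T_2(s_2)$-extent and $\gamma_2|_{[s_2-4h,s_2+4h]}$ to a strip $S_2$ of $\mathbf N_2(s_2)$-extent $\le 2r$, then checks that the contradiction assumption makes $\gamma_1$ exit $S_1\cap S_2$ through both $\mathbf T_2$-parallel sides while $\gamma_2$ connects the other pair of sides, so they must cross inside the rectangle --- this also makes the final constant $10+Mh^\beta+M(4h)^\beta\le 12$ come out without any auxiliary shrinking.
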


\begin{proof}
    Let $M\coloneqq \max\left\{\norm{\gamma_{1}}_{\dot{C}^{1,\beta}},
    \norm{\gamma_{2}}_{\dot{C}^{1,\beta}}\right\}$ and
    $R\coloneqq \frac{1}{4^{1+1/\beta}M^{1/\beta}}$. Note that the result holds trivially when $r\coloneqq\abs{\gamma_{1}(s_{1}) - \gamma_{2}(s_{2})}>R$,
    so we assume that $r\leq R$.   Then    $r^{\frac{\beta}{1+\beta}} \leq \frac{1}{4}M^{-\frac{1}{1+\beta}}$,
    so  $r\leq h \coloneqq \frac{1}{4}(r/M)^{\frac{1}{1+\beta}}$ and for any $s_{1}' \in [s_{1} - h, s_{1} + h]$ we have
    \[
    \abs{(\gamma_{1}(s_{1}') - \gamma_{2}(s_{2}))\cdot \mathbf{T}_{2}(s_{2})}\leq r + h\le 2h.
    \]
Since $4h \leq (R/M)^{\frac{1}{1+\beta}} \leq M^{-1/\beta}$,
    Lemma~\ref{L3.1} 
    yields
    \begin{align*}
        (\gamma_{2}(s_{2}+4h) - \gamma_{2}(s_{2}))\cdot \mathbf{T}_{2}(s_{2})
        &\geq 4h\left(
            1 - \frac{1}{2}M^{2}
            (4h)^{2\beta}
        \right)
        \geq 2h
    \end{align*}
    and similarly
    \[
        (\gamma_{2}(s_{2}-4h) - \gamma_{2}(s_{2}))\cdot \mathbf{T}_{2}(s_{2}) \leq -2h.
    \]
    This all shows that the segment $\gamma_{1}|_{[s_{1}-h,s_{1}+h]}$ 
    is  contained in the strip
    \[
        S_{1}\coloneqq \set{y\in\bbR^{2} \colon y\cdot\mathbf{T}_{2}(s_{2})\in
        [\gamma_{2}(s_{2}-4h)\cdot \mathbf{T}_{2}(s_{2}), \gamma_{2}(s_{2}+4h)\cdot \mathbf{T}_{2}(s_{2})]}.
    \]

    Next, for any $s_{2}'\in[s_{2}-4h,s_{2}+4h]$,
    the mean value theorem gives some $\tau$ between $s_{2}$ and $s_{2}'$ such that
    \begin{align*}
        \abs{(\gamma_{2}(s_{2}') - \gamma_{2}(s_{2}))\cdot\mathbf{N}_{2}(s_{2})}
        &=\abs{s_{2}' - s_{2}}
        \abs{(\mathbf{T}_{2}(\tau) - \mathbf{T}_{2}(s_{2}))\cdot\mathbf{N}_{2}(s_{2})}
        \leq M(4h)^{1+\beta} = r.
    \end{align*}
    Therefore, the segment $\gamma_{2}|_{[s_{2}-4h,s_{2}+4h]}$
    is contained in the strip
    \[
        S_{2}\coloneqq \set{y\in\bbR^{2} \colon
        \abs{(y - \gamma_{2}(s_{2}))\cdot\mathbf{N}_{2}(s_{2})} \leq r}.
    \]
    On the other hand, the mean value theorem gives some $\tau_{\pm}$ between
    $s_{1}$ and $s_{1}\pm h$ such that
    \begin{align*}
        &(\gamma_{1}(s_{1}\pm h) - \gamma_{2}(s_{2}))\cdot\mathbf{N}_{2}(s_{2}) \\
        &\quad\quad
        = (\gamma_{1}(s_{1}) - \gamma_{2}(s_{2}))\cdot\mathbf{N}_{2}(s_{2}) \pm h\mathbf{T}_{1}(s_{1})\cdot\mathbf{N}_{2}(s_{2})
        \pm h(\mathbf{T}_{1}(\tau_{\pm}) - \mathbf{T}_{1}(s_{1}))\cdot\mathbf{N}_{2}(s_{2}).
    \end{align*}
    Since
    \begin{align*}
        \abs{(\gamma_{1}(s_{1}) - \gamma_{2}(s_{2}))\cdot\mathbf{N}_{2}(s_{2})
        \pm h(\mathbf{T}_{1}(\tau_{\pm}) - \mathbf{T}_{1}(s_{1}))\cdot\mathbf{N}_{2}(s_{2})}
        &\leq r + Mh^{1+\beta} \leq 2r,
    \end{align*}
assuming
    \begin{equation} \lb{111.4}
        \mathbf{T}_{1}(s_{1})\cdot\mathbf{N}_{2}(s_{2})
        > 12M^{\frac{1}{1+\beta}}
        r^{\frac{\beta}{1+\beta}} \quad (= 3rh^{-1})
    \end{equation}
now yields
    \begin{align*}
        (\gamma_{1}(s_{1}+h) - \gamma_{2}(s_{2}))\cdot\mathbf{N}_{2}(s_{2})
        > 3r - 2r = r
    \end{align*}
    and
    \begin{align*}
        (\gamma_{1}(s_{1}-h) - \gamma_{2}(s_{2}))\cdot\mathbf{N}_{2}(s_{2})
        < -3r + 2r = -r.
    \end{align*}
    Hence the segment $\gamma_{1}|_{[s_{1}-h,s_{1}+h]}$ passes through both sides of
    the rectangle $S_1\cap S_2$ that are parallel to $\mathbf{T}_2(s_2)$,
    while being contained in $S_1$.  Similarly, $\gamma_{2}|_{[s_{2}-4h,s_{2}+4h]}$
    connects the other two sides of $S_1\cap S_2$, while being contained in $S_2$.
    Therefore the segments must intersect at some $x \in S_{1}\cap S_{2}$,
    and so there are $s_{1}'\in[s_{1}-h,s_{1}+h]$ and
    $s_{2}'\in[s_{2}-4h,s_{2}+4h]$ such that
    $x = \gamma_{1}(s_{1}') = \gamma_{2}(s_{2}')$.
    By our assumption on $(s_{1},s_{2})$ we then  have
    \begin{align*}
        \abs{\mathbf{T}_{1}(s_{1})\cdot\mathbf{N}_{2}(s_{2})}
        \leq \abs{\mathbf{T}_{1}(s_{1}')\cdot\mathbf{N}_{2}(s_{2}')}
        + Mh^{\beta} + M(4h)^{\beta}
        \leq 12M^{\frac{1}{1+\beta}}r^{\frac{\beta}{1+\beta}},
    \end{align*}
    which contradicts \eqref{111.4}.
    We similarly obtain a contradiction when we assume
    \[
        \mathbf{T}_{1}(s_{1})\cdot\mathbf{N}_{2}(s_{2}) <
        -12M^{\frac{1}{1+\beta}}
        r^{\frac{\beta}{1+\beta}}
    \]
    instead of \eqref{111.4}, so the proof is finished.
\end{proof}

If the $\gamma_{i}$'s are $H^{2}$ instead of $C^{1,\beta}$ (and no transversal crossings are assumed), then an identical
proof works with  $\norm{\gamma_{i}}_{\dot{C}^{1,\beta}}$ and $\beta$ replaced by $\mathcal{M}\kappa_{i}(s_{i})$ and 1, where
$\kappa_{i}\coloneqq \partial_{s}\mathbf{T}_{i}\cdot\mathbf{N}_{i}$
is the (signed) curvature of $\gamma_i$ and $\mathcal{M}$ is the maximal operator 
\beq\lb{111.18}
    \mathcal{M}f(s)\coloneqq \max\set{
        \sup_{h\in\left(0,\frac{\ell}{2}\right]}
        \frac{1}{h}\int_{s}^{s+h}\abs{f(s')}\,ds',
        \sup_{h\in\left(0,\frac{\ell}{2}\right]}
        \frac{1}{h}\int_{s-h}^{s}\abs{f(s')}\,ds'
    }
\eeq
for an integrable $f\colon \ell\bbT\to\bbR$. We thus obtain
the following result.

\begin{lemma}\label{L3.4}
    Let $\gamma_{i}\colon\ell_{i}\bbT\to\bbR^{2}$ ($i=1,2$) be
    $H^{2}$ closed curves parametrized by arclength that do not cross transversally,
    and let $\mathbf{T}_{i}\coloneqq\partial_{s}\gamma_{i}$,
    $\mathbf{N}_{i}\coloneqq\mathbf{T}_{i}^{\perp}$, and
    $\kappa_{i}\coloneqq \partial_{s}\mathbf{T}_{i}\cdot\mathbf{N}_{i}$.
    Then for any $s_{1}\in\ell_{1}\bbT$ and $s_{2}\in\ell_{2}\bbT$ we have
    \begin{align*}
        \abs{\mathbf{T}_{1}(s_{1})\cdot\mathbf{N}_{2}(s_{2})}
        \leq 12\max\left\{\mathcal{M}\kappa_{1}(s_{1}),\mathcal{M}\kappa_{2}(s_{2})\right\}^{1/2}
        \abs{\gamma_{1}(s_{1}) - \gamma_{2}(s_{2})}^{1/2}.
    \end{align*}
\end{lemma}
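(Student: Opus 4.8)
\textbf{Proof plan for Lemma~\ref{L3.4}.}
The statement is the $H^2$ analog of Lemma~\ref{L3.3}, obtained in the regime where there are no transversal crossings at all, so the plan is essentially to re-run the geometric argument of Lemma~\ref{L3.3} with the $C^{1,\beta}$ modulus of continuity of the tangent vector replaced by a pointwise bound coming from the maximal function of the curvature. Concretely, the key input is the following replacement for the estimate ``$\abs{\mathbf{T}_i(s)-\mathbf{T}_i(s')}\le \norm{\gamma_i}_{\dot C^{1,\beta}}\abs{s-s'}^{\beta}$'': for an arclength-parametrized $H^2$ curve $\gamma_i$ with signed curvature $\kappa_i$, and for any $s_i$ and any $h>0$ with $h\le\frac{\ell_i}{2}$, one has $\abs{\mathbf{T}_i(s_i\pm t)-\mathbf{T}_i(s_i)}\le \int_{s_i}^{s_i\pm t}\abs{\kappa_i}\le t\,\mathcal Mk_i(s_i)$ for all $t\in[0,h]$, simply because $\partial_s\mathbf{T}_i=\kappa_i\mathbf{N}_i$ and by the very definition \eqref{111.18} of $\mathcal M$. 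This is exactly the role played by ``$\norm{\gamma_i}_{\dot C^{1,\beta}}\,t^{\beta}$'' in the proof of Lemma~\ref{L3.3}, with $\beta=1$ and $\norm{\gamma_i}_{\dot C^{1,\beta}}$ replaced by $\mathcal Mk_i(s_i)$.

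First I would set $M\coloneqq\max\{\mathcal Mk_1(s_1),\mathcal Mk_2(s_2)\}$, $r\coloneqq\abs{\gamma_1(s_1)-\gamma_2(s_2)}$, and $h\coloneqq\frac14(r/M)^{1/2}$, note the claim is trivial if $r$ is too large relative to $M^{-1}$ (so that $4h>\frac{\ell_i}{2}$ or the length constraints fail), and otherwise argue by contradiction assuming $\mathbf{T}_1(s_1)\cdot\mathbf{N}_2(s_2)>12M^{1/2}r^{1/2}=3rh^{-1}$. Then I would copy the geometric construction of Lemma~\ref{L3.3} verbatim: using the curvature bound above in place of the H\"older bound, show that the segment $\gamma_1|_{[s_1-h,s_1+h]}$ is trapped in a strip $S_1$ of width comparable to $h$ in the $\mathbf{T}_2(s_2)$-direction, that $\gamma_2|_{[s_2-4h,s_2+4h]}$ is trapped in a strip $S_2$ of width $\le r$ in the $\mathbf{N}_2(s_2)$-direction, that the two segments must cross inside the rectangle $S_1\cap S_2$ at a point $x=\gamma_1(s_1')=\gamma_2(s_2')$ with $\abs{s_1'-s_1}\le h$ and $\abs{s_2'-s_2}\le 4h$, and finally that this crossing is transversal (because at such a point the two tangent directions differ by an angle bounded below away from $0$), contradicting the no-transversal-crossing hypothesis; the symmetric sign case $\mathbf{T}_1(s_1)\cdot\mathbf{N}_2(s_2)<-12M^{1/2}r^{1/2}$ is identical. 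Since in this $H^2$/no-transversal-crossing setting the auxiliary hypothesis on $(s_1',s_2')$ from Lemma~\ref{L3.3} is vacuously satisfied, the crossing at $x$ directly yields the contradiction without the extra comparison step, which is why the final constant is $12$ rather than needing the ``$+12$'' slack of Lemma~\ref{L3.3}.

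The one genuinely new point compared to Lemma~\ref{L3.3}, and the step I would be most careful about, is verifying that the crossing at $x$ is \emph{transversal} so that the hypothesis ``$\gamma_1,\gamma_2$ do not cross transversally'' is actually violated. This requires that $\mathbf{T}_1(s_1')$ and $\mathbf{T}_2(s_2')$ are not parallel, which I would get by combining the contradiction hypothesis $\abs{\mathbf{T}_1(s_1)\cdot\mathbf{N}_2(s_2)}>12M^{1/2}r^{1/2}$ with the curvature-controlled oscillation bounds $\abs{\mathbf{T}_1(s_1')-\mathbf{T}_1(s_1)}\le Mh\le \tfrac14\cdot 12M^{1/2}r^{1/2}$-type estimates (and likewise for $\gamma_2$ over the interval of length $4h$), so that $\abs{\mathbf{T}_1(s_1')\cdot\mathbf{N}_2(s_2')}$ is still bounded below by a positive multiple of $M^{1/2}r^{1/2}>0$; in particular the two tangent vectors at $x$ are linearly independent and the crossing is transversal. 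Everything else is the same bookkeeping with strips and rectangles as in Lemma~\ref{L3.3}, now with $\mathcal Mk_i(s_i)$ playing the role of $\norm{\gamma_i}_{\dot C^{1,\beta}}$ and exponent $\beta=1$, together with the trivial observation that replacing a global H\"older seminorm by a pointwise maximal-function value only improves the estimates at the single points $s_1,s_2$ where they are invoked.
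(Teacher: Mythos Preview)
Your proposal is correct and matches the paper's approach exactly: the paper's entire proof is the one-line remark that the argument of Lemma~\ref{L3.3} goes through verbatim once $\norm{\gamma_i}_{\dot C^{1,\beta}}$ and $\beta$ are replaced by $\mathcal M\kappa_i(s_i)$ and $1$, using precisely the pointwise tangent-oscillation bound $\abs{\mathbf T_i(s_i\pm t)-\mathbf T_i(s_i)}\le t\,\mathcal M\kappa_i(s_i)$ that you isolate. Your framing of the contradiction (the constructed crossing at $x$ is transversal because $\abs{\mathbf T_1(s_1')\cdot\mathbf N_2(s_2')}>0$) is equivalent to the paper's (at a non-transversal crossing that quantity vanishes, so the hypothesis of Lemma~\ref{L3.3} holds with the ``$10$'' replaced by $0$); your aside about ``the $+12$ slack'' is slightly garbled wording but not a mathematical issue.
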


We also recall that there is a universal constant $C<\infty$ such that
\begin{equation}\label{3.1}
    \norm{\mathcal{M}f}_{L^{2}} \leq C\norm{f}_{L^{2}}
\end{equation}
holds for any $f\in L^{2}(\ell\bbT)$ and $\ell\in(0,\infty)$.

Even though $\mathbf{T}_{1}(s_{1})\cdot\mathbf{N}_{2}(s_{2})$
can be bounded by some power of $\abs{\gamma_{1}(s_{1}) - \gamma_{2}(s_{2})}$,
this does not mean the same for $\mathbf{T}_{1}(s_{1}) - \mathbf{T}_{2}(s_{2})$
because $\mathbf{T}_{1}(s_{1})$ and $\mathbf{T}_{2}(s_{2})$ may be pointing
in roughly opposite directions. Nevertheless, the latter will not happen
if we assume that $\gamma_{1},\gamma_{2}$ are both positive simple closed curves
and one lies inside the region enclosed by the other.

\begin{lemma}\label{L3.5}
    Let $\gamma_{i}\colon\ell_{i}\bbT\to\bbR^{2}$ ($i=1,2$) be
    $C^{1,\beta}$ positive simple closed curves parametrized by arclength,
    and let $\mathbf{T}_{i}\coloneqq\partial_{s}\gamma_{i}$.
    Then for any $s_{1}\in\ell_{1}\bbT$ and $s_{2}\in\ell_{2}\bbT$ we have:
    \begin{enumerate}
        \item If $\Omega(\gamma_{1})\subseteq\Omega(\gamma_{2})$
        or $\Omega(\gamma_{2})\subseteq\Omega(\gamma_{1})$, then
        \begin{align*}
            &\abs{\mathbf{T}_{1}(s_{1}) - \mathbf{T}_{2}(s_{2})}
            \leq 60
            \min\set{
                \Delta_{\norm{\gamma_{1}}_{\dot{C}^{1,\beta}}^{-1/\beta}}(\gamma_{1}),
                \Delta_{\norm{\gamma_{2}}_{\dot{C}^{1,\beta}}^{-1/\beta}}(\gamma_{2}),
            }^{-\frac{\beta}{1+\beta}}
            \abs{\gamma_{1}(s_{1}) - \gamma_{2}(s_{2})}^{\frac{\beta}{1+\beta}}.
        \end{align*}

        \item If $\Omega(\gamma_{1})\cap\Omega(\gamma_{2}) = \emptyset$, then
        \begin{align*}
            &\abs{\mathbf{T}_{1}(s_{1}) + \mathbf{T}_{2}(s_{2})}
            \leq 60
            \min\set{
                \Delta_{\norm{\gamma_{1}}_{\dot{C}^{1,\beta}}^{-1/\beta}}(\gamma_{1}),
                \Delta_{\norm{\gamma_{2}}_{\dot{C}^{1,\beta}}^{-1/\beta}}(\gamma_{2}),
            }^{-\frac{\beta}{1+\beta}}
            \abs{\gamma_{1}(s_{1}) - \gamma_{2}(s_{2})}^{\frac{\beta}{1+\beta}}.
        \end{align*}
    \end{enumerate}
\end{lemma}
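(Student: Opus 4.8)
\textbf{Proof proposal for Lemma \ref{L3.5}.}

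The plan is to reduce both parts to Lemma \ref{L3.3} plus a topological sign-disambiguation argument.  First I would fix $s_1,s_2$ and set $r\coloneqq|\gamma_1(s_1)-\gamma_2(s_2)|$ and $M\coloneqq\max\{\|\gamma_1\|_{\dot C^{1,\beta}},\|\gamma_2\|_{\dot C^{1,\beta}}\}$; as in the proof of Lemma \ref{L3.3} the statement is trivial unless $r$ is small compared to the quantity on the right, so I may assume $r$ is at most a small multiple of $\min\{\Delta_{\|\gamma_1\|_{\dot C^{1,\beta}}^{-1/\beta}}(\gamma_1),\Delta_{\|\gamma_2\|_{\dot C^{1,\beta}}^{-1/\beta}}(\gamma_2)\}$.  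The key point is that for simple closed curves, the non-crossing hypothesis is automatic: if $\Omega(\gamma_1)\subseteq\Omega(\gamma_2)$ (or vice versa, or the domains are disjoint), then $\gamma_1$ and $\gamma_2$ cannot cross transversally, since a transversal crossing would force each curve to pass from the inside to the outside of the region bounded by the other, contradicting the containment/disjointness of the $\Omega$'s.  Hence the hypothesis of Lemma \ref{L3.3} involving pairs $(s_1',s_2')$ with $\gamma_1(s_1')=\gamma_2(s_2')$ is vacuously satisfied for the relevant nearby arguments (any intersection point there is a non-transversal touch, so $\mathbf T_1(s_1')$ and $\mathbf T_2(s_2')$ are parallel and $|\mathbf T_1(s_1')\cdot\mathbf N_2(s_2')|=0$), and Lemma \ref{L3.3} gives
\[
|\mathbf T_1(s_1)\cdot\mathbf N_2(s_2)|\le 12 M^{\frac1{1+\beta}} r^{\frac\beta{1+\beta}}.
\]
Since $\mathbf T_1(s_1),\mathbf T_2(s_2)$ are unit vectors, this already forces $\mathbf T_1(s_1)$ to be either very close to $\mathbf T_2(s_2)$ or very close to $-\mathbf T_2(s_2)$; concretely $\min\{|\mathbf T_1(s_1)-\mathbf T_2(s_2)|,|\mathbf T_1(s_1)+\mathbf T_2(s_2)|\}\le C M^{\frac1{1+\beta}} r^{\frac\beta{1+\beta}}$.  (Note $M^{1/(1+\beta)}\le \Delta_{\|\gamma_i\|_{\dot C^{1,\beta}}^{-1/\beta}}(\gamma_i)^{-\beta/(1+\beta)}$ up to a constant by \eqref{2.2}-type comparison, which converts the bound into the stated form.)

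It remains to pin down which of the two signs occurs, and this is where orientation enters.  I would argue as follows in case (1), say $\Omega(\gamma_1)\subseteq\Omega(\gamma_2)$: pick the nearest point $\gamma_2(s_2^*)$ on $\operatorname{im}(\gamma_2)$ to $\gamma_1(s_1)$ (so $|s_2^*-s_2|$ is small, of order $r^{1/(1+\beta)}$, using Lemma \ref{L3.2}-type control), and observe that $\gamma_1(s_1)-\gamma_2(s_2^*)$ points into $\Omega(\gamma_2)$, hence is a positive multiple of the inward normal to $\gamma_2$ at $s_2^*$, which for a positively oriented curve is $-\mathbf T_2(s_2^*)^\perp$ rotated appropriately — the standard fact that the inner normal of a counterclockwise curve is $\mathbf N_2 = \mathbf T_2^\perp$ pointing inward.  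Since $\gamma_1(s_1)$ lies inside $\Omega(\gamma_2)$ and $\gamma_1$ is also positively oriented and stays inside $\Omega(\gamma_2)$, near $s_1$ the curve $\gamma_1$ traverses "in the same rotational sense" as $\gamma_2$ near $s_2^*$; combined with $\mathbf T_1(s_1)\cdot\mathbf N_2(s_2)$ being tiny, this rules out $\mathbf T_1(s_1)\approx -\mathbf T_2(s_2)$.  A clean way to make this rigorous: consider the signed area / winding, or simply note that if $\mathbf T_1(s_1)$ were close to $-\mathbf T_2(s_2)$, then following $\gamma_1$ a short arc-length distance from $s_1$ would move in the direction roughly opposite to $\gamma_2$ near $s_2^*$, and since both curves are trapped (one inside the other) in the thin neighborhood where they are nearly parallel to $\mathbf T_2(s_2)$, this contradicts the consistency of orientations of nested positively oriented Jordan curves — made precise by the fact that $\int_{\gamma_1}$ and $\int_{\gamma_2}$ of the same closed $1$-form linking a point inside $\Omega(\gamma_1)$ both equal $2\pi$.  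In case (2), $\Omega(\gamma_1)\cap\Omega(\gamma_2)=\emptyset$, the roles flip: $\gamma_1(s_1)-\gamma_2(s_2^*)$ now points out of $\Omega(\gamma_2)$, i.e.\ along the outward normal, and the two curves bound regions on opposite sides of the nearly-common tangent line, so their positive orientations force $\mathbf T_1(s_1)\approx -\mathbf T_2(s_2)$, giving the bound on $|\mathbf T_1(s_1)+\mathbf T_2(s_2)|$.

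The main obstacle I anticipate is the sign-disambiguation step: turning the intuitive "nested/disjoint positively oriented Jordan curves that are locally nearly parallel must have tangents pointing the same/opposite way" into a rigorous quantitative statement without appealing to excessive machinery.  The cleanest route is probably to work entirely within the thin rectangle $S_1\cap S_2$ from the proof of Lemma \ref{L3.3}: inside it both curve segments are graphs over the $\mathbf T_2(s_2)$-direction (by the $g_i' \le -\tfrac12$ monotonicity from Lemma \ref{L3.2}(4)), so "$\gamma_1$ above/below $\gamma_2$" is well-defined, the containment of the $\Omega$'s tells us which case holds, and the direction of increase of the graph parameter then directly determines the sign of $\mathbf T_1(s_1)\cdot\mathbf T_2(s_2)$.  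I expect the constant $60$ to come out of chaining the $12$ from Lemma \ref{L3.3}, the factor $\le 2$ from $|\mathbf T_1(s_1)\pm\mathbf T_2(s_2)|\le 2|\mathbf T_1(s_1)\cdot\mathbf N_2(s_2)|$ once the sign is fixed, and the comparison constants relating $M^{1/(1+\beta)}$ to $\Delta_{\cdot}(\gamma_i)^{-\beta/(1+\beta)}$ via Lemma \ref{L3.1} and the definition of $\Delta_h$, with a bit of room to spare.
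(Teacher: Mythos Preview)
Your overall strategy matches the paper's: invoke Lemma~\ref{L3.3} (using that the containment/disjointness hypotheses forbid transversal crossings, so any intersection has $|\mathbf T_1(s_1')\cdot\mathbf N_2(s_2')|=0$) to get $|\mathbf T_1(s_1)\cdot\mathbf N_2(s_2)|\le 12M^{\frac1{1+\beta}}r^{\frac\beta{1+\beta}}$, then use the identity $|\mathbf T_1-\mathbf T_2|^2=\frac{2|\mathbf T_1\cdot\mathbf N_2|^2}{1+\mathbf T_1\cdot\mathbf T_2}$ together with a lower bound on $\mathbf T_1\cdot\mathbf T_2$, and finally trade $M$ for $\min_i\Delta_{d_i}(\gamma_i)^{-1}$ via \eqref{2.2}.

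The gap is entirely in the sign step, and your ``cleanest route'' does not close it. Knowing that both local arcs are graphs over the $\mathbf T_2(s_2)$-direction and that $\gamma_1$'s graph lies on a definite side of $\gamma_2$'s graph tells you nothing about the \emph{direction of traversal} of $\gamma_1$: both signs of $\mathbf T_1\cdot\mathbf T_2$ are compatible with that picture. What is missing is a determination of which side of the local $\gamma_1$-arc contains $\Omega(\gamma_1)$. (Also, your citation of Lemma~\ref{L3.2}(4) is circular here --- it yields monotonicity of $\gamma_1$ in the $\mathbf T_1(s_{1,j})$-direction, which only aligns with $\mathbf T_2(s_2)$ after the sign is known.) The paper supplies this as follows. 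Take $s_{2,1}$ minimizing $|\gamma_1(s_1)-\gamma_2(\cdot)|$, so the segment $[\gamma_1(s_1),\gamma_2(s_{2,1})]$ is parallel to $\mathbf N_2(s_{2,1})$, and show its interior meets neither curve: disjointness from $\gamma_2$ is immediate, while for $\gamma_1$ one uses the $\Delta_{d_1}(\gamma_1)$ bound to localize any intersection to a short arc near $s_1$, then Rolle produces $s_1''$ with $\mathbf T_1(s_1'')\perp\mathbf N_2(s_{2,1})$, i.e.\ $|\mathbf T_1(s_1'')\cdot\mathbf N_2(s_{2,1})|=1$, contradicting Lemma~\ref{L3.3} at the nearby pair $(s_1'',s_{2,1})$. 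Once the segment interior is known to lie in $\Omega(\gamma_2)\setminus\overline{\Omega(\gamma_1)}$, both inward normals $\mathbf N_1(s_1)$ and $\mathbf N_2(s_{2,1})$ have nonnegative dot product with $\gamma_1(s_1)-\gamma_2(s_{2,1})$, hence $\mathbf N_1(s_1)\cdot\mathbf N_2(s_{2,1})\ge 0$; a $C^{1,\beta}$ correction (using that $|s_2-s_{2,1}|$ is small, again via the $\Delta_{d_2}(\gamma_2)$ localization) then gives $\mathbf T_1(s_1)\cdot\mathbf T_2(s_2)\ge -\tfrac1{12}$, which is enough to finish.
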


\begin{proof}
    We only prove the case $\Omega(\gamma_{1})\subseteq\Omega(\gamma_{2})$ as
   the others are analogous. Let
    $d_{i}\coloneqq\norm{\gamma_{i}}_{\dot{C}^{1,\beta}}^{-1/\beta}$ for $i=1,2$, and then
    $d\coloneqq \min\set{d_{1},d_{2}}$ and
    \[
        k\coloneqq \min\left\{
            \frac{1}{5\cdot 12^{1+1/\beta}},
            \frac{\Delta_{d_{1}}(\gamma_{1})}{2d},
            \frac{\Delta_{d_{2}}(\gamma_{2})}{2d}
        \right\}.
    \]
    Since $\abs{\mathbf{T}_{1}(s_{1}) - \mathbf{T}_{2}(s_{2})}\leq 2$,
    we may assume that $\abs{\gamma_{1}(s_{1}) - \gamma_{2}(s_{2})} < kd$ because
    otherwise the result follows from (recall also \eqref{2.2})
    \begin{align*}
        (kd)^{\frac{\beta}{1+\beta}}
        \geq \frac{1}{5^{\frac{\beta}{1+\beta}}\cdot 12} \min\set{
            d, \Delta_{d_{1}}(\gamma_{1}), \Delta_{d_{2}}(\gamma_{2})
        }^{\frac{\beta}{1+\beta}}
        \geq \frac{\min\set{\Delta_{d_{1}}(\gamma_{1}),
        \Delta_{d_{2}}(\gamma_{2})}^{\frac{\beta}{1+\beta}}}{30}.
    \end{align*}
    
    Take $s_{2,1},\dots,s_{2,N_{2}} \in \ell_{2}\bbT$ obtained by applying
    Lemma~\ref{L3.2} with $x\coloneqq\gamma_{1}(s_{1})$ and $\gamma\coloneqq\gamma_{2}$ so that
    $(\gamma_{1}(s_{1}) - \gamma_{2}(s_{2,i}))\cdot
    \mathbf{T}_{2}(s_{2,i}) = 0$ holds for $i=1,\dots,N_2$ and 
    \[
    \left\{ s_{2}'\in \ell_{2}\bbT \,:\, \abs{\gamma_{1}(s_{1}) - \gamma_{2}(s_{2}')} \leq \frac{d_{2}}{4} \right\}
    \subseteq  \bigcup_{i=1}^{N_2} \left[s_{2,i}-\frac{d_{2}}{2},s_{2,i}+\frac{d_{2}}{2}\right].
    \]
    Without loss of generality, suppose that
    $\abs{\gamma_{1}(s_{1}) - \gamma_{2}(s_{2,1})}
    = d(\gamma_{1}(s_{1}), \operatorname{im}(\gamma_{2})) < kd$.
    We now claim that any $s_{2}'\in\ell_{2}\bbT$ with
    $\abs{\gamma_{1}(s_{1}) - \gamma_{2}(s_{2}')}\leq kd$
    must be in $[s_{2,1}-4kd,s_{2,1}+4kd]$. This is because from
    $ \abs{s_{2}' - s_{2,1}} \in[d_2, \frac{\ell_{2}}{2}]$ and
    the definition of $\Delta_{d_{2}}(\gamma_{2})$ we see that
    \[
        \abs{\gamma_{1}(s_{1}) - \gamma_{2}(s_{2}')}
        \geq \abs{\gamma_{2}(s_{2,1}) - \gamma_{2}(s_{2}')}
        - \abs{\gamma_{1}(s_{1}) - \gamma_{2}(s_{2,1})}
        > \Delta_{d_{2}}(\gamma_{2}) - kd
        \geq kd,
    \]
    while from     $ \abs{s_{2}' - s_{2,1}} \in(4kd, d_{2})$ and Lemma~\ref{L3.2} we obtain
    \[
        \abs{\gamma_{1}(s_{1}) - \gamma_{2}(s_{2}')}
        \geq \abs{\gamma_{2}(s_{2,1}) - \gamma_{2}(s_{2}')}
        - \abs{\gamma_{1}(s_{1}) - \gamma_{2}(s_{2,1})}
        > 2kd - kd = kd.
    \]
 
    Similarly, take $s_{1,1},\dots,s_{1,N_{1}}\in \ell_{1}\bbT$ obtained by applying
    Lemma~\ref{L3.2} with $x\coloneqq\gamma_{2}(s_{2,1})$ and $\gamma\coloneqq\gamma_{1}$, and without loss assume that
    $\abs{\gamma_{1}(s_{1,1}) - \gamma_{2}(s_{2,1})}
    = d(\gamma_{2}(s_{2,1}), \operatorname{im}(\gamma_{1})) < kd$.
    Then again any $s_{1}'\in\ell_{1}\bbT$ with $\abs{\gamma_{1}(s_{1}') - \gamma_{2}(s_{2,1})}
    \leq kd$ must be in $[s_{1,1}-4kd,s_{1,1}+4kd]$.
    
    Next, we claim that the interior of the line segment joining $\gamma_{1}(s_{1})$
    and $\gamma_{2}(s_{2,1})$ is disjoint with
    $\operatorname{im}(\gamma_{1})\cup\,\operatorname{im}(\gamma_{2})$. It is clearly disjoint with $\operatorname{im}(\gamma_{2})$, by the definition of $s_{2,1}$.
    Assume, towards contradiction that  $t\gamma_{1}(s_{1}) + (1-t)\gamma_{2}(s_{2,1}) = \gamma_{1}(s_{1}')$
    holds for some $t\in(0,1)$ and $s_{1}'\in\ell_{1}\bbT$ (as well as that $\gamma_{1}(s_{1})\neq\gamma_{2}(s_{2,1})$).
    Then  $|\gamma_{1}(s_{1}') - \gamma_{2}(s_{2,1})|
    = t|\gamma_{1}(s_{1}) - \gamma_{2}(s_{2,1})|<tkd$, so we must have
    $s_{1},s_{1}'\in [s_{1,1}-4kd,s_{1,1}+4kd]$. Then
    \[
        f(s_{1}'')\coloneqq
        (\gamma_{1}(s_{1}'') - \gamma_{2}(s_{2,1}))
        \cdot (\gamma_{1}(s_{1}) - \gamma_{2}(s_{2,1}))^{\perp}
    \]
    satisfies $f(s_{1}) = f(s_{1}') = 0$, so there is $s_{1}''$ strictly in between $s_{1}$ and $s_{1}'$ such that
    with $\mathbf{N}_{i}\coloneqq\mathbf{T}_{i}^{\perp}$ we have
    $(\gamma_{1}(s_{1}) - \gamma_{2}(s_{2,1}))\cdot\mathbf{N}_{1}(s_{1}'') = 0$.
    Since $\gamma_{1}(s_{1}) - \gamma_{2}(s_{2,1})$ is a nonzero vector
    parallel to $\mathbf{N}_{2}(s_{2,1})$, this implies
    $\abs{\mathbf{T}_{1}(s_{1}'')\cdot\mathbf{N}_{2}(s_{2,1})} = 1$.  Then
    Lemma~\ref{L3.3} shows that
    \begin{align*}
        \abs{\gamma_{1}(s_{1}'') - \gamma_{2}(s_{2,1})}
        \geq \frac{d}{12^{1+1/\beta}}
    \end{align*}
    and we obtain a contradiction via
    \begin{align*}
        \abs{\gamma_{1}(s_{1,1}) - \gamma_{2}(s_{2,1})}
        &\geq \abs{\gamma_{1}(s_{1}'') - \gamma_{2}(s_{2,1})}
        - \abs{\gamma_{1}(s_{1,1}) - \gamma_{1}(s_{1}'')}
        \geq \frac{d}{12^{1+1/\beta}} - 4kd \geq kd.
    \end{align*}
  
    So the line segment joining $\gamma_{1}(s_{1})$ and $\gamma_{2}(s_{2,1})$
    also joins  $\partial\Omega(\gamma_{1})$ and  $\partial\Omega(\gamma_{2})$, and its interior does not cross either boundary.  Hence  this interior must  lie fully inside
    $\Omega(\gamma_{2})\setminus\overline{\Omega(\gamma_{1})}$, and then the inward normal vectors
    $\mathbf{N}_{1}(s_{1})$ 
    (at $\gamma_{1}(s_{1})$) and $\mathbf{N}_{2}(s_{2,1})$ 
    (at $\gamma_{2}(s_{2,1})$) both have non-negative dot products with 
    $\gamma_{1}(s_{1}) - \gamma_{2}(s_{2,1})$. Since $\gamma(s_{1}) - \gamma_{2}(s_{2,1})$
    is parallel to $\mathbf{N}_{2}(s_{2,1})$, we obtain
    \begin{align*}
        0&\leq (\gamma(s_{1}) - \gamma_{2}(s_{2,1})) \cdot \mathbf{N}_{1}(s_{1})
        = \abs{\gamma_{1}(s_{1}) - \gamma_{2}(s_{2,1})}
        \mathbf{N}_{1}(s_{1})\cdot\mathbf{N}_{2}(s_{2,1}).
    \end{align*}
    This yields $\mathbf{N}_{1}(s_{1})\cdot\mathbf{N}_{2}(s_{2,1}) \geq 0$ if also $\gamma_{2}(s_{2,1}) \neq \gamma_{1}(s_{1})$, while  $\gamma_{2}(s_{2,1}) = \gamma_{1}(s_{1})$  implies
    $\mathbf{N}_{1}(s_{1})\cdot\mathbf{N}_{2}(s_{2,1}) \geq 0$ as well because $\Omega(\gamma_{1})\subseteq\Omega(\gamma_{2})$.
    We now obtain
    \begin{align*}
        \mathbf{T}_{1}(s_{1})\cdot\mathbf{T}_{2}(s_{2})
        \geq \mathbf{T}_{1}(s_{1})\cdot\mathbf{T}_{2}(s_{2,1})
        - \norm{\gamma_{2}}_{\dot{C}^{1,\beta}}(4kd)^{\beta} \geq
        -\frac{1}{12\cdot 15^{\beta}} \geq -\frac{1}{12},
    \end{align*}
    which together with Lemma~\ref{L3.3} and \eqref{2.2} shows that
    \begin{align*}
        \abs{\mathbf{T}_{1}(s_{1}) - \mathbf{T}_{2}(s_{2})}
        &= \left(\frac{2\abs{\mathbf{T}_{1}(s_{1})\cdot\mathbf{N}_{2}(s_{2})}^{2}}
        {1 + \mathbf{T}_{1}(s_{1})\cdot\mathbf{T}_{2}(s_{2})}\right)^{1/2} \\
        &\leq 18\max\set{\norm{\gamma_{1}}_{\dot{C}^{1,\beta}},
        \norm{\gamma_{2}}_{\dot{C}^{1,\beta}}}^{\frac{1}{1+\beta}}
        \abs{\gamma_{1}(s_{1}) - \gamma_{2}(s_{2})}^{\frac{\beta}{1+\beta}} \\
        &\leq 18\min\set{\Delta_{d_{1}}(\gamma_{1}),
        \Delta_{d_{2}}(\gamma_{2})}^{-\frac{\beta}{1+\beta}}
        \abs{\gamma_{1}(s_{1}) - \gamma_{2}(s_{2})}^{\frac{\beta}{1+\beta}},
    \end{align*}
and the proof is finished.
\end{proof}

Next, we obtain an angle estimate similar to Lemma~\ref{L3.3} which, instead of
a condition on angles of nearby crossings, assumes that the curves are close in the Fr\'{e}chet metric.
This estimate is used in our uniqueness proof, specifically in Appendix~\ref{S9}.

\begin{lemma}\label{L3.6}
    Let $\gamma_{i}\colon\ell_{i}\bbT\to\bbR^{2}$ ($i=1,2$) be
    $C^{1,\beta}$ closed curves parametrized by arclength,
    and let $\mathbf{T}_{i}\coloneqq\partial_{s}\gamma_{i}$ and
    $\mathbf{N}_{i}\coloneqq\mathbf{T}_{i}^{\perp}$.
    Then for any orientation-preserving homeomorphisms
    $\phi_{i}\colon\bbT\to\ell_{i}\bbT$ ($i=1,2$) and any $\xi\in\bbT$ we have
    \[
        \abs{\mathbf{T}_{1}(\phi_{1}(\xi))\cdot\mathbf{N}_{2}(\phi_{2}(\xi))}
        \leq 4\max\left\{\norm{\gamma_{1}}_{\dot{C}^{1,\beta}},
        \norm{\gamma_{2}}_{\dot{C}^{1,\beta}}\right\}^{\frac{1}{1+\beta}}
        \norm{\gamma_{1}\circ\phi_{1} - \gamma_{2}\circ\phi_{2}}
        _{L^{\infty}}^{\frac{\beta}{1+\beta}}
    \]
    and if
    $\norm{\gamma_{1}\circ\phi_{1} - \gamma_{2}\circ\phi_{2}}_{L^{\infty}}
    \leq {6^{-1-1/\beta}}\max\left\{\norm{\gamma_{1}}_{\dot{C}^{1,\beta}},
    \norm{\gamma_{2}}_{\dot{C}^{1,\beta}}\right\}^{-1/\beta}$, then
    \[
        \mathbf{T}_{1}(\phi_{1}(\xi))\cdot \mathbf{T}_{2}(\phi_{2}(\xi))
        \geq 1 - 10\max\left\{\norm{\gamma_{1}}_{\dot{C}^{1,\beta}},
        \norm{\gamma_{2}}_{\dot{C}^{1,\beta}}\right\}^{\frac{2}{1+\beta}}
        \norm{\gamma_{1}\circ\phi_{1} - \gamma_{2}\circ\phi_{2}}
        _{L^{\infty}}^{\frac{2\beta}{1+\beta}}.
    \]
\end{lemma}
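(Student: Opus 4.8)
\textbf{Proof plan for Lemma~\ref{L3.6}.} The strategy is to reduce the two claims to a pointwise geometric picture near a single parameter value, exactly as in Lemma~\ref{L3.3}, but using the Fr\'echet-closeness hypothesis $\norm{\gamma_{1}\circ\phi_{1} - \gamma_{2}\circ\phi_{2}}_{L^{\infty}}$ as a substitute for the ``no large-angle crossing nearby'' condition there. Write $M\coloneqq\max\{\norm{\gamma_{1}}_{\dot{C}^{1,\beta}},\norm{\gamma_{2}}_{\dot{C}^{1,\beta}}\}$ and $\delta\coloneqq\norm{\gamma_{1}\circ\phi_{1} - \gamma_{2}\circ\phi_{2}}_{L^{\infty}}$. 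Fix $\xi\in\bbT$, set $s_{i}\coloneqq\phi_{i}(\xi)$, so that $\abs{\gamma_{1}(s_{1})-\gamma_{2}(s_{2})}\le\delta$. The first inequality is vacuous when $\delta>(6M)^{-1/\beta}$-type thresholds (the right-hand side then exceeds $2$ up to constants), so assume $\delta$ is small; let $h\coloneqq\frac14(\delta/M)^{1/(1+\beta)}$, which is the natural length scale on which a $C^{1,\beta}$ arc of curvature-scale $M^{-1/\beta}$ stays within $O(\delta)$ of its tangent line.

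First I would carry out the geometric argument for the first inequality. Suppose for contradiction that $\abs{\mathbf{T}_{1}(s_{1})\cdot\mathbf{N}_{2}(s_{2})}>4M^{1/(1+\beta)}\delta^{\beta/(1+\beta)}$, say the dot product is $>3\delta/h$ after adjusting constants (here I would follow the bookkeeping of Lemma~\ref{L3.3} closely). Using $\norm{\mathbf{T}_{i}}_{\dot C^{0,\beta}}\le M$ and Lemma~\ref{L3.1}, one checks: (i) the segment $\gamma_{2}|_{[s_{2}-4h,s_{2}+4h]}$ is confined to the thin strip $S_{2}$ of half-width $O(\delta)$ around the tangent line to $\gamma_{2}$ at $s_{2}$, and its endpoints have $\mathbf{T}_{2}(s_{2})$-coordinate differing by at least $\approx4h$; (ii) because $\mathbf{T}_{1}(s_{1})\cdot\mathbf{N}_{2}(s_{2})$ is large, the segment $\gamma_{1}|_{[s_{1}-h,s_{1}+h]}$ starts within $O(\delta)$ of $\gamma_{2}(s_{2})$ and crosses both long sides of the rectangle $S_{1}\cap S_{2}$ in the $\mathbf{N}_{2}(s_{2})$-direction while staying in the strip $S_{1}$ orthogonal to this crossing. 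A planar intersection argument then forces $\gamma_{1}(s_{1}')=\gamma_{2}(s_{2}')$ for some $s_{1}'\in[s_{1}-h,s_{1}+h]$, $s_{2}'\in[s_{2}-4h,s_{2}+4h]$. The new input replacing the hypothesis of Lemma~\ref{L3.3} is this: since $\phi_{1},\phi_{2}$ are orientation-preserving homeomorphisms of circles and $\xi\mapsto\gamma_{1}(\phi_{1}(\xi))$, $\xi\mapsto\gamma_{2}(\phi_{2}(\xi))$ stay within $\delta$ of each other, the preimages $\phi_{1}^{-1}(s_{1}')$ and $\phi_{2}^{-1}(s_{2}')$ must be close to $\xi$ — more precisely, one shows there is a single $\xi'$ near $\xi$ (within $O(h)$ in a suitable sense, using that both curves are parametrized by arclength and locally graph-like over the common tangent direction) with $\gamma_{1}(\phi_{1}(\xi'))$ within $O(\delta)$ of $\gamma_{2}(s_{2}')$; combining $\abs{\gamma_{1}(\phi_{1}(\xi'))-\gamma_{2}(\phi_{2}(\xi'))}\le\delta$ with the triangle inequality pins $\phi_{2}(\xi')$ near $s_{2}'$, hence near $s_{2}$. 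Then $\mathbf{T}_{1}(s_{1}')$ and $\mathbf{T}_{2}(s_{2}')$ agree with $\mathbf{T}_{1}(s_{1})$ and $\mathbf{T}_{2}(s_{2})$ up to $O(M h^{\beta})=O(M^{1/(1+\beta)}\delta^{\beta/(1+\beta)})$, and since at the crossing $\gamma_{1}(s_{1}')=\gamma_{2}(s_{2}')$ the quantity $\abs{\mathbf{T}_{1}(s_{1}')\cdot\mathbf{N}_{2}(s_{2}')}$ is itself controlled (it is at most, roughly, the $C^{0,\beta}$-oscillation over the short interval, because the two arcs pass through a common point and the Fr\'echet bound prevents a genuine transversal crossing of the reparametrized curves at nearby parameters — this is where I must be careful), we reach a contradiction with the assumed lower bound. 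I would organize this so that the constant $4$ falls out as in Lemma~\ref{L3.3}'s constant $12$, only smaller because here the reparametrizations are assumed aligned.

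Second, the $\mathbf{T}_{1}\cdot\mathbf{T}_{2}$ estimate follows cheaply from the first once we know the tangent vectors are not anti-aligned. Under the smallness hypothesis $\delta\le(6M)^{-1/\beta}$ up to the stated constant, I would argue $\mathbf{T}_{1}(\phi_{1}(\xi))\cdot\mathbf{T}_{2}(\phi_{2}(\xi))>0$: at $\xi$ the two reparametrized curves are $\delta$-close, and using the arclength parametrization of $\gamma_{2}$ one has, for small $\eta>0$, $\gamma_{2}(\phi_{2}(\xi)+t)$ moving in direction $\mathbf{T}_{2}(\phi_{2}(\xi))$ up to $O(Mt^{\beta})$; meanwhile $\gamma_{1}(\phi_{1}(\cdot))$ tracks $\gamma_{2}(\phi_{2}(\cdot))$ within $\delta$ and $\phi_{1}$ is orientation-preserving, which forces the displacement of $\gamma_{1}$ along its own arc to have positive component on $\mathbf{T}_{2}(\phi_{2}(\xi))$ once one travels an arclength slightly exceeding the scale $(\delta/M)^{1/(1+\beta)}$; hence $\mathbf{T}_{1}(\phi_{1}(\xi))\cdot\mathbf{T}_{2}(\phi_{2}(\xi))\ge1-CM^{2/(1+\beta)}\delta^{2\beta/(1+\beta)}$ cannot be beaten by the negative branch. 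Concretely, writing $a\coloneqq\mathbf{T}_{1}(\phi_{1}(\xi))\cdot\mathbf{T}_{2}(\phi_{2}(\xi))$ and $b\coloneqq\mathbf{T}_{1}(\phi_{1}(\xi))\cdot\mathbf{N}_{2}(\phi_{2}(\xi))$, we have $a^{2}+b^{2}=1$, so $a=\pm\sqrt{1-b^{2}}\ge\pm(1-b^{2})$; the first inequality bounds $b^{2}\le16M^{2/(1+\beta)}\delta^{2\beta/(1+\beta)}$, and ruling out the minus sign via the orientation argument yields $a\ge1-16M^{2/(1+\beta)}\delta^{2\beta/(1+\beta)}$, which is stronger than the claimed constant $10$ once one is slightly more careful (or one simply records the weaker constant $10$ by using $\sqrt{1-b^{2}}\ge1-b^{2}$ together with a cleaner version of the first bound restricted to the small-$\delta$ regime).

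\textbf{Main obstacle.} The delicate point — and the only place where Fr\'echet-closeness does real work that pointwise curvature bounds cannot — is ruling out that $\mathbf{T}_{1}(\phi_{1}(\xi))$ and $\mathbf{T}_{2}(\phi_{2}(\xi))$ point in nearly opposite directions, i.e.\ controlling the sign of $a$. A priori two arclength-parametrized arcs could stay $\delta$-close as sets while being traversed in opposite directions, and then $a\approx-1$; what forbids this is precisely that $\phi_{1},\phi_{2}$ are \emph{orientation-preserving} and the \emph{same} parameter $\xi$ is used for both. Making this rigorous requires a short topological/continuity argument — essentially that $\xi\mapsto\gamma_{2}(\phi_{2}(\xi))$ winds once and $\gamma_{1}\circ\phi_{1}$ stays in a $\delta$-tube around it, so the local direction of traversal must agree — and interlacing it cleanly with the quantitative $C^{1,\beta}$ estimates (choosing the right intermediate arclength scale on which the tube is thin compared to the arc's extent) is the step I would spend the most care on. Everything else is a reprise of the geometry already executed in Lemmas~\ref{L3.1}--\ref{L3.3}.
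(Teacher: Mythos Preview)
Your approach to the first inequality has a genuine gap at the step you yourself flag. The rectangle-and-crossing argument of Lemma~\ref{L3.3} terminates in a contradiction only because that lemma \emph{assumes} a bound on $\abs{\mathbf{T}_{1}\cdot\mathbf{N}_{2}}$ at nearby crossings; here no such hypothesis is available, and your proposed replacement---that the Fr\'echet closeness ``prevents a genuine transversal crossing of the reparametrized curves at nearby parameters''---is circular. Finding a crossing $(s_{1}',s_{2}')$ with $\abs{s_{i}'-s_{i}}\le O(h)$ gives you a point where $\abs{\mathbf{T}_{1}(s_{1}')\cdot\mathbf{N}_{2}(s_{2}')}$ differs from $\abs{\mathbf{T}_{1}(s_{1})\cdot\mathbf{N}_{2}(s_{2})}$ by at most $O(Mh^{\beta})$, so bounding it at the crossing is equivalent to the very estimate you are trying to prove. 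The Fr\'echet bound says nothing about the \emph{images} of the curves crossing transversally; it only controls $\gamma_{1}(\phi_{1}(\xi'))-\gamma_{2}(\phi_{2}(\xi'))$ at a \emph{common} parameter $\xi'$, and you have no control on how $\phi_{1}^{-1}(s_{1}')$ and $\phi_{2}^{-1}(s_{2}')$ are related.

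The paper's proof avoids crossings entirely and works directly with the Fr\'echet hypothesis. Lift $\phi_{i}$ to increasing maps $\bbR\to\bbR$, fix $\xi$, and for variable $\xi'$ expand $(\gamma_{1}(\phi_{1}(\xi'))-\gamma_{2}(\phi_{2}(\xi')))\cdot\mathbf{N}_{1}(\phi_{1}(\xi))$: the $\gamma_{1}$-piece is $O(M\abs{\phi_{1}(\xi')-\phi_{1}(\xi)}^{1+\beta})$ while the $\gamma_{2}$-piece has leading term $\abs{\mathbf{T}_{2}(\phi_{2}(\xi))\cdot\mathbf{N}_{1}(\phi_{1}(\xi))}\,\abs{\phi_{2}(\xi')-\phi_{2}(\xi)}$. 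Since the whole expression is bounded by $2\delta$ for every $\xi'$, and since $\xi'\mapsto\max\{\abs{\phi_{1}(\xi')-\phi_{1}(\xi)},\abs{\phi_{2}(\xi')-\phi_{2}(\xi)}\}$ is a continuous surjection onto $[0,\infty)$, one can choose $\xi'$ so that this maximum equals any prescribed $t>0$; plugging $t=(\delta/M)^{1/(1+\beta)}$ yields the first inequality with constant $4$. The same computation projected onto $\mathbf{T}_{1}(\phi_{1}(\xi))$ instead of $\mathbf{N}_{1}(\phi_{1}(\xi))$, and using that $\xi'\mapsto(\phi_{1}(\xi')-\phi_{1}(\xi))+(\phi_{2}(\xi')-\phi_{2}(\xi))$ surjects onto $\bbR$ (this is exactly where orientation-preservation enters, and it is where your ``main obstacle'' dissolves), gives $\mathbf{T}_{1}\cdot\mathbf{T}_{2}\ge 1-6M^{1/(1+\beta)}\delta^{\beta/(1+\beta)}$ directly; under the smallness assumption this is $\ge 0$, and then $\sqrt{1-x}\ge 1-\tfrac{5}{8}x$ on $[0,\tfrac{4}{9}]$ combined with the first inequality gives the constant $10$. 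Your proposed route through $a=\pm\sqrt{1-b^{2}}$ and a separate topological sign argument is workable in principle, but gives the weaker constant $16$ and leaves the sign determination exactly as hard as before, whereas the paper's tangential projection handles it in one stroke.
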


\begin{proof}
    Let us identify $\phi_{i}$ with its lifting $\bbR\to\bbR$
    (so that $\phi_{i}$ is strictly increasing and $\phi_{i}(\xi+n) = \phi_{i}(\xi) + n\ell_{i}$
    holds for all $\xi\in\bbR$ and $n\in\bbZ$)
    and consider $\gamma_{i}$, $\mathbf{T}_{i}$, $\mathbf{N}_{i}$
    as functions  on $\bbR$. 
    
    Then for any $\xi,\xi'\in\bbR$ we have
    \begin{align*}
         (\gamma_{1}(\phi_{1}(\xi')) & - \gamma_{2}(\phi_{2}(\xi')))
         \cdot \mathbf{N}_{1}(\phi_{1}(\xi))
        - (\gamma_{1}(\phi_{1}(\xi)) - \gamma_{2}(\phi_{2}(\xi)))
        \cdot \mathbf{N}_{1}(\phi_{1}(\xi))
        \\& 
        = (\gamma_{1}(\phi_{1}(\xi')) - \gamma_{1}(\phi_{1}(\xi)))
        \cdot \mathbf{N}_{1}(\phi_{1}(\xi))
        - (\gamma_{2}(\phi_{2}(\xi')) - \gamma_{2}(\phi_{2}(\xi)))
        \cdot \mathbf{N}_{1}(\phi_{1}(\xi)).
    \end{align*}
    The mean value theorem and
    $\abs{\mathbf{T}_{1}(s)\cdot\mathbf{N}_{1}(\phi_{1}(\xi))}
    \leq \norm{\gamma_{1}}_{\dot{C}^{1,\beta}}\abs{s - \phi_{1}(\xi)}^{\beta}$
    for $s\in\bbR$ yield
    \begin{align*}
        \abs{(\gamma_{1}(\phi_{1}(\xi')) - \gamma_{1}(\phi_{1}(\xi)))
        \cdot\mathbf{N}_{1}(\phi_{1}(\xi))}
        &\leq \norm{\gamma_{1}}_{\dot{C}^{1,\beta}}
        \abs{\phi_{1}(\xi') - \phi_{1}(\xi)}^{1+\beta}
    \end{align*}
    and 
    \begin{align*}
        &\abs{(\gamma_{2}(\phi_{2}(\xi')) - \gamma_{2}(\phi_{2}(\xi)))
        \cdot\mathbf{N}_{1}(\phi_{1}(\xi))}
        \\&\quad\quad\quad
        \geq \abs{\mathbf{T}_{2}(\phi_{2}(\xi))\cdot\mathbf{N}_{1}(\phi_{1}(\xi))}
        \abs{\phi_{2}(\xi') - \phi_{2}(\xi)}
        - \norm{\gamma_{2}}_{\dot{C}^{1,\beta}}
        \abs{\phi_{2}(\xi') - \phi_{2}(\xi)}^{1+\beta}.
    \end{align*}
    Therefore
    \begin{align*}
        &\abs{\mathbf{T}_{2}(\phi_{2}(\xi))\cdot\mathbf{N}_{1}(\phi_{1}(\xi))}
        \abs{\phi_{2}(\xi') - \phi_{2}(\xi)}
        \\&\quad\quad
        \leq 2\norm{\gamma_{1}\circ\phi_{1} - \gamma_{2}\circ\phi_{2}}_{L^{\infty}}
        + \norm{\gamma_{1}}_{\dot{C}^{1,\beta}}
        \abs{\phi_{1}(\xi') - \phi_{1}(\xi)}^{1+\beta}
        + \norm{\gamma_{2}}_{\dot{C}^{1,\beta}}
        \abs{\phi_{2}(\xi') - \phi_{2}(\xi)}^{1+\beta},
    \end{align*}
    and similarly
    \begin{align*}
        &\abs{\mathbf{T}_{1}(\phi_{1}(\xi))\cdot\mathbf{N}_{2}(\phi_{2}(\xi))}
        \abs{\phi_{1}(\xi') - \phi_{1}(\xi)}
        \\&\quad\quad
        \leq 2\norm{\gamma_{1}\circ\phi_{1} - \gamma_{2}\circ\phi_{2}}_{L^{\infty}}
        + \norm{\gamma_{1}}_{\dot{C}^{1,\beta}}
        \abs{\phi_{1}(\xi') - \phi_{1}(\xi)}^{1+\beta}
        + \norm{\gamma_{2}}_{\dot{C}^{1,\beta}}
        \abs{\phi_{2}(\xi') - \phi_{2}(\xi)}^{1+\beta}.
    \end{align*}
    Since $\xi'\mapsto\max\left\{
        \abs{\phi_{1}(\xi')-\phi_{1}(\xi)},\abs{\phi_{2}(\xi')-\phi_{2}(\xi)}
    \right\}$ is a continuous surjection from $\bbR$ onto $[0,\infty)$,
    we conclude that with  $M\coloneqq \max\left\{\norm{\gamma_{1}}_{\dot{C}^{1,\beta}},
    \norm{\gamma_{2}}_{\dot{C}^{1,\beta}}\right\}$ we have 
    \begin{align*}
        \abs{\mathbf{T}_{1}(\phi_{1}(\xi))\cdot\mathbf{N}_{2}(\phi_{2}(\xi))}t
        \leq 2\norm{\gamma_{1}\circ\phi_{1} - \gamma_{2}\circ\phi_{2}}_{L^{\infty}}
        + 2Mt^{1+\beta}
    \end{align*}
    holds for all $t\geq 0$. Taking
    $t: = \left(\frac{\norm{\gamma_{1}\circ\phi_{1} - \gamma_{2}\circ\phi_{2}}_{L^{\infty}}}
    {M}\right)^{1/({1+\beta})}$ now shows the first claim of the lemma.
%
    
    A similar argument with $\mathbf{T}_{i}(\phi_{i}(\xi))$
    in place of $\mathbf{N}_{i}(\phi_{i}(\xi))$ shows
    \begin{align*}
       (\phi_{1}(\xi') & -  \phi_{1}(\xi))  - (\phi_{2}(\xi') - \phi_{2}(\xi))
        \, \mathbf{T}_{1}(\phi_{1}(\xi))\cdot \mathbf{T}_{2}(\phi_{2}(\xi))
        \\&
        \leq  2\norm{\gamma_{1}\circ\phi_{1} - \gamma_{2}\circ\phi_{2}}_{L^{\infty}}
       +  \norm{\gamma_{1}}_{\dot{C}^{1,\beta}}\abs{\phi_{1}(\xi') - \phi_{1}(\xi)}^{1+\beta}
        + \norm{\gamma_{2}}_{\dot{C}^{1,\beta}}\abs{\phi_{2}(\xi') - \phi_{2}(\xi)}^{1+\beta}
    \end{align*}
    and
    \begin{align*}
       (\phi_{2}(\xi') & -  \phi_{2}(\xi))  - (\phi_{1}(\xi') - \phi_{1}(\xi))
        \, \mathbf{T}_{1}(\phi_{1}(\xi))\cdot \mathbf{T}_{2}(\phi_{2}(\xi))
        \\&
        \leq  2\norm{\gamma_{1}\circ\phi_{1} - \gamma_{2}\circ\phi_{2}}_{L^{\infty}}
       +  \norm{\gamma_{1}}_{\dot{C}^{1,\beta}}\abs{\phi_{1}(\xi') - \phi_{1}(\xi)}^{1+\beta}
        + \norm{\gamma_{2}}_{\dot{C}^{1,\beta}}\abs{\phi_{2}(\xi') - \phi_{2}(\xi)}^{1+\beta}
    \end{align*}
    for any $\xi,\xi'\in\bbR$.
    Since $\xi'\mapsto (\phi_{1}(\xi')-\phi_{1}(\xi)) + (\phi_{2}(\xi')-\phi_{2}(\xi))$
    is a continuous surjection from $\bbR$ to $\bbR$, adding these two inequalities yields
    \begin{align*}
        \mathbf{T}_{1}(\phi_{1}(\xi))\cdot \mathbf{T}_{2}(\phi_{2}(\xi)) \, t \geq t
        - 2M|t|^{1+\beta}
        - 4\norm{\gamma_{1}\circ\phi_{1} - \gamma_{2}\circ\phi_{2}}_{L^{\infty}}
    \end{align*}
    for all $t\in\bbR$.  Taking
    $t \coloneqq \left(\frac{2\norm{\gamma_{1}\circ\phi_{1} - \gamma_{2}\circ\phi_{2}}_{L^{\infty}}}
    {M}\right)^{1/(1+\beta)}$ now shows
    \begin{align*}
        \mathbf{T}_{1}(\phi_{1}(\xi))\cdot \mathbf{T}_{2}(\phi_{2}(\xi))
        \geq 1 - 6M^{\frac{1}{1+\beta}}
        \norm{\gamma_{1}\circ\phi_{1} - \gamma_{2}\circ\phi_{2}}_{L^{\infty}}^{\frac{\beta}{1+\beta}}.
    \end{align*}
    If we now assume that $\norm{\gamma_{1}\circ\phi_{1} - \gamma_{2}\circ\phi_{2}}_{L^{\infty}}
    \leq {6^{-1-1/\beta}}M^{-1/\beta}$, then
    $\mathbf{T}_{1}(\phi_{1}(\xi))\cdot \mathbf{T}_{2}(\phi_{2}(\xi))\geq 0$
follows for all $\xi\in\bbR$.  Since the first claim of the lemma yields
    $\abs{\mathbf{T}_{1}(\phi_{1}(\xi))\cdot \mathbf{N}_{2}(\phi_{2}(\xi))}\leq \frac{2}{3}$,
    the second claim now follows via
    $\sqrt{1 - x} \geq 1 - \frac{5}{8}x$ for $x\in\left[0,\frac{4}{9}\right]$.
\end{proof}

 Next we show that the angle of a crossing
of two curves yields a certain lower bound on how far one of the crossing curve segments departs from the other curve.
We will use this in Section~\ref{S8} when proving that crossings of $H^2$ patch boundaries
cannot develop from initial data with no crossings.

\begin{lemma}\label{L3.10}
    Let $\gamma_{i}\colon\ell_{i}\bbT\to\bbR^{2}$ ($i=1,2$) be $C^{1,\beta}$ closed curves
    parametrized by arclength such that
    $\operatorname{im}(\gamma_{1})\cap\operatorname{im}(\gamma_{2})\neq\emptyset$,
    and let $\mathbf{T}_{i}\coloneqq\partial_{s}\gamma_{i}$
    and $\mathbf{N}_{i}\coloneqq\mathbf{T}_{i}^{\perp}$.
    Let $J$ be a connected component of
    $\set{s\in\ell_{1}\bbT\colon \gamma_1(s)\notin\operatorname{im}(\gamma_{2})}$, let
    $s_{1}\in\partial J$, and let $s_{2}\in\ell_{2}\bbT$ be such that
    $\gamma_{1}(s_{1}) = \gamma_{2}(s_{2})$. Then there is $s_{1}'\in J$ such that
    \[
        \abs{\mathbf{T}_{1}(s_{1})\cdot\mathbf{N}_{2}(s_{2})}
        \leq 6\max\set{
            \norm{\gamma_{1}}_{\dot{C}^{1,\beta}}^{\frac{1}{1+\beta}},
            \Delta_{\norm{\gamma_{2}}_{\dot{C}^{1,\beta}}^{-1/\beta}}(\gamma_{2})
            ^{-\frac{\beta}{1+\beta}}
        }
        d(\gamma_{1}(s_{1}'), \operatorname{im}(\gamma_{2}))^{\frac{\beta}{1+\beta}}.
    \]
\end{lemma}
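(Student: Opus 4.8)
\textbf{Proof plan for Lemma~\ref{L3.10}.}
The idea is to translate the non-parallelism of the tangent vectors $\mathbf{T}_1(s_1)$ and $\mathbf{T}_2(s_2)$ at the crossing point $\gamma_1(s_1)=\gamma_2(s_2)$ into a quantitative lower bound on how far $\gamma_1$ departs from $\operatorname{im}(\gamma_2)$ along the component $J$. Write $M_1 \coloneqq \norm{\gamma_1}_{\dot C^{1,\beta}}$, $M_2 \coloneqq \norm{\gamma_2}_{\dot C^{1,\beta}}$, $\Delta_2 \coloneqq \Delta_{M_2^{-1/\beta}}(\gamma_2)$, and $a \coloneqq \abs{\mathbf{T}_1(s_1)\cdot\mathbf{N}_2(s_2)}$. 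If $a$ is smaller than the right-hand side for the trivial reason that it is bounded by a fixed constant times $(M_1^{1/(1+\beta)})\cdot(\text{anything of size}\gtrsim M_1^{-1/\beta})^{\beta/(1+\beta)}$, we are done, so we may reduce to the regime where $a$ is close to $1$ — say $a \ge \tfrac12$ — and where the relevant scales are comparable to $M_1^{-1/\beta}$ and $\Delta_2$. The plan is to take the parameter $h \coloneqq c \min\{M_1^{-1/\beta}, \Delta_2\}$ for a small absolute constant $c$ and look at the segment $\gamma_1|_{[s_1, s_1 + h]}$ (choosing the orientation so that $(s_1, s_1+h)\subseteq J$ near $s_1$; if instead $s_1$ is the right endpoint of $J$ we use $[s_1-h,s_1]$, the argument being symmetric).

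First I would show that this segment moves away from $\operatorname{im}(\gamma_2)$ in the $\mathbf{N}_2(s_2)$ direction at a rate proportional to $a$: by the $C^{1,\beta}$ bound on $\gamma_1$,
\[
    \abs{(\gamma_1(s_1+t) - \gamma_1(s_1))\cdot\mathbf{N}_2(s_2) - t\,\mathbf{T}_1(s_1)\cdot\mathbf{N}_2(s_2)} \le M_1 t^{1+\beta}
\]
for $t\in[0,h]$, so with $\gamma_1(s_1)=\gamma_2(s_2)$ we get $\abs{(\gamma_1(s_1+t)-\gamma_2(s_2))\cdot\mathbf{N}_2(s_2)} \ge a t - M_1 t^{1+\beta} \ge \tfrac{a}{2} t$ once $t \le (a/(2M_1))^{1/\beta}$, which holds on $[0,h]$ after shrinking $c$ since $a\le 1$. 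Second, I would use Lemma~\ref{L3.2} (applied with $\gamma\coloneqq\gamma_2$ and $x\coloneqq\gamma_1(s_1+t)$, together with the definition of $\Delta_2$ exactly as in the proof of Lemma~\ref{L3.5}) to argue that \emph{near} $\gamma_1(s_1+t)$ the curve $\operatorname{im}(\gamma_2)$ looks like a nearly straight segment aligned with $\mathbf{T}_2(s_2)$: within the ball of radius $O(M_2^{-1/\beta})$ around $\gamma_2(s_2)$, any point of $\operatorname{im}(\gamma_2)$ lies within $M_2(4h)^{1+\beta}$ of the line through $\gamma_2(s_2)$ in direction $\mathbf{T}_2(s_2)$ (by the mean value theorem plus the $C^{1,\beta}$ bound), and points of $\operatorname{im}(\gamma_2)$ outside that ball are at distance $\ge \Delta_2 - (\text{small})$ from $\gamma_1(s_1+t)$ by the definition of $\Delta_2$. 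Combining, for $t\in[0,h]$ the distance $d(\gamma_1(s_1+t),\operatorname{im}(\gamma_2))$ is bounded below by $\min\{ \tfrac{a}{2}t - M_2(4h)^{1+\beta},\ \tfrac12\Delta_2\}$, and choosing $t = h$ (with $c$ small, so that $M_2(4h)^{1+\beta} \le \tfrac{a}{8}h$ using $h\le c\Delta_2 \le c M_2^{-1/\beta}$ and $a\ge\tfrac12$) gives $d(\gamma_1(s_1+h),\operatorname{im}(\gamma_2)) \ge \tfrac{a}{4} h$.

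It remains to produce the claimed $s_1'\in J$. If the whole segment $(s_1,s_1+h]$ stays in $J$ we may just take $s_1' = s_1+h$, and the lower bound above rearranges to $a \le \tfrac{4}{h} d(\gamma_1(s_1'),\operatorname{im}(\gamma_2)) \le C \min\{M_1^{1/\beta},\Delta_2^{-1}\}\, d(\gamma_1(s_1'),\operatorname{im}(\gamma_2))$; since $d(\gamma_1(s_1'),\operatorname{im}(\gamma_2)) \le h = O(\min\{M_1^{-1/\beta},\Delta_2\})$, interpolating (i.e.\ writing $\min\{M_1^{1/\beta},\Delta_2^{-1}\} d \le \min\{M_1^{1/\beta},\Delta_2^{-1}\}^{1/(1+\beta)} d^{\beta/(1+\beta)} \cdot (\min\{M_1^{1/\beta},\Delta_2^{-1}\} d)^{\beta/(1+\beta)}$ and bounding the last factor by $O(1)$) yields exactly the stated estimate with the $\max\{M_1^{1/(1+\beta)}, \Delta_2^{-\beta/(1+\beta)}\}$ prefactor. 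The one subtlety — and the step I expect to be the main obstacle — is that $(s_1,s_1+h]$ need not lie entirely in $J$: $\gamma_1$ could re-meet $\operatorname{im}(\gamma_2)$ before parameter $s_1+h$, in which case $J$ ends at some $s_1^* \in (s_1, s_1+h)$. But then $d(\gamma_1(s_1^*),\operatorname{im}(\gamma_2))=0$ while, just after $s_1$, the distance is already comparable to $a\cdot(\text{something})$ by the first step; so the distance function $t\mapsto d(\gamma_1(s_1+t),\operatorname{im}(\gamma_2))$, which is $M_1$-Lipschitz, attains a value $\gtrsim a/M_1$ somewhere in $J$ before returning to $0$. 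Making this last point quantitative — choosing $s_1'$ to be a point in $J$ where this distance is maximal over $[s_1,s_1^*]$ and checking the Lipschitz/geometry bookkeeping so that the resulting lower bound on $d(\gamma_1(s_1'),\operatorname{im}(\gamma_2))$ is still $\gtrsim a \min\{M_1^{-1/\beta},\Delta_2\}$ up to absolute constants — is where the care is needed, but it is the same type of near-straight-segment reasoning as in the proof of Lemma~\ref{L3.5}, and the constant $6$ (resp.\ the admissible range of $c$) can be extracted by tracking the numerical constants there.
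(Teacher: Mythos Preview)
Your overall strategy is the same as the paper's, but there is one genuine gap and one avoidable detour.

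\textbf{The gap: the reduction to $a\ge\tfrac12$ is not justified.} You write that if $a$ is small the inequality holds ``for the trivial reason'' that the right-hand side is $\gtrsim 1$ once $d(\gamma_1(s_1'),\operatorname{im}(\gamma_2))\gtrsim M_1^{-1/\beta}$. But you have no a priori lower bound on $\sup_{s_1'\in J}d(\gamma_1(s_1'),\operatorname{im}(\gamma_2))$: the component $J$ could be arbitrarily short, so no such $s_1'$ is guaranteed to exist. Without this reduction your choice $h=c\min\{M_1^{-1/\beta},\Delta_2\}$ fails, because the inequality $M_1t^{1+\beta}\le\tfrac{a}{2}t$ in Step~1 requires $t\le(a/(2M_1))^{1/\beta}$, which for small $a$ is much smaller than $h$.

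\textbf{The fix (and what the paper does):} let $h$ depend on $a$, namely take $h=d\cdot a^{1/\beta}$ with $d=\tfrac{1}{2\cdot 4^{1/\beta}}\min\{M_1^{-1/\beta},\Delta_2\}$. Then your Step~1 estimate $at-M_1t^{1+\beta}\ge \tfrac{a}{2}t$ holds for \emph{every} $t\in(0,h]$ and every $a\in(0,1]$, and likewise the Step~2 comparison with the nearly straight piece of $\gamma_2$ (using Lemma~\ref{L3.2} and the definition of $\Delta_2$) gives $d(\gamma_1(s_1+t),\operatorname{im}(\gamma_2))\ge\tfrac{a}{4}t>0$ for every $t\in(0,h]$. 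This immediately forces $(s_1,s_1+h]\subseteq J$, so the ``subtlety'' you flagged at the end simply does not arise. Setting $s_1'=s_1+h$ then yields $d(\gamma_1(s_1'),\operatorname{im}(\gamma_2))\ge\tfrac{a}{4}h=\tfrac{d}{4}a^{(1+\beta)/\beta}$, which rearranges directly to the claimed bound with the correct exponent $\tfrac{\beta}{1+\beta}$ --- no separate interpolation step is needed. (Your interpolation paragraph also has a $\min$/$\max$ slip and an algebraic identity that does not parse as written, though the intent is recoverable.)
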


\begin{proof}
    Assume without loss that $\abs{\mathbf{T}_{1}(s_{1})\cdot\mathbf{N}_{2}(s_{2})}>0$,
 and let
    \[
        d\coloneqq \frac{1}{2\cdot 4^{1/\beta}}
        \min\set{
            \norm{\gamma_{1}}_{\dot{C}^{1,\beta}}^{-1/\beta},
            \Delta_{\norm{\gamma_{2}}_{\dot{C}^{1,\beta}}^{-1/\beta}}(\gamma_{2})
        }
        \qquad \text{and} \qquad
        h\coloneqq d\abs{\mathbf{T}_{1}(s_{1})\cdot\mathbf{N}_{2}(s_{2})}^{1/\beta}.
    \]
    Without loss assume that $s_{1}$ is the left end-point of $J$, and pick any $s_{1}'\in (s_{1},s_{1} + h]$. Then
    \[
    \abs{\gamma_{1}(s_{1}') - \gamma_{2}(s_{2})} \leq s_{1}' - s_{1} \leq h \le 
    \frac{1}{4}\norm{\gamma_{2}}_{\dot{C}^{1,\beta}}^{-1/\beta}
    \]
     holds by \eqref{2.2}, so
     Lemma~\ref{L3.2} with $\gamma\coloneqq\gamma_{2}$ and $x\coloneqq \gamma_{1}(s_{1}')$
    yields at least one $\tilde{I}_{i}$ (as defined in the lemma) such that $s_{2}\in\tilde{I}_{i}$.  Since
    $\abs{\gamma_{1}(s_{1}') - \gamma_{2}(s_{2})}
    \leq \frac{1}{4}\Delta_{\norm{\gamma_{2}}_{\dot{C}^{1,\beta}}^{-1/\beta}}(\gamma_{2})$,     for all $s\in\ell_{2}\bbT$ with    $\abs{s - s_{2}}\in\left[\norm{\gamma_{2}}_{\dot{C}^{1,\beta}}^{-1/\beta},
    \frac{\ell_{2}}{2}\right]$   we have
    \[
        \abs{\gamma_{1}(s_{1}') - \gamma_{2}(s)}
        \geq \Delta_{\norm{\gamma_{2}}_{\dot{C}^{1,\beta}}^{-1/\beta}}(\gamma_{2})
        - \abs{\gamma_{1}(s_{1}') - \gamma_{2}(s_{2})}
        > \abs{\gamma_{1}(s_{1}') - \gamma_{2}(s_{2})}.
    \]
 So there is $\tilde{I}_{i}$ containing
    $s_{2}$ whose center $s_{2}'$
    satisfies $\abs{\gamma_{1}(s_{1}') - \gamma_{2}(s_{2}')}
    = d(\gamma_{1}(s_{1}'), \operatorname{im}(\gamma_{2}))$.
    Then Lemma~\ref{L3.2}(4) shows that
    \beq\lb{111.40}
        \frac{\abs{s_{2}' - s_{2}}}{2}
        \leq \abs{\gamma_{1}(s_{1}') - \gamma_{2}(s_{2})}
        \leq s_{1}' - s_{1} \le h.
    \eeq
    Since the mean value theorem and
    $\abs{\mathbf{T}_{2}(s)\cdot\mathbf{N}_{2}(s_{2})}
    \leq \norm{\gamma_{2}}_{\dot{C}^{1,\beta}}\abs{s - s_{2}}^{\beta}$
    for $s\in\ell_{2}\bbT$ yield
    \begin{align*}
        \abs{(\gamma_{2}(s_{2}') - \gamma_{2}(s_{2}))\cdot\mathbf{N}_{2}(s_{2})}
        &\leq \norm{\gamma_{2}}_{\dot{C}^{1,\beta}}
        \abs{s_{2}' - s_{2}}^{1+\beta}
    \end{align*}
    and 
    \begin{align*}
        \abs{(\gamma_{1}(s_{1}') - \gamma_{1}(s_{1}))\cdot\mathbf{N}_{2}(s_{2})}
        &\geq \abs{\mathbf{T}_{1}(s_{1})\cdot\mathbf{N}_{2}(s_{2})}(s_{1}' - s_{1})
        - \norm{\gamma_{1}}_{\dot{C}^{1,\beta}}(s_{1}' - s_{1})^{1+\beta},
    \end{align*}
    we obtain
    \begin{equation*}\begin{aligned}
        \abs{\gamma_{1}(s_{1}') - \gamma_{2}(s_{2}')}
        &\geq \abs{
            (\gamma_{1}(s_{1}') - \gamma_{1}(s_{1}))\cdot\mathbf{N}_{2}(s_{2})
            - (\gamma_{2}(s_{2}') - \gamma_{2}(s_{2}))\cdot\mathbf{N}_{2}(s_{2})
        } \\
        &\geq (s_{1}' - s_{1})\abs{\mathbf{T}_{1}(s_{1})\cdot\mathbf{N}_{2}(s_{2})}
        - \norm{\gamma_{1}}_{\dot{C}^{1,\beta}}(s_{1}' - s_{1})^{1+\beta}
        - \norm{\gamma_{2}}_{\dot{C}^{1,\beta}}\abs{s_{2}' - s_{2}}^{1+\beta} \\
        &\geq (s_{1}' - s_{1})\abs{\mathbf{T}_{1}(s_{1})\cdot\mathbf{N}_{2}(s_{2})}
        \left(
            1 - \norm{\gamma_{1}}_{\dot{C}^{1,\beta}}d^{\beta}
            - 2\norm{\gamma_{2}}_{\dot{C}^{1,\beta}}(2d)^{\beta}
        \right) \\
        &\geq \frac{s_{1}' - s_{1}}{4}
        \abs{\mathbf{T}_{1}(s_{1})\cdot\mathbf{N}_{2}(s_{2})} >0,
    \end{aligned}\end{equation*}
    where in the third and fourth inequalities we used \eqref{111.40}
    and \eqref{2.2}, respectively.
    Since $s_{1}'\in(s_{1},s_{1}+h]$ was arbitrary, it follows that $(s_{1},s_{1}+h]\subseteq J$.
    Then letting $s_{1}' \coloneqq s_{1} + h$ and substituting $s_1'-s_1=h$ in  the last estimate finishes the proof.
\end{proof}

The next lemma shows that we can deform any $H^{2}$ positive simple closed curve $\gamma$
slightly so that the new curve lies strictly inside $\Omega(\gamma)$ and is still
a positive simple closed curve, while its $\dot{H}^{2}$-norm and $\Delta_{h}$ are controlled by the corresponding quantities for $\gamma$.
This  is used in the existence proof in Section~\ref{S6},
where we employ arbitrarily small perturbations of the initial data such that
all the perturbed initial curves are separated from each other.


\begin{lemma}\label{L3.7}
    There is  $M>0$ such that
    for any $\ell>0$, any $H^{2}$ positive simple closed curve $\gamma\colon\ell\bbT\to\bbR^{2}$
    parametrized by arclength, and any $h\in\left(0,\norm{\gamma}_{\dot{C}^{1,1/2}}^{-2}\right]$
    and $c\in(0,1]$, the following holds with
    \begin{align*}
        \eps_{0} \coloneqq \min\left\{
            \frac{c \Delta_{h}(\gamma)}{10}, \,
            \frac{1}{17M\ell\norm{\gamma}_{\dot{C}^{1,1/2}}^{4}}
        \right\}.
    \end{align*}
    For any $\eps\in(0,\eps_{0}]$, there is
    an $H^{2}$ positive simple closed curve $\gamma_{\eps}$ such that
    \begin{enumerate}
        \item $d_{\mathrm{F}}(\gamma_{\eps},\gamma) \leq \eps$,

        \item $\operatorname{im}(\gamma_{\eps}) \subseteq \Omega(\gamma)$
        with $\Delta(\gamma_{\eps},\gamma) \geq \frac{\eps}{4}$,

        \item $\norm{\gamma_{\eps}}_{\dot{H}^{2}}
        \leq 4\norm{\gamma}_{\dot{H}^{2}}$,

        \item $\ell(\gamma_{\eps}) \geq 3h$ and
        $\Delta_{ch}(\gamma_{\eps}) \geq \frac{2c}{7}\Delta_{h}(\gamma)$.
    \end{enumerate}
\end{lemma}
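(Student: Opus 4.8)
The plan is to construct $\gamma_\eps$ by pushing $\gamma$ slightly inward along its inward normal, using a mollified version of the normal displacement so that the $\dot H^2$-norm is not amplified too much. Concretely, let $\mathbf{T}=\partial_s\gamma$, $\mathbf{N}=\mathbf{T}^\perp$ be the inward normal (consistent with positivity of $\gamma$), and $\kappa=\partial_s\mathbf{T}\cdot\mathbf{N}$ be the signed curvature. I would set $\gamma_\eps(s):=\gamma(s)+\eps\,\mathbf{N}(s)$ as a first attempt, but since $\mathbf{N}$ is only in $H^1$ this would make $\norm{\gamma_\eps}_{\dot H^2}$ involve $\norm{\partial_s^2\mathbf{N}}_{L^2}$, which is not controlled by $\norm{\gamma}_{\dot H^2}$. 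So instead I would take $\gamma_\eps(s):=\gamma(s)+\eps\,\rho_\delta*\mathbf{N}(s)$ where $\rho_\delta$ is a standard mollifier at scale $\delta$ chosen as a fixed constant multiple of $\norm{\gamma}_{\dot C^{1,1/2}}^{-2}$ (comparable to the scale on which $\gamma$ has bounded geometry, per Lemma~\ref{L3.1} and Lemma~\ref{L3.2}); equivalently one may mollify $\mathbf{T}$ and take the perpendicular. This keeps the displacement field within $O(\eps\delta^{-1})$ in $\dot C^{0,1}$ and within $O(\eps)$ in $L^\infty$, and crucially $\eps\norm{\partial_s^2(\rho_\delta*\mathbf{N})}_{L^2}\le \eps\delta^{-1}\norm{\partial_s\mathbf{N}}_{L^2}=\eps\delta^{-1}\norm{\gamma}_{\dot H^2}$, which by the choice of $\eps_0$ (the second term in the $\min$, after absorbing the constant $M$) is at most a small multiple of $\norm{\gamma}_{\dot H^2}$. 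Establishing (3), $\norm{\gamma_\eps}_{\dot H^2}\le 4\norm{\gamma}_{\dot H^2}$, then follows by writing $\partial_s\gamma_\eps=\mathbf{T}+\eps\partial_s(\rho_\delta*\mathbf{N})$, noting $|\partial_s\gamma_\eps|$ stays within $[\tfrac12,2]$ say (so that reparametrizing by arclength costs only a bounded factor), and expanding $\partial_s^2\gamma_\eps$; the arclength-reparametrization bookkeeping is where most of the routine work sits but it is standard given the uniform bounds.

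Next I would verify the geometric claims. Claim (1), $d_{\mathrm F}(\gamma_\eps,\gamma)\le\eps$, is immediate from $\norm{\rho_\delta*\mathbf{N}}_{L^\infty}\le 1$. For Claim (2) — that $\gamma_\eps$ is still a positive simple closed curve with $\operatorname{im}(\gamma_\eps)\subseteq\Omega(\gamma)$ and $\Delta(\gamma_\eps,\gamma)\ge\eps/4$ — the point is that on the scale $\delta$ the mollified normal $\rho_\delta*\mathbf{N}(s)$ is close (within a small absolute constant, by Lemma~\ref{L3.1}) to $\mathbf{N}(s')$ for all $s'$ within $O(\delta)$ of $s$, so moving each point of $\gamma$ a distance $\eps\le\eps_0\ll\delta$ in the direction $\rho_\delta*\mathbf{N}$ moves it genuinely into the interior and keeps the displaced curve embedded: locally the curve is a near-straight segment and a near-constant inward push cannot create self-intersections, while globally the small-$\Delta_h$ constraint and $\eps\le c\Delta_h(\gamma)/10$ prevent far-apart segments from colliding. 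The estimate $\Delta(\gamma_\eps,\gamma)\ge\eps/4$ comes from the fact that the normal component of the displacement is $\eps\,\mathbf{N}(s)\cdot\rho_\delta*\mathbf{N}(s)\ge\eps(1-c_0)$ for a small $c_0$ coming from the angular deviation of $\mathbf{T}$ over scale $\delta$. Positivity of $\gamma_\eps$ then follows by a homotopy/continuity argument (the family $t\mapsto\gamma+t\eps\,\rho_\delta*\mathbf{N}$, $t\in[0,1]$, never self-intersects and never touches $\operatorname{im}(\gamma)$, so it stays in $\operatorname{PSC}$ and inside $\Omega(\gamma)$).

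For Claim (4), the bound $\ell(\gamma_\eps)\ge 3h$ follows from $\ell(\gamma)\ge$ a fixed multiple of $\norm{\gamma}_{\dot C^{1,1/2}}^{-2}\ge h$ (Lemma~\ref{L3.1}) together with $\ell(\gamma_\eps)\ge\ell(\gamma)-O(\eps)$ and the smallness of $\eps_0$ relative to $h$; I'd pick the absolute constant $M$ large enough to make this work. The bound $\Delta_{ch}(\gamma_\eps)\ge\tfrac{2c}{7}\Delta_h(\gamma)$ is the most delicate part and the one I expect to be the main obstacle, because $\Delta_{ch}$ mixes an arclength-separation threshold ($ch$ on the new curve) with a chord-length infimum, and the reparametrization to arclength slightly distorts arclength along the curve. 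The strategy: for $s,s'$ with arclength separation on $\gamma_\eps$ at least $ch$, show their preimages under the near-isometry $s\mapsto$ (arclength on $\gamma_\eps$) have $\gamma$-arclength separation at least $\approx ch/2\ge$ something, and then split into two regimes. When that $\gamma$-separation is $\ge h$, use $\Delta_h(\gamma)$ directly: $|\gamma_\eps(s)-\gamma_\eps(s')|\ge|\gamma(s)-\gamma(s')|-2\eps\ge\Delta_h(\gamma)-2\eps\ge\tfrac{2c}{7}\Delta_h(\gamma)$ by $\eps\le c\Delta_h(\gamma)/10$. When it is in $[\,\approx ch/2,\,h)$, i.e.\ the two points are on the same "smooth" arc of length $<h\le\norm{\gamma}_{\dot C^{1,1/2}}^{-2}$, use Lemma~\ref{L3.2}(4) (applied to $\gamma$ with $x:=\gamma_\eps(s)$ or $x:=\gamma(s)$) to get a linear lower bound $|\gamma(s)-\gamma(s')|\gtrsim$ (arclength separation), hence $\gtrsim ch$, and then again subtract $2\eps$; since $\eps_0\le c\Delta_h(\gamma)/10$ and $\Delta_h(\gamma)\le h$ by \eqref{2.2}, this is comfortably controlled. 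Tracking the constants so that the final factor is exactly $\tfrac{2c}{7}$ (and that the displaced arc cannot double back to shorten the chord, which is again Lemma~\ref{L3.2}(4)) is bookkeeping, but must be done carefully to land on the stated constant.
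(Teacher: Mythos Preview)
Your proposal is correct and follows essentially the same strategy as the paper. Both constructions replace the raw inward normal $\mathbf{N}(s)$ by a smoothed version at scale $\delta\sim\norm{\gamma}_{\dot C^{1,1/2}}^{-2}$: the paper uses a partition-of-unity average $\sum_k\varphi_k(s)\mathbf{N}(s_k)$ with $N\sim\ell\norm{\gamma}_{\dot C^{1,1/2}}^{2}$ bumps of width $d\sim\norm{\gamma}_{\dot C^{1,1/2}}^{-2}$, while you use the convolution $\rho_\delta*\mathbf{N}$. These are interchangeable --- in fact the paper's construction is essentially a discretized convolution --- and the key estimates line up: $\abs{\partial_s(\text{smoothed normal})}\lesssim\norm{\gamma}_{\dot C^{1,1/2}}^{2}$ in $L^\infty$ (giving $\abs{\partial_s\tilde\gamma_\eps-1}\le\tfrac{1}{34}$ after using the second term in $\eps_0$) and $\norm{\partial_s^2(\text{smoothed normal})}_{L^2}\lesssim\delta^{-1}\norm{\gamma}_{\dot H^2}$, which is what drives (3).

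Your two-regime split for (4) is exactly what the paper does, with the threshold at $\norm{\gamma}_{\dot C^{1,1/2}}^{-2}$ rather than $h$: for $\gamma$-arclength separation $\ge\norm{\gamma}_{\dot C^{1,1/2}}^{-2}$ one uses $\Delta_h(\gamma)$ directly (noting that the complementary arc then also has length $\ge\norm{\gamma}_{\dot C^{1,1/2}}^{-2}\ge h$), and for smaller separation one uses the linear chord bound from Lemma~\ref{L3.1}. The constant $\tfrac{2c}{7}$ comes out precisely from $\tfrac{17ch}{35}-\tfrac{c\Delta_h(\gamma)}{5}\ge\tfrac{2c}{7}\Delta_h(\gamma)$ (using $\Delta_h(\gamma)\le h$), which in turn requires the $\tfrac{1}{34}$ bound on $\abs{\partial_s\tilde\gamma_\eps-1}$; your mollified version achieves this once $M$ (which absorbs $\norm{\rho'}_{L^1}$ and the like) is fixed appropriately. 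For (2), the paper's explicit case analysis showing $\abs{\gamma(s)-(\gamma(s_0)+\eps\mathbf{N}(s_0))}\ge\tfrac{\eps}{2}$ carries over verbatim to your setting since $\abs{\rho_\delta*\mathbf{N}(s_0)-\mathbf{N}(s_0)}\le\tfrac14$.
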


\begin{proof}
    Let $\mathbf{T}\coloneqq\partial_{s}\gamma$,
    $\mathbf{N}\coloneqq\mathbf{T}^{\perp}$, and let $\eps_0$ be as above, with $M$ to be determined later.  
    
    First we claim that for any $(\eps,s,s_0)\in(0,\eps_{0}]\times(\ell\bbT)^2$ we have    $\abs{\gamma(s) - (\gamma(s_{0}) + \eps\mathbf{N}(s_{0}))} \geq \frac{\eps}{2}$.
To show this, first assume that $ \abs{s - s_{0}} \in( {\norm{\gamma}_{\dot{C}^{1,1/2}}^{-2}} , \frac{\ell}{2}]$. Then $ \abs{s - s_{0}}\ge h $ indeed yields
    \[
        \abs{\gamma(s) - (\gamma(s_{0}) + \eps\mathbf{N}(s_{0}))}
        \geq \abs{\gamma(s) - \gamma(s_{0})} - \eps
        \geq \Delta_{h}(\gamma) - \eps \geq 9\eps . 
    \]
If instead $\abs{s - s_{0}} \in (\eps, {\norm{\gamma}_{\dot{C}^{1,1/2}}^{-2}}]$, then
     for any $s'$ between $s$ and $s_0$ Lemma~\ref{L3.1} shows 
    \[
        \mathbf{T}(s')\cdot\mathbf{T}(s_{0})
        \geq 1 - \frac{1}{2}\norm{\gamma}_{\dot{C}^{1,1/2}}^{2}\abs{s' - s_{0}}
        \geq \frac{1}{2},
    \]
and thus
    \begin{equation}\label{3.2}
        \abs{\gamma(s) - (\gamma(s_{0}) + \eps\mathbf{N}(s_{0}))}
        \geq \abs{(\gamma(s) - \gamma(s_{0}))\cdot\mathbf{T}(s_{0})} 
       = \abs{\int_{s_{0}}^{s}\mathbf{T}(s')\cdot\mathbf{T}(s_{0})\,ds'}
        \geq \frac{\abs{s - s_{0}}}{2} 
     \geq \frac{\eps}{2}.
   \end{equation}
    Finally, if $\abs{s - s_{0}} \leq \eps$, then   
    \begin{align*}
        \abs{\gamma(s) - (\gamma(s_{0}) + \eps\mathbf{N}(s_{0}))}
        &\geq \eps - \abs{(\gamma(s) - \gamma(s_{0}))\cdot\mathbf{N}(s_{0})} \\
        &= \eps - \abs{\int_{s_{0}}^{s}(\mathbf{T}(s') - \mathbf{T}(s_{0}))
        \cdot \mathbf{N}(s_{0})\,ds'} \\
        &\geq \eps - \frac{2\norm{\gamma}_{\dot{C}^{1,1/2}}}{3}\abs{s - s_{0}}^{3/2}.
    \end{align*}
    The definition of $\eps_{0}$ and \eqref{2.2} show that
    \begin{align*}
        \frac{2\norm{\gamma}_{\dot{C}^{1,1/2}}}{3}\eps^{1/2} \leq
        \frac{2}{3}\left(\frac{\eps}{h}\right)^{1/2}
        \leq \frac{2}{3}\left(\frac{\eps}{\Delta_{h}(\gamma)}\right)^{1/2}
        \leq \frac{2}{3\sqrt{10}} \leq \frac{1}{2},
    \end{align*}
    so  we also obtain $\abs{\gamma(s) - (\gamma(s_{0}) + \eps\mathbf{N}(s_{0}))}
    \geq \frac{\eps}{2}$ in this case. 
    
    So    $d(\gamma(s_{0}) + \eps\mathbf{N}(s_{0}), \operatorname{im}(\gamma))
    \geq \frac{\eps}{2}$  for all $(\eps,s_0)\in(0,\eps_{0}]\times\ell\bbT$, while clearly
    $\gamma(s_{0}) + \eps\mathbf{N}(s_{0})\in \Omega(\gamma)$
    for all small enough $\eps>0$.  Hence $\gamma(s_{0}) + \eps\mathbf{N}(s_{0}) \in \Omega(\gamma)$ for all  $(\eps,s_0)\in(0,\eps_{0}]\times\ell\bbT$.

    Next, fix a smooth  $\psi\colon\bbR\to[0,\infty)$ such that
    $\operatorname{supp}(\psi)=[-1,1]$ and $\psi>0$ on $(-1,1)$.
    Let $N$ be the smallest integer such that
    $N\geq 16\ell\norm{\gamma}_{\dot{C}^{1,1/2}}^{2}$, then let $d\coloneqq\frac{\ell}{N}$ and
    $s_{1}, \dots ,s_{N}$ be points that  divide $\ell\bbT$ into
    $N$ subintervals of  length $d$. Since $\ell\norm{\gamma}_{\dot{C}^{1,1/2}}^{2} \geq 4$
    by Lemma~\ref{L3.1}, we know that $N\leq 17\ell \norm{\gamma}_{\dot{C}^{1,1/2}}^{2}$ and $d\in [\frac 1{17}\norm{\gamma}_{\dot{C}^{1,1/2}}^{-2}, \frac 1{16}\norm{\gamma}_{\dot{C}^{1,1/2}}^{-2}]$.

    For each $k=1, \dots ,N$, let
    $\psi_{k}(s)\coloneqq\psi\left(\frac{s - s_{k}}{d}\right)$ for $s\in\ell\bbT$
    (with $s - s_{k}$ being considered as a number from $(-\frac\ell 2, \frac \ell2]$)
    and $\varphi_{k} \coloneqq \frac{\psi_{k}}{\sum_{l=1}^{N}\psi_{l}}$.  Then 
    $\set{\varphi_{k}}_{k=1}^{N}$ is a smooth partition of unity on $\ell\bbT$ such that at most two $\varphi_k$ are positive at any $s\in\ell\bbT$, and from $d\ge \frac1{17}\norm{\gamma}_{\dot{C}^{1,1/2}}^{-2}$ we see that there is  $M>0$ (depending only on  $\psi$) such that for each $k$ we have
    \begin{align*}
        \norm{\varphi_{k}'}_{L^{\infty}(\ell\bbT)} \leq M\norm{\gamma}_{\dot{C}^{1,1/2}}^{2}
        \qquad\textrm{and}\qquad
        \norm{\varphi_{k}''}_{L^{2}(\ell\bbT)} \leq M\norm{\gamma}_{\dot{C}^{1,1/2}}^{3}.
    \end{align*}
    
    This now defines $\eps_0$ as above. For any $\eps\in(0,\eps_0]$, let us  define $\tilde{\gamma}_{\eps}\colon\ell\bbT\to\bbR$ via
    \begin{align*}
        \tilde{\gamma}_{\eps}(s) \coloneqq
        \gamma(s) + \eps\sum_{k=1}^{N}\varphi_{k}(s)\mathbf{N}(s_{k})
    \end{align*}
    and let $\gamma_{\eps}$ be the curve defined by $\tilde{\gamma}_\eps$ (note that $\tilde{\gamma}_{\eps}$ is not necessarily    an arclength parametrization of $\gamma_{\eps}$).
    It is clear from this construction that (1) holds, and (2) also follows  because for each $s\in\ell\bbT$ we have
$\gamma(s) + \eps\mathbf{N}(s) \in \Omega(\gamma)$,
    $d(\gamma(s) + \eps\mathbf{N}(s),\operatorname{im}(\gamma))
    \geq \frac{\eps}{2}$ , 
and
    \begin{align*}
        \abs{\tilde{\gamma}_{\eps}(s) - (\gamma(s) + \eps\mathbf{N}(s)) }
        &\leq \eps\sum_{k=1}^{N}\varphi_{k}(s)
        \abs{\mathbf{N}(s_{k}) - \mathbf{N}(s)} \\
        &\leq\eps\norm{\gamma}_{\dot{C}^{1,1/2}}
        \sum_{k=1}^{N}\varphi_{k}(s)\abs{s_{k} - s}^{1/2} 
        \leq \eps\norm{\gamma}_{\dot{C}^{1,1/2}}d^{1/2}
        \leq \frac{\eps}{4}.
    \end{align*}

    Next, 
    for any $s\in\ell\bbT$ we have
    \begin{equation} \lb{111.1}
        \abs{\partial_{s}\tilde{\gamma}_{\eps}(s) - \partial_{s}\gamma(s)}
        \leq \eps\sum_{k=1}^{N}\abs{\varphi_{k}'(s)}
        \leq 2\eps M\norm{\gamma}_{\dot{C}^{1,1/2}}^{2}
        \leq \frac{2}{17\ell\norm{\gamma}_{\dot{C}^{1,1/2}}^{2}}
        \leq \frac{1}{34},
    \end{equation}
    where we again used $\ell\norm{\gamma}_{\dot{C}^{1,1/2}}^{2} \geq 4$.
    This also yields the first claim in (4) via
    \[
       \ell(\gamma_{\eps})
        \geq \frac{33}{34}\ell \geq \frac{66}{17\norm{\gamma}_{\dot{C}^{1,1/2}}^{2}}
        \geq 3 h.
    \]
    
        To show the second claim in (4), consider arbitrary $s',s\in\ell\bbT$ such that $s'\le s$ and
    \[
        ch \leq \int_{s'}^{s}\abs{\partial_{s}\tilde{\gamma}_{\eps}(s'')}ds''
        \leq \frac{\ell(\gamma_{\eps})}{2}.
    \]
    From \eqref{111.1} we see that
      \[
      s - s' \le  \frac{34}{33}\int_{s'}^{s}\abs{\partial_{s}\tilde{\gamma}_{\eps}(s'')}ds''
      \le  \frac{17\ell(\gamma_{\eps})}{33}
       \leq \frac{35\ell}{66}
        \leq \ell - \frac{31}{66}\cdot \frac{4}{\norm{\gamma}_{\dot{C}^{1,1/2}}^{2}}
        \leq \ell - \frac{1}{\norm{\gamma}_{\dot{C}^{1,1/2}}^2}.
    \]  
 If also $s-s'\ge  {\norm{\gamma}_{\dot{C}^{1,1/2}}^{-2}}$, then either $ s - s' \in[h, \frac{\ell}{2}]$ or 
    $(s' + \ell) - s \in [h, \frac{\ell}{2}]$, and so
    \[
        \abs{\tilde{\gamma}_{\eps}(s) - \tilde{\gamma}_{\eps}(s')}
        \geq \abs{\gamma(s) - \gamma(s')} - 2\eps
        \geq \Delta_{h}(\gamma) - 2\eps
        \geq \frac{4}{5}\Delta_{h}(\gamma)
        \geq \frac{2c}{7}\Delta_{h}(\gamma).
    \]
    If instead $ s - s' <{\norm{\gamma}_{\dot{C}^{1,1/2}}^{-2}}$, then $\abs{\gamma(s) - \gamma(s')} \geq \frac{s - s'}{2}$ as in \eqref{3.2}.  Therefore
    \[
        \abs{\tilde{\gamma}_{\eps}(s) - \tilde{\gamma}_{\eps}(s')}
        \geq \frac{s - s'}{2} - 2\eps
        \ge \frac{17}{35}  \int_{s'}^{s}\abs{\partial_{s}\tilde{\gamma}_{\eps}(s'')}ds'' -2\eps
        \ge  \frac{17ch}{35} -\frac{c\Delta_{h}(\gamma)}{5}
        \geq \frac{2c}{7}\Delta_{h}(\gamma),
    \]
    where we also used \eqref{2.2} at the end.  This proves the claim.

    It remains to prove (3). Recall that $\norm{\gamma_{\eps}}_{\dot{H}^{2}}$ is defined via any arclength parametrization of the curve (so 
it is the $L^{2}$-norm of its curvature), and therefore
    \begin{align*}
        \norm{\gamma_{\eps}}_{\dot{H}^{2}}
        = \norm{\frac{\partial_{s}^{2}\tilde{\gamma}_{\eps}
        \cdot\partial_{s}\tilde{\gamma}_{\eps}^{\perp}}
        {\abs{\partial_{s}\tilde{\gamma}_{\eps}}^{3}}}_{L^{2}(\ell\bbT)}.
    \end{align*}
    Let $\kappa\coloneqq\partial_{s}\mathbf{T}\cdot\mathbf{N}$
    so that $\partial_{s}\mathbf{T} = \kappa \mathbf{N}$.  Then
    \begin{align*}
        \partial_{s}^{2}\tilde{\gamma}_{\eps}(s) & \cdot
        \partial_{s}\tilde{\gamma}_{\eps}(s)^{\perp}
        =
        \left(
            \kappa(s)\mathbf{N}(s)
            + \eps\sum_{k=1}^{N}\varphi_{k}''(s)\mathbf{N}(s_{k})
        \right)
        \cdot
        \left(
            \mathbf{N}(s)
            - \eps\sum_{k=1}^{N}\varphi_{k}'(s)\mathbf{T}(s_{k})
        \right) \\
        &= \kappa(s)\, \mathbf{T}(s)\cdot \partial_{s}\tilde{\gamma}_{\eps}(s)
        + \eps\sum_{k=1}^{N}\varphi_{k}''(s)(\mathbf{N}(s_{k})\cdot \mathbf{N}(s))
        - \eps^{2}\sum_{k\neq l}\varphi_{k}''(s)\varphi_{l}'(s)
        (\mathbf{N}(s_{k})\cdot\mathbf{T}(s_{l})).
    \end{align*}
    Since $\abs{\partial_{s}\tilde{\gamma}_{\eps}}\geq\frac{33}{34}$, we get
    \begin{align*}
        \norm{\frac{\partial_{s}^{2}\tilde{\gamma}_{\eps}
        \cdot\partial_{s}\tilde{\gamma}_{\eps}^{\perp}}
        {\abs{\partial_{s}\tilde{\gamma}_{\eps}}^{3}}}_{L^{2}}
        &\leq \left(\frac{34}{33}\right)^{2}\norm{\kappa}_{L^{2}}
        + \left(\frac{34}{33}\right)^{3}
        \eps N\norm{\varphi_{1}''}_{L^{2}}
        + \left(\frac{34}{33}\right)^{3}
        \eps^{2}N^{2}\norm{\varphi_{1}''}_{L^{2}}
        \norm{\varphi_{1}'}_{L^{\infty}} \\
        &\leq \left(\frac{34}{33}\right)^{2}\norm{\gamma}_{\dot{H}^{2}}
        + \left(\frac{34}{33}\right)^{3}\left(
            17M\eps\ell\norm{\gamma}_{\dot{C}^{1,1/2}}^{5}
            + 17^{2}M^{2}\eps^{2}\ell^{2}\norm{\gamma}_{\dot{C}^{1,1/2}}^{9}
        \right). 
    \end{align*}
    The definition of $\eps_0$ and $\norm{\gamma}_{\dot{C}^{1,1/2}}\le \norm{\gamma}_{\dot{H}^{2}}$ now show that this is indeed less than $4\norm{\gamma}_{\dot{H}^{2}}$.
\end{proof}

The next lemma yields a sufficient condition for $\Delta_{h}(\gamma)$ to be achieved at
two points with arclength parameters differing by more than $h$.  Note that when that happens, the line joining such points is orthogonal to both tangent lines to $\gamma$ at these points.

\begin{lemma}\label{L3.8}
    Let $\gamma\colon\ell\bbT\to\bbR^{2}$ be a $C^{1,\beta}$ closed curve
    parametrized by arclength and
    $h\in\left(0,\norm{\gamma}_{\dot{C}^{1,\beta}}^{-1/\beta}\right]$.
    Then $\Delta_{h}(\gamma) > 0$ if and only if $\gamma$ is simple.
    And if $\Delta_{h}(\gamma) \in \left(0, \frac{1 + 4\beta}{2(1 + 2\beta)}h\right)$, then for any $s,s'\in\ell\bbT$ with
    $\abs{s - s'} \in[h, \frac{\ell}{2}]$ and
    $\abs{\gamma(s) - \gamma(s')} = \Delta_{h}(\gamma)$ we have
    $\abs{s - s'}>h$ and
    $\partial_{s}\gamma(s)\perp \gamma(s) - \gamma(s')\perp \partial_{s}\gamma(s')$.
\end{lemma}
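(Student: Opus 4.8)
The plan is to prove the three assertions of Lemma~\ref{L3.8} in turn: first the characterization of $\Delta_h(\gamma)>0$, then the strict inequality $\abs{s-s'}>h$ under the smallness hypothesis, and finally the orthogonality relations.

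\textbf{Step 1: $\Delta_h(\gamma)>0$ iff $\gamma$ is simple.} If $\gamma$ is not simple, then $\gamma(s_0)=\gamma(s_0')$ for some $s_0\neq s_0'$, and after relabeling we may assume $\abs{s_0-s_0'}\le\frac{\ell}{2}$; then Lemma~\ref{L3.1} (specifically the bound $\ell\ge 2^{1+1/2\beta}\norm{\gamma}_{\dot{C}^{1,\beta}}^{-1/\beta}$, together with $h\le\norm{\gamma}_{\dot C^{1,\beta}}^{-1/\beta}$) forces $\abs{s_0-s_0'}\ge h$ because Lemma~\ref{L3.2}(4) applied with $x\coloneqq\gamma(s_0)$ shows that any $s$ with $\abs{s-s_0}\le\norm{\gamma}_{\dot C^{1,\beta}}^{-1/\beta}$ and $\gamma(s)=\gamma(s_0)$ must equal $s_0$ (the map $g_i$ there is a homeomorphism with $\abs{g_i(s)}\ge\tfrac12\abs{s-s_0}$). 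Hence $(s_0,s_0')$ is admissible in the $\min$ defining $\Delta_h(\gamma)$ and $\Delta_h(\gamma)=0$. Conversely, if $\gamma$ is simple, then $\abs{\gamma(s)-\gamma(s')}>0$ on the compact admissible set $\{h\le\abs{s-s'}\le\ell/2\}$, so the continuous function attains a positive minimum. I would spell out the compactness argument (the set is closed and bounded in $(\ell\bbT)^2$, hence compact, and $\gamma$ is continuous).

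\textbf{Step 2: $\abs{s-s'}>h$ when $\Delta_h(\gamma)$ is small.} Suppose $\Delta_h(\gamma)=\abs{\gamma(s)-\gamma(s')}$ with $\abs{s-s'}=h$. The idea is that $\gamma$ restricted to a short arc is nearly a straight segment of length $h$, so its endpoints are roughly distance $h$ apart, which contradicts $\Delta_h(\gamma)\ll h$. Quantitatively: since $h\le\norm{\gamma}_{\dot C^{1,\beta}}^{-1/\beta}$, Lemma~\ref{L3.1} gives $\mathbf T(\sigma)\cdot\mathbf T(s)\ge 1-\tfrac12\norm{\gamma}_{\dot C^{1,\beta}}^2\abs{\sigma-s}^{2\beta}\ge 1-\tfrac12 h^{2\beta}\norm{\gamma}_{\dot C^{1,\beta}}^{2}$ for $\sigma$ between $s$ and $s'$; integrating $(\gamma(s)-\gamma(s'))\cdot\mathbf T(s)=\int_{s'}^{s}\mathbf T(\sigma)\cdot\mathbf T(s)\,d\sigma$ over an interval of length $h$ gives
\[
\abs{\gamma(s)-\gamma(s')}\ \ge\ h\left(1-\tfrac12 h^{2\beta}\norm{\gamma}_{\dot C^{1,\beta}}^{2}\right)\ \ge\ h\left(1-\tfrac12\right)\ \cdot\ (\text{some refinement}),
\]
and using $h^{2\beta}\norm{\gamma}_{\dot C^{1,\beta}}^{2}\le 1$ one obtains a lower bound that exceeds $\frac{1+4\beta}{2(1+2\beta)}h$ — this is where the specific constant $\frac{1+4\beta}{2(1+2\beta)}$ comes from, and I would track the elementary inequality $1-\tfrac12 t^{2\beta}\cdot(\text{correction})\ge\frac{1+4\beta}{2(1+2\beta)}$ carefully, perhaps by averaging the tangent bound over the interval rather than using the worst-case endpoint value, to get the sharp threshold. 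This contradicts $\abs{\gamma(s)-\gamma(s')}=\Delta_h(\gamma)<\frac{1+4\beta}{2(1+2\beta)}h$, so $\abs{s-s'}>h$.

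\textbf{Step 3: orthogonality.} Once $\abs{s-s'}\in(h,\ell/2)$, the pair $(s,s')$ is an interior minimizer of the smooth (away from the degenerate arclength cases) function $(\sigma,\sigma')\mapsto\abs{\gamma(\sigma)-\gamma(\sigma')}^2$ on the open region $\{h<\abs{\sigma-\sigma'}<\ell/2\}$, so its partial derivatives vanish: $(\gamma(s)-\gamma(s'))\cdot\partial_s\gamma(s)=0$ and $(\gamma(s)-\gamma(s'))\cdot\partial_s\gamma(s')=0$, which is exactly $\partial_s\gamma(s)\perp\gamma(s)-\gamma(s')\perp\partial_s\gamma(s')$. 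I would phrase this as a one-variable first-derivative test (hold $s'$ fixed, vary $s$ in an open neighborhood, minimum in the interior $\Rightarrow$ derivative zero), which only needs $\gamma\in C^1$. The main obstacle is Step~2: getting the \emph{sharp} constant $\frac{1+4\beta}{2(1+2\beta)}$ rather than just some constant less than $1$ requires choosing the right averaging of the tangent-angle estimate over the length-$h$ arc (and possibly splitting into the two sub-arcs on either side of the point where $\mathbf T$ deviates most), so I expect to spend most of the effort optimizing that elementary inequality; everything else is routine.
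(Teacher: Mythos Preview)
Your approach is the same as the paper's, but you are making Step~2 harder than it is and Step~1 more roundabout than necessary.  The paper handles both at once with a single inequality: for any $s,s'$ with $|s-s'|\le h$, Lemma~\ref{L3.1} gives $\mathbf{T}(s'')\cdot\mathbf{T}(s)\ge 1-\tfrac12\norm{\gamma}_{\dot C^{1,\beta}}^{2}|s''-s|^{2\beta}$, and integrating this over $s''\in[s',s]$ (not bounding $|s''-s|^{2\beta}$ by $h^{2\beta}$ first, as you do) yields
\[
|\gamma(s)-\gamma(s')|\ \ge\ |s-s'|-\tfrac{1}{2(1+2\beta)}\norm{\gamma}_{\dot C^{1,\beta}}^{2}|s-s'|^{1+2\beta}\ \ge\ \tfrac{1+4\beta}{2(1+2\beta)}|s-s'|,
\]
the last step using $|s-s'|^{2\beta}\norm{\gamma}_{\dot C^{1,\beta}}^{2}\le h^{2\beta}\norm{\gamma}_{\dot C^{1,\beta}}^{2}\le 1$.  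So the sharp constant is a one-line integral, not an optimization problem; your ``averaging the tangent bound over the interval'' is exactly right and is all that is needed.  This same inequality also disposes of Step~1 (any self-intersection must lie at arclength distance $>h$, hence witnesses $\Delta_h(\gamma)=0$), so your detour through Lemma~\ref{L3.2} is unnecessary.  Your Step~3 matches the paper; just note that the interior-minimum argument also covers the endpoint $|s-s'|=\ell/2$, since any small perturbation of $(s,s')$ on $(\ell\bbT)^2$ can only decrease $|s-s'|$ from $\ell/2$ and hence stays in the admissible set.
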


\begin{proof}
    Let $\mathbf{T}\coloneqq \partial_{s}\gamma$. By Lemma~\ref{L3.1},
    for any $s,s'\in\ell\bbT$ with $\abs{s - s'} \leq h$ we have
    \begin{align*}
        \abs{\gamma(s) - \gamma(s')} &\geq
        \abs{(\gamma(s) - \gamma(s'))\cdot \mathbf{T}(s)}
        = \abs{\int_{s'}^{s}\mathbf{T}(s'')\cdot \mathbf{T}(s)\,ds''} \\
        &\geq \abs{s - s'} - \frac{1}{2(1+2\beta)}\norm{\gamma}_{\dot{C}^{1,\beta}}^{2}
        \abs{s - s'}^{1 + 2\beta} 
        \geq \frac{1 + 4\beta}{2(1 + 2\beta)}\abs{s - s'}.
    \end{align*}
    This proves the first claim, as well as that $\abs{s - s'}>h$ whenever the hypotheses of the second claim hold.
    Since any such $(s,s')$ minimize $\abs{\gamma(s) - \gamma(s')}^{2}$
    on an open subset of $(\ell\bbT)^2$, we see that
    $(\gamma(s) - \gamma(s'))\cdot\partial_{s}\gamma(s)
    =0 = (\gamma(s) - \gamma(s'))\cdot\partial_{s}\gamma(s') $, and the proof is finished.
%
\end{proof}

The last result in this appendix, which is extensively used in Section~\ref{S4}, provides a bound on the length of a curve $\gamma$ in terms of $\abs{\Omega(\gamma)} $ and $\Delta_{\norm{\gamma}_{\dot{C}^{1,\beta}}^{-1/\beta}}(\gamma)$.

\begin{lemma}\label{L3.9}
    Let $\gamma\colon\ell\bbT\to\bbR^{2}$ be a $C^{1,\beta}$
    positive simple closed curve parametrized by arclength. Then
    for any $h\in\left(0,\norm{\gamma}_{\dot{C}^{1,\beta}}^{-1/\beta}\right]$ we have
    $ \ell \le  \frac{ 30 \abs{\Omega(\gamma)} }{\Delta_{h}(\gamma)}$.
\end{lemma}

\begin{proof}
    Let $\mathbf{T}\coloneqq\partial_{s}\gamma$,
    $\mathbf{N}\coloneqq\mathbf{T}^{\perp}$, and
    $d\coloneqq\frac{1}{3}\norm{\gamma}_{\dot{C}^{1,\beta}}^{-1/\beta}$.
    Let $N$ be the largest integer such that $5dN < \ell$.
    Since $\ell \geq 2^{1+\frac{1}{2\beta}}(3d)
    \geq 6\sqrt{2}d$ by Lemma~\ref{L3.1}, we have
    $ N \geq \frac{\ell}{5d} - 1
        > \frac{\ell}{15d}$.
        
    Let $\set{s_{i}}_{i=1}^{N}$ be $N$ equally spaced points in $\ell\bbT$, so that
    $\abs{s_{i} - s_{j}} \geq \frac{\ell}{N} > 5d$ holds whenever $i\neq j$.
    For $i=1,\dots ,N$ let
    \[
        A_{i} \coloneqq \set{\gamma(s) + t\mathbf{N}(s_{i})
        \colon (s,t)\in[s_{i} - d, s_{i} + d] \times (0,\tfrac 12\Delta_{h}(\gamma))}.
    \]
    For each $s,s'\in\ell\bbT$ with $\abs{s - s'}\leq d$, Lemma~\ref{L3.1} shows that $\mathbf{T}(s)\cdot\mathbf{T}(s') \ge \frac 12$.
    Hence $s\mapsto \gamma(s)\cdot \mathbf{T}(s_{i})$
    is strictly increasing on $[s_{i} - d, s_{i} + d]$ and
    $(\gamma(s_{i} + d) - \gamma(s_{i} - d))\cdot\mathbf{T}(s_{i}) \geq d$, for each $i$.
    It follows  that 
    $\abs{A_{i}} \geq \frac{1}{2}\Delta_{h}(\gamma)d$, which will now yield the result once we also prove that $A_{1},\dots,A_N$ are contained in $\Omega(\gamma)$ and
    are pairwise disjoint (since $ N > \frac{\ell}{15d}$). 
        
       So assume that
    \[
        \gamma(s) + t\mathbf{N}(s_{i}) = \gamma(s') + t'\mathbf{N}(s_{j})
    \]
    holds for some $i\neq j$,
$s\in [s_{i} - d, s_{i} + d]$, 
    and $t,t'\in[0,\frac 12\Delta_{h}(\gamma))$ with $t>0$ (the case $t'=0$ will be used to show that $A_i\subseteq\Omega(\gamma)$). Then
    \[
        \abs{\gamma(s) - \gamma(s')} \leq t + t' < \Delta_{h}(\gamma),
    \]
    so we must have $\abs{s - s'} < h \leq 3d$ and hence
    $\max\set{\abs{s - s_{i}}, \abs{s' - s_{i}}} \leq 4d$.
    If $t'=0$, Lemma~\ref{L3.1} now shows that
    $\mathbf{T}(s'')\cdot\mathbf{T}(s_{i})\geq \frac{1}{9}$ holds for all
    $s''$  between $s$ and $s'$.  Therefore
    \[
        0 = \abs{(\gamma(s) - \gamma(s'))\cdot\mathbf{T}(s_{i})}
        = \abs{\int_{s'}^{s}\mathbf{T}(s'')\cdot\mathbf{T}(s_{i})\,ds''}
        \geq \frac{\abs{s - s'}}{9},
    \]
    and we see that $s = s'$ and $t = 0$. So
    $A_{i}\cap\operatorname{im}(\gamma)=\emptyset$, and since
    $\gamma(s_{i}) + \eps\mathbf{N}(s_{i}) \in \Omega(\gamma)$
     for all small enough $\eps>0$, we obtain
    $A_{i} \subseteq \Omega(\gamma)$.

    Similarly, if we now assume that $s'\in [s_{j}-d, s_{j}+d]$ and $t' > 0$ instead of $t'=0$, then
    $\abs{s_{i} - s_{j}} \leq \abs{s - s'} + 2d \leq 5d$, which contradicts $i \neq j$.
    Hence $A_{1},\dots,A_N$ are also pairwise disjoint, and the proof is finished. 
\end{proof}

\section{Results involving the space of closed  curves}\label{SA}

\begin{lemma}\label{LA.1}
    For any $\tilde{\gamma}_{1},\tilde{\gamma}_{2}\in C(\bbT;\bbR^{2})$ we have
    \[
        d_{\mathrm{F}}(\tilde{\gamma}_{1},\tilde{\gamma}_{2}) =
        \inf_{\phi}
        \norm{\tilde{\gamma}_{1} - \tilde{\gamma}_{2}\circ\phi}_{L^{\infty}},
    \]
    where the infimum is taken over all orientation-preserving
    diffeomorphisms $\phi \colon\bbT\to\bbT$.
\end{lemma}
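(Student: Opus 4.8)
The statement claims that the infimum defining $d_{\mathrm F}$ does not change if we restrict from orientation-preserving homeomorphisms to orientation-preserving diffeomorphisms. Since every diffeomorphism is a homeomorphism, the infimum over diffeomorphisms is at least $d_{\mathrm F}(\tilde\gamma_1,\tilde\gamma_2)$; the content is the reverse inequality. The plan is to fix an arbitrary orientation-preserving homeomorphism $\phi\colon\bbT\to\bbT$ and an arbitrary $\eps>0$, and to produce an orientation-preserving diffeomorphism $\psi\colon\bbT\to\bbT$ with $\norm{\tilde\gamma_1-\tilde\gamma_2\circ\psi}_{L^\infty}\le\norm{\tilde\gamma_1-\tilde\gamma_2\circ\phi}_{L^\infty}+\eps$, which then gives $\inf_\psi\norm{\tilde\gamma_1-\tilde\gamma_2\circ\psi}_{L^\infty}\le\norm{\tilde\gamma_1-\tilde\gamma_2\circ\phi}_{L^\infty}$, and taking the infimum over $\phi$ finishes the proof.

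First I would lift $\phi$ to a strictly increasing continuous bijection $\Phi\colon\bbR\to\bbR$ with $\Phi(x+1)=\Phi(x)+1$. The key analytic input is uniform continuity: $\tilde\gamma_2$ is continuous on the compact set $\bbT$, hence uniformly continuous, so there is $\delta>0$ such that $|\tilde\gamma_2(a)-\tilde\gamma_2(b)|\le\eps$ whenever the (circular) distance between $a$ and $b$ is at most $\delta$. It therefore suffices to find a smooth strictly increasing $\Psi\colon\bbR\to\bbR$ with $\Psi(x+1)=\Psi(x)+1$, $\Psi'>0$ everywhere, and $\|\Psi-\Phi\|_{L^\infty}\le\delta$: then $\psi$ (the map on $\bbT$ induced by $\Psi$) is an orientation-preserving diffeomorphism, and for each $x\in\bbT$ we have $|\tilde\gamma_2(\psi(x))-\tilde\gamma_2(\phi(x))|\le\eps$, whence $\norm{\tilde\gamma_1-\tilde\gamma_2\circ\psi}_{L^\infty}\le\norm{\tilde\gamma_1-\tilde\gamma_2\circ\phi}_{L^\infty}+\eps$ by the triangle inequality.

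So the real work is the approximation of a periodic increasing homeomorphism $\Phi$ of $\bbR$ by a periodic orientation-preserving diffeomorphism in the sup norm. I would do this by a standard mollification-plus-tilt argument: let $\rho_\eta$ be a smooth even mollifier supported in $[-\eta,\eta]$ with $\int\rho_\eta=1$, and set $\Phi_\eta(x)\coloneqq(\rho_\eta*\Phi)(x)$. Since $\Phi(x)-x$ is continuous and $1$-periodic, so is $\Phi_\eta(x)-x$, hence $\Phi_\eta(x+1)=\Phi_\eta(x)+1$, and $\Phi_\eta$ is smooth; moreover $\|\Phi_\eta-\Phi\|_{L^\infty}\le\omega_\Phi(\eta)\to0$ as $\eta\to0^+$ by uniform continuity of $\Phi$ on $[0,1]$, where $\omega_\Phi$ is its modulus of continuity. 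The only defect is that $\Phi_\eta'$, while $\ge0$ (as $\Phi$ is nondecreasing, the convolution of the distributional derivative, a nonnegative measure, with $\rho_\eta$ is nonnegative), need not be strictly positive. This is repaired by replacing $\Phi_\eta$ with $\Psi\coloneqq(1-t)\Phi_\eta+t\,\mathrm{id}$ for small $t\in(0,1)$: then $\Psi'=(1-t)\Phi_\eta'+t\ge t>0$, $\Psi$ is smooth, strictly increasing, satisfies $\Psi(x+1)=\Psi(x)+1$, and $\|\Psi-\Phi\|_{L^\infty}\le\|\Phi_\eta-\Phi\|_{L^\infty}+t\,\|\Phi-\mathrm{id}\|_{L^\infty}$. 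Choosing first $\eta$ small so that $\|\Phi_\eta-\Phi\|_{L^\infty}\le\delta/2$ and then $t$ small so that $t\,\|\Phi-\mathrm{id}\|_{L^\infty}\le\delta/2$ yields $\|\Psi-\Phi\|_{L^\infty}\le\delta$, as required. (Note $\Phi-\mathrm{id}$ is bounded by periodicity, so this is legitimate.)

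The main obstacle, such as it is, is purely bookkeeping: one must be careful that all the approximants descend to well-defined maps on $\bbT$ (which is exactly the periodicity condition $\Psi(x+1)=\Psi(x)+1$, preserved at every step because it holds for $\Phi$ and for $\mathrm{id}$ and is an affine condition) and that the mollified map is genuinely smooth and the lifted map genuinely a diffeomorphism of the circle (which follows from $\Psi$ being a smooth strictly increasing bijection of $\bbR$ commuting with the unit translation, with smooth inverse by the inverse function theorem since $\Psi'>0$). No convergence or compactness subtlety arises beyond uniform continuity of the two continuous functions $\tilde\gamma_2$ and $\Phi$ on compact sets.
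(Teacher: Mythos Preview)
Your proof is correct and follows the same approach as the paper, which simply says to approximate the homeomorphism by mollification. Your added tilt step $(1-t)\Phi_\eta + t\,\mathrm{id}$ is a safe way to guarantee $\Psi'>0$, though it is not strictly needed: if the mollifier is chosen strictly positive on the interior of its support, then $\Phi_\eta' = \rho_\eta * \Phi'$ is already everywhere positive because the distributional derivative of a continuous strictly increasing $\Phi$ assigns positive mass to every open interval.
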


\begin{proof}
    It suffices to show that any orientation-preserving homeomorphism
    $\phi\colon\bbT\to\bbT$ can be approximated arbitrarily well in $L^\infty(\bbT)$ by orientation-preserving diffeomorphisms. 
    This can be done via convolution with a smooth delta function (see \eqref{111.22}).
\end{proof}

\begin{lemma}\label{LA.2}
    For any $R_{0},R_{2}\in[0,\infty)$, the set
    \begin{align*}
        X\coloneqq\set{\gamma\in\operatorname{CC}(\bbR^{2})
        \colon \norm{\gamma}_{L^{\infty}}\leq R_{0} \,\,\&\,\, 
        \norm{\gamma}_{\dot{H}^{2}}\leq R_{2}}
    \end{align*}
    is compact with respect to $d_{\mathrm{F}}$.
\end{lemma}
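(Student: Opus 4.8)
\textbf{Proof proposal for Lemma \ref{LA.2}.}
The plan is to show sequential compactness of $X$ with respect to $d_{\mathrm{F}}$, and then deduce compactness (since $(\operatorname{CC}(\bbR^2),d_{\mathrm{F}})$ is a metric space). Given a sequence $\{\gamma_n\}\subseteq X$, I would first pass to constant-speed parametrizations $\tilde\gamma_n\in C(\bbT;\bbR^2)$, so that $\tilde\gamma_n(\bbT\cdot)=\gamma_n$ and $|\partial_\xi\tilde\gamma_n|\equiv\ell(\gamma_n)$ a.e. The bound $\ell(\gamma_n)\le\norm{\gamma_n}_{L^\infty}^2\norm{\gamma_n}_{\dot H^2}^2\le R_0^2R_2^2$ from \eqref{2.1} gives a uniform bound on $\ell_n:=\ell(\gamma_n)$, hence a uniform Lipschitz bound on $\tilde\gamma_n$. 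Moreover, in terms of the constant-speed parametrization, $\partial_\xi\tilde\gamma_n=\ell_n\mathbf{T}_n(\bbT\xi)$ with $\mathbf{T}_n$ the unit tangent of $\gamma_n$, and $\norm{\partial_\xi^2\tilde\gamma_n}_{L^2(\bbT)}=\ell_n^{3/2}\norm{\gamma_n}_{\dot H^2}\le (R_0R_2)^3 R_2$, so $\{\tilde\gamma_n\}$ is bounded in $H^2(\bbT;\bbR^2)$. In particular $\{\partial_\xi\tilde\gamma_n\}$ is bounded in $\dot C^{0,1/2}(\bbT)$ by Morrey/Sobolev embedding, and $\{\tilde\gamma_n\}$ is bounded in $C^{1,1/2}(\bbT;\bbR^2)$.

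By Arzel\`a–Ascoli (applied twice, to $\tilde\gamma_n$ and $\partial_\xi\tilde\gamma_n$), after passing to a subsequence I get $\tilde\gamma_n\to\tilde\gamma$ in $C^1(\bbT;\bbR^2)$, and I may also assume $\ell_n\to\ell\ge 0$ and that $\partial_\xi^2\tilde\gamma_n\rightharpoonup \partial_\xi^2\tilde\gamma$ weakly in $L^2(\bbT)$ (so $\tilde\gamma\in H^2$ with $\norm{\partial_\xi^2\tilde\gamma}_{L^2}\le\liminf\norm{\partial_\xi^2\tilde\gamma_n}_{L^2}$ by weak lower semicontinuity of the norm). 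The curve $\gamma$ represented by $\tilde\gamma$ then satisfies $d_{\mathrm F}(\gamma_n,\gamma)\le\norm{\tilde\gamma_n-\tilde\gamma}_{L^\infty(\bbT)}\to 0$, so $\gamma_n\to\gamma$ in $d_{\mathrm F}$. It remains to check $\gamma\in X$. The bound $\norm{\gamma}_{L^\infty}\le R_0$ is immediate from $C^0$-convergence. For the $\dot H^2$-bound one must be slightly careful because $\norm{\gamma}_{\dot H^2}$ is defined via an \emph{arclength} parametrization, whereas $\tilde\gamma$ is constant-speed with $|\partial_\xi\tilde\gamma|\equiv\ell$; if $\ell>0$ this is harmless (rescale, using the invariant formula $\norm{\gamma}_{\dot H^2}^2=\int_\bbT \frac{|\partial_\xi\tilde\gamma\cdot\partial_\xi^2\tilde\gamma^\perp|^2}{|\partial_\xi\tilde\gamma|^5}\,d\xi$ from the proof of Lemma \ref{L6.1}, whose lower semicontinuity under $C^1$-convergence of $\partial_\xi\tilde\gamma$ together with weak $L^2$-convergence of $\partial_\xi^2\tilde\gamma$ gives $\norm{\gamma}_{\dot H^2}\le\liminf\norm{\gamma_n}_{\dot H^2}\le R_2$), and if $\ell=0$ then $\gamma$ is the trivial curve, for which $\norm{\gamma}_{\dot H^2}=0\le R_2$ by definition. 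Either way $\gamma\in X$.

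The main obstacle I anticipate is exactly this last bookkeeping: making sure the passage between constant-speed and arclength parametrizations, the degenerate case $\ell=0$ (where $|\partial_\xi\tilde\gamma|$ may vanish and the invariant integrand is only defined a.e.), and the lower semicontinuity of $\norm{\cdot}_{\dot H^2}$ under the mixed strong-$C^1$/weak-$L^2$ convergence are all handled cleanly. The lower semicontinuity can be obtained either by noting that for $\ell_n\to\ell>0$ the integrand $\frac{|\partial_\xi\tilde\gamma_n\cdot\partial_\xi^2\tilde\gamma_n^\perp|^2}{|\partial_\xi\tilde\gamma_n|^5}$ has uniformly bounded denominator and numerator converging weakly in $L^1$ after the strong factor $\partial_\xi\tilde\gamma_n$ is pulled out (so one reduces to weak-$L^2$ lsc of $v\mapsto\norm{v}_{L^2}$ against the strongly convergent coefficient), or, more robustly, by passing to arclength parametrizations $\gamma_n(\cdot)$ on the common circle $\bbT$ (rescaled to have length $\ell_n\to\ell$), whose second derivatives $\partial_s^2\gamma_n=\kappa_n\mathbf N_n$ are bounded in $L^2$, and invoking weak lsc of the $L^2$-norm of the curvature directly; the trivial-curve case is then a separate one-line check. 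Everything else — the $H^2$ bound via \eqref{2.1}, Arzel\`a–Ascoli, and the $d_{\mathrm F}\le\|\cdot\|_{L^\infty}$ estimate — is routine.
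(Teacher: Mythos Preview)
Your approach is essentially the paper's: pass to constant-speed parametrizations, use \eqref{2.1} to bound $\norm{\partial_\xi^2\tilde\gamma_n}_{L^2(\bbT)}=\ell_n^{3/2}\norm{\gamma_n}_{\dot H^2}$, extract a $C^1$-convergent subsequence via Arzel\`a--Ascoli, and recover $\norm{\gamma}_{\dot H^2}\le R_2$ from weak $L^2$-lower semicontinuity.

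The one real slip is your handling of $\ell=0$. By Definition~\ref{D2.3}, the trivial curve has $\norm{\gamma}_{\dot H^2}=\infty$, not $0$ (the convention $\norm{\gamma}_{\dot H^k}=0$ for trivial $\gamma$ applies only for $k=1$), so such $\gamma$ would \emph{not} lie in $X$ and your closure argument in that case is incorrect. The paper avoids the case-split altogether by invoking Lemma~\ref{L3.1} (with $\beta=\tfrac12$), which gives $\ell(\gamma_n)\ge 4/\norm{\gamma_n}_{\dot H^2}^2\ge 4/R_2^2$ uniformly; hence $\ell=\lim_k\ell(\gamma_{n_k})>0$, the limit $\tilde\gamma$ is a genuine constant-speed parametrization, and the direct identity $\norm{\gamma}_{\dot H^2}=\ell^{-3/2}\norm{\partial_\xi^2\tilde\gamma}_{L^2(\bbT)}$ together with weak lower semicontinuity of $\norm{\cdot}_{L^2}$ closes the argument---no need for the integrand formula from Lemma~\ref{L6.1} or a separate degenerate case.
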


\begin{proof}
    For each $\gamma\in X$, choose any constant-speed parametrization
    $\tilde{\gamma}\in C(\bbT;\bbR^{2})$, and let $\tilde{X}$ be the resulting
    subset of $C(\bbT;\bbR^{2})$. We have $\partial_{\xi}^{2}\tilde{\gamma}(\xi)
    = \ell(\gamma)^{2}\partial_{s}^{2}\gamma(\ell(\gamma) \xi)$ when 
    $\gamma(\cdot)$ is an appropriate arclength parametrization, and 
    \eqref{2.1} shows that 
    $\norm{\partial_{\xi}^{2}\tilde{\gamma}}_{L^{2}(\bbT)}
    = \ell(\gamma)^{3/2}\norm{\gamma}_{\dot{H}^{2}} \leq
    \norm{\gamma}_{L^{\infty}}^{3}\norm{\gamma}_{\dot{H}^{2}}^{4}
    \leq R_{0}^{3}R_{2}^{4}$.

    Hence $\tilde{X}$ is a bounded subset of $C^{1,1/2}(\bbT;\bbR^{2})$, which means that $\tilde{X}$ is precompact in
    $C^{1}(\bbT;\bbR^{2})$. If now $\gamma_{n}\in X$ ($n=1,2,\dots$) are arbitrary, there is a subsequence $\seq{\gamma_{n_{k}}}_{k=1}^{\infty}$
    such that $\tilde{\gamma}_{n_{k}} \to \tilde{\gamma}$ in $C^{1}(\bbT;\bbR^{2})$.
    Then $\tilde{\gamma}$ is a constant-speed parametrization of the curve
    $\gamma\in\operatorname{CC}(\bbR^{2})$ defined by $\tilde{\gamma}$, and 
    $\ell(\gamma) = \lim_{k\to\infty}\ell(\gamma_{n_{k}})
    \geq \frac{4}{R_{2}^{2}}$  by Lemma~\ref{L3.1}.
    From the uniform bound on
    $\norm{\partial_{\xi}^{2}\tilde{\gamma}_{n_{k}}}_{L^{2}(\bbT)}$ we see that after passing to a further subsequence, we can assume that    $\seq{\partial_{\xi}^{2}\tilde{\gamma}_{n_{k}}}_{k=1}^{\infty}$
  weakly converges to some $f\in L^{2}(\bbT;\bbR^{2})$.  This means that $f$ is the weak derivative of
    $\partial_{\xi}\tilde{\gamma}$, and since the $C^{1}$-convergence yields
    $\lim_{k\to\infty}\ell(\gamma_{n_{k}}) = \ell(\gamma)$, we also have
    \begin{align*}
        \norm{f}_{L^{2}(\bbT)} &\leq
        \liminf_{k\to\infty}\norm{\partial_{\xi}^{2}\tilde{\gamma}_{n_{k}}}_{L^{2}(\bbT)}
        = \ell(\gamma)^{3/2}\liminf_{k\to\infty}\norm{\gamma_{n_{k}}}_{\dot{H}^{2}}\le \ell(\gamma)^{3/2} R_2.
    \end{align*}
    It follows that $\norm{\gamma}_{\dot{H}^{2}} \leq R_2$,
    so $\gamma\in X$ and hence $X$ is compact.
\end{proof}

\begin{corollary}\label{CA.3}
    The functional $\norm{\,\cdot\,}_{\dot{H}^{2}}
    \colon\operatorname{CC}(\bbR^{2})\to[0,\infty]$
    is lower semicontinuous.
\end{corollary}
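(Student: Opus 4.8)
The plan is to deduce Corollary~\ref{CA.3} directly from the compactness statement in Lemma~\ref{LA.2}, exactly as one deduces lower semicontinuity of a norm from the fact that its sublevel sets (intersected with bounded sets) are closed. First I would recall that a function $f\colon X\to[0,\infty]$ on a metric space is lower semicontinuous if and only if for every $c\in[0,\infty)$ the sublevel set $\{x\in X\colon f(x)\le c\}$ is closed. So it suffices to show that for each $R_2\in[0,\infty)$ the set $\{\gamma\in\operatorname{CC}(\bbR^2)\colon \norm{\gamma}_{\dot H^2}\le R_2\}$ is closed with respect to $d_{\mathrm F}$.

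To prove closedness, suppose $\gamma_n\to\gamma$ in $(\operatorname{CC}(\bbR^2),d_{\mathrm F})$ with $\norm{\gamma_n}_{\dot H^2}\le R_2$ for all $n$; I must show $\norm{\gamma}_{\dot H^2}\le R_2$. Since the sequence $\seq{\gamma_n}$ is $d_{\mathrm F}$-convergent it is $d_{\mathrm F}$-bounded, i.e.\ there is $R_0<\infty$ with $\norm{\gamma_n}_{L^\infty}\le R_0$ for all $n$ (after translating $\gamma$ so that, say, $\operatorname{im}(\gamma)$ meets the unit ball, which does not affect the $\dot H^2$-norm); here I am using that $d_{\mathrm F}$ controls the $L^\infty$-norm of suitable representatives up to the choice of base point. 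Then all $\gamma_n$ lie in the set $X$ of Lemma~\ref{LA.2} with parameters $R_0,R_2$, which is $d_{\mathrm F}$-compact, hence closed in $(\operatorname{CC}(\bbR^2),d_{\mathrm F})$, so the limit $\gamma$ also lies in $X$ and in particular $\norm{\gamma}_{\dot H^2}\le R_2$. This is the whole argument.

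One point deserving a little care, and the part I expect to be the main (minor) obstacle, is the handling of the $L^\infty$ bound and the translation: strictly speaking $d_{\mathrm F}$ is defined on equivalence classes of paths, and $\norm{\gamma}_{L^\infty}$ of a curve is the $L^\infty$-norm of an arclength parametrization, which is not translation invariant. If $\gamma_n\to\gamma$ in $d_{\mathrm F}$, there are representatives $\tilde\gamma_n,\tilde\gamma$ with $\norm{\tilde\gamma_n-\tilde\gamma}_{L^\infty}\to 0$ (up to reparametrization), so $\sup_n\norm{\tilde\gamma_n}_{L^\infty}\le \norm{\tilde\gamma}_{L^\infty}+1<\infty$ for large $n$, and one simply discards the finitely many exceptional $n$ or enlarges $R_0$ to accommodate them; then Lemma~\ref{LA.2} applies with this $R_0$. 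Alternatively, since both $\norm{\cdot}_{\dot H^2}$ and $d_{\mathrm F}$ are invariant under global translations, I may freely translate each $\gamma_n$ to pass through the origin, preserving $d_{\mathrm F}$-convergence and the $\dot H^2$-bound, after which the $L^\infty$-bound is immediate. Once this bookkeeping is done, the statement follows at once; there are no analytic estimates to carry out beyond those already contained in Lemma~\ref{LA.2}.
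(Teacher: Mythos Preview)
Your argument is correct and is exactly the paper's proof: show each sublevel set $\{\gamma:\norm{\gamma}_{\dot H^2}\le R_2\}$ is closed by noting that a $d_{\mathrm F}$-convergent sequence in it is $L^\infty$-bounded, hence lies in the compact set $X$ of Lemma~\ref{LA.2}, so the limit is in $X$. One caution: your ``alternatively'' remark about translating \emph{each} $\gamma_n$ to pass through the origin is not valid as stated, since $d_{\mathrm F}$ is only invariant under translating all curves by the \emph{same} vector, not under translating each $\gamma_n$ separately; but this is irrelevant because your first (and the paper's) route to the $L^\infty$ bound already works.
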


\begin{proof}
    It suffices to show that the set
    $S\coloneqq \set{\gamma\in\operatorname{CC}(\bbR^{2})\colon
    \norm{\gamma}_{\dot{H}^{2}}\leq R_{2}}$
    is closed in $\operatorname{CC}(\bbR^{2})$ for any given $R_{2}\in[0,\infty)$, so assume that $\gamma_{n} \to \gamma$ in $\operatorname{CC}(\bbR^{2})$
    for some $\set{\gamma_{n}}_{n=1}^{\infty}$ in $S$.
    Then clearly   $\sup_{n}\norm{\gamma_{n}}_{L^{\infty}}<\infty$, so $\set{\gamma_n}_{n=1}^\infty$ is contained in a
    compact subset of $S$ by Lemma~\ref{LA.2}. Therefore $\gamma\in S$, and hence $S$ is closed.
\end{proof}

The next two results
follow easily from the winding number with respect to any fixed $x\in\bbR^{2}$
being constant near any $\gamma\in\operatorname{SC}(\bbR^{2})$ with $x\notin{\rm im}(\gamma)$, and we omit their proofs.

\begin{lemma}\label{LA.4}
    $\operatorname{PSC}(\bbR^{2})$ is a clopen subset of $\operatorname{SC}(\bbR^{2})$.
\end{lemma}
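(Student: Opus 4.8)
The goal is to show that $\operatorname{PSC}(\bbR^2)$ is both open and closed in $\operatorname{SC}(\bbR^2)$ (with the metric $d_\mathrm{F}$). The plan is to exploit the fact that orientation of a simple closed curve is detected by the winding number around any interior point, and that this winding number is locally constant.

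First I would fix $\gamma \in \operatorname{SC}(\bbR^2)$ and pick an injective parametrization $\tilde\gamma \colon \bbT \to \bbR^2$ together with a point $x \in \Omega(\gamma)$. Since $x \notin \operatorname{im}(\gamma)$ and $\operatorname{im}(\gamma)$ is compact, we have $\delta_0 \coloneqq d(x, \operatorname{im}(\gamma)) > 0$. For any $\eta \in \operatorname{SC}(\bbR^2)$ with $d_\mathrm{F}(\gamma, \eta) < \delta_0$, any parametrization $\tilde\eta$ (suitably reparametrized so as to be $L^\infty$-close to $\tilde\gamma$) satisfies $|\tilde\gamma(\xi) - \tilde\eta(\xi)| < \delta_0 = d(x, \operatorname{im}(\gamma)) \le |\tilde\gamma(\xi) - x|$ for all $\xi$, so the straight-line homotopy $(1-t)\tilde\gamma + t\tilde\eta$ stays in $\bbR^2 \setminus \{x\}$. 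Hence $\tilde\gamma$ and $\tilde\eta$ are homotopic in $\bbR^2 \setminus \{x\}$, so they have the same winding number around $x$; in particular $x \in \Omega(\eta)$ as well (the winding number is $0$ on the unbounded component and $\pm1$ on $\Omega(\eta)$, and it is nonzero here).

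From this I would conclude both halves. For openness: if $\gamma \in \operatorname{PSC}(\bbR^2)$, then the winding number of $\tilde\gamma$ around $x$ is $+1$, so the same holds for every $\eta$ in the $d_\mathrm{F}$-ball of radius $\delta_0$, whence $\eta$ is positively oriented, i.e. the ball lies in $\operatorname{PSC}(\bbR^2)$. For closedness within $\operatorname{SC}(\bbR^2)$: if $\gamma \in \operatorname{SC}(\bbR^2) \setminus \operatorname{PSC}(\bbR^2)$, then $\gamma$ is negatively oriented (winding number $-1$ around $x$), and the same argument shows every $\eta$ in the ball of radius $\delta_0$ is negatively oriented, hence not in $\operatorname{PSC}(\bbR^2)$; so the complement is open in $\operatorname{SC}(\bbR^2)$.

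The only mildly delicate point — and the one I would be most careful about — is passing from closeness in $d_\mathrm{F}$ (which involves an infimum over reparametrizations $\phi$, possibly not attained) to an actual pair of parametrizations $\tilde\gamma, \tilde\eta \circ \phi$ that are uniformly within $\delta_0$: one should fix $\varepsilon < \delta_0 - d_\mathrm{F}(\gamma,\eta)$, choose $\phi$ with $\|\tilde\gamma - \tilde\eta\circ\phi\|_{L^\infty} < d_\mathrm{F}(\gamma,\eta) + \varepsilon < \delta_0$, and note that homotopy invariance of the winding number does not care that $\tilde\eta \circ \phi$ rather than $\tilde\eta$ is used (reparametrization does not change the winding number). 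Everything else is the standard fact that the winding number $\operatorname{ind}_{\tilde\gamma}(x) = \frac{1}{2\pi i}\oint_{\tilde\gamma} \frac{dz}{z-x}$ is a homotopy invariant in $\bbR^2\setminus\{x\}$, together with the Jordan curve theorem characterization of $\Omega(\gamma)$ and of orientation, which is exactly how $\operatorname{PSC}(\bbR^2)$ was defined in the excerpt. I would present this compactly, since the paper explicitly says it omits the proof.
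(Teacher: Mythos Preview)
Your proposal is correct and follows exactly the approach the paper indicates: the text explicitly says Lemma~\ref{LA.4} ``follows easily from the winding number with respect to any fixed $x\in\bbR^{2}$ being constant near any $\gamma\in\operatorname{SC}(\bbR^{2})$ with $x\notin\operatorname{im}(\gamma)$'', which is precisely your straight-line homotopy argument. The only cosmetic difference is that the paper (in a draft version of the proof) derives openness from closedness via the orientation-reversing homeomorphism $\gamma\mapsto(\xi\mapsto\gamma(-\xi))$, whereas you run the winding-number argument twice; both are equally valid.
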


\begin{lemma}\label{LA.5}
    The sets
    \begin{align*}
        S_{1}&\coloneqq \set{(\gamma_{1},\gamma_{2})\in\operatorname{PSC}(\bbR^{2})^{2}\colon
        \Omega(\gamma_{1}) \subseteq \Omega(\gamma_{2})}, \\
        S_{2}&\coloneqq \set{(\gamma_{1},\gamma_{2})\in\operatorname{PSC}(\bbR^{2})^{2}\colon
        \Omega(\gamma_{1}) \supseteq \Omega(\gamma_{2})}, \\
        S_{3}&\coloneqq \set{(\gamma_{1},\gamma_{2})\in\operatorname{PSC}(\bbR^{2})^{2}\colon
        \Omega(\gamma_{1}) \cap \Omega(\gamma_{2}) = \emptyset}
    \end{align*}
    are all closed in $\operatorname{PSC}(\bbR^{2})^{2}$.
    In particular, $S_{1}\cup S_{2}$ and $S_{3}$ are disjoint clopen subsets
    of $S_{1}\cup S_{2}\cup S_{3}$.
    Moreover,   $\{ (\gamma_{1},\gamma_{2})    \in\operatorname{PSC}(\bbR^{2})^{2}\,:\, \Delta(\gamma_{1},\gamma_{2}) > 0\} = \bigcup_{i=1}^3 {\rm int}(S_i)$.
\end{lemma}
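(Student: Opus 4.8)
The plan is to prove Lemma~\ref{LA.5} by establishing three facts in sequence: closedness of each $S_i$, disjointness of $S_1\cup S_2$ from $S_3$ (with each being clopen in the union), and finally the identity $\{(\gamma_1,\gamma_2):\Delta(\gamma_1,\gamma_2)>0\}=\bigcup_{i=1}^3\operatorname{int}(S_i)$. The key tool is the winding number: for $\gamma\in\operatorname{PSC}(\bbR^2)$ and $x\notin\operatorname{im}(\gamma)$, the winding number $n(\gamma,x)$ equals $1$ if $x\in\Omega(\gamma)$ and $0$ otherwise (by positive orientation), and $n(\cdot,x)$ is locally constant on $\{\gamma:x\notin\operatorname{im}(\gamma)\}$ with respect to $d_{\mathrm F}$ since it depends continuously on the parametrization away from $x$.

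First I would prove closedness of $S_1$ (the others being identical up to swapping or complementing). Suppose $(\gamma_{1,n},\gamma_{2,n})\to(\gamma_1,\gamma_2)$ in $\operatorname{PSC}(\bbR^2)^2$ with $\Omega(\gamma_{1,n})\subseteq\Omega(\gamma_{2,n})$ for all $n$, and suppose for contradiction that $\Omega(\gamma_1)\not\subseteq\Omega(\gamma_2)$. Then either there is $x\in\Omega(\gamma_1)\setminus\overline{\Omega(\gamma_2)}$, or $\operatorname{im}(\gamma_1)$ meets $\Omega(\gamma_2)$ (the two regions of a pair of disjoint-interior configurations cannot have one boundary strictly inside the other region while the interiors nest). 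In the first case, pick a small ball $B$ around $x$ disjoint from $\operatorname{im}(\gamma_1)\cup\overline{\Omega(\gamma_2)}$; for large $n$, $d_{\mathrm F}(\gamma_{i,n},\gamma_i)$ is small enough that $B$ remains disjoint from $\operatorname{im}(\gamma_{1,n})\cup\operatorname{im}(\gamma_{2,n})$, so the winding numbers are preserved, giving $x\in\Omega(\gamma_{1,n})$ but $x\notin\Omega(\gamma_{2,n})$, a contradiction. In the second case, one takes $x\in\operatorname{im}(\gamma_1)\cap\Omega(\gamma_2)$, a nearby point $x'\in\Omega(\gamma_1)$ (possible since $x$ is a boundary point of $\Omega(\gamma_1)$ and $\Omega(\gamma_2)$ is open, so $x'\in\Omega(\gamma_1)\cap\Omega(\gamma_2)$), but one also needs a point of $\Omega(\gamma_1)\setminus\overline{\Omega(\gamma_2)}$ — this exists because $\operatorname{im}(\gamma_1)$ crossing into $\Omega(\gamma_2)$ forces, by connectedness of $\operatorname{im}(\gamma_1)$ and the fact that $\Omega(\gamma_2)$ is bounded, part of $\Omega(\gamma_1)$ to lie outside $\overline{\Omega(\gamma_2)}$ (otherwise $\overline{\Omega(\gamma_1)}\subseteq\overline{\Omega(\gamma_2)}$, contradicting that $\operatorname{im}(\gamma_1)$ enters the open set $\Omega(\gamma_2)$ and the nesting is strict along the limit); then the first-case argument applies. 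For $S_3$: if $\Omega(\gamma_1)\cap\Omega(\gamma_2)\neq\emptyset$ in the limit, pick $x$ in the intersection, which is open, take $B\subseteq\Omega(\gamma_1)\cap\Omega(\gamma_2)$ around $x$ disjoint from both images, and preserve winding numbers to contradict $\Omega(\gamma_{1,n})\cap\Omega(\gamma_{2,n})=\emptyset$.

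Next, disjointness: $S_1\cup S_2$ and $S_3$ are disjoint because if $\Omega(\gamma_1)\subseteq\Omega(\gamma_2)$ (or vice versa) then $\Omega(\gamma_1)\cap\Omega(\gamma_2)=\Omega(\gamma_1)\neq\emptyset$ (bounded interiors of nontrivial simple closed curves are nonempty). Since both $S_1\cup S_2$ and $S_3$ are closed in $\operatorname{PSC}(\bbR^2)^2$ by the first part, they are closed in $S_1\cup S_2\cup S_3$, and being disjoint with union equal to $S_1\cup S_2\cup S_3$, each is also open in it, hence clopen. Finally, for the displayed identity: the inclusion $\bigcup\operatorname{int}(S_i)\subseteq\{\Delta>0\}$ follows because $\{(\gamma_1,\gamma_2):\Delta(\gamma_1,\gamma_2)>0\}$ is open (by Lemma~\ref{L4.4}-type continuity, or more simply because $\Delta$ is continuous in $d_{\mathrm F}$ as the minimum distance between two compact sets varying continuously in Hausdorff distance, and $d_{\mathrm H}\le d_{\mathrm F}$) and each $S_i$ on which $\Delta>0$ already forces $\Delta>0$ — more precisely, if $(\gamma_1,\gamma_2)\in\operatorname{int}(S_i)$ then a whole $d_{\mathrm F}$-neighborhood lies in $S_i$, and I claim $\Delta(\gamma_1,\gamma_2)>0$: if $\Delta=0$ then $\operatorname{im}(\gamma_1)\cap\operatorname{im}(\gamma_2)\neq\emptyset$, and one can produce a small perturbation of $\gamma_1$ pushing a touching point across $\operatorname{im}(\gamma_2)$ (using Lemma~\ref{L3.7}-style deformation, or a local bump) that exits $S_i$ — for $S_1$, push the touching arc outward of $\Omega(\gamma_2)$; for $S_3$, push it inward — contradicting that the neighborhood lies in $S_i$. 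Conversely, if $\Delta(\gamma_1,\gamma_2)>0$ then $\operatorname{im}(\gamma_1)$ and $\operatorname{im}(\gamma_2)$ are disjoint compact sets, so exactly one of three mutually exclusive topological configurations holds: $\operatorname{im}(\gamma_1)\subseteq\Omega(\gamma_2)$, $\operatorname{im}(\gamma_2)\subseteq\Omega(\gamma_1)$, or $\operatorname{im}(\gamma_1)\subseteq\bbR^2\setminus\overline{\Omega(\gamma_2)}$ and $\operatorname{im}(\gamma_2)\subseteq\bbR^2\setminus\overline{\Omega(\gamma_1)}$ — placing $(\gamma_1,\gamma_2)$ in $S_1$, $S_2$, or $S_3$ respectively; and since $\Delta>0$ is an open condition under which this configuration is stable (small $d_{\mathrm F}$-perturbations keep images disjoint and hence keep the same configuration, by the winding-number argument again), the pair lies in $\operatorname{int}(S_i)$ for the appropriate $i$.

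The main obstacle will be the second, subtler closedness case in the argument for $S_1$ (and $S_2$) — handling a limiting pair where $\operatorname{im}(\gamma_1)$ touches or enters $\Omega(\gamma_2)$ — which requires a careful topological argument (Jordan curve theorem plus connectedness) to extract a point of $\Omega(\gamma_1)$ lying strictly outside $\overline{\Omega(\gamma_2)}$, so that a winding-number comparison yields the contradiction. The remaining pieces (the $S_3$ closedness, disjointness, and the final identity) are comparatively routine once the winding-number stability lemma is in hand. Since the paper explicitly says it omits this proof, in the actual writeup I would state the winding-number stability as a one-line remark and give only the skeleton above; here I record the full plan for completeness.
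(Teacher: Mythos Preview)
Your proposal is correct and follows the same winding-number approach the paper indicates (the paper omits the proof, stating only that it follows from local constancy of the winding number). One simplification: your ``second case'' in the $S_1$ closedness argument is vacuous, since $\Omega(\gamma_1)\not\subseteq\Omega(\gamma_2)$ together with $\Omega(\gamma_2)=\operatorname{int}\bigl(\overline{\Omega(\gamma_2)}\bigr)$ (Jordan domains are regular open) and openness of $\Omega(\gamma_1)$ already forces $\Omega(\gamma_1)\not\subseteq\overline{\Omega(\gamma_2)}$, so a point $x\in\Omega(\gamma_1)\setminus\overline{\Omega(\gamma_2)}$ always exists and the winding-number contradiction applies directly---no careful topological extraction is needed.
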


Recall now that $\mathcal{L}$ is a fixed finite set of patch indices and
$Q(z)=\frac{1}{m(\theta)}\sum_{\lambda\in\mathcal{L}}
\abs{\theta^{\lambda}}\norm{z^{\lambda}}_{\dot{H}^{2}}^{2}$, with
$\theta^\lambda\in \bbR\setminus\set{0}$ the patch strength 
and $m(\theta)= \min_{\lambda\in\mathcal{L}}\abs{\theta^{\lambda}}$, is lower semi-continuous by Corollary~\ref{CA.3}.

\begin{lemma}\label{LA.6}
    For any $\tht\in(\bbR\setminus\set{0})^{\mathcal{L}}$ and
    $\lambda\in\mathcal{L}$, the functional
    \[
        M^{\lambda}(z) \coloneqq \Delta_{1/Q(z)}(z^{\lambda})
    \]
    is upper semicontinuous on $\operatorname{CC}(\bbR^{2})^{\mathcal{L}}$.
\end{lemma}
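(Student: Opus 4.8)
\textbf{Proof proposal for Lemma~\ref{LA.6}.}

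The plan is to show directly that if $z_n\to z$ in $\operatorname{CC}(\bbR^2)^{\mathcal L}$, then $\limsup_{n\to\infty} M^\lambda(z_n)\le M^\lambda(z)$. We may assume $M^\lambda(z)<\infty$; in particular each $z^{\lambda'}$ is rectifiable, and by lower semicontinuity of $\norm{\cdot}_{\dot H^2}$ (Corollary~\ref{CA.3}) we have $\norm{z}_{\dot H^2}\le\liminf_n\norm{z_n}_{\dot H^2}$, hence $Q(z)\le\liminf_n Q(z_n)$. We may also assume that along a subsequence realizing the $\limsup$, the quantities $M^\lambda(z_n)$, $Q(z_n)$, and $\ell(z_n^\lambda)$ all converge (the first to the $\limsup$, the latter to $\ell(z^\lambda)$ by continuity of length along convergent sequences in this setting — or by passing to a further subsequence using that these are bounded, which follows from $d_{\mathrm F}$-convergence together with the $\dot H^2$ bound forcing boundedness of length). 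Write $Q_\infty:=\lim_n Q(z_n)\in[Q(z),\infty]$. The key tension is that the ``window'' $h_n:=1/Q(z_n)$ over which the minimum defining $\Delta_{h_n}(z_n^\lambda)$ is taken may shrink relative to $h:=1/Q(z)$, which is the reason the functional is only upper (not lower) semicontinuous.

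The main step is to extract a near-optimal pair. For each $n$ (in the subsequence) pick arclength parametrizations and parameters $s_n,s_n'\in\ell(z_n^\lambda)\bbT$ with $|s_n-s_n'|\in[h_n,\tfrac{\ell(z_n^\lambda)}2]$ and $M^\lambda(z_n)=|z_n^\lambda(s_n)-z_n^\lambda(s_n')|$. Using that the $z_n^\lambda$ converge uniformly (as curves, after choosing compatible constant-speed representatives and reparametrizing to arclength — here one invokes the $C^1$-precompactness argument from the proof of Lemma~\ref{LA.2}, which gives uniform convergence of arclength parametrizations up to reparametrization), pass to a further subsequence so that $(s_n,s_n')$ converges, in the corresponding arclength parameter of $z^\lambda$, to some $(s_*,s_*')$. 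Then $M^\lambda(z_n)\to|z^\lambda(s_*)-z^\lambda(s_*')|$ and, taking limits in $|s_n-s_n'|\in[h_n,\tfrac{\ell(z_n^\lambda)}2]$ together with $h_n\to 1/Q_\infty\le 1/Q(z)=h$, we get $|s_*-s_*'|\in[1/Q_\infty,\tfrac{\ell(z^\lambda)}2]$. The issue is that $1/Q_\infty$ may be strictly smaller than $h$, so $(s_*,s_*')$ need not be admissible in the minimum defining $\Delta_h(z^\lambda)=M^\lambda(z)$. To handle this I would split into two cases. If $|s_*-s_*'|\ge h$, then $(s_*,s_*')$ is admissible and $\limsup_n M^\lambda(z_n)=|z^\lambda(s_*)-z^\lambda(s_*')|\ge\Delta_h(z^\lambda)=M^\lambda(z)$ — wait, that is the wrong direction, so in this case one instead argues directly: $\limsup_n M^\lambda(z_n)=|z^\lambda(s_*)-z^\lambda(s_*')|$, and since this pair is admissible for $\Delta_h(z^\lambda)$, we get $\limsup_n M^\lambda(z_n)\ge M^\lambda(z)$, which does \emph{not} suffice. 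The correct route is the opposite extraction: one must bound $M^\lambda(z_n)$ \emph{above} by a quantity converging to $M^\lambda(z)$.

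So the actual argument proceeds by taking an \emph{optimal} pair for $z$ and perturbing it. Fix arclength parametrizations and $\bar s,\bar s'\in\ell(z^\lambda)\bbT$ with $|\bar s-\bar s'|\in[h,\tfrac{\ell(z^\lambda)}2]$ and $|z^\lambda(\bar s)-z^\lambda(\bar s')|=M^\lambda(z)$. By Lemma~\ref{L3.8} (applicable since $Q(z)\ge\norm{z^\lambda}_{\dot C^{1,1/2}}^2$, so $h=1/Q(z)\le\norm{z^\lambda}_{\dot C^{1,1/2}}^{-2}$) we may moreover assume $|\bar s-\bar s'|>h$ whenever $M^\lambda(z)<\tfrac58 h$; if instead $M^\lambda(z)\ge\tfrac58 h$ a cruder bound suffices. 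Now use uniform convergence of arclength parametrizations: there are $\bar s_n,\bar s_n'$ in $\ell(z_n^\lambda)\bbT$ with $\bar s_n\to\bar s$, $\bar s_n'\to\bar s'$ and $|z_n^\lambda(\bar s_n)-z_n^\lambda(\bar s_n')|\to M^\lambda(z)$. Since $h_n=1/Q(z_n)$ and $Q(z)\le\liminf_n Q(z_n)$ give $\limsup_n h_n\le h< |\bar s-\bar s'|$, for all large $n$ we have $|\bar s_n-\bar s_n'|\ge h_n$; and $|\bar s_n-\bar s_n'|\le\tfrac{\ell(z_n^\lambda)}2$ holds for large $n$ because $|\bar s-\bar s'|\le\tfrac{\ell(z^\lambda)}2$ and $\ell(z_n^\lambda)\to\ell(z^\lambda)$ (with a strict-inequality adjustment, again via Lemma~\ref{L3.8}, in the boundary case $|\bar s-\bar s'|=\tfrac{\ell(z^\lambda)}2$, or by noting one of the two arcs between the points has length $\le\tfrac{\ell}2-\text{const}$). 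Hence $(\bar s_n,\bar s_n')$ is admissible for $\Delta_{h_n}(z_n^\lambda)$, giving $M^\lambda(z_n)=\Delta_{h_n}(z_n^\lambda)\le|z_n^\lambda(\bar s_n)-z_n^\lambda(\bar s_n')|\to M^\lambda(z)$, so $\limsup_n M^\lambda(z_n)\le M^\lambda(z)$, as desired. The main obstacle, and the point requiring the most care, is precisely the window mismatch $h_n$ vs.\ $h$ together with the need for \emph{strict} inequalities at the endpoints $h$ and $\tfrac{\ell}2$ of the admissible range — this is exactly what Lemma~\ref{L3.8} is designed to provide, and it must be invoked (with $\beta=\tfrac12$) in the regime where $M^\lambda(z)$ is small; the complementary regime $M^\lambda(z)$ bounded below is handled by elementary continuity of length and of the parametrizations, using \eqref{2.2} to control lengths in terms of $\norm{\cdot}_{\dot H^2}$ where needed.
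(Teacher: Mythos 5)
Your overall strategy is the same as the paper's: fix a minimizing pair for the limit curve $z^\lambda$, transfer it to the approximating curves via $C^1$-convergence of constant-speed/arclength parametrizations (as in the proof of Lemma~\ref{LA.2}), and use lower semicontinuity of $Q$ (Corollary~\ref{CA.3}), so that $\limsup_n 1/Q(z_n)\le 1/Q(z)$, to make the transferred pair admissible for $\Delta_{1/Q(z_n)}(z_n^\lambda)$. The gap is in how you force \emph{strict} admissibility. Lemma~\ref{L3.8} with $\beta=\tfrac12$ only guarantees that a minimizing pair has separation strictly greater than $h$ when $\Delta_h(z^\lambda)<\tfrac34 h$ (your threshold $\tfrac58 h$ is a harmless under-shoot of this). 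In the complementary regime $\Delta_h(z^\lambda)\in[\tfrac34 h,\,h)$ — which is the generic situation, e.g.\ a large round circle, where the minimum is attained exactly at separation $h$ — your argument fails: the optimal pair may sit at $|\bar s-\bar s'|=h$ exactly, and if $Q(z_n)\to Q(z)$ then $h_n\to h$ while $|\bar s_n-\bar s_n'|$ can approach $h$ from below, so the admissibility $|\bar s_n-\bar s_n'|\ge h_n$ is not guaranteed for large $n$. The only "cruder bound" available there is \eqref{2.2}, i.e.\ $M^\lambda(z_n)\le h_n$, which yields $\limsup_n M^\lambda(z_n)\le h$; since $M^\lambda(z)$ can be strictly smaller than $h$ in this regime, that does not close the argument. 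The paper sidesteps the whole dichotomy by \emph{enlarging} the separation: it takes the optimal pair, moves one endpoint by $\eps>0$ in the constant-speed parameter, checks that this strictly enlarged pair is admissible for $\Delta_{1/Q(z_n)}(z_n^\lambda)$ for all large $n$, passes to the limit in $n$, and only then sends $\eps\to0^+$ using continuity of the parametrization. That $\eps$-perturbation, valid uniformly in all regimes, is the missing step in your write-up.

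Two secondary points. The endpoint case $|\bar s-\bar s'|=\ell(z^\lambda)/2$ is not what Lemma~\ref{L3.8} addresses; the right tool is $1/Q(z_n)\le\ell(z_n^\lambda)/4$ (from Lemma~\ref{L3.1} together with $Q(z_n)\ge\norm{z_n^\lambda}_{\dot C^{1,1/2}}^2$), which lets you measure the perturbed pair along the complementary arc when its separation exceeds half the length — exactly as the paper does. Also, your opening extraction of near-optimal pairs for the $z_n$, which you yourself note produces the wrong inequality, should simply be deleted, and the trivial-curve and $\limsup_n Q(z_n)=\infty$ cases should be disposed of at the outset via \eqref{2.2}.
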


\textit{Remark}. The proof below equally applies when $Q(z)$ is replaced by
$\norm{z^{\lambda}}_{\dot{H}^{2}}^{2}$ (or any lower semicontinuous functional
bounded below by $\norm{z^{\lambda}}_{\dot{H}^{2}}^{2}$),
so the lemma also holds in that setting.

\begin{proof}
    It is enough to show that for any $z\in\operatorname{CC}(\bbR^{2})^{\mathcal{L}}$
    and a sequence $\seq{z_{n}}_{n=1}^{\infty}$ converging to $z$  in
    $\operatorname{CC}(\bbR^{2})^{\mathcal{L}}$ and composed of non-trivial curves,
    \[
        \lim_{n\to\infty}\Delta_{1/Q(z_{n})}(z_{n}^{\lambda})
        \leq M^{\lambda}(z)
    \]
    holds whenever the limit exists.  We may also assume that  $\limsup_{n\to\infty}Q(z_{n}) < \infty$
    because otherwise the conclusion is trivial due to \eqref{2.2}, and after passing to a subsequence 
    we can even assume that $\exists \lim_{n\to\infty}Q(z_{n}) < \infty$.
    Letting
    $\tilde{\gamma}_{n}\colon\bbT \to \bbR^{2}$ be any constant-speed parametrization of $z_{n}^{\lambda}$,
    from \eqref{2.1} we see that
    \[
        \sup_{n\in\bbN}\norm{\partial_{\xi}^{2}\tilde{\gamma}_{n}}_{L^{2}(\bbT)}
        = \sup_{n\in\bbN}\ell(z_{n}^{\lambda})^{3/2}\norm{z_{n}^{\lambda}}_{\dot{H}^{2}}
        \leq \sup_{n\in\bbN}\norm{z_{n}^{\lambda}}_{L^{\infty}}^{3}
        \norm{z_{n}^{\lambda}}_{\dot{H}^{2}}^{4} < \infty.
    \]
    By passing to a further subsequence, we can therefore assume that
    $\seq{\tilde{\gamma}_{n}}_{n=1}^{\infty}$ converges to some constant-speed parametrization 
    $\tilde{\gamma}$ of $z^{\lambda}$ in $C^{1}(\bbT;\bbR^{2})$, as in the proof of Lemma~\ref{LA.2}. 

    If $z^{\lambda}$ is trivial, then this $C^{1}$ convergence shows that
    \[
        \lim_{n\to\infty}\Delta_{1/Q(z_{n})}(z_{n}^{\lambda})
        \leq \liminf_{n\to\infty}\frac{\ell(z_{n}^{\lambda})}{2}
        = \frac{\ell(z^{\lambda})}{2} = 0.
    \]
    So assume otherwise and take any $\xi,\xi'\in\bbT$ such that
    $\frac{1}{\ell(z^{\lambda})Q(z)} \leq \abs{\xi - \xi'} \leq \frac{1}{2}$ and
    \[
        \Delta_{1/Q(z)}(z^{\lambda})
        = \abs{\tilde{\gamma}(\xi) - \tilde{\gamma}(\xi')}.
    \]
    Without loss of generality
    assume that $\xi' = 0\le \xi \le\frac{1}{2}$.
    Since $Q(z) \leq \lim_{n\to\infty}Q(z_{n})$ holds by
    Corollary~\ref{CA.3}, for any given $\eps\in\left(0,\frac{1}{4}\right]$ and all large enough $n$ we have
    \[
        \frac{1}{\ell(z_{n}^{\lambda})Q(z_{n})}
        \leq \xi + \eps \leq \frac{1}{2} + \eps.
    \]
    If $\xi + \eps \leq \frac{1}{2}$, then 
    \[
        \Delta_{1/Q(z_{n})}(z_{n}^{\lambda})
        \leq \abs{\tilde{\gamma}_{n}(\xi + \eps) - \tilde{\gamma}_{n}(0)}.
    \]
    Otherwise
    \begin{align*}
        \frac{1}{2} > 1 - (\xi + \eps)
        \geq \frac{1}{2} - \eps \geq \frac{1}{\ell(z_{n}^{\lambda})Q(z_{n})}
    \end{align*}
   because Lemma~\ref{L3.1} shows that $\frac{1}{Q(z_{n})} \leq \frac{\ell(z_{n}^{\lambda})}{4}$, so again we have
    \[
        \Delta_{1/Q(z_{n})}(z_{n}^{\lambda})
        \leq \abs{\tilde{\gamma}_{n}(1) - \tilde{\gamma}_{n}(\xi + \eps)}
        = \abs{\tilde{\gamma}_{n}(\xi + \eps) - \tilde{\gamma}_{n}(0)}.
    \]
    Sending $n\to\infty$ and then $\eps\to 0^{+}$ now gives
    \[
        \lim_{n\to\infty}\Delta_{1/Q(z_{n})}(z_{n}^{\lambda}) \le \abs{\tilde{\gamma}(\xi) - \tilde{\gamma}(0)}
        = \Delta_{1/Q(z)}(z^{\lambda}),
    \]
      as desired.
\end{proof}

\begin{lemma}\label{LA.7}
    For any $\tht\in(\bbR\setminus\set{0})^{\mathcal{L}}$,
    the functional $L\colon\operatorname{PSC}(\bbR^{2})^{\mathcal{L}}\to [0,\infty]$    from \eqref{111.23} is lower semicontinuous.
\end{lemma}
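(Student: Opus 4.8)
\textbf{Proof plan for Lemma~\ref{LA.7}.}

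The goal is to show that $L$, as defined in \eqref{111.23}, is lower semicontinuous on $\operatorname{PSC}(\bbR^{2})^{\mathcal{L}}$. Since $L$ is the maximum of $2Q(z)$ and a finite collection of terms of the form $\Delta_{1/Q(z)}(z^\lambda)^{-1}$ and $\Delta(z^\lambda,z^{\lambda'})^{-1}$ (the latter only over $\lambda'\notin\Sigma^\lambda(z)$), and since a finite maximum of lower semicontinuous functions is lower semicontinuous, the plan is to establish lower semicontinuity of each constituent separately. The only subtlety is that the index set of the inner maximum, namely the pairs $(\lambda,\lambda')$ with $\lambda'\notin\Sigma^\lambda(z)$, is itself $z$-dependent, so I must argue that this does not destroy lower semicontinuity.

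First, $2Q(z)$ is lower semicontinuous by Corollary~\ref{CA.3} (it is a positive linear combination of the lower semicontinuous functionals $\norm{z^\lambda}_{\dot H^2}^2$), and $Q$ is $[0,\infty]$-valued so $Q(z)<\infty$ is an open condition from below in the relevant sense. Second, for each fixed $\lambda$, the functional $z\mapsto \Delta_{1/Q(z)}(z^\lambda)$ is upper semicontinuous by Lemma~\ref{LA.6}, and since it is nonnegative, $z\mapsto \Delta_{1/Q(z)}(z^\lambda)^{-1}$ (with $0^{-1}=\infty$) is lower semicontinuous; here I would note that $\Delta_h(\gamma)$ is well-defined since $Q(z)\ge \norm{z^\lambda}_{\dot C^{1,1/2}}^2$, as observed after \eqref{111.50}. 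Third, for each fixed pair $\lambda\neq\lambda'$, the map $z\mapsto \Delta(z^\lambda,z^{\lambda'})$ is upper semicontinuous (indeed continuous on any set where the relevant $L^\infty$-bounds and $\dot H^2$-bounds hold, by a compactness argument as in Lemma~\ref{LA.2} together with continuity of $(\xi_1,\xi_2)\mapsto |\tilde\gamma_1(\xi_1)-\tilde\gamma_2(\xi_2)|$; more simply, $\Delta$ is always $1$-Lipschitz with respect to $d_{\mathrm{F}}$ and hence continuous), so $z\mapsto\Delta(z^\lambda,z^{\lambda'})^{-1}$ is lower semicontinuous.

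The main point to handle carefully is the $z$-dependence of $\Sigma^\lambda(z)$ in the term $\max_{\lambda'\notin\Sigma^\lambda(z)}\Delta(z^\lambda,z^{\lambda'})^{-1}$. The key observation is that if $\lambda'\in\Sigma^\lambda(z_0)$ for the limit point $z_0$, then either the patch strengths have $\theta^\lambda\theta^{\lambda'}>0$ and one of $\Omega(z_0^\lambda),\Omega(z_0^{\lambda'})$ contains the other, or $\theta^\lambda\theta^{\lambda'}<0$ and $\Omega(z_0^\lambda)\cap\Omega(z_0^{\lambda'})=\emptyset$; in each case Lemma~\ref{LA.5} shows that $(z_0^\lambda,z_0^{\lambda'})$ lies in $S_1\cup S_2$ or in $S_3$, which are clopen in $S_1\cup S_2\cup S_3$. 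Consequently, for any sequence $z_n\to z_0$ in $\operatorname{PSC}(\bbR^2)^{\mathcal L}$ (when $L(z_0)<\infty$, which forces all the relevant $(z_0^\lambda,z_0^{\lambda'})$ to lie in the interiors of $S_1,S_2$ or $S_3$ by the last claim in Lemma~\ref{LA.5}), we have $\lambda'\in\Sigma^\lambda(z_n)$ for all large $n$ as well; hence along the tail of the sequence the index set of the inner maximum for $z_n$ is contained in that for $z_0$, so no term present in $L(z_0)$ is missing from $L(z_n)$ for large $n$. When $L(z_0)=\infty$ the conclusion $\liminf_n L(z_n)\ge L(z_0)$ is automatic, so it suffices to treat $L(z_0)<\infty$. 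Combining this stabilization of the index set with the pointwise lower semicontinuity of each individual $\Delta(z^\lambda,z^{\lambda'})^{-1}$, $\Delta_{1/Q(z)}(z^\lambda)^{-1}$, and $2Q(z)$, and taking the maximum, yields $\liminf_{n\to\infty}L(z_n)\ge L(z_0)$, which is exactly lower semicontinuity. I expect the index-set stabilization argument via the clopen structure of $S_1\cup S_2$ versus $S_3$ to be the only step requiring genuine care; everything else is an assembly of already-proven semicontinuity facts.
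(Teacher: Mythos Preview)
Your handling of the $z$-dependent index set is where the argument breaks down, and the direction of the implication is reversed. You claim that if $\lambda'\in\Sigma^\lambda(z_0)$ then $\lambda'\in\Sigma^\lambda(z_n)$ for large $n$, and then conclude that ``the index set of the inner maximum for $z_n$ is contained in that for $z_0$, so no term present in $L(z_0)$ is missing from $L(z_n)$.'' But that conclusion does not follow: if the index set $\{\lambda'\notin\Sigma^\lambda(z_n)\}$ is \emph{smaller} than $\{\lambda'\notin\Sigma^\lambda(z_0)\}$, then $L(z_n)$ has \emph{fewer} terms, and the terms of $L(z_0)$ coming from $\lambda'\notin\Sigma^\lambda(z_0)$ might be precisely the ones absent from $L(z_n)$. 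What lower semicontinuity actually requires is the opposite inclusion: whenever $\lambda'\notin\Sigma^\lambda(z_0)$, one must have $\lambda'\notin\Sigma^\lambda(z_n)$ eventually.

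Your justification for the (wrong-direction) implication is also flawed: $L(z_0)<\infty$ only forces $\Delta(z_0^\lambda,z_0^{\lambda'})>0$ for $\lambda'\notin\Sigma^\lambda(z_0)$, so by the last claim of Lemma~\ref{LA.5} it places \emph{those} pairs in $\bigcup_i\operatorname{int}(S_i)$; it says nothing about pairs with $\lambda'\in\Sigma^\lambda(z_0)$, which may well touch and sit on $\partial S_i$. The correct observation, and the one the paper uses, is that $A^{\lambda,\lambda'}\coloneqq\{z:\lambda'\in\Sigma^\lambda(z)\}$ is \emph{closed} (since $S_1,S_2,S_3$ are closed by Lemma~\ref{LA.5}), hence its complement is open, giving exactly the needed implication $\lambda'\notin\Sigma^\lambda(z_0)\Rightarrow\lambda'\notin\Sigma^\lambda(z_n)$ for large $n$. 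Equivalently, the paper writes $\min_{\lambda'\notin\Sigma^\lambda(z)}\Delta(z^\lambda,z^{\lambda'})=\min_{\lambda'\in\mathcal L}I^{\lambda,\lambda'}(z)\,\Delta(z^\lambda,z^{\lambda'})$ with $I^{\lambda,\lambda'}=\infty\cdot\chi_{A^{\lambda,\lambda'}}+\chi_{(A^{\lambda,\lambda'})^c}$, and closedness of $A^{\lambda,\lambda'}$ makes $I^{\lambda,\lambda'}$ upper semicontinuous; the rest of your outline (via Corollary~\ref{CA.3} and Lemma~\ref{LA.6}) is fine.
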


\begin{proof}
    By Corollary~\ref{CA.3} and Lemma~\ref{LA.6}, it suffices to prove that
    for any given $\lambda\in\mathcal{L}$, the functional
    $z\mapsto \min_{\lambda'\notin\Sigma^{\lambda}(z)}\Delta(z^{\lambda},z^{\lambda'})$
    is upper semicontinuous. For each $\lambda'\in\mathcal{L}$, let
    \[
        A^{\lambda,\lambda'}\coloneqq 
        \begin{cases}
        \set{z\in\operatorname{PSC}(\bbR^{2})^{\mathcal{L}}\colon
        \Omega(z^{\lambda}) \subseteq \Omega(z^{\lambda'}) 
        \ \textrm{or}\         \Omega(z^{\lambda}) \supseteq \Omega(z^{\lambda'})}
        & \text{if }\theta^{\lambda}\theta^{\lambda'} > 0, \\
        \set{z\in\operatorname{PSC}(\bbR^{2})^{\mathcal{L}}\colon
        \Omega(z^{\lambda}) \cap \Omega(z^{\lambda'}) = \emptyset} 
        & \text{if }\theta^{\lambda}\theta^{\lambda'} < 0.
        \end{cases}
    \]
    Also let 
    $I^{\lambda,\lambda'}(z) \coloneqq  \infty$ when
    $z\in A^{\lambda,\lambda'}$ and $I^{\lambda,\lambda'}(z) \coloneqq  1$ otherwise.
    Since 
        \[
        \min_{\lambda'\notin\Sigma^{\lambda}(z)}\Delta(z^{\lambda},z^{\lambda'})
        = \min_{\lambda'\in\mathcal{L}}
        I^{\lambda,\lambda'}(z)\Delta(z^{\lambda},z^{\lambda'})
    \]
    and $(\gamma_{1},\gamma_{2})\mapsto \Delta(\gamma_{1},\gamma_{2})$
    is clearly continuous on $\operatorname{PSC}(\bbR^{2})^2$,
    it suffices to show that each $I^{\lambda,\lambda'}$ is upper semicontinuous on
    $\operatorname{PSC}(\bbR^{2})^{\mathcal{L}}$. This follows from
    Lemma~\ref{LA.5}, which shows that $A^{\lambda,\lambda'}$ is closed there.
\end{proof}

\begin{corollary}\label{CA.8}
    For any $\tht\in(\bbR\setminus\set{0})^{\mathcal{L}}$ and $R_{0},M\in[0,\infty)$, the set
    \[
        X \coloneqq \set{z\in\operatorname{PSC}(\bbR^{2})^{\mathcal{L}} \colon
        \norm{\gamma}_{L^{\infty}} \leq R_{0}\,\,\&\,\, L(z) \leq M}
    \]
    is compact with respect to $d_{\mathrm{F}}$.
\end{corollary}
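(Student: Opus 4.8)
The plan is to deduce compactness of $X$ from Lemma~\ref{LA.2} together with lower semicontinuity of $L$ from Lemma~\ref{LA.7}, exactly as Corollary~\ref{CA.3} was deduced from Lemma~\ref{LA.2}. First I would observe that for any $z\in X$ we have $L(z)\le M$, which by the definition \eqref{111.23} of $L$ forces $2Q(z)\le M$, and hence $\norm{z^\lambda}_{\dot H^2}^2\le Q(z)\le M/2$ for every $\lambda\in\mathcal L$ (using the factor $\frac1{m(\tht)}$ in \eqref{111.50} and $\abs{\tht^\lambda}\ge m(\tht)$). Combined with $\norm{z^\lambda}_{L^\infty}\le\norm{z}_{L^\infty}\le R_0$, this shows $z^\lambda\in X_0\coloneqq\{\gamma\in\operatorname{CC}(\bbR^2):\norm{\gamma}_{L^\infty}\le R_0,\ \norm{\gamma}_{\dot H^2}\le (M/2)^{1/2}\}$ for each $\lambda$, so $X\subseteq X_0^{\mathcal L}\cap\operatorname{PSC}(\bbR^2)^{\mathcal L}$.

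Next I would use Lemma~\ref{LA.2} to conclude that $X_0$ is compact in $(\operatorname{CC}(\bbR^2),d_{\mathrm F})$, hence $X_0^{\mathcal L}$ is compact in $(\operatorname{CC}(\bbR^2)^{\mathcal L},d_{\mathrm F})$ (a finite product of compact metric spaces). So it suffices to show that $X$ is a \emph{closed} subset of $X_0^{\mathcal L}$. Take any sequence $\seq{z_n}_{n=1}^\infty$ in $X$ converging in $d_{\mathrm F}$ to some $z\in X_0^{\mathcal L}$. Each $z_n$ lies in $\operatorname{PSC}(\bbR^2)^{\mathcal L}$, and more precisely each pair $(z_n^\lambda,z_n^{\lambda'})$ with $\lambda\ne\lambda'$ satisfies $\Delta(z_n^{\lambda},z_n^{\lambda'})\ge L(z_n)^{-1}\ge M^{-1}>0$ (when $M>0$; the case $M=0$ forces $\mathcal L$ to be a singleton and $X$ to consist of curves of zero $\dot H^2$-norm, which one handles separately, or one simply notes $L(z)=\infty$ would be needed so $X=\emptyset$ unless $|\mathcal L|=1$). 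Here one must be slightly careful: $L(z_n)\le M$ does not directly bound $\Delta(z_n^\lambda,z_n^{\lambda'})$ from below for $\lambda'\in\Sigma^\lambda(z_n)$, so instead I would argue that each $z_n^\lambda$ is a genuine positive simple closed curve with $\norm{z_n^\lambda}_{\dot C^{1,1/2}}^2\le Q(z_n)<\infty$, and that the limit curves $z^\lambda$ remain simple and positively oriented. The latter is where the main work lies.

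The main obstacle is showing that the limit $z$ still lies in $\operatorname{PSC}(\bbR^2)^{\mathcal L}$ and that $L(z)\le M$. For membership in $\operatorname{PSC}(\bbR^2)^{\mathcal L}$: by Corollary~\ref{CA.3}, $\norm{z^\lambda}_{\dot H^2}\le\liminf_n\norm{z_n^\lambda}_{\dot H^2}<\infty$, so $z^\lambda$ is $H^2$, hence $C^{1,1/2}$, and in particular non-trivial (its length is bounded below via Lemma~\ref{L3.1} and the uniform $\dot H^2$-bound — alternatively, $\Delta_{1/Q(z)}(z^\lambda)$ will turn out positive below, forcing non-triviality). Simplicity of $z^\lambda$ follows from Lemma~\ref{L3.8}: since $\Delta_{h}(z_n^\lambda)\ge\Delta_{1/Q(z_n)}(z_n^\lambda)\ge L(z_n)^{-1}\ge M^{-1}$ for $h\coloneqq\norm{z_n^\lambda}_{\dot C^{1,1/2}}^{-2}\ge (M/2)^{-1}$ — wait, here I must use that $Q(z_n)$ could be larger than $M/2$; but in fact $2Q(z_n)\le L(z_n)\le M$ gives $Q(z_n)\le M/2$, so $\norm{z_n^\lambda}_{\dot C^{1,1/2}}^2\le M/2$ and $h\ge 2/M$, while $\Delta_h(z_n^\lambda)\ge\Delta_{1/Q(z_n)}(z_n^\lambda)\ge 1/L(z_n)\ge 1/M$ since $1/Q(z_n)\ge 2/M\ge h$ would be false — let me instead just invoke $\Delta_{1/Q(z_n)}(z_n^\lambda)\ge 1/L(z_n)$ directly from \eqref{111.23}, and use that $\Delta_{1/Q(z)}(z^\lambda)$ is upper semicontinuous by Lemma~\ref{LA.6} while being $\ge\limsup$ of a quantity bounded below — the clean route is: $Q(z)\le\liminf Q(z_n)\le M/2$ by Corollary~\ref{CA.3}, and then by the $C^1$-convergence of constant-speed parametrizations (as in the proof of Lemma~\ref{LA.6}) together with upper semicontinuity, one gets the needed positivity. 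Finally, for $L(z)\le M$: apply Lemma~\ref{LA.7}, which gives $L(z)\le\liminf_n L(z_n)\le M$ directly. And $\norm{z^\lambda}_{L^\infty}\le\lim_n\norm{z_n^\lambda}_{L^\infty}\le R_0$ by continuity of the $L^\infty$-norm under $d_{\mathrm F}$-convergence (which follows since the images converge in Hausdorff distance). To pin down that $z^\lambda\in\operatorname{PSC}(\bbR^2)$ rather than merely $\operatorname{SC}(\bbR^2)$, I would invoke Lemma~\ref{LA.4}: $\operatorname{PSC}(\bbR^2)$ is clopen in $\operatorname{SC}(\bbR^2)$, so a $d_{\mathrm F}$-limit of positive simple closed curves that is itself simple must be positive. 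Putting these together, $z\in X$, so $X$ is closed in the compact set $X_0^{\mathcal L}$ and therefore compact.
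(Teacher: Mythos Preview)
Your proposal is correct and follows essentially the same approach as the paper: embed $X$ into the compact set $X_0^{\mathcal L}$ via Lemma~\ref{LA.2}, then verify closedness by showing any limit $z$ lies in $\operatorname{PSC}(\bbR^2)^{\mathcal L}$ with $L(z)\le M$, using Corollary~\ref{CA.3} for $Q(z)\le M/2$, Lemma~\ref{LA.6} for $\Delta_{1/Q(z)}(z^\lambda)\ge 1/M$, Lemma~\ref{L3.8} for simplicity, Lemma~\ref{LA.4} for positivity, and Lemma~\ref{LA.7} for $L(z)\le M$. The paper's proof is exactly this, just stated more tersely. Your detours (the $M=0$ case, the attempt via $\Delta(z_n^\lambda,z_n^{\lambda'})$, and the confusion over the $h$ parameter) are unnecessary and should be trimmed; in particular, upper semicontinuity from Lemma~\ref{LA.6} directly gives $\Delta_{1/Q(z)}(z^\lambda)\ge\limsup_n\Delta_{1/Q(z_n)}(z_n^\lambda)\ge 1/M$, which is all you need to invoke Lemma~\ref{L3.8}.
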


\begin{proof}
    By Lemmas \ref{LA.2} and \ref{LA.7}, it suffices to prove that
    if a sequence $\seq{z_{n}}_{n=1}^{\infty}$ in $X$ converges to some
    $z\in\operatorname{CC}(\bbR^{2})^{\mathcal{L}}$, then
    $z\in\operatorname{PSC}(\bbR^{2})^{\mathcal{L}}$.
    Note that Lemmas~\ref{LA.2} and \ref{LA.6} show that
    \[
        Q(z) \leq \frac{M}{2}
        \qquad\textrm{and}\qquad
        \min_{\lambda\in\mathcal{L}}\Delta_{1/Q(z)}(z^{\lambda})
        \geq \frac{1}{M}.
    \]
    Lemma~\ref{L3.8} then shows that each $z^{\lambda}$ is a simple closed curve,
    so $z\in\operatorname{PSC}(\bbR^{2})^{\mathcal{L}}$ follows by Lemma~\ref{LA.4}.
\end{proof}

\section{Proof of Lemma~\ref{L8.1}}\label{S9}

We start by collecting some basic estimates
on the convolution of a function with a bump function.
For $f\in L^1(\ell\bbT)$
and $\sigma\in C_c(\bbR)$, we extend $f$ periodically on $\bbR$ and for $x\in \ell\bbT$ let 
\beq\lb{111.22}
    (\sigma * f)( x) \coloneqq  \int_{\bbR}f(x - y)\sigma(y)\,dy.
\eeq

\begin{lemma}\label{L9.1}
    Let $\ell_{1},\ell_{2}\in(0,\infty)$ and $\sigma\colon\bbR\to[0,\infty)$ be
    a smooth even function with $\operatorname{supp}(\sigma) \subseteq [-\ell_{1},\ell_{1}]$
    and $\int_{\bbR}\sigma(x)\,dx = 1$. Let $p\in[1,\infty]$ and 
 $\sigma_{r}(x)\coloneqq \frac{1}{r}\sigma(\frac xr)$ for $r\in(0,\infty)$.  
    \begin{enumerate}
        \item[(a)] $\norm{\sigma_{r} * f}_{L^{p}(\ell_{2}\bbT)} \leq
        \norm{f}_{L^{p}(\ell_{2}\bbT)}$
        for any $f\in L^{p}(\ell_{2}\bbT)$.

        \item[(b)] $\norm{\sigma_{r} * f}_{L^{\infty}(\ell_{2}\bbT)}
        \leq r^{-1/2} \left\lceil\frac{2r\ell_{1}}{\ell_{2}}\right\rceil^{1/2}
        \norm{\sigma}_{L^{2}(\bbR)}\norm{f}_{L^{2}(\ell_{2}\bbT)}$
        for any $f\in L^{2}(\ell_{2}\bbT)$.

        \item[(c)] $\norm{\sigma_{r}*f - f}_{L^{\infty}(\ell_{2}\bbT)} \leq
        r^{\beta}\ell_{1}^{\beta}\norm{f}_{\dot{C}^{0,\beta}(\ell_{2}\bbT)}$
        for any $\beta\in(0,1]$ and $f\in C^{0,\beta}(\ell_{2}\bbT)$.

        \item[(d)] $\norm{\sigma_{r}*f - f}_{L^{p}(\ell_{2}\bbT)} \leq
        r\ell_{1}\norm{\partial_{s}f}_{L^{p}(\ell_{2}\bbT)}$
        for any $f\in W^{1,p}(\ell_{2}\bbT)$.

        \item[(e)] $\norm{\sigma_{r}*f - f}_{L^{p}(\ell_{2}\bbT)} \leq
        \frac{r^{2}\ell_{1}^{2}}{2}\norm{\partial_{s}^{2}f}_{L^{p}(\ell_{2}\bbT)}$
        for any $f\in W^{2,p}(\ell_{2}\bbT)$.
    \end{enumerate}
\end{lemma}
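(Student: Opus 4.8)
The plan is to derive all five bounds from a handful of elementary facts about $\sigma_{r}$: it is nonnegative, $\int_{\bbR}\sigma_{r}=1$, $\operatorname{supp}\sigma_{r}\subseteq[-r\ell_{1},r\ell_{1}]$, and $\sigma_{r}$ is even (the evenness being needed only for (e)); together with the key consequence of periodicity that $\norm{f(\,\cdot\,-y)}_{L^{p}(\ell_{2}\bbT)}=\norm{f}_{L^{p}(\ell_{2}\bbT)}$ for every $y\in\bbR$. For parts (a), (c), (d), and (e) I would begin from the representation $(\sigma_{r}*f)(x)-f(x)=\int_{\bbR}(f(x-y)-f(x))\sigma_{r}(y)\,dy$, valid since $\int\sigma_{r}=1$, and then apply Minkowski's integral inequality together with a pointwise bound on $f(x-y)-f(x)$ suited to the regularity class in question (for $p=\infty$ these reduce to plain supremum estimates). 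For part (b) I would instead apply the Cauchy--Schwarz inequality directly to the convolution integral.

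For (a), Minkowski's integral inequality (equivalently, Jensen, using that $\sigma_{r}$ is a probability density) gives $\norm{\sigma_{r}*f}_{L^{p}(\ell_{2}\bbT)}\leq\int_{\bbR}\sigma_{r}(y)\norm{f(\,\cdot\,-y)}_{L^{p}(\ell_{2}\bbT)}\,dy=\norm{f}_{L^{p}(\ell_{2}\bbT)}$, and the case $p=\infty$ is immediate. For (b), Cauchy--Schwarz yields $|(\sigma_{r}*f)(x)|\leq\norm{\sigma_{r}}_{L^{2}(\bbR)}\big(\int_{-r\ell_{1}}^{r\ell_{1}}|f(x-y)|^{2}\,dy\big)^{1/2}$, where the scaling $\sigma_{r}(x)=r^{-1}\sigma(x/r)$ gives $\norm{\sigma_{r}}_{L^{2}(\bbR)}=r^{-1/2}\norm{\sigma}_{L^{2}(\bbR)}$; the integration window, an interval of length $2r\ell_{1}$, is then covered by $\lceil 2r\ell_{1}/\ell_{2}\rceil$ intervals of length at most $\ell_{2}$, each of which contributes at most $\norm{f}_{L^{2}(\ell_{2}\bbT)}^{2}$ by periodicity, producing the stated constant after taking the supremum over $x$.

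For (c), I would use $|f(x-y)-f(x)|\leq\norm{f}_{\dot{C}^{0,\beta}(\ell_{2}\bbT)}|y|^{\beta}$ (the circular distance on $\ell_{2}\bbT$ never exceeds $|y|$) together with $\int_{\bbR}|y|^{\beta}\sigma_{r}(y)\,dy\leq(r\ell_{1})^{\beta}$. For (d), write $f(x-y)-f(x)=-\int_{0}^{y}\partial_{s}f(x-t)\,dt$, so that Minkowski's integral inequality and periodicity give $\norm{\sigma_{r}*f-f}_{L^{p}(\ell_{2}\bbT)}\leq\big(\int_{\bbR}\sigma_{r}(y)|y|\,dy\big)\norm{\partial_{s}f}_{L^{p}(\ell_{2}\bbT)}\leq r\ell_{1}\norm{\partial_{s}f}_{L^{p}(\ell_{2}\bbT)}$. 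For (e), evenness of $\sigma_{r}$ gives $\int_{\bbR}y\,\sigma_{r}(y)\,dy=0$, whence $(\sigma_{r}*f-f)(x)=\int_{\bbR}\big(f(x-y)-f(x)+y\,\partial_{s}f(x)\big)\sigma_{r}(y)\,dy$; Taylor's formula with integral remainder (applied to $s\mapsto f(x-s)$ and rescaled to $[0,1]$) yields $f(x-y)-f(x)+y\,\partial_{s}f(x)=y^{2}\int_{0}^{1}(1-s)\,\partial_{s}^{2}f(x-ys)\,ds$, and then Minkowski together with $\int_{\bbR}y^{2}\sigma_{r}(y)\,dy\leq(r\ell_{1})^{2}$ and $\int_{0}^{1}(1-s)\,ds=\tfrac12$ gives the constant $\tfrac{(r\ell_{1})^{2}}{2}$.

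These are all routine, so there is no genuine obstacle; the only points demanding a little care are the bookkeeping in (b) — counting the covering of the window when $2r\ell_{1}>\ell_{2}$, which is exactly what the ceiling records — and, in (c), reading $\norm{f}_{\dot{C}^{0,\beta}(\ell_{2}\bbT)}$ through the circular distance so that the Hölder bound on $f(x-y)-f(x)$ remains valid for all $y\in\bbR$ after periodic extension.
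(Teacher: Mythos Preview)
Your argument is correct and matches the paper's approach exactly: the paper's proof simply cites Jensen's inequality, Cauchy--Schwarz, Taylor expansion, and the evenness of $\sigma$, leaving the details to the reader, and you have supplied precisely those details. There is nothing to add.
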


\begin{proof}
    These follow easily from Jensen's inequality, Cauchy-Schwarz inequality,
    Taylor expansion, and evenness of $\sigma$.  The details are left to the reader.
\end{proof}

We will also use the following interpolation-type estimate.

\begin{lemma}\label{L9.2}
    For any $\ell\in(0,\infty)$ and $f\in H^{1}(\ell\bbT)$,
    \begin{align*}
        \norm{f}_{L^{\infty}(\ell\bbT)}
        &\leq \frac{1}{\ell}\abs{\int_{\ell\bbT}f(s)\,ds}
        + \norm{\partial_{s}f}_{L^{2}(\ell\bbT)}^{1/2}
        \norm{f - \frac{1}{\ell}\int_{\ell\bbT}f(s)\,ds}_{L^{2}(\ell\bbT)}^{1/2} \\
        &\leq \frac{1}{\sqrt{\ell}}\norm{f}_{L^{2}(\ell\bbT)}^{1/2}\left(
            \norm{f}_{L^{2}(\ell\bbT)}^{1/2}
            + \sqrt{\ell}\norm{\partial_{s}f}_{L^{2}(\ell\bbT)}^{1/2}
        \right).
    \end{align*}
\end{lemma}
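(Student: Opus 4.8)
The plan is to prove the final statement, Lemma~\ref{L9.2}, directly from the Fundamental Theorem of Calculus on the circle and the Cauchy--Schwarz inequality. Write $\bar f \coloneqq \frac{1}{\ell}\int_{\ell\bbT} f(s)\,ds$ for the mean of $f$, so that $g\coloneqq f - \bar f$ has zero mean. Since $g$ has zero average, there must be some $s_0\in\ell\bbT$ with $g(s_0)=0$ (by the intermediate value theorem, as $g$ is continuous --- note $H^1(\ell\bbT)\hookrightarrow C(\ell\bbT)$). Then for any $s\in\ell\bbT$ we have $g(s) = \int_{s_0}^{s} \partial_s g(s')\,ds'$ along the shorter arc, and hence, by Cauchy--Schwarz,
\[
    |g(s)| \le \left(\int_{\ell\bbT} |\partial_s g(s')|\,ds'\right) \wedge \text{(better: use a sharper FTC bound)}.
\]
Rather than that crude bound, I would use the standard trick: for any $s\in\ell\bbT$, writing $g(s)^2 = g(s_0)^2 + \int_{s_0}^{s} 2 g(s')\partial_s g(s')\,ds'$, one gets
\[
    g(s)^2 \le 2\int_{\ell\bbT} |g(s')|\,|\partial_s g(s')|\,ds' \le 2\norm{g}_{L^2(\ell\bbT)}\norm{\partial_s g}_{L^2(\ell\bbT)}
\]
by Cauchy--Schwarz. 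Taking the supremum over $s$ and then the square root, and noting $\partial_s g = \partial_s f$, gives
\[
    \norm{g}_{L^\infty(\ell\bbT)} \le \sqrt{2}\,\norm{\partial_s f}_{L^2(\ell\bbT)}^{1/2}\norm{f - \bar f}_{L^2(\ell\bbT)}^{1/2}.
\]

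Since $\norm{f}_{L^\infty} \le |\bar f| + \norm{g}_{L^\infty}$ and $|\bar f| = \frac{1}{\ell}\left|\int_{\ell\bbT} f\right|$, this almost gives the first claimed inequality, except the constant is $\sqrt 2$ rather than $1$. To remove the $\sqrt 2$, I would instead pick $s_0$ to be a point where $g$ attains its minimum of $|g|$ over $\ell\bbT$ --- in fact the cleanest route is: let $s_*$ be a point where $|g|$ attains its maximum $\norm{g}_{L^\infty}$; then for the two arcs from $s_*$ around to $s_*$, on at least one of them (say of length $\le \ell/2$ --- actually we use both) one integrates $\partial_s(g^2)$ from a zero of $g$. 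The sharp statement $\norm{g}_{L^\infty}^2 \le \norm{g}_{L^2}\norm{\partial_s g}_{L^2}$ for zero-mean $g$ on $\ell\bbT$ follows because, splitting $\ell\bbT$ at $s_*$ and at a zero $s_0$ of $g$ (which exists by zero mean), one has $g(s_*)^2 = \int_{s_0}^{s_*}\partial_s(g^2)$ along \emph{either} arc, so $2 g(s_*)^2 \le \int_{\ell\bbT}|\partial_s(g^2)| = 2\int_{\ell\bbT}|g|\,|\partial_s g| \le 2\norm{g}_{L^2}\norm{\partial_s g}_{L^2}$, i.e.\ $\norm{g}_{L^\infty}^2 \le \norm{g}_{L^2}\norm{\partial_s g}_{L^2}$, exactly the constant-$1$ version. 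Combining with the triangle inequality $\norm{f}_{L^\infty}\le |\bar f| + \norm{g}_{L^\infty}$ yields the first displayed inequality of the lemma.

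For the second inequality, I would bound each of the two terms on the right of the first inequality: $|\bar f| = \frac{1}{\ell}\left|\int_{\ell\bbT} f\right| \le \frac{1}{\ell}\cdot \sqrt\ell\,\norm{f}_{L^2(\ell\bbT)} = \frac{1}{\sqrt\ell}\norm{f}_{L^2(\ell\bbT)}$ by Cauchy--Schwarz, and $\norm{f - \bar f}_{L^2(\ell\bbT)} \le \norm{f}_{L^2(\ell\bbT)}$ since subtracting the mean is an orthogonal projection in $L^2$ and therefore decreases the $L^2$-norm. Substituting these two bounds into $\frac{1}{\ell}|\int f| + \norm{\partial_s f}_{L^2}^{1/2}\norm{f-\bar f}_{L^2}^{1/2}$ gives
\[
    \norm{f}_{L^\infty(\ell\bbT)} \le \frac{1}{\sqrt\ell}\norm{f}_{L^2(\ell\bbT)} + \norm{\partial_s f}_{L^2(\ell\bbT)}^{1/2}\norm{f}_{L^2(\ell\bbT)}^{1/2} = \frac{1}{\sqrt\ell}\norm{f}_{L^2(\ell\bbT)}^{1/2}\left(\norm{f}_{L^2(\ell\bbT)}^{1/2} + \sqrt\ell\,\norm{\partial_s f}_{L^2(\ell\bbT)}^{1/2}\right),
\]
which is the second claim. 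There is no real obstacle here; the only point requiring a little care is getting the optimal constant $1$ (rather than $\sqrt 2$) in the first inequality, which is handled by the two-arc argument splitting at a zero of the mean-zero part $g$ together with a maximizing point of $|g|$.
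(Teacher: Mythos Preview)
Your proof is correct and takes a genuinely different route from the paper. The paper proves the first inequality via Fourier series: it bounds $\norm{f}_{L^\infty}\le \ell^{-1/2}\norm{\hat f}_{\ell^1(\bbZ)}$, separates the $k=0$ mode, applies Cauchy--Schwarz to $\sum_{k\neq 0}|\hat f(k)|$ with the weight $(a^{-1}+a|2\pi k/\ell|^2)^{-1}$, uses the integral comparison $\sum_{k\neq 0}\frac{a}{1+|2\pi ak/\ell|^2}\le \frac{\ell}{2}$, and then optimizes over the parameter $a>0$. Your argument instead uses the Fundamental Theorem of Calculus on the circle: with $g=f-\bar f$ continuous and of zero mean, a zero $s_0$ exists by the intermediate value theorem, and the two-arc identity $2g(s_*)^2=\int_{\text{arc }1}\partial_s(g^2)-\int_{\text{arc }2}\partial_s(g^2)\le\int_{\ell\bbT}|\partial_s(g^2)|$ combined with Cauchy--Schwarz gives the sharp constant $1$ directly. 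The derivation of the second inequality is the same in both.

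Your approach is more elementary (no Fourier analysis) and makes the constant $1$ transparent. The paper's Fourier argument, on the other hand, works verbatim for complex- or $\bbR^2$-valued $f$ (which is how the lemma is actually applied later, to $\gamma_1-\gamma_2\circ\phi$), whereas your use of the intermediate value theorem requires $f$ to be real-valued; for the vector-valued case you would apply your argument componentwise. This is a harmless extra step, so both proofs are fine.
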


\begin{proof}    
    Consider the unitary Fourier transform
    \begin{align*}
        \hat{f}(k) \coloneqq  \frac{1}{\sqrt{\ell}}\int_{\ell\bbT}
        e^{-\frac{2\pi iks}{\ell}}f(s)\,ds
    \end{align*}
    for all $k\in\bbZ$. For any $a>0$, Cauchy-Schwarz inequality shows that
    \begin{align*}
        \norm{f}_{L^{\infty}(\ell\bbT)}
        &\leq \frac{1}{\sqrt{\ell}}\norm{\hat{f}}_{L^{1}(\bbZ)}
        = \frac{1}{\ell}\abs{\int_{\ell\bbT}f(s)\,ds}
        + \frac{1}{\sqrt{\ell}}\sum_{k\neq 0}\abs{\hat{f}(k)} \\
        &\leq \frac{1}{\ell}\abs{\int_{\ell\bbT}f(s)\,ds} +
        \frac{1}{\sqrt{\ell}}
        \left(
            \sum_{k\neq 0}\frac{a}{1 + \abs{2\pi a k/\ell}^{2}}
        \right)^{1/2}
        \left(
            \sum_{k\neq 0}\left(\frac{1}{a} + a\abs{\frac{2\pi k}{\ell}}^{2}\right)
            \abs{\hat{f}(k)}^{2}
        \right)^{1/2} \\
        &\leq \frac{1}{\ell}\abs{\int_{\ell\bbT}f(s)\,ds}
        + \frac{1}{\sqrt{2}}
        \left(
            \frac{1}{a}\norm{f - \frac{1}{\ell}\int_{\ell\bbT}f(s)\,ds}_{L^{2}(\ell\bbT)}^{2}
            + a\norm{\partial_{s}f}_{L^{2}(\ell\bbT)}^{2}
        \right)^{1/2},
    \end{align*}
    where in the last inequality we used
    $\sum_{k\neq 0}\frac{a}{1+\abs{2\pi ak/\ell}^{2}}
    \leq \int_{-\infty}^{\infty}\frac{a}{1 + \abs{2\pi ax/\ell}^{2}}\,dx = \frac{\ell}{2}$.
    Optimizing the right-hand side for $a>0$ now gives the desired estimate.
\end{proof}


Let  $\gamma_{1},\gamma_{2}\in\operatorname{PSC}(\bbR^{2})$ be as in the statement of Lemma \ref{L8.1} (we also assume  $d_{\mathrm{F}}(\gamma_{1},\gamma_{2})>0$ because otherwise we are done), fix their (arbitrary) arclength parametrizations, and let
    \begin{align*}
        \delta \coloneqq\min\left\{
            \frac{1}{512R_{2}^{2}},
            \frac{1}{(2^{56}\cdot 3^{10})
            R_{1}^{6}R_{2}^{14}}
        \right\}.
    \end{align*}
    We note that the second term in the definition of $\delta$ will only be used in the proof of (b), while first term will be used in all the other parts.  


    Fix a smooth even  $\sigma\colon\bbR\to[0,\infty)$
    with $\operatorname{supp}(\sigma) \subseteq [-\ell(\gamma_{1}),\ell(\gamma_{1})]$,
    $\int_{\bbR}\sigma(x)\,dx = 1$, as well as
    $\norm{\sigma}_{L^{2}(\bbR)} = \frac{1}{\ell(\gamma_{1})^{1/2}}$.
   Fix also the mollification scale
    $r\coloneqq \frac{64\norm{\gamma_{2}}_{\dot{H}^{2}}^{2}}{\ell(\gamma_{1})}
    d_{\mathrm{F}}(\gamma_{1},\gamma_{2})^{2}$, let $\sigma_{r}(x)\coloneqq \frac{1}{r}\sigma(\frac xr)$, and
    define $\hat{\gamma}_{1}\coloneqq \sigma_{r} * \gamma_{1}$ and
    $\hat{\gamma}_{2}\coloneqq \sigma_{r} * \gamma_{2}$. Note that $\hat{\gamma}_{i}$
    is a \emph{path} rather than \emph{curve}, but  for the sake of simplicity, we will use the same notation
    to denote the curve it defines.
    In general, $\hat{\gamma}_{i}$ is not an arclength parametrization of that curve, so the two can have different $\dot{C}^{k,\beta}$-norms and $\dot{H}^{2}$-norms. 
All norms below will be defined with respect to the relevant parametrizations (i.e., for paths), and we will suppress the corresponding domains in the notation.
Also, whenever needed,    we will consider $\gamma_{i}$ and $\hat \gamma_{i}$ to be  $\ell(\gamma_{i})$-periodic functions on     $\bbR$ rather than  functions on $\ell(\gamma_{i})\bbT$.

    By Lemma~\ref{L3.1} (with $\beta\coloneqq\frac{1}{2}$) and the definition of $\delta$, we have
    \[
        \frac{2r\ell(\gamma_{1})}{\ell(\gamma_{i})}
        \leq 32\norm{\gamma_{2}}_{\dot{H}^{2}}^{2}\norm{\gamma_{i}}_{\dot{H}^{2}}^{2}
        d_{\mathrm{F}}(\gamma_{1},\gamma_{2})^{2} \leq 1
    \]
   for $i=1,2$.  Hence Lemma~\ref{L9.1}(a,b) shows that
    $\norm{\partial_{s}^{2}\hat{\gamma}_{i}}_{L^{2}}
    \leq \norm{\gamma_{i}}_{\dot{H}^{2}}$ and
    \begin{equation}\label{9.1}
        \norm{\partial_{s}^{2}\hat{\gamma}_{i}}_{L^{\infty}}
        \leq \frac{1}{r^{1/2}}\left\lceil
            \frac{2r\ell(\gamma_{1})}{\ell(\gamma_{2})}
        \right\rceil^{1/2}
        \norm{\sigma}_{L^{2}}
        \norm{\gamma_{i}}_{\dot{H}^{2}}
        \leq \frac{\norm{\gamma_{i}}_{\dot{H}^{2}}}{r^{1/2}\ell(\gamma_{1})^{1/2}},
    \end{equation}
while Lemma~\ref{L9.1}(c) (with $\beta=1$) shows
    \begin{equation}\label{9.2}
        \norm{\hat{\gamma}_{i} - \gamma_{i}}_{L^{\infty}}
        \leq r\ell(\gamma_{1}).    
    \end{equation}
    Definition of $\delta$ yields
    \begin{equation}\label{9.3}
        r\ell(\gamma_{1})
        \leq 64R_{2}^{2}\,d_{\mathrm{F}}(\gamma_{1},\gamma_{2})^{2}
        \leq \frac{d_{\mathrm{F}}(\gamma_{1},\gamma_{2})}8,
    \end{equation}
and so Lemma~\ref{L3.1} (with $\beta\coloneqq\frac{1}{2}$) shows
    \begin{equation}\label{9.4}
        \begin{aligned}
            \abs{1 - \abs{\partial_{s}\hat{\gamma}_{i}(s)}^{2}}
            &= \abs{\int_{\bbR^2}\sigma(x_{1})\sigma(x_{2})
            \left(
                1 - \partial_{s}\gamma_{i}(s - rx_{1})
                \cdot \partial_{s}\gamma_{i}(s - rx_{2})
            \right)
            \,dx_{1}\,dx_{2}} \\
            &\leq
            \frac{r}{2}\norm{\gamma_{i}}_{\dot{C}^{1,1/2}}^{2}
            \int_{\bbR^2}\sigma(x_{1})\sigma(x_{2})
            \abs{x_{1} - x_{2}}\,dx_{1}\,dx_{2} \\
            &\leq r\ell(\gamma_{1})\norm{\gamma_{i}}_{\dot{H}^{2}}^{2}
            \leq r\ell(\gamma_{1})R_{2}^{2}
            \leq \frac{d_{\mathrm{F}}(\gamma_{1},\gamma_{2})
            R_{2}^{2}}8 \leq \frac{1}{4}
        \end{aligned}
    \end{equation}
    for all $s\in\ell(\gamma_{i})\bbT$ ($i=1,2$),
    where the last inequality again uses the definition of $\delta$.

    By Lemma~\ref{LA.1}, there is an orientation-preserving
    diffeomorphism $\psi\colon\ell(\gamma_{1})\bbT\to\ell(\gamma_{2})\bbT$ such that
    $\norm{\gamma_{1} - \gamma_{2}\circ\psi}_{L^{\infty}}
    \leq \frac{3}{2}d_{\mathrm{F}}(\gamma_{1},\gamma_{2})$.
    By reparametrizing $\gamma_{2}$ if necessary, we can assume without loss of generality
    that $\psi(0) = 0$. Whenever needed, $\psi$ will also represent the corresponding $\ell(\gamma_{1})$-periodic function on $\bbR$.
    Consider now the ODE
    \[
            \partial_{t}\eta^{t}(s)
            = \left(\hat{\gamma}_{1}(s) - \hat{\gamma}_{2}(\eta^{t}(s)) \right)\cdot
            \partial_{s}\hat{\gamma}_{2}(\eta^{t}(s))
    \]
    on $C^{k}([a,b];\bbR)$ with $ \eta^{0}(s) = \psi(s)$ for each $s\in[a,b]$, where $k\geq 0$ and $[a,b]$ is an arbitrary compact interval   (with $\hat{\gamma}_{i},\psi$ being    functions on $\bbR$). Since $\hat{\gamma}_{1},\hat{\gamma}_{2}$ are smooth, Picard-Lindel\"{o}f theorem shows that there is a unique global solution $\eta\in C^1(\bbR; C^{k}([a,b];\bbR))$. Moreover, uniqueness  for the ODE with any fixed $s\in\bbR$ shows that
    $\eta^{t}(s + n\ell(\gamma_{1})) = \eta^{t}(s) + n\ell(\gamma_{2})$ holds for all $(s,t)\in[a,b]\times \bbR$ and  $n\in\bbZ$ such that $s + n\ell(\gamma_{1})\in[a,b]$, so we can extend $\eta^t$ to $\bbR$ for each $t\in\bbR$ with
    $\eta^{t}\in C^{\infty}(\bbR;\bbR)$ as well as
    $\eta^{t}\in C^{\infty}(\ell(\gamma_{1})\bbT;\ell(\gamma_{2})\bbT)$.

    We will obtain the desired 
    homeomorphism $\phi\colon\ell(\gamma_{1})\bbT\to\ell(\gamma_{2})\bbT$
    as $\phi\coloneqq \lim_{t\to\infty} \eta^{t}$, and show that
    $\abs{\partial_{t}\eta^{t}}\to 0$ uniformly  as $t\to\infty$, which will  yield
    $(\hat{\gamma}_{1} - \hat{\gamma}_{2}\circ\phi)
    \cdot (\partial_{s}\hat{\gamma}_{2}\circ\phi)\equiv 0$.
    Our proof of this requires that
    $\norm{\partial_{s}^{2}\hat{\gamma}_{2}}_{L^{\infty}}$ is not too large,
    thus $r$ cannot be too small (specifically, we will need
    $r\gtrsim d_{\mathrm{F}}(\gamma_{1},\gamma_{2})^{2}$ to conclude \eqref{9.6} below from \eqref{9.1}). 
    At the same time, $r$ must be small enough
    in order to translate estimates for $\hat{\gamma}_{1},\hat{\gamma_{2}}$
    into good enough estimates for $\gamma_{1},\gamma_{2}$ and vice versa, which is why we need precisely
    $r \sim d_{\mathrm{F}}(\gamma_{1},\gamma_{2})^{2}$ here.

From
    \begin{align*}
        \partial_{t}\abs{\hat{\gamma}_{1} - \hat{\gamma}_{2}\circ\eta^{t}}^{2}
        = -2\abs{\partial_{t}\eta^{t}}^{2}
    \end{align*}
    we see that
    $\abs{\hat{\gamma}_{1} - \hat{\gamma}_{2}\circ\eta^{t}}(s)$ is decreasing in $t\in\bbR$ for each $s\in\bbR$.
    Thus,  \eqref{9.2} and \eqref{9.3} show that
    \begin{equation}\label{9.5}
            \norm{\hat{\gamma}_{1} - \hat{\gamma}_{2}\circ\eta^{t}}_{L^{\infty}}
            \leq \norm{\hat{\gamma}_{1} - \hat{\gamma}_{2}\circ\psi}_{L^{\infty}} 
            \leq \norm{\gamma_{1} - \gamma_{2}\circ\psi}_{L^{\infty}}
            + 2r\ell(\gamma_{1})
            \leq \frac{7}{4}d_{\mathrm{F}}(\gamma_{1},\gamma_{2}),    
    \end{equation}
    for each $t\ge 0$.
    Then \eqref{9.1} yields
    \begin{equation}\label{9.6}
        \norm{\hat{\gamma}_{1} - \hat{\gamma}_{2}\circ\eta^{t}}_{L^{\infty}}
        \norm{\partial_{s}^{2}\hat{\gamma}_{2}\circ\eta^{t}}_{L^{\infty}}
        \leq
        \frac{7}{4}d_{\mathrm{F}}(\gamma_{1},\gamma_{2})
        \cdot \frac{\norm{\gamma_{2}}_{\dot{H}^{2}}}{8\norm{\gamma_{2}}_{\dot{H}^{2}}
        d_{\mathrm{F}}(\gamma_{1},\gamma_{2})}
        \leq \frac{1}{4},
    \end{equation}
    and using \eqref{9.4} we conclude that for all $t\ge 0$ we have
    \begin{equation}\label{9.7}
        \frac{1}{2} \leq
        \abs{\partial_{s}\hat{\gamma}_{2}\circ\eta^{t}}^{2}
        - (\hat{\gamma}_{1} - \hat{\gamma}_{2}\circ\eta^{t})
        \cdot (\partial_{s}^{2}\hat{\gamma}_{2}\circ\eta^{t})
        \leq \frac{3}{2}.
    \end{equation}
    This implies
    \begin{align*}
        \partial_{t}\abs{\partial_{t}\eta^{t}}^{2}
        &= -2\abs{\partial_{t}\eta^{t}}^{2}\left(
            \abs{\partial_{s}\hat{\gamma}_{2}\circ\eta^{t}}^{2}
            - (\hat{\gamma}_{1} - \hat{\gamma}_{2}\circ\eta^{t})
            \cdot (\partial_{s}^{2}\hat{\gamma}_{2}\circ\eta^{t})
        \right)
        \leq -\abs{\partial_{t}\eta^{t}}^{2}
    \end{align*}
     for all $t\ge 0$, thus for all $t\ge 0$ and $s\in\bbR$ we have
    \begin{align*}
        \abs{\partial_{t}\eta^{t}(s)}^{2}
        \leq e^{-t} \abs{\partial_{t}\eta^{0}(s)}^{2}.
    \end{align*}
    
    Hence  $\int_{0}^{\infty}\norm{\partial_{t}\eta^{t}}_{L^{\infty}}\,dt<\infty$, so 
    \begin{align*}
        \phi(s) \coloneqq  \lim_{t\to\infty} \eta^{t}(s) = \psi(s) + \int_{0}^{\infty}\partial_{t}\eta^{t}(s)\,dt \,\, \in C(\ell(\gamma_{1})\bbT;\ell(\gamma_{2})\bbT)
    \end{align*}
    is well defined and satisfies
    \begin{equation}\label{9.8}
        \abs{(\hat{\gamma}_{1} - \hat{\gamma}_{2}\circ\phi)\cdot
        (\partial_{s}\hat{\gamma}_{2}\circ\phi)} (s)
        = \lim_{t\to\infty}\abs{\partial_{t}\eta^{t}(s)} = 0
    \end{equation}
    for all $s\in \ell(\gamma_{1})\bbT$. From \eqref{9.2}, \eqref{9.3}, and \eqref{9.5} we see that
    \begin{equation}\label{9.9}
        \norm{\gamma_{1} - \gamma_{2}\circ\eta^{t}}_{L^{\infty}}
        \leq \norm{\hat{\gamma}_{1} - \hat{\gamma}_{2}\circ\eta^{t}}_{L^{\infty}}
        + 2r\ell(\gamma_{1})
        \leq 2d_{\mathrm{F}}(\gamma_{1},\gamma_{2})
    \end{equation}
    for all $t\ge 0$, so taking $t\to\infty$ yields the first inequality in (a).

    Next we obtain  lower and upper bounds on $\partial_{s}\eta^{t}$ for all large $t$.
    For any $t\in\bbR$ we have
    \begin{equation}\label{9.10}
        \begin{aligned}
            \partial_{t}(\partial_{s}\eta^{t})
            &= \partial_{s}\hat{\gamma}_{1}
            \cdot (\partial_{s}\hat{\gamma}_{2}\circ\eta^{t})
            - \left(
                \abs{\partial_{s}\hat{\gamma}_{2}\circ\eta^{t}}^{2}
                - (\hat{\gamma}_{1} - \hat{\gamma}_{2}\circ\eta^{t})
                \cdot (\partial_{s}^{2}\hat{\gamma}_{2}\circ\eta^{t})
            \right) \partial_{s}\eta^{t},
        \end{aligned}
    \end{equation}
    and \eqref{9.7} shows that the coefficient
    in front of $\partial_{s}\eta^{t}$ is from
    $[\frac{1}{2},\frac{3}{2}]$ for $t\ge 0$, while \eqref{9.4} yields
    \begin{equation}\label{9.11}
        \partial_{s}\hat{\gamma}_{1}
        \cdot(\partial_{s}\hat{\gamma}_{2}\circ\eta^{t})
        \leq (1+r\ell(\gamma_{1})R_{2}^{2})^{1/2}
        (1+r\ell(\gamma_{1})R_{2}^{2})^{1/2}
        \leq \frac{5}{4}
    \end{equation}
    (these will give us the upper bound in  \eqref{111.19} below).
    Lemma~\ref{L3.6}
    with $\phi_{1}(\xi)\coloneqq  \ell(\gamma_{1})\xi$ and
    $\phi_{2}(\xi)\coloneqq  \eta^{t}(\ell(\gamma_{1})\xi)$, together with \eqref{9.9}, shows
    \begin{equation}\label{8.11a}
        \begin{aligned}
            \partial_{s}\gamma_{1}\cdot
            (\partial_{s}\gamma_{2}\circ\eta^{t})
            &\geq 1 - 10\max\{\norm{\gamma_{1}}_{\dot{H}^{2}},
            \norm{\gamma_{2}}_{\dot{H}^{2}}\}^{4/3}
            \norm{\gamma_{1} - \gamma_{2}\circ\eta^{t}}_{L^{\infty}}^{2/3} \\
            &\geq  1 - 16R_{2}^{4/3}d_{\mathrm{F}}(\gamma_{1},\gamma_{2})^{2/3}
        \end{aligned}
    \end{equation}
    for $t\ge 0$ because $\norm{\gamma_{1} - \gamma_{2}\circ\eta^{t}}_{L^{\infty}}
    \leq 2d_{\mathrm{F}}(\gamma_{1},\gamma_{2})\leq \frac{1}{6^{3}R_{2}^{2}}$ by \eqref{9.9}.
    Also, Lemma~\ref{L9.1}(c) (with $\beta\coloneqq\frac{1}{2}$), the definition of $r$, and 
    \begin{equation}\label{9.12}
        d_{\mathrm{F}}(\gamma_{1},\gamma_{2})^{1/3}\leq
        \delta^{1/3}\leq \frac{1}{8R_{2}^{2/3}}
    \end{equation}
    show that
    \begin{align*}
        \norm{\partial_{s}\hat{\gamma}_{i} - \partial_{s}\gamma_{i}}_{L^{\infty}}
        &\leq r^{1/2}\ell(\gamma_{1})^{1/2}\norm{\gamma_{i}}_{\dot{C}^{1,1/2}}
        \leq 8\norm{\gamma_{2}}_{\dot{H}^{2}}\norm{\gamma_{i}}_{\dot{H}^{2}}
        d_{\mathrm{F}}(\gamma_{1},\gamma_{2}) \\
        &\leq 8R_{2}^{2}\,d_{\mathrm{F}}(\gamma_{1},\gamma_{2})
        \leq R_{2}^{4/3}d_{\mathrm{F}}(\gamma_{1},\gamma_{2})^{2/3}
        \leq \frac{1}{64}
    \end{align*}
    for $i=1,2$, so
    \begin{equation}\label{9.13}
        \begin{aligned}
            &\partial_{s}\hat{\gamma}_{1}
            \cdot(\partial_{s}\hat{\gamma}_{2}\circ\eta^{t})
            \\&\quad\quad\quad\geq
            \partial_{s}\gamma_{1}\cdot(\partial_{s}\gamma_{2}\circ\eta^{t})
            - \abs{\partial_{s}\gamma_{1}}
            \abs{\partial_{s}\gamma_{2}\circ\eta^{t}
            - \partial_{s}\hat{\gamma}_{2}\circ\eta^{t}}
            - \abs{\partial_{s}\hat{\gamma}_{2}\circ\eta^{t}}
            \abs{\partial_{s}\gamma_{1} - \partial_{s}\hat{\gamma}_{1}}
            \\&\quad\quad\quad\geq
            1 - 16R_{2}^{4/3}d_{\mathrm{F}}(\gamma_{1},\gamma_{2})^{2/3}
            - R_{2}^{4/3}d_{\mathrm{F}}(\gamma_{1},\gamma_{2})^{2/3}
            - \frac{65}{64}R_{2}^{4/3}d_{\mathrm{F}}(\gamma_{1},\gamma_{2})^{2/3}
            \\&\quad\quad\quad\geq
            1 - 19R_{2}^{4/3}d_{\mathrm{F}}(\gamma_{1},\gamma_{2})^{2/3}
            \geq \frac{1}{2}
        \end{aligned}
    \end{equation}
    for all $t\ge 0$. Therefore, for any $t\ge 0$ and $s\in\ell(\gamma_{1})\bbT$ we have
    \beq \lb{111.19}
        \frac{1}{2} - \frac{3}{2}\partial_{s}\eta^{t}(s)
        \leq \partial_{t} (\partial_{s}\eta^{t}(s)) \leq
        \frac{5}{4} - \frac{1}{2}\partial_{s}\eta^{t}(s)
    \eeq
     whenever $\partial_{s}\eta^{t}(s) > 0$.
    Since $\partial_{s}\eta^{0}(s)=\psi'(s) > 0$, \eqref{111.19} shows that $\partial_{s}\eta^{t}(s)>0$ for all $t\ge 0$ and $s\in\ell(\gamma_{1})\bbT$,
    and then for any $\eps>0$ and all large enough $t\ge 0$ we have
    \begin{equation}\label{9.14}
        \frac{1}{3} - \eps \leq \partial_{s}\eta^{t}(s) \leq \frac{5}{2} + \eps
    \end{equation}
    for all  $s\in\ell(\gamma_{1})\bbT$.  This also proves (c).

    Next we estimate $\norm{\partial_{s}\eta^{t} - 1}_{L^{2}}$.
    The ODE \eqref{9.10} for $\partial_{s}\eta^{t}$
    and \eqref{9.7} show that
    \begin{align*}
        &\frac{1}{2}\partial_{t}\abs{\partial_{s}\eta^{t} - 1}^{2}
        = (\partial_{s}\eta^{t} - 1)
        \left(
            (\partial_{s}\hat{\gamma}_{1}\cdot
            (\partial_{s}\hat{\gamma}_{2}\circ\eta^{t}))
            - \abs{\partial_{s}\hat{\gamma}_{2}\circ\eta^{t}}^{2}
            + (\hat{\gamma}_{1} - \hat{\gamma}_{2}\circ\eta^{t})
            \cdot (\partial_{s}^{2}\hat{\gamma}_{2}\circ\eta^{t})
        \right) \\
        &\qquad\qquad\quad\quad\quad\quad
        - \left(
            \abs{\partial_{s}\hat{\gamma}_{2}\circ\eta^{t}}^{2}
            - (\hat{\gamma}_{1} - \hat{\gamma}_{2}\circ\eta^{t})
            \cdot (\partial_{s}^{2}\hat{\gamma}_{2}\circ\eta^{t})
        \right)
        \abs{\partial_{s}\eta^{t} - 1}^{2}
        \\&\quad
        \leq \abs{\partial_{s}\eta^{t} - 1}
        \left(
            \abs{(\partial_{s}\hat{\gamma}_{1}\cdot
            (\partial_{s}\hat{\gamma}_{2}\circ\eta^{t}))
            - \abs{\partial_{s}\hat{\gamma}_{2}\circ\eta^{t}}^{2}}
            + \abs{\hat{\gamma}_{1} - \hat{\gamma}_{2}\circ\eta^{t}}
            \abs{\partial_{s}^{2}\hat{\gamma}_{2}\circ\eta^{t}}
        \right)
        - \frac{\abs{\partial_{s}\eta^{t} - 1}^{2}}2 .
    \end{align*}
   From \eqref{9.11} and \eqref{9.13} we see that
    \begin{align*}
        \abs{\partial_{s}\hat{\gamma}_{1}
        \cdot(\partial_{s}\hat{\gamma}_{2}\circ\eta^{t}) - 1}
        &\leq \max\left\{
            r\ell(\gamma_{1})R_{2}^{2},
            19R_{2}^{4/3}d_{\mathrm{F}}(\gamma_{1},\gamma_{2})^{2/3}
        \right\} \\
        &\leq \max\left\{
            64R_{2}^{4}d_{\mathrm{F}}(\gamma_{1},\gamma_{2})^{2},
            19R_{2}^{4/3}d_{\mathrm{F}}(\gamma_{1},\gamma_{2})^{2/3}
        \right\} \\
        &= 19R_{2}^{4/3}d_{\mathrm{F}}(\gamma_{1},\gamma_{2})^{2/3}
    \end{align*}
    because
    \begin{align*}
        r\ell(\gamma_{1})R_{2}^{2}
        \leq 64R_{2}^{4}d_{\mathrm{F}}(\gamma_{1},\gamma_{2})^{2}
        \leq \frac{64R_{2}^{4}\cdot d_{\mathrm{F}}(\gamma_{1},\gamma_{2})^{2/3}}
        {2^{12}R_{2}^{8/3}}
        \leq \frac{R_{2}^{4/3}d_{\mathrm{F}}(\gamma_{1},\gamma_{2})^{2/3}}{64}
    \end{align*}
    holds by the definition of $r$ and \eqref{9.12}. Hence \eqref{9.4} shows that
    \begin{equation}\label{9.15}
        \begin{aligned}
            \abs{(\partial_{s}\hat{\gamma}_{1}\cdot
            (\partial_{s}\hat{\gamma}_{2}\circ\eta^{t}))
            - \abs{\partial_{s}\hat{\gamma}_{2}\circ\eta^{t}}^{2}}
            &\leq r\ell(\gamma_{1})R_{2}^{2}
            + 19R_{2}^{4/3}d_{\mathrm{F}}(\gamma_{1},\gamma_{2})^{2/3} \\
            &\leq 20R_{2}^{4/3}d_{\mathrm{F}}(\gamma_{1},\gamma_{2})^{2/3}.
        \end{aligned}
    \end{equation}
    On the other hand, Cauchy-Schwarz inequality and \eqref{9.14} show that
    \begin{align*}
        \int_{\ell(\gamma_{1})\bbT}\abs{\partial_{s}\eta^{t}(s) - 1}
        \abs{\partial_{s}^{2}\hat{\gamma}_{2}(\eta^{t}(s))}\,ds
        &\leq \norm{\partial_{s}\eta^{t} - 1}_{L^{2}}
        \left(4\int_{\ell(\gamma_{1})\bbT}
        \abs{\partial_{s}^{2}\hat{\gamma}_{2}(\eta^{t}(s))}^{2}
        \partial_{s}\eta^{t}(s)\,ds\right)^{1/2} \\
        &= 2\norm{\partial_{s}^{2}\hat{\gamma}_{2}}_{L^{2}}
        \norm{\partial_{s}\eta^{t} - 1}_{L^{2}}
    \end{align*}
    holds for all sufficiently large $t\ge 0$.
    This, \eqref{9.15}, Cauchy-Schwarz inequality, and \eqref{9.5} yield
    \begin{equation}\label{9.16}
        \begin{aligned}
            \frac{1}{2}\partial_{t}\norm{\partial_{s}\eta^{t} - 1}_{L^{2}}^{2}
            &\leq \left(
                20\ell(\gamma_{1})^{1/2}R_{2}^{4/3}
                d_{\mathrm{F}}(\gamma_{1},\gamma_{2})^{2/3}
                + \frac{7}{2}\norm{\gamma_{2}}_{\dot{{H}^{2}}}
                d_{\mathrm{F}}(\gamma_{1},\gamma_{2})
            \right)
            \norm{\partial_{s}\eta^{t} - 1}_{L^{2}}
            \\&\quad\quad\quad
            - \frac{1}{2}\norm{\partial_{s}\eta^{t} - 1}_{L^{2}}^{2} \\
            &\leq 21\ell(\gamma_{1})^{1/2}R_{2}^{4/3}
            d_{\mathrm{F}}(\gamma_{1},\gamma_{2})^{2/3}
            \norm{\partial_{s}\eta^{t} - 1}_{L^{2}}
            - \frac{1}{2}\norm{\partial_{s}\eta^{t} - 1}_{L^{2}}^{2}
        \end{aligned}
    \end{equation}
    for all large $t\ge 0$, where the last inequality uses
    \begin{align*}
        \frac{7}{2}\norm{\gamma_{2}}_{\dot{{H}^{2}}}
        d_{\mathrm{F}}(\gamma_{1},\gamma_{2})^{1/3}
        &\leq \frac{7R_{2}}{2}\, \frac{1}{8R_{2}^{2/3}}
        \leq \frac{7}{32}R_{2}^{4/3}\, \frac{2}{R_{2}},
    \end{align*}
    which holds by \eqref{9.12} and $\frac{2}{R_{2}} \leq \frac{2}{\norm{\gamma_{1}}_{\dot{C}^{1,1/2}}}
    \leq \ell(\gamma_{1})^{1/2}$ (the latter is due to Lemma~\ref{L3.1}).
    So a Gr\"{o}nwall-type argument shows that
    \begin{align*}
        \norm{\partial_{s}\eta^{t} - 1}_{L^{2}}
        &\leq 42\ell(\gamma_{1})^{1/2}R_{2}^{4/3}
        d_{\mathrm{F}}(\gamma_{1},\gamma_{2})^{2/3}
        \\&\quad\quad\quad
        + \left(
            \norm{\partial_{s}\eta^{t_{0}} - 1}_{L^{2}}
            - 42\ell(\gamma_{1})R_{2}^{4/3}
            d_{\mathrm{F}}(\gamma_{1},\gamma_{2})^{2/3}
        \right)e^{-\frac{1}{2}(t-t_{0})}
    \end{align*}
    holds for all $t\ge t_{0}$ when $t_0\ge 0$ is  large enough.
    In particular, for any $\eps>0$, we have
    \[
        \norm{\partial_{s}\eta^{t} - 1}_{L^{2}} \leq
        42\ell(\gamma_{1})^{1/2}R_{2}^{4/3}
        d_{\mathrm{F}}(\gamma_{1},\gamma_{2})^{2/3} + \eps
    \]
    for all large enough $t\ge 0$. Then a simple application of
    Banach-Alaoglu theorem shows that $\partial_{s}\eta^{t}$ converges
    weakly to $\phi'$ in $L^{2}(\ell(\gamma_{1})\bbT)$ and
    \beq\lb{111.20}
        \norm{\phi' - 1}_{L^{2}}
        \leq 42\ell(\gamma_{1})^{1/2}R_{2}^{4/3}
        d_{\mathrm{F}}(\gamma_{1},\gamma_{2})^{2/3}.
    \eeq
    
    We can now turn to bounding
    $d_{\mathrm{F}}(\gamma_{1},\gamma_{2})$
    in terms of $\norm{\gamma_{1} - \gamma_{2}\circ\phi}_{L^{2}}$. By Lemma~\ref{L9.2},
    \begin{align*}
        \norm{\gamma_{1} - \gamma_{2}\circ\phi}_{L^{\infty}}
        \leq \frac{\norm{\gamma_{1} - \gamma_{2}\circ\phi}_{L^{2}}^{1/2}}
        {\ell(\gamma_{1})^{1/2}}
        \left(
            \norm{\gamma_{1} - \gamma_{2}\circ\phi}_{L^{2}}^{1/2}
            + \ell(\gamma_{1})^{1/2}\norm{\partial_{s}\gamma_{1}
            - \partial_{s}(\gamma_{2}\circ\phi)}_{L^{2}}^{1/2}
        \right).
    \end{align*}
    We have
    \begin{align*}
        \norm{\partial_{s}\gamma_{1}
        - \partial_{s}(\gamma_{2}\circ\phi)}_{L^{2}}
        &\leq \norm{\partial_{s}\gamma_{1}
        - \partial_{s}\gamma_{2}\circ\phi}_{L^{2}}
        + \norm{\partial_{s}\gamma_{2}}_{L^{\infty}}
        \norm{\phi' - 1}_{L^{2}},
    \end{align*}
    and \eqref{8.11a} (which clearly also holds with $\phi$ in place of $\eta^{t}$) shows that
    \begin{align*}
        \abs{\partial_{s}\gamma_{1} - \partial_{s}\gamma_{2}\circ\phi}^{2}
        &= \abs{\partial_{s}\gamma_{1}}^{2}
        + \abs{\partial_{s}\gamma_{2}\circ\phi}^{2}
        - 2(\partial_{s}\gamma_{1}\cdot
        (\partial_{s}\gamma_{2}\circ\phi))
        \leq 32R_{2}^{4/3}d_{\mathrm{F}}(\gamma_{1},\gamma_{2})^{2/3}.
    \end{align*}
    From this, \eqref{9.12}, and \eqref{111.20} we now conclude (d) because
    \begin{align*}
        \norm{\partial_{s}\gamma_{1}
        - \partial_{s}(\gamma_{2}\circ\phi)}_{L^{2}}
        &\leq 6\ell(\gamma_{1})^{1/2}R_{2}^{2/3}
        d_{\mathrm{F}}(\gamma_{1},\gamma_{2})^{1/3}
        + 42\ell(\gamma_{1})^{1/2}R_{2}^{4/3}
        d_{\mathrm{F}}(\gamma_{1},\gamma_{2})^{2/3} \\
        &\leq 12\ell(\gamma_{1})^{1/2}R_{2}^{2/3}
        d_{\mathrm{F}}(\gamma_{1},\gamma_{2})^{1/3}.
    \end{align*}
    Using again $\frac{4}{R_{2}^{2}}
    \leq \frac{4}{\norm{\gamma_{1}}_{\dot{C}^{1,1/2}}^{2}}
    \leq \ell(\gamma_{1})$, as well as
    \eqref{9.9} with $\phi$ in place of $\eta^{t}$ 
    and \eqref{9.12}, yields
    \begin{align*}
        \norm{\gamma_{1} - \gamma_{2}\circ\phi}_{L^{2}}
        &\leq \ell(\gamma_{1})^{1/2}
        \norm{\gamma_{1} - \gamma_{2}\circ\phi}_{L^{\infty}}
        \leq 2\ell(\gamma_{1})^{1/2}d_{\mathrm{F}}(\gamma_{1},\gamma_{2}) \\
        &\leq 2\ell(\gamma_{1})^{1/2} \,
        \frac{1}{64R_{2}^{4/3}}\, d_{\mathrm{F}}(\gamma_{1},\gamma_{2})^{1/3}
        \leq \frac{\ell(\gamma_{1})^{3/2}}{128}R_{2}^{2/3}
        d_{\mathrm{F}}(\gamma_{1},\gamma_{2})^{1/3},
    \end{align*}
    which  shows that
    \begin{align*}
        d_{\mathrm{F}}(\gamma_{1},\gamma_{2})
        &\leq \norm{\gamma_{1} - \gamma_{2}\circ\phi}_{L^{\infty}} \\
        &\leq \frac{\norm{\gamma_{1} - \gamma_{2}\circ\phi}_{L^{2}}^{1/2}}
        {\ell(\gamma_{1})^{1/2}}
        \left(
            \frac{\ell(\gamma_{1})^{3/4}}{\sqrt{128}}R_{2}^{1/3}
            d_{\mathrm{F}}(\gamma_{1},\gamma_{2})^{1/6}
            + \sqrt{12}\ell(\gamma_{1})^{3/4}R_{2}^{1/3}
            d_{\mathrm{F}}(\gamma_{1},\gamma_{2})^{1/6}
        \right) \\
        &\leq 4\ell(\gamma_{1})^{1/4}R_{2}^{1/3}
        d_{\mathrm{F}}(\gamma_{1},\gamma_{2})^{1/6}
        \norm{\gamma_{1} - \gamma_{2}\circ\phi}_{L^{2}}^{1/2}.
    \end{align*}
    This now proves the second inequality in (a):
    \begin{equation}\label{9.17}
        d_{\mathrm{F}}(\gamma_{1},\gamma_{2})
        \leq 6\ell(\gamma_{1})^{3/10}R_{2}^{2/5}
        \norm{\gamma_{1} - \gamma_{2}\circ\phi}_{L^{2}}^{3/5}.
    \end{equation}

    Next we estimate $\norm{(\gamma_{1} - \gamma_{2}\circ\phi)\cdot
    (\partial_{s}\gamma_{2}\circ\phi)}_{L^{2}}$. By \eqref{9.8} we have
    \begin{align*}
        \norm{(\gamma_{1} - \gamma_{2}\circ\phi)\cdot
        (\partial_{s}\gamma_{2}\circ\phi)}_{L^{2}}
        &\leq \norm{\hat{\gamma}_{1} - \gamma_{1}}_{L^{2}}
        + \norm{\hat{\gamma}_{2}\circ\phi - \gamma_{2}\circ\phi}_{L^{2}}
        \\&\quad\quad
        + \norm{\hat{\gamma}_{1} - \hat{\gamma}_{2}\circ\phi}_{L^{\infty}}
        \norm{\partial_{s}\hat{\gamma}_{2}\circ\phi - \partial_{s}\gamma_{2}\circ\phi}_{L^{2}}.
    \end{align*}
    Note that (c), Lemma~\ref{L9.1}(d) (with $p=2$), and the definition of $r$  show that
    \begin{align*}
        \norm{\partial_{s}\hat{\gamma}_{2}\circ\phi
        - \partial_{s}\gamma_{2}\circ\phi}_{L^{2}}
        &\leq 2\norm{\partial_{s}\hat{\gamma}_{2} - \partial_{s}\gamma_{2}}_{L^{2}}
        \leq 2r\ell(\gamma_{1})\norm{\gamma_{2}}_{\dot{H}^{2}}
        \leq 128R_{2}^{3}\,d_{\mathrm{F}}(\gamma_{1},\gamma_{2})^{2},
    \end{align*}
    so using \eqref{9.5} with  $\phi$ in place of $\eta^{t}$ yields
    \begin{align*}
        \norm{\hat{\gamma}_{1} - \hat{\gamma}_{2}\circ\phi}_{L^{\infty}}
        \norm{\partial_{s}\hat{\gamma}_{2}\circ\phi - \partial_{s}\gamma_{2}\circ\phi}_{L^{2}}
        \leq 224R_{2}^{3}\,d_{\mathrm{F}}(\gamma_{1},\gamma_{2})^{3}.
    \end{align*}
    On the other hand, Lemma~\ref{L9.1}(e) (with $p=2$) and \eqref{9.3} show
    \begin{align*}
        \norm{\hat{\gamma}_{1} - \gamma_{1}}_{L^{2}}
        &\leq \frac{1}{2}\, \frac{d_{\mathrm{F}}(\gamma_{1},\gamma_{2})}8
        r\ell(\gamma_{1})R_{2}
        \leq 4R_{2}^{3}\,d_{\mathrm{F}}(\gamma_{1},\gamma_{2})^{3},
    \end{align*}
    and similarly
    \begin{align*}
        \norm{\hat{\gamma}_{2}\circ\phi - \gamma_{2}\circ\phi}_{L^{2}}
        &\leq 2\norm{\hat{\gamma}_{2} - \gamma_{2}}_{L^{2}}
        \leq 8R_{2}^{3}\,d_{\mathrm{F}}(\gamma_{1},\gamma_{2})^{3}.
    \end{align*}
    We therefore obtain
    \begin{align*}
        \norm{(\gamma_{1} - \gamma_{2}\circ\phi)\cdot
        (\partial_{s}\gamma_{2}\circ\phi)}_{L^{2}}
        \leq 236R_{2}^{3}\,d_{\mathrm{F}}(\gamma_{1},\gamma_{2})^{3},
    \end{align*}
    and combining this with \eqref{9.17} proves (e).

    Now we claim that for any $s\in\ell(\gamma_{1})\bbT$,
    $\phi(s)$ is a unique element in $\ell(\gamma_{2})\bbT$ such that
    \begin{equation}\label{9.18}
        \abs{\hat{\gamma}_{1}(s) - \hat{\gamma}_{2}(\phi(s))}
        = d(\hat{\gamma}_{1}(s), \operatorname{im}(\hat{\gamma}_{2})).
    \end{equation}
    (This would be false in general if $r$ were chosen too small
    because we will need sufficient control of $\norm{\partial_{s}^{2}\hat{\gamma}_{2}}_{L^{\infty}}$.)
    So fix $s\in\ell(\gamma_{1})\bbT$ and take any $s'\in\ell(\gamma_{2})\bbT$ such that
    $\abs{\hat{\gamma}_{1}(s) - \hat{\gamma}_{2}(s')}
    = d(\hat{\gamma}_{1}(s), \operatorname{im}(\hat{\gamma}_{2}))$.
    We can now regard $s',\phi(s)$ as elements of $\bbR$ with
    $\abs{s' - \phi(s)}\leq \frac{\ell(\gamma_{2})}{2}$.
    Since $(\hat{\gamma}_{1}(s) - \hat{\gamma}_{2}(s'))
    \cdot\partial_{s}\hat{\gamma}_{2}(s') = 0$, we obtain
    \begin{align*}
        0 &= (\hat{\gamma}_{1}(s) - \hat{\gamma}_{2}(\phi(s)))
        \cdot \partial_{s}\hat{\gamma}_{2}(\phi(s)) \\
        &= (\hat{\gamma}_{1}(s) - \hat{\gamma}_{2}(\phi(s)))
        \cdot \left(            \partial_{s}\hat{\gamma}_{2}(\phi(s))
            - \partial_{s}\hat{\gamma}_{2}(s')         \right)
        + (\hat{\gamma}_{2}(s') - \hat{\gamma}_{2}(\phi(s)))
        \cdot \partial_{s}\hat{\gamma}_{2}(s'),
    \end{align*}
    which yields the equality in
    \begin{align*}
        &        \abs{\hat{\gamma}_{1}(s) - \hat{\gamma}_{2}(\phi(s))}
     \norm{\partial_{s}^{2}\hat{\gamma}_{2}}_{L^{\infty}}   \abs{s' - \phi(s)} \\
        &\quad\quad\quad\quad\geq
        \abs{(\hat{\gamma}_{1}(s) - \hat{\gamma}_{2}(\phi(s)))
        \cdot (\partial_{s}\hat{\gamma}_{2}(\phi(s)) - \partial_{s}\hat{\gamma}_{2}(s'))} \\
        &\quad\quad\quad\quad=
        \abs{\int_{\phi(s)}^{s'}\partial_{s}\hat{\gamma}_{2}(s'')
        \cdot \partial_{s}\hat{\gamma}_{2}(s')\,ds''} \\
        &\quad\quad\quad\quad\geq
        \abs{\partial_{s}\hat{\gamma}_{2}(s')}^{2}\abs{s' - \phi(s)}
        - \abs{\int_{\phi(s)}^{s'}
            (\partial_{s}\hat{\gamma}_{2}(s'')
            - \partial_{s}\hat{\gamma}_{2}(s'))
            \cdot \partial_{s}\hat{\gamma}_{2}(s')\,ds''
        } \\
        &\quad\quad\quad\quad\geq
        \abs{\partial_{s}\hat{\gamma}_{2}(s')}^{2}\abs{s' - \phi(s)}
        - \frac{1}{2}\norm{\partial_{s}^{2}\hat{\gamma}_{2}}_{L^{\infty}}
        \abs{\partial_{s}\hat{\gamma}_{2}(s')}\abs{s' - \phi(s)}^{2}.
    \end{align*}
    If $s'\neq\phi(s)$, then
    this, \eqref{9.4}, and
    \eqref{9.6} with $\phi$ in place of $\eta^{t}$ show that
    \begin{align*}
        \frac{1}{2}\norm{\partial_{s}^{2}\hat{\gamma}_{2}}_{L^{\infty}}
        \abs{\partial_{s}\hat{\gamma}_{2}(s')}\abs{s' - \phi(s)}        
        \geq \abs{\partial_{s}\hat{\gamma}_{2}(s')}^{2}
        - \norm{\partial_{s}^{2}\hat{\gamma}_{2}}_{L^{\infty}}
        \norm{\hat{\gamma}_{1} - \hat{\gamma}_{2}\circ\phi}_{L^{\infty}}
        \geq \frac{1}{2},
    \end{align*}
    so \eqref{9.4}, \eqref{9.1}, and the definition of $r$ yield
    \beq\lb{111.21}
        \abs{s' - \phi(s)}
        \geq \frac{1}{\norm{\partial_{s}^{2}\hat{\gamma}_{2}}_{L^{\infty}}
        \abs{\partial_{s}\hat{\gamma}_{2}(s')}}
        \geq \frac{2r^{1/2}\ell(\gamma_{1})^{1/2}}
        {\sqrt{5}\norm{\gamma_{2}}_{\dot{H}^{2}}}
        = \frac{16}{\sqrt{5}}d_{\mathrm{F}}(\gamma_{1},\gamma_{2}).
    \eeq
    On the other hand, our choice of $s'$, \eqref{9.2}, \eqref{9.3},
    \eqref{9.5} with $\phi$ in place of $\eta^{t}$,
    and the definition of $\delta$ show
    \begin{align*}
        \abs{\gamma_{2}(s') - \gamma_{2}(\phi(s))}
        &\leq \abs{\gamma_{1}(s) - \gamma_{2}(s')}
        + \abs{\gamma_{1}(s) - \gamma_{2}(\phi(s))} \\
        &\leq \abs{\hat{\gamma}_{1}(s) - \hat{\gamma}_{2}(s')}
        + \abs{\hat{\gamma}_{1}(s) - \hat{\gamma}_{2}(\phi(s))}
        + 2\norm{\hat{\gamma}_{1} - \gamma_{1}}_{L^{\infty}}
        + 2\norm{\hat{\gamma}_{2} - \gamma_{2}}_{L^{\infty}} \\
        &\leq 2\norm{\hat{\gamma}_{1} - \hat{\gamma}_{2}\circ\phi}_{L^{\infty}}
        + 2\norm{\hat{\gamma}_{1} - \gamma_{1}}_{L^{\infty}}
        + 2\norm{\hat{\gamma}_{2} - \gamma_{2}}_{L^{\infty}} \\
        &\leq 2
            \norm{\hat{\gamma}_{1} - \hat{\gamma}_{2}\circ\phi}_{L^{\infty}}
            + 4r\ell(\gamma_{1}) \\
        &\leq 4d_{\mathrm{F}}(\gamma_{1},\gamma_{2})
        \leq 4\delta
        \leq \frac{1}{128R_{2}^{2}}
        < \Delta_{1/\norm{\gamma_{2}}_{\dot{H}^{2}}^{2}}(\gamma_{2}).
    \end{align*}
    It follows that $\abs{s' - \phi(s)} < \frac{1}{\norm{\gamma_{2}}_{\dot{H}^{2}}^{2}}$,
    and Lemma~\ref{L3.1} (with $\beta\coloneqq\frac{1}{2}$) then shows
    \begin{align*}
        \abs{\gamma_{2}(s') - \gamma_{2}(\phi(s))}
        &\geq \abs{\int_{\phi(s)}^{s'}
        \partial_{s}\gamma_{2}(s'')\cdot \partial_{s}\gamma_{2}(s')\,ds''} \\
        &\geq \abs{s' - \phi(s)} - \frac{\norm{\gamma_{2}}_{\dot{H}^{2}}^{2}}{4}
        \abs{s' - \phi(s)}^{2} > \frac{3}{4}\abs{s' - \phi(s)}.
    \end{align*}
    But this implies
    \[
        \abs{s' - \phi(s)} < \frac{4}{3}\abs{\gamma_{2}(s') - \gamma_{2}(\phi(s))}
        \leq \frac{16}{3}d_{\mathrm{F}}(\gamma_{1},\gamma_{2}),
    \]
    contradicting \eqref{111.21}.
    Hence we must have $s' =\phi(s)$.

    Next fix any $s\in\ell(\gamma_{1})\bbT$ and then any
    $s'\in\ell(\gamma_{2})\bbT$ such that $\abs{\gamma_{1}(s) - \gamma_{2}(s')}
    = d(\gamma_{1}(s), \operatorname{im}(\gamma_{2}))$.
    (Note that \eqref{9.18} concerns $\hat{\gamma}_{1},\hat{\gamma}_{2}$,
    not $\gamma_{1},\gamma_{2}$.) We can again regard $s',\phi(s)$ as elements of $\bbR$ with
    $\abs{s' - \phi(s)}\leq \frac{\ell(\gamma_{2})}{2}$. Then by \eqref{9.2},
    \begin{equation}\label{9.19}
        \begin{aligned}
            \abs{\gamma_{1}(s) - \gamma_{2}(\phi(s))}
            &\leq \abs{\hat{\gamma}_{1}(s) - \hat{\gamma}_{2}(\phi(s))}
            + \norm{\hat{\gamma}_{1} - \gamma_{1}}_{L^{\infty}}
            + \norm{\hat{\gamma}_{2} - \gamma_{2}}_{L^{\infty}} \\
            &\leq \abs{\hat{\gamma}_{1}(s) - \hat{\gamma}_{2}(s')}
            + \norm{\hat{\gamma}_{1} - \gamma_{1}}_{L^{\infty}}
            + \norm{\hat{\gamma}_{2} - \gamma_{2}}_{L^{\infty}} \\
            &\leq \abs{\gamma_{1}(s) - \gamma_{2}(s')}
            + 2\norm{\hat{\gamma}_{1} - \gamma_{1}}_{L^{\infty}}
            + 2\norm{\hat{\gamma}_{2} - \gamma_{2}}_{L^{\infty}} \\
            &\leq \abs{\gamma_{1}(s) - \gamma_{2}(s')}
            + 4r\ell(\gamma_{1}) \\
            &\leq \abs{\gamma_{1}(s) - \gamma_{2}(s')}
            + 256R_{2}^{2}\,d_{\mathrm{F}}(\gamma_{1},\gamma_{2})^{2},
        \end{aligned}
    \end{equation}
    so the definition of $\delta$ shows that
    \begin{align*}
        \abs{\gamma_{2}(s') - \gamma_{2}(\phi(s))}
        &\leq \abs{\gamma_{1}(s) - \gamma_{2}(s')}
        + \abs{\gamma_{1}(s) - \gamma_{2}(\phi(s))} \\
        &\leq 2\abs{\gamma_{1}(s) - \gamma_{2}(s')}
        + 256R_{2}^{2}\,d_{\mathrm{F}}(\gamma_{1},\gamma_{2})^{2} \\
        &\leq 2d_{\mathrm{F}}(\gamma_{1},\gamma_{2})
        + \frac{1}{2}d_{\mathrm{F}}(\gamma_{1},\gamma_{2})
        = \frac{5}{2}d_{\mathrm{F}}(\gamma_{1},\gamma_{2})
        < \Delta_{1/\norm{\gamma_{2}}_{\dot{H}^{2}}^{2}}(\gamma_{2}).
    \end{align*}
    So again $\abs{s' - \phi(s)} < \frac{1}{\norm{\gamma_{2}}_{\dot{H}^{2}}^{2}}$,
    and similarly to the proof of \eqref{9.18}, Lemma~\ref{L3.1} gives
    \begin{align*}
        \abs{s' - \phi(s)}
        &< \frac{4}{3}\abs{\gamma_{2}(s') - \gamma_{2}(\phi(s))}
        \leq \frac{8}{3}\abs{\gamma_{1}(s) - \gamma_{2}(s')}
        + \frac{1024}{3}R_{2}^{2}\,d_{\mathrm{F}}(\gamma_{1},\gamma_{2})^{2}.
    \end{align*}
    Hence our choice of $s'$ now yields (f):
    \[
        \abs{s' - \phi(s)}
        \leq 3d(\gamma_{1}(s),\operatorname{im}(\gamma_{2}))
        + 342R_{2}^{2}\,d_{\mathrm{F}}(\gamma_{1},\gamma_{2})^2.
    \]

    Finally, \eqref{9.19} and the choice of $s'$ imply
    \begin{align*}
        \abs{\gamma_{1}(s) - \gamma_{2}(\phi(s))}
        &\leq d(\gamma_{1}(s),\operatorname{im}(\gamma_{2}))
        + 256R_{2}^{2}\,d_{\mathrm{F}}(\gamma_{1},\gamma_{2})^{2}
    \end{align*}
     for all $s\in\ell(\gamma_{1})\bbT$, and we get
    \begin{equation*}
        \norm{\gamma_{1} - \gamma_{2}\circ\phi}_{L^{2}}
        \leq D(\gamma_{1},\gamma_{2})^{1/2}
        + 256R_{1}^{1/2}R_{2}^{2}\,d_{\mathrm{F}}(\gamma_{1},\gamma_{2})^{2}.
    \end{equation*}
    Then \eqref{9.17} shows that
    \begin{equation}\label{9.20}
        \norm{\gamma_{1} - \gamma_{2}\circ\phi}_{L^{2}}
        \leq D(\gamma_{1},\gamma_{2})^{1/2}
        + (2^{10}\cdot 3^{2})R_{1}^{11/10}R_{2}^{14/5}
        \norm{\gamma_{1} - \gamma_{2}\circ\phi}_{L^{2}}^{6/5},
    \end{equation}
    and since \eqref{9.9} with   $\phi$ in place of $\eta^{t}$ yields
    \begin{align*}
        \norm{\gamma_{1} - \gamma_{2}\circ\phi}_{L^{2}}
        \leq 2\ell(\gamma_{1})^{1/2}d_{\mathrm{F}}(\gamma_{1},\gamma_{2}),
    \end{align*}
    using the definition of $\delta$ we also have
    \begin{align*}
        (2^{10}\cdot 3^{2})R_{1}^{11/10}
        R_{2}^{14/5}
        \norm{\gamma_{1} - \gamma_{2}\circ\phi}_{L^{2}}^{1/5}
        \leq (2^{10}\cdot 3^{2})\cdot 2^{1/5}
        R_{1}^{6/5}R_{2}^{14/5}
        d_{\mathrm{F}}(\gamma_{1},\gamma_{2})^{1/5}
        \leq \frac{1}{2}.
    \end{align*}
    This and \eqref{9.20} now conclude the proof of (b):
    \begin{align*}
        \norm{\gamma_{1} - \gamma_{2}\circ\phi}_{L^{2}}
        \leq 2D(\gamma_{1},\gamma_{2})^{1/2}.
    \end{align*}


\end{document}